\newtheorem{theorem}{Theorem}[section]        
\newtheorem{corollary}[theorem]{Corollary}    
\newtheorem{lemma}[theorem]{Lemma}            
\newtheorem{proposition}[theorem]{Proposition}
\newtheorem*{numberlesscorollary}{Corollary}
\theoremstyle{definition}
\newtheorem{definition}[theorem]{Definition}
\newtheorem{example}[theorem]{Example}
\numberwithin{section}{chapter}
\numberwithin{figure}{chapter}
\newcommand{\longpage}{\enlargethispage{\baselineskip}}
\newcommand{\shortpage}{\enlargethispage{-\baselineskip}}
\newcommand{\abs}[1]{\vert#1\vert}            
\newcommand{\C}{\operatorname{{\mathbb C}}}
\newcommand{\cdim}{\operatorname{cdim}}
\newcommand{\Cinf}{\operatorname{C^\infty}}
\newcommand{\Dih}{\operatorname{Dih}}
\newcommand{\Diff}{\operatorname{Diff}}
\newcommand{\diff}{\operatorname{diff}}
\renewcommand{\dim}{\operatorname{dim}}
\newcommand{\Ecal}{\mathcal{E}}
\newcommand{\Exp}{\operatorname{Exp}}
\newcommand{\Fr}{\operatorname{Fr}}
\renewcommand{\H}{\operatorname{{\mathbb H}}}
\newcommand{\I}{\operatorname{I}}
\newcommand{\Img}{\operatorname{Img}}
\newcommand{\Imb}{\operatorname{Emb}}
\newcommand{\imb}{\operatorname{emb}}
\newcommand{\interior}{\operatorname{int}}
\newcommand{\Is}{\operatorname{{\mathcal I}}}
\newcommand{\Isom}{\operatorname{Isom}}
\newcommand{\isom}{\operatorname{isom}}
\newcommand{\Maps}{\operatorname{C^\infty}}
\newcommand{\MapsH}{\operatorname{C^\infty_H}}
\newcommand{\maps}{\operatorname{Maps}}
\newcommand{\Norm}{\operatorname{Norm}}
\newcommand{\normalizer}{\operatorname{norm}}
\renewcommand{\O}{\operatorname{\mathcal{O}}}
\newcommand{\Or}{\operatorname{O}}
\newcommand{\Ostar}{\mbox{$\Or(2)^*$}}
\newcommand{\Out}{\operatorname{Out}}
\newcommand{\orb}{\operatorname{{\mathcal O}}}
\renewcommand{\P}{\operatorname{{\mathbb P}}}
\newcommand{\R}{\operatorname{{\mathbb R}}}
\newcommand{\RP}{\operatorname{{\mathbb{RP}}}}
\newcommand{\rel}{\mbox{$\;\operatorname{rel}\;$}}
\newcommand{\set}[1]{\{ #1 \}}                
\newcommand{\sA}{\textsc{a}}
\newcommand{\sB}{\textsc{b}}
\newcommand{\SO}{\operatorname{SO}}
\newcommand{\sX}{\textsc{x}}
\newcommand{\sY}{\textsc{y}}
\newcommand{\TExp}{\operatorname{TExp}}
\newcommand{\ttimes}{\operatorname{\,\widetilde{\times}\,}}
\newcommand{\vertical}{\operatorname{vert}}
\newcommand{\Vol}{\operatorname{Vol}}
\newcommand{\vbar}{\mbox{$\;\vert\;$}}        
\newcommand{\Z}{\operatorname{{\mathbb Z}}}
\newcommand{\mapdown}[1]{\big\downarrow 
            \rlap            
            {\smash{$\vcenter       
            {\hbox{$         %
            \scriptstyle#1   %
            $}}$}}}           %
\newcommand{\mapright}[1]{\smash{      
            \mathop          
            {\longrightarrow 
            }\limits^{#1}}}  %
\newcommand{\indexdef}[1]{\index{#1|textbf}}
\newcommand{\indexstate}[1]{\index{#1|textit}}
\newcommand{\indexsym}[2]{\index{#1@#2}\index{ #1@#2}}
\newcommand{\indexsymdef}[2]{\index{#1@#2|textbf}\index{ #1@#2|textbf}}
\begin{document}

\frontmatter
\title[Diffeomorphisms of Elliptic $\mathbf{3}$-Manifolds]{Diffeomorphisms of\\
Elliptic $3$-Manifolds}

\author{Sungbok Hong}    
\address{\parbox{4in}{Department of Mathematics\\ 
Korea University\medskip}}
\email{shong@korea.ac.kr}
\urladdr{math.korea.ac.kr/~shong/}

\author{John Kalliongis}    
\address{\parbox{4in}{Department of Mathematics\\ 
Saint Louis University\medskip}}
\email{kalliongisje@slu.edu}
\urladdr{mathcs.slu.edu/people/kalliongisje}

\author{Darryl McCullough}    
\address{\parbox{4in}{Department of Mathematics\\ 
University of Oklahoma\medskip}}  
\email{dmccullough@math.ou.edu}
\urladdr{math.ou.edu/~dmccullough/}

\author{J. H. Rubinstein}
\address{\parbox{4in}{Department of Mathematics\\
University of Melbourne\medskip}}
\email{rubin@ms.unimelb.edu.au}
\urladdr{ms.unimelb.edu.au/~rubin/}

\date{\today}
\subjclass[2000]{Primary 57M99; Secondary 57M50}

\maketitle

\tableofcontents

\chapter*{Preface}

This work is ultimately directed at understanding the diffeomorphism groups
of elliptic $3$-manifolds--- those closed $3$-manifolds that admit a
Riemannian metric of constant positive curvature. The main results concern
the Smale Conjecture. The original \index{Smale Conjecture}Smale
Conjecture, proven by \index{Hatcher}A. Hatcher \cite{HSmale}, asserts that if $M$ is the
$3$-sphere with the standard constant curvature metric, the inclusion
$\Isom(M)\rightarrow \Diff(M)$ from the isometry group to the
diffeomorphism group is a homotopy equivalence. The 
\index{Generalized Smale Conjecture}\textit{Generalized Smale
Conjecture} (henceforth just called the Smale Conjecture) asserts this
whenever $M$ is an elliptic $3$-manifold.

Here are our main results:
\begin{enumerate}
\item[1.] The Smale Conjecture holds for elliptic $3$-manifolds containing
geometrically incompressible Klein bottles (Theorem~\ref{thm:one-sided}).
These include all quaternionic
and prism manifolds.
\item[2.] The Smale Conjecture holds for all lens spaces $L(m,q)$ with $m\geq
3$ (Theorem~\ref{thm:SCforLensSpaces}).
\end{enumerate}
\noindent Many of the cases in Theorem~\ref{thm:one-sided} were proven a
number of years ago \index{Ivanov}by N.~Ivanov \cite{I1c,I1,I5,I2} (see
Section~\ref{sec:SCtoday}).

Some of our other results concern the groups of diffeomorphisms
$\Diff(\Sigma)$ and fiber-preserving diffeomorphisms 
$\Diff_f(\Sigma)$ of
a Seifert-fibered Haken $3$-manifold $\Sigma$, and the coset space
$\Diff(\Sigma)/\Diff_f(\Sigma)$, which is called the
\index{space of Seifert fiberings}space of Seifert
fiberings (equivalent to the given fibering) of~$\Sigma$.
\begin{enumerate}
\item[3.] Apart from a small list of known exceptions, $\Diff_f(\Sigma)\to
\Diff(\Sigma)$ is a homotopy equivalence (Theorem~\ref{thm:DiffSigmaHE}).
\item[4.] The space of Seifert fiberings of $\Sigma$ has contractible
components (Theorem~\ref{space of sf's}), and apart from a small list of
known exceptions, it is contractible (Theorem~\ref{thm:DiffSigmaHE}).
\end{enumerate}
\noindent These may be already accepted as part of the overall
$3$-dimensional landscape, but we are unable to find any serious treatment
of them. And we have found that the development of the necessary tools and
their application to the $3$-dimensional context goes well beyond a routine
exercise.

This manuscript includes work done more than twenty years ago, as well as
work recently completed. In the mid-1980's, two of the authors (DM and JHR)
sketched an argument proving the Smale Conjecture for the $3$-manifolds
that contain one-sided Klein bottles (other than the lens space
$L(4,1)$). That method, which ultimately became Chapter~\ref{ch:one-sided}
below, underwent a long evolution as various additions were made to fill in
technical details.

The case of one-sided Klein bottles includes some lens spaces--- those of
the form $L(4n,2n-1)$ for $n\geq 2$. But for the general lens space case, a
different approach using Heegaard tori was developed by SH and DM starting
around 2000. It is based on a powerful methodology developed by JHR and
M. Scharlemann~\cite{RS}\index{Scharlemann}. It turned out that JHR was
working on the Smale Conjecture for lens spaces along exactly the same
lines as SH and DM, so the efforts were combined in the work that became
Chapter~\ref{ch:lens} below.

One more case of the Smale Conjecture may be accessible to existing
techniques. It seems likely that \index{Hatcher}A. Hatcher's approach to
the $S^3$ case in~\cite{HSmale} would also serve for $\RP^3$, but this has
yet to be carried out.

In summary, this is where the \index{Smale Conjecture!current status}Smale
\index{table!Smale Conjecture status}Conjecture now stands:
\medskip

\begin{center}
\renewcommand{\arraystretch}{1.3}
\setlength{\fboxsep}{0pt}
\setlength{\tabcolsep}{8pt}
\fbox{%
\begin{tabular}{l|l}
case&SC proven\\
\hline\hline
$S^3$&Hatcher \cite{HSmale}\\
\hline
$\RP^3$\\
\hline
lens spaces&Chapter~\ref{ch:lens} below\\
\hline
\begin{minipage}[c]{33ex}{\raggedright prism and quaternionic manifolds}\end{minipage}&%
\begin{minipage}[c]{25ex}{\mbox{\vrule height 2.3ex width 0ex depth 1 ex Ivanov \cite{I1,I1c,I5,I2}},
\newline \vrule height 1.8ex width 0ex depth 1ex Chapter~\ref{ch:one-sided} below}\end{minipage}\\
\hline
tetrahedral manifolds\\
\hline
octahedral manifolds&\\
\hline
icosahedral manifold\\
\end{tabular}}
\end{center}
\medskip


Our work on the Smale Conjecture requires some basic theory about spaces of
mappings of smooth manifolds, such as the fact that diffeomorphism groups
of compact manifolds and spaces of embeddings of submanifolds have the
homotopy type of CW-complexes, a result originally proven by
R. Palais\index{Palais}. This theory is well known to global analysts and
others, but not to many low-dimensional topologists. Also, most sources do
not discuss the case of manifolds with boundary, and we know of no existing
treatment of the case of fiber-preserving diffeomorphisms and embeddings,
which is the context of much of our technical work. For this reason, we
have included a fair dose of foundational material on diffeomorphism groups
in Chapter~\ref{ch:foundations}, which includes the case of manifolds with
boundary, with the additional boundary control that we will need.

A more serious gap in the literature is the absence of versions of the
fundamental restriction fibration theorems of \index{Palais}Palais and
\index{Cerf}Cerf in the context of fibered (and Seifert-fibered)
manifolds. These extensions of the well-known theory require some new
ideas, which were developed by JK and DM and form most of
Chapter~\ref{ch:Palais}. We work in a class of singular fiberings large
enough to include all Seifert fiberings of $3$-manifolds, except some
fiberings of lens spaces. These results are heavily used in our work in
Chapters~\ref{ch:one-sided} and~\ref{ch:lens}. Our results on
fiber-preserving diffeomorphisms and the space of fibered structures of a
Seifert-fibered Haken $3$-manifold are applications of this work, and also
appear in Chapter~\ref{ch:Palais}.

As a final note, we mention that much of our work here is unusually
detailed and technical. In considerable part, this is inherent
complication, but it also reflects the fact that over the years we have
filled in many arguments in response to recommendations from various
readers. Unfortunately, one reader's ``too sketchy'' can be another's ``too
much elaboration of well-known facts'', and personally we find some of the
current exposition to be somewhat too long and too detailed. To provide an
alternative, we have included Sections~\ref{sec:results}
and~\ref{sec:outline}, which are overviews of the proofs of the main
results. In the actual proofs, we trust that each reader will simply accept
the ``obvious'' parts and focus on the ``nontrivial'' parts, whichever they
may~be.

The authors are grateful to many sources of support during the lengthy
preparation of this work. These include the Australian Research Council, the
Korea Research Foundation, the Basic Science Research Center of Korea
University, Saint Louis University, the U. S. National Science Foundation,
the Mathematical Sciences Research Institute, the University of Oklahoma
Vice President for Research, and the University of Oklahoma College of Arts
and Sciences. We also thank the referees of versions of this work for
occasional corrections and many helpful suggestions.

\mainmatter

\chapter{Elliptic $3$-manifolds and the Smale Conjecture}
\label{ch:intro}

As noted in the Preface, the \index{Smale Conjecture}Smale Conjecture is
the assertion that the inclusion $\Isom(M)\to \Diff(M)$ is a homotopy
equivalence whenever $M$ is an elliptic $3$-manifold, that is, a
$3$-manifold admitting a Riemannian metric of constant positive
curvature. The \index{Geometrization Conjecture}Geometrization Conjecture,
now proven by \index{Perelman}Perelman, shows that all closed $3$-manifolds
with finite fundamental group are elliptic.

In this chapter, we will first review elliptic $3$-manifolds and their
isometry groups. In the second section, we will state our main results on
the Smale Conjecture, and provide some historical context. In the final two
sections, we discuss isometries of nonelliptic $3$-manifolds, and address
the possibility of applying Perelman's methods to the Smale Conjecture.

\section{Elliptic $3$-manifolds and their isometries}
\label{sec:isometries}

The \indexdef{elliptic} elliptic $3$-manifolds were completely classified
long ago. They are exactly the $3$-manifolds whose universal cover can be
uniformized as the unit sphere $S^3$ in $\R^4$ so that $\pi_1(M)$ acts
freely as a subgroup of $\Isom_+(S^3)=\SO(4)$. The subgroups of $\SO(4)$
that act freely were first determined by Hopf and Seifert-Threlfall, and
reformulated using \index{quaternions}quaternions by 
\index{Hattori}Hattori. References include \cite{Wolf}
(pp.~226-227), \cite{Orlik} (pp.~103-113), \cite{Scott} (pp.\ 449-457),
\cite{Sakuma}, and~\cite{M}.

The isometry groups of elliptic $3$-manifolds have also been known for a
long time, and are topologically rather simple: they are compact Lie groups
of dimension at most $6$. A detailed calculation of the isometry groups of
elliptic $3$-manifolds was given in \cite{M}, and in this section we will
recall the resulting groups.

To set notation, recall that there is a well-known $2$-fold covering
$S^3\to \SO(3)$, which is a homomorphism when $S^3$ is regarded as the
group of unit quaternions (see Section~\ref{isometry} for a fuller
discussion). The elements of $\SO(3)$ that preserve a given axis, say the
$z$-axis, form the orthogonal subgroup
\indexsymdef{O(2)}{$\Or(2)$}$\Or(2)$. We will denote by
\indexsymdef{O(2)star}{$\Or(2)^*$}$\Or(2)^*$ the inverse image in $S^3$ of
$\Or(2)$. When $H_1$ and $H_2$ are groups, each containing $-1$ as a central
involution, the quotient $(H_1\times H_2)/\langle (-1,-1)\rangle$ is
denoted by \indexsymdef{ttimes}{$\protect\ttimes$} $H_1\ttimes H_2$. In particular,
$\SO(4)$ itself is $S^3\ttimes S^3$, and contains the subgroups
\indexsymdef{S1timesS3}{$S^1\protect\ttimes S^3$}$S^1\ttimes S^3$,
\indexsymdef{OstartimesOstar}{$\protect\Ostar\protect\ttimes\protect\Ostar$}$\Ostar\ttimes\Ostar$, and
\indexsymdef{S1timesS1}{$S^1\protect\ttimes S^1$}$S^1\ttimes S^1$.  The latter is
isomorphic to $S^1\times S^1$, but it is sometimes useful to distinguish
between them. Finally, \indexsymdef{Dih}{$\protect\Dih(S^1\times
S^1)$}$\Dih(S^1\times S^1)$ is the semidirect product $(S^1\times
S^1)\circ C_2$, where $C_2$ acts by complex conjugation in both factors.

There are $2$-fold covering homomorphisms
\begin{equation*}
\Ostar\times \Ostar\to
\Ostar\ttimes\Ostar\to
\Or(2)\times \Or(2)\to
\Or(2)\ttimes\Or(2)\ .
\end{equation*}
Each of these groups is diffeomorphic to four disjoint copies of the torus,
but they are pairwise nonisomorphic. Indeed, they are easily distinguished
by examining their subsets of order~$2$ elements.  Similarly, $S^1\times
S^3$ and $S^1\ttimes S^3$ are diffeomorphic, but nonisomorphic.

Table~\ref{tab:isometries} gives the isometry groups of the elliptic
$3$-manifolds with non-cyclic fundamental group.  The first column, $G$,
indicates the fundamental group of $M$, where \indexsymdef{Cm}{$C_m$}$C_m$
denotes a cyclic group of order $m$, and
\indexsymdef{Dstar4m}{$D^*_{4m}$}$D^*_{4m}$,
\indexsymdef{Tstar24}{$T^*_{24}$}$T^*_{24}$,
\indexsymdef{Ostar48}{$O^*_{48}$}$O^*_{48}$, and
\indexsymdef{Istar120}{$I^*_{120}$}$I^*_{120}$ are the binary
dihedral, tetrahedral, octahedral, and icosahedral groups of the indicated
orders. The groups called \indexdef{diagonal subgroup}index $2$ and 
index
$3$ diagonal are certain subgroups of $D^*_{4m}\times C_{4m}$ and
$T^*_{24}\times C_{6n}$ respectively. The last two columns give the full
isometry group $\Isom(M)$, and the group
\indexsymdef{IM}{$\protect\mathcal{I}(M)$}$\mathcal{I}(M)$ of path components of
$\Isom(M)$.

Section~\ref{isometry} contains the detailed calculation of 
\indexsymdef{isomM}{$\protect\isom(M)$}$\isom(M)$, the
connected component of $\mathrm{id}_M$ in $\Isom(M)$, for the elliptic
$3$-manifolds that contain one-sided incompressible Klein bottles--- the
quaternionic and prism manifolds, and the lens spaces of the form
$L(4n,2n-1)$--- since the notation and some of the mechanics of this are
needed for the arguments in Chapter~\ref{ch:one-sided}.

Table~\ref{tab:lens spaces} gives the isometry groups of the elliptic
$3$-manifolds with cyclic fundamental group. These are the $3$-sphere
$L(1,0)$, real projective space $L(2,1)$, and the lens spaces 
\indexsymdef{L(m,q)}{$L(m,q)$}$L(m,q)$ with
$m\geq 3$.
\index{table!isometry groups}%
\begin{table}
\begin{small}
\renewcommand{\arraystretch}{1.5}
\setlength{\tabcolsep}{2 ex}
\setlength{\fboxsep}{0pt}
\fbox{%
\begin{tabular}{l|l|l|l}
$G$&$M$&$\Isom(M)$&$\Is(M)$\\
\hline
\hline
$Q_8$&quaternionic&$\SO(3)\times S_3$&$S_3$\\ 
\hline 
$Q_8\times C_n$&quaternionic&$\Or(2)\times S_3$&$C_2\times S_3$\\ 
\hline 
$D_{4m}^*$&prism&$\SO(3)\times  C_2$&$C_2$\\ 
\hline 
$D_{4m}^*\times C_n$&prism&$\Or(2)\times C_2$&$C_2\times C_2$\\ 
\hline 
index $2$ diagonal&prism&$\Or(2)\times C_2$&$C_2\times C_2$\\ 
\hline
$T_{24}^*$&tetrahedral&$\SO(3)\times C_2$&$C_2$\\ 
\hline 
$T_{24}^*\times C_n$&tetrahedral&$\Or(2)\times C_2$&$C_2\times C_2$\\ 
\hline 
index $3$ diagonal&tetrahedral&$\Or(2)$&$C_2$\\ 
\hline 
$O_{48}^*$&octahedral&$\SO(3)$&$\set{1}$\\ 
\hline 
$O_{48}^*\times C_n$&octahedral&$\Or(2)$&$C_2$\\ 
\hline 
$I_{120}^*$&icosahedral&$\SO(3)$&$\set{1}$\\ 
\hline 
$I_{120}^*\times C_n$&icosahedral&$\Or(2)$&$C_2$\\
\end{tabular}}
\end{small}
\bigskip
\caption{Isometry groups of $M=S^3/G$\hspace{2 ex}($m>2$, $n>1$)}
\label{tab:isometries}
\end{table}

\index{table!isometry groups}%
\begin{table}
\begin{scriptsize}
\renewcommand{\arraystretch}{1.5}
\newlength{\minipagewidth}%
\setlength{\tabcolsep}{1.5 ex}
\setlength{\fboxsep}{0pt}
\fbox{%
\begin{tabular}{l|l|l}
$m$, $q$&$\Isom(L(m,q))$&$\Is(L(m,q))$\\
\hline
\hline
$m=1$ ($L(1,0)=S^3$)&$\Or(4)$&$C_2$\\
\hline
$m=2$ ($L(2,1)=\RP^3$)&$(\SO(3)\times \SO(3))\circ C_2$&$C_2$\\
\hline
\settowidth{\minipagewidth}{$m>2$, $m$ even}%
\begin{minipage}{\minipagewidth}%
\noindent $m>2$, $m$ odd,\par\end{minipage} $q=1$&$\Ostar\ttimes S^3$&$C_2$\\  
\hline
$m>2$, $m$ even, $q=1$&$\Or(2)\times \SO(3)$&$C_2$\\
\hline
$m>2$, $1<q<m/2$, $q^2\not\equiv\pm1\bmod{m}$&$\Dih(S^1\times S^1)$&$C_2$\\
\hline
$m>2$, $1<q<m/2$, $q^2\equiv-1\bmod{m}$&$(S^1\ttimes S^1)\circ C_4$&$C_4$\\
\hline
$m>2$,
\settowidth{\minipagewidth}{$\gcd(m,q+1)\gcd(m,q-1)=m$}%
\begin{minipage}[t]{\minipagewidth}%
\noindent $1<q<m/2$, $q^2\equiv1\bmod{m}$,\par
\noindent $\gcd(m,q+1)\gcd(m,q-1)=m$\rule[-1.2 ex]{0mm}{0mm}\par%
\end{minipage}%
&$\Or(2)\ttimes \Or(2)$&$C_2\times C_2$\\
\hline
$m>2$,
\settowidth{\minipagewidth}{$\gcd(m,q+1)\gcd(m,q-1)=2m$}%
\begin{minipage}[t]{\minipagewidth}%
\noindent $1<q<m/2$, $q^2\equiv1\bmod{m}$,\par
\noindent $\gcd(m,q+1)\gcd(m,q-1)=2m$\rule[-1.2 ex]{0mm}{0mm}\par%
\end{minipage}%
&$\Or(2)\times \Or(2)$&$C_2\times C_2$
\end{tabular}}
\end{scriptsize}
\bigskip
\caption{Isometry groups of $L(m,q)$}
\label{tab:lens spaces}
\end{table}

\newpage\longpage
\section{The Smale Conjecture}
\label{sec:SCtoday}\index{Smale Conjecture}

S. Smale \cite{Smale}\index{Smale} proved that for the standard round
$2$-sphere $S^2$, the inclusion of the isometry group $\Or(3)$ into the
diffeomorphism group $\Diff(S^2)$ is a homotopy equivalence. He conjectured
that the analogous result holds true for the $3$-sphere, that is, that
$\Or(4)\to \Diff(S^3)$ is a homotopy
equivalence. J.~Cerf~\cite{CerfS3}\index{Cerf} proved that the inclusion
induces a bijection on path components, and the full conjecture was proven
by \index{Hatcher}A.~Hatcher \cite{HSmale}.

A weak form of the (generalized) Smale Conjecture is known. In \cite{M},
the calculations of $\Isom(M)$ for elliptic $3$-manifolds are combined with
results on mapping class groups of many authors, including
\cite{A,Boileau-Otal,Bonahon,R2,R-B}, to obtain the following statement:
\begin{theorem}
Let $M$ be an elliptic $3$-manifold. Then the inclusion of $\Isom(M)$ into
$\Diff(M)$ is a bijection on path components.\par
\label{thm:pi0 Smale}
\end{theorem}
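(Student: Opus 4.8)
The plan is to factor the inclusion-induced map through the action on the fundamental group and on orientation. Since $\Isom(M)$ is a compact Lie group whose identity component is $\isom(M)$, we have $\pi_0(\Isom(M))=\Isom(M)/\isom(M)=\mathcal{I}(M)$, the group recorded in Tables~\ref{tab:isometries} and~\ref{tab:lens spaces}; so we must show that the homomorphism $j_*\colon\mathcal{I}(M)\to\pi_0(\Diff(M))$ induced by inclusion is a bijection. I would prove injectivity and surjectivity separately, the first by elementary means and the second by importing the mapping class group computations referred to in the statement.

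For injectivity, suppose an isometry $f$ is isotopic to $\mathrm{id}_M$ in $\Diff(M)$; we want $f\in\isom(M)$. Being isotopic to the identity, $f$ induces the trivial outer automorphism of $\pi_1(M)$ and preserves orientation (every elliptic $3$-manifold is orientable). Since $\isom(M)$ is connected, it too acts trivially on $\pi_1(M)$, so there is a well-defined homomorphism
\[
\mathcal{I}(M)\longrightarrow \Out(\pi_1(M))\times\{\pm1\}
\]
given by the outer action on $\pi_1(M)$ together with orientation, and it suffices to know that this map is injective, for then $[f]\in\mathcal{I}(M)$ has trivial image and hence equals $1$. This is a finite check against Tables~\ref{tab:isometries} and~\ref{tab:lens spaces}. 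For the lens spaces $L(m,q)$ with $m\geq3$ the $\pi_1$-action already separates the isometry classes: generically the nontrivial class inverts $\Z/m$, for $q^2\equiv-1$ the classes are the distinct powers of $q$ in $(\Z/m)^\times$, and for $q^2\equiv1$ they are the distinct elements $\pm1,\pm q$ of $(\Z/m)^\times$; for the space forms with noncyclic $\pi_1$ one reads off from \cite{M} that the outer action on $\pi_1(M)$, together with orientation, separates the isometry classes; and for $S^3$ and $\RP^3$, where $\Out(\pi_1(M))$ is trivial, the two isometry classes are distinguished by orientation. In every case $j_*$ is therefore injective.

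For surjectivity --- that every diffeomorphism of $M$ is isotopic to an isometry --- I would invoke the mapping class group computations of the cited authors: Cerf's theorem \cite{CerfS3} that $\pi_0(\Diff(S^3))\cong C_2$, Bonahon's calculation \cite{Bonahon} for the lens spaces, and the determinations of $\pi_0(\Diff(M))$ for the remaining elliptic $3$-manifolds in \cite{A}, \cite{Boileau-Otal}, \cite{R2}, and \cite{R-B}. In each case the cited work not only computes $\pi_0(\Diff(M))$, and so shows it has the same (finite) order as $\mathcal{I}(M)$ from Tables~\ref{tab:isometries} and~\ref{tab:lens spaces}, but exhibits a generating set of isotopy classes represented by concrete symmetries --- rotations about the cores of Heegaard solid tori, reflections, rotations of one-sided Heegaard Klein bottles, fiber involutions, and the like --- each of which preserves the constant-curvature metric after a preliminary isotopy, hence lies in the image of $j_*$. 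Combined with injectivity, this shows $j_*$ is a bijection.

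The genuine difficulty lies entirely in these imported inputs rather than in the synthesis: Cerf's theorem for $S^3$, Bonahon's analysis of the isotopy classes of Heegaard tori in lens spaces, and the one-sided-splitting and Haken-type rigidity arguments underlying the mapping class group computations for the remaining elliptic manifolds are each substantial pieces of $3$-manifold topology. Granting those, what remains --- checking that each generator produced in the literature can be realized isometrically, and carrying out the separation argument of the second paragraph family by family against Tables~\ref{tab:isometries} and~\ref{tab:lens spaces} --- is routine bookkeeping, though it must be done case by case.
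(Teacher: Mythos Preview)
Your proposal is correct and matches the paper's treatment: the paper does not give a self-contained proof of this theorem but simply attributes it to \cite{M}, where the isometry-group calculations recorded in Tables~\ref{tab:isometries} and~\ref{tab:lens spaces} are combined with the mapping-class-group results of \cite{A,Boileau-Otal,Bonahon,R2,R-B} (and \cite{CerfS3} for $S^3$) to verify the bijection case by case. Your two-step decomposition into injectivity (via the action on $\Out(\pi_1)\times\{\pm1\}$) and surjectivity (via the cited mapping-class-group computations) is exactly the shape of that argument, and your honest acknowledgment that the substance lies in the imported results is apt---note, incidentally, that once you have surjectivity and matching finite cardinalities your separate injectivity check is redundant, though it is a useful sanity check.
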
\indexstate{Smale Conjecture!pi zero part@$\pi_0$-part}
\noindent 
This can be called the ``$\pi_0$-part'' 
of the Smale Conjecture. By virtue of
this result, to prove the Smale Conjecture for any elliptic $3$-manifold,
it is sufficient to prove that the inclusion $\isom(M)\to\diff(M)$ of the
connected components of the identity map in $\Isom(M)$ and $\Diff(M)$ is a
homotopy equivalence.
\longpage

The earliest work on the Smale Conjecture was \index{Ivanov}by N. Ivanov. Certain
elliptic $3$-manifolds contain one-sided geometrically incompressible Klein
bottles. Fixing such a Klein bottle $K_0$, called the base Klein bottle,
the remainder of the $3$-manifold is an open solid torus, and (up to
isotopy) there are two Seifert fiberings, one for which the Klein bottle is
fibered by nonsingular fibers (the 
\index{meridional!fibering of $K_0$}``meridional'' 
fibering), and one for
which it contains two exceptional fibers of type $(2,1)$ (the
\index{longitudinal!fibering of $K_0$}``longitudinal'' fibering).
As will be detailed in Section~\ref{sec:M(m,n)} below, the manifolds then
fall into four types:
\begin{enumerate}
\item[I.] Those for which neither the meridional nor the longitudinal
fibering is nonsingular on the complement of $K_0$.
\item[II.] Those for which only the longitudinal fibering is nonsingular on
the complement of $K_0$. These are the lens spaces $L(4n,2n-1),\;n\geq2$.
\item[III.] Those for which only the meridional fibering is nonsingular on
the complement of $K_0$.
\item[IV.] The lens space $L(4,1)$, for which both the meridional and
longitudinal fiberings are nonsingular on the complement of~$K_0$.
\end{enumerate}
\noindent Cases~I and~III are the quaternionic and prism manifolds.

\index{Ivanov}Ivanov announced the Smale Conjecture for Cases~I and~II in \cite{I1c,I1},
and gave a detailed proof for Case~I in \cite{I5,I2}. One of our main
theorems extends those results to all cases:
\begin{theorem}[Smale Conjecture for elliptic $3$-manifolds containing
incompressible Klein bottles] Let $M$ be an elliptic $3$-manifold
containing a geometrically incompressible Klein bottle. Then $\Isom(M)\to
\Diff(M)$ is a homotopy equivalence.
\label{thm:one-sided}\indexstate{Smale Conjecture!for incompressible Klein bottle case}
\end{theorem}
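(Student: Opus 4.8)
The plan is to combine the $\pi_0$-reduction with a parametrized-surface argument organized around the base Klein bottle. By Theorem~\ref{thm:pi0 Smale} it suffices to show that the inclusion $\isom(M)\to\diff(M)$ of identity components is a homotopy equivalence. Take $K_0$ to be the isometrically standard one-sided Klein bottle, let $N=N(K_0)$ be its twisted $I$-bundle neighborhood with torus boundary $T=\partial N$, and let $V=M\setminus\interior(N)$, a solid torus. Let $\mathcal K$ be the space (with the $\Cinf$ topology) of geometrically incompressible Klein bottles in $M$ isotopic to $K_0$, and let $\mathcal K_0\subset\mathcal K$ be the $\isom(M)$-orbit of $K_0$.

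\textbf{Step 1 (parametrizing the Klein bottles).} The central claim is that the inclusion $\mathcal K_0\hookrightarrow\mathcal K$ is a homotopy equivalence; that is, the whole space of geometrically incompressible Klein bottles deformation retracts onto the finite-dimensional family swept out by the isometries. Passing from $K_0$ to the splitting torus $T$ and to the (at most two) Seifert fiberings of $M$ determined by $K_0$, this is proved using the fibered Palais--Cerf fibration and uniqueness theorems of Chapter~\ref{ch:Palais} applied to the Haken manifold $N$ and to the solid torus $V$ (together with Theorems~\ref{space of sf's} and~\ref{thm:DiffSigmaHE} on spaces of Seifert fiberings), plus the enumeration of isotopy classes of such Klein bottles carried out in Section~\ref{sec:M(m,n)}. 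The argument necessarily splits along the four types~I--IV, since the way $K_0$ meets the Seifert fiberings of $V$---meridional versus longitudinal, nonsingular on $M\setminus K_0$ or not---is different in each.

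\textbf{Step 2 (restriction fibrations).} The fibration theorems for diffeomorphism and embedding spaces (Chapter~\ref{ch:foundations}), refined to the fibered setting in Chapter~\ref{ch:Palais}, give a fibration $\diff(M)\to\mathcal K$ onto the component of $K_0$, whose fiber is the subgroup of $\diff(M)$ preserving $K_0$. Choosing the neighborhood $N$ invariantly and cutting along $T$, this fiber is assembled from the diffeomorphism group of the twisted $I$-bundle $N$ (preserving the zero section as a set) and the diffeomorphism group of the solid torus $V$, glued over $\Diff(T)$. On the isometric side there is the homogeneous-space fibration $\isom(M)\to\mathcal K_0$ with fiber the stabilizer $\isom(M,K_0)$, which decomposes similarly into the isometries of $N$ and of $V$ rel $T$.

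\textbf{Step 3 (assembly), and the main obstacle.} By Step~1 the base spaces of the two fibrations are homotopy equivalent compatibly with the inclusion maps, so it remains to compare the fibers. For the solid-torus piece the required equivalence is Hatcher's theorem on $\Diff$ of the solid torus; for the twisted $I$-bundle $N$ over the Klein bottle it follows from the Haken results of Chapter~\ref{ch:Palais} together with an explicit identification of the relevant components of $\Diff(N\rel T)$ (which turn out to be contractible), matched against $\isom(N)$ as computed in Section~\ref{isometry}. Feeding this into the long exact sequences of the two fibrations and applying the five lemma shows $\isom(M)\to\diff(M)$, hence $\Isom(M)\to\Diff(M)$, is a homotopy equivalence. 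The hard part is Step~1: producing, for an arbitrary geometrically incompressible Klein bottle, a \emph{parametrized} isotopy into the standard isometric family with enough control to yield a genuine deformation retraction rather than merely a surjection on $\pi_0$. This is where the fibered Palais--Cerf machinery is indispensable, where the bookkeeping of the Seifert fiberings of $N(K_0)$ and of $V$ is most delicate, and where the four types must be handled separately---which is the main reason Chapter~\ref{ch:one-sided} is long.
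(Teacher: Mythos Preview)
Your Steps~2 and~3 are essentially the paper's reduction (Theorem~\ref{smale1}): once one knows that the space of ``standard'' Klein bottles is homotopy equivalent to the full space, the comparison of fibers over $K_0$ goes through using contractibility of $\diff(M\rel K_0)$ (Hatcher on the complementary solid torus) and the explicit computation of $\isom(M)\to\isom_f(K_0,M)$ (Lemmas~\ref{restriction} and~\ref{lem:isom_to_imb}). The paper phrases this via embeddings rather than images, reducing to Theorem~\ref{main}: $\imb_f(K_0,M)\to\imb(K_0,M)$ is a homotopy equivalence. That is the exact analogue of your Step~1.

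The gap is in your justification of Step~1. The machinery of Chapter~\ref{ch:Palais}---the fibered Palais--Cerf theorems and Theorems~\ref{space of sf's}, \ref{thm:DiffSigmaHE}---compares $\Diff_f$ to $\Diff$ for \emph{Haken} Seifert-fibered manifolds, and the restriction fibrations it supplies are for \emph{fiber-preserving} embeddings of \emph{vertical} submanifolds. An arbitrary Klein bottle $K\in\mathcal K$ is not a priori vertical in the Hopf fibering of the elliptic manifold $M$, and $M$ itself is not Haken, so none of those theorems apply to it. Cutting $M$ along $K$ into a twisted $I$-bundle and a solid torus and invoking uniqueness of Seifert fiberings on each piece does not help either: the fiberings on the two pieces need not glue to the given Hopf fibering on $M$, and there is no mechanism in Chapter~\ref{ch:Palais} that moves a non-vertical incompressible surface to a vertical one.

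The paper's proof of Theorem~\ref{main} is instead a direct geometric argument occupying Sections~\ref{sec:generic position}--\ref{parameterization}. One perturbs the family so each $K_t$ meets $K_0$ in generic position, then uses a flattening argument (Lemma~\ref{flatten}) and a combinatorial Euler-characteristic count to prove the key Proposition~\ref{prop:circles}: for small $u$, every circle of $K_t\cap T_u$ is inessential or represents $a$ or $b^2$ in $\pi_1(T_u)$. This is where the hypothesis $(m,n)\neq(1,1)$ enters---the proposition is false for $L(4,1)$, which must be handled by the lens-space methods of Chapter~\ref{ch:lens}. With Proposition~\ref{prop:circles} in hand, Hatcher's parametrized methods eliminate inessential intersections and then push non-fiber annuli out of the complementary solid tori (using Lemma~\ref{lem:longitudes are fibers} to guarantee unique boundary-parallelism), after which the fibered results of Chapter~\ref{ch:Palais} finally do apply to finish. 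So the fibered Palais--Cerf machinery is used, but only \emph{after} the intersection analysis has forced each $K_t$ into a position where the machinery is applicable; it cannot by itself supply Step~1.
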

\noindent Theorem~\ref{thm:one-sided} is proven in
Chapter~\ref{ch:one-sided}, except for the case of $L(4,1)$, which is
proven in Chapter~\ref{ch:lens}.

Our second main result concerns lens spaces, which for us refers only to
the lens spaces $L(m,q)$ with $m\geq 3$:

\begin{theorem}[Smale Conjecture for lens spaces]\indexstate{Smale
Conjecture!for lens spaces}
For any lens space $L$, the inclusion $\Isom(L)\to \Diff(L)$ is a homotopy
equivalence.
\label{thm:SCforLensSpaces}
\end{theorem}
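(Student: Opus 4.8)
The plan is to first reduce to the identity components --- by Theorem~\ref{thm:pi0 Smale} the inclusion $\Isom(L)\to\Diff(L)$ is already a bijection on path components, so it suffices to prove that $\isom(L)\to\diff(L)$ is a homotopy equivalence --- and then to parametrize $\diff(L)$ by the genus-one Heegaard tori of $L$. Let $T_0$ be the standard (Clifford) Heegaard torus and let $\mathcal H$ be the space of embedded tori in $L$ isotopic to $T_0$. By the uniqueness up to isotopy of the genus-one Heegaard splitting of a lens space (Bonahon and Otal), $\mathcal H$ is the full space of Heegaard tori, and $\diff(L)$ acts transitively on it, since any Heegaard torus is carried to $T_0$ by an ambient isotopy. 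Using the restriction fibration theorems of Palais and Cerf type established in Chapters~\ref{ch:foundations} and~\ref{ch:Palais}, the evaluation $f\mapsto f(T_0)$ is then a fibration $\diff(L)\to\mathcal H$ with fiber the stabilizer $\diff_{T_0}(L)$ of $T_0$, and comparing it with the orbit fibration $\isom_{T_0}(L)\to\isom(L)\to\isom(L)\cdot T_0$ for the isometric action reduces the theorem to two statements: (A) the orbit inclusion $\isom(L)\cdot T_0\hookrightarrow\mathcal H$ is a weak homotopy equivalence; and (B) the inclusion $\isom_{T_0}(L)\hookrightarrow\diff_{T_0}(L)$ of the isometries preserving $T_0$ into the identity-component diffeomorphisms preserving $T_0$ is a weak homotopy equivalence. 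Since every space involved has the homotopy type of a CW-complex (Chapter~\ref{ch:foundations}), weak equivalences suffice, and (A) together with (B) gives the result by comparison of the long exact homotopy sequences of the two fibrations.

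Statement (B) I expect to be comparatively routine. A diffeomorphism preserving $T_0$ restricts to a diffeomorphism of the $2$-torus and to diffeomorphisms of the two complementary solid tori, so one deformation retracts $\diff_{T_0}(L)$ onto its isometric subgroup using the known homotopy type of $\Diff(T^2)$ and of the diffeomorphism group of the solid torus $S^1\times D^2$ with suitable boundary control, while keeping track of how the two sides are identified in $L(m,q)$; the finitely many lens spaces whose isometry group is larger than the standard $2$-torus --- principally $L(m,1)$ --- require only extra bookkeeping on path components. (Several lens spaces, the $L(4n,2n-1)$, are already covered by Theorem~\ref{thm:one-sided}, but the Heegaard-torus method treats all of them uniformly.) For every lens space other than $L(m,1)$ the isometry group fixes $T_0$, so the orbit in (A) is a single point and (A) simply asserts that $\mathcal H$ is contractible.

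Statement (A) is the crux, and it is where the sweepout methodology of Rubinstein and Scharlemann~\cite{RS} enters. I would fix a standard sweepout $\sigma_0\colon L\to[0,1]$ whose generic level sets are the Heegaard tori in the $\isom(L)$-orbit of $T_0$ and whose two degenerate fibers are the cores of the two solid tori. Given a $k$-parameter family of Heegaard tori --- a map of a sphere or disk into $\mathcal H$ --- I would put it in general position with respect to $\sigma_0$ and analyze the associated Rubinstein--Scharlemann graphic, the locus of critical values of $\sigma_0$ restricted to the members of the family, in order to select, continuously in the parameter, a canonical level of $\sigma_0$ to which each torus of the family can be isotoped. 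The uniqueness of the genus-one splitting guarantees that there is never a global obstruction to such a straightening; the work lies in carrying it out canonically and compatibly over the entire family and then inducting on the number of parameters. This step also requires the extensions of the Palais and Cerf restriction theorems to the (singular) Seifert-fibered setting developed in Chapter~\ref{ch:Palais}.

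The hard part will be exactly this parametrized version of the Rubinstein--Scharlemann argument: one must understand how the graphic degenerates in families, arrange general position compatibly across all parameters, extract continuous canonical choices of straightening, and handle the ends of the sweepout where the level tori collapse onto the cores. Making this machinery function for families rather than for a single surface is the substance of Chapter~\ref{ch:lens}, and it is the principal obstacle; by contrast the fibration bookkeeping in the first two paragraphs and the solid-torus computation in (B) are essentially formal once the foundational material of Chapters~\ref{ch:foundations} and~\ref{ch:Palais} is in hand.
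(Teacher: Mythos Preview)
Your overall strategy---reduce to identity components, set up a comparison of fibrations, and identify the core difficulty as a parametrized Rubinstein--Scharlemann argument---is sound, but your decomposition differs from the paper's in a way that matters. The paper does \emph{not} factor through the stabilizer $\diff_{T_0}(L)$ of a Heegaard torus; instead it factors through $\diff_f(L)$, the fiber-preserving diffeomorphisms for the Hopf Seifert fibering of $L$. The reduction $\isom(L)\simeq\diff_f(L)$ (Theorem~\ref{thm:reduce to fiber-preserving}) is a short argument using the fibration $\diff_f(L)\to\diff_{orb}(L_0)$ over the quotient orbifold, and the remaining task is to show that any family $S^d\to\diff(L)$ can be homotoped into $\diff_f(L)$. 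Your statement~(B), by contrast, requires analyzing diffeomorphisms of two solid tori glued along their boundaries with the $(m,q)$-identification---doable via Hatcher's results, but with more case analysis than you suggest, particularly when the two sides can be exchanged.

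More importantly, your description of~(A) underestimates what is needed. You cannot ``select, continuously in the parameter, a canonical level'' to which each torus is isotoped: the Rubinstein--Scharlemann graphic gives no canonical choice, and Section~\ref{sec:examples} exhibits sweepouts for which naive general position (Morse-type tangencies only) yields no unlabeled region at all. The paper's actual procedure uses an \emph{open cover} of the parameter space with a different pair $(s_i,t_i)$ on each $U_i$, establishes a delicate general-position theory (Section~\ref{sec:generalposition}, drawing on Bruce and Sergeraert) so that the graphic has controlled vertices, then passes from good to very good position via Hatcher's disk-elimination method (Section~\ref{sec:from good to very good}), and finally executes a six-step procedure (Section~\ref{sec:fiber-preserving families}) that pushes annuli out of solid tori block by block, levels the image tori, and makes everything fiber-preserving. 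The Hopf fibering is not incidental here: it supplies the target structure (verticality) that organizes the entire endgame, and Lemmas~\ref{lem:bilongitude} and~\ref{lem:coincident levels} use it essentially. Your Heegaard-torus framing would have to reinvent an analogous target, and it is not clear what would play the role of ``fiber-preserving'' in reconciling the choices made over overlapping $U_i$.
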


\index{DiffM@$\Diff(M)$!homeomorphism type|(}One consequence of the Smale
Conjecture is the determination of the homeomorphism type of $\Diff(M)$.
Recall that a \indexdef{Frechet space@Fr\'echet space}Fr\'echet space is a
locally convex complete metrizable linear space.  In
Section~\ref{sec:Cinfinity}, we will review the fact that if $M$ is a
closed smooth manifold, then with the $\Cinf$-topology, $\Diff(M)$ is a
separable infinite-dimensional manifold locally modeled on the Fr\'echet
space of smooth vector fields on $M$.  By the \index{Anderson-Kadec
Theorem}Anderson-Kadec Theorem \cite[Corollary VI.5.2]{BP}, every
infinite-dimensional separable Fr\'echet space is homeomorphic to
$\R^\infty$, the countable product of lines. A theorem of Henderson and
Schori (\cite[Theorem~IX.7.3]{BP}, originally announced in \cite{HS}) shows
that if $Y$ is any locally convex space with $Y$ homeomorphic to
$Y^\infty$, then manifolds locally modeled on $Y$ are homeomorphic whenever
they have the same homotopy type. Therefore our main theorems give
immediately the homeomorphism type of $\Diff(M)$:

\begin{numberlesscorollary} Let $M$ be an elliptic $3$-manifold which
either contains an incompressible Klein bottle or is a lens space $L(m,q)$
with $m\geq 3$. Then $\Diff(M)$ is homeomorphic to $\Isom(M)\times
\R^\infty$.\par
\end{numberlesscorollary}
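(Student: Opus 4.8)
The plan is to deduce this formally from the two main theorems together with the structure theory of $\Diff(M)$ recalled above and the two classical theorems from infinite-dimensional topology cited in the paragraph preceding the statement. First I would apply Theorem~\ref{thm:one-sided} (for the incompressible Klein bottle case) and Theorem~\ref{thm:SCforLensSpaces} (for the lens space case $L(m,q)$, $m\geq 3$) to conclude that, under the stated hypothesis on $M$, the inclusion $\Isom(M)\to\Diff(M)$ is a homotopy equivalence. In particular $\Diff(M)$ is homotopy equivalent to the compact Lie group $\Isom(M)$; and since $\R^\infty$ is contractible, $\Isom(M)\times\R^\infty$ is homotopy equivalent to $\Isom(M)$ as well, hence to $\Diff(M)$.

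Next I would check that $\Diff(M)$ and $\Isom(M)\times\R^\infty$ are both manifolds modeled on $\R^\infty$. By the discussion in Section~\ref{sec:Cinfinity}, $\Diff(M)$ is a separable manifold locally modeled on the Fr\'echet space $V$ of smooth vector fields on $M$, and $V$ is infinite-dimensional; the Anderson-Kadec Theorem then gives $V\cong\R^\infty$, so $\Diff(M)$ is a manifold modeled on $\R^\infty$. On the other side, $\Isom(M)$ is a finite-dimensional smooth manifold, so $\Isom(M)\times\R^\infty$ is locally modeled on $\R^n\times\R^\infty$ for various $n$; since $\R^n\times\R^\infty$ is linearly homeomorphic to $\R^\infty$, composing charts with this identification shows that $\Isom(M)\times\R^\infty$ is a manifold modeled on $\R^\infty$ too. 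Finally, $\R^\infty$ is a locally convex space homeomorphic to its own countable power $(\R^\infty)^\infty=\R^\infty$ (reindex by a bijection $\mathbb{N}\times\mathbb{N}\to\mathbb{N}$), so the hypothesis $Y\cong Y^\infty$ holds for $Y=\R^\infty$.

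The conclusion is now immediate from the Henderson-Schori theorem: for a locally convex $Y$ with $Y\cong Y^\infty$, two manifolds modeled on $Y$ that are homotopy equivalent are homeomorphic. Taking $Y=\R^\infty$, and using that $\Diff(M)$ and $\Isom(M)\times\R^\infty$ are both homotopy equivalent to $\Isom(M)$, we obtain $\Diff(M)\cong\Isom(M)\times\R^\infty$. There is essentially no obstacle at this stage: all the genuine work has already been done in proving Theorems~\ref{thm:one-sided} and~\ref{thm:SCforLensSpaces} (hence in Chapters~\ref{ch:one-sided} and~\ref{ch:lens}), and what remains is only the routine verification that both sides are $\R^\infty$-manifolds of the same homotopy type, so that the abstract classification result of Henderson and Schori applies.
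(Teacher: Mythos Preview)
Your proposal is correct and follows exactly the approach the paper takes: the corollary is stated as an immediate consequence of the main theorems together with the Anderson--Kadec and Henderson--Schori results quoted in the preceding paragraph. You have simply spelled out in detail the verifications (that $\Isom(M)\times\R^\infty$ is an $\R^\infty$-manifold, and that $\R^\infty\cong(\R^\infty)^\infty$) that the paper leaves implicit.
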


\noindent Combining this with the calculations of $\Isom(M)$ in
Table~\ref{tab:isometries} gives the following homeomorphism classification
of $\Diff(M)$, in which $P_n$ denotes the discrete space with $n$ points:
\begin{numberlesscorollary} Let $M$ be an elliptic $3$-manifold, not a lens
space, containing an incompressible Klein bottle.
\begin{enumerate}
\item If $M$ is the quaternionic manifold with fundamental group
$Q_8=D^*_8$, then $\Diff(M) \approx P_6 \times \SO(3)\times  \R^\infty$.
\item If $M$ is a quaternionic manifold with fundamental group
$Q_8\times C_n$, $n> 2$, then $\Diff(M) \approx P_{12}\times S^1
\times \R^\infty$.
\item If $M$ is a prism manifold with fundamental group
\indexsym{Dstar4m}{$D^*_{4m}$}$D^*_{4m}$, $m\geq 3$, then $\Diff(M) \approx P_2\times \SO(3)\times \R^\infty$.
\item If $M$ is any other prism manifold,
then $\Diff(M) \approx P_4\times S^1\times \R^\infty$.
\end{enumerate}
\end{numberlesscorollary}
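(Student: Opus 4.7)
The proof is essentially bookkeeping: combine the immediately preceding numberless corollary, which asserts $\Diff(M)\approx \Isom(M)\times\R^\infty$ under these hypotheses, with the explicit calculation of $\Isom(M)$ recorded in Table~\ref{tab:isometries}. The isometry groups that appear there are all finite products of $\SO(3)$, of $\Or(2)\approx S^1\sqcup S^1$, and of finite groups, so their homeomorphism type is determined by the identity component together with the number of path components. So the plan is simply to read off, for each of the four rows of Table~\ref{tab:isometries} relevant to the statement, the homeomorphism type of $\Isom(M)$ and then multiply by $\R^\infty$.

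Concretely, for the quaternionic manifold with fundamental group $Q_8$ the table gives $\Isom(M)=\SO(3)\times S_3$; since $S_3$ has six elements, $\Isom(M)\approx P_6\times \SO(3)$, which yields~(1). For the quaternionic manifold with fundamental group $Q_8\times C_n$ the table gives $\Isom(M)=\Or(2)\times S_3$, and counting components gives $\Or(2)\times S_3\approx P_{12}\times S^1$, which yields~(2). For a prism manifold with fundamental group $D^*_{4m}$, $m\geq 3$, the table gives $\Isom(M)=\SO(3)\times C_2\approx P_2\times\SO(3)$, which yields~(3). The remaining prism manifolds have fundamental group either $D^*_{4m}\times C_n$ or the index~$2$ diagonal subgroup of $D^*_{4m}\times C_{4m}$, and in both of these rows the table records $\Isom(M)=\Or(2)\times C_2$, which has four components each homeomorphic to $S^1$, hence $\Isom(M)\approx P_4\times S^1$; this yields~(4).

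The only nontrivial points to check are that every $M$ satisfying the hypotheses of the corollary (elliptic, containing a geometrically incompressible Klein bottle, not a lens space) falls into exactly one of these four rows of Table~\ref{tab:isometries}, and that the Smale Conjecture is known for all such $M$ so that the preceding numberless corollary is actually applicable. The first point follows from the classification of manifolds containing a one-sided incompressible Klein bottle summarized in Section~\ref{sec:SCtoday}, where Cases~I and~III are identified with the quaternionic and prism manifolds; the second point is exactly the content of Theorem~\ref{thm:one-sided}. There is no real obstacle to the argument: it is a direct, finite-case verification.
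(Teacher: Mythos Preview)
Your proposal is correct and follows exactly the paper's approach: the paper simply says ``Combining this with the calculations of $\Isom(M)$ in Table~\ref{tab:isometries} gives the following homeomorphism classification,'' and you have spelled out the bookkeeping case by case.
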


\noindent
Similarly, using Table~\ref{tab:lens spaces}, we obtain a complete
classification of $\Diff(L)$ for lens spaces into four homeomorphism types:
\begin{numberlesscorollary}For a lens space $L(m,q)$ with $m\geq 3$, 
the homeomorphism type of $\Diff(L)$ is as follows:
\begin{enumerate}
\item For $m$ odd, $\Diff(L(m,1))\approx P_2 \times S^1 \times S^3 \times
\R^\infty$.
\item For $m$ even, $\Diff(L(m,1))\approx P_2 \times S^1 \times \SO(3) \times
\R^\infty$. 
\item For $q>1$ and $q^2\not\equiv \pm 1\pmod{m}$,
$\Diff(L(m,q))\approx P_2 \times S^1 \times S^1 \times \R^\infty$.
\item For $q>1$ and $q^2\equiv \pm 1\pmod{m}$,
$\Diff(L(m,q))\approx P_4 \times S^1 \times S^1 \times \R^\infty$.
\end{enumerate}
\end{numberlesscorollary}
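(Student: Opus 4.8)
The plan is to deduce this classification directly from the preceding (unnumbered) Corollary together with Table~\ref{tab:lens spaces}. The preceding Corollary already gives $\Diff(L(m,q))\approx\Isom(L(m,q))\times\R^\infty$ for every lens space $L(m,q)$ with $m\geq 3$, so the only work is to examine each row of Table~\ref{tab:lens spaces} and identify the homeomorphism type of the compact Lie group $\Isom(L(m,q))$, then absorb the finite group of path components into a factor $P_n$ and the identity component into its underlying manifold. Concretely, I would proceed case by case matching the seven nontrivial rows of the table (those with $m>2$) against the four target homeomorphism types.

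First, for $q=1$ with $m$ odd, the table gives $\Isom(L(m,1))=\Ostar\ttimes S^3$; since $\Ostar$ has two components each diffeomorphic to $S^1$, the group is diffeomorphic to $P_2\times S^1\times S^3$, yielding case~(1). For $q=1$ with $m$ even, $\Isom=\Or(2)\times\SO(3)$, and $\Or(2)\approx P_2\times S^1$, giving case~(2). Next, when $q>1$ and $q^2\not\equiv\pm1\pmod m$, the isometry group is $\Dih(S^1\times S^1)=(S^1\times S^1)\circ C_2$, which as a manifold is two copies of the torus, i.e. $P_2\times S^1\times S^1$, which is case~(3). Finally, the three remaining rows all have $q>1$ with $q^2\equiv\pm1\pmod m$: when $q^2\equiv-1$ the group is $(S^1\ttimes S^1)\circ C_4$, which has four components each a torus; when $q^2\equiv1$ the group is $\Or(2)\ttimes\Or(2)$ or $\Or(2)\times\Or(2)$, and in either case (as noted in Section~\ref{sec:isometries}) the underlying manifold is four disjoint copies of the torus. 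Hence in all three subcases $\Isom(L(m,q))\approx P_4\times S^1\times S^1$, giving case~(4).

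It remains only to check that these four cases exhaust all $L(m,q)$ with $m\geq 3$ and that the normalization $1\leq q<m/2$ (or $q=1$) used in the table loses no generality. This follows from the standard fact that $L(m,q)\cong L(m,q')$ whenever $q'\equiv q^{\pm1}\pmod m$, so every lens space with $m\geq 3$ appears exactly once among the rows of Table~\ref{tab:lens spaces}; the condition ``$q>1$ and $q^2\equiv\pm1\pmod m$'' in cases~(3) and~(4) is manifestly the complement, within the lens spaces with $q\neq 1$, and the $q=1$ lens spaces split into the odd and even subcases~(1) and~(2). Combining these observations with the homeomorphism $\Diff(L)\approx\Isom(L)\times\R^\infty$ completes the proof.

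There is essentially no obstacle here: the only point requiring the slightest care is the identification of the underlying smooth manifolds of the groups $\Ostar\ttimes S^3$, $(S^1\ttimes S^1)\circ C_4$, $\Or(2)\ttimes\Or(2)$, and $\Or(2)\times\Or(2)$, and these identifications were already recorded in Section~\ref{sec:isometries} (each of the four ``$\Or(2)$-type'' groups is diffeomorphic to four disjoint tori, and $S^1\ttimes S^3\approx S^1\times S^3$). Since homeomorphism type, unlike group structure, does not distinguish these, the classification collapses to the four types listed.
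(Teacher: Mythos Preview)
Your proposal is correct and is exactly the approach the paper takes: the paper simply says ``Similarly, using Table~\ref{tab:lens spaces}, we obtain a complete classification of $\Diff(L)$ for lens spaces into four homeomorphism types,'' and you have spelled out the details of that table lookup. In particular, your identifications of the underlying manifolds of $\Ostar\ttimes S^3$, $\Dih(S^1\times S^1)$, $(S^1\ttimes S^1)\circ C_4$, $\Or(2)\ttimes\Or(2)$, and $\Or(2)\times\Or(2)$ are precisely those recorded in Section~\ref{sec:isometries}.
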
\indexstate{homeomorphism type of
  DiffM@homeomorphism type of $\Diff(M)$}

We remark that the homeomorphism classification is quite different from the
isomorphism classification. In fact, for \textit{any} smooth manifold, the
\textit{isomorphism type}\index{DiffM@$\Diff(M)$!isomorphism type} of
$\Diff(M)$ determines $M$. That is, an abstract isomorphism between the
diffeomorphism groups of two differentiable manifolds must be induced by a
diffeomorphism between the manifolds~\cite{Banyaga, Filip,
Takens}.\index{DiffM@$\Diff(M)$!homeomorphism type|)}

The Smale Conjecture has some other applications, beyond the problem of
understanding $\Diff(M)$. \index{Ivanov}Ivanov's results were used in \cite{F-W} to
construct examples of homeomorphisms of reducible $3$-manifolds that are
homotopic but not isotopic. Our results show that the construction applies
to a larger class of $3$-manifolds. In \cite{P-R},
Theorem~\ref{thm:one-sided} was applied to the classification problem for
$3$-manifolds which have metrics of positive Ricci curvature and universal
cover $S^3$.

\index{Smale Conjecture!in physics}The Smale Conjecture has attracted the
interest of physicists studying the theory of quantum gravity. Certain
physical configuration spaces can be realized as the quotient space of a
principal $\Diff_1(M,x_0)$-bundle with contractible total space, where
$\Diff_1(M,x_0)$ denotes the subgroup of $\Diff(M,x_0)$ that induce the
identity on the tangent space to $M$ at $x_0$. (This group is homotopy
equivalent to $\Diff(M\# D^3\rel \partial D^3)$.) Consequently the loop
space of the configuration space is weakly homotopy equivalent to
$\Diff_1(M,x_0)$. Physical significance of $\pi_0(\Diff(M))$ for quantum
gravity was first pointed out in \cite{F-S}. See also \cite{ABBJRS, G-L, I,
  S3, W2}. The physical significance of some higher homotopy groups of
$\Diff(M)$ was examined by D. Giulini~\cite{Giulini}.

\section{Isometries of nonelliptic $3$-manifolds}
\label{sec:others}\index{DiffM@$\Diff(M)$!nonelliptic 3-manifolds|(}

For Haken $3$-manifolds, \index{Hatcher}Hatcher (\cite{H}, combined with~\cite{HSmale}),
extending earlier work of \index{Laudenbach}Laudenbach \cite{L}, proved
that the components of $\Diff(M \rel \partial M)$ have the expected
homotopy types (contractible, except when $\partial M$ is empty, in which
case they are homotopy equivalent to $(S^1)^k$, where $k$ is the rank of
the center of $\pi_1(M)\,$). The same was accomplished by
\index{Ivanov}Ivanov~\cite{I3}. The main part of the argument is to show that the space
of embeddings of a two-sided incompressible surface $F$ that are disjoint
from a parallel copy of $F$ is a deformation retract of the space of all
embeddings of~$F$ (isotopic to the inclusion relative to $\partial F$).
Using his ``insulator'' methodology, D. Gabai \cite{Gabai}\index{Gabai} proved that the
components of $\Diff(M)$ are contractible for all hyperbolic $3$-manifolds.

The analogue of the Smale Conjecture holds for (compact) $3$-manifolds
whose interiors have constant negative curvature and finite volume: in
fact, D.~Gabai~\cite{Gabai} showed that both $\Isom(M)\to \Diff(M)$ and
$\Diff(M)\to \Out(\pi_1(M))$ are homotopy equivalences for finite-volume
hyperbolic $3$-manifolds (for hyperbolic $3$- manifolds that are also
Haken, this was already known by Mostow Rigidity, \index{Waldhausen's
  Theorem}Waldhausen's Theorem, and the work of \index{Hatcher}Hatcher and
Ivanov already discussed). The same statement has also been
proven~\cite{McC-Soma} when $M$ has an $\mathbb{H}^2\times \R$ or
$\widetilde{\mathrm{SL}}_2(\R)$ geometry and its (unique, up to isotopy)
Seifert-fibered structure has base orbifold the $2$-sphere with three cone
points. This is expected to hold for the $\mathrm{Nil}$ geometry as well.

In contrast, when the manifold has interior of constant negative curvature
and infinite volume, or has constant zero curvature, a diffeomorphism will
not in general be isotopic to an isometry (said differently, the
diffeomorphism group may have more components than the isometry
group). Even in these cases, however, Waldhausen's Theorem and Hatcher's
work show that for a maximally symmetric Riemannian metric on $M$,
$\Isom(M)\to \Diff(M)$ is a homotopy equivalence when one restricts to the
connected components of~$\mathrm{id}_M$.\index{DiffM@$\Diff(M)$!nonelliptic 3-manifolds|)}

\section{Perelman's methods}
\label{sec:Perelman}\index{Perelman|(}

It is natural to ask whether the Smale Conjecture can be proven using the
methodology that G.~Perelman developed to prove the 
\index{Geometrization Conjecture}Geometrization
Conjecture. The Smale Conjecture would follow if there were a flow
retracting the space $\mathcal{R}$ of all Riemannian metrics on an elliptic
$3$-manifold $M$ to the subspace $\mathcal{R}_c$ of metrics of constant
positive curvature. Here is why this is so. First, note that by rescaling,
$\mathcal{R}_c$ deformation retracts to the subspace $\mathcal{R}_1$ of
metrics of constant curvature~$1$. Now, $\Diff(M)$ acts by pullback on
$\mathcal{R}_1$; this action is transitive (given two constant curvature
metrics on $M$, the developing map gives a diffeomorphism which is an
isometry between the lifted metrics on the universal cover, and since the
action of $\pi_1(M)$ is known to be unique up to conjugation by an
isometry, this diffeomorphism can be composed with some isometry to make it
equivariant) and the stabilizer of each point is a subgroup conjugate to
$\Isom(M)$, so $\mathcal{R}_1$ may be identified with the coset space
$\Isom(M)\backslash\Diff(M)$. On the other hand, $\mathcal{R}$ is
contractible ($M$ is parallelizable and one can use a Gram-Schmidt
orthonormalization process). So the existence of a flow retracting
$\mathcal{R}$ to $\mathcal{R}_c$ would imply that
$\Isom(M)\backslash\Diff(M)$ is contractible, which is equivalent to the
Smale Conjecture. Finding a flow that retracts $\mathcal{R}$ to
$\mathcal{R}_c$ is, of course, the rough idea of the Hamilton-Perelman
program. At the present time, however, we do not see any way to carry this
out, due to the formation of singularities and the requisite surgery of
necks, and we are unaware of any progress in this direction.\index{Perelman|)}

\chapter{Diffeomorphisms and embeddings of manifolds}
\label{ch:foundations}

This chapter contains foundational material on spaces of diffeomorphisms
and embeddings. Such spaces are known to be Fr\'echet manifolds, separable
when the manifolds involved are compact. We will need versions of these and
related facts for manifolds with boundary, and also in the context of
fiber-preserving diffeomorphisms and maps. For the latter, a new (to us, at
least) idea is required--- the aligned exponential introduced in
Section~\ref{exponent}. It will also be heavily used in
Chapter~\ref{ch:Palais}.

Two convenient references for Fr\'echet spaces and Fr\'echet manifolds are
R. Hamilton~\cite{Hamilton} and A. Kriegl and
P. Michor~\cite{Kriegl-Michor}.

This is a good time to introduce some of our notational conventions. Spaces
of mappings will usually have names beginning with capital letters, such as
the diffeomorphism group $\Diff(M)$ or the space of embeddings $\Imb(V,M)$
of a submanifold of $M$. The same name beginning with a small
\index{small-letter convention}\index{lower-case convention}letter, as in
\indexsymdef{diff(M)}{$\diff(M)$}$\diff(M)$ or
\indexsymdef{imb(M)}{$\imb(M)$}$\imb(V,M)$, will indicate the path
component of the identity or inclusion map. We also use
\indexsymdef{I}{$\protect\mathrm{I}$}$\I$ to denote the standard unit
interval~$[0,1]$.

\section[The $\Cinf$-topology]
{The $\Cinf$-topology}
\label{sec:Cinfinity}

For now, let $M$ be a manifold with empty boundary. Throughout our work, we
will use the $\Cinf$-topology on $\Diff(M)$. For this topology,
$\Diff(M)$ is a Fr\'echet manifold, locally diffeomorphic to the Fr\'echet
space $\mathcal{Y}(M,TM)$\indexsymdef{Y(M,TM)}{$\protect\mathcal{Y}(M,TM)$}
of smooth vector fields on $M$. In fact, the
space \indexsymdef{CinfinityMN}{$\Maps(M,N)$}$\Maps(M,N)$ 
of smooth maps from $M$ to $N$ is a Fr\'echet
manifold, metrizable when $M$ is compact (see for example Theorem~42.1 and
Proposition~42.3 of~\cite{Kriegl-Michor}), and $\Diff(M)$ is an open subset
of $\Maps(M,M)$ (Theorem~43.1 of~\cite{Kriegl-Michor}).

When $M$ is compact, or more generally when one is working with maps and
sections supported on a fixed compact subset of $M$, $\mathcal{Y}(M,TM)$ is
a separable Fr\'echet space. By Theorem~II.7.3 of \cite{BP}, originally
announced in \cite{HS}, manifolds modeled on a separable Fr\'echet space
$Y$ are homeomorphic whenever they have the same homotopy
type. Theorem~IX.7.1 of \cite{BP} (originally Theorem~4 of
\cite{Henderson}) shows that $\Diff(M)$ admits an open embedding into
$Y$. Theorems II.6.2 and II.6.3 of \cite{BP} then show that $\Diff(M)$ has
the homotopy type of a \index{DiffM@$\Diff(M)$!CW-complex}CW-complex. (As
far as we know, this fact is due originally to \index{Palais}Palais
\cite{Palais}; he showed that many infinite-dimensional manifolds are
dominated by CW-complexes, but a space dominated by a CW-complex is
homotopy equivalent to some CW-complex~\cite[Theorem IV.3.8]{LW}.)

\section{Metrics which are products near the boundary}
\label{metrics}

We are going to work extensively with manifolds with boundary, and will
need special Riemannian metrics on them, which we develop in this section.

Recall that a Riemannian metric is called
\textit{complete}\indexdef{Riemannian metric!complete} if every Cauchy
sequence converges. For a complete Riemannian metric on $M$, a geodesic can
be extended indefinitely unless it reaches a point in the boundary of $M$,
where it may continue or it may fail to be extendible because it ``runs out
of the manifold.''

\begin{definition}
A Riemannian metric on $M$ is said to be a 
\indexdef{product near the boundary}\textit{product near the
boundary}\indexdef{Riemannian metric!product near the boundary} if there is
a collar neighborhood $\partial M\times \I$ of the boundary on which the
metric is the product of a complete metric on $\partial M$ and the standard
metric on $\I$.\par
\label{def:product_metric}
\end{definition}
\noindent Note that when the metric is a product near the boundary, the
exponential of any vector tangent to $\partial M$ is a point in $\partial
M$.

Given any collar $\partial M\times [0,2]$, it is easy to obtain a metric
that is a product near the boundary of $M$. On $\partial M\times [0,2)$,
fix a Riemannian metric that is the product of a metric on $\partial M$ and
the usual metric on $[0,2)$. Obtain the metric on $M$ from this metric and
any metric defined on all of $M$ by using a partition of unity subordinate
to the open cover $\set{\partial M\times[0,2), M-\partial M\times \I}$.

By a submanifold $V$ of $M$, we mean a smooth submanifold.  When $M$ has
boundary and $\dim(V)<\dim(M)$, we always require that $V$ be properly
embedded in the sense that $V\cap \partial M=\partial V$, and that every
inward pointing tangent vector to $V$ at a point in $\partial V$ be also
inward pointing in~$M$.

We will often work with codimension-$0$ submanifolds of bounded
manifolds. In that case, the submanifold is a \textit{manifold with
  corners,}\index{corners}\indexdef{manifold with corners} that is, locally diffeomorphic
to a product of half-lines and lines. In fact, all of our work should
extend straightforwardly into the full context of manifolds with corners,
but for simplicity we restrict to the cases we will need. When $V$ has
codimension~0, we require that the frontier of $V$ be a codimension-1
submanifold of $M$ as above.

\begin{definition}\indexdef{submanifold!meeting collar in $\protect\I$-fibers}
Suppose that the Riemannian metric on $M$ is a product near the boundary,
with respect to the collar $\partial M\times \I$.  A submanifold $V$ of $M$
is said to \textit{meet the collar $\partial M\times \I$ in $\I$-fibers} when
$V\cap \partial M\times \I$ is a union of $\I$-fibers of $\partial M\times
\I$.\par
\label{def:meet_the_collar}
\end{definition}
\noindent Note that when $V$ meets the collar of $M$ in $\I$-fibers, the
normal space to $V$ at any point $(x,t)$ in $\partial M\times \I$ is
contained in the subspace in $T_xM$ tangent to $\partial M\times
\{t\}$. Consequently, if one exponentiates the $(<\epsilon)$-length vectors
in the normal bundle to obtain a tubular neighborhood of $V$, then the
fiber at $(x,t)$ is contained in $\partial M\times \{t\}$.

Given a submanifold $V$, one may obtain a complete metric on $M$ that is a
product near $\partial M$ and such that $V$ meets the collar $\partial
M\times \I$ in $\I$-fibers as follows. First, obtain a collar of $\partial M$
that $V$ meets in $\I$-fibers, by constructing an inward-pointing vector field
on a neighborhood of $\partial M$ which is tangent to $V$, using the
integral curves associated to the vector field to produce the collar, then
carrying out the previous construction to obtain a metric that is a product
near the boundary for this collar. It is complete on the collar. To make it
complete on all of $M$, define $f\colon M-\partial M\to (0,\infty)$ by
putting $f(x)$ equal to the supremum of the values of $r$ such that $\Exp$
is defined on all vectors in $T_x(M)$ of length less than~$r$. Let $g\colon
M-\partial M\to (0,\infty)$ be a smooth map that is an
$\epsilon$-approximation to $1/f$, and let $\phi\colon M\to[0,1]$ be a
smooth map which is equal to~$0$ on $\partial M\times \I$ and is~$1$ on
$M-\partial M\times[0,2)$. Give $M\times[0,\infty)$ the product metric, and
define a smooth embedding $i\colon M\to M\times[0,\infty)$ by $i(x)=
(x,\phi(x)g(x))$ if $x\notin\partial M$ and $i(x)= (x,0)$ if $x\in
\partial M$. The restricted metric on $i(M)$ agrees with the product metric
on $\partial M\times \I$ and is complete.

We will always assume that Riemannian metrics have been chosen to be
complete.

\section{Manifolds with boundary}
\label{sec:manifolds_with_boundary}

In this section, we will extend the results of Section~\ref{sec:Cinfinity}
to the bounded case. We always assume that $M$ has a Riemannian metric
which is a product near the boundary for some collar $\partial M\times \I$.
\begin{definition}\label{def:Y}
Let $V$ be a submanifold of $M$. By
\indexsymdef{YTMV}{$\mathcal{Y}(V,TM)$, $\mathcal{Y}^L(V,TM)$}$\mathcal{Y}(V,TM)$ 
we denote the
Fr\'echet space of all sections from $V$ to the restriction of the
tangent bundle of $M$ to $V$. The zero section of $\mathcal{Y}(V,TM)$ is
denoted by \indexsymdef{Z}{$Z$}$Z$. 
For $L\subseteq M$, we denote by $\mathcal{Y}^L(V,TM)$
the subspace of $\mathcal{Y}(V,TM)$ consisting of the sections which
equal $Z$ on $V-L$.
\end{definition}

The following extension lemma will be useful.
\begin{lemma}\indexstate{Seeley!Extension Lemma}\indexstate{Extension Lemma!Seeley}
Form a manifold $N$ from $M$ and $\partial M\times (-\infty,0]$ by
identifying $\partial M$ with $\partial M\times \{0\}$, and extending the
metric on $M$ using the product of the complete metric on $\partial M$ and
the standard metric on $(-\infty,0]$.
\begin{enumerate}
\item[(i)] There is a continuous linear extension $E\colon
\Maps(M,\R)\to \Maps(N,\R)$ for which the image is contained in the
subspace of functions that vanish on $\partial M\times (-\infty,-1]$.
\item[(ii)] There is a continuous linear extension $E\colon
\mathcal{Y}(M,TM)\to \mathcal{Y}(N,TN)$ for which the image is contained in
the subspace of sections that vanish on $\partial M\times (-\infty,-1]$.
\end{enumerate}
\label{lem:Seeley}
\end{lemma}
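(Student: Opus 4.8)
The plan is to prove both parts simultaneously by reducing to the classical Seeley extension theorem for functions on a half-space, localized via a partition of unity, and exploiting the product structure of the metric near $\partial M$. First I would set up coordinates near the boundary: since the metric on $M$ is a product near the boundary with respect to the collar $\partial M\times\I$, the manifold $N$ is a genuine smooth Riemannian extension, and the collar extends to a product region $\partial M\times(-\infty,1]\subseteq N$ on which the metric is the product of the complete metric on $\partial M$ with the standard metric on $(-\infty,1]$. The key point is that on this product region, both functions and vector fields decompose naturally with respect to the two factors, so the extension problem becomes one-dimensional in the collar direction.

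For part (i), recall Seeley's theorem: there is a continuous linear operator extending smooth functions on $[0,\infty)$ to smooth functions on $\R$, and by cutting off one may arrange the extended function to be supported in $(-1,\infty)$. Applying this fiberwise over $\partial M$ in the collar coordinate $t\in\I$ (extended to $t\in(-\infty,1]$), one obtains from $f\in\Maps(M,\R)$ a function on $\partial M\times(-\infty,1]$ agreeing with $f$ where $t\geq 0$ and vanishing where $t\leq -1$; this patches with $f$ itself on $M$ to give $E(f)\in\Maps(N,\R)$. Continuity and linearity are inherited from Seeley's operator since the $\Cinf$-topology is controlled by derivatives in local charts, and the collar is a product so the chart computations separate. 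One must check smoothness of the result across $\partial M\times\{0\}$, but this is exactly what Seeley's operator guarantees (all one-sided derivatives match).

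For part (ii), I would first trivialize $TN$ over the collar region: because the metric is a product, $TN|_{\partial M\times(-\infty,1]}\cong \pi^*T\partial M\oplus \R\,\partial_t$ canonically, where $\pi$ is projection to $\partial M$. A section of $TM$ over $M$ restricted to the collar thus splits into a $t$-dependent family of vector fields on $\partial M$ together with a scalar function (the $\partial_t$-component). Applying the function extension $E$ from part (i) to the scalar component and — after choosing a locally finite atlas of $\partial M$ with a subordinate partition of unity — to each local coordinate component of the tangential part, then reassembling, yields an extended section vanishing on $\partial M\times(-\infty,-1]$; gluing with the original section on $M$ gives $E\colon\mathcal{Y}(M,TM)\to\mathcal{Y}(N,TN)$. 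Linearity and continuity again transfer from part (i) componentwise, using that the partition of unity is locally finite so only finitely many terms contribute near any point.

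The main obstacle is purely bookkeeping rather than conceptual: one must verify that the componentwise extension, performed in overlapping charts on $\partial M$ and then summed against a partition of unity, genuinely assembles into a globally smooth section of $TN$ and that the resulting operator is continuous in the Fréchet topologies — i.e. that the seminorms of $E(f)$ or $E(X)$ are bounded by seminorms of $f$ or $X$ uniformly. Since $\partial M$ need not be compact here, some care is needed to ensure the partition of unity is locally finite and that the bounds are local (the $\Cinf$-topology is defined by a countable family of seminorms indexed by a compact exhaustion and derivative order, so it suffices to bound each seminorm on each compact piece). The product structure of the metric is what makes all of this routine: it removes any interaction between the normal and tangential directions, so no Christoffel-symbol cross terms appear in the collar, and the extension never has to ``see'' the geometry of $M$ away from the boundary.
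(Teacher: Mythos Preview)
Your proposal is correct and takes essentially the same approach as the paper: both reduce to Seeley's half-space extension in local coordinates on the collar, globalize via a partition of unity, and multiply by a cutoff to achieve the support condition; for part~(ii) the paper is even more terse, simply observing that locally a vector field is an $n$-tuple of real-valued functions so that~(i) applies componentwise, whereas you first split $TN$ globally over the collar using the product metric before localizing---this extra structural step is harmless but not needed.
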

\begin{proof} Part~(i) is basically what is established in the proof of
Corollary~II.1.3.7 of~\cite{Hamilton}. It was also proven by essentially
the same method, using series in place of integration and working only on a
half-space in $\R^n$, by R. Seeley~\cite{Seeley}. The extensions are first
performed in local coordinates $\R^{n-1}\times \R$, where the value of the
extension $Ef(x,t)$ for $t<0$ is given by an integral on the ray
$\{x\}\times [0,\infty)$. Fixing a collection of charts and a partition of
unity, these local extensions are pieced together to give $Ef$.
Multiplying by a smooth function which is $1$ on a neighborhood of $M$
and vanishes on $\partial M\times (-\infty,-1]$, we may achieve the final
property in~(i). Part (ii) follows from (i) since locally a vector field is
just a collection of $n$ real-valued functions.
\end{proof}

We are grateful to Tatsuhiko Yagasaki\index{Yagasaki} for bringing the
reference \cite{Seeley} to our attention.

Our proof that $\Diff(M)$ is a Fr\'echet manifold will use the \textit{tame
exponential} 
\indexdef{tame exponential}\indexsymdef{TExp}{$\TExp$}$\TExp$. Let $X$ be 
a vector field on $M$ such that for every $x\in M$, $\Exp(X(x))$ is
defined. Then $\TExp(X)$ is defined to be the map from $M$ to $M$ that
takes each $x$ to $\Exp(X(x))$. For a complete manifold $M$ without
boundary, the tame exponential defines local charts on $\Maps(M,M)$ (and
more generally on $\Maps(M,N)$ if instead of vector fields on $M$ one uses
sections of a pullback of $TN$ to a bundle over $M$), see for example
Theorem~42.1 of~\cite{Kriegl-Michor}.

\begin{definition} Let $V$ be a submanifold of $M$, and as always assume
that the metric on $M$ is a product near the boundary and $V$ meets
$\partial M\times \I$ in $\I$-fibers. By 
\indexsymdef{XVTM}{$\mathcal{X}(V,TM)$, $\mathcal{X}^L(V,TM)$}$\mathcal{X}(V,TM)$ we denote the
Fr\'echet subspace of $\mathcal{Y}(V,TM)$ consisting of those sections
which are tangent to $\partial M$ at all points of $V\cap \partial M$. For
$L\subseteq M$, we denote by $\mathcal{X}^L(V,TM)$ the 
subspace of sections that equal $Z$ on $V-L$.\par
\label{def:X}
\end{definition}
\index{DiffM@$\Diff(M)$!Lie structure|(}We remark that 
$\mathcal{X}(M,TM)$ is the tangent space at $1_M$ of the
infinite-dimensional Lie
group $\Diff(M)$, and the exponential map in that
context takes a vector field on $M$ to the map at time $1$ of the flow on
$M$ associated to the vector field. The resulting exponential map from
$\mathcal{X}(M,TM)$ to $\Diff(M)$ is not locally surjective near $Z$ and
$1_M$, even for $M=S^1$ (see for example Section~5.5.2
of~\cite{Hamilton}). We will always use the 
\index{tame exponential}tame exponential, which as
noted above is a local homeomorphism (in fact, a local diffeomorphism, for
appropriate structures on these spaces as infinite-dimensional
manifolds).\index{DiffM@$\Diff(M)$!Lie structure|)}

We can now give the Fr\'echet structure on
$\Diff(M)$.\index{DiffM@$\Diff(M)$!Fr\'echet structure} We will denote by
$\Maps((M,\partial M),(M,\partial M))$ the space of smooth maps from $M$ to
$M$ that take $\partial M$ to $\partial M$, with the $\Cinf$-topology.
\begin{theorem} The space $\Maps((M,\partial M),(M,\partial M))$ is a
Fr\'echet manifold locally modeled on $\mathcal{X}(M,TM)$, and $\Diff(M)$ is
an open subset of $\Maps((M,\partial M),(M,\partial M))$.\par
\label{thm:diffs_with_boundary}
\end{theorem}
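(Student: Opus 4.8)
The plan is to construct charts for $\Maps((M,\partial M),(M,\partial M))$ near an arbitrary $f$ in this space using the tame exponential, modeled on $\mathcal{X}(M,TM)$, and then to argue that $\Diff(M)$ is open. The starting point is the boundaryless case already recalled in Section~\ref{sec:Cinfinity}: for a complete manifold $N$ without boundary, $\TExp$ gives local charts on $\Maps(N,N')$. First I would apply this to the doubled-up manifold $N$ obtained from $M$ by attaching the open collar $\partial M\times(-\infty,0]$ as in Lemma~\ref{lem:Seeley}, with the product metric near $\partial M$. Since the metric on $M$ is a product near the boundary, a vector field on $M$ that is tangent to $\partial M$ along $\partial M$ exponentiates to a map $M\to M$ that carries $\partial M$ into $\partial M$: a vector tangent to $\partial M$ at $x\in\partial M$ has its geodesic staying in the totally geodesic slice $\partial M\times\{0\}$, which is exactly $\partial M$. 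This is the observation recorded just after Definition~\ref{def:product_metric}. So $\TExp$ maps a neighborhood of $Z$ in $\mathcal{X}(M,TM)$ into $\Maps((M,\partial M),(M,\partial M))$.

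The main work is to show this is a local homeomorphism onto an open neighborhood of $1_M$ in $\Maps((M,\partial M),(M,\partial M))$, i.e.\ that $\TExp(X)$ preserving $\partial M$ forces $X$ to be tangent to $\partial M$ there, and that every boundary-preserving map $C^\infty$-close to $1_M$ arises this way. For the first point: if $\TExp(X)$ sends $\partial M$ into $\partial M$, then for each $x\in\partial M$ the geodesic $t\mapsto\Exp(tX(x))$ has its time-$1$ endpoint in $\partial M$; for $X(x)$ small this endpoint lies in the collar $\partial M\times\I$, and since the collar metric is a product, a short geodesic ending on $\partial M=\partial M\times\{0\}$ and starting at a point of $\partial M\times\{0\}$ must have had zero $\I$-component of initial velocity, so $X(x)$ is tangent to $\partial M$. (Here I use completeness and the product structure to control short geodesics in the collar; this is where I expect to spend the most care, since one must rule out a geodesic that dips into $M$ and returns to $\partial M$ within length~$1$, which is fine for $X$ in a small enough neighborhood of $Z$ but needs to be said.) For surjectivity onto a neighborhood: given $g\in\Maps((M,\partial M),(M,\partial M))$ close to $1_M$, the boundaryless theory applied to $N$ produces a section $X$ of $TN$ over $M$ with $\TExp(X)=g$ (after extending $g$ to $N$ via the collar, or simply using that $M\subseteq N$ and $\TExp$ is a chart on $\Maps(M,N)$ near the inclusion); since $g(\partial M)\subseteq\partial M$, the first point shows $X\in\mathcal{X}(M,TM)$. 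Charts at a general $f$ are obtained by composing with the chart at $1_M$ in the usual way (pulling back $TM$ along $f$), or by using left/right translation by $f$ once $\Diff(M)$ is known to act; for the mapping space one uses the section space of $f^*TM$ and the analogous $\mathcal{X}$-subspace, and the transition maps are smooth because they are the transition maps of the ambient $\Maps(M,N)$ restricted to a Fr\'echet subspace.

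Finally, openness of $\Diff(M)$ in $\Maps((M,\partial M),(M,\partial M))$: a boundary-preserving map $C^\infty$-close to a diffeomorphism $f$ is a diffeomorphism. One way is to note $\Diff(M)$ is open in $\Maps(M,M)$ by Theorem~43.1 of~\cite{Kriegl-Michor} (cited in Section~\ref{sec:Cinfinity}), and $\Maps((M,\partial M),(M,\partial M))$ carries the subspace topology from $\Maps(M,M)$, so $\Diff(M)\cap\Maps((M,\partial M),(M,\partial M))$ is open; it remains only to observe that a diffeomorphism of $M$ automatically preserves $\partial M$, so this intersection is all of $\Diff(M)$. Alternatively, one argues directly: closeness to $f$ makes the map an immersion (hence a local diffeomorphism, using that it sends $\partial M$ to $\partial M$ and inward vectors to inward vectors), proper since $M$ is compact, and injective by a standard closeness argument, hence a diffeomorphism onto $M$. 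Either route completes the proof; the genuinely nontrivial ingredient is the tame exponential chart, and within that, the tangency statement for short geodesics in the product collar, which is exactly the point the product-near-the-boundary hypothesis was set up to handle.
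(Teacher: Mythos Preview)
Your proposal is correct and follows essentially the same route as the paper: extend $M$ to the boundaryless manifold $N$ via the collar, invoke the known chart $\TExp$ on $\Maps(N,N)$ (and the openness of $\Diff(N)$) from \cite{Kriegl-Michor}, and use the product metric near $\partial M$ to see that $\mathcal{X}(M,TM)$ corresponds under $\TExp$ to boundary-preserving maps. You are in fact more explicit than the paper on two points it leaves to the reader---the converse tangency (that $\TExp(X)(\partial M)\subseteq\partial M$ forces $X\in\mathcal{X}(M,TM)$, needed for the chart to be onto an open set) and the passage to charts at a general $f$---so the extra care you flag around short geodesics in the collar is well placed.
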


\begin{proof} It suffices to find a local chart for
$\Maps((M,\partial M),(M,\partial M))$ at the identity $1_M$ that has
image in $\Diff(M)$. Form a manifold $N$ from $M$ as in
Lemma~\ref{lem:Seeley}, and let $E\colon \mathcal{Y}(M,TM)\to
\mathcal{Y}(N,TN)$ be a continuous linear extension as in part~(ii) of
\index{Extension Lemma!Seeley}Lemma~\ref{lem:Seeley}. Since $N$ is
complete, the 
\index{tame exponential}tame exponential
$\TExp\colon \mathcal{Y}(N,TN)\to \Maps(N,N)$ is defined. From
Theorem~43.1 of~\cite{Kriegl-Michor}, $\Diff(N)$ is an open subset of
$\Maps(N,N)$. Let $U\subset \mathcal{Y}(N,TN)$ be an open neighborhood
of $Z$ which $\TExp$ carries homeomorphically to an open neighborhood of
$1_N$ in $\Diff(N)$. Since vector fields in $\mathcal{X}(M,TM)$ are tangent
to the boundary, $\TExp$ carries $U\cap E(\mathcal{X}(M,TM))$ to
diffeomorphisms of $N$ taking $\partial M$ to $\partial M$. Therefore
$\TExp$ carries the open neighborhood $E^{-1}(U\cap E(\mathcal{X}(M,TM)))$
of $Z$ into~$\Diff(M)$.
\end{proof}

As in the case of manifolds without boundary, we can now conclude that
$\Diff(M)$ has the homotopy type of a CW-complex.

When $M$ is compact, $\Diff(M)$ is separable, and moreover $\Diff(M)$ is
\index{DiffM@$\Diff(M)$!local convexity}locally convex. Explicitly, our
local charts defined using the 
\index{tame exponential}tame exponential show that for any $f\in
\Diff(M)$, there is a neighborhood $U$ of $f$ such that for every $g\in U$,
the homotopy that moves points along the shortest geodesic from each $g(x)$
to $f(x)$ is an isotopy from $g$ to~$f$.

For a closed subset $X\subset M$, we denote by 
\indexsymdef{DiffMrelX}{$\Diff(M\protect\rel X)$}$\Diff(M\rel X)$ the
subgroup of $\Diff(M)$ consisting of the elements which take $X$ to $X$ and
restrict to the identity map on $X$. Adapting the previous arguments shows
that $\Diff(M\rel X)$ is modeled on the closed Fr\'echet subspace of
$\mathcal{X}(M,TM)$ consisting of sections that vanish on~$X$.

\newpage
\section{Spaces of embeddings}
\label{sec:spaces_of_embeddings}

When we work with embeddings, we always start with a fixed submanifold $V$
of the ambient manifold $M$. The inclusion map then furnishes a natural
basepoint of the space of imbeddings. In addition, this will allow a simple
definition of the 
\index{space of images of a submanifold}space of images of $V$ in $M$, given in
Definition~\ref{def:images} below.

\begin{definition} 
Let $V$ be a submanifold of $M$. When $M$ has boundary and
$\dim(V)<\dim(M)$, we always require that $V\cap \partial M=\partial V$,
and select our Riemannian metric on $M$ to be a product near the boundary
for which $V$ meets the collar $\partial M\times \I$ in
$\I$-fibers. Similarly, when $V$ is codimension-$0$, the frontier of $V$ is
a codimension-$1$ submanifold of $M$ assumed to meet $\partial M\times \I$
in $\I$-fibers. Denote by \indexsymdef{EmbVM}{$\Imb(V,M)$}$\Imb(V,M)$ 
the space of all smooth embeddings $j$ of $V$ into $M$ such that
\begin{enumerate}
\item[(i)] $j^{-1}(\partial M)= V\cap \partial M$, and
\item[(ii)] $j$ extends to a diffeomorphism from $M$ to $M$.
\end{enumerate}\par
\label{def:Imb}
\end{definition}
\noindent
Note that condition~(ii) implies that $j$ carries every inward-pointing
tangent vector of $V\cap \partial M$ to an inward-pointing tangent vector
of~$M$. It also implies that the natural map $\Diff(M)\to\Imb(V,M)$ that
sends each diffeomorphism to its restriction to $V$ is surjective.

With the $\Cinf$-topology\index{EmbVM@$\Imb(V,M)$!Fr\'echet structure},
$\Imb(V,M)$ is a Fr\'echet manifold locally modeled
on $\mathcal{X}(V,TM)$. For the closed case, this is proven in Theorem~44.1
of~\cite{Kriegl-Michor}, and adaptations like those in
Section~\ref{sec:manifolds_with_boundary} allow its extension in the
bounded and codimension-$0$ contexts (note that 
\index{Extension Lemma!Seeley}Lemma~\ref{lem:Seeley}
provides a continuous linear extension from $\mathcal{X}(V,M)$ to
$\mathcal{X}(V\cup (-\infty,0], M\cup (-\infty,0])$).  As in the case of
$\Diff(M)$, this Fr\'echet manifold structure shows that $\Imb(V,M)$ has
the homotopy type of a CW-complex.

\section{Bundles and fiber-preserving diffeomorphisms}
\label{sec:fiberpreservingdiffeos}

Let $p\colon E\to B$ be a locally trivial smooth map of manifolds, with
compact fiber. When $B$ and the fiber have nonempty boundary, $E$ should be
regarded as a manifold with 
\index{corners}corners at the boundary points of the fibers in
$p^{-1}(\partial B)$. The 
\indexdef{horizontal!boundary}%
\indexdef{boundary!horizontal}horizontal boundary
\indexsymdef{delhE}{$\partial_hE$, $\partial_vE$}$\partial_hE$ is
defined to be $\cup_{x\in B}\partial(p^{-1}(x))$, and the 
\indexdef{vertical!boundary}%
\indexdef{boundary!vertical}vertical boundary $\partial_vE$ to 
be~$p^{-1}(\partial B)$.

\begin{definition}
The space of
\indexdef{fiber-preserving!diffeomorphism}%
\indexdef{diffeomorphism!fiber-preserving}%
\textit{fiber-preserving diffeomorphisms} is the subspace
\indexsymdef{DifffE}{$\Diff_f(E)$}$\Diff_f(E)$ 
of $\Diff(E)$ consisting of the diffeomorphisms that take each
fiber of $E$ to a fiber. The 
\indexdef{vertical!diffeomorphism}%
\indexdef{diffeomorphism!vertical}%
\textit{vertical diffeomorphisms} 
\indexsymdef{DiffvE}{$\Diff_v(E)$}$\Diff_v(E)$
are the elements of $\Diff_f(E)$ that take each fiber to itself.\par
\end{definition}

Fibered submanifolds also play an important role.
\begin{definition}\label{def:fibered_submanifolds}
A submanifold $W$ of $E$ is called 
\indexdef{fibered submanifold}\indexdef{submanifold!fibered}\textit{fibered} 
or
\indexdef{vertical!submanifold}\indexdef{submanifold!vertical}\textit{vertical} 
if it is a union of fibers. For a fibered submanifold $W$
of $E$, define 
\indexsymdef{delhW}{$\partial_hW$, $\partial_vW$}$\partial_hW$ to be 
$W\cap \partial_hE$ and $\partial_vW$ to be $W\cap \partial_vE$. The space of 
\indexdef{fiber-preserving!embedding}%
\indexdef{embedding!fiber-preserving}\textit{fiber-preserving embeddings}
\indexsymdef{EmbfWE}{$\Imb_f(W,E)$}$\Imb_f(W,E)$ is the subspace of 
$\Imb(W,E)$ consisting of embeddings that
take each fiber of $W$ to a fiber of $E$, and the space of 
\indexdef{vertical!embedding}\indexdef{embedding!vertical}\textit{vertical
  embeddings} 
\indexsymdef{EmbvWE}{$\Imb_v(W,E)$}$\Imb_v(W,E)$ is the subspace of 
$\Imb_f(W,E)$ consisting of
embeddings taking each fiber to itself.\par
\end{definition}

At each point $x\in E$, let 
\indexsymdef{VxE}{$V_x(E)$}$V_x(E)$ denote the 
\indexdef{vertical!subspace of $T_xE$}\indexdef{subspace!vertical}\textit{vertical subspace}
of $T_x(E)$ consisting of vectors tangent to the fiber of~$p$. When $E$ has
a Riemannian metric, the orthogonal complement 
\indexsymdef{HxE}{$H_x(E)$}$H_x(E)$ of $V_x(E)$ in
$T_x(E)$ is called the 
\indexdef{horizontal!subspace of $T_xE$}\indexdef{subspace!horizontal}\textit{horizontal subspace.}  
We call the elements
of $V_x(E)$ and $H_x(E)$ 
\indexdef{vertical!tangent vector}\textit{vertical} and 
\indexdef{horizontal!tangent vector}\textit{horizontal}
respectively. Clearly $V_x(E)$ is the kernel of $p_*\colon T_x(E)\to
T_{p(x)}(B)$, while $p_*\vert_{H_x(E)}\colon H_x(E)\to T_{p(x)}(B)$ is an
isomorphism. Each vector $\omega\in T_x(E)$ has an orthogonal decomposition
\indexsymdef{wh}{$\omega_v$, $\omega_h$}$\omega= \omega_v+\omega_h$ 
into its vertical and horizontal parts.

A path $\alpha$ in $E$ is called 
\indexdef{horizontal!path}\indexdef{path!horizontal}\textit{horizontal} 
if $\alpha'(t)\in H_{\alpha(t)}(E)$ for all $t$ in the domain of $\alpha$. 
Let $\gamma\colon [a,b]\to B$ be a path such that $\gamma'(t)$ never
vanishes, and let $x\in E$ with $p(x)= \gamma(a)$. A horizontal path
$\widetilde{\gamma}\colon [a,b]\to E$ such that
$\widetilde{\gamma}(a)= x$ and $p\widetilde{\gamma}= \gamma$ is
called a 
\indexdef{horizontal!lift}\indexdef{lift!horizontal}\textit{horizontal lift} 
of $\gamma$ starting at~$x$.

To ensure that horizontal lifts exist, we will need a special metric
on $E$. 
\begin{definition}
A Riemannian metric on $E$ is said to be a 
\indexdef{product near $\partial_hE$}\textit{product near $\partial_hE$} when
\begin{enumerate}
\item[(i)] There is a collar neighborhood $\partial_hE\times \I$ of the
horizontal boundary on which the metric is the product of a complete metric
on $\partial_hE$ and the standard metric on $\I$,
\item[(ii)] For this collar $\partial_hE\times \I$, 
each $\{x\}\times \I$ lies in some fiber of~$p$.
\end{enumerate}\par
\end{definition}
\noindent Such metrics can be constructed using a partition of unity as
follows.  Using the local product structure, at each point $x$ in
$\partial_hE$ select a vector field defined on a neighborhood of $x$ that
\begin{enumerate}
\item[{\rm(a)}] points into the fiber at points of $\partial_hE$, and
\item[{\rm(b)}] is tangent to the fibers wherever it is defined.
\end{enumerate}
\noindent By (b), the vector field must be tangent to $\partial_vE$ at
points in $\partial_vE$. Since scalar multiples and linear combinations of
vectors satisfying these two conditions also satisfy them, we may piece
these local fields together using a partition of unity to construct a
vector field, nonvanishing on a neighborhood of $\partial_hE$, that
satisfies (a) and~(b). Using the integral curves associated to this vector
field we obtain a smooth collar neighborhood $\partial_hE\times [0,2]$ of
$\partial_hE$ such that each $[0,2]$-fiber lies in a fiber of $p$. On
$\partial_hE\times [0,2)$, fix a Riemannian metric that is the product of a
metric on $\partial_hE$ and the usual metric on $[0,2)$. Form a metric on
$E$ from this metric and any metric on all of $E$ using a partition of
unity subordinate to the open cover $\set{\partial_hE\times[0,2),
E-\partial_hE\times \I}$.

When the metric is a product near $\partial_hE$ such that the $\I$-fibers of
$\partial_hE\times \I$ are vertical, the horizontal subspace $H_x$ is
tangent to~$\partial_hE\times\set{t}$ whenever
$x\in\partial_hE\times\set{t}$. For $H_x$ is orthogonal to the fiber
$p^{-1}(p(x))$, and since the $\I$-fiber of $\partial_hE\times \I$ that
contains $x$ lies in $p^{-1}(p(x))$, $H_x$ is orthogonal to that $\I$-fiber
as well. Since $\partial_hE\times\set{t}$ meets the $\I$-fiber orthogonally,
with codimension~1, $H_x$ is tangent to $\partial_hE\times\set{t}$.

Since the horizontal subspaces are tangent to the
$\partial_hE\times\set{t}$, a horizontal lift starting in some
$\partial_hE\times \set{t}$ will continue in $\partial_hE\times
\set{t}$. Provided that the fiber is compact, as we are assuming, the
existence of horizontal lifts is assured.

\section{Aligned vector fields and the aligned exponential}
\label{exponent}

\begin{definition}
A vector field $X\colon E\to TE$ is called 
\indexdef{aligned!vector field}\indexdef{vector field!aligned}\textit{aligned} 
if $p(x)= p(y)$ implies that $p_*(X(x))= p_*(X(y))$ (these are often
called \textit{projectable} in the literature). This happens precisely when
there exist a vector field $X_B$ on $B$ and a vertical vector field $X_V$
on $E$ so that for all $x\in E$,
\[X(x)= (p_*\vert_{H_x})^{-1}(X_B(p(x)))+X_V(x)\ .\]
\noindent In particular, any vertical vector field is aligned. When
$X$ is aligned, the projected vector field $p_*X$ is well-defined.\par
\label{def:aligned_vector_fields}
\end{definition}

The idea of the aligned exponential $\Exp_a$ is that it behaves as would
the regular exponential if the metric on $E$ were locally the product of a
metric on $F$ and a metric on $B$. The key property of $\Exp_a$ is that if
$X$ is an aligned vector field on $E$, and $\Exp_a(X(x))$ is defined for
all $x$, then the map of $E$ defined by sending $x$ to $\Exp_a(X(x))$ will
be fiber-preserving.

\begin{definition} Let $\pi\colon TE\to E$ denote the tangent bundle of $E$. 
Assume that the metric on $E$ is a product near $\partial_hE$ such that the
$\I$-fibers of $\partial_hE\times \I$ are vertical. Each fiber $F$ of $E$
inherits a Riemannian metric from that of $E$, and has an exponential map
$\Exp_F$ which (where defined) carries vectors tangent to $F$ to points of
$F$. The path $\Exp_F(t\omega)$ is not generally a geodesic in $E$. The
\indexdef{vertical!exponential}\indexdef{exponential!vertical}\textit{vertical exponential} 
\indexsymdef{Expv}{$\Exp_v$}$\Exp_v$ is defined by
$\Exp_v(\omega)=\Exp_F(\omega)$, where $\omega$ is a vertical vector and
$F$ is the fiber containing~$\pi(\omega)$. The 
\indexdef{aligned!exponential}\textit{aligned exponential map} 
\indexsymdef{Expa}{$\Exp_a$}$\Exp_a$ is defined as follows. 
Consider a tangent vector $\omega\in
T_x(E)$ such that for the vector $p_*(\omega)\in T_{p(x)}(B)$,
$\Exp(p_*(\omega))$ is defined. A geodesic segment $\gamma_{p_*(\omega)}$
starting at $p(\pi(\omega))$ is defined by $\gamma_{p_*(\omega)}(t)=
\Exp(tp_*(\omega))$, $0\leq t\leq 1$. Define $\Exp_a(\omega)$ to be the
endpoint of the unique horizontal lift of $\gamma_{p_*(\omega)}$ starting
at $\Exp_v(\omega_v)$.\par
\label{def:aligned}
\end{definition}

Note that $\Exp_a(\omega)$ exists if and only if both $\Exp_v(\omega_v)$
and $\Exp(p_*(\omega))$ exist. Clearly, when $\Exp_a(\omega)$ is defined,
it lies in the fiber containing the endpoint of a lift of
$\gamma_{p_*(\omega)}$, and therefore $p(\Exp_a(\omega))=
\Exp(p_*(\omega))$. This immediately implies that if $X$ is an aligned
vector field on $E$ such that $\Exp_a(X(x))$ is defined for all $x\in E$,
then the map defined by sending $x$ to $\Exp_a(X(x))$ takes fibers to
fibers, and in particular if $X$ is vertical, it takes each fiber to
itself.

\begin{definition}\label{def:A}
Let $W$ be a vertical submanifold of $E$.
By 
\indexsymdef{AWTE}{${\mathcal{A}}(W,TE)$}${\mathcal{A}}(W,TE)$ 
we denote the Fr\'echet space of
sections $X$ from $W$ to $TE\vert_W$ such that
\begin{enumerate}
\item[(1)] $X$ is aligned, that is, if $p(w_1)\!=\!p(w_2)$ then
$p_*(X(w_1))\!=\! p_*(X(w_2))$,
\item[(2)] if $x\in \partial_hW$, then $X(x)$ is tangent to
$\partial_hE$, and if $x\in \partial_vW$, then $X(x)$ is tangent to
$\partial_vE$, and
\end{enumerate}
The elements of ${\mathcal{A}}(W,TE)$ such that
$p_*X(x)= Z(p(x))$ for all $x\in W$ are denoted by
\indexsymdef{VWTE}{$\mathcal{V}(W,TE)$}$\mathcal{V}(W,TE)$.\par
\label{def:aligned_sections}
\end{definition}

\noindent By condition (3), $\TExp_a(X)$ is defined for every $X$ in
$\mathcal{A}(W,TE)$ or in $\mathcal{V}(W,TE)$.\par 

The vector space structure on ${\mathcal{A}}(W,TE)$ is defined using the
vector space structures of the fibers of $TE$ and $TB$. Given $v,w\in
\mathcal{A}(W,TE)$, we decompose them into their vertical and horizontal
parts. The vertical parts are added by the usual addition in $TE$. The
horizontal parts are added by pushing down to $TB$, adding there, and
taking horizontal lifts.

Since horizontal lifts of geodesics in $B$ exist, $\Exp_a(\omega)$ is
defined whenever $\Exp_v(\omega)$ and $\Exp(p_*(\omega))$ are defined. In
particular, the 
\index{tame exponential}tame aligned exponential $\TExp_a$ carries a neighborhood of
$Z$ in $\mathcal{A}(W,TE)$ into $\mathrm{C}^\infty_f(W,E)$. Choosing the
neighborhood small enough to ensure that $\TExp_a(X)\in \Imb_f(W,E)$
provides local charts on $\Imb_f(W,E)$, that carry the vertical fields into
$\Imb_v(W,E)$. Thus we have:
\begin{theorem} The spaces $\Diff_f(E)$, 
$\Diff_v(E)$, $\Imb_f(W,E)$, and
$\Imb_v(W,E)$ are infinite-dimensional manifolds modeled on
Fr\'echet spaces of aligned vector fields.
\end{theorem}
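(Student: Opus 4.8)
The plan is to follow the template already established in Sections~\ref{sec:Cinfinity} through~\ref{sec:spaces_of_embeddings}, replacing the ordinary exponential $\TExp$ by the tame aligned exponential $\TExp_a$ of Definition~\ref{def:aligned}. In each of the four cases, it suffices to produce a local chart at the basepoint (the identity $1_E$ for the diffeomorphism groups, the inclusion for the embedding spaces) whose image lies in the appropriate space; homogeneity under composition with a fixed fiber-preserving diffeomorphism then spreads the charts over the whole space, and the transition maps are smooth for the same reasons as in the classical case (Theorems~42.1, 43.1, and~44.1 of~\cite{Kriegl-Michor}).

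First I would treat $\Diff_f(E)$. Give $E$ a metric that is a product near $\partial_hE$ with vertical $\I$-fibers, so that $\Exp_a$ is defined and horizontal lifts exist (this was arranged in Section~\ref{sec:fiberpreservingdiffeos}). For $X\in\mathcal{A}(E,TE)$ small, the map $x\mapsto\Exp_a(X(x))$ is fiber-preserving by the remark following Definition~\ref{def:aligned}, and for $X$ in a small enough neighborhood $U$ of $Z$ it is a diffeomorphism: indeed its derivative at $Z$ is the identity on $\mathcal{X}(E,TE)$ once one checks $\Exp_a$ agrees with $\Exp$ to first order, and the inverse is again fiber-preserving because the unique horizontal lift can be run backwards. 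To check that $\TExp_a$ is actually a \emph{chart}—i.e.\ a homeomorphism onto an open subset of $\Diff_f(E)$—I would compare with the ambient chart $\TExp$ on $\Diff(E)$: a fiber-preserving diffeomorphism $C^0$-close to $1_E$ is $\TExp(Y)$ for a unique small vector field $Y$, and being fiber-preserving forces $Y$ to be aligned, so the $\TExp_a$-chart surjects onto a neighborhood of $1_E$ in $\Diff_f(E)$. Restricting to $\mathcal{V}(E,TE)$ gives the chart for $\Diff_v(E)$, since $\Exp_a$ applied to a vertical field keeps each fiber fixed.

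For $\Imb_f(W,E)$ and $\Imb_v(W,E)$ with $W$ a vertical submanifold, the argument is parallel: $\TExp_a$ carries a neighborhood of $Z$ in $\mathcal{A}(W,TE)$ into $C^\infty_f(W,E)$ (stated just before the theorem), shrink to land in $\Imb_f(W,E)$, and use the classical fact that $\Imb(W,E)$ is a Fréchet manifold on $\mathcal{X}(W,TE)$ plus the fiber-preserving/vertical constraint to see surjectivity onto a neighborhood of the inclusion. The boundary bookkeeping—that $\mathcal{A}(W,TE)$ and $\mathcal{V}(W,TE)$ are \emph{closed} Fréchet subspaces, cut out by the linear conditions in Definition~\ref{def:aligned_sections}, and that the Seeley extension of Lemma~\ref{lem:Seeley} can be performed respecting the aligned/vertical splitting near $\partial_hE$ and $\partial_vE$—is routine given that near $\partial_hE$ the metric is a product with vertical $\I$-fibers, so extending across $\partial_hE\times(-\infty,0]$ does not disturb alignment. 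I expect the main obstacle to be verifying that $\TExp_a$ is genuinely a local \emph{diffeomorphism} of the modeling Fréchet spaces, not merely a continuous bijection: one must show $\Exp_a$ is smooth in $\omega$ (it is a composition of $\Exp$ on $B$ with the horizontal-lift ODE, whose solutions depend smoothly on parameters) and that the derivative computation identifying $D\TExp_a|_Z$ with the identity goes through—but this is exactly the sort of check the authors have already carried out for $\TExp$ in the bounded setting, now with the extra horizontal-lift construction layered on top.
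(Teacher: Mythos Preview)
Your overall strategy---use $\TExp_a$ to build charts, exactly parallel to how $\TExp$ builds charts in the unbounded and bounded cases---is precisely what the paper does; indeed the paper's ``proof'' is essentially the paragraph immediately preceding the theorem, asserting that $\TExp_a$ carries a neighborhood of $Z$ in $\mathcal{A}(W,TE)$ into $\Imb_f(W,E)$ and thereby furnishes the charts. So at the level of approach you are aligned with the paper, and in fact supply considerably more detail than it does.

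There is, however, a genuine gap in your surjectivity argument. You write that a fiber-preserving diffeomorphism near $1_E$ is $\TExp(Y)$ for a unique small $Y$, and that being fiber-preserving forces $Y$ to be aligned, whence $\TExp_a$ surjects. This does not work: the ordinary exponential $\TExp$ and the aligned exponential $\TExp_a$ are \emph{different} maps---geodesics in $E$ do not project to geodesics in $B$, so $\TExp(Y)$ need not be fiber-preserving even when $Y$ is aligned (this is the whole reason $\Exp_a$ was introduced), and conversely there is no reason the $\TExp$-logarithm of a fiber-preserving map should be aligned. Even granting both claims, you would only know $g=\TExp(Y)$ with $Y$ aligned, not $g=\TExp_a(X)$ for any $X$.

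The correct argument inverts $\Exp_a$ directly. Since $\Exp_a(\omega)$ is built from $\Exp_v$ on the vertical part and horizontal transport along a geodesic in $B$ determined by $p_*\omega$, one checks that $D\Exp_a|_{0_x}=\mathrm{id}_{T_xE}$, so $\Exp_a$ is a local diffeomorphism near the zero section just as $\Exp$ is. Then for $j$ fiber-preserving and close to the inclusion, set $X(x)$ to be the unique small vector in $T_xE$ with $\Exp_a(X(x))=j(x)$; the identity $p\circ\Exp_a=\Exp\circ p_*$ together with $p\circ j(x)=p\circ j(y)$ whenever $p(x)=p(y)$ forces $p_*X(x)=p_*X(y)$, so $X$ is aligned. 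This is exactly the content of the Logarithm Lemma for fiber-preserving maps (Lemma~\ref{lem:aligned_logarithm}) later in the paper, and it is what you need here.
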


\chapter{The method of Cerf and Palais}
\label{ch:Palais}

In \cite{P}, R.~Palais proved a very useful result relating diffeomorphisms
and embeddings. For closed $M$, it says that if $W\subseteq V$ are
submanifolds of $M$, then the mappings $\Diff(M)\to \Imb(V,M)$ and
$\Imb(V,M)\to \Imb(W,M)$ obtained by restricting diffeomorphisms and
embeddings are locally trivial, and hence are Serre fibrations. The same
results, with variants for manifolds with boundary and more complicated
additional boundary structure, were proven by J.~Cerf\index{Cerf}
in~\cite{Cerf}. Among various applications of these results, the
\index{Isotopy Extension Theorem}Isotopy
Extension Theorem follows by lifting a path in $\Imb(V,M)$ starting at the
inclusion map of $V$ to a path in $\Diff(M)$ starting at $1_M$. Moreover,
parameterized versions of isotopy extension follow just as easily from the
homotopy lifting property for $\Diff(M)\to \Imb(V,M)$ (see
Corollary~\ref{isotopy lifting}).

In this chapter, we will extend the theorem of Palais in various ways. Many
of our results concern fiber-preserving maps. For example,
in Section~\ref{project} we will prove the
\smallskip

\indexstate{Projection Theorem!Palais-Cerf}%
\noindent{\bf Projection Theorem} (Theorem \ref{project diffs})
\textit{Let $E$ be a bundle over a compact manifold $B$. Then
$\Diff_f(E)\to\Diff(B)$ is locally trivial. }
\smallskip

\noindent This should be considered a folk theorem. Below we will discuss
some of its antecedents.

The homotopy extension property for the projection fibration $\Diff_f(E)\to
\Diff(B)$ translates directly into the following.
\smallskip

\indexstate{Parameterized Isotopy Lifting Theorem}%
\noindent{\bf Parameterized Isotopy Lifting Theorem} (Corollary
\ref{isotopy lifting}) \textbf{Suppose that $p\colon E\to B$ is a fibering of
compact manifolds, and suppose that for each $t$ in a path-connected
parameter space $P$, there is an isotopy $g_{t,s}$ such that $g_{t,0}$
lifts to a diffeomorphism $G_{t,0}$ of $E$. Assume that sending $(t,s)\to
g_{t,s}$ defines a continuous function from $P\times [0,1]$ to $\Diff(B)$
and sending $t$ to $G_{t,0}$ defines a continuous function from $P$ to
$\Diff_f(E)$. Then the family $G_{t,0}$ extends to a continuous family
on $P\times \I$ such that for each $(t,s)$, $G_{t,s}$ is a
fiber-preserving diffeomorphism inducing $g_{t,s}$ on~$B$.}
\smallskip

For fiber-preserving and vertical embeddings of vertical submanifolds, we
have a more direct analogue of Palais' results.
\smallskip

\indexstate{Restriction Theorem!fiber-preserving}%
\index{Palais-Cerf Restriction Theorem}%
\index{Restriction Theorem}%
\noindent\textbf{Restriction Theorem} (Corollaries~\ref{corollary2}
and~\ref{corollary3}) \textit{Let $V$ and $W$ be vertical submanifolds of $E$
with $W\subseteq V$, each of which is either properly embedded or
codimension-zero. Then the restrictions $\Diff_f(M)\to \Imb_f(V,M)$,
$\Diff_v(M)\to \Imb_v(V,M)$, $\Imb_f(V,E)\to \Imb_f(W,E)$ and
$\Imb_v(V,E)\to \Imb_v(W,E)$ are locally trivial.}
\medskip

\noindent As shown in Theorem \ref{square}, the Projection and
Restriction Theorems can be combined into a single commutative square,
called the 
\index{square!projection-restriction}\index{projection-restriction square}\textit{projection-restriction square,}
in which all four maps are locally trivial:
\[\vbox{\halign{\hfil#\hfil\quad&#&\quad\hfil#\hfil\cr
$\Diff_f(E)$&$\longrightarrow$&$\Imb_f(W,E)$\cr
\noalign{\smallskip}
$\mapdown{}$&&$\mapdown{}$\cr
\noalign{\smallskip}
$\Diff(B)$&$\longrightarrow$&$\Imb(p(W),B)\rlap{\ .}$\cr}}\]

In 3-dimensional topology, a key role is played by manifolds admitting a
more general kind of fibered structure, called a Seifert fibering. Some
general references for 
\index{Seifert-fibered $3$-manifold}Seifert-fibered 3-manifolds are
\cite{Hempel,Jaco,JS,Orlik,OVZ,Scott,Seifert,Waldhausen1,Waldhausen2}.  In
Section~\ref{sfiber}, we prove the analogues of the results discussed above
for most Seifert fiberings $p\colon\Sigma\to\O $. Actually, we work in a
somewhat more general context, called 
\index{singular fiberings}\textit{singular fiberings}, which
resemble Seifert fiberings but for which none of the usual structure of the
fiber as a homogeneous space is required.

In the late 1970's fibration results akin to our Projection Theorem for the
singular fibered case were proven by W.~Neumann\index{Neumann} and
F.~Raymond\index{Raymond}~\cite{N-R}. They were interested in the case when
$\Sigma$ admits an action of the $k$-torus~$T^k$ and $\Sigma\to\O $ is the
quotient map to the orbit space of the action. They proved that the space
of (weakly) $T^k$-equivariant homeomorphisms of $\Sigma$ fibers over the
space of homeomorphisms of $\O $ that respect the orbit types associated to
the points of $\O $. A detailed proof of this result when the dimension of
$\Sigma$ is $k+2$ appears in the dissertation of 
C.~Park~\cite{Park}. Park\index{Park}
also proved analogous results for space of weakly $G$-equivariant maps for
principal $G$-bundles and for Seifert fiberings of arbitrary
dimension~\cite{Park,Park1}. These results do not directly overlap ours
since we always consider the full group of fiber-preserving diffeomorphisms
without any restriction to $G$-equivariant maps (indeed, no assumption of a
$G$-action is even present).

The results of this chapter will be used heavily in the later chapters. In
this chapter, we give one main application.  For a Seifert-fibered manifold
$\Sigma$, $\Diff(\Sigma)$ acts on the set of Seifert fiberings, and the
stabilizer of the given fibering is $\Diff_f(\Sigma)$, thus the space of
cosets $\Diff(\Sigma)/\Diff_f(\Sigma)$ can be regarded as the 
\index{space of Seifert fiberings}\textit{space of Seifert fiberings} 
of $\Sigma$ equivalent to the given one.  We prove in Section~\ref{sfspace} 
that for a Seifert-fibered Haken
3-manifold, each component of the space of Seifert fiberings is
contractible (apart from a small list of well-known exceptions, the space
of Seifert fiberings is connected). This too should be considered a folk
result; it appears to be widely believed and regarded to be a direct
consequence of the work of \index{Hatcher}Hatcher and 
\index{Ivanov}Ivanov on the diffeomorphism groups
of Haken manifolds. We have found, however, that a real proof requires more
than a little effort.

Our results will be proven by adapting the Palais method of~\cite{P}, using
the aligned exponential defined in Section~\ref{exponent}. In
Section~\ref{palais}, we reprove the main result of \cite{P} for manifolds
which may have boundary. This duplicates~\cite{Cerf} (in fact, the boundary
control there is more refined than ours), but is included to furnish lemmas
as well as to exhibit a prototype for the approach we use to deal with the
bounded case in our later settings. In Section~\ref{orbifold}, we give the
analogues of the results of Palais and Cerf for smooth orbifolds, which for
us are quotients $\widetilde{\O}/H$ where $\widetilde{\O}$ is a manifold
and $H$ is a group acting smoothly and properly discontinuously on
$\widetilde{\O}$. Besides being of independent interest, these analogues
are needed for the case of singular fiberings.

Throughout this chapter, all Riemannian metrics are assumed to be products
near the boundary, or near the horizontal boundary for total spaces of
bundles, such that any submanifolds under consideration meet the collars in
$\I$-fibers. Let $V$ be a submanifold of $M$.  As in Definition~\ref{def:X},
the notation $\mathcal{X}(V,TM)$ means the Fr\'echet space of sections from
$V$ to the restriction of the tangent bundle of $M$ to $V$ that are tangent
to $\partial M$ at all points of $V\cap \partial M$. We also utilize
various kinds of control, as indicated in the following definitions.
\begin{definition}\label{def:diffcontrol}
The notations 
\indexsymdef{DiffMrelX}{$\Diff(M\protect\rel X)$}$\Diff(M\rel X)$ and 
\indexsymdef{DiffM-XM}{$\Diff^{M-X}(M)$}$\Diff^{M-X}(M)$ mean the space of
diffeomorphisms which restrict to the identity map on each point of the
subset $X$ of $M$.  These notations may be combined, for example
\indexsymdef{DiffLMrelX}{$\Diff^L(M\protect\rel X)$}$\Diff^L(M\rel X)$ is the 
space of diffeomorphisms that are the identity on $X\cup (M-L)$.
\end{definition}

\begin{definition}\label{def:imbcontrol}
For $X\subseteq M$ we say that $K\subseteq M$ is a 
\indexdef{neighborhood of a submanifold}neighborhood of $X$ when
$X$ is contained in the topological interior of $K$. If $K$ is a
neighborhood of a submanifold $V$ of $M$, then 
\indexsymdef{EmbVMK}{$\Imb^K(V,M)$}$\Imb^K(V,M)$ means the
elements $j$ in $\Imb(V,M)$ such that $K$ is a neighborhood of~$j(V)$.
Suppose that $S$ is a closed neighborhood in $\partial M$ of $V\cap S$.
Note that this implies that $S\cap\partial V$ is a union of components of
$V\cap\partial M$. We denote by 
\indexsymdef{EmbVMrelS}{$\Imb(V,M\protect\rel S)$}$\Imb(V,M\rel S)$ the elements 
$j$ that equal the inclusion on $V\cap S$ and carry $V\cap (\partial M-S)$ into
$\partial M-S$. For a neighborhood $K$ of $V$, the superscript notation of
Definition~\ref{def:diffcontrol} may be used, as in 
\indexsymdef{EmbVMrelSK}{$\Imb^K(V,M\protect\rel S)$}$\Imb^K(V,M\rel S)$.
\end{definition}

\begin{definition}\label{def:vfcontrol}
Recall from Definition~\ref{def:X} that for $L\subseteq M$, 
$\mathcal{X}^L(V,TM)$ means 
the elements of $\mathcal{X}(V,TM)$ that equal the zero section $Z$ on 
$V-L$. We extend this to the aligned and vertical sections (see
Definition~\ref{def:aligned_sections}), so that if $L\subset E$ then
\indexsymdef{AWTEL}{$\mathcal{A}^L(W,TE)$}$\mathcal{A}^L(W,TE)$ and 
\indexsymdef{VWTEL}{$\protect\mathcal{V}^L(W,TE)$}$\mathcal{V}^L(W,TE)$ and 
$\mathcal{V}^L(W,TE)$ have the corresponding meanings.
\end{definition}

\section{The Palais-Cerf Restriction Theorem}
\label{palais}

We begin with a review of the method of Palais~\cite{P}.

\begin{definition}\label{def:localcrosssections}
Let $X$ be a $G$-space and $x_0\in X$. A 
\indexdef{local cross-section}\textit{local cross-section} (or
\textit{$G$ local cross-section}) for $X$ at $x_0$ is a map $\chi$ from a
neighborhood $U$ of $x_0$ into $G$ such that $\chi(u)x_0= u$ for all
$u\in U$. By replacing $\chi(u)$ by $\chi(u)\chi(x_0)^{-1}$, one may always
assume that $\chi(x_0)= 1_G$. If $X$ admits a local cross-section at each
point, it is said to admit local cross-sections. 
\end{definition}
Note that a local cross-section $\chi_0\colon U_0\to G$ at a single point
$x_0$ determines a local cross section $\chi\colon gU_0\to G$ at any point
$gx_0$ in the orbit of $x_0$, by the formula
$\chi(u)=g\chi_0(g^{-1}u)g^{-1}$, since then
$\chi(u)(gx_0)=g\chi_0(g^{-1}u)g^{-1}gx_0=g\chi_0(g^{-1}u)x_0=gg^{-1}u=u$. In
particular, if $G$ acts transitively on $X$, then a local cross section at
any point provides local cross sections at all points.

From \cite{P} we have
\begin{proposition} Let $G$ be a topological group and $X$ a
$G$-space admitting local cross-sections. Then any equivariant map of
a $G$-space into $X$ is locally trivial.
\label{theoremA}
\end{proposition}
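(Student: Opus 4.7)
The plan is to construct, at each point $x_0\in X$, an explicit local trivialization of the equivariant map $f\colon Y\to X$ using the local cross-section provided by the hypothesis.

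First I would pick $x_0\in X$ and use the hypothesis to choose a local cross-section $\chi\colon U\to G$ at $x_0$, normalized so that $\chi(x_0)=1_G$. The candidate trivialization is the map
\[
\phi\colon U\times f^{-1}(x_0)\longrightarrow f^{-1}(U),\qquad \phi(u,y)=\chi(u)\cdot y.
\]
The two things to check are that $\phi$ is a homeomorphism onto $f^{-1}(U)$, and that $f\circ\phi$ is the projection to $U$. The second is immediate from equivariance of $f$ and the cross-section property:
\[
f(\phi(u,y))=f(\chi(u)\cdot y)=\chi(u)\cdot f(y)=\chi(u)\cdot x_0=u.
\]
For the first, I would write down the explicit candidate inverse
\[
\psi\colon f^{-1}(U)\longrightarrow U\times f^{-1}(x_0),\qquad \psi(z)=\bigl(f(z),\,\chi(f(z))^{-1}\cdot z\bigr),
\]
and verify that the second coordinate does land in $f^{-1}(x_0)$: by equivariance $f(\chi(f(z))^{-1}z)=\chi(f(z))^{-1}\cdot f(z)=\chi(f(z))^{-1}\chi(f(z))\cdot x_0=x_0$.

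Next I would check $\phi\circ\psi=\mathrm{id}_{f^{-1}(U)}$ and $\psi\circ\phi=\mathrm{id}_{U\times f^{-1}(x_0)}$; both are direct unpackings using the group action axioms and the normalization $\chi(x_0)=1_G$. Continuity of $\phi$ and $\psi$ follows from continuity of $\chi$, of $f$, of the group action $G\times Y\to Y$, and of inversion in $G$. This exhibits $f$ as locally trivial over $U$. To obtain local triviality at an arbitrary point of $X$, I would invoke the observation made just after Definition~\ref{def:localcrosssections}: a local cross-section at one point $x_0$ produces one at every point $gx_0$ in its orbit by conjugating with $g$. Since equivariant maps send orbits to orbits and the hypothesis guarantees a cross-section at every point of $X$, we conclude that $f$ is locally trivial everywhere.

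The proof is really just bookkeeping; the only conceptual obstacle is recognizing the right inverse formula $z\mapsto(f(z),\chi(f(z))^{-1}z)$, after which everything reduces to applying the cross-section identity $\chi(u)\cdot x_0=u$ and the equivariance $f(gy)=gf(y)$. No properties of $Y$ beyond its being a $G$-space are needed, which is what makes the statement so flexibly applicable in the subsequent restriction and projection theorems.
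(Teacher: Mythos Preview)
Your proof is correct and follows exactly the paper's approach: the paper merely sketches that the local coordinates on $\pi^{-1}(U)$ are given by sending $(u,z)\in U\times\pi^{-1}(x_0)$ to $\chi(u)\cdot z$, and you have supplied the verification that this is a homeomorphism with the correct inverse. One small remark: your final paragraph about translating cross-sections along orbits is unnecessary here, since the hypothesis already provides a local cross-section at \emph{every} point of $X$; the orbit-translation trick is only needed when a cross-section is given at a single point (e.g.\ when $G$ acts transitively).
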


\noindent In fact, when $\pi\colon Y\to X$ is $G$-equivariant, the
local coordinates on $\pi^{-1}(U)$ are just given by sending the point
$(u,z)\in U\times \pi^{-1}(y_0)$ to $\chi(u)\cdot z$. Some additional
properties of the bundles obtained in Proposition~\ref{theoremA} are
given in~\cite{P}.

\begin{example}\label{ex:Lie_groups}\index{examples!example1@local triviality and
Lie groups}
For a closed subgroup $H$ of a Lie group $G$, the projection $G\to G/H$ to
the space of left cosets of $H$ always has local $G$ cross-sections, and
hence is locally trivial. To check this, recall first that since $G$ acts
transitively on $G/H$, it is sufficient to find a local cross-section
$\chi_0$ at the coset $e\,H$, where $e$ is the identity element of $G$. To
construct $\chi_0$, fix a Riemannian metric on $G$. The tangent space
$T_eH$ is a subspace of $T_eG$. Let $W$ be a complementary subspace. Let
$V$ be an open neighborhood of $0$ in $T_eG$ such that $\Exp\colon V\to U$
is a diffeomorphism onto an open neighborhood of $e$ in $G$, and so that
the submanifold $\Exp(W\cap V)$ is transverse to the cosets $uH$ for all
$u\in U$. Defining $\chi_0(uH)$ to be $\Exp(w)$ for the unique element
$w\in W\cap U$ such that $\Exp(w)U=uH$ gives the local cross-section
at~$e$.
\end{example}

The following technical lemma will simplify some of our applications
of Proposition~\ref{theoremA}.

\begin{proposition} Let $M$ be a $G$-space and let $V$ be a subspace
of $M$, possibly equal to $M$. Let $I(V,M)$ be a space of embeddings of $V$
into $M$, on which $G$ acts by composition on the left. Suppose that for
every $i\in I(V,M)$, the space of embeddings $I(i(V),M)$ has a local $G$
cross-section at the inclusion map of $i(V)$ into $M$. Then $I(V,M)$ has
local $G$ cross-sections.
\label{prop:inclusion}
\end{proposition}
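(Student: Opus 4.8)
The plan is to reduce the statement to the hypothesis by pulling back a local cross-section along a ``change of source'' map. Fix an arbitrary $i_0\in I(V,M)$; we must construct a local $G$ cross-section for $I(V,M)$ at $i_0$. The first step is to observe that $i_0$, being an embedding, is a diffeomorphism of $V$ onto the submanifold $i_0(V)\subseteq M$, so $i_0^{-1}\colon i_0(V)\to V$ is smooth. Since right composition with a fixed smooth map is continuous in the $\Cinf$-topology, the assignment $j\mapsto j\circ i_0^{-1}$ defines a continuous map
\[\Phi\colon I(V,M)\longrightarrow I(i_0(V),M),\]
carrying $i_0$ to the inclusion map $\iota$ of $i_0(V)$ into $M$. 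That $\Phi(j)$ actually lies in $I(i_0(V),M)$ is the compatibility built into the notion of a ``space of embeddings'': a diffeomorphism of $M$ extending $j$, postcomposed with the inverse of one extending $i_0$, extends $\Phi(j)$. Crucially, $\Phi$ is $G$-equivariant for the left-composition actions, since $(g\circ j)\circ i_0^{-1}=g\circ(j\circ i_0^{-1})$.

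Next I would invoke the hypothesis: there is a local $G$ cross-section $\psi\colon\mathcal{U}\to G$ for $I(i_0(V),M)$ at $\iota$, and by the normalization remark after Definition~\ref{def:localcrosssections} we may take $\psi(\iota)=1_G$. Set $U=\Phi^{-1}(\mathcal{U})$, an open neighborhood of $i_0$, and define $\chi=\psi\circ\Phi\colon U\to G$. The verification is then a short unwinding of definitions: for $j\in U$, writing $u=\Phi(j)=j\circ i_0^{-1}$, the cross-section property of $\psi$ together with the fact that the $G$-action on $I(i_0(V),M)$ is left composition says that $\psi(u)$ restricts to $u$ on $i_0(V)$, so for $x\in V$ we get $(\chi(j)\cdot i_0)(x)=\psi(u)(i_0(x))=u(i_0(x))=j(i_0^{-1}(i_0(x)))=j(x)$. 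Hence $\chi(j)\cdot i_0=j$ for all $j\in U$, and $\chi(i_0)=1_G$; since $i_0$ was arbitrary, $I(V,M)$ admits local $G$ cross-sections.

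There is no deep obstacle here; the only points that need a word of justification are the continuity of the right-composition map $\Phi$, which is standard for the $\Cinf$-topology, and the observation that $\Phi$ really does take values in the space $I(i_0(V),M)$ appearing in the hypothesis. Everything else is formal manipulation of the left $G$-action by composition. In fact, since $\Phi$ is a $G$-equivariant homeomorphism of $I(V,M)$ onto $I(i_0(V),M)$ (with inverse $k\mapsto k\circ i_0$), one could phrase the whole argument abstractly: a $G$-space that is $G$-equivariantly homeomorphic to one admitting a local cross-section at a chosen point admits one at the corresponding point.
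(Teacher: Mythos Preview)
Your proof is correct and is essentially identical to the paper's: both fix $i_0$, define the map $\Phi(j)=j\circ i_0^{-1}$ from $I(V,M)$ to $I(i_0(V),M)$, pull back the hypothesized cross-section at the inclusion through $\Phi$, and verify $\chi(j)\cdot i_0=j$ by the same one-line computation. The paper's version is more terse (omitting your remarks on continuity, $G$-equivariance, and the homeomorphism interpretation), but the argument is the same.
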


\begin{proof} Fix $i\in I(V,M)$, and denote by $j_{i(V)}$ the inclusion 
map of $i(V)$ into $M$. Define $Y\colon I(V,M)\to I(i(V),M)$ by
$Y(j)=ji^{-1}$. For a local cross-section $\chi\colon U\to G$ at
$j_{i(V)}$, define $Y_1$ to be the restriction of $Y$ to $Y^{-1}(U)$, a
neighborhood of $i$ in $I(V,M)$. Then $\chi Y_1\colon Y^{-1}(U)\to G$ is
a local cross-section for $I(V,M)$ at~$i$. For if $j\in Y^{-1}(U)$, then
$\chi (Y_1(j)) \circ i= \chi (Y_1(j))\circ j_{i(V)} \circ i=Y_1(j)\circ i=j$.
\end{proof}

\index{Palais method|(}In our context, a typical procedure for finding a
local cross-section using the Palais method is as follows. Suppose, for
example, that one wants to find a local cross-section from a space of
embeddings of a submanifold to a space of diffeomorphisms of the ambient
manifold. First, take the ``logarithm'' of an embedding $j$, that is, find
a section from the submanifold to the tangent bundle of $M$ so that the
exponential of the vector at each $x$ is the image $j(x)$. Then obtain an
``extension'' of this section to a vector field on $M$. Finally,
``exponentiate'' the extended vector field to obtain the diffeomorphism of
$M$ that agrees with $j$ on the submanifold. The extension process must be
canonical enough so that sending the embedding to the resulting
diffeomorphism is a local cross-section.

This three-step procedure depends in large part on three lemmas, or
appropriate versions of them, called Lemmas~d, c and~b in~\cite{P}. As our
first instance of them, we give the following versions for manifolds with
boundary and submanifolds that may be of codimension~$0$.

\begin{lemma}[Logarithm Lemma]\indexstate{Palais method!Logarithm
Lemma}\indexstate{Logarithm Lemma}
Assume that the metric on $M$ is a product near $\partial M$, and let $V$
be a compact submanifold of $M$ that meets $\partial M\times \I$ in
$\I$-fibers. Then there are an open neighborhood $U$ of the inclusion $i_V$
in $\Imb(V,M)$ and a continuous map $X\colon U \to\mathcal{X}(V,TM)$ such
that for all $j\in U$, $\Exp(X(j)(x))$ is defined for all $x\in V$ and
$\Exp(X(j)(x))= j(x)$ for all $x\in V$. Moreover, $X(i_V) = Z$.\par
\label{logarithm}
\end{lemma}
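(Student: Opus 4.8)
The plan is to build the map $X$ by the familiar ``shortest geodesic'' recipe and then check that it lands in $\mathcal{X}(V,TM)$, i.e.\ that the resulting vector fields are tangent to $\partial M$ along $V\cap\partial M$. First I would observe that since $V$ is compact and $i_V$ the inclusion, there is $\epsilon>0$ such that the $\epsilon$-ball subbundle of the normal bundle of $V$ in $M$ exponentiates to a tubular neighborhood $N_\epsilon$ of $V$; moreover $\Exp$ is defined and is a diffeomorphism on the set of all vectors in $T_xM$, $x\in V$, of length $<\epsilon$, by completeness of the metric (shrinking $\epsilon$ using compactness of $V$). Take $U$ to be the set of $j\in\Imb(V,M)$ with $d(j(x),x)<\epsilon$ for all $x\in V$; this is an open neighborhood of $i_V$ in the $\Cinf$-topology. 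For $j\in U$ and $x\in V$, define $X(j)(x)$ to be the unique vector in $T_xM$ of length $<\epsilon$ with $\Exp(X(j)(x))=j(x)$; equivalently $X(j)(x)=\Exp_x^{-1}(j(x))$ where $\Exp_x^{-1}$ is the local inverse near $0_x$. Then $\Exp(X(j)(x))=j(x)$ by construction, and $X(i_V)=Z$ since $\Exp_x^{-1}(x)=0_x$.

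Next I would address continuity of $X\colon U\to\mathcal{Y}(V,TM)$. The map $(x,y)\mapsto \Exp_x^{-1}(y)$, defined on the open set $\{(x,y)\in V\times M : d(x,y)<\epsilon\}$, is smooth, being the local inverse of the smooth map $(x,v)\mapsto(x,\Exp(v))$ on a neighborhood of the zero section, which is a diffeomorphism onto a neighborhood of the diagonal. So $X(j)$ is the composite of $j$ (thought of as $x\mapsto(x,j(x))$) with this fixed smooth map, hence $X$ is a composition-with-a-fixed-smooth-map operation followed by a pullback; continuity in the $\Cinf$-topology then follows from the standard fact that such operations (the ``$\Omega$-lemma'' / composition being smooth in the convenient calculus, cf.\ \cite{Kriegl-Michor}) are continuous. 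Concretely, one may also argue directly: on each chart, all derivatives of $X(j)$ up to order $k$ are continuous functions of the derivatives of $j$ up to order $k$, which is exactly $\Cinf$-continuity on the compact set $V$.

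The one point that needs the hypotheses about the metric being a product near the boundary and $V$ meeting the collar in $\I$-fibers is that $X(j)$ actually lies in $\mathcal{X}(V,TM)$, not merely $\mathcal{Y}(V,TM)$; this I expect to be the main (though still mild) obstacle. Fix $x\in V\cap\partial M$, so $x$ lies in the collar $\partial M\times\I$, say $x=(x',0)$. Condition~(i) in Definition~\ref{def:Imb} forces $j(x)\in\partial M$, and since $V$ meets the collar in $\I$-fibers and $j\in U$ is close to the inclusion, $j(x)$ lies in $\partial M\times\{0\}$ as well (choosing $\epsilon$ smaller than the collar width). Now recall the remark following Definition~\ref{def:product_metric}: because the metric is a product near the boundary, the exponential of any vector tangent to $\partial M$ stays in $\partial M$, and conversely, within the ball of radius $<\epsilon$, the geodesic from $x$ to $j(x)$ lies in $\partial M$ (both endpoints are in the totally geodesic submanifold $\partial M\times\{0\}$, and the minimizing geodesic between nearby points of a totally geodesic submanifold stays in it). Hence its initial velocity $X(j)(x)$ is tangent to $\partial M$, which is precisely the defining condition of $\mathcal{X}(V,TM)$. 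This completes the verification; assembling the three steps gives the lemma.
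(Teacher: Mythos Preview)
Your proof is correct and follows essentially the same approach as the paper: define $X(j)(x)$ as the short vector whose exponential is $j(x)$, and use the product structure near $\partial M$ to see that this vector is tangent to $\partial M$ when $x\in V\cap\partial M$. The paper's proof is considerably terser---it simply asserts the tangency ``since the metric is a product near the boundary'' and does not discuss continuity at all---so your elaboration via total geodesicity of $\partial M\times\{0\}$ and the $\Omega$-lemma/composition argument for continuity is a welcome expansion rather than a different route. (One small wrinkle you glide over, as does the paper: for $x\in\partial M$ the exponential is only defined on a half-ball of $T_xM$, so the phrase ``unique vector in $T_xM$ of length $<\epsilon$'' should be read as ``unique vector in the inward half-ball''; your totally-geodesic argument in fact shows the relevant vector lies in $T_x(\partial M)$, so this causes no trouble.)
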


\begin{proof} Choose $\epsilon$ small enough so that for all $x\in
V$, $\Exp$ carries the $\epsilon$-ball about $0$ in $T_x(M)$ (that is,
the portion of this $\epsilon$-ball on which it is defined, which may be as
small as a closed half-ball for $x\in \partial M$) diffeomorphically to a
neighborhood $W_x$ of $x$ in $M$. Choose a neighborhood $U$ of $i_V$ in
$\Imb(V,M)$ so that if $j\in U$ then $j(x)\in W_x$. For $j\in U$ define
$X(j)(x)$ to be the unique vector in $T_x(M)$ of length less than
$\epsilon$ for which $\Exp(X(j)(x))$ equals~$j(x)$. Since the metric is a
product near the boundary, $X(j)$ is in $\mathcal{X}(V,M)$, and the remark
about $i_V$ is clear.
\end{proof}

The Extension Lemma uses the notation from Definition~\ref{def:vfcontrol}.
\begin{lemma}[Extension Lemma]\index{Palais method!Extension
Lemma}\indexstate{Extension Lemma}
Assume that the metric on $M$ is a product 
near $\partial M$, and let $V$ be a compact submanifold of $M$ that meets
$\partial M\times \I$ in $\I$-fibers. Let $L$ be a neighborhood of $V$ in
$M$. Then there exists a continuous linear map $k\colon
\mathcal{X}(V,TM)\to \mathcal{X}^L(M,TM)$ such that $k(X)(x)= X(x)$ for
all $x$ in $V$ and all $X$ in $\mathcal{X}(V,TM)$, and moreover if $S$ is a
closed neighborhood in $\partial M$ of $S\cap \partial V$, and if $X(x)=
Z(x)$ for all $x\in S\cap \partial V$, then $k(X)(x)= Z(x)$ for all $x\in
S$.
\label{extension}
\end{lemma}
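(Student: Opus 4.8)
The plan is to build the operator $k$ in two stages. First, I would use a tubular neighborhood of $V$ in $M$ to extend sections "transversally off $V$", and then multiply by a cutoff to force the support into $L$. Concretely: since $V$ is a compact submanifold meeting $\partial M\times\I$ in $\I$-fibers, choose $\epsilon>0$ small enough that the $\epsilon$-disk bundle $\nu$ of the normal bundle of $V$ exponentiates diffeomorphically onto a tubular neighborhood $T$ of $V$ in $M$, with $T\subseteq L$, and (using that the metric is a product near $\partial M$ and $V$ meets the collar in $\I$-fibers) such that the fiber of $T$ over a point $(x,t)\in\partial M\times\I$ lies in $\partial M\times\{t\}$ — this is exactly the compatibility noted just after Definition~\ref{def:meet_the_collar}. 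Let $r\colon T\to V$ be the associated retraction (projection of the tubular neighborhood). Given $X\in\mathcal{X}(V,TM)$, define a section $\widetilde X$ on $T$ by parallel-transporting $X(r(y))$ along the unique short geodesic from $r(y)$ to $y$. This is smooth and linear in $X$, agrees with $X$ on $V$, and because parallel transport preserves "tangent to $\partial M$" for the product metric near the boundary, $\widetilde X(y)$ is tangent to $\partial M$ whenever $y\in\partial M$; so $\widetilde X\in\mathcal{X}(T,TM)$.

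Second, fix a smooth bump function $\varphi\colon M\to[0,1]$ that is $\equiv 1$ on a smaller tubular neighborhood $T'$ of $V$ and $\equiv 0$ outside $T$ (so outside $L$), chosen so that $\varphi$ depends only on the normal distance in the tubular coordinates — in particular $\varphi$ is constant on each $\I$-fiber near the boundary, which again uses that $T$ meets the collar in $\I$-fibers. Then set $k(X):=\varphi\cdot\widetilde X$, extended by $Z$ outside $T$. This is manifestly continuous and linear in $X$, satisfies $k(X)(x)=X(x)$ for $x\in V$ (since $\varphi\equiv 1$ there), lies in $\mathcal{X}^L(M,TM)$ (support in $T\subseteq L$, and tangency to $\partial M$ inherited from $\widetilde X$ and the fact that multiplying a boundary-tangent field by a scalar keeps it boundary-tangent), and $X(i_V)\mapsto Z$ is automatic.

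For the boundary refinement: suppose $S\subseteq\partial M$ is a closed neighborhood of $S\cap\partial V$ and $X(x)=Z(x)$ for all $x\in S\cap\partial V$. I would arrange the construction so that near $\partial M$ the retraction $r$ restricted to $\partial M\cap T$ retracts into $\partial V$, and the tubular coordinates respect the collar; then for $y\in S$ close enough to $\partial V$, $r(y)\in S\cap\partial V$, so $X(r(y))=0$ and hence $\widetilde X(y)=0$ and $k(X)(y)=0$. For points of $S$ farther from $\partial V$, shrink $T$ (equivalently, enlarge the region where $\varphi\equiv 0$) so that $S$ meets $T$ only in the portion retracting into $S\cap\partial V$; this is possible because $S$ is a \emph{closed} neighborhood of $S\cap\partial V$ in $\partial M$, so its complement in $\partial M$ stays a definite distance from $S\cap\partial V$. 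Then $k(X)$ vanishes on all of $S$.

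The main obstacle is keeping all the choices — the tubular neighborhood, its retraction, and the cutoff $\varphi$ — simultaneously compatible with the product-near-the-boundary structure and with the collar-in-$\I$-fibers condition, so that boundary-tangency of sections is preserved and the support control in both $L$ and $S$ holds at once. None of this is deep, but the bookkeeping near $\partial V$ (where "transverse to $V$" and "along $\partial M$" interact) is where care is needed; the product structure of the metric near $\partial M$ is precisely what makes it go through cleanly.
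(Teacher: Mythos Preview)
Your argument for the positive-codimension case is correct and is essentially the paper's own proof: tubular neighborhood via the normal exponential, extension by parallel transport along the normal geodesics, then multiplication by a radial bump function, with the tubular neighborhood chosen small enough that fibers over $S\cap\partial V$ stay inside $S$.

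There is, however, a genuine gap. In this paper ``submanifold'' is explicitly allowed to have codimension zero (see the conventions preceding Definition~\ref{def:meet_the_collar}), and the Extension Lemma is invoked in that setting as well. Your construction breaks down there: when $\dim V=\dim M$ the normal bundle has rank zero, so the ``tubular neighborhood'' $T$ is $V$ itself, there is no transverse direction to parallel-transport along, and cutting off by $\varphi$ would force a jump across the frontier of $V$. The paper treats this case by a completely different mechanism: one takes a bicollar $W\times(-\infty,\infty)$ of the frontier $W$ of $V$, with $V\cap(W\times(-\infty,\infty))=W\times[0,\infty)$, and applies the Seeley-type linear extension operator of Lemma~\ref{lem:Seeley} to push sections across $W$, then extends by $Z$ on the rest of $M-V$. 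Linearity of the Seeley operator also guarantees that boundary-tangency is preserved (the normal component extends to zero). You should add this second case to complete the proof.
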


\begin{proof} Suppose first that $V$ has positive codimension. Let
$\nu_\epsilon(V)$ denote the subspace of the normal bundle of $V$
consisting of vectors of length $\epsilon$, and let $e\colon
\nu_\epsilon(V)\to M$ be the exponentiation map. For $\epsilon$
sufficiently small, $e$ is a diffeomorphism onto a tubular neighborhood of
$V$ in $M$. Since the metric on $M$ is a product near the boundary, and $V$
meets $\partial M\times \I$ in $\I$-fibers, the fibers of $\nu_\epsilon(V)$
are carried into the submanifolds $\partial{M}\times \{t\}$ near the
boundary.

Suppose that $v\in T_x(M)$ and that $\Exp(v)$ is defined. For all $u\in
T_x(M)$ define $P(u,v)$ to be the vector that results from parallel
translation of $u$ along the path that sends $t$ to~$\Exp(tv)$, $0\leq
t\leq 1$. In particular, $P(u,Z(x))= u$ for all $u$. Let $\alpha\colon
M\to [0,1]$ be a smooth function which is identically~1 on $V$ and
identically~0 on $M-e(\nu_{\epsilon/2}(V))$. Define $k\colon
\mathcal{X}(V,TM)\to\mathcal{X}^L(M,TM)$ by
\[k(X)(x)=
\begin{cases}
\alpha(x)P(X(\pi(e^{-1}(x))),e^{-1}(x))&\text{for $x\in e(\nu_{\epsilon}(V))$}\\
Z(x)&\text{for $x\in M-e(\nu_{\epsilon/2}(V))$}
\end{cases}\]

\noindent For $x\in V$, $e^{-1}(x)= Z(x)$ and $\alpha(x)= 1$, so
$k(X)(x)= X(x)$. Similarly, $k(X)(x)=Z(x)$ for $x\in M-L$. For $x\in
\partial M$, $\pi(e^{-1}(x))$ is also in $\partial M$, so
$X(\pi(e^{-1}(x)))$ is tangent to the boundary. Since the metric is a
product near the boundary, $P(X(\pi(e^{-1}(x))),e^{-1}(x))$ is also tangent
to the boundary. Therefore $k(X)\in\mathcal{X}^L(M,TM)$.

Assume now that $V$ has codimension zero, so that its frontier $W$ is a
properly embedded submanifold. Fix a tubular neighborhood $W\times
(-\infty,\infty)$, contained in $L$, with $V\cap (W\times
(-\infty,\infty))=W\times [0,\infty)$. As in 
\index{Extension Lemma!Seeley}Lemma~\ref{lem:Seeley}, there
is a continuous linear extension map $E\colon \mathcal{X}(V,TM)\to
\mathcal{Y}(V\cup (W\times (-\infty,\infty)), TM)$. Note that since $M$ may
have boundary, it is necessary to use the half-space version of reference
\cite{Seeley} at points of $V\cap \partial M$. The extended vector fields
are $Z$ on $W\times [1,\infty)$, so extend using $Z$ on $M- (V\cup
(W\times (-\infty,\infty)))$. At points of $\partial M$, the component of
each vector in the direction perpendicular to $\partial M$ is $0$, so 
since $E$ is linear, the extended component is also $0$ and therefore the
extended vector field is also tangent to the boundary. This defines
$k\colon \mathcal{X}(V,TM)\to \mathcal{X}^L(M,TM)$.

The final sentence of the proof holds provided that we choose the tubular
neighborhoods small enough to have fibers contained in $S$ at points in
$V\cap \partial M$, or in $\partial M-S$ at points in $V\cap (\partial
M-S)$.
\end{proof}

\begin{lemma}[Exponentiation Lemma]\index{Palais method!Exponentiation
Lemma}\indexstate{Exponentiation Lemma}
Assume that the metric on $M$ is a
product near the boundary, and let $K$ be a compact subset of $M$. Then
there exists a neighborhood $U$ of $Z$ in $\mathcal{X}^K(M,TM)$ such that
$\TExp(X)$ is defined for all $X\in U$, and $\TExp$ carries $U$ into
$\Diff^KM)$.\par
\label{lem:exponentiation}
\end{lemma}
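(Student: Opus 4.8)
The plan is to deduce this from the Fr\'echet manifold structure on $\Diff(M)$ already established in Theorem~\ref{thm:diffs_with_boundary}; the superscript $K$ then costs nothing, because a field that vanishes off $K$ exponentiates to a map that is the identity off $K$.

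The first step is to see that $\TExp(X)$ is defined for every $X$ in a suitable neighborhood of $Z$ in $\mathcal{X}^K(M,TM)$. Outside a fixed collar $\partial M\times\I$, compactness of $K$ gives a uniform $\epsilon>0$ so that $\Exp$ is defined on all vectors of length $<\epsilon$ based at points of $K$, and outside $K$ itself the field is $Z$, with $\Exp(Z(x))=x$ always defined. Inside the collar the metric is a product, so a geodesic issuing from a point at distance $s$ from $\partial M$ can reach $\partial M$ at parameter $\le 1$ --- and so possibly fail to be extendible, ``running out of the manifold'' --- only if the component of its initial velocity pointing toward $\partial M$ has magnitude at least $s$. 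Since every field in $\mathcal{X}(M,TM)$ is tangent to $\partial M$, that component vanishes on $\partial M$, hence on the compact set $K\cap(\partial M\times\I)$ it is bounded by a fixed multiple of the distance to $\partial M$ times the $C^1$ norm of $X$; so for $X$ small in the $C^1$ sense --- in particular for $X$ in a small enough neighborhood of $Z$ in the Fr\'echet topology --- $\Exp(X(x))$ is defined for all $x\in M$ and $\TExp(X)$ is defined. Moreover, since the metric is a product near $\partial M$, the exponential of a vector tangent to $\partial M$ lies in $\partial M$ (the remark after Definition~\ref{def:product_metric}), so $\TExp(X)$ carries $\partial M$ into $\partial M$; thus on this neighborhood $\TExp$ is a continuous map into $\Maps((M,\partial M),(M,\partial M))$ with $\TExp(Z)=1_M$.

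By Theorem~\ref{thm:diffs_with_boundary}, $\Diff(M)$ is an open subset of $\Maps((M,\partial M),(M,\partial M))$. Since $\TExp$ is continuous and $\TExp(Z)=1_M\in\Diff(M)$, the preimage of $\Diff(M)$ under $\TExp$ contains an open neighborhood $U$ of $Z$ in $\mathcal{X}^K(M,TM)$, and $\TExp(U)\subseteq\Diff(M)$.

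Finally, if $X\in\mathcal{X}^K(M,TM)$ then $X=Z$ on $M-K$, so $\TExp(X)(x)=\Exp(Z(x))=x$ for every $x\in M-K$; that is, $\TExp(X)\in\Diff^K(M)$. Hence $\TExp$ carries $U$ into $\Diff^K(M)$, completing the proof. The one step that is not purely formal is the boundary analysis in the second paragraph --- excluding geodesics that leave $M$ through $\partial M$ --- and even that is handled entirely by the product-near-the-boundary hypothesis together with the tangency built into $\mathcal{X}(M,TM)$; once $\TExp(X)$ is known to be a well-defined self-map of the pair $(M,\partial M)$, its being a diffeomorphism is immediate from the openness of $\Diff(M)$ in $\Maps((M,\partial M),(M,\partial M))$ proven in Theorem~\ref{thm:diffs_with_boundary}.
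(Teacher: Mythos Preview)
Your proof is correct and takes a genuinely different route from the paper's. The paper attaches an infinite collar $\partial M\times(-\infty,0]$ to form a complete boundaryless manifold $N$, extends vector fields from $M$ to $N$ via the Seeley extension lemma, and then uses that $\Diff(N)$ is open in $\Maps(N,N)$ (the classical boundaryless case) to produce diffeomorphisms of $N$ that preserve $\partial M$ and hence restrict to $\Diff^K(M)$. You instead stay entirely on $M$: you argue directly from the product structure of the collar and the tangency condition in $\mathcal{X}(M,TM)$ that geodesics with initial velocity $X(x)$ do not run out through $\partial M$ for $X$ small in $C^1$, and then invoke Theorem~\ref{thm:diffs_with_boundary} (which already packages the extension trick) for the openness of $\Diff(M)$.

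What each buys: the paper's approach is uniform with the pattern used throughout Chapters~\ref{ch:foundations} and~\ref{ch:Palais}---whenever a boundary causes trouble, pass to $N$---and sidesteps the $C^1$ estimate on the normal component of $X$ near $\partial M$. Your approach is more self-contained at this point and makes explicit exactly why $\Exp_M(X(x))$ exists, a point the paper's proof leaves implicit (the paper establishes that $\TExp_N(E(X))|_M$ is a diffeomorphism of $M$ but does not directly verify that this coincides with $\TExp_M(X)$, which requires precisely your observation that the geodesics stay in $M$). On the other hand, your argument leans on Theorem~\ref{thm:diffs_with_boundary}, so the Seeley extension is still doing work behind the scenes; the paper's proof of the lemma is independent of that theorem.
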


\begin{proof}
Form a manifold $N$ from $M$ and $\partial M\times (-\infty,0]$ by
identifying $\partial M$ with $\partial M\times \{0\}$, and extending the
metric on $M$ using the product of the complete metric on $\partial M$ and
the standard metric on $(-\infty,0]$. By 
\index{Extension Lemma!Seeley}Lemma~\ref{lem:Seeley}, there is a
continuous linear extension $E\colon \mathcal{Y}(M,TM)\to
\mathcal{Y}(N,TN)$ for which the image is contained in the subspace of
sections that vanish on $\partial M\times (-\infty,-1]$.  Put $L=K\cup
(K\cap \partial M)\times [-1,0]$. As seen in the proof of
Lemma~\ref{lem:Seeley}, the extended vector fields may be chosen to lie in
$\mathcal{Y}^L(N,TN)$. Since $N$ is complete and open, there is a
neighborhood $W$ of $Z$ in $\mathcal{Y}^L(N,TN)$ for which $\Exp(E(Y(x)))$
is defined for all $Y\in W$ and $x\in N$. That is, $\TExp\colon W \to
\Maps(N,N)$ is defined.

Since $\Diff(N)$ is an open subset of $\Maps(N,N)$, $W$ may be chosen
smaller, if necessary, to ensure that it is carried into $\Diff(N)$ by
$\TExp$. Diffeomorphisms obtained from extended vector fields of
$\mathcal{X}(M,TM)$ carry $\partial M$ to $\partial M$, so $\TExp$ carries
the neighborhood $U=\mathcal{X}^K(M,TM)\cap E^{-1}(W)$ of $Z$ in
$\mathcal{X}^K(M,TM)$ into $\Diff^K(M)$.
\end{proof}

We are now set up for the main results of this section. At this point the
reader may wish to review Definitions~\ref{def:diffcontrol}
and~\ref{def:imbcontrol}.
\begin{theorem} Let $V$ be a compact submanifold of $M$, and let 
$S$ be a closed neighborhood in $\partial M$ of $S\cap
\partial V$. Let $L$ be a compact neighborhood of $V$ in $M$. Then
$\Imb^L(V,M\rel S\cap \partial V)$ admits local $\Diff^L(M\rel S)$ 
cross-sections.
\label{palaistheoremB}
\end{theorem}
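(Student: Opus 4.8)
The plan is to apply the Palais method via Proposition~\ref{theoremA}: since $\Diff^L(M\rel S)$ acts by composition on $\Imb^L(V,M\rel S\cap\partial V)$, and the restriction map $\Diff^L(M\rel S)\to \Imb^L(V,M\rel S\cap\partial V)$ is equivariant, it suffices to produce a local $\Diff^L(M\rel S)$ cross-section at the inclusion $i_V$; by the general remark following Definition~\ref{def:localcrosssections}, once we have one at $i_V$ we are done (and Proposition~\ref{prop:inclusion} lets us upgrade from the inclusion to all of $\Imb^L$, but here the statement is exactly about $\Imb^L(V,M\rel S\cap\partial V)$ so we only need the single cross-section). The cross-section $\chi$ will be built by the three-step ``logarithm--extension--exponentiation'' procedure described in the text, using Lemmas~\ref{logarithm}, \ref{extension}, and~\ref{lem:exponentiation} in turn.

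First I would shrink the neighborhood $U$ of $i_V$ from the Logarithm Lemma so that for $j\in U$ we also have $j(V)\subset \interior(L)$ (possible since $L$ is a neighborhood of $V$) and $j$ equals the inclusion on $V\cap S$ and carries $V\cap(\partial M-S)$ into $\partial M-S$ (automatic for $j\in\Imb^L(V,M\rel S\cap\partial V)$). Apply Lemma~\ref{logarithm} to get a continuous $X\colon U\to\mathcal{X}(V,TM)$ with $\Exp(X(j)(x))=j(x)$ and $X(i_V)=Z$; note that because $j$ is the inclusion on $V\cap S$ and $\Exp$ is injective near each point, $X(j)$ vanishes on $V\cap S\supseteq S\cap\partial V$. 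Next apply the Extension Lemma~\ref{extension} with the neighborhood $L$ and the closed set $S$: this yields a continuous linear $k\colon\mathcal{X}(V,TM)\to\mathcal{X}^L(M,TM)$ with $k(Y)|_V=Y$, and $k(Y)$ vanishing on $S$ whenever $Y$ vanishes on $S\cap\partial V$. Composing, $k\circ X\colon U\to\mathcal{X}^L(M,TM)$ is continuous, sends $i_V$ to $Z$, and lands in the subspace of fields vanishing on $S\cup(M-L)$.

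Finally, apply the Exponentiation Lemma~\ref{lem:exponentiation} with the compact set $K=L$: there is a neighborhood $U'$ of $Z$ in $\mathcal{X}^L(M,TM)$ carried by $\TExp$ into $\Diff^L(M)$, and since the fields in the image of $k\circ X$ also vanish on $S$, the same argument shows $\TExp$ sends $U'\cap(\text{those fields vanishing on }S)$ into $\Diff^L(M\rel S)$ — one should check the Exponentiation Lemma is compatible with this extra $S$-control, which it is because $\TExp$ of a field vanishing on a set fixes that set pointwise. Shrink $U$ once more so that $(k\circ X)(U)\subset U'$; then define $\chi(j)=\TExp((k\circ X)(j))$. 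By construction $\chi$ is continuous, $\chi(i_V)=1_M$, $\chi(j)\in\Diff^L(M\rel S)$, and $\chi(j)|_V=\Exp\bigl(k(X(j))|_V\bigr)=\Exp(X(j))=j$, so $\chi(j)\cdot i_V=j$; this is the required local cross-section.

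The main obstacle is bookkeeping the boundary-control data consistently through all three lemmas: one must ensure that the set $S$ (on which everything is required to be the identity) and the set $V\cap(\partial M-S)$ are handled correctly at each stage — in particular that the Logarithm Lemma's $X(j)$ genuinely vanishes on $S\cap\partial V$ (not just that $j$ is the inclusion there), that the Extension Lemma's hypothesis ``$S$ is a closed neighborhood in $\partial M$ of $S\cap\partial V$'' is exactly what we have, and that exponentiating a field vanishing on $S$ produces a diffeomorphism fixing $S$ pointwise while still respecting the decomposition of $\partial M$ into $S$ and its complement. None of these is deep, but getting the quantifiers and the nested neighborhood shrinkings in the right order is where the care is needed; the geometric content is entirely carried by the three preceding lemmas.
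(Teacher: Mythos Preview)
Your construction is exactly the paper's: Logarithm Lemma, then Extension Lemma (with the $S$-control clause), then Exponentiation Lemma, composed to give $\chi = \TExp\circ k\circ X$. The verification that $\chi(j)$ fixes $S$ pointwise and restricts to $j$ on $V$ is also the same.

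There is one slip in the reduction step. You write that the remark following Definition~\ref{def:localcrosssections} shows a cross-section at $i_V$ suffices, and you dismiss Proposition~\ref{prop:inclusion} as unnecessary. That remark only applies when $G$ acts \emph{transitively}; here it is not clear that $\Diff^L(M\rel S)$ acts transitively on $\Imb^L(V,M\rel S\cap\partial V)$ --- an arbitrary $j$ extends to some diffeomorphism of $M$, but not obviously to one supported in $L$ and fixing $S$. The paper's fix is exactly Proposition~\ref{prop:inclusion}: the construction you gave works verbatim with $i(V)$ in place of $V$ for any $i$ in the embedding space, so that proposition upgrades the single cross-section at $i_V$ to local cross-sections everywhere. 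You cited the right tool and then talked yourself out of using it.
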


\begin{proof} By Proposition~\ref{prop:inclusion} it suffices to find a local 
cross-section at the inclusion map $i_V$. Using Lemmas~\ref{logarithm}
and~\ref{extension}, we obtain an open neighborhood $W$ of $i_V$ in
$\Imb(V,M \rel S\cap \partial V)$ and continuous maps $X\colon W\to
\mathcal{X}(V,TM)$ and $k\colon \mathcal{X}(V,TM)\to \mathcal{X}^L(M,TM)$.
By Lemma~\ref{lem:exponentiation}, there is a neighborhood $U$ of $Z$ in
$\mathcal{X}^L(M,TM)$ for which the map $F\colon U\to \Diff^L(M)$ sending
$Y$ to $\TExp(Y)$ is defined and continuous. Choosing a neighborhood $U_1$
of $i_V$ contained in $W\cap (k\circ X)^{-1}(U)$, the function $F\circ
k\circ X\colon U_1\to \Diff^L(M\rel S)$ will be the desired cross-section.

To see that the image of this function lies in $\Diff^L(M\rel S)$, suppose
that $j(x)=x$ for all $x\in V\cap S$. Then $X(j)(x)=Z(x)$ for $x\in V\cap
S$. By the condition in Lemma~\ref{extension}, $k(X(j))(x)= Z(x)$ and
consequently $(FkX(j))(x)=x$ for all $x\in S$.
\end{proof}\index{Palais method|)}

Using Proposition~\ref{theoremA} we obtain immediate corollaries of
Theorem~\ref{palaistheoremB}:

\indexstate{Restriction Theorem!Palais-Cerf!relative version}%
\begin{corollary} Let $V$ be a compact submanifold of
$M$. Let $S\subseteq\partial M$ be a closed neighborhood in $\partial
M$ of $S\cap \partial V$, and $L$ a neighborhood of $V$ in $M$. Then
the restriction $\Diff^L(M\rel S)\to \Imb^L(V,M\rel S)$ is locally
trivial.
\label{palaiscoro2}
\end{corollary}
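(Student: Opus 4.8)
The plan is to deduce Corollary~\ref{palaiscoro2} from Theorem~\ref{palaistheoremB} by invoking the Palais local-triviality machinery (Proposition~\ref{theoremA}), essentially just assembling pieces already in place. First I would set $G = \Diff^L(M\rel S)$, which is a topological group acting on the space of embeddings $\Imb^L(V,M\rel S)$ by composition on the left: if $\phi\in G$ and $j\in\Imb^L(V,M\rel S)$, then $\phi\circ j$ is again an embedding of $V$ into $M$, it still has image with $L$ as a neighborhood (since $\phi$ is supported in $L$, it maps $L$ to $L$ and fixes $M-L$ pointwise, so $\phi(j(V))\subseteq L$ and indeed $L$ remains a neighborhood of the image), and it still agrees with the inclusion on $V\cap S$ and carries $V\cap(\partial M-S)$ into $\partial M - S$ (because $\phi$ is the identity on $S$ and preserves $\partial M - S$). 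So the action is well-defined, and the restriction map $\Diff^L(M\rel S)\to\Imb^L(V,M\rel S)$, $\phi\mapsto\phi|_V$, is exactly the orbit map of the inclusion $i_V$, hence is $G$-equivariant (here one uses that the map is surjective, which follows from condition (ii) in Definition~\ref{def:Imb} together with the fact that diffeomorphisms supported in $L$ fixing $S$ realize all such embeddings — this is essentially the content of Theorem~\ref{palaistheoremB}).

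Next I would observe that Theorem~\ref{palaistheoremB} gives local $\Diff^L(M\rel S)$ cross-sections for $\Imb^L(V,M\rel S\cap\partial V)$. There is a small bookkeeping point: the corollary is stated with $\Imb^L(V,M\rel S)$ rather than $\Imb^L(V,M\rel S\cap\partial V)$. But since $S$ is a closed neighborhood in $\partial M$ of $S\cap\partial V$, an embedding $j$ of $V$ that equals the inclusion on $V\cap S$ equals the inclusion precisely on $V\cap S = V\cap\partial M\cap S$, and — because $S\cap\partial V$ is a union of components of $V\cap\partial M$ by the remark in Definition~\ref{def:imbcontrol} — the conditions ``$j = \mathrm{incl}$ on $V\cap S$'' and ``$j = \mathrm{incl}$ on $V\cap(S\cap\partial V)$ and $j$ carries $V\cap(\partial M - S)$ into $\partial M - S$'' describe the same space $\Imb^L(V,M\rel S)$. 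So the two notations agree and Theorem~\ref{palaistheoremB} applies directly.

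With equivariance and local cross-sections in hand, I would apply Proposition~\ref{theoremA}: the space $X = \Imb^L(V,M\rel S)$ is a $G$-space admitting local cross-sections, and the restriction map is a $G$-equivariant map from the $G$-space $G = \Diff^L(M\rel S)$ (acting on itself by left translation, hence the orbit map of $i_V$ is exactly restriction) into $X$, therefore it is locally trivial. Concretely, over a neighborhood $U$ of a point $j_0$ carrying a cross-section $\chi\colon U\to G$ with $\chi(u)\cdot i_V = u$, the local trivialization sends $(u, \psi)\in U\times(\text{restriction}^{-1}(i_V))$ to $\chi(u)\circ\psi$, with inverse $\phi\mapsto(\phi|_V,\ \chi(\phi|_V)^{-1}\circ\phi)$.

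The only genuine subtlety — and the step I would be most careful about — is the compatibility of all the control data: that the cross-sections produced by Theorem~\ref{palaistheoremB} genuinely land in $\Diff^L(M\rel S)$ (support in $L$ \emph{and} identity on all of $S$, not merely on $S\cap\partial V$), so that the $G$-action and the equivariance are with respect to the \emph{same} group $G$ appearing in the statement. This is exactly what the final paragraph of the proof of Theorem~\ref{palaistheoremB} checks, via the ``moreover'' clause of the Extension Lemma (Lemma~\ref{extension}), so there is nothing new to prove here; one simply has to state it carefully. Everything else is the formal Palais argument, and the proof is correspondingly short.
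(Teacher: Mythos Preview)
Your proposal is correct and takes essentially the same approach as the paper: the paper simply states that Corollary~\ref{palaiscoro2} is an immediate consequence of Theorem~\ref{palaistheoremB} via Proposition~\ref{theoremA}, and your argument unpacks exactly this, with some added care about the well-definedness of the $G$-action and the notational reconciliation between $\Imb^L(V,M\rel S)$ and $\Imb^L(V,M\rel S\cap\partial V)$. The paper omits these checks as routine, but your treatment of them is accurate.
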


\begin{corollary} Let $V$ and $W$ be compact
submanifolds of $M$, with $W\subseteq V$. Let $S\subseteq\partial M$ a
closed neighborhood in $\partial M$ of $S\cap \partial V$, and $L$ a
neighborhood of $V$ in $M$. Then the restriction $\Imb^L(V,M\rel S)
\to\Imb^L(W,M\rel S)$ is locally trivial.
\label{palaiscoro3}
\end{corollary}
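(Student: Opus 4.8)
The plan is to deduce this from Proposition~\ref{theoremA} together with Proposition~\ref{prop:inclusion}, in exactly the same style as Corollary~\ref{palaiscoro2} was obtained from Theorem~\ref{palaistheoremB}. The group $G=\Diff^L(M\rel S)$ acts by composition on the left on the space $\Imb^L(V,M\rel S)$, and the restriction map $\Imb^L(V,M\rel S)\to\Imb^L(W,M\rel S)$ is clearly $G$-equivariant. By Proposition~\ref{theoremA}, it therefore suffices to show that $\Imb^L(W,M\rel S)$ admits local $G$ cross-sections. By Proposition~\ref{prop:inclusion} (applied with $V$ there taken to be $W$, and $I(W,M)$ the space $\Imb^L(W,M\rel S)$), it is enough to produce, for each $i\in\Imb^L(W,M\rel S)$, a local $G$ cross-section for $\Imb^{L}(i(W),M\rel S)$ at the inclusion map of $i(W)$.

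First I would observe that $i(W)$ is again a compact submanifold of $M$, contained in the interior of $L$, and meeting $\partial M\times\I$ in $\I$-fibers (since $i$ carries the collar structure correctly, being the restriction of a diffeomorphism and lying in $\Imb^L$). Moreover $S$ is a closed neighborhood in $\partial M$ of $S\cap\partial(i(W))$, because $S\cap\partial(i(W))=i(S\cap\partial W)$ and $i$ equals the inclusion on $W\cap S$. Thus Theorem~\ref{palaistheoremB} applies verbatim with the submanifold $i(W)$ in place of $V$: it gives that $\Imb^L(i(W),M\rel S\cap\partial(i(W)))$ admits local $\Diff^L(M\rel S)$ cross-sections, in particular at the inclusion map of $i(W)$. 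Note that $\Imb^L(i(W),M\rel S)=\Imb^L(i(W),M\rel S\cap\partial(i(W)))$ once one is restricting diffeomorphisms that already fix $S$ pointwise, so this is the cross-section we need; feeding it into Proposition~\ref{prop:inclusion} yields local $G$ cross-sections for $\Imb^L(W,M\rel S)$, and then Proposition~\ref{theoremA} finishes the proof.

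The only point requiring any care---and the one I would expect to be the main obstacle---is bookkeeping with the neighborhood control $L$ and the boundary control $S$ under the change of submanifold from $W$ to $i(W)$: one must check that the hypotheses of Theorem~\ref{palaistheoremB} (\emph{$L$ a compact neighborhood of the submanifold}, \emph{$S$ a closed neighborhood in $\partial M$ of its intersection with the boundary of the submanifold}) are preserved, and that the ``$\rel S$'' and ``$\rel S\cap\partial V$'' conventions of Definition~\ref{def:imbcontrol} match up so that the composite $\chi\circ Y_1$ from Proposition~\ref{prop:inclusion} lands in $\Diff^L(M\rel S)$ rather than in some larger group. All of this is routine once the definitions are unwound, but it is exactly the kind of detail the paper warns the reader to check rather than skip. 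No new analytic input (no Logarithm/Extension/Exponentiation Lemma) is needed here beyond what already goes into Theorem~\ref{palaistheoremB}.
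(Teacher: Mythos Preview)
Your argument is correct and follows the paper's approach: the paper simply notes that Corollaries~\ref{palaiscoro2} and~\ref{palaiscoro3} are immediate consequences of Theorem~\ref{palaistheoremB} together with Proposition~\ref{theoremA}, since the restriction map is $\Diff^L(M\rel S)$-equivariant and Theorem~\ref{palaistheoremB} (applied with $W$ in the role of the submanifold) already supplies local $\Diff^L(M\rel S)$ cross-sections for $\Imb^L(W,M\rel S)$ at \emph{every} point. Your explicit pass through Proposition~\ref{prop:inclusion} is therefore redundant---that reduction to the inclusion map is already absorbed into the statement of Theorem~\ref{palaistheoremB}---and the collar-compatibility worry you flag for $i(W)$ is likewise already handled inside that theorem's proof, so you need not re-verify it here.
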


\section{The space of images}
\label{sec:images}

As an initial application of these methods, we examine the space of
images. This is well-known material (see for example Section~44
of~\cite{Kriegl-Michor}), although it seems to be rarely examined in the
bounded case. In the next definition, 
\indexsymdef{DiffMV}{$\Diff(M,V)$}$\Diff(M,V)$ denotes the
subgroup of $\Diff(M)$ consisting of the diffeomorphisms that take the
submanifold $V$ onto~$V$.

\begin{definition} Let $V$ be a submanifold of $M$ as in
Definition~\ref{def:Imb}. The space 
\indexdef{space of images of a submanifold}\indexsymdef{ImgVM}{$\Img(V,M)$}$\Img(V,M)$ of \textit{images} of $V$
in $M$ is the space of orbits $\Diff(M)/\Diff(M,V)$.
\label{def:images}
\end{definition}
For $j,k\in \Diff(M)$, $j=k$ in $\Img(V,M)$ if and only if
$j(V)=k(V)$. Consequently, we may write elements of $\Img(V,M)$ as $j(V)$
with $j\in \Diff(M,V)$.

The next result is basically Theorem~44.1 of~\cite{Kriegl-Michor}.
\begin{theorem} Let $V$ be a submanifold of a compact manifold $M$.
\begin{enumerate}
\item[(i)] If $V$ has positive codimension, then $\Img(V,M)$ is a Fr\'echet
manifold, locally modeled on the Fr\'echet space of sections from $V$ to its normal
bundle in $M$. Moreover, $\Diff(M)$ is the total space of a locally
trivial principal bundle with structure group $\Diff(M,V)$,
whose base space is $\Img(V,M)$.
\item[(ii)] If $V$ has codimension zero, and $W$ is the frontier of $V$ in
$M$, then the restriction map $\Img(V,M)\to\Img(W,M)$ is either a two-sheeted
covering map or a homeomorphism, according to whether or not there exists a
diffeomorphism of $M$ that preserves $W$ and interchanges $V$ and 
$\overline{M-V}$.
\end{enumerate}\par
\label{thm:space_of_images}
\end{theorem}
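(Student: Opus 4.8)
The plan is to reduce both parts to an application of the Palais-Cerf restriction results of the previous section (Theorem~\ref{palaistheoremB} and its corollaries), combined with elementary covering-space considerations. For part~(i), since $V$ has positive codimension, the key geometric input is that $\Img(V,M)$ should be locally modeled on the space $\Gamma(\nu)$ of sections of the normal bundle $\nu$ of $V$ in $M$. First I would produce the local chart: take $\epsilon$ small enough that exponentiating the $(<\epsilon)$-disk bundle of $\nu$ gives a tubular neighborhood $e\colon \nu_\epsilon(V)\to M$ (using the product-near-the-boundary metric so that the fibers stay in the $\partial M\times\{t\}$, exactly as in the proof of Lemma~\ref{extension}), and send a small section $s$ of $\nu_\epsilon$ to its image $\{\Exp(s(x)):x\in V\}\in\Img(V,M)$. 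That this is a homeomorphism onto a neighborhood of $V$ in $\Img(V,M)$ is the content of Theorem~44.1 of~\cite{Kriegl-Michor} in the closed case; the bounded case follows because the chart respects the collar structure. To get the principal bundle statement, I would apply Corollary~\ref{palaiscoro2} (with $S=\emptyset$, $L=M$) to conclude that $\Diff(M)\to\Imb(V,M)$ is locally trivial, hence so is the further quotient $\Diff(M)\to\Img(V,M)=\Diff(M)/\Diff(M,V)$; local cross-sections for the $\Diff(M)$-action on $\Img(V,M)$ are obtained by composing the chart above with the Logarithm and Extension Lemmas to lift a nearby image back to a diffeomorphism, so Proposition~\ref{theoremA} applies and the bundle is principal with structure group $\Diff(M,V)$.

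For part~(ii), $V$ is codimension zero with frontier $W$ a properly embedded codimension-one submanifold, and I want to analyze the restriction map $\Img(V,M)\to\Img(W,M)$ sending $j(V)$ to $j(W)$. This map is well-defined and surjective onto its image. The main point is to identify its fibers. If $j_1(V)$ and $j_2(V)$ have the same frontier $F=j_1(W)=j_2(W)$, then $M$ is cut by $F$ into pieces, and $j_1(V)$, $j_2(V)$ are each unions of closures of components of $M-F$ with frontier exactly $F$; so there are at most two possibilities for such a union that have frontier precisely $F$ (namely $j_1(V)$ and its "complementary" region $\overline{M-j_1(V)}$), and these two are distinct iff there is a diffeomorphism of $M$ preserving $F$ and interchanging them. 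Thus the map is at most two-to-one. To upgrade "at most two-to-one" to "covering map or homeomorphism," I would show it is a local homeomorphism: near a given image $j(V)$, a nearby image $j'(V)$ has frontier $j'(W)$ close to $j(W)$, and conversely, using the part~(i) chart for $\Img(W,M)$ (applied to the codimension-one submanifold $W$) together with the one-sided collar of $W$ inside $V$, one can push a small normal section of $W$ to a small variation of $V$ on the appropriate side; this gives a continuous local section of the restriction map, so it is an open map with discrete fibers of size $\le 2$, hence a covering of the stated type. Whether the two sheets are globally joined or not is precisely the condition on the existence of the interchanging diffeomorphism, which one checks is a well-defined dichotomy because any such diffeomorphism can be isotoped to carry one tubular neighborhood of $W$ to another.

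The step I expect to be the main obstacle is establishing that the chart in part~(i) really is a local homeomorphism in the bounded, and especially the codimension-zero-frontier, setting --- i.e., that every image $j(V)$ sufficiently $\Cinf$-close to $V$ is the exponential-image of a genuine \emph{section} of the normal bundle, with the collar constraints respected. In the interior this is standard, but near $\partial M$ one must check that the product-near-the-boundary hypothesis forces $j(V)$ to meet the collar in $\I$-fibers (so that the normal exponential applies unchanged there), and for part~(ii) one must verify that the one-sided collar of $W$ inside $V$ varies continuously with the image; this is where the foundational care about collars from Section~\ref{metrics} is actually used. Once the chart and its continuity are in hand, the covering-space dichotomy in part~(ii) is a routine consequence of the combinatorics of how $F$ separates $M$, and part~(i)'s principal bundle statement is immediate from Corollary~\ref{palaiscoro2} and Proposition~\ref{theoremA}.
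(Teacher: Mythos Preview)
Your proposal is essentially correct and, for part~(i), follows the paper's approach closely: the normal-bundle chart you describe is exactly how the paper obtains both the Fr\'echet-manifold structure and (via the Extension and Exponentiation Lemmas) the local $\Diff(M)$ cross-sections needed for Proposition~\ref{theoremA}. Your detour through Corollary~\ref{palaiscoro2} is unnecessary, since the cross-section for $\Img(V,M)$ must be built directly from the image (by reading off the unique normal vector at each $x\in V$ whose exponential lies in $g(V)$), not factored through $\Imb(V,M)$; but you recover this in your next sentence.

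For part~(ii) the paper takes a cleaner route than your local-section construction. Since $\Img(V,M)=\Diff(M)/\Diff(M,V)$ and $\Img(W,M)=\Diff(M)/\Diff(M,W)$ by definition, and $\Diff(M,V)\subseteq\Diff(M,W)$ automatically (the frontier of $V$ is $W$), the whole question reduces to computing the index of this inclusion. The paper observes that any $g\in\Diff(M,W)$ must send $V$ to either $V$ or $\overline{M-V}$ (this is your ``at most two possibilities'' claim, which for connected $M$ follows from the fact that a bipartition of the components of $M-W$ with every component of $W$ separating the two sides is a $2$-coloring of a connected graph, hence unique up to swapping colors). So the index is $1$ or $2$, and in the index-$2$ case a fixed interchanging element $H_0$ defines a free involution $j(V)\mapsto j(H_0(V))$ on $\Img(V,M)$ whose quotient is $\Img(W,M)$. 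This bypasses your hand-built local sections via one-sided collars: the covering structure comes for free from the coset description and part~(i) applied to~$W$. Your approach works too, but is more laborious; the concern you flag about collars varying continuously with the image is not really needed once you use the group quotient directly.
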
\par
\begin{proof} Assume first that $V$ has positive codimension.
The map $\Imb(V,M)\to \Img(V,M)$ is $\Diff(M)$-equivariant, so to prove
that $\Imb(V,M)\to \Img(V,M)$ is locally trivial, it it suffices to find
local $\Diff(M)$ cross-sections at points in $\Img(V,M)$. Since $\Diff(M)$
acts transitively on $\Img(V,M)$, it suffices to find a local cross-section
at $1_M\Diff(M,V)$. 

Let $\nu(V)$ be the normal bundle of $V$. For some $\epsilon$, $\Exp\colon
\nu_\epsilon(V)\to M$ is a tubular neighborhood of $V$, where
$\nu_\epsilon(V)$ is the space of vectors of length less than
$\epsilon$. For each $g\Diff(M,V)$ in some neighborhood $U$ of
$1_M\Diff(M,V)$, $g(V)$ meets each fiber of the tubular neighborhood in a
single point. So at each $x\in V$, there is a unique normal vector
$X(g)(x)$ in the fiber $\nu_x(V)$ of the normal bundle of $V$ at $x$ such
that $\Exp(X(g)(x)) = g(V)\cap \Exp(\nu_x(V))$. This defines $X\colon U\to
\mathcal{X}(V,TM)$. Note that $X^{-1}$ defines a local chart for the
Fr\'echet structure of $\Img(V,M)$ at $i$, showing that $\Imb(V,M)$ is a
Fr\'echet manifold.

By the Extension Lemma~\ref{extension}, there is a continuous linear map
$k\colon \mathcal{X}(V,TM)\to \mathcal{X}(M,TM)$ such that $k(Y)(x)=
Y(x)$ for all $x$ in $V$ and all $Y$ in $\mathcal{X}(V,TM)$.  By the
Exponentiation Lemma~\ref{lem:exponentiation}, there is a 
neighborhood $K$ of $Z$ in $\mathcal{X}(M,TM)$ such that $\TExp(Y)$ is
defined for all $Y\in W$, and $\TExp$ carries $W$ into $\Diff(M)$.
Provided that our original $U$ was selected small enough that
$k(X(U))\subset K$, the composition $\TExp\circ k\circ X\colon U\to
\Diff(M)$ is the desired local cross-section. 

Suppose now that $V$ has codimension zero, with frontier $W$. If there does
not exist an element of $\Diff(M,W)$ that interchanges $V$ and
$\overline{M-V}$. 
then $\Diff(M,W)=\Diff(M,V)$, and the restriction map sending $j\Diff(M,V)$
to $j|_W\Diff(M,W)$ defines a homeomorphism between $\Img(V,M)$ and
$\Img(W,M)$. In the remaining case, we fix an element $H_0\in \Diff(M,W)$
that interchanges $V$ and $\overline{M-V}$.

Define $\rho\colon \Img(V,M)\to \Img(W,M)$ by sending $j\Diff(M,V)$ to
$j|_W\Diff(M,W)$, i.~e.~sending $j(V)$ to $j(W)$. This is well-defined,
since if $j_1(V)=j_2(V)$ then $j_1(W)=j_2(W)$.

A free involution $\tau$ on $\Img(V,M)$ is defined by sending $j(V)$ to
$j(H_0(V))$. To see that $\Img(W,M)$ is the quotient of $\Img(V,M)$ by this
involution, let $j_1(V),j_2(V)\in \Img(V,M)$ and suppose that
$j_1(W)=j_2(W)$. Then either $j_1(V)=j_2(V)$ or
$j_1(V)=j_2(\overline{M-V})$. The latter case says exactly that
$j_1(V)=j_2(H_0(V))$.
\end{proof}

\section{Projection of fiber-preserving diffeomorphisms}
\label{project}

Throughout this section and the next, it is understood that $p\colon E\to
B$ is a locally trivial smooth map as in Section~\ref{exponent}, such that
the metric on $B$ is a product near $\partial B$, and the metric on $E$ is
a product near $\partial_hE$ such that the $\I$-fibers of $\partial_hE\times
I$ are vertical.  When $W$ is a vertical submanifold of $E$, it is then
automatic that $W$ meets the collar $\partial_hE\times \I$ in $\I$-fibers.
By rechoosing the metric on $B$, we may assume that $p(W)$ meets the collar
$\partial B\times \I$ in $\I$-fibers. From
Definition~\ref{def:fibered_submanifolds}, we have the notations $\partial_hW=
W\cap\partial_hE$ and $\partial_vW= W\cap\partial_vE$.

We now examine the fundamental lemmas of \cite{P} in the fiber-preserving
case. Lemma~\ref{logarithm} adapts straightforwardly using the aligned
exponential from Definition~\ref{def:aligned} and the aligned vector fields
from Definition~\ref{def:aligned_vector_fields}.

\begin{lemma}[Logarithm Lemma for fiber-preserving maps]\indexstate{Logarithm
Lemma!for fiber-preserving maps}
Assume that $p\colon E\to B$ has compact fiber, and suppose that $W$ is a
compact vertical submanifold of $E$. Then there are an open neighborhood
$U$ of the inclusion $i_W$ in $\Imb_f(W,E)$ and a continuous map $X\colon U
\to\mathcal{A}(W,TE)$ such that for all $j\in U$, $\Exp_a((X(j))(x))$ is
defined for all $x\in W$ and $\Exp_a((X(j))(x))= j(x)$ for all $x\in
W$. Moreover, $X(i_W)=Z$.
\label{lem:aligned_logarithm}
\end{lemma}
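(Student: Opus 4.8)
The plan is to mimic the proof of the ordinary Logarithm Lemma (Lemma~\ref{logarithm}), replacing the exponential $\Exp$ by the aligned exponential $\Exp_a$ and the space $\mathcal{X}(V,TM)$ by $\mathcal{A}(W,TE)$. The basic idea is to show that for $j$ close to the inclusion $i_W$, the image $j(x)$ of each point $x\in W$ lies in a small ``$\Exp_a$-ball'' about $x$ on which $\Exp_a$ is a diffeomorphism onto a neighborhood, so that there is a unique small tangent vector $X(j)(x)\in T_x(E)$ with $\Exp_a(X(j)(x))=j(x)$; one then checks that $X(j)$ is aligned and satisfies the required boundary conditions, and that $j\mapsto X(j)$ is continuous with $X(i_W)=Z$.

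First I would fix, for each $x\in W$, a small $\epsilon>0$ such that the map $\omega\mapsto \Exp_a(\omega)$, defined on the set of vectors $\omega\in T_x(E)$ with $\abs{\omega_v}<\epsilon$ and $\abs{p_*(\omega)}<\epsilon$ (the domain on which $\Exp_a$ is defined, by the remark after Definition~\ref{def:aligned}), is a diffeomorphism onto a neighborhood $W_x$ of $x$ in $E$. This requires knowing that $\Exp_a$ is a local diffeomorphism at the zero vector $Z(x)$; this follows because $\Exp_a$ differs from the genuine exponential of a suitable locally-product metric only in the choice of how the horizontal direction is spread out, and its differential at $Z(x)$ is still the identity on $T_x(E)$ once one identifies the vertical and horizontal summands correctly --- compactness of the fiber guarantees $\epsilon$ can be chosen locally uniformly, hence (again by compactness of $W$) uniformly over $W$. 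Next, choose a neighborhood $U$ of $i_W$ in $\Imb_f(W,E)$ small enough that $j\in U$ implies $j(x)\in W_x$ for all $x\in W$; for $j\in U$, define $X(j)(x)$ to be the unique vector in the above ball with $\Exp_a(X(j)(x))=j(x)$.

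The remaining points are: (a) $X(j)$ is aligned, i.e.\ $X(j)\in\mathcal{A}(W,TE)$; (b) $X(j)$ satisfies the boundary tangency conditions (2) of Definition~\ref{def:aligned_sections}, and $\TExp_a(X(j))$ is defined; (c) $j\mapsto X(j)$ is continuous and $X(i_W)=Z$. For (a), since $j$ is fiber-preserving, $p\circ j$ factors through $p$, so $p(j(x))$ depends only on $p(x)$; because $p(\Exp_a(\omega))=\Exp(p_*(\omega))$ (the remark after Definition~\ref{def:aligned}), we get $\Exp(p_*(X(j)(x)))=p(j(x))$ depending only on $p(x)$, and since $\Exp$ is injective on the relevant small ball in $T_{p(x)}(B)$, the projected vector $p_*(X(j)(x))$ depends only on $p(x)$ --- this is exactly alignment. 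For (b): because the metric is a product near $\partial_hE$ with vertical $\I$-fibers and $W$ meets the collar in $\I$-fibers, and because $j$ preserves $\partial_hE$ and $\partial_vE$, the small vector $X(j)(x)$ landing in $W_x$ must be tangent to $\partial_hE$ at points of $\partial_hW$ and to $\partial_vE$ at points of $\partial_vW$, by the same local-product argument as in Lemma~\ref{logarithm}; and $\TExp_a(X(j))$ is defined precisely because $X(j)(x)$ lies in the domain of $\Exp_a$ for every $x$. Part (c) is routine: $X(j)(x)$ depends continuously on $j(x)$ through the inverse of $\Exp_a$ (a diffeomorphism on the chosen ball), uniformly in $x$, giving continuity $U\to\mathcal{A}(W,TE)$ in the $\Cinf$-topology; and $\Exp_a(Z(x))=x$ forces $X(i_W)=Z$.

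The step I expect to be the real obstacle is (a) together with the preliminary verification that $\Exp_a$ is a bona fide local diffeomorphism near the zero section with the claimed uniformity over the compact submanifold $W$: one has to be careful that the horizontal-lift construction in Definition~\ref{def:aligned} varies smoothly, that the domain of $\Exp_a$ (which involves both $\abs{\omega_v}$ and $\abs{p_*\omega}$ being small) can be taken uniform, and that the inverse correctly recovers an \emph{aligned} vector field rather than merely a section. Everything else is a direct transcription of the proof of Lemma~\ref{logarithm}, using Definitions~\ref{def:aligned} and~\ref{def:aligned_sections} and the compactness of the fiber to ensure horizontal lifts exist and behave uniformly.
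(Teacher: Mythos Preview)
Your proposal is correct and follows essentially the same approach as the paper's proof: invert $\Exp_a$ on a small ball to define $X(j)(x)$, then verify alignment from the fiber-preserving property of $j$ via $p(\Exp_a(\omega))=\Exp(p_*(\omega))$. Your version is in fact more careful than the paper's, which is quite terse and does not explicitly address the boundary-tangency conditions or the local-diffeomorphism property of $\Exp_a$ that you flag as the main obstacle (the paper simply asserts the latter elsewhere, in Section~\ref{sec:fibered_structures}).
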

\begin{proof}
We adapt the argument in Lemma~\ref{logarithm}, using the aligned
exponential. Choose $\epsilon$ small enough so that for all $x\in W$,
$\Exp$ carries the $\epsilon$-ball about $0$ in $T_x(E)$ (that is, the
portion of this $\epsilon$-ball on which it is defined, which may be as
small as a closed quarter-ball when $x\in \partial_vE\cap \partial_hE$)
diffeomorphically to a neighborhood $W_x$ of $x$ in $E$. Choose a
neighborhood $U$ of $i_W$ in $\Imb_f(W,E)$ so that if $j\in U$ then
$j(x)\in W_x$. For $j\in U$ define $X(j)(x)$ to be the unique vector in
$T_x(M)$ of length less than $\epsilon$ for which $\Exp_a(X(j)(x))$
equals~$j(x)$. Since $p\circ j(x)=p\circ j(y)$ whenever $p(x)=p(y)$, and
$j$ is close to the inclusion, $X\in \mathcal{A}(W,TE)$.
\end{proof}

\begin{lemma}[Extension Lemma for fiber-preserving maps]\index{Extension
Lemma! for fiber-preserving maps}\label{lem:fiber_preserving_aligned_extension}
Let $W$ be a compact vertical submanifold of $E$. Let $T$ be a closed
fibered neighborhood in $\partial_vE$ of $T\cap\partial_vW$, and let
$L\subseteq E$ be a neighborhood of $W$. Then there is a continuous linear
map $k\colon \mathcal{A}(W,TE) \to {\mathcal{A}}^L(E,TE)$ such that
$k(X)(x)= X(x)$ for all $x\in W$ and all $X\in \mathcal{A}(W,TE)$. If
$X(x)= Z(x)$ for all $x\in T\cap\partial_vW$, then $k(X)(x)= Z(x)$ for
all $x\in T$. Furthermore, $k(\mathcal{V}(W,TE))\subset
\mathcal{V}^L(E,TE)$.
\end{lemma}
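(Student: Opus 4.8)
The plan is to adapt the Extension Lemma~\ref{extension} to the fibered setting, handling the horizontal and vertical parts of an aligned vector field separately and then recombining, rather than treating the field as an undifferentiated collection of real-valued functions. First I would treat the case where $W$ has positive codimension in $E$. Given $X\in\mathcal{A}(W,TE)$, decompose $X=X_h+X_v$ into its horizontal and vertical parts; since $W$ is vertical, its normal bundle in $E$ maps isomorphically under $p_*$ onto the normal bundle of $p(W)$ in $B$ along horizontal vectors, and the vertical directions normal to $W$ inside a fiber form the rest. For the vertical part $X_v$, exponentiate the $(<\epsilon)$-length vectors in the vertical normal directions, obtaining a fiberwise tubular neighborhood (this stays inside fibers because the $\I$-fibers of $\partial_hE\times\I$ are vertical), and use parallel transport along vertical geodesics together with a bump function $\alpha$ as in Lemma~\ref{extension}, so that the extended field remains vertical; this yields $k(X_v)\in\mathcal{V}^L(E,TE)$. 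For the horizontal part, push $X_h$ down via $p_*$ to an honest vector field along $p(W)$ in $B$, apply the ordinary Extension Lemma~\ref{extension} on the base to obtain a vector field on $B$ supported in $p(L)$, then take its horizontal lift to $E$; because horizontal lifts of vectors are well-defined and the construction on $B$ is canonical and linear, the result is an aligned field whose projected field is the extended base field, and it vanishes outside $p^{-1}(p(L))\cap(\text{support})$. Set $k(X)=(\text{horizontal lift of base extension of }p_*X_h)+k(X_v)$; linearity of $k$ follows from linearity of $p_*$, of the base extension, of horizontal lifting, and of the vertical construction.

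The remaining tasks are the boundary-control assertions and the codimension-zero case. For the boundary: condition (2) of Definition~\ref{def:aligned_sections} requires tangency to $\partial_hE$ along $\partial_hW$ and to $\partial_vE$ along $\partial_vW$. Tangency to $\partial_hE$ is inherited because the metric is a product near $\partial_hE$ with vertical $\I$-fibers, so parallel transport and exponentiation preserve the $\partial_hE\times\{t\}$ slices, exactly as in Lemma~\ref{extension}; tangency to $\partial_vE$ along $\partial_vW$ follows from the base-level statement in Lemma~\ref{extension} applied over $\partial B$ (the projected field is tangent to $\partial B$, hence its horizontal lift is tangent to $\partial_vE=p^{-1}(\partial B)$) together with the corresponding vertical estimate. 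For the relative statement, one chooses the tubular neighborhoods fine enough that their fibers lie in $T$ over $T\cap\partial_vW$ and in $\partial_vE-T$ over $\partial_vW-T$, and uses the $S$-relative clause of the base Extension Lemma with $S$ taken to be $p(T)$; then $X=Z$ on $T\cap\partial_vW$ forces $p_*X=Z$ on $p(T)\cap p(\partial_vW)$ and $X_v=Z$ there, so $k(X)=Z$ on all of $T$. The codimension-zero case is handled as in Lemma~\ref{extension}: the frontier of $W$ is a properly embedded vertical submanifold, one fixes a fibered collar $W'\times(-\infty,\infty)\subset L$ with vertical $(-\infty,\infty)$-fibers, and applies the Seeley-type extension of Lemma~\ref{lem:Seeley} to the horizontal and vertical components along these fibers, using the half-space version at points of $\partial_vE$.

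The main obstacle I expect is verifying that the horizontal part of the extended field is genuinely aligned and well-defined globally: horizontal lifting is only canonical once one knows horizontal lifts of paths in $B$ exist and are unique, which the hypotheses on the metric (product near $\partial_hE$ with vertical collar fibers, compact fiber) guarantee, but one must check that lifting a vector field on $B$ — not just a single geodesic — produces a smooth, aligned vector field on $E$, and that its support behaves correctly under $p^{-1}$ of the support on $B$. Once that bookkeeping is in place, the rest is a careful but routine transcription of the arguments in Lemmas~\ref{extension} and~\ref{lem:Seeley}.
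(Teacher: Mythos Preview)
Your overall architecture matches the paper's: decompose an aligned field into horizontal and vertical parts, handle the horizontal part by projecting to $B$, applying Lemma~\ref{extension} there with $S=p(T)$, and taking horizontal lifts. That part is fine, as is your treatment of the relative clause on~$T$.

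The gap is in your handling of the vertical part. You write that the normal bundle of $W$ in $E$ has ``vertical directions normal to $W$ inside a fiber,'' and you propose to build a ``fiberwise tubular neighborhood'' by exponentiating vertical normal vectors and then parallel-transporting $X_v$ along vertical geodesics. But $W$ is \emph{vertical} in the sense of Definition~\ref{def:fibered_submanifolds}: it is a union of fibers, so each fiber $p^{-1}(p(x))$ through a point $x\in W$ lies entirely inside $W$. Consequently $V_x(E)\subset T_xW$, and the normal space to $W$ at $x$ is purely horizontal --- it is exactly the horizontal lift of the normal space to $p(W)$ in $B$. There are no vertical normal directions, no fiberwise tubular neighborhood, and no vertical geodesics along which to transport. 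The same misconception recurs in your codimension-zero case, where you speak of a collar of the frontier with ``vertical $(-\infty,\infty)$-fibers''; the frontier of $W$ is again a union of fibers, so its normal, and hence the collar direction, is horizontal.

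The fix is what the paper actually does: extend $X_v$ using the ordinary Extension Lemma~\ref{extension} (whose normal tubular neighborhood is horizontal here), obtaining some field on $E$ that agrees with $X_v$ on $W$ but need not be vertical off $W$; then project pointwise to the vertical subspace $V_x(E)$. The projection is continuous and linear, it is the identity on $W$ (where $X_v$ was already vertical), and it preserves the vanishing on $T$ and outside $L$. With this correction your proof goes through and coincides with the paper's.
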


Figure~\ref{fig:T} illustrates the neighborhood $T$ in
Lemma~\ref{lem:fiber_preserving_aligned_extension}.
\index{figures!figure1@$T$ a neighborhood of $T\cap S$}%
\begin{figure}
\labellist
\pinlabel $E$ [B] at 24 127
\pinlabel $B$ [B] at 24 26
\pinlabel $W$ [B] at 60 117
\pinlabel $p(W)$ [B] at 60 16
\pinlabel $T$ [B] at 119 60
\pinlabel $T$ [B] at 101 46
\pinlabel $p(T)$ [B] at 126 18
\pinlabel $p(T)$ [B] at 108 4
\endlabellist
\begin{center}
\includegraphics[width=5cm]{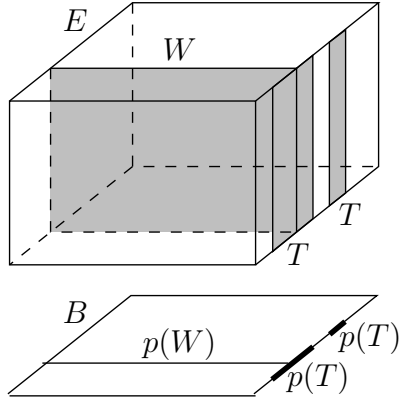}
\caption{The neighborhood $T$ in Lemma~\ref{def:aligned_vector_fields}.}
\label{fig:T}
\end{center}
\end{figure}

\begin{proof}[Proof of Lemma~\ref{lem:fiber_preserving_aligned_extension}]
For an aligned vector field $X$, we use Lemma~\ref{extension} followed by
projection to the vertical components to extend the vertical part. When
$X=Z$ on $T\cap \partial_vW$, the extension and hence its vertical
projection are $Z$ on $T$. For the horizontal part, project to $B$, extend
using Lemma~\ref{extension}, and take horizontal lifts. The extensions in
$B$ are selected to vanish outside a neighborhood $L'$ whose inverse image lies
in $L$. In addition, taking $S=p(T)$ in Lemma~\ref{extension}, the
extension in $B$ is $Z$ on $p(T)$ when $X=Z$ on $T\cap \partial_v(W)$,
ensuring that the lift is $Z$ on $T$ in this case.
\end{proof}

\begin{lemma}[Exponentiation Lemma for fiber-preserving
maps]\indexstate{Exponentiation Lemma!for fiber-preserving maps}
Let $p\colon E\to B$ be a fiber bundle with compact fiber, and assume that
the metric on $E$ is a product near $\partial_h(E)$. Let $K$ be a compact
subset of $E$. Then there exists a neighborhood $U$ of $Z$ in
$\mathcal{A}^K(E,TE)$ such that $\TExp_a(X(x))$ is defined for all $X\in U$
and $\TExp_a$ carries $U$ into $\Diff_f^K(E)$.\par
\label{lem:aligned_exponentiation}
\end{lemma}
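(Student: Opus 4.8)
The plan is to factor $\TExp_a$ according to the two-step structure of the aligned exponential in Definition~\ref{def:aligned}, and to control the two resulting factors using the ordinary Exponentiation Lemma~\ref{lem:exponentiation} applied once to the base $B$ and once, fiberwise, to the compact fibers of $p$.

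First I would set up the decomposition. Given $X\in\mathcal{A}^K(E,TE)$, write $X=X_V+X_B^{\sharp}$ as in Definition~\ref{def:aligned_vector_fields}, where $X_V$ is the vertical part and $X_B^{\sharp}$ is the horizontal lift of the projected vector field $X_B=p_*X$ on $B$ (well defined since $X$ is aligned). Because $X=Z$ off $K$, the field $X_B$ vanishes off the compact set $K_B=\{\,b\in B:p^{-1}(b)\subseteq K\,\}$, which is closed (its complement is the open set $p(E-K)$) and contained in $p(K)$; and since $X$ is tangent to $\partial_vE=p^{-1}(\partial B)$ along $\partial_vE$ (condition (2) of Definition~\ref{def:aligned_sections}), $X_B$ is tangent to $\partial B$ along $\partial B$, so $X_B\in\mathcal{X}^{p(K)}(B,TB)$. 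Likewise $X_V$ is vertical and equal to $Z$ off $K$; on each fiber $F=p^{-1}(b)$ the induced metric is a product near $\partial F=\partial_hE\cap F$ (the $\I$-fibers of the collar of $\partial_hE$ being vertical), and $X_V|_F$ is tangent to $\partial F$ along $\partial F$ (the horizontal subspaces being tangent to $\partial_hE$), so $X_V|_F\in\mathcal{X}^{K\cap F}(F,TF)$. A direct check from Definition~\ref{def:aligned} then gives
\[
\TExp_a(X)=\Phi_h(X_B)\circ\TExp_v(X_V),
\]
where $\TExp_v(X_V)$ is the fiberwise tame exponential (restricting to $\TExp_F(X_V|_F)$ on each fiber $F$), and $\Phi_h(X_B)\colon E\to E$ is the bundle self-map covering $\TExp_B(X_B)\colon B\to B$ whose restriction over each geodesic $t\mapsto\Exp_B(tX_B(b))$, $0\le t\le 1$, is horizontal transport: indeed $\TExp_v(X_V)(x)$ lies in the fiber over $p(x)$, and $\Exp_a(X(x))$ is by definition the endpoint of the horizontal lift of $t\mapsto\Exp_B(tX_B(p(x)))$ that starts at $\Exp_v(X_V(x))=\TExp_v(X_V)(x)$.

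Next I would control the two factors, each for $X$ in a suitable neighborhood of $Z$. For the base factor, Lemma~\ref{lem:exponentiation} applied to $B$ with the compact set $p(K)$ gives a neighborhood of $Z$ in $\mathcal{X}^{p(K)}(B,TB)$ that $\TExp_B$ carries into $\Diff^{p(K)}(B)$; since $X_B$ is tangent to $\partial B$ along $\partial B$, the geodesics $t\mapsto\Exp_B(tX_B(b))$, $0\le t\le 1$, stay in $B$, and since $p$ has compact fiber and the metric on $E$ is a product near $\partial_hE$ with vertical $\I$-fibers, their horizontal lifts exist, remain in $\partial_hE\times\{t\}$ when they start there, and remain in $p^{-1}(\partial B)=\partial_vE$ when they start there (Section~\ref{sec:fiberpreservingdiffeos}). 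Since horizontal transport is invertible, $\Phi_h(X_B)$ is then a fiber-preserving diffeomorphism of $E$ carrying $\partial_hE$ and $\partial_vE$ to themselves and equal to the identity off $p^{-1}(K_B)\subseteq K$, and $X_B\mapsto\Phi_h(X_B)$ is continuous because horizontal lifts depend smoothly on the base geodesic. For the vertical factor, applying Lemma~\ref{lem:exponentiation} to the fibers --- uniformly, by covering $p(K)$ with finitely many trivializing neighborhoods and using the smooth dependence of the construction on the base point and the induced fiber metric --- yields a neighborhood of $Z$ in the vertical fields that are $Z$ off $K$ which $\TExp_v$ carries into $\Diff_v^K(E)$, again continuously. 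Intersecting the preimages under $X\mapsto p_*X$ and $X\mapsto X_V$ of these two neighborhoods gives the required neighborhood $U$ of $Z$ in $\mathcal{A}^K(E,TE)$: for $X\in U$, both $\Exp_v(X_V(x))$ and $\Exp_B(X_B(p(x)))$ exist, so $\TExp_a(X)$ is defined, and by the factorization it is a composite of two elements of $\Diff_f^K(E)$, hence lies in $\Diff_f^K(E)$. (That $\TExp_a(X)$ is fiber-preserving also follows directly from the alignedness of $X$, cf.\ the remark after Definition~\ref{def:aligned}.)

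The step I expect to carry the real weight is the analysis of the horizontal factor $\Phi_h(X_B)$: one must verify that assembling the fiberwise horizontal-transport maps over all of $B$ yields a map of $E$ that is globally smooth, is a diffeomorphism, depends continuously on $X_B$ in the $\Cinf$-topology, and has the correct behavior at \emph{both} $\partial_hE$ and $\partial_vE$ --- and it is exactly here that the special forms of the metrics (product near $\partial_hE$ with vertical $\I$-fibers on $E$, product near $\partial B$ on $B$) are used, via the fact that horizontal lifts of geodesics in $B$ exist and respect the collars. The only other point needing care is the routine but slightly fussy verification that Lemma~\ref{lem:exponentiation}, applied to the fibers, can be carried out continuously in the base parameter over the compact set $p(K)$.
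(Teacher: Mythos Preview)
Your approach is correct but differs from the paper's. The paper does not factor $\TExp_a$; instead it parallels the proof of Lemma~\ref{lem:exponentiation} directly. It enlarges $E$ to a bundle $N$ over $B$ by attaching $\partial_hE\times[0,\infty)$ along $\partial_hE$, so that each fiber becomes an open manifold. Given $X\in\mathcal{A}^K(E,TE)$, the vertical part $X_v$ is extended to $N$ using the Seeley extension (Lemma~\ref{lem:Seeley}) followed by projection into the vertical subspace, and the horizontal part is extended simply by taking horizontal lifts of $p_*X$ over the new collar. The extended field is aligned, tangent to $\partial_hE$, and supported in $L=K\cup(K\cap\partial_hE)\times[0,1]$; the aligned exponential is then applied directly on the enlarged space, and the argument finishes exactly as in Lemma~\ref{lem:exponentiation}.

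Your factorization $\TExp_a(X)=\Phi_h(X_B)\circ\TExp_v(X_V)$ is a legitimate alternative and has the virtue of reducing the problem to two black-box invocations of Lemma~\ref{lem:exponentiation} (once on $B$, once fiberwise). The cost is precisely the two points you flag: the uniformity-in-$b$ argument for the fiberwise exponentiation, and the verification that the assembled horizontal-transport map $\Phi_h(X_B)$ is globally smooth and a diffeomorphism --- both true but needing more words than the paper's route, which sidesteps them by working on a single enlarged total space where the aligned exponential handles both directions at once. The paper's approach also dovetails more naturally with the equivariant extensions needed later for singular fiberings, where Seeley-type extension is already the established tool.
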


\begin{proof} As in the proof of Lemma~\ref{lem:exponentiation}, enlarge
$E$ to a complete open manifold $N$ by attaching $\partial_hE\times
[0,\infty)$ along $\partial_hE$. For each fiber $F$ in $E$, put
$\partial_hF = F\cap \partial_hE$. Then $N$ is still a fiber bundle over
$B$, where each fiber $F$ has been enlarged to an open manifold by
attaching $\partial_hF \times [0,\infty)$ along $F\cap \partial_hE$. We
denote this fibering by $p_N\colon N\to B$.

Now, consider a vector field $X\in \mathcal{A}^K(E,TE)$. We extend the
vertical and horizontal parts $X_v$ and $X_h$ to $N$ separately. For
$X_v$, we extend using 
\index{Extension Lemma!Seeley}Lemma~\ref{lem:Seeley}, then project into the
vertical subspace at each point. For $X_n$, at each point $x\in N-E$, we
just take the horizontal lift of $p_*(X(y))$ for some $y\in E$ with
$p(y)=p_N(x)$, so that the extended vector field is aligned. Its
restriction to $E\cup \partial_hE\times [0,1]$ is tangent to $\partial_hE$.
The vertical part of the extension, constructed using
Lemma~\ref{lem:Seeley}, vanishes off of $K\cup (K\cap \partial_hE)\times
[0,1]$. The proof is now completed as in Lemma~\ref{lem:exponentiation},
using the aligned exponential on $E\cup \partial_hE\times [0,1]$, and
taking $L=K\cup (K\cap \partial_hE)\times [0,1]$.
\end{proof}

\begin{definition}\label{def:projection}
Let $p\colon E\to B$ be a fiber bundle. For an element $g$ of $\Diff_f(E)$,
the induced diffeomorphism of $B$ will be denoted by 
\indexsymdef{g}{$\overline{g}$}$\overline{g}$. More
generally, if $W$ is a vertical submanifold of $E$, each $j\in
\Imb_f(W,E)$ induces an embedding of $p(W)$ into $B$, denoted by
$\overline{j}$. Note that $\Diff_f(E)$ acts on $\Diff(B)$ and on
$\Imb(p(W),B)$ by sending $h$ to $\overline{g}h$. More generally, if
$S\subset \partial B$ is a neighborhood of $S\cap \partial\, p(W)$ and $K$ is
a neighborhood of $W$, then $\Diff_f^{p^{-1}(K)}(E\rel p^{-1}(S))$ acts in
this way on $\Diff^K(B\rel S)$ and $\Imb^K(W,B\rel S)$.
\end{definition}

\begin{theorem} Let $K$ be a compact subset of $B$,
let $S$ be a subset of $\partial B$, and put $T= p^{-1}(S)$.  Then
$\Diff^K(B\rel S)$ admits local $\Diff^{p^{-1}(K)}_f(E\rel T)$
cross-sections.
\label{theorem1}
\end{theorem}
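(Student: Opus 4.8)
The statement to prove is that $\Diff^K(B\rel S)$ admits local $\Diff^{p^{-1}(K)}_f(E\rel T)$ cross-sections, where the action is the one described in Definition~\ref{def:projection} (a fiber-preserving diffeomorphism $g$ acts on $\Diff(B)$ by $h\mapsto \overline{g}h$, with $\overline{\;\cdot\;}$ the projection homomorphism $\Diff_f(E)\to\Diff(B)$). The orbit of $1_B$ under this action is all of $\Diff^K(B\rel S)$, since every diffeomorphism of $B$ supported in $K$ can be covered by a fiber-preserving diffeomorphism of $E$ supported in $p^{-1}(K)$ (using local triviality of $p$ and a partition of unity, or more simply a horizontal-lift construction). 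Because $\Diff^{p^{-1}(K)}_f(E\rel T)$ acts transitively on $\Diff^K(B\rel S)$, by the remark following Definition~\ref{def:localcrosssections} it suffices to produce a single local cross-section at the identity $1_B$.

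**Key steps.** I would follow the now-familiar three-step Palais pattern (logarithm, extension, exponentiation), carried out \emph{on the base} $B$, but then lifted to $E$:
\begin{enumerate}
\item[(1)] \textbf{Logarithm on $B$.} Apply the Logarithm Lemma~\ref{logarithm} (with $V=M=B$) to get, on a neighborhood $W$ of $1_B$ in $\Diff^K(B\rel S)$, a continuous map $\beta\mapsto X_\beta\in\mathcal{X}(B,TB)$ with $\Exp(X_\beta(b))=\beta(b)$ for all $b$, and $X_\beta$ supported in a compact neighborhood of $K$; when $\beta$ fixes $S$, $X_\beta$ vanishes on $S$.
\item[(2)] \textbf{Horizontal lift to $E$.} Lift $X_\beta$ to an aligned (in fact horizontal) vector field $\widetilde{X}_\beta$ on $E$ by taking the horizontal lift of $X_\beta$ at each point: $\widetilde{X}_\beta(x)=(p_*|_{H_x})^{-1}(X_\beta(p(x)))$. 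Since the metric on $E$ is a product near $\partial_hE$ with vertical $\I$-fibers, $\widetilde{X}_\beta$ is tangent to $\partial_hE$; and because $X_\beta$ vanishes on $S\subseteq\partial B$ when $\beta\in\Diff^K(B\rel S)$, the lift $\widetilde{X}_\beta$ vanishes on $T=p^{-1}(S)$. It is supported in $p^{-1}$ of a compact neighborhood of $K$, hence (shrinking if needed) in $p^{-1}(K')$ for a compact neighborhood $K'$ of $K$; one arranges $K'\subseteq K$ by rescaling the support using a bump function pulled back from $B$, so that $\widetilde{X}_\beta\in\mathcal{A}^{p^{-1}(K)}(E,TE)$.
\item[(3)] \textbf{Exponentiate on $E$.} Apply the Exponentiation Lemma for fiber-preserving maps~\ref{lem:aligned_exponentiation} to $\widetilde{X}_\beta$: for $\beta$ in a small enough neighborhood $U$ of $1_B$, $\TExp_a(\widetilde{X}_\beta)$ is defined and lies in $\Diff^{p^{-1}(K)}_f(E)$; since $\widetilde{X}_\beta$ vanishes on $T$, it lies in $\Diff^{p^{-1}(K)}_f(E\rel T)$.
\end{enumerate}
Define $\chi(\beta)=\TExp_a(\widetilde{X}_\beta)$. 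This is continuous in $\beta$ (each step is continuous), $\chi(1_B)=1_E$, and I must check $\overline{\chi(\beta)}=\beta$, i.e.\ $\chi(\beta)\cdot 1_B=\beta$ in the action. This holds because $\Exp_a$ of a horizontal vector projects to $\Exp$ of its image under $p_*$ (the paragraph after Definition~\ref{def:aligned}: $p(\Exp_a(\omega))=\Exp(p_*(\omega))$), so the fiber-preserving diffeomorphism $\TExp_a(\widetilde{X}_\beta)$ induces on $B$ exactly the map $b\mapsto\Exp(X_\beta(b))=\beta(b)$. Thus $\chi$ is the desired local cross-section at $1_B$, and transitivity promotes it to local cross-sections everywhere.

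**Main obstacle.** The delicate point is the boundary/support bookkeeping in step~(2): one needs the horizontal lift of $X_\beta$ to be genuinely aligned \emph{and} tangent to $\partial_hE$ \emph{and} to vanish on all of $T=p^{-1}(S)$ (not merely on $\partial_h W$ for some submanifold), while staying supported in $p^{-1}(K)$ rather than in $p^{-1}$ of a possibly larger neighborhood. The tangency to $\partial_hE$ is exactly the content of the paragraph in Section~\ref{sec:fiberpreservingdiffeos} showing $H_x$ is tangent to $\partial_hE\times\{t\}$, so that is free; vanishing on $T$ follows from $X_\beta$ vanishing on $S$ together with the fact that a horizontal lift of the zero vector is the zero vector. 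Controlling the support inside $p^{-1}(K)$ rather than $p^{-1}(K')$ is handled by multiplying $X_\beta$ by a cutoff function pulled back from $B$ that is $1$ on $\beta$'s support and $0$ outside $K$ — legitimate since $X_\beta=0$ on $S$ already — and then relabeling. Once these are in place, everything else is a routine invocation of Lemmas~\ref{logarithm} and~\ref{lem:aligned_exponentiation} in the style of the proof of Theorem~\ref{palaistheoremB}.
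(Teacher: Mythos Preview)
Your proposal is correct and follows essentially the same three-step approach as the paper (logarithm on $B$, horizontal lift to $E$, aligned exponentiation). The paper applies the Logarithm Lemma to a compact codimension-zero neighborhood $L\supset K$ rather than to $V=B$ (since $B$ need not be compact), and your support-control worry is unfounded: since $\beta\in\Diff^K(B\rel S)$ is the identity outside $K$, the logarithm $X_\beta$ vanishes there automatically, so no cutoff is needed.
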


\begin{proof} By Proposition~\ref{prop:inclusion},
it suffices to find a local cross-section at the identity $id_B$. Let $L$
be a compact codimension-zero submanifold of $B$ that contains $K$ in its
topological interior (and such that as usual, $L$ meets $\partial B\times
I$ in $\I$-fibers). By Lemma~\ref{logarithm}, there are a neighborhood $U$
of the inclusion $i_L$ in $\Imb(L,B)$ and a continuous map $X\colon
U\to\mathcal{X}(L,TB)$ such that for all $j\in U$ and all $x\in L$,
$\Exp(X(j)(x))$ is defined and $\Exp(X(j)(x))= j(x)$.

Suppose that $f\in \Diff^K(B\rel S)$. Then $X(f|_L)$ vanishes on a
neighborhood in $L$ of the frontier of $L$ in $B$, and on $L\cap S$, so
the vector field $X(f|_L)$ extends to a smooth vector field $X'(f|_L)$ on
$B$ using $Z$ on $B-K$, which vanishes on $S$. For each $x\in B$,
$\Exp(X'(j)(x))$ is defined and $\Exp(X'(j)(x))= f(x)$.  At each point $y$
of $E$, let $\widetilde{X'}(j)(y)$ be the horizontal lift of
$X'(j)(p(y))$. This produces an aligned vector field in
${\mathcal{A}}^{p^{-1}(L)}(E,TE)$, which vanishes on~$T$.

Choose a neighborhood $V$ of $id_B$ in $\Diff^K(B\rel S)$ such that
$f|_L\in U$ for each $f\in V$. On $V$, define
$\chi(f)=\TExp_a(\widetilde{X'}(f|_L))$. Since $\widetilde{X'}(f|_L))$
vanishes on $T$ and off of $p^{-1}(K)$, this defines $\chi\colon V\to
\Diff^{p^{-1}(K)}_f(E\rel T)$. This is a local cross-section, since given
$b\in K\subseteq L$ we may choose any $y$ with $p(y)= b$ and calculate that
for the induced diffeomorphism $\overline{\chi(f)}$ on $B$,
\begin{align*}
\overline{\chi(f)}(b)&= p(\chi(f)(x))\\
&=p\big(\Exp_a\big(\widetilde{X}(\rho(f))(x)\big)\big)\\
&=\Exp\big(X(\rho(f))(b)\big)\\
&=f(b)
\end{align*}
while at points in $B-K$, $\overline{\chi(f)}(b)=b$.
\end{proof}

From Proposition~\ref{theoremA}, we have immediately
\indexstate{Projection Theorem!bundles}%
\begin{theorem} Let $K$ be a compact subset of $B$.
Let $S\subseteq\partial B$ and let $T= p^{-1}(S)$. Then
$\Diff^{p^{-1}(K)}_f(E\rel T)\to \Diff^K(B\rel S)$ is locally trivial.
\label{project diffs}
\end{theorem}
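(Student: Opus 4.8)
The plan is to deduce Theorem~\ref{project diffs} as a direct consequence of Theorem~\ref{theorem1} together with Proposition~\ref{theoremA}. The map under consideration, $\Diff^{p^{-1}(K)}_f(E\rel T)\to \Diff^K(B\rel S)$, is the projection $g\mapsto \overline{g}$ sending a fiber-preserving diffeomorphism to the diffeomorphism it induces on the base; by Definition~\ref{def:projection} this is exactly the action map for the action of the group $\Diff^{p^{-1}(K)}_f(E\rel T)$ on the space $\Diff^K(B\rel S)$ (where a fiber-preserving $g$ sends $h$ to $\overline{g}\,h$, and the orbit of $\mathrm{id}_B$ is all of $\Diff^K(B\rel S)$ since $\overline{g}\cdot\mathrm{id}_B=\overline{g}$). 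Thus the projection is a $\Diff^{p^{-1}(K)}_f(E\rel T)$-equivariant map of $\Diff^{p^{-1}(K)}_f(E\rel T)$-spaces, with $\Diff^K(B\rel S)$ the target.

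The steps, in order, are: first, observe that $\Diff^K(B\rel S)$ carries an action of $\Diff^{p^{-1}(K)}_f(E\rel T)$ by precomposition with the induced base diffeomorphism, as recorded in Definition~\ref{def:projection}; second, invoke Theorem~\ref{theorem1}, which asserts precisely that this $\Diff^{p^{-1}(K)}_f(E\rel T)$-space admits local cross-sections (in the sense of Definition~\ref{def:localcrosssections}); third, note that the projection map $\Diff^{p^{-1}(K)}_f(E\rel T)\to \Diff^K(B\rel S)$ is equivariant for this action — indeed it is the orbit map of $\mathrm{id}_B$, and equivariance is immediate from $\overline{g_1 g_2}=\overline{g_1}\,\overline{g_2}$; fourth, apply Proposition~\ref{theoremA}, which says that any equivariant map of a $G$-space into a $G$-space admitting local cross-sections is locally trivial. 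This yields local triviality of the projection, which is the assertion of the theorem.

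There is essentially no obstacle here, since all the real work has already been done in establishing Theorem~\ref{theorem1} (whose proof produces the local cross-section explicitly via the Logarithm, Extension, and Exponentiation Lemmas for fiber-preserving maps and the tame aligned exponential). The only minor point requiring a word of care is the bookkeeping of the control data: one should check that the group acting and the space acted upon carry \emph{compatible} control, i.e.\ that $T=p^{-1}(S)$ and that the superscript $p^{-1}(K)$ on the group matches the superscript $K$ on the base space, so that Theorem~\ref{theorem1} applies verbatim. Once that is noted, the explicit trivialization is the standard one from Proposition~\ref{theoremA}: over a neighborhood $U$ of a point $f_0\in\Diff^K(B\rel S)$, the local coordinates on the preimage are given by $(f,G)\mapsto \chi(f)\cdot G$, where $\chi\colon U\to\Diff^{p^{-1}(K)}_f(E\rel T)$ is the local cross-section and $G$ ranges over the fiber $\overline{\;\cdot\;}^{-1}(f_0)$. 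Thus the proof is a two-line formal deduction.
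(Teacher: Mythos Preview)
Your proposal is correct and follows exactly the same approach as the paper, which simply states that Theorem~\ref{project diffs} follows immediately from Theorem~\ref{theorem1} via Proposition~\ref{theoremA}. Your explanation is more detailed than the paper's one-line deduction, but the logic is identical.
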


Each of the fibration theorems we prove has a corresponding corollary
involving parameterized lifting or extension, but since the statements are
all analogous we give only the following one as a prototype.

\begin{corollary}{(Parameterized Isotopy Lifting
Theorem)}\index{Parameterized Isotopy Lifting Theorem}\index{lifting isotopies 
to bundles}\index{isotopy lifting} 
Let $K$ be a compact subset of $B$, let $S\subseteq\partial B$, and let $T=
p^{-1}(S)$. Suppose that for each $t$ in a path-connected parameter space
$P$ there is an isotopy $g_{t,s}$, such that each $g_{t,s}$ is the identity
on $S$ and outside of $K$, and such that $g_{t,0}$ lifts to a
diffeomorphism $G_{t,0}$ of $E$ which is the identity on $T$. Assume that
sending $(t,s)\to g_{t,s}$ defines a continuous function from $P\times
[0,1]$ to $\Diff(B\rel S)$ and sending $t$ to $G_{t,0}$ defines a
continuous function from $P$ to $\Diff(E\rel T)$. Then the family $G_{t,0}$
extends to a continuous family on $P\times \I$ such that for each $(t,s)$,
$G_{t,s}$ is a fiber-preserving diffeomorphism inducing $g_{t,s}$ on~$B$.
\label{isotopy lifting}
\end{corollary}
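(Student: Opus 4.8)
The plan is to deduce this corollary directly from the homotopy lifting property of the locally trivial map $\Diff^{p^{-1}(K)}_f(E\rel T)\to \Diff^K(B\rel S)$ furnished by Theorem~\ref{project diffs}. A locally trivial map is a Serre fibration, so it has the homotopy lifting property with respect to all CW-pairs, and in particular with respect to the pair $(P\times \I, P\times\{0\})$ when $P$ is (say) a finite complex; for general path-connected $P$ one applies the lifting property cube-by-cube, or simply notes that the statement is really a statement about the induced map on singular complexes. First I would package the data: the hypothesis that $(t,s)\mapsto g_{t,s}$ is a continuous map $P\times\I\to \Diff^K(B\rel S)$ gives a map $G\colon P\times\I\to \Diff^K(B\rel S)$, and the hypothesis that $t\mapsto G_{t,0}$ is continuous into $\Diff^{p^{-1}(K)}_f(E\rel T)$ — noting that $\Diff(E\rel T)$ in the statement should be read as these fiber-preserving diffeomorphisms, since $G_{t,0}$ is assumed to be a lift of $g_{t,0}$ — gives a map $\widetilde G_0\colon P\times\{0\}\to \Diff^{p^{-1}(K)}_f(E\rel T)$ lying over $G|_{P\times\{0\}}$.

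Next I would apply the homotopy lifting property to the square
\[\vbox{\halign{\hfil#\hfil\quad&#&\quad\hfil#\hfil\cr
$P\times\{0\}$&$\longrightarrow$&$\Diff^{p^{-1}(K)}_f(E\rel T)$\cr
\noalign{\smallskip}
$\mapdown{}$&&$\mapdown{}$\cr
\noalign{\smallskip}
$P\times\I$&$\longrightarrow$&$\Diff^K(B\rel S)\rlap{\ .}$\cr}}\]
The lift $\widetilde G\colon P\times\I\to \Diff^{p^{-1}(K)}_f(E\rel T)$ produced is by construction continuous, restricts to $\widetilde G_0$ on $P\times\{0\}$, and satisfies $p_*\circ\widetilde G = G$, where $p_*$ denotes the projection $g\mapsto \overline g$ of Definition~\ref{def:projection}. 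Writing $G_{t,s}=\widetilde G(t,s)$, this says exactly that each $G_{t,s}$ is a fiber-preserving diffeomorphism of $E$, that $G_{t,s}$ restricts to the identity on $T=p^{-1}(S)$ and outside $p^{-1}(K)$ (these are built into the target space), that $G_{t,0}$ agrees with the originally given lift, and that the induced diffeomorphism $\overline{G_{t,s}}$ on $B$ equals $g_{t,s}$. That is the desired conclusion.

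The only point requiring a little care — and the closest thing to a genuine obstacle — is the reduction, for an arbitrary path-connected parameter space $P$, from ``locally trivial'' to an honest lift of the whole homotopy $P\times\I$. Local triviality of $\pi\colon \Diff^{p^{-1}(K)}_f(E\rel T)\to \Diff^K(B\rel S)$ means $\pi$ is a fiber bundle, hence a Serre fibration (by the local-triviality-implies-Hurewicz-fibration theorem over paracompact base, or more elementarily because a map with local sections in the sense of Proposition~\ref{theoremA} has the homotopy lifting property — the local coordinates exhibited after Proposition~\ref{theoremA} let one lift a homotopy over a small neighborhood, and one patches such lifts along a subdivision of $\I$ using the group structure to correct the overlaps). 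Once one knows $\pi$ is a Hurewicz fibration, the homotopy lifting property holds for \emph{every} space $P$, with no finiteness hypothesis, and the argument above applies verbatim. I would state this patching argument in a sentence and refer back to the structure of the local cross-sections established in Section~\ref{palais}, since it is entirely routine and is exactly the mechanism by which all the ``parameterized'' corollaries in this chapter follow from their fibration theorems.
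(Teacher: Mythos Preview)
Your proposal is correct and is exactly the paper's intended argument: the paper presents this corollary as the direct translation of the homotopy lifting property for the locally trivial map of Theorem~\ref{project diffs}, without spelling out the details you provide. Your observation that $\Diff(E\rel T)$ must be read as $\Diff_f^{p^{-1}(K)}(E\rel T)$ (since $G_{t,0}$ is assumed to lift $g_{t,0}$) and your remark that local triviality yields the Hurewicz lifting property for arbitrary $P$ are both apt clarifications of points the paper leaves implicit.
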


\section{Restriction of fiber-preserving diffeomorphisms}
\label{restrict}

In this section we present the analogues of the main results of
Palais~\cite{P} in the fibered case. As usual, we tacitly assume that
metrics are products near the boundary and that submanifolds meet the
boundary in $\I$-fibers. We remind the reader about
Figure~\ref{fig:T}, which indicates the setup in the next
result.
\begin{theorem}
Let $W$ be a compact vertical submanifold of $E$. Let $T$ be a closed
fibered neighborhood in $\partial_vE$ of $T\cap \partial_vW$, and let
$L$ be a neighborhood of $W$. Then
\begin{enumerate}
\item[{\rm (i)}] $\Imb_f^L(W,E\rel T)$ admits local $\Diff_f^L(E\rel
T)$ cross-sections, and
\item[{\rm (ii)}] $\Imb_v(W,E\rel T)$ admits local $\Diff_v^L(E\rel T)$ 
cross-sections.
\end{enumerate}
\label{theorem2}
\end{theorem}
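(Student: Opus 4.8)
The plan is to imitate the proof of Theorem~\ref{palaistheoremB} step for step, with the three fundamental lemmas of Palais replaced by their fiber-preserving counterparts established above. By Proposition~\ref{prop:inclusion} (whose hypothesis is met because the construction below applies with any compact vertical submanifold in place of $W$), it suffices in each part to produce a single local cross-section at the inclusion $i_W$, since $\Diff_f(E)$ acts on $\Imb_f(W,E)$ and on $\Imb_v(W,E)$ by composition on the left. That cross-section will be the composite $\TExp_a\circ k\circ X$,
\[
U_1\;\xrightarrow{\;X\;}\;\mathcal{A}(W,TE)\;\xrightarrow{\;k\;}\;\mathcal{A}^L(E,TE)\;\xrightarrow{\;\TExp_a\;}\;\Diff_f^L(E\rel T)\ ,
\]
restricted to a suitably small neighborhood $U_1$ of $i_W$.

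First I would apply the Logarithm Lemma for fiber-preserving maps (Lemma~\ref{lem:aligned_logarithm}) to obtain a neighborhood $U$ of $i_W$ in $\Imb_f(W,E)$ and a continuous $X\colon U\to\mathcal{A}(W,TE)$ with $\Exp_a(X(j)(x))=j(x)$ for all $x\in W$ and $X(i_W)=Z$. Shrinking $U$ inside $\Imb_f^L(W,E\rel T)$ (possible since $W$ is compact and $L$ is a neighborhood of $W$), every $j\in U$ restricts to the inclusion on $W\cap T$, so $X(j)$ vanishes there; and $W\cap T=T\cap\partial_vW$ because $T\subseteq\partial_vE$. Next I would apply the Extension Lemma for fiber-preserving maps (Lemma~\ref{lem:fiber_preserving_aligned_extension}) with this $T$ and this $L$, obtaining a continuous linear $k\colon\mathcal{A}(W,TE)\to\mathcal{A}^L(E,TE)$ that restricts to the identity on $W$, sends sections vanishing on $T\cap\partial_vW$ to aligned fields vanishing on $T$, and (its last clause) carries $\mathcal{V}(W,TE)$ into $\mathcal{V}^L(E,TE)$. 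Finally I would apply the Exponentiation Lemma for fiber-preserving maps (Lemma~\ref{lem:aligned_exponentiation}), with $K$ a compact neighborhood of $W$ contained in $L$, to get a neighborhood of $Z$ in $\mathcal{A}^L(E,TE)$ that $\TExp_a$ carries into $\Diff_f^K(E)\subseteq\Diff_f^L(E)$; since an aligned field vanishing on $T$ exponentiates under $\TExp_a$ to a fiber-preserving diffeomorphism that is the identity on $T$, this image in fact lies in $\Diff_f^L(E\rel T)$. Choosing $U_1\subseteq U$ small enough that $k(X(U_1))$ lies in that last neighborhood, put $\chi=\TExp_a\circ k\circ X$ on $U_1$. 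For $j\in U_1$ and $x\in W$ one has $\chi(j)(x)=\Exp_a\big(k(X(j))(x)\big)=\Exp_a\big(X(j)(x)\big)=j(x)$, so $\chi$ is a local cross-section; this gives~(i). For~(ii), the same $\chi$ works: if $j\in\Imb_v(W,E\rel T)$ then $j$ takes each fiber to itself, so the induced embedding of $p(W)$ is the inclusion and $p_*X(j)=Z$, i.e.\ $X(j)\in\mathcal{V}(W,TE)$; hence $k(X(j))\in\mathcal{V}^L(E,TE)$ by the last clause of Lemma~\ref{lem:fiber_preserving_aligned_extension}, and $\TExp_a$ of a vertical field is a vertical diffeomorphism, so $\chi(j)\in\Diff_v^L(E\rel T)$. (Proposition~\ref{theoremA} will then convert these cross-sections into the local triviality of the associated restriction maps.)

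Since the one genuinely new device, the aligned exponential packaged into the three controlled lemmas, is already in place, what remains is bookkeeping rather than a new idea, and the bookkeeping is the only place care is needed. The points to get right are: the simultaneous choice of the nested neighborhoods $U_1\subseteq U$ so that $X$, $k$, and $\TExp_a$ all compose and land in the controlled group; the interplay of the two kinds of control, namely that the locus $T\cap\partial_vW=W\cap T$ on which Lemma~\ref{lem:fiber_preserving_aligned_extension} requires the input to vanish is exactly where membership in $\Imb_f(W,E\rel T)$ forces $j$ to equal the inclusion, and that the $L$-support of the extension projects correctly (the fiber-preserving Extension Lemma already arranges its base-level extension to be supported in an $L'$ with $p^{-1}(L')\subseteq L$); and the section's standing hypothesis that the metric on $B$ has been rechosen so that $p(W)$ meets $\partial B\times\I$ in $\I$-fibers, which is what lets the base-level Extension Lemma be applied inside Lemma~\ref{lem:fiber_preserving_aligned_extension}. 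None of these is deep, but a proof of this shape can fail precisely there, so the written proof will spell them out.
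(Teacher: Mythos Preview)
Your proposal is correct and follows essentially the same route as the paper's proof: reduce to the inclusion via Proposition~\ref{prop:inclusion}, then compose the aligned Logarithm, Extension, and Exponentiation Lemmas to build the cross-section $\TExp_a\circ k\circ X$, with part~(ii) following because the same construction restricts to vertical fields. The paper's version is terser and fixes a compact saturated $K\subseteq L$ at the outset (applying the Extension Lemma with target $\mathcal{A}^K(E,TE)$ rather than $\mathcal{A}^L(E,TE)$, so that the Exponentiation Lemma's hypothesis is met directly), but the argument is the same.
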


\begin{proof} By Proposition~\ref{prop:inclusion}, it suffices to find
local cross-sections at the inclusion $i_W$. Choose a compact neighborhood
$K$ of $W$ with $K\subseteq L$ and $K=p^{-1}(p(K))$.

By Lemma~\ref{lem:aligned_logarithm}, there are a neighborhood $U_1$ of the
inclusion $i_W$ in $\Imb_f(W,E)$ and a continuous map $X\colon
U_1\to\mathcal{A}(W,TE)$ such that for all $j\in U_1$ and all $x\in W$,
$\Exp_a(X(j)(x))$ is defined and $\Exp_a(X(j)(x))= j(x)$.  By
Lemma~\ref{lem:fiber_preserving_aligned_extension}, there is a continuous linear map
$k\colon \mathcal{A}(W,TE)\to \mathcal{A}^K(E,TE)$, with
$k(\mathcal{V}(W,TE))\subset \mathcal{V}^K(E,TE))$, such that $k(X)(x) =
X(x)$ for all $x\in W$. 
\index{Exponentiation Lemma!for fiber-preserving maps}Lemma~\ref{lem:aligned_exponentiation} now gives a
neighborhood $U_2$ of $Z$ in $\mathcal{A}^K(E,TE)$ such that $\Exp_a(X)$ is
defined for all $X\in U_2$, and $\TExp_a$ has image in
$\Diff_f^K(E)$. Putting $U=X^{-1}(k^{-1}(U_2))$, the composition
$\TExp_a\circ k \circ X\colon U\to \Diff_f^K(E)$ is the desired
cross-section for~(i).

Since $X$ carries $\Imb_v(W,B)$ into $\mathcal{V}(W,TE)$, $k$ carries
$\mathcal{V}(W,TE)$ into $\mathcal{V}^K(E,TE)$, and $\TExp_a$ carries
$U_2\cap \mathcal{V}^K(E,TE)$ into $\Diff_v(E)$, this cross-section
restricts on $\Imb_v(W,E\rel T)$ to a $\Diff_v^L(E\rel T)$ cross-section,
giving~(ii).
\end{proof}

Proposition~\ref{theoremA} has the following immediate
corollaries.
\indexstate{Restriction Theorem!relative fiber-preserving}%
\begin{corollary} Let $W$ be a compact vertical submanifold of $E$. Let 
$T$ be a closed fibered neighborhood in $\partial_vE$ of $T\cap
\partial_vW$, and $L$ a neighborhood of $W$. Then the following
restrictions are locally trivial:
\begin{enumerate}
\item[{\rm(i)}] $\Diff_f^L(E\rel T)\to\Imb_f^L(W,E\rel T)$, and
\item[{\rm(ii)}] $\Diff_v^L(E\rel T)\to \Imb_v(W,E\rel T)$.
\end{enumerate}
\label{corollary2}
\end{corollary}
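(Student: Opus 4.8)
The plan is to obtain both statements as formal consequences of Theorem~\ref{theorem2}, using the abstract local-triviality principle recorded in Proposition~\ref{theoremA}. For part~(i), set $G=\Diff_f^L(E\rel T)$. This group acts on itself by left translation, and it acts on $\Imb_f^L(W,E\rel T)$ by post-composition, $g\cdot j = g\circ j$. First I would check that these actions are well defined on the controlled spaces: composing a fiber-preserving diffeomorphism supported in $L$ and fixing $T$ with a fiber-preserving embedding of the same type again lands in $\Imb_f^L(W,E\rel T)$, and left translation preserves $\Diff_f^L(E\rel T)$. This is immediate because fiber-preservation, support in $L$, and being the identity on $T$ are all preserved under composition (see Definitions~\ref{def:diffcontrol}, \ref{def:imbcontrol}, and~\ref{def:fibered_submanifolds}). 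The restriction map $g\mapsto g|_W$ is then $G$-equivariant, since $(g\circ h)|_W = g\circ(h|_W)$.

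Next, Theorem~\ref{theorem2}(i) supplies exactly the hypothesis required by Proposition~\ref{theoremA}, namely that the $G$-space $\Imb_f^L(W,E\rel T)$ admits local $G$ cross-sections. Applying Proposition~\ref{theoremA} to the equivariant restriction map $\Diff_f^L(E\rel T)\to\Imb_f^L(W,E\rel T)$, with the source regarded as the $G$-space $G$ acting on itself, yields local triviality: over a neighborhood $U$ on which a cross-section $\chi$ is defined, the trivialization sends $(u,z)$ to $\chi(u)\circ z$, as in the remark following Proposition~\ref{theoremA}. This proves~(i).

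For part~(ii), I would run the identical argument with $G=\Diff_v^L(E\rel T)$ acting on itself by left translation and on $\Imb_v(W,E\rel T)$ by post-composition, now invoking Theorem~\ref{theorem2}(ii) for the existence of local $G$ cross-sections. Since verticality is also preserved under composition, the relevant actions restrict correctly, and Proposition~\ref{theoremA} again gives local triviality of $\Diff_v^L(E\rel T)\to\Imb_v(W,E\rel T)$.

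There is essentially no obstacle here: all of the genuine work — the logarithm, extension, and exponentiation constructions for aligned and vertical vector fields — was already carried out in the proof of Theorem~\ref{theorem2}. What remains is only the bookkeeping that the various control conditions behave well under the group actions, which is routine, and the observation that the domains of the cross-sections and trivializations are open in the $\Cinf$-topology, which is inherited directly from Theorem~\ref{theorem2} and Proposition~\ref{theoremA}.
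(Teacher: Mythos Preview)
Your proposal is correct and matches the paper's approach exactly: the paper states this corollary as an immediate consequence of Proposition~\ref{theoremA} applied to the local cross-sections furnished by Theorem~\ref{theorem2}, with no further argument given. Your write-up spells out the equivariance and control-condition bookkeeping that the paper leaves implicit, but the underlying reasoning is identical.
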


\begin{corollary} Let $V$ and $W$ be vertical
submanifolds of $E$, with $W\subseteq V$. Let $T$ be a closed fibered
neighborhood in $\partial_vE$ of $T\cap \partial_vV$, and let $L$ a
neighborhood of $V$. Then the following restrictions are locally trivial:
\begin{enumerate}
\item[{\rm(i)}] $\Imb_f^L(V,E\rel T)\to\Imb_f^L(W,E\rel T)$.
\item[{\rm(ii)}] $\Imb_v(V,E\rel T)\to \Imb_v(W,E\rel T)$.
\end{enumerate}
\label{corollary3}
\end{corollary}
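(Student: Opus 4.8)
The plan is to obtain this in exactly the way Corollary~\ref{corollary2} was obtained from Theorem~\ref{theorem2}: combine the cross-section statement of Theorem~\ref{theorem2} with the general principle of Proposition~\ref{theoremA}, that an equivariant map into a $G$-space admitting local cross-sections is automatically locally trivial. So for part~(i) I would take $G=\Diff_f^L(E\rel T)$, acting by left composition, and verify three things. First, $G$ acts on $\Imb_f^L(V,E\rel T)$ and on $\Imb_f^L(W,E\rel T)$; the only point to check is that $g\circ j$ stays inside the relevant superscripted, relative embedding space, which is immediate since a $g\in G$ fixes $E-L$ pointwise (hence permutes $L$ and its interior), fixes $T$ pointwise, and preserves $\partial_vE$ and $\partial_vE-T$. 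Second, the restriction map $j\mapsto j|_W$ is $G$-equivariant, which is clear from $(g\circ j)|_W=g\circ(j|_W)$, together with the fact that $W\subseteq V$ forces $j|_W$ to land in $\Imb_f^L(W,E\rel T)$. Third, the target $\Imb_f^L(W,E\rel T)$ admits local $G$ cross-sections — this is precisely Theorem~\ref{theorem2}(i) (using that $W$ is compact, as is tacit from the hypotheses, cf.\ Corollary~\ref{palaiscoro3}). Proposition~\ref{theoremA} then yields local triviality of $\Imb_f^L(V,E\rel T)\to\Imb_f^L(W,E\rel T)$, which is~(i).

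Part~(ii) is the same argument with $G$ replaced throughout by the subgroup $\Diff_v^L(E\rel T)$ of vertical diffeomorphisms and $\Imb_f$ replaced by $\Imb_v$. The needed local cross-sections are supplied by Theorem~\ref{theorem2}(ii), which is stated so as to give a cross-section valued in $\Diff_v^L(E\rel T)$ rather than merely in $\Diff_f^L(E\rel T)$, so the vertical action on $\Imb_v(V,E\rel T)$ is the right one; again Proposition~\ref{theoremA} finishes.

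There is no serious obstacle here: all the analytic content — the aligned Logarithm, Extension, and Exponentiation Lemmas for fiber-preserving maps and their assembly into Theorem~\ref{theorem2} — is already in place, and this statement is a formal consequence in the Palais style. The only points needing a moment's care are the bookkeeping in the first step, i.e.\ checking that left-translation by $G$ genuinely preserves the control data $L$ and $T$ built into the embedding spaces, and, in part~(ii), remembering to use the vertical group $\Diff_v^L(E\rel T)$ so that the cross-section from Theorem~\ref{theorem2}(ii), rather than~(i), is the one being applied.
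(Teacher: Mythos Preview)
Your proposal is correct and is exactly the paper's approach: the paper states Corollaries~\ref{corollary2} and~\ref{corollary3} together as ``immediate corollaries'' of Proposition~\ref{theoremA} applied to the cross-sections furnished by Theorem~\ref{theorem2}, without writing out any further argument. Your filling-in of the equivariance and control-data checks is accurate and matches what the paper leaves implicit.
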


\newpage
The final result of this section is the projection-restriction square for
bundles.\index{square!projection-restriction!fiber-preserving}%
\index{projection-restriction square!fiber-preserving}
\begin{theorem} Let $W$ be a compact vertical submanifold of
$E$. Let $K$ be a compact neighborhood of $p(W)$ in $B$. Let $T$ be a
closed fibered neighborhood in $\partial_vE$ of $T\cap
\partial_vW$, and put $S= p(T)$. Then all four maps in the following
commutative square are locally trivial:
$$\vbox{\halign{\hfil#\hfil\quad&#&\quad\hfil#\hfil\cr
$\Diff_f^{p^{-1}(K)}(E\rel T)$&$\longrightarrow$&%
$\Imb_f^{p^{-1}(K)}(W,E\rel T)$\cr
\noalign{\smallskip}
$\mapdown{}$&&$\mapdown{}$\cr
\noalign{\smallskip}
$\Diff^K(B\rel S)$&$\longrightarrow$&%
$\Imb^K(p(W),B\rel S)$\rlap{\ .}\cr}}$$\par
\label{square}
\end{theorem}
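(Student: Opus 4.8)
The plan is to assemble all four assertions from results already established, since the heavy machinery---the aligned exponential and the fibered Logarithm, Extension, and Exponentiation Lemmas---is already built. First I would note that the square commutes: for $g\in\Diff_f^{p^{-1}(K)}(E\rel T)$, restricting to $W$ and then projecting gives $\overline{g\circ i_W}=\overline{g}\circ i_{p(W)}$, which also results from first projecting $g$ to $\overline{g}$ and then restricting to $p(W)$, and the analogous identity holds at the level of embeddings. Three of the four maps are then immediate. The left-hand map $\Diff_f^{p^{-1}(K)}(E\rel T)\to\Diff^K(B\rel S)$ is the Projection Theorem~\ref{project diffs}, with $S=p(T)$ and $T=p^{-1}(S)$. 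The top map $\Diff_f^{p^{-1}(K)}(E\rel T)\to\Imb_f^{p^{-1}(K)}(W,E\rel T)$ is Corollary~\ref{corollary2}(i) with $L=p^{-1}(K)$, which is a neighborhood of $W$ because $K$ is a neighborhood of $p(W)$. The bottom map $\Diff^K(B\rel S)\to\Imb^K(p(W),B\rel S)$ is Corollary~\ref{palaiscoro2} with $M=B$, $V=p(W)$, and $L=K$, using that $S=p(T)$ is a closed neighborhood in $\partial B$ of $S\cap\partial\,p(W)$ since $T$ is a closed fibered neighborhood in $\partial_vE$ of $T\cap\partial_vW$.

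The only map needing real work is the right-hand one, $\Imb_f^{p^{-1}(K)}(W,E\rel T)\to\Imb^K(p(W),B\rel S)$, $j\mapsto\overline{j}$. Here $\Diff_f^{p^{-1}(K)}(E\rel T)$ acts on the target by $g\cdot h=\overline{g}\circ h$ and on the source by composition, and $j\mapsto\overline{j}$ is equivariant since $\overline{g\circ j}=\overline{g}\circ\overline{j}$; so by Proposition~\ref{theoremA} it is enough to give $\Imb^K(p(W),B\rel S)$ local $\Diff_f^{p^{-1}(K)}(E\rel T)$ cross-sections, and by Proposition~\ref{prop:inclusion} it suffices to produce one at the inclusion $i_{p(W)}$. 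I would obtain this by composition: Theorem~\ref{palaistheoremB}, applied with $M=B$ and $V=p(W)$, gives a local cross-section $\sigma\colon U_1\to\Diff^K(B\rel S)$ with $\sigma(i_{p(W)})=\mathrm{id}_B$ and $\sigma(h)\circ i_{p(W)}=h$; Theorem~\ref{theorem1} gives a local cross-section $\tau\colon U_2\to\Diff_f^{p^{-1}(K)}(E\rel T)$ at $\mathrm{id}_B$ for the $\Diff_f$-action on $\Diff^K(B\rel S)$, so $\overline{\tau(f)}=f$. On $U_1\cap\sigma^{-1}(U_2)$ put $\chi(h)=\tau(\sigma(h))$; then $\overline{\chi(h)}=\sigma(h)$, hence $\chi(h)\cdot i_{p(W)}=\overline{\chi(h)}\circ i_{p(W)}=\sigma(h)\circ i_{p(W)}=h$, so $\chi$ is the required cross-section and Proposition~\ref{theoremA} finishes the right-hand map.

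The part demanding care---and the closest thing to an obstacle---is tracking the three forms of control (the superscript $p^{-1}(K)$, the $\rel T$ upstairs, the $\rel S$ downstairs) simultaneously through the group actions and the composite cross-sections. One needs that a fiber-preserving diffeomorphism that is the identity off $p^{-1}(K)$ carries $p^{-1}(K)$ onto itself, that $g|_{p^{-1}(S)}=\mathrm{id}$ forces $\overline{g}|_{S}=\mathrm{id}$, and that $p(p^{-1}(S))=S$, so that all of the actions above restrict to the constrained spaces and $\chi$ lands in $\Diff_f^{p^{-1}(K)}(E\rel T)$. Each of these is routine, but the combination is where a reader should slow down; beyond it I expect no new difficulty, the essential content lying in Sections~\ref{project} and~\ref{restrict}.
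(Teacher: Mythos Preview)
Your proof is correct and matches the paper's approach essentially line for line: the three easy arrows are handled by the same citations (Corollary~\ref{corollary2}(i), Theorem~\ref{project diffs}/Theorem~\ref{theorem1}, Corollary~\ref{palaiscoro2}), and for the right-hand arrow the paper also composes a $\Diff^K(B\rel S)$ cross-section for $\Imb^K(p(W),B\rel S)$ with a $\Diff_f^{p^{-1}(K)}(E\rel T)$ cross-section for $\Diff^K(B\rel S)$, then invokes equivariance and Proposition~\ref{theoremA}. The only cosmetic difference is that the paper works at a general $i$ rather than reducing to $i_{p(W)}$ via Proposition~\ref{prop:inclusion}, and it cites Theorem~\ref{theorem2} (apparently in error) where your citation of Theorem~\ref{palaistheoremB} is the one that actually produces $\chi_1$.
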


\begin{proof} The top arrow is 
\index{Restriction Theorem!relative fiber-preserving}%
Corollary~\ref{corollary2}(i), the
left vertical arrow is Theorem~\ref{theorem1}, and the bottom arrow is
\index{Restriction Theorem!Palais-Cerf!relative version}%
Corollary~\ref{palaiscoro2}.  For the right vertical arrow, we will
first show that $\Imb^K(p(W),B \rel\allowbreak S)$ admits local
$\Diff_f^{p^{-1}(K)}(E\allowbreak \rel\allowbreak T)$ cross-sections.
Let $i\in \Imb^K(p(W),B\rel S)$.  Using Theorems~\ref{theorem2}
and~\ref{theorem1}, choose local cross-sections $\chi_1\colon U\to
\Diff^K(B\rel S)$ at $i$ and $\chi_2\colon
V\to\Diff_f^{p^{-1}(K)}\allowbreak(E\rel T)$ at $\chi_1(i)$.  Let
$U_1=\chi_1^{-1}(V)$, then for $j\in U_1$ we have
\[\overline{\chi_2\chi_1(j)}i=\overline{\chi_2(\chi_1(j))}i=
\chi_1(j)i=j\ .\]
\noindent Since the right vertical arrow is
$\Diff_f^{p^{-1}(K)}(E\rel T)$-equivariant, Proposition \ref{theoremA}
implies that it is locally trivial.
\end{proof}

\section{Restriction theorems for orbifolds}
\label{orbifold}

Throughout this section, indeed in all of our work, an 
\indexdef{orbifold}orbifold means an
orbifold in the standard sense
whose universal covering $\pi\colon \widetilde{\O}\to \O$ is a
manifold. We assume further that $\O$ is a smooth orbifold, meaning that
$\widetilde{\O}$ is a smooth manifold and the group $H$ of covering
transformations consists of diffeomorphisms.

\begin{definition}\label{def:equivariant_diffeos}\indexdef{equivariant}
A map $f\colon \widetilde{\O}\to \widetilde{\O}$ is called \textit{(weakly)
$H$-equivariant} if for some automorphism $\alpha$ of $H$,
$f(h(x))=\alpha(h)(f(x))$ for all $x\in \widetilde{\O}$ and $h\in
H$. Define 
\indexsymdef{CinfinityOH}{$\protect\MapsH(\protect\mathcal{O})$}$\MapsH(\widetilde{\O})$ 
to be the space of $H$-equivariant boundary-preserving smooth maps from
$\widetilde{O}$ to $\widetilde{O}$, and $\Diff_H(\widetilde{\O})$ to be the
$H$-equivariant diffeomorphisms of $\widetilde{\O}$. Note that
$\Diff_H(\widetilde{\O})$ is the normalizer of $H$ in
$\Diff(\widetilde{\O})$.
\end{definition}

\begin{definition}\label{def:orbifold_diffeomorphism}\indexdef{orbifold!homeomorphism}\indexdef{orbifold!diffeomorphism}\indexdef{diffeomorphism!orbifold}
An \textit{orbifold homeomorphism} of $\O$ is a homeomorphism of the
underlying topological space of $\O$ that is induced by an $H$-equivariant
homeomorphism of $\widetilde{\O}$, called a \textit{lift} of the orbifold
homeomorphism. An orbifold diffeomorphism of $\O$ is an orbifold
homeomorphism for which some and hence all lifts to $\widetilde{\O}$ are
diffeomorphisms. Define $\Diff(\O )$ to be the group of orbifold
diffeomorphisms.  Note that $\Diff(\O )$ is the quotient of
$\Diff_H({\widetilde{\O}})$ by the normal subgroup~$H$. We give $\Diff(\O)$
the quotient topology of the $\Cinf$-topology on
$\Diff_H({\widetilde{\O}})$.
\end{definition}

\begin{definition}\label{def:suborbifolds}
An orbifold $\mathcal{W}$ contained in $\O$ is called a
\indexdef{suborbifold}\textit{suborbifold} of $\O$ if its inverse 
image $\widetilde{\mathcal{W}}$ in $\widetilde{\O} $ is a submanifold. 
An element of $\Imb(\widetilde{\mathcal{W}},\widetilde{\O})$ is called
\index{equivariant}\textit{$H$-equivariant} if it extends to an element of
$\Diff_H(\widetilde{\O})$, and the subspace of $H$-equivariant embeddings
is denoted by 
\indexsymdef{EmbHWO}{$\protect\mathrm{Emb}_H(\protect\widetilde{\protect\mathcal{W}},\protect\widetilde{\protect\mathcal{O}})$}$\Imb_H(\widetilde{\mathcal{W}},\widetilde{\O})$. An
\indexdef{embedding!orbifold}\textit{embedding} of $\mathcal{W}$ into $\mathcal{\O}$ is an embedding
induced by an element of $\Imb_H(\widetilde{\mathcal{W}},\widetilde{\O})$,
and the space of embeddings is denoted by 
\indexsymdef{EmbWO}{$\protect\Imb(\protect\mathcal{W},\protect\O )$}$\Imb(\mathcal{W},\O )$.
\end{definition}
\noindent Throughout this section, $\mathcal{W}$ will denote a compact
suborbifold of $\O $.

\begin{definition}\label{def:equivariant_vector_field}
A section from an $H$-equivariant subset $\widetilde{L}$ of
$\widetilde{\O}$ to $T\widetilde{\O}|_{\widetilde{L}}$ is called
\indexdef{equivariant!vector field}\textit{$H$-equivariant} if for each $x\in \widetilde{L}$ and each $h\in
H$, $h_*(X(x))=X(h(x))$.  In general, we use a subscript $H$ to indicate
the $H$-equivariant elements of any of the spaces of sections that we have
defined, thus for example
\indexsymdef{XHWTO}{$\protect\mathcal{X}_H(\protect\widetilde{\protect\mathcal{W}},T\protect\widetilde{\protect\O})$}$\mathcal{X}_H(\widetilde{\mathcal{W}},T\widetilde{\O})$ means the
$H$-equivariant elements of $\mathcal{X}(\widetilde{\mathcal{W}},
T\widetilde{\mathcal{O}})$.
\end{definition}

The next two lemmas provide equivariant functions and metrics.
 
\begin{lemma}\index{equivariant!smooth function} 
Let $H$ be a group acting
smoothly and properly discontinuously on a manifold $M$, possibly with
boundary, such that $M/H$ is compact. Let $A$ be an $H$-invariant
closed subset of $M$, and $U$ an $H$-invariant neighborhood of
$A$. Then there exists an $H$-equivariant smooth function
$\gamma\colon M\to [0,1]$ that is identically equal to~$1$ on $A$ and
whose support is contained in~$U$.
\label{equivariant function}
\end{lemma}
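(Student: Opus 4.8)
The plan is to avoid passing to the quotient $M/H$ (which need not be a manifold, since the action is not assumed free) and instead to build $\gamma$ directly on $M$ as a normalized average over $H$ of a suitable finite collection of ordinary bump functions. The point is that proper discontinuity turns such an infinite average into a \emph{locally finite} sum, hence smooth, once the bumps are supported in relatively compact sets; and normalizing, rather than merely averaging, is what lets one achieve the value $1$ exactly on $A$.

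First I would fix a good cover. For each $y\in M$ choose a relatively compact open neighborhood $W_y$ of $y$ so that $\overline{W_y}\subseteq U$ if $y\in A$, and $\overline{W_y}\cap A=\emptyset$ if $y\notin A$; this uses only that $M$ is locally compact and $U$ is open. Since the orbit map $\pi\colon M\to M/H$ is open and $M/H$ is compact, finitely many $W_1,\dots,W_m$ among the $W_y$ already have $\{\,hW_j : h\in H,\ 1\le j\le m\,\}$ covering $M$. Because each $\overline{W_j}$ is compact and the action is properly discontinuous, for every compact $L\subseteq M$ only finitely many pairs $(h,j)$ satisfy $hW_j\cap L\neq\emptyset$; thus this cover is locally finite. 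Put $J_A=\{\,j : \overline{W_j}\subseteq U\,\}$. If $a\in A$ and $a\in hW_j$, then $h^{-1}a\in\overline{W_j}\cap A$ (using that $A$ is $H$-invariant), so $W_j$ cannot be one of the sets of the second type, and therefore $j\in J_A$; thus $A\subseteq\bigcup_{j\in J_A}\bigcup_{h\in H}hW_j$. Conversely, for $j\notin J_A$ the set $W_j$ is of the second type, so $\overline{W_j}\cap A=\emptyset$, and hence $h\overline{W_j}\cap A=\emptyset$ for all $h\in H$.

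Next I would choose, for each $j$, a smooth $\phi_j\colon M\to[0,1]$ supported in $W_j$ and positive on a smaller open set $W_j'$ with $\{\,hW_j'\,\}$ still covering $M$, and set
\[
N(x)=\sum_{j\in J_A}\ \sum_{h\in H}\phi_j(h^{-1}x),\qquad
D(x)=\sum_{j=1}^{m}\ \sum_{h\in H}\phi_j(h^{-1}x).
\]
By local finiteness both sums are smooth, and reindexing the inner sum (replace $h$ by $gh$) shows each is $H$-invariant; moreover $D>0$ everywhere. Then $\gamma:=N/D$ is a smooth $H$-invariant function with $0\le\gamma\le1$. On $A$ every term of $D$ with $j\notin J_A$ vanishes, since its support lies in $\bigcup_{h}hW_j$, which misses $A$; hence $N=D$ on $A$ and $\gamma\equiv1$ there. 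Finally $\operatorname{supp}\gamma\subseteq\operatorname{supp}N\subseteq\bigcup_{j\in J_A}\bigcup_{h\in H}h\overline{W_j}\subseteq U$, since $\overline{W_j}\subseteq U$ for $j\in J_A$ and $U$ is $H$-invariant. So this $\gamma$ has all the required properties.

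The only genuine obstacle is the local finiteness of the cover $\{hW_j\}$: this is precisely where ``properly discontinuous'' enters, together with the compactness of $M/H$, which is what permits the reduction to finitely many relatively compact $W_j$. The remaining ingredients — existence of the bumps $\phi_j$, smoothness of locally finite sums, and the $H$-invariance bookkeeping — are routine, and none of them is affected by $M$ having boundary.
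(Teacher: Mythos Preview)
Your proof is correct. The paper's argument is shorter but follows the same averaging philosophy with a different endgame: instead of building a partition of unity and normalizing by division, the paper takes a single compact ``fundamental domain'' $C$ mapping onto $M/H$, chooses one bump $\phi\colon M\to[0,\infty)$ with compact support in $U$ and $\phi\ge 1$ on $C\cap A$, averages $\psi(x)=\sum_{h\in H}\phi(h(x))$, and then simply post-composes with a smooth cutoff $\eta\colon\R\to[0,1]$ that is $0$ on $(-\infty,0]$ and $1$ on $[1,\infty)$. Since every $H$-orbit meets $C$, one has $\psi\ge 1$ on $A$, so $\gamma=\eta\circ\psi$ is identically $1$ there, and the support condition follows as in your argument. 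Your normalization $N/D$ achieves the value $1$ on $A$ exactly, at the cost of setting up a full partition of unity; the paper's cutoff trick avoids that machinery but requires the extra observation that $\psi\ge 1$ on $A$ rather than merely $\psi>0$. Both routes hinge on the same point you identified---proper discontinuity plus compactness of $M/H$ making the $H$-sum locally finite---and neither is affected by boundary.
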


\begin{proof} Fix a compact subset $C$ of $M$ which maps surjectively
onto $M/H$ under the quotient map.  Let $\phi\colon M\to[0,\infty)$ be
a smooth function such that $\phi(x)\geq 1$ for all $x\in C\cap A$ and
whose support is compact and contained in $U$. Define $\psi$ by
$\psi(x)= \sum_{h\in H}\phi(h(x))$. Now choose $\eta\colon\R\to[0,1]$
such that $\eta(r)= 0$ for $r\leq 0$ and $\eta(r)= 1$ for $r\geq
1$, and put $\gamma= \eta\circ\psi$.
\end{proof}

When $\O $ is compact, the following lemma provides a Riemannian
metric on $\widetilde{\O}$ for which the covering transformations
are isometries.

\begin{lemma}\index{equivariant!Riemannian metric}
Let $H$ be a group acting
smoothly and properly discontinuously on a manifold $M$, possibly with
boundary, such that $M/H$ is compact. Let $N$ be a properly embedded
$H$-invariant submanifold, possibly empty. Then $M$ admits a complete
$H$-equivariant Riemannian metric, which is a product near $\partial M$,
and such that $N$ meets $\partial M\times \I$ in $\I$-fibers. Moreover, the
action preserves the collar, and if $(y,t)\in \partial M\times \I$ and $h\in
H$, then $h(y,t)= (h\vert_{\partial M}(y),t)$.\par
\label{covering isometries}
\end{lemma}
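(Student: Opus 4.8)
The plan is to carry out exactly the construction given in Section~\ref{metrics} for producing a complete metric that is a product near the boundary with the given submanifold meeting the collar in $\I$-fibers, but to make every choice $H$-equivariant. Two facts make this possible. First, since $H$ acts properly discontinuously with $M/H$ compact, summing any smooth tensor of compact support over the $H$-orbit gives a locally finite sum, so averaging an arbitrary Riemannian metric (or an arbitrary positive smooth function) over $H$ produces an $H$-invariant one. Second, Lemma~\ref{equivariant function} supplies $H$-equivariant bump functions, hence $H$-equivariant partitions of unity subordinate to any $H$-invariant open cover.

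First I would build the collar. Since $\partial M/H$ is compact, cover a neighborhood of $\partial M$ in $M$ by finitely many charts whose $H$-translates still cover a neighborhood of $\partial M$, and on each chart choose a vector field that points inward along $\partial M$ and is tangent to $N$ (possible since $N$ is properly embedded, so a vector along $\partial N$ pointing inward in $N$ points inward in $M$). Patch these together using an $H$-equivariant partition of unity: both conditions ``inward-pointing along $\partial M$'' and ``tangent to $N$'' are preserved under convex combinations and under pushing forward by elements of $H$, so the result is an $H$-equivariant vector field $\xi$, nonvanishing near $\partial M$, that is inward-pointing and tangent to $N$. Because $\xi$ is $H$-equivariant, each $h\in H$ carries integral curves of $\xi$ to integral curves preserving the parameter; hence the flow of $\xi$ yields a collar $\partial M\times[0,2]$ of $\partial M$ on which $h(y,t)=(h|_{\partial M}(y),t)$, and in particular the action preserves the collar. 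Since $\xi$ is tangent to $N$, the flow preserves $N$ near $\partial M$, so $N$ meets this collar in the $\I$-fibers lying over $\partial N$.

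Next I would produce the metric. On $\partial M$ (which is boundaryless with $\partial M/H$ compact) take an $H$-invariant complete metric, obtained by averaging an arbitrary one and, if needed, correcting completeness as in Section~\ref{metrics} in the elementary boundaryless case; its product with the standard metric on $[0,2)$ is an $H$-invariant metric on $\partial M\times[0,2)$, since $H$ acts there as $(h|_{\partial M},\mathrm{id})$. Choose any $H$-invariant metric on all of $M$ and patch the two using an $H$-equivariant partition of unity subordinate to the $H$-invariant open cover $\{\partial M\times[0,2),\ M-\partial M\times\I\}$. The outcome is an $H$-invariant Riemannian metric that is a product near $\partial M$ for the collar just constructed, and for which $N$ meets the collar in $\I$-fibers.

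Finally I would make the metric complete, following Section~\ref{metrics}. Let $f\colon M-\partial M\to(0,\infty)$ send $x$ to the supremum of the $r$ for which $\Exp$ is defined on all vectors of $T_x(M)$ of length less than $r$; since the metric is $H$-invariant, $f$ is $H$-invariant, and since $1/f$ is bounded above on a compact set surjecting onto $M/H$, a constant $g$ dominates $1/f$ everywhere on $M-\partial M$. Let $\phi\colon M\to[0,1]$ be an $H$-equivariant smooth function, obtained from Lemma~\ref{equivariant function}, equal to $0$ on $\partial M\times\I$ and to $1$ on $M-\partial M\times[0,2)$. Then $i\colon M\to M\times[0,\infty)$ defined by $i(x)=(x,\phi(x)g)$ for $x\notin\partial M$ and $i(x)=(x,0)$ for $x\in\partial M$ is an $H$-equivariant embedding, where $H$ acts on $M\times[0,\infty)$ by $h\cdot(x,s)=(h(x),s)$; pulling back the product metric (which is $H$-invariant) gives an $H$-invariant complete metric agreeing with the previous one on $\partial M\times\I$, hence still a product near the boundary with $N$ meeting the collar in $\I$-fibers. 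I expect the one step genuinely needing care to be the construction of the collar-defining field $\xi$, which must be simultaneously $H$-equivariant, inward-pointing along $\partial M$, and tangent to $N$; everything else is a routine equivariant transcription of Section~\ref{metrics}.
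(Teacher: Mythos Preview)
Your construction of the equivariant collar and the product-near-the-boundary metric is essentially the paper's: build an $H$-equivariant inward-pointing vector field tangent to $N$ (the paper does this by summing $\sum_h \phi(h(x))\,h_*^{-1}(\tau_{h(x)})$, you do it with an equivariant partition of unity, which is the same mechanism), flow to get the collar, and patch a product metric on the collar with an $H$-invariant metric on $M$ using equivariant bump functions. That part is fine.

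The completeness step is where you diverge from the paper, and your argument has a gap. You claim that since $f$ is $H$-invariant and $M/H$ is compact, $1/f$ is bounded, so a \emph{constant} $g$ dominates $1/f$. But $f$ is defined only on $M-\partial M$, and $f(x)\to 0$ as $x\to\partial M$ (a geodesic perpendicular to $\partial M$ exits in time equal to the distance to $\partial M$), so $1/f$ is unbounded near $\partial M$; a compact set surjecting onto $M/H$ must meet $\partial M$ when $\partial M\neq\emptyset$. More seriously, with $g$ constant the embedding $i(x)=(x,\phi(x)g)$ has second coordinate constant wherever $\phi$ is constant, so the pullback metric differs from the original only on the strip where $d\phi\neq 0$. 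Your ``completion'' changes nothing outside a compact band in the collar and therefore cannot manufacture completeness on the rest of~$M$.

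The paper avoids all of this by observing that completeness is \emph{automatic} once you have an $H$-invariant metric with $M/H$ compact: enlarge a compact fundamental set $C$ to a compact $C'$ so that every point of $M$ has an $H$-translate lying in $C'$ at distance at least some $\epsilon$ from the frontier of $C'$; then any Cauchy sequence, once its terms are $\epsilon$-close, can be translated by a \emph{single} $h\in H$ into $C'$, where it converges. So your step~3 is unnecessary, and the embedding trick of Section~\ref{metrics}---designed for the non-equivariant situation where the quotient need not be compact---is the wrong tool here.
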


\begin{proof} We first prove that equivariant Riemannian metrics
exist. Choose a compact subset $C$ of $M$ that maps surjectively onto
$M/H$ under the quotient map. Let $\phi\colon M\to [0,\infty)$ be a
compactly supported smooth function which is positive on $C$. Choose
a Riemannian metric $R$ on $M$ and denote by $R_x$ the inner product
which $R$ assigns to $T_x(M)$. Define a new metric $R'$ by
$$R'_x(v,w)=\sum_{h\in H}\phi(h(x))\,R_{h(x)}(h_*(v),h_*(w))\ .$$
\noindent Since $\phi$ is compactly supported, the sum is finite, and
since every orbit meets the support of $\phi$, $R'$ is positive
definite. To check equivariance, let $g\in H$. Then
\begin{align*}R'_{g(x)}(g_*(v),g_*(w))&=
\sum_{h\in H}\phi(h(g(x)))\,
R_{h(g(x))}(h_*(g_*(v)),h_*(g_*(w)))\\
&=\sum_{h\in H}\phi(hg(x))\,
R_{hg(x)}((hg)_*(v),(hg)_*(w))\\
&=R'_x(v,w)\ .
\end{align*}
\longpage\longpage

We need to improve the metric near the boundary. First, note that
$C\cap\partial M$ maps surjectively onto the image of $\partial M$. Choose
an inward-pointing vector field $\tau'$ on a neighborhood $U$ of $C\cap
\partial M$, which is tangent to $N$. Choose a smooth function $\phi\colon
M\to [0,\infty)$ which is positive on $C\cap \partial M$ and has compact 
support contained in $U$. The field $\phi\tau'$ defined on $U$ extends
using the zero vector field on $M-U$ to a vector field $\tau$ which is
nonvanishing on $C\cap \partial M$. For $x$ in the union of the
$H$-translates of $U$, define $\omega_x=\sum_{h\in
H}\phi(h(x))\,h_*^{-1}(\tau_{h(x)})$. This is defined, nonsingular, and
equivariant on an $H$-invariant neighborhood of $\partial M$, and we use it
to define a collar $\partial M\times[0,2]$ equivariant in the sense that if
$(y,t)\in\partial M\times[0,2]$ then $h(y,t)= (h\vert_{\partial
  M}(y),t)$. Moreover, $N$ meets this collar in $\I$-fibers. On $\partial
M\times[0,2]$, choose an equivariant metric $R_1$ which is the product of
an equivariant metric on $\partial M$ and the standard metric on $[0,2]$,
and choose any equivariant metric $R_2$ defined on all of $M$. Using
Lemma~\ref{equivariant function}, choose $H$-equivariant functions $\phi_1$
and $\phi_2$ from $M$ to $[0,1]$ so that $\phi_1(x)= 1$ for all $x\in
\partial M\times [0,3/2]$ and the support of $\phi_1$ is contained in
$\partial M\times[0,2)$, and so that $\phi_2(x)= 1$ for $x\in M-\partial
  M\times [0,3/2]$ and the support of $\phi_2$ is contained in $M-\partial
  M\times[0,1]$.  Then, $\phi_1R_1+\phi_2R_2$ is $H$-equivariant and is a
  product near $\partial M$, and $N$ is vertical in $\partial M\times \I$.

Since $M/H$ is compact and $H$ acts as isometries, the metric must be
complete. For let $C$ be a compact subset of $M$ that maps
surjectively onto $M/H$. We may enlarge $C$ to a compact
codimension-zero submanifold $C'$ such that every point of $M$ has a
translate which lies in $C'$ at distance at least a fixed~$\epsilon$
from the frontier of $C'$. Then, any Cauchy sequence in $M$ can be
translated, except for finitely many terms, into a Cauchy sequence
in $C'$. Since $C'$ is compact, this converges, so the original sequence
also converged.
\end{proof}

\begin{proposition} Suppose that $H$ acts properly discontinuously on 
a locally compact connected Hausdorff space $X$, and that $X/H$ is
compact. Then $H$ is finitely generated.
\label{finitely generated}
\end{proposition}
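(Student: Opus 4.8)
The plan is to reproduce the classical argument showing that the deck group of a compact quotient is finitely generated, adapted to this point-set setting. First I would manufacture a compact ``almost fundamental domain'': using local compactness, for each $x\in X$ choose a compact neighborhood $K_x$, so $x\in\interior(K_x)$. The quotient map $q\colon X\to X/H$ is open, since $q^{-1}(q(U))=\bigcup_{h\in H}hU$ is open for every open $U$; hence the sets $q(\interior(K_x))$ form an open cover of the compact space $X/H$, and finitely many of them, say for $x_1,\dots,x_n$, already cover it. Setting $C=K_{x_1}\cup\dots\cup K_{x_n}$, we get a compact set with $q(\interior(C))=X/H$, equivalently $H\cdot\interior(C)=X$; in particular $\interior(C)\neq\emptyset$ (as $X\neq\emptyset$).

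Next I would invoke proper discontinuity in the form that $S=\{h\in H:hC\cap C\neq\emptyset\}$ is finite, and claim the finitely generated subgroup $H_0=\langle S\rangle$ equals $H$. To see this, consider $Y=H_0\cdot\interior(C)$ and $Z=(H\setminus H_0)\cdot\interior(C)$. Each is open, being a union of open translates of $\interior(C)$; their union is $X$ because $H\cdot\interior(C)=X$; and $Y\neq\emptyset$ since it contains $\interior(C)$. The crucial point is disjointness: if $g\in H_0$, $h\in H$, and $g\,\interior(C)\cap h\,\interior(C)\neq\emptyset$, then $g^{-1}h\,\interior(C)\cap\interior(C)\neq\emptyset$, so $g^{-1}h\in S\subseteq H_0$, whence $h\in H_0$. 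Thus $X=Y\sqcup Z$ is a partition into disjoint open sets with $Y$ nonempty, and connectedness of $X$ forces $Z=\emptyset$, i.e. $H=H_0$, which is finitely generated.

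The construction of $C$ and the verification that $Y$ and $Z$ are open and cover $X$ are routine; the only spot that needs attention is the disjointness step, where both the finiteness of $S$ and the choice of $\langle S\rangle$ as the test subgroup are used. The one genuine dependency is that ``properly discontinuous'' yields finiteness of $\{h:hC\cap C\neq\emptyset\}$ for compact $C$: this is immediate if one takes it as the definition of a proper action of the discrete group $H$, and otherwise is a standard consequence in the locally compact Hausdorff setting (and in particular for the deck actions of coverings, which is the only situation in which this proposition is applied). I do not expect any serious obstacle beyond this bookkeeping; the argument is elementary.
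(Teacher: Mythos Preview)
Your proof is correct and follows essentially the same approach as the paper: construct a compact set $C$ whose interior surjects onto $X/H$, let $H_0$ be generated by the finite set $S=\{h:hC\cap C\neq\emptyset\}$, and use connectedness to conclude $H_0=H$. You supply more detail than the paper (the explicit construction of $C$, and the clean $Y\sqcup Z$ partition using $\interior(C)$ rather than $C$), but the argument is the same.
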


\begin{proof} Using local compactness, there exists a compact set $C$
whose interior maps surjectively to $X/H$. Let $H_0$ be the subgroup
generated by the finitely many elements $h$ such that $h(C)\cap C$ is
nonempty. The union of the $H_0$-translates of $C$ is an open and closed
subset, so must equal $X$. This implies that $H= H_0$.
\end{proof}

\begin{definition}\label{def:controlled_equivariant}
Let $A$ be an $H$-invariant subset of $\widetilde{O}$. Define
\indexsymdef{CinfinityOHA}{$(\protect\Maps)_H^A(\protect\widetilde{\protect\O})$}$(\Maps)_H^A(\widetilde{\O})$ to be the elements of
$\MapsH(\widetilde{\O})$ that fix each point not in $A$, and
define 
\indexsymdef{DiffHAO}{$\protect\Diff_H^A(\protect\widetilde{\protect\mathcal{O}})$}$\Diff_H^A(\widetilde{\mathcal{O}})$ similarly. If $A$ is a neighborhood of
$\widetilde{\mathcal{W}}$, define
\indexsymdef{EmbHWOA}{$\protect\Imb_H^A(\protect\widetilde{\protect\mathcal{W}},\protect\widetilde{\protect\mathcal{\protect\O}})$}$\Imb_H^A(\widetilde{\mathcal{W}},\widetilde{\mathcal{\O}})$ to be the
elements of $\Imb_H(\widetilde{\mathcal{W}},\widetilde{\mathcal{\O}})$ that
carry $\mathcal{\widetilde{W}}$ into $A$. We use this
notation to extend our previous concepts to orbifolds. For example, if $K$
is a neighborhood of a suborbifold $\mathcal{W}$ in $\O$, then
$\Imb^K(\mathcal{W},\mathcal{O})$ is the subspace of
$\Imb(\mathcal{W},\mathcal{O})$ induced by elements of
$\Imb_H^{\pi^{-1}(K)}(\widetilde{\mathcal{W}},\widetilde{\mathcal{O}})$,
$\mathcal{X}^K(\mathcal{\O},T\mathcal{\O})$ is the subspace of elements
of $\mathcal{X}_H^K(\mathcal{\O},T\mathcal{\O})$ that equal $Z$ outside of
$\pi^{-1}(K)$, and so on.
\end{definition}

\begin{lemma}
Suppose that $H$ acts properly discontinuously as isometries on
$\widetilde{\O}$.  Let $\widetilde{K}$ be an $H$-invariant subset of
$\widetilde{\O}$ whose quotient in $\O$ is compact. Then there exists a
neighborhood $J$ of $1_{\widetilde{\O}}$ in
$(\Maps)_H^{\widetilde{K}}(\widetilde{\O})$ that consists of
diffeomorphisms.
\label{orblemmaB}
\end{lemma}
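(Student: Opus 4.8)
The plan is to take $J$ to be a small neighborhood of $1_{\widetilde{\O}}$ cut out by $C^1$-estimates on a single compact set, and to verify that every $f\in J$ is a local diffeomorphism which is injective and surjective, hence a diffeomorphism. Fix a complete $H$-equivariant Riemannian metric on $\widetilde{\O}$ (pull back any complete metric on the underlying space of $\O$; the choice has no effect on the $\Cinf$-topology), with associated distance $d$. Since $\widetilde K/H$ is compact, choose compact subsets $D\subseteq D_1$ of $\widetilde{\O}$ with $\widetilde K\subseteq H\cdot\interior(D)$ and $D\subseteq\interior(D_1)$; because $H$ acts by isometries the injectivity radius is an $H$-invariant function, so it has a positive lower bound $\rho$ on $D_1$ and hence on all of $H\cdot D_1\supseteq\widetilde K$. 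Let $J$ consist of those $f\in(\Maps)_H^{\widetilde{K}}(\widetilde{\O})$ such that, for every $x\in D_1$, both $d(f(x),x)<\delta$ and the distance from $df_x$ to parallel translation $T_x\widetilde{\O}\to T_{f(x)}\widetilde{\O}$ along the short geodesic from $x$ to $f(x)$ is less than $\delta$, where $\delta>0$ is a small constant (in particular $\delta<\rho$, so this short geodesic is unique). This is an open neighborhood of $1_{\widetilde{\O}}$, and---using that $f$, $df$, the metric, and parallel translation are $H$-equivariant and that $f=1_{\widetilde{\O}}$ off $\widetilde K\subseteq H\cdot D_1$---the same two estimates then hold at \emph{every} point of $\widetilde{\O}$.

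One preliminary point: for $f\in J$ the automorphism $\alpha$ of $H$ determined by $f\circ h=\alpha(h)\circ f$ equals $\mathrm{id}_H$. If $\widetilde K\neq\widetilde{\O}$ this needs no restriction on $f$: on the nonempty open $H$-invariant set $\widetilde{\O}\setminus\widetilde K$ the map $f$ is the identity, so $\alpha(h)$ and $h$ agree there, hence everywhere, $\widetilde{\O}$ being connected and both maps being isometries. If $\widetilde K=\widetilde{\O}$, then $\widetilde{\O}/H$ is compact and, by Proposition~\ref{finitely generated}, $H$ is finitely generated; for a fixed finite generating set $\{h_i\}$ and $f$ close enough to $1_{\widetilde{\O}}$ one has $\alpha(h_i)=h_i$ (if $f_n\to1_{\widetilde{\O}}$ with $\alpha_n(h_i)\neq h_i$, then $\alpha_n(h_i)(f_n(x))=f_n(h_i(x))\to h_i(x)$ and hence $\alpha_n(h_i)(x)\to h_i(x)$ for every $x$, and proper discontinuity forces $\alpha_n(h_i)$ to be eventually $h_i$), so after shrinking $J$ we may assume $\alpha=\mathrm{id}_H$ on $J$.

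Now take $f\in J$. First, $df_x$ differs from an isometry by less than $\delta<1$, so it is invertible and $f$ is a local diffeomorphism; at boundary points one also uses that $f$ preserves $\partial\widetilde{\O}$, together with the inverse function theorem for manifolds with boundary. Injectivity: if $f(x)=f(y)$ with $x\neq y$, then $d(x,y)\le d(x,f(x))+d(f(y),y)<2\delta$; if both $x,y$ lay off $\widetilde K$ we would get $x=f(x)=f(y)=y$, so at least one, say $z$, lies in $\widetilde K$, and $x,y\in B(z,2\delta)$ with injectivity radius at $z$ at least $\rho$. On the $\rho$-ball about $z$, $\Exp_z$ is a diffeomorphism, and for $\delta$ sufficiently small the induced map $\Exp_z^{-1}\circ f\circ\Exp_z$ is defined near $0$ and $C^1$-close to the inclusion, hence injective; therefore $x=y$. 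Surjectivity: $f$ is proper, since for compact $L\subseteq\widetilde{\O}$ the set $f^{-1}(L)$ is closed and lies in the $\delta$-neighborhood of $L$, which is precompact by completeness of the metric. A proper map is closed, so $f(\widetilde{\O})$ is closed; it is also open (the local-diffeomorphism property, with $f$ restricting to a proper local diffeomorphism of $\partial\widetilde{\O}$ onto itself), and $\widetilde{\O}$ is connected, so $f(\widetilde{\O})=\widetilde{\O}$. Hence $f$ is a bijective local diffeomorphism, i.e.\ a diffeomorphism.

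The heart of the argument, and where the compactness of $\widetilde K/H$ (with $H$ acting isometrically) is genuinely used, is the promotion of the local $C^1$-bounds defining $J$ to global bounds and the existence of the uniform lower bound $\rho$ for the injectivity radius over the relevant region; the bookkeeping for $\alpha$, the boundary refinements of the inverse function theorem, and the openness of $f(\widetilde{\O})$ at the boundary are routine but must be carried out with some care.
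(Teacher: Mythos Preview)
Your proof is correct and takes a genuinely different route from the paper's. The paper first reduces to the case where $\O$ is compact (by restricting to a compact codimension-zero suborbifold $\mathcal{L}$ containing $\widetilde{K}/H$ when $\O$ is noncompact), and then argues as follows: once $f$ commutes with $H$ and $df$ is nonsingular on a compact fundamental set, $f$ is a proper local diffeomorphism, hence a covering map, and since $\widetilde{\O}$ is simply connected (being the universal cover), $f$ must be a diffeomorphism. You instead prove injectivity directly by a mean-value estimate in normal coordinates and prove surjectivity by an open-and-closed argument, never invoking covering-space theory or the simple-connectedness of $\widetilde{\O}$. This makes your argument more self-contained and in principle applicable to non--simply-connected targets, at the cost of needing uniform geometric control (the injectivity-radius bound and the curvature bounds implicit in your choice of $\delta$). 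One small wrinkle: your assertion that $\widetilde{\O}\setminus\widetilde{K}$ is open presumes $\widetilde{K}$ is closed, which the lemma does not state; in the case where $\O$ is compact but $\widetilde{K}$ is a non-closed proper $H$-invariant subset, you should simply fall back on the finite-generation argument you already give for $\widetilde{K}=\widetilde{\O}$, which applies verbatim.
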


\begin{proof} Assume for now that $\O$ is compact and 
$\widetilde{\O}=\widetilde{K}$, and fix a compact set
$C$ in $\widetilde{\O}$ that maps surjectively to $\O$. 

We claim that if $f\in\MapsH(\widetilde{\O})$ is close enough to
$1_{\widetilde{\mathcal{O}}}$, then $f$ commutes with the $H$-action. By
Proposition~\ref{finitely generated}, $H$ is finitely generated.  Choose an
$x\in\widetilde{\O}$ which is not fixed by any nontrivial element of $H$.
Define $\Phi\colon (\Maps)_H^{\widetilde{K}}(\widetilde{\mathcal{O}})\to
\hbox{End}(H)$ by sending $f$ to $\phi_f$ where
$f(h(x))=\phi_f(h)f(x)$. This is independent of the choice of $x$, and
is a homomorphism. If $f$ is close enough to $1_{\widetilde{\O}}$ on
$\set{x,h_1(x),\ldots,h_n(x)}$, where $\set{h_1,\ldots,h_n}$ generates $H$,
then $\phi_f= 1_H$. This prove the claim.


For the remainder of the argument, we require $f$ to be close enough to
$1_{\widetilde{\O}}$ to ensure that $f$ commutes with the $H$-action. This
implies that $f^{-1}(S)$ is compact whenever $S$ is compact. For if $S$ is
a subset for which $f^{-1}(S)$ meets infinitely many translates of $C$,
then $S$ meets infinitely many translates of $f(C)$, so $S$ cannot be
compact.

Requiring in addition that $f$ be sufficiently
$\Cinf$-close to $1_{\widetilde{\O}}$, we have
$f_*$ nonsingular at each
point of $C$, hence on all of $\widetilde{\O}$. Since $f$ takes boundary
to boundary, it follows that $f$ is a local diffeomorphism. Since inverse images
of compact sets under $f$ are compact, $f$ is a covering map. And since
$\widetilde{\O}$ is simply-connected, $f$ is a diffeomorphism.

Now suppose that $\O$ is noncompact. Choose a compact
co\-di\-men\-sion-zero suborbifold $\mathcal{L}$ of $\O$ that contains
$\widetilde{K}/H$ in its topological interior. Each element of
$(\Maps)^{\widetilde{K}}_H(\widetilde{\mathcal{L}})$ extends to an element
of $(\Maps)^{\widetilde{K}}_H(\widetilde{\O})$ by using the identity on
$\widetilde{\O}-\widetilde{\mathcal{L}}$.  Applying the case when $\O$ is
compact, that is, using $\widetilde{\mathcal{L}}$ in place of $\O$, some
neighborhood of the identity in $\MapsH(\widetilde{\mathcal{L}})$
consists of maps which are diffeomorphisms
on~$\widetilde{\mathcal{L}}$. The intersection of this neighborhood with
$(\Maps)^{\widetilde{K}}_H(\widetilde{\mathcal{L}})$ consists of
diffeomorphisms, and their extensions to $\widetilde{\O}$ form the desired
neighborhood of the identity in
$(\Maps)^{\widetilde{K}}_H(\widetilde{\O})$.
\end{proof}

We now prove the analogues of Lemmas~\ref{logarithm} and~\ref{extension}
for vector fields on $\O $. Assume that $\mathcal{W}$ is a compact
suborbifold of $\O $.

\begin{lemma}[Equivariant Logarithm Lemma]\indexstate{Logarithm Lemma!equivariant}
There are a neighborhood $U$ of the inclusion $i_{\widetilde{\mathcal{W}}}$
of $\widetilde{\mathcal{W}}$
into $\widetilde{\O}$ in $\Imb_H(\widetilde{\mathcal{W}},\widetilde{\O})$
and a continuous map $X\colon U \to
\mathcal{X}_H(\widetilde{\mathcal{W}},T\widetilde{\O})$ such that for all
$j\in U$, $\Exp(X(j)(x))$ is defined for all $x\in \widetilde{\mathcal{W}}$ and
$\Exp(X(j)(x))= j(x)$ for all $x\in \widetilde{\mathcal{W}}$. Moreover,
$X(i_{\widetilde{\mathcal{W}}})=Z$.
\label{orblogarithm}
\end{lemma}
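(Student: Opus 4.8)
The plan is to closely follow the proof of the Logarithm Lemma (Lemma~\ref{logarithm}), carried out on $\widetilde{\mathcal{O}}$, and to get the $H$-equivariance essentially for free by arranging that the exponential map of the metric intertwines the $H$-action. The only new ingredient is the choice of an equivariant metric and a uniform injectivity radius along $\widetilde{\mathcal{W}}$.

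First I would fix an $H$-equivariant complete Riemannian metric on $\widetilde{\mathcal{O}}$ that is a product near $\partial\widetilde{\mathcal{O}}$ and for which $\widetilde{\mathcal{W}}$ meets the collar in $\I$-fibers. This is exactly Lemma~\ref{covering isometries} when $\mathcal{O}$ is compact; for noncompact $\mathcal{O}$ one first replaces $\mathcal{O}$ by a compact codimension-zero suborbifold neighborhood of $\mathcal{W}$ and extends constructions by the identity outside it, as in the last paragraph of the proof of Lemma~\ref{orblemmaB}. Since $H$ then acts by isometries, $h(\Exp(v))=\Exp(h_*(v))$ whenever either side is defined, for all $h\in H$ and $v\in T\widetilde{\mathcal{O}}$.

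Next I would choose a uniform $\epsilon>0$: because $\mathcal{W}=\widetilde{\mathcal{W}}/H$ is compact and $H$ acts isometrically, there is a single $\epsilon$ such that for every $x\in\widetilde{\mathcal{W}}$ the map $\Exp$ carries the portion of the $\epsilon$-ball about $0$ in $T_x(\widetilde{\mathcal{O}})$ on which it is defined diffeomorphically onto a neighborhood $W_x$ of $x$, with $W_{h(x)}=h(W_x)$ (find $\epsilon$ over a compact set mapping onto $\mathcal{W}$, then propagate by the isometric action). I then take $U$ to be a neighborhood of $i_{\widetilde{\mathcal{W}}}$ in $\Imb_H(\widetilde{\mathcal{W}},\widetilde{\mathcal{O}})$ small enough that $j(x)\in W_x$ for all $x$ and all $j\in U$, and small enough that the weak-equivariance automorphism of each $j\in U$ is $\mathrm{id}_H$; here one uses proper discontinuity of $H$, since for $j$ close to $i_{\widetilde{\mathcal{W}}}$ the element of $H$ carrying $j(x)$ to $j(h(x))$ is forced to be $h$. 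For $j\in U$ define $X(j)(x)$ to be the unique vector of length $<\epsilon$ in $T_x(\widetilde{\mathcal{O}})$ with $\Exp(X(j)(x))=j(x)$; continuity of $X$ and $X(i_{\widetilde{\mathcal{W}}})=Z$ follow as in Lemma~\ref{logarithm}, and since the metric is a product near the boundary, $X(j)$ is tangent to $\partial\widetilde{\mathcal{O}}$ along $\widetilde{\mathcal{W}}\cap\partial\widetilde{\mathcal{O}}$, so $X(j)\in\mathcal{X}(\widetilde{\mathcal{W}},T\widetilde{\mathcal{O}})$. Equivariance is then immediate: for $h\in H$ the vector $h_*(X(j)(x))$ has length $<\epsilon$ and satisfies $\Exp(h_*(X(j)(x)))=h(j(x))=j(h(x))$, so uniqueness forces $h_*(X(j)(x))=X(j)(h(x))$, that is, $X(j)\in\mathcal{X}_H(\widetilde{\mathcal{W}},T\widetilde{\mathcal{O}})$.

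The argument is routine given the earlier lemmas; what is genuinely new relative to Lemma~\ref{logarithm}, and hence the ``main obstacle'', is producing the isometric $H$-action (already packaged in Lemma~\ref{covering isometries}) together with a uniform $\epsilon$ over the noncompact $\widetilde{\mathcal{W}}$, and observing that near the inclusion the weak-equivariance automorphism of $j$ is trivial, so that the a priori twisted equivariance of $j$ becomes the strict equivariance required for $X(j)$ to lie in $\mathcal{X}_H$. Once those are in place, equivariance of $X(j)$ is a purely formal consequence of the uniqueness of short geodesics.
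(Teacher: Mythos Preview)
Your proof is correct and follows essentially the same approach as the paper: reduce to a compact orbifold neighborhood of $\mathcal{W}$, invoke Lemma~\ref{covering isometries} to obtain an $H$-equivariant product metric, and then rerun the argument of Lemma~\ref{logarithm} equivariantly. Your write-up is considerably more detailed than the paper's---in particular you make explicit the uniform $\epsilon$ via cocompactness, the triviality of the weak-equivariance automorphism near the inclusion, and the uniqueness argument giving $h_*(X(j)(x))=X(j)(h(x))$---but these are exactly the details the paper's phrase ``working equivariantly in $\widetilde{\mathcal{O}}$'' is meant to cover.
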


\begin{proof} Replacing $\O $ by a compact orbifold
neighborhood of $\mathcal{W}$ and using Lemma~\ref{covering isometries}, we
may assume that $H$ acts as isometries on $\widetilde{\O}$, that the metric
is a product near $\partial\widetilde{\O}$, and that
$\widetilde{\mathcal{W}}$ meets the collar
$\partial\widetilde{\mathcal{O}}\times \I$ in $\I$-fibers. The proof then
follows the argument of Lemma~\ref{logarithm}, working equivariantly in
$\widetilde{\O}$.
\end{proof}

\begin{lemma}[Equivariant Extension Lemma]\index{Extension Lemma!equivariant}
Let $\mathcal{W}$ be a compact 
suborbifold of $\O$. Let $L$ be a neighborhood of $\mathcal{W}$ in $\O$ and
let $S$ be a closed neighborhood in $\partial\mathcal{O}$ of
$S\cap\partial\mathcal{W}$.  Denote the inverse images in $\widetilde{\O}$ by
$\widetilde{L}$ and $\widetilde{S}$. Then there exists a continuous map
$k\colon \mathcal{X}_H(\widetilde{\mathcal{W}},T\widetilde{\O})\to
\mathcal{X}_H^{\widetilde{L}}(\widetilde{\O},T\widetilde{\O})$ such that
$k(X)(x)= X(x)$ for all $x$ in $\widetilde{\mathcal{W}}$. Moreover,
$k(Z)= Z$, and if $X(x)= Z(x)$ for all $x\in
\widetilde{S}\cap\partial\widetilde{\mathcal{W}}$, then $k(X)(x)= Z(x)$
for all $x\in\widetilde{S}$.
\label{orbextension}
\end{lemma}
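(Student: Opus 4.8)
The plan is to establish Lemma~\ref{orbextension} as the equivariant counterpart of the Extension Lemma~\ref{extension}, using the reduction already employed in Lemma~\ref{orblogarithm}. First I would replace $\O$ by a compact orbifold neighborhood of $\mathcal{W}$ and invoke Lemma~\ref{covering isometries}, so that we may assume $H$ acts on $\widetilde{\O}$ by isometries, the metric is a product near $\partial\widetilde{\O}$, $\widetilde{\mathcal{W}}$ meets the collar $\partial\widetilde{\O}\times\I$ in $\I$-fibers, and the collar is equivariant in the strong sense $h(y,t)=(h|_{\partial\widetilde{\O}}(y),t)$. The conceptual point is then that every geometric operation used in the proof of Lemma~\ref{extension} --- the normal exponential of the $H$-invariant submanifold $\widetilde{\mathcal{W}}$, parallel transport $P(u,v)$ along geodesics, and the Seeley-type extension along a collar --- commutes with an isometric action, while the damping functions needed along the way are furnished equivariantly by Lemma~\ref{equivariant function}. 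So $k$ will be built by the very formulas of Lemma~\ref{extension}, and one only has to verify equivariance, tangency to $\partial\widetilde{\O}$, support in $\widetilde{L}$, and the $\widetilde{S}$-condition.

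For the case $\dim\widetilde{\mathcal{W}}<\dim\widetilde{\O}$, I would take the $H$-invariant normal bundle $\nu(\widetilde{\mathcal{W}})$ and note that for $\epsilon$ small enough --- uniformly, since $\widetilde{\mathcal{W}}/H$ is compact --- the exponentiation $e\colon\nu_\epsilon(\widetilde{\mathcal{W}})\to\widetilde{\O}$ is an $H$-equivariant diffeomorphism onto a tubular neighborhood lying in $\widetilde{L}$, carrying fibers into the slices $\partial\widetilde{\O}\times\{t\}$ near the boundary. Choosing by Lemma~\ref{equivariant function} an $H$-equivariant $\alpha\colon\widetilde{\O}\to[0,1]$ equal to $1$ on $\widetilde{\mathcal{W}}$ and to $0$ off $e(\nu_{\epsilon/2}(\widetilde{\mathcal{W}}))$, set $k(X)(x)=\alpha(x)\,P\bigl(X(\pi(e^{-1}(x))),e^{-1}(x)\bigr)$ on $e(\nu_\epsilon(\widetilde{\mathcal{W}}))$ and $k(X)(x)=Z(x)$ elsewhere. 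Equivariance of $k(X)$ is immediate from that of $e$, $\pi$ and $\alpha$ together with $P(h_*u,h_*v)=h_*P(u,v)$; the value $X(x)$ is recovered on $\widetilde{\mathcal{W}}$ because there $e^{-1}(x)=Z(x)$ and $P(u,Z(x))=u$; tangency to the boundary and vanishing off $\widetilde{L}$ are checked exactly as in Lemma~\ref{extension}; and the $\widetilde{S}$-condition holds once $\epsilon$ is taken small enough that the fibers of $e$ based at points of $\widetilde{\mathcal{W}}\cap\partial\widetilde{\O}$ lie in $\widetilde{S}$, respectively in $\partial\widetilde{\O}-\widetilde{S}$. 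As $k$ is linear in $X$ it is continuous and $k(Z)=Z$.

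For the case that $\widetilde{\mathcal{W}}$ has codimension zero, its frontier $\widetilde{W}$ is an $H$-invariant, two-sided, properly embedded codimension-one submanifold, and repeating the collar construction of Lemma~\ref{covering isometries} (now using an equivariant vector field transverse to $\widetilde{W}$ and pointing out of $\widetilde{\mathcal{W}}$) yields an equivariant bicollar $\widetilde{W}\times(-\infty,\infty)\subseteq\widetilde{L}$ with $\widetilde{\mathcal{W}}\cap(\widetilde{W}\times(-\infty,\infty))=\widetilde{W}\times[0,\infty)$ and $h(x,t)=(h|_{\widetilde{W}}(x),t)$. Since $H$ acts trivially on the $(-\infty,\infty)$-coordinate, the Seeley extension of Lemma~\ref{lem:Seeley}, applied in these bicollar coordinates fiberwise in $t$ (with the half-space version used along $\widetilde{W}\cap\partial\widetilde{\O}$, and damped by an equivariant bump function so as to vanish for $t\le-1$), automatically produces an $H$-equivariant section; one extends it by $Z$ outside the bicollar, and the boundary-tangency, $\widetilde{L}$-support, and $\widetilde{S}$-conditions are arranged just as in Lemma~\ref{extension}. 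The only real obstacle is this last step: Lemma~\ref{lem:Seeley} is proved by piecing together extensions in arbitrary local charts with a partition of unity, which destroys equivariance, so one must instead carry out Seeley's series (or the Hamilton integral) globally in the single equivariant bicollar --- which the equivariant collar of Lemma~\ref{covering isometries}, with $H$ acting trivially on the collar parameter, makes straightforward.
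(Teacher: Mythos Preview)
Your proposal is correct and follows essentially the same route as the paper: reduce to a compact orbifold neighborhood with an isometric $H$-action via Lemma~\ref{covering isometries}, then in positive codimension rerun the parallel-transport formula of Lemma~\ref{extension} using an $H$-equivariant bump function from Lemma~\ref{equivariant function} and the equivariance of $P$ under isometries, and in codimension zero use an $H$-equivariant bicollar of the frontier together with the coordinate-based Seeley extension. Your explicit remark that the partition-of-unity proof of Lemma~\ref{lem:Seeley} would spoil equivariance, and that one must instead apply the Seeley formula globally in the equivariant bicollar (where $H$ acts trivially on the collar parameter), is exactly the point the paper compresses into the phrase ``defined by a formula in terms of the coordinates of $W\times[0,\infty)$.''
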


\begin{proof} Assume first that $\mathcal{W}$ has positive codimension.
Replacing $\O$ by a compact orbifold neighborhood $\O'$ of $\mathcal{W}$,
$L$ by a compact neighborhood of $\mathcal{W}$ in $L\cap \O'$, and $S$ by
$S\cap \O'$, and using Lemma~\ref{covering isometries}, we may assume that
$H$ acts as isometries on $\widetilde{\O}$, that the metric is a product
near $\partial\widetilde{\O} $, and that $\widetilde{\mathcal{W}}$ meets
the collar $\partial\widetilde{\O} \times \I$ in $\I$-fibers. Let
$\nu(\widetilde{\mathcal{W}})$ be the normal bundle, regarded as a
subbundle of the restriction of $T\widetilde{\O}$ to
${\widetilde{\mathcal{W}}}$. For $\epsilon>0$, let
$\nu_\epsilon(\widetilde{\mathcal{W}})$ be the subspace of all vectors of
length less than $\epsilon$. Since $\mathcal{W}$ is compact and $H$ acts as
isometries on $\widetilde{L}$, $\Exp$ embeds
$\nu_\epsilon(\widetilde{\mathcal{W}})$ as a tubular neighborhood of
$\widetilde{\mathcal{W}}$ for sufficiently small $\epsilon$. By choosing
$\epsilon$ small enough, we may assume that
$\Exp(\nu_\epsilon(\widetilde{\mathcal{W}}))\subset \widetilde{L}$, that
the fibers at points in $\widetilde{S}$ map into $\widetilde{S}$, and that
the fibers at points in $\partial\widetilde{\mathcal{O}}-\widetilde{S}$ map
into $\partial\widetilde{\O}-\widetilde{S}$.

Now use Lemma~\ref{equivariant function} to choose an $H$-equivariant
smooth function $\alpha\colon\widetilde{\O}\to[0,1]$ which is
identically equal to~1 on $\widetilde{\mathcal{W}}$ and has support in
$\Exp(\nu_{\epsilon/2}(\widetilde{\mathcal{W}}))$. The extension $k(X)$ can
now be defined exactly as in Lemma~\ref{extension}. Note that since
$H$ acts as isometries, the parallel translation function $P$ is
$H$-equivariant, and the $H$-equivariance of $k(X)$ follows easily.

Assume now that $\mathcal{W}$ has codimension zero. The frontier $W$ of
$\widetilde{\mathcal{W}}$ is an equivariant properly embedded submanifold
of $\widetilde{\O}$. Since $H$ acts as isometries, we can select an
an equivariant tubular neighborhood of $W$ parameterized
as $W\times (-\infty,\infty)$ with
$\widetilde{\mathcal{W}}\cap (W\times (-\infty,\infty))=W\times
[0,\infty)$, and so that the action of $H$ respects the
$(-\infty,\infty)$-coordinate. By 
\index{Extension Lemma!Seeley}Lemma~\ref{lem:Seeley}, there is a
continuous linear extension operator carrying each vector field on
$\widetilde{\mathcal{W}}$ to a vector field on $\widetilde{\mathcal{W}}\cup
(W\times (-\infty,\infty))$. The extended vector fields are equivariant
since they are defined by a formula in terms of the coordinates of $W\times
[0,\infty)$. At points of $\partial \widetilde{\O}$, the component of each
vector in the direction perpendicular to $\partial \widetilde{\O}$ is $0$,
so the extended component is also $0$ and therefore the extended vector
fields are also tangent to the boundary. After multiplying by an
equivariant function on $\widetilde{\mathcal{W}}\cup (W\times
(-\infty,\infty))$ that is $1$ on $\widetilde{\mathcal{W}}$ and $0$ on
$W\times (-\infty,-1]$, these vector fields extend using $Z$ on
$\widetilde{\O}- (\mathcal{W}\cup (W\times (-\infty,\infty)))$.
\end{proof}

Now we are ready for the analogue of Theorem~B of~\cite{P}. Its statement
and proof use some notation explained in
Definition~\ref{def:controlled_equivariant}.

\begin{theorem} Let $\mathcal{W}$ be a compact suborbifold
of $\O $. Let $S$ be a closed neighborhood in $\partial\O $ of
$S\cap\partial\mathcal{W}$, and let $L$ be a neighborhood of $\mathcal{W}$
in $\O $. Then $\Imb^L(\mathcal{W},\O \rel S)$ admits local
$\Diff^L(\O \rel S)$ cross-sections.  
\par
\label{orbtheoremB}
\end{theorem}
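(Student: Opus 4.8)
The plan is to carry out the three-step Palais procedure — logarithm, extension, exponentiation — entirely in the orbifold cover $\widetilde{\O}$, working $H$-equivariantly, just as in the proof of Theorem~\ref{palaistheoremB}. By Proposition~\ref{prop:inclusion} it suffices to produce a local $\Diff^L(\O\rel S)$ cross-section at the inclusion $i_{\mathcal{W}}$. Every embedding of $\mathcal{W}$ into $\O$ is by definition induced by an $H$-equivariant embedding of $\widetilde{\mathcal{W}}$ into $\widetilde{\O}$, and every orbifold diffeomorphism of $\O$ by an $H$-equivariant diffeomorphism of $\widetilde{\O}$, and $\Diff(\O)$ carries the quotient topology; so it is enough to build a continuous map from a neighborhood of $i_{\widetilde{\mathcal{W}}}$ in $\Imb_H(\widetilde{\mathcal{W}},\widetilde{\O})$ into the diffeomorphisms lying in $(\Maps)_H^{\widetilde{L}}(\widetilde{\O})$ that fix $\widetilde{S}$, sending $j$ to a diffeomorphism that restricts to $j$ on $\widetilde{\mathcal{W}}$; this then descends to the desired cross-section. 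After replacing $\O$ by a compact orbifold neighborhood of $\mathcal{W}$ (and $L$, $S$ by their intersections with it) we may invoke Lemma~\ref{covering isometries} to arrange that $H$ acts on $\widetilde{\O}$ by isometries of a complete metric that is a product near $\partial\widetilde{\O}$, with $\widetilde{\mathcal{W}}$ meeting the collar in $\I$-fibers.

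First I would apply the Equivariant Logarithm Lemma~\ref{orblogarithm}, obtaining a neighborhood $U$ of $i_{\widetilde{\mathcal{W}}}$ and a continuous $X\colon U\to\mathcal{X}_H(\widetilde{\mathcal{W}},T\widetilde{\O})$ with $\Exp(X(j)(x))=j(x)$ for all $x\in\widetilde{\mathcal{W}}$ and $X(i_{\widetilde{\mathcal{W}}})=Z$. Then the Equivariant Extension Lemma~\ref{orbextension} gives a continuous linear $k\colon\mathcal{X}_H(\widetilde{\mathcal{W}},T\widetilde{\O})\to\mathcal{X}_H^{\widetilde{L}}(\widetilde{\O},T\widetilde{\O})$ that restricts to the identity on $\widetilde{\mathcal{W}}$, with $k(Z)=Z$ and with $k(X)$ vanishing on $\widetilde{S}$ whenever $X$ vanishes on $\widetilde{S}\cap\partial\widetilde{\mathcal{W}}$; moreover its construction keeps all the fields $k(X)$ supported in a fixed neighborhood $\widetilde{K}\subseteq\widetilde{L}$ of $\widetilde{\mathcal{W}}$ whose quotient is compact. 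For the exponentiation step I would use that, since $H$ acts by isometries, the tame exponential $\TExp$ commutes with the $H$-action, so it carries an $H$-equivariant vector field equal to $Z$ outside $\widetilde{K}$ to an $H$-equivariant self-map of $\widetilde{\O}$ equal to the identity outside $\widetilde{K}$; completeness of the metric makes $\TExp$ defined on a neighborhood of $Z$ in $\mathcal{X}_H^{\widetilde{K}}(\widetilde{\O},T\widetilde{\O})$, and shrinking that neighborhood and applying Lemma~\ref{orblemmaB} with this $\widetilde{K}$ forces its image into diffeomorphisms. Composing, $\chi=\TExp\circ k\circ X$ on a small enough neighborhood $U_1$ of $i_{\widetilde{\mathcal{W}}}$ yields, after passing to the quotient, a continuous map $U_1\to\Diff^L(\O)$. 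Since $\TExp(k(X(j)))(x)=\Exp(k(X(j))(x))=\Exp(X(j)(x))=j(x)$ for $x\in\widetilde{\mathcal{W}}$, this is a local cross-section for $\Imb^L(\mathcal{W},\O)$; and when $j$ is the inclusion on $\mathcal{W}\cap S$, the final clause of Lemma~\ref{orbextension} makes $\chi(j)$ the identity on $S$, so $\chi$ in fact takes values in $\Diff^L(\O\rel S)$ and restricts to a cross-section on $\Imb^L(\mathcal{W},\O\rel S)$, as required.

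The genuinely routine points — continuity of the composition, normalizing so that $\chi(i_{\mathcal{W}})=1_{\mathcal{O}}$, and shrinking neighborhoods to keep everything compatible — go exactly as in Theorem~\ref{palaistheoremB}. The main thing to get right is the exponentiation step: there is no separately stated equivariant analogue of the Exponentiation Lemma~\ref{lem:exponentiation}, so one has to assemble it from the equivariance of $\TExp$ under an isometric $H$-action, the completeness furnished by Lemma~\ref{covering isometries}, and Lemma~\ref{orblemmaB} to upgrade ``$H$-equivariant self-map near the identity'' to ``$H$-equivariant diffeomorphism''. One must also take care to track the control on $\widetilde{S}$ through all three lemmas, so that the output genuinely lies in $\Diff^L(\O\rel S)$ and not merely in $\Diff^L(\O)$.
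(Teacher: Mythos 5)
Your proposal follows essentially the same route as the paper's proof: reduce via Proposition~\ref{prop:inclusion} to a cross-section at the inclusion, then compose the Equivariant Logarithm Lemma~\ref{orblogarithm}, the Equivariant Extension Lemma~\ref{orbextension}, and Lemma~\ref{orblemmaB} in $\widetilde{\O}$, using $H$-equivariance of $\TExp$, before descending to $\O$. Your elaboration of the exponentiation step (completeness from Lemma~\ref{covering isometries}, $\TExp$ commuting with the isometric $H$-action, then Lemma~\ref{orblemmaB} to promote near-identity equivariant maps to diffeomorphisms) is sound and somewhat more explicit than the paper, which simply asserts that $\widetilde{\chi}$ is defined and has image in $J$. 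The one point you pass over too quickly is the descent itself: the phrase ``this then descends'' hides the need to know that each embedding of $\mathcal{W}$ near $i_{\mathcal{W}}$ admits a lift to a neighborhood $\widetilde{U}$ of $i_{\widetilde{\mathcal{W}}}$ and that, after shrinking $\widetilde{U}$, this lift is \emph{unique}, so that $j\mapsto\widetilde{\jmath}$ is well-defined and continuous and the cross-section on $\O$ is single-valued. The paper states this explicitly; you should make the same observation, since without it the map $\chi$ on $\Imb^L(\mathcal{W},\O\rel S)$ is not obviously well-defined.
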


\begin{proof} By Proposition~\ref{prop:inclusion}, it suffices to find a
local cross-section at the inclusion $i_\mathcal{W}$. Choose a compact
neighborhood $K$ of $\mathcal{W}$ with $K\subseteq L$. Using
\index{Logarithm Lemma!equivariant}Lemmas~\ref{orblogarithm}
and~\index{Extension Lemma!equivariant}\ref{orbextension}, we obtain 
an open neighborhood $\widetilde{V}$ of $i_{\widetilde{\mathcal{W}}}$ in
$\Imb_H(\widetilde{\mathcal{W}},\widetilde{\O})$ and continuous maps
$X\colon \widetilde{V}\to
\mathcal{X}_H(\widetilde{\mathcal{W}},T\widetilde{\O})$ and $k\colon
\mathcal{X}_H(\widetilde{\mathcal{W}},T\widetilde{\O})\to
\mathcal{X}_H^{\widetilde{L}}(\widetilde{O},T\widetilde{\O})$.  By
Lemma~\ref{orblemmaB}, there is a neighborhood $J$ of $1_{\widetilde{\O}}$
in $(\Maps)^{\widetilde{K}}_H(\widetilde{\O})$ that consists of
diffeomorphisms.

On a sufficiently small neighborhood $\widetilde{U}$ of
$i_{\widetilde{\mathcal{W}}}$, the function $\widetilde{\chi}\colon
\widetilde{U}\to \Diff_H^{\widetilde{K}}(\widetilde{\O})$ defined by
$\widetilde{\chi}(j)= \TExp \circ k \circ X(j)$ is defined and has
image in $J$. Let $U$ be the embeddings of $\mathcal{W}$ in $\O $ which
admit a lift to $\widetilde{U}$. By choosing $\widetilde{U}$ small enough,
we may ensure that the lift of an element of $U$ is unique. Define $\chi
\colon U \to \Diff^{K}(\mathcal{O})$ to be $\widetilde{\chi}$ applied to
the lift of an element of $U$ to $\widetilde{U}$, followed by the
projection of $\Diff_H^{\widetilde{K}}(\widetilde{\O})$ to
$\Diff^{K}(\mathcal{O})$.

For elements in $U\cap \Imb^K(\mathcal{W},\O \rel S)$, each lift to
$\widetilde{U}$ that is sufficiently close to $i_{\widetilde{\mathcal{W}}}$
must agree with $i_{\widetilde{\mathcal{W}}}$ on $\widetilde{S}$. So $U$
may be chosen small enough so that if $j\in U$ then its lift
$\widetilde{j}$ in $\widetilde{U}$ lies in $\Imb(\widetilde{\mathcal{W}},
\widetilde{\mathcal{O}}\rel\widetilde{S})$. Then, $X(\widetilde{j}(x))=
Z(x)$ for all $x\in \widetilde{S}$, so $k(X)(x)= Z(x)$ for all
$x\in\widetilde{S}$. It follows that $\chi(j)\in\Diff(\O \rel S)$.
\end{proof}

\indexstate{Restriction Theorem!orbifolds}%
\begin{corollary} Let $\mathcal{W}$ be a compact suborbifold
of $\O $, which is either properly embedded or codimension-zero.
Let $S$ be a closed neighborhood in $\partial\O $ of
$S\cap\partial\mathcal{W}$, and let $L$ be a neighborhood of $\mathcal{W}$
in $\O $. Then the restriction $\Diff^L(\O \rel S)\to
\Imb^L(\mathcal{W},\O \rel S)$ is locally trivial.
\label{orbcoro2}
\end{corollary}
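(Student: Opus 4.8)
The plan is to obtain this as an immediate consequence of Theorem~\ref{orbtheoremB} together with the abstract local-triviality principle of Proposition~\ref{theoremA}, in exact parallel with the way Corollary~\ref{palaiscoro2} was deduced from Theorem~\ref{palaistheoremB} in the manifold setting. First I would note that the restriction map $\rho\colon \Diff^L(\O \rel S)\to \Imb^L(\mathcal{W},\O \rel S)$, sending an orbifold diffeomorphism to its restriction to $\mathcal{W}$, is well defined and continuous: unwinding Definition~\ref{def:controlled_equivariant}, an element of $\Diff^L(\O\rel S)$ is the identity outside $\pi^{-1}(L)$ and on $\widetilde{S}$ upstairs, so its restriction to $\widetilde{\mathcal{W}}$ is the inclusion on $\widetilde{S}\cap\partial\widetilde{\mathcal{W}}$, carries $\widetilde{\mathcal{W}}$ into $\pi^{-1}(L)$, and extends to an element of $\Diff_H(\widetilde{\O})$, hence descends to a member of $\Imb^L(\mathcal{W},\O\rel S)$; continuity passes through the quotient topologies.

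Next I would observe that $\rho$ is equivariant for the left action of $G=\Diff^L(\O\rel S)$, which acts on its source by left translation and on $\Imb^L(\mathcal{W},\O\rel S)$ by post-composition $g\cdot j = g\circ j$ (this action indeed preserves the control data, since $g$ is the identity outside $\pi^{-1}(L)$ and on $\widetilde{S}$). Since $\mathcal{W}$ is assumed to be either properly embedded or codimension-zero, Theorem~\ref{orbtheoremB} applies and shows that $\Imb^L(\mathcal{W},\O\rel S)$ admits local $G$ cross-sections. Proposition~\ref{theoremA} then gives that every $G$-equivariant map out of a $G$-space into $\Imb^L(\mathcal{W},\O\rel S)$ is locally trivial; applied to $\rho$, this is precisely the assertion of the corollary.

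I do not expect a genuine obstacle: all the real work is packaged inside Theorem~\ref{orbtheoremB}, which in turn rests on the Equivariant Logarithm and Extension Lemmas (Lemmas~\ref{orblogarithm} and~\ref{orbextension}) and on Lemma~\ref{orblemmaB} guaranteeing that equivariant maps near the identity are diffeomorphisms. The only small bookkeeping point to verify is that the cross-section $\chi$ built in Theorem~\ref{orbtheoremB} can be normalized so that $\chi(i_\mathcal{W})=1_{\O}$ as in Definition~\ref{def:localcrosssections}, which is automatic from the formula $\chi(j)=\TExp\circ k\circ X(j)$ and the normalization $X(i_{\widetilde{\mathcal{W}}})=Z$; this ensures the local trivializations assemble in the usual Palais manner.
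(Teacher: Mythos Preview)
Your proposal is correct and follows exactly the paper's approach: the paper states this corollary (together with Corollary~\ref{orbcoro3}) as an immediate consequence of Theorem~\ref{orbtheoremB} via Proposition~\ref{theoremA}, in direct parallel with the deduction of Corollary~\ref{palaiscoro2} from Theorem~\ref{palaistheoremB}. Your additional remarks on well-definedness, continuity, and equivariance of the restriction map simply make explicit what the paper leaves implicit.
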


\begin{corollary} Let $\mathcal{V}$ and $\mathcal{W}$ be
suborbifolds of $\O $, with $\mathcal{W}\subset \mathcal{V}$. Assume that
$\mathcal{W}$ compact, and is either properly embedded or
codimension-zero. Let $S$ be a closed neighborhood in $\partial\O $ of
$S\cap\partial\mathcal{W}$, and let $L$ be a neighborhood of $\mathcal{W}$
in $\O $. Then the restriction $\Imb^L(\mathcal{V},\O \rel S) \to
\Imb^L(\mathcal{W},{\mathcal{O}}\rel S)$ is locally trivial.
\label{orbcoro3}
\end{corollary}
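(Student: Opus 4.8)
The plan is to deduce this from Corollary~\ref{orbcoro2} together with Proposition~\ref{prop:inclusion}, exactly as Corollary~\ref{palaiscoro3} was deduced in the manifold case. First I would observe that the restriction map $\Imb^L(\mathcal{V},\O\rel S)\to\Imb^L(\mathcal{W},\mathcal{O}\rel S)$ is equivariant with respect to the action of $\Diff^L(\O\rel S)$ by post-composition (a diffeomorphism restricted to the image of an embedding of $\mathcal{V}$ still carries the sub-suborbifold $\mathcal{W}$ correctly, and all the control conditions — support in $L$, identity on $S$ — are preserved). By Proposition~\ref{theoremA}, once we know that $\Imb^L(\mathcal{W},\mathcal{O}\rel S)$, viewed as a $\Diff^L(\O\rel S)$-space, admits local cross-sections, the equivariant map above is automatically locally trivial. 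But Corollary~\ref{orbcoro2} is precisely the statement that $\Diff^L(\O\rel S)\to\Imb^L(\mathcal{W},\O\rel S)$ is locally trivial, which by the definition of local triviality means exactly that $\Imb^L(\mathcal{W},\O\rel S)$ admits local $\Diff^L(\O\rel S)$ cross-sections at every point (these are the coordinate charts for the fibration, or equivalently one invokes Theorem~\ref{orbtheoremB} directly, which gives the cross-sections).

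The one point requiring a little care is that Corollary~\ref{orbcoro2} is stated for a suborbifold that is \emph{itself} compact and either properly embedded or codimension-zero, and here the hypotheses put those conditions on $\mathcal{W}$, not on $\mathcal{V}$; moreover $\mathcal{V}$ need not be compact. So I would apply Theorem~\ref{orbtheoremB} (the orbifold analogue of Palais' Theorem~B) to the pair $\mathcal{W}\subset\O$ — whose hypotheses are met since $\mathcal{W}$ is compact — to produce, for each $i\in\Imb^L(\mathcal{W},\O\rel S)$, a neighborhood and a local $\Diff^L(\O\rel S)$ cross-section at $i$. Then I apply Proposition~\ref{prop:inclusion} with $M=\O$, $V=\mathcal{V}$ (allowed to be noncompact), and $I(V,M)=\Imb^L(\mathcal{V},\O\rel S)$, $G=\Diff^L(\O\rel S)$: the hypothesis of that proposition is exactly that for every $i\in I(\mathcal{V},\O)$ the embedding space of the \emph{image} $i(\mathcal{W})$ (which is again a compact suborbifold, being a diffeomorphic copy of $\mathcal{W}$, since $i$ extends to a diffeomorphism of $\O$) has a local $G$ cross-section at its inclusion, which is what Theorem~\ref{orbtheoremB} supplies. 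The conclusion is that $\Imb^L(\mathcal{V},\O\rel S)$ admits local $\Diff^L(\O\rel S)$ cross-sections.

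Having established that, Proposition~\ref{theoremA} applied to the $\Diff^L(\O\rel S)$-equivariant restriction map $\Imb^L(\mathcal{V},\O\rel S)\to\Imb^L(\mathcal{W},\O\rel S)$ yields local triviality, completing the proof. The main obstacle — and it is a minor bookkeeping obstacle rather than a genuine difficulty — is keeping the control data ($L$, $S$, and the equivariance/orbifold structure) consistent through the two applications, in particular checking that $i(\mathcal{W})$ inherits ``properly embedded or codimension-zero'' and that the neighborhood $L$ of $\mathcal{V}$ is still a legitimate neighborhood of $i(\mathcal{W})$ for the images we care about; all of this is immediate from condition~(ii) in Definition~\ref{def:Imb}, i.e.\ from the fact that embeddings in these spaces extend to diffeomorphisms of the ambient orbifold.
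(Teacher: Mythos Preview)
Your first paragraph is already the complete proof, and it is exactly the paper's (implicit) argument: Theorem~\ref{orbtheoremB} gives local $\Diff^L(\O\rel S)$ cross-sections for the \emph{target} space $\Imb^L(\mathcal{W},\O\rel S)$ (the hypotheses on $\mathcal{W}$ being compact and properly embedded or codimension-zero are precisely what is needed there), the restriction map is $\Diff^L(\O\rel S)$-equivariant, and Proposition~\ref{theoremA} finishes. Nothing about $\mathcal{V}$ beyond equivariance is required.

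Your second and third paragraphs, however, introduce a genuine muddle. You pivot to seeking cross-sections for the \emph{source} space $\Imb^L(\mathcal{V},\O\rel S)$, but Proposition~\ref{theoremA} needs cross-sections on the target, not the source, so this work is irrelevant. Worse, your invocation of Proposition~\ref{prop:inclusion} misstates its hypothesis: with $V=\mathcal{V}$, the proposition asks for cross-sections of $I(i(\mathcal{V}),\O)$ at the inclusion of $i(\mathcal{V})$, not of $i(\mathcal{W})$; and Theorem~\ref{orbtheoremB} would not apply to $i(\mathcal{V})$ since $\mathcal{V}$ is not assumed compact. So the detour is both unnecessary and, as written, incorrect. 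Simply delete everything after the first paragraph.
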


\section{Singular fiberings}
\label{sfiber}

Throughout this section, $\Sigma$ and $\O$ denote compact connected
orbifolds, in the sense of Section~\ref{orbifold}.

\begin{definition}\label{def:singular_fibering}
A continuous surjection $p\colon \Sigma\to \O $ is called a
\indexdef{singular fiberings}\textit{singular fibering} if there exists a
commutative diagram
\[\vbox{\halign{\hfil$#$\hfil&\hfil$#$\hfil&\hfil$#$\hfil\cr
\widetilde{\Sigma}&\mapright{\widetilde{p}}&\widetilde{\O }\cr
\mapdown{\sigma}&&\mapdown{\tau}\cr
\Sigma&\mapright{p}&\O \cr}}\]
\noindent in which
\begin{enumerate}
\item[{\rm(i)}] $\widetilde{\Sigma}$ and $\widetilde{\O}$ are
manifolds, and $\sigma$ and $\tau$ are regular orbifold coverings with
groups of covering transformations $G$ and $H$ respectively,
\item[{\rm(ii)}] $\widetilde{p}$ is surjective and locally trivial,
and
\item[{\rm(iii)}] the fibers of $p$ and $\widetilde{p}$ are
path-connected.
\end{enumerate}
\end{definition}

\index{singular fiberings!and Seifert fiberings}The class of singular
fiberings includes many Seifert fiberings, for example all compact
3-dimensional Seifert manifolds $\Sigma$ except the \index{lens spaces}lens
spaces with one or two exceptional orbits (see for
example~\cite{Scott}). For some of those lens spaces, $\O $ fails to have
an orbifold covering by a manifold. On the other hand, it is a much larger
class than Seifert fiberings, because no structure as a homogeneous space
is required on the fiber.

For mappings there is a complete analogy with the bundle case, where now
\indexsymdef{DifffSigma}{$\Diff_f(\Sigma)$}$\Diff_f(\Sigma)$ is by definition the quotient of the group of
fiber-preserving $G$-equivariant diffeomorphisms
$(\Diff_G)_f(\widetilde{\Sigma})$ by its normal subgroup $G$, and so on.  A
suborbifold $W$ of $\Sigma$ is called
\indexdef{vertical!suborbifold}\textit{vertical} if it is a union of
fibers. In this case the inverse image $\widetilde{W}$ of $W$ in
$\widetilde{\Sigma}$ is a vertical submanifold, and we write
$\Imb_f(W,\Sigma)$ for embeddings induced by elements of
$(\Imb_G)_f(\widetilde{W},\widetilde{\Sigma})$, $\Imb_v(W,\Sigma)$ for
embeddings induced by elements of
$(\Imb_G)_v(\widetilde{W},\widetilde{\Sigma})$, and so on.

Following our usual notations, we put 
\indexsymdef{delvSigma}{$\partial_h\Sigma$, $\partial_v\Sigma$}$\partial_v\Sigma=
p^{-1}(\partial\O )$, 
\indexsym{delhW}{$\partial_hW$, $\partial_vW$}$\partial_vW= W\cap\partial_v\Sigma$,
$\partial_h\Sigma= \overline{\partial \Sigma - \partial_v\Sigma}$, and
$\partial_hW= \partial W\cap \partial_h\Sigma$.

Since $\O $ is compact, Lemma~\ref{covering isometries} shows that a
(complete) \index{Riemannian metric!equivariant}Riemannian metric on
$\widetilde{\O}$ can be chosen so that $H$ acts as isometries, and moreover
so that the metric on $\widetilde{\O}$ is a 
\index{product near the boundary}product near the boundary.
Next we will sketch how to obtain a $G$-equivariant metric which is a
product near $\partial_h\widetilde{\Sigma}$ and near
$\partial_v\widetilde{\Sigma}$. If $\partial_h\widetilde{\Sigma}$ is empty,
we simply apply Lemma~\ref{covering isometries}. Assume that
$\partial_h\widetilde{\Sigma}$ is nonempty. Construct a $G$-equivariant
collar of $\partial_h\widetilde{\Sigma}$, and use it to obtain a
$G$-equivariant metric such that the $\I$-fibers of
$\partial_h\widetilde{\Sigma}\times \I$ are vertical. If
$\partial_v\widetilde{\Sigma}$ is also nonempty, put
$Y=\partial_h\widetilde{\Sigma}\cap\partial_v\widetilde{\Sigma}$. We will
follow the construction in the last paragraph of
Section~\ref{exponent}. Denote the collar of $\partial_h\widetilde{\Sigma}$
by $\partial_h\widetilde{\Sigma}\times[0,2]_1$. Assume that the metric on
$\partial_h\widetilde{\Sigma}$ was a product on a collar $Y\times[0,2]_2$
of $Y$ in $\partial_h\widetilde{\Sigma}$. Next, construct a $G$-equivariant
collar $\partial_v\widetilde{\Sigma}\times [0,2]_2$ of
$\partial_v\widetilde{\Sigma}$ whose $[0,2]_2$-fiber at each point of
$Y\times [0,2]_1$ agrees with the $[0,2]_2$-fiber of the collar of $Y$ in
$\partial_h\widetilde{\Sigma}\times\set{t}$. Then, the product metric on
$\partial_v\widetilde{\Sigma}\times[0,2]_2$ agrees with the product metric
of $\partial_h\widetilde{\Sigma}\times[0,2]_1$ where they overlap, and the
$G$-equivariant patching can be done to obtain a metric which is a product
near $\partial_v\widetilde{\Sigma}$ without losing the property that it is
a product near~$\partial_h\widetilde{\Sigma}$.  We will always assume that
the metrics have been selected with these properties. By the first sentence
of the next lemma, $G$ preserves the vertical and horizontal parts of
vectors.

Some basic observations about singular fiberings will be needed.

\begin{lemma} The action of $G$ preserves the fibers of
$\widetilde{p}$. Moreover:
\begin{enumerate}
\item[\rm(i)] If $g\in G$, then there exists an element $h\in H$ such
that $\widetilde{p}g= h\widetilde{p}$.
\item[\rm(ii)] If $h\in H$, then there exists an element
$g$ of $G$ such that $\widetilde{p}g= h\widetilde{p}$.
\item[\rm(iii)] If $x\in \Sigma$, then $\tau^{-1}p(x)=
\widetilde{p}\sigma^{-1}(x)$.
\end{enumerate}
\label{lift}
\end{lemma}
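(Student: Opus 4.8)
The plan is to deduce all four assertions from a single geometric claim---that $\sigma$ carries each fiber of $\widetilde p$ \emph{onto} the corresponding fiber of $p$---and to take the statements in the order that makes this efficient rather than the order listed. First I would record three routine facts about the diagram. (a)~$\sigma$ is the orbit map of the $G$-action, hence an open map, and by Definition~\ref{def:singular_fibering}(ii) $\widetilde p$ is a continuous open surjection, hence a quotient map. (b)~Since $H$ acts properly discontinuously on the manifold $\widetilde\O$, every orbit $H\tilde o$ is discrete, so for each fixed $\tilde o$ the sets $\widetilde p^{-1}(h\tilde o)$, $h\in H$, are relatively open (hence also relatively closed) in $\widetilde p^{-1}(H\tilde o)$. (c)~$\tau$ is a regular orbifold covering with group $H$, so $\tau^{-1}(o)$ is a single $H$-orbit for every $o$, and any homeomorphism $f$ of $\widetilde\O$ with $\tau f=\tau$ lies in $H$. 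I would also fix a base point $\tilde o_0\in\widetilde\O$ with trivial $H$-stabilizer; such points exist because the nonsingular locus is dense.

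I would first prove that $G$ permutes the fibers of $\widetilde p$, which also yields (i). For $g\in G$, the relations $\sigma g=\sigma$ and $p\sigma=\tau\widetilde p$ give $\tau(\widetilde p(g\tilde s))=p(\sigma(\tilde s))=\tau(\widetilde p(\tilde s))$, so $\widetilde p(g\tilde s)\in H\cdot\widetilde p(\tilde s)$ for all $\tilde s$; in particular $g(\widetilde p^{-1}(\tilde o))\subseteq\widetilde p^{-1}(H\tilde o)$. Since $\widetilde p^{-1}(\tilde o)$ is path-connected (Definition~\ref{def:singular_fibering}(iii)), its image under $g$ is connected and hence, by (b), lies in a single $\widetilde p^{-1}(h\tilde o)$. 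Applying this to every $g\in G$ (in particular to $g^{-1}$) shows $\widetilde p\circ g$ is constant on fibers of $\widetilde p$, so there is a well-defined $\overline g\colon\widetilde\O\to\widetilde\O$ with $\widetilde p g=\overline g\,\widetilde p$; from $\overline{g_1g_2}=\overline{g_1}\,\overline{g_2}$ and $\overline{1}=\mathrm{id}$ each $\overline g$ is a bijection, so $G$ does permute the fibers, and since $\widetilde p$ is a quotient map each $\overline g$ is continuous, hence a homeomorphism. Finally $\tau\overline g\,\widetilde p=\tau\widetilde p g=p\sigma=\tau\widetilde p$ with $\widetilde p$ surjective forces $\tau\overline g=\tau$, so $\overline g\in H$ by (c); taking $h=\overline g$ is (i).

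Next comes the key claim: $\sigma(\widetilde p^{-1}(\tilde o))=p^{-1}(\tau(\tilde o))$ for every $\tilde o$. The inclusion $\subseteq$ is $p\sigma=\tau\widetilde p$. For $\supseteq$, put $A=p^{-1}(\tau(\tilde o))$; it is path-connected by Definition~\ref{def:singular_fibering}(iii), and $\sigma^{-1}(A)=\widetilde p^{-1}(\tau^{-1}\tau(\tilde o))=\widetilde p^{-1}(H\tilde o)=\bigsqcup_{h\in H}\widetilde p^{-1}(h\tilde o)$, each summand relatively open in $\sigma^{-1}(A)$ by (b). Since $\sigma$ is open, each $\sigma(\widetilde p^{-1}(h\tilde o))$ is open in $A$; and if two of them meet, the previous paragraph supplies $g\in G$ carrying one fiber $\widetilde p^{-1}(h\tilde o)$ onto the other, so (as $\sigma g=\sigma$) the two images coincide. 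Thus the distinct members of $\{\sigma(\widetilde p^{-1}(h\tilde o))\}_{h\in H}$ partition the connected space $A$ into nonempty open sets, so there is exactly one, necessarily all of $A$; in particular $\sigma(\widetilde p^{-1}(\tilde o))=A$. Now (iii) is immediate: if $\tau(\tilde o)=p(x)$ then $x\in p^{-1}(\tau(\tilde o))=\sigma(\widetilde p^{-1}(\tilde o))$, so $x=\sigma(\tilde s)$ with $\widetilde p(\tilde s)=\tilde o$, whence $\tilde o\in\widetilde p(\sigma^{-1}(x))$; the reverse containment is the commutative square. For (ii), given $h\in H$ apply the key claim at $\tilde o_0$: $\sigma(\widetilde p^{-1}(\tilde o_0))=p^{-1}(\tau(\tilde o_0))=p^{-1}(\tau(h\tilde o_0))=\sigma(\widetilde p^{-1}(h\tilde o_0))$. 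Picking $\tilde s\in\widetilde p^{-1}(\tilde o_0)$ gives $\tilde s'\in\widetilde p^{-1}(h\tilde o_0)$ with $\sigma(\tilde s')=\sigma(\tilde s)$; by regularity of $\sigma$, $\tilde s'=g\tilde s$ for some $g\in G$, so $\overline g\,\tilde o_0=\widetilde p(\tilde s')=h\tilde o_0$, and since $\tilde o_0$ has trivial stabilizer and $\overline g\in H$ this forces $\overline g=h$, i.e.\ $\widetilde p g=h\widetilde p$.

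The substantive content---the fiber-permuting step and the key claim---uses only path-connectedness of the \emph{individual} fibers of $p$ and $\widetilde p$ (never connectedness of $\widetilde\Sigma$ or $\widetilde\O$), together with the openness and discreteness facts, so I expect it to go through cleanly. The points needing the most care are the orbifold-covering foundations packaged into (a) and (c): that the covering projections are open maps, that a regular orbifold covering has deck group exactly $H$ with fibers equal to $H$-orbits, and that a point of $\widetilde\O$ with trivial stabilizer exists. These are standard, but they are genuinely facts about the orbifold setup of Section~\ref{orbifold} rather than about the fibering, and stating them precisely is the main thing to watch.
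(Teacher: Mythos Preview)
Your proof is correct, and for the fiber-permutation statement and (i) it is essentially the paper's argument: both use $\tau\widetilde p\,g=\tau\widetilde p$ together with path-connectedness of $\widetilde p$-fibers and discreteness of $H$-orbits to see that $g$ sends each fiber into a single fiber, then check that the induced map on $\widetilde\O$ is a deck transformation.

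Where you genuinely diverge is in the order and mechanism for (ii) and (iii). The paper proves (ii) first, by an explicit path-lifting: it chooses $s,s''$ over $a$ and $h(a)$, joins $\sigma(s)$ to $\sigma(s'')$ by a path in the $p$-fiber, lifts to get $s'$ in the same $\sigma$-fiber as $s$, takes $g\in G$ with $gs=s'$, and then \emph{verifies $\widetilde p g=h\widetilde p$ pointwise on the dense set} $\widetilde p^{-1}(\widetilde\O\setminus\tau^{-1}(\mathrm{sing}\,\O))$ by another path-lifting argument; (iii) is then read off from (i) and (ii). You instead isolate the single geometric statement $\sigma(\widetilde p^{-1}(\tilde o))=p^{-1}(\tau(\tilde o))$ and prove it by the open-partition argument on the connected $p$-fiber, which immediately gives (iii), and then get (ii) in one stroke because you already know $\overline g\in H$ from (i), so agreement at a point with trivial stabilizer forces $\overline g=h$. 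Your route buys you a cleaner (ii): no dense-set check and no second use of path-connectedness of the nonsingular locus. The paper's route is more hands-on and makes the path-lifting visible, which matches how the lemma is later used, but at the cost of that extra verification. Both rely on the same orbifold facts you flagged in (a)--(c); your only added input is the existence of $\tilde o_0$ with trivial $H$-stabilizer, which is exactly the paper's choice of $a\notin\tau^{-1}(\mathrm{sing}\,\O)$.
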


\begin{proof}
Suppose that $\widetilde{p}(x)=\widetilde{p}(y)$. For $g\in G$, we
have $\tau\widetilde{p}(g(x))= p\sigma(g(x))= p\sigma(x)=
\tau\widetilde{p}(x)= \tau\widetilde{p}(y)=
\tau\widetilde{p}(g(y))$. Since the fibers of $\widetilde{p}$ are
path-connected, and the fibers of $\tau$ are discrete, this implies
that $g(x)$ and $g(y)$ lie in the same fiber of $\widetilde{p}$. For
(i), let $g\in G$. Since $g$ preserves the fibers of $\widetilde{p}$,
it induces a map $h$ on $\widetilde{\O }$. Given
$x\in\widetilde{\O }$, choose $y\in\widetilde{\Sigma}$ with
$\widetilde{p}(y)= x$. Then $\tau h(x)= \tau
\widetilde{p}(g(y))= p\sigma(g(y))= p\sigma(y)= \tau
\widetilde{p}(y)= \tau(x)$ so $h\in H$.

To prove (ii), suppose $h$ is any element of $H$. Let $\hbox{sing}(\O )$
denote the singular set of $\O $. Choose $a\in\widetilde{\O
}-\tau^{-1}(\hbox{sing}(\O ))$, choose $s\in \widetilde{\Sigma}$ with
$\widetilde{p}(s)= a$, and choose $s''\in \widetilde{\Sigma}$ with
$\widetilde{p}(s'')= h(a)$. Since $p\sigma(s)= \tau\widetilde{p}(s)=
\tau\widetilde{p}(s'')= p\sigma(s'')$, $\sigma(s)$ and $\sigma(s'')$ must
lie in the same fiber of $p$. Since the fiber is path-connected, there
exists a path $\beta$ in that fiber from $\sigma(s'')$ to $\sigma(s)$. Let
$\widetilde{\beta}$ be its lift in $\widetilde{\Sigma}$ starting at $s''$
and let $s'$ be the endpoint of this lift, so that $\sigma(s')=
\sigma(s)$. Note that $\widetilde{p}(s')= \widetilde{p}(s'')= h(a)$
since $\widetilde{\beta}$ lies in a fiber of $\widetilde{p}$. Since
$\sigma(s)= \sigma(s')$, there exists a covering transformation $g\in G$
with $g(s)= s'$.  To show that $\widetilde{p}g= h\widetilde{p}$, it is
enough to verify that they agree on the dense set
$\widetilde{p}^{-1}(\widetilde{\mathcal{O}}-\tau^{-1}(\hbox{sing}(\O
)))$. Let $t\in \widetilde{p}^{-1}(\widetilde{\O
}-\tau^{-1}(\hbox{sing}(\mathcal{O})))$ and choose a path $\gamma$ in
$\widetilde{p}^{-1}(\widetilde{\O }-\tau^{-1}(\hbox{sing}(\mathcal{O})))$
from $s$ to $t$. Since $g\in G$, we have $p\sigma\gamma= p\sigma
g\gamma$. Therefore $\tau\widetilde{p}\gamma= \tau\widetilde{p} g\gamma$,
and so $\widetilde{p}g\gamma$ is the unique lift of $p\sigma\gamma$
starting at $\widetilde{p}g(s)= h(a)$. But this lift equals
$h\widetilde{p}\gamma$, so $h\widetilde{p}(t)= \widetilde{p}g(t)$.

For (iii), fix $z_0\in \sigma^{-1}(x)$ and let $y_0=
\widetilde{p}(z_0)$. Suppose $y\in \widetilde{p}\sigma^{-1}(x)$.
Choose $z\in \sigma^{-1}(x)$ with $\widetilde{p}(z)= y$. Since
$\sigma$ is a regular covering, there exists $g\in G$ such that $g(z)=
z_0$. By~(i), $g$ induces $h$ on $\widetilde{\O}$, and $h(y)=
h\widetilde{p}(z)= \widetilde{p}g(z)= \widetilde{p}(z_0)= y_0$.
Therefore $\tau(y)= \tau(h(y))= \tau(y_0)=
\tau\widetilde{p}(z_0)= p\sigma(z_0)= p(x)$ so $y\in
\tau^{-1}(p(x))$. For the opposite inclusion, suppose that $y\in
\tau^{-1}p(x)$, so $\tau(y)= p(x)= \tau(y_0)$. Since $\sigma$ is
regular, there exists $h\in H$ such that $h(y_0)= y$. Let $g$ be as
in (ii). Then $y= h(y_0)= h\widetilde{p}(z_0)=
\widetilde{p}g(z_0)$, and $\sigma(g(z_0))= \sigma(z_0)= x$ so
$y\in\widetilde{p}(\sigma^{-1}(x))$.
\end{proof}

One consequence of Lemma~\ref{lift} is that there is a unique surjective
homomorphism $\phi\colon G\to H$ with respect to which $\widetilde{p}$ is
equivariant: $\widetilde{p}(gx)=\phi(g)(\widetilde{p}(x))$.

A second consequence of Lemma~\ref{lift} is that provided that $G$ acts as
isometries, the aligned exponential\index{aligned!exponential!equivariance}
$\Exp_a$ for the bundle $\widetilde{p}\colon \widetilde{\Sigma}\to
\widetilde{\mathcal{O}}$ is $G$-equivariant. Consequently, the aligned tame
exponential $\TExp_a$ takes $G$-equivariant vector fields on
$\widetilde{\Sigma}$ to $G$-equivariant smooth maps of
$\widetilde{\Sigma}$.

\begin{theorem} Let $S$ be a closed subset of
$\O$, and let $T= p^{-1}(S)$. Then $\Diff(\O \rel S)$ admits local
$\Diff_f(\Sigma\rel T)$ cross-sections.\par
\label{sftheorem1}
\end{theorem}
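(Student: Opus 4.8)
The plan is to follow the proof of Theorem~\ref{theorem1} for ordinary bundles, carried out equivariantly on the covering manifolds $\widetilde{\Sigma}$ and $\widetilde{\O}$ of Definition~\ref{def:singular_fibering}. By Proposition~\ref{prop:inclusion} it suffices to produce a single local cross-section at the identity $1_\O$ for the action of $\Diff_f(\Sigma\rel T)$ on $\Diff(\O\rel S)$ given by $g\cdot h=\overline{g}h$ (Definition~\ref{def:projection}; note $p(T)=S$ since $p$ is onto, so this action does land in $\Diff(\O\rel S)$). Throughout I would use the standing $G$- and $H$-equivariant metrics fixed just before Lemma~\ref{lift}: the metric on $\widetilde{\O}$ is $H$-equivariant and a product near the boundary, the metric on $\widetilde{\Sigma}$ is $G$-equivariant and a product near $\partial_h\widetilde{\Sigma}$ and $\partial_v\widetilde{\Sigma}$ with the collar $\I$-fibers vertical, $G$ preserves horizontal and vertical parts of vectors, and $\widetilde p$ is equivariant for the canonical surjective homomorphism $\phi\colon G\to H$.

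Given $f\in\Diff(\O\rel S)$ near $1_\O$, I would lift it to the unique $H$-equivariant diffeomorphism $\widetilde f$ of $\widetilde{\O}$ near $1_{\widetilde{\O}}$; as in the proof of Theorem~\ref{orbtheoremB}, this lift is well-defined and continuous on a neighborhood of $1_\O$, and since $f$ fixes $S$ pointwise the nearby lift $\widetilde f$ fixes $\widetilde S:=\tau^{-1}(S)$ pointwise (the fibers of $\tau$ are discrete, so the small permutation of $\tau^{-1}(s)$ induced by $\widetilde f$ is trivial). Applying the Equivariant Logarithm Lemma~\ref{orblogarithm} with $\widetilde{\mathcal{W}}=\widetilde{\O}$ (legitimate since $\O$ is compact, so no auxiliary codimension-zero suborbifold is needed) yields an $H$-equivariant vector field $X$ on $\widetilde{\O}$, depending continuously on $f$, with $\Exp(X(x))=\widetilde f(x)$ for all $x$, with $X=Z$ when $f=1_\O$, and hence with $X=Z$ on $\widetilde S$. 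I would then horizontally lift $X$: for $y\in\widetilde{\Sigma}$, let $\widetilde X(y)$ be the horizontal lift at $y$ of $X(\widetilde p(y))$, obtaining an aligned vector field with zero vertical part, i.e.\ an element of $\mathcal{A}(\widetilde{\Sigma},T\widetilde{\Sigma})$. It is $G$-equivariant: for $g\in G$ and $h=\phi(g)$, both $\widetilde X(gy)$ and $g_*\widetilde X(y)$ are horizontal at $gy$ and project under $\widetilde p_*$ to $h_*X(\widetilde p(y))$ (using that $g$ is an isometry with $\widetilde p g=h\widetilde p$ and that $X$ is $H$-equivariant), so they agree; and $\widetilde X$ vanishes on $\widetilde p^{-1}(\widetilde S)=\sigma^{-1}(T)$ by commutativity of the defining square.

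Finally I would set $\chi(f)$ to be the image in $\Diff_f(\Sigma\rel T)$ of $\TExp_a(\widetilde X)$. Here $\Exp_a(\widetilde X(y))$ is defined for every $y$ (it is the endpoint of the horizontal lift of the geodesic $t\mapsto\Exp(tX(\widetilde p(y)))$, which exists because $\Exp(X(\widetilde p(y)))=\widetilde f(\widetilde p(y))$ is defined and the fibers are compact); the resulting self-map of $\widetilde{\Sigma}$ is fiber-preserving because $\widetilde X$ is aligned, is $G$-equivariant by the equivariance of $\Exp_a$ noted after Lemma~\ref{lift}, fixes $\sigma^{-1}(T)$ because $\widetilde X$ does there, and induces $\widetilde f$ on $\widetilde{\O}$ since $p(\Exp_a(\widetilde X(y)))=\Exp(X(\widetilde p(y)))$. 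Passing to the quotient by $G$ gives $\chi(f)\in\Diff_f(\Sigma\rel T)$ with $\chi(1_\O)=1_\Sigma$, depending continuously on $f$, and $\overline{\chi(f)}=f$, so $\chi$ is the required local cross-section.

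The step I expect to be the main obstacle is verifying that $\TExp_a(\widetilde X)$ is actually a diffeomorphism: $\widetilde{\Sigma}$ need not be compact or simply connected (the covering $\sigma$ may be infinite), so Lemma~\ref{orblemmaB} does not apply verbatim. I would circumvent this by descending: $\TExp_a(\widetilde X)$, being $G$-equivariant, induces a self-map of the \emph{compact} orbifold $\Sigma$ that is $C^\infty$-close to $1_\Sigma$, so the argument of Lemma~\ref{orblemmaB} shows it is an orbifold diffeomorphism (the one appeal to simple connectivity there, upgrading a self-covering near the identity to a homeomorphism, is also valid here since such a covering is properly homotopic to the identity and thus onto on $\pi_1$, hence one-sheeted); its unique nearby lift is $\TExp_a(\widetilde X)$, which is therefore a diffeomorphism fixing $\sigma^{-1}(T)$. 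Packaging this into an ``exponentiation lemma for singular fiberings'' (the analogue of Lemmas~\ref{lem:aligned_exponentiation} and~\ref{orblemmaB}) and then bookkeeping the $G/H$-compatibilities through the horizontal lift and $\TExp_a$ is the real work; the overall structure is the by-now-familiar Palais pattern.
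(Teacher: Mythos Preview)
Your proof is essentially identical to the paper's: reduce to the identity via Proposition~\ref{prop:inclusion}, lift $f$ to $\widetilde f$ on $\widetilde{\O}$, take the equivariant logarithm (Lemma~\ref{orblogarithm}) to get an $H$-equivariant vector field $X$ vanishing on $\tau^{-1}(S)$, horizontally lift to a $G$-equivariant aligned field $\widetilde X$ on $\widetilde{\Sigma}$, apply $\TExp_a$, and descend to $\Sigma$. The equivariance check you give for $\widetilde X$ is exactly the computation in the paper (using Lemma~\ref{lift}), and your verification that $\overline{\chi(f)}=f$ matches the displayed chain of equalities there.

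The one place you are more careful than the paper: the paper simply cites Lemma~\ref{orblemmaB} to conclude $\TExp_a(\widetilde X)\in\Diff_G(\widetilde{\Sigma})$, but as you notice, the \emph{proof} of that lemma uses simple connectivity of the cover, and in the singular-fibering setup $\widetilde{\Sigma}$ is only assumed to be a regular manifold cover of $\Sigma$, not the universal one. Your fix---that a proper local diffeomorphism $G$-equivariantly close to $1_{\widetilde{\Sigma}}$ is a covering map homotopic to the identity, hence degree one, hence a diffeomorphism---is correct and closes this small gap. So your proposal is the paper's argument with one point tightened.
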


\begin{proof} By Proposition~\ref{prop:inclusion}, we only need a 
local $\Diff_f(\Sigma \rel T)$ cross-section at $1_{\O}$.

Applying \indexstate{Logarithm Lemma!equivariant}Lemma~\ref{orblogarithm} with
$\widetilde{\mathcal{W}}=\widetilde{\O}$ provides a neighborhood
$\widetilde{U}$ of $1_{\widetilde{\O}}$ in $\Diff_H(\widetilde{\O}\rel
\tau^{-1}(S))$ and $X\colon \widetilde{U}\to \mathcal{X}_H(\widetilde{\O},
T\widetilde{\O})$ such that $\Exp(X(j)(y))= j(y)$ for all $y\in
\widetilde{\O}$, and $X(j)(y)= Z(y)$ for all $y\in \tau^{-1}(S)$. Define
$\widetilde{X}\colon \widetilde{U}\to
\mathcal{X}(\widetilde{\Sigma},T\widetilde{\Sigma})$ by taking horizontal
lifts, that is,
\[ \widetilde{X}(j)(x) =
\big(\widetilde{p}\vert_{H_x}\big)_*^{-1}(X(j)(\widetilde{p}(x)))\ .\] We
claim that $\widetilde{X}(j)$ lies in
$\mathcal{A}_G(\widetilde{\Sigma}, T\widetilde{\Sigma})$. To verify the
boundary tangency conditions, we observe that $\widetilde{X}(j)$ must be
tangent to the vertical boundary since it is a lift of a vector tangent to
the boundary of $\widetilde{\O}$, and tangent to the horizontal boundary
since it is horizontal. Since
$\Exp(X(j)(y))$ is defined at all points of $\widetilde{\O}$, and
$\widetilde{X}(j)$ is horizontal, each
$\Exp_a(\widetilde{X}(j))(x))$ exists.
To check equivariance, let $g\in G$. By
Lemma~\ref{lift}, there exists $h\in H$ such that
$\widetilde{p}g=h\widetilde{p}$. We then have
\begin{align*}
\widetilde{X}(j)(g(x)) 
&= \big(\widetilde{p}\vert_{H_x}\big)_*^{-1}(X(j)(\widetilde{p}(g(x))))\\
&= \big(\widetilde{p}\vert_{H_x}\big)_*^{-1}(X(j)(h\widetilde{p}(x)))\\
&= \big(\widetilde{p}\vert_{H_x}\big)_*^{-1}(h_*(X(j)(\widetilde{p}(x))))\\
&= g_*\big(\widetilde{p}\vert_{H_x}\big)_*^{-1}(X(j)(h\widetilde{p}(x)))\\
&= g_*\widetilde{X}(j)(x)\ .
\end{align*}
Note also that $\widetilde{X}(j)(x)=Z(x)$ for every 
$x\in \widetilde{p}^{-1}\tau^{-1}(S)$,
since $X(j)(y)=Z(y)$ for every $y\in \tau^{-1}(S)$. 
Using Lemma~\ref{orblemmaB}, we
may pass to a smaller $\widetilde{U}$ if necessary to assume that
$\TExp_a\circ \widetilde{X}\colon \widetilde{U}\to
\Diff_G(\widetilde{\Sigma}\rel \widetilde{p}^{-1}\tau^{-1}(S))$.

Since $\tau\colon \widetilde{\O}\to \O$ is an orbifold covering map, there
exists a neighborhood of $1_{\widetilde{\O}}$ in
$\Diff_H(\widetilde{\O})$ such that no two elements in this neighborhood
induce the same diffeomorphism on $\O$. Intersecting this neighborhood with
$\widetilde{U}$, we may assume that $\widetilde{U}$ has the same property.

By definition, each $f\in \Diff(\O)$ has lifts to elements of
$\Diff_H(\widetilde{\O})$. If $f$ lies in some sufficiently small
neighborhood $U$ of $1_{\O}$, then it has a lift in $\widetilde{U}$. This
lift is unique, by our selection of $\widetilde{U}$, and we denote it by
$\widetilde{f}$. Define $\chi\colon U\to \Diff(\Sigma \rel T)$ by letting
$\chi(f)$ be the diffeomorphism induced on $\Sigma$ by $\TExp_a\circ
\widetilde{X}(\widetilde{f})$. Let $y\in \O$, choose $\widetilde{y}\in
\widetilde{\O}$ with $\tau(\widetilde{y})=y$, and $\widetilde{x}\in
\widetilde{\Sigma}$ with $\widetilde{p}(\widetilde{x})=\widetilde{y}$.
Then we have
\begin{gather*}
(\chi(f) \cdot 1_{\O}) (y)
=\overline{\chi(f)}(y) =\overline{\chi(f)}(\tau\circ \widetilde{p}(\widetilde{x}))
=\overline{\chi(f)}(p \circ \sigma (\widetilde{x}))\\
=p \circ \chi(f)(\sigma (\widetilde{x}))
= p\circ \sigma\circ \TExp_a\circ \widetilde{X}(\widetilde{f})(\widetilde{x})
= \tau\circ \widetilde{p} \circ \TExp_a\circ \widetilde{X}(\widetilde{f})(\widetilde{x})\\
= \tau\circ \Exp \circ \;\widetilde{p}_* \circ \widetilde{X}(\widetilde{f})(\widetilde{x})
= \tau\circ \Exp \circ X(\widetilde{f})(\widetilde{y})
= \tau\circ \widetilde{f}(y)
= f(y)\\
\end{gather*}
as required.
\end{proof}

Applying Proposition~\ref{theoremA}, we have immediately
\indexstate{Projection Theorem!singular fiberings}%
\begin{theorem} Let $S$ be a closed subset of
$\O $, and let $T= p^{-1}(S)$. Then
$\Diff_f(\Sigma\rel T)\to \Diff(\O \rel S)$ is
locally trivial.
\label{sfproject diffs}
\end{theorem}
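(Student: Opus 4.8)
The plan is to obtain this as an immediate consequence of Theorem~\ref{sftheorem1} together with the abstract criterion Proposition~\ref{theoremA}, following exactly the pattern by which Theorem~\ref{project diffs} followed from Theorem~\ref{theorem1}.

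First I would record the relevant group action. The group $\Diff_f(\Sigma\rel T)$ acts on $\Diff(\O\rel S)$ by $g\cdot f=\overline{g}\circ f$, where $\overline{g}$ denotes the orbifold diffeomorphism of $\O$ induced by the fiber-preserving diffeomorphism $g$ (in the sense of Definition~\ref{def:projection}, transported to the singular fibered setting via the covering $\sigma$). This is well defined because $g$ permutes the fibers of $p$, and $g$ being the identity on $T=p^{-1}(S)$ forces $\overline{g}$ to be the identity on $S$. With $\Diff_f(\Sigma\rel T)$ acting on itself by left translation, the restriction-to-base map $g\mapsto\overline{g}$ is equivariant, since $\overline{g_1g_2}=\overline{g_1}\,\overline{g_2}$.

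Next I would invoke Theorem~\ref{sftheorem1}, which says precisely that $\Diff(\O\rel S)$ admits local $\Diff_f(\Sigma\rel T)$ cross-sections in the sense of Definition~\ref{def:localcrosssections}. Then Proposition~\ref{theoremA} applies word for word: an equivariant map of a $G$-space into a $G$-space admitting local cross-sections is locally trivial. Hence $\Diff_f(\Sigma\rel T)\to\Diff(\O\rel S)$ is locally trivial, with local coordinates over a neighborhood $U$ of a point given by $(u,z)\mapsto\chi(u)\cdot z$ for a cross-section $\chi$ furnished by Theorem~\ref{sftheorem1}.

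I do not anticipate any real obstacle: all the analytic work --- lifting the logarithm of an orbifold diffeomorphism of $\O$ to a $G$-equivariant aligned vector field on $\widetilde{\Sigma}$ via horizontal lifting, exponentiating it with $\TExp_a$ to a fiber-preserving diffeomorphism of $\widetilde{\Sigma}$, and descending to $\Sigma$ --- is already contained in the proof of Theorem~\ref{sftheorem1}. The only items needing a line of verification are that the formula $g\cdot f=\overline{g}\circ f$ defines an action and that $g\mapsto\overline{g}$ is equivariant for it, both of which are formal.
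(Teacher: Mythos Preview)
Your proposal is correct and takes essentially the same approach as the paper: the paper simply states ``Applying Proposition~\ref{theoremA}, we have immediately'' before stating the theorem, so it is exactly the combination of Theorem~\ref{sftheorem1} (local cross-sections) with Proposition~\ref{theoremA} (equivariant maps into spaces with local cross-sections are locally trivial) that you spell out.
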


We now examine 
\index{Logarithm Lemma!for fiber-preserving maps}Lemmas~\ref{lem:aligned_logarithm} 
and\index{Extension Lemma!for fiber-preserving maps}~\ref{lem:fiber_preserving_aligned_extension} 
in the singular fibered case. There is no difficulty in adapting 
Lemma~~\ref{lem:aligned_logarithm}
equivariantly:
\begin{lemma}[Logarithm Lemma for singular fiberings]%
\indexstate{Logarithm Lemma!for singular fiberings}
Let $W$ be a vertical suborbifold of $\Sigma$. Then there are an open
neighborhood $U$ of the inclusion $i_{\widetilde{W}}$ in
$(\Imb_f)_{G}(\widetilde{W},\widetilde{\Sigma})$ and a continuous map
$X\colon U \to\mathcal{A}_G(\widetilde{W},T\widetilde{\Sigma})$ such that
for all $j\in U$, $\Exp_a(X(j)(x))$ is defined for all $x\in
\widetilde{W}$ and $\Exp_a(X(j)(x))= j(x)$ for all $x\in
\widetilde{W}$. Also, $X(i_{\widetilde{W}})=Z$.
\label{sflemmaD}
\end{lemma}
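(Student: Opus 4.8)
The plan is to imitate the proof of Lemma~\ref{lem:aligned_logarithm}, but carry everything out $G$-equivariantly upstairs in $\widetilde{\Sigma}$ and then descend. By Definition~\ref{def:singular_fibering} and the discussion following it, $W$ vertical in $\Sigma$ means $\widetilde{W}=\sigma^{-1}(W)$ is a vertical submanifold of $\widetilde{\Sigma}$ for the honest bundle $\widetilde{p}\colon\widetilde{\Sigma}\to\widetilde{\O}$. As already arranged in this section, we may choose the Riemannian metric on $\widetilde{\Sigma}$ to be $G$-equivariant and a product near both $\partial_h\widetilde{\Sigma}$ and $\partial_v\widetilde{\Sigma}$, with the $\I$-fibers of the horizontal collar vertical; then the aligned exponential $\Exp_a$ for $\widetilde{p}$ is defined, and by the second consequence of Lemma~\ref{lift} it is $G$-equivariant. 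Here we are also using that $\widetilde{W}$, being $\sigma^{-1}(W)$ of a compact suborbifold, has compact quotient, so $\Exp_a$ restricted near $\widetilde{W}$ behaves uniformly; but since $\widetilde{W}$ itself need not be compact, the ``choose $\epsilon$ small enough'' step must be done with $G$-invariance in mind.

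First I would fix a compact set $C\subseteq\widetilde{W}$ mapping onto $W$ under $\sigma$, and use the equivariant-metric machinery (Lemma~\ref{covering isometries}, already invoked in this section for the ambient metric) together with compactness of $C$ to choose a single $\epsilon>0$ so that for every $x$ in the $G$-orbit $G\cdot C=\widetilde{W}$, the map $\Exp$ carries the (possibly partial) $\epsilon$-ball about $0$ in $T_x(\widetilde{\Sigma})$ diffeomorphically onto a neighborhood $W_x$ of $x$; equivariance of the metric makes this $\epsilon$ uniform over each orbit, hence over all of $\widetilde{W}$. Then I would let $U$ be the neighborhood of the inclusion $i_{\widetilde{W}}$ in $(\Imb_f)_G(\widetilde{W},\widetilde{\Sigma})$ consisting of those $j$ with $j(x)\in W_x$ for all $x$, and for each such $j$ define $X(j)(x)$ to be the unique vector of length $<\epsilon$ in $T_x(\widetilde{\Sigma})$ with $\Exp_a(X(j)(x))=j(x)$ — this exists and is unique exactly as in Lemma~\ref{lem:aligned_logarithm} because $j$ is close to the inclusion and $\Exp_a(\omega)$ exists iff $\Exp_v(\omega_v)$ and $\Exp(p_*\omega)$ do. Continuity of $j\mapsto X(j)$ and $X(i_{\widetilde{W}})=Z$ are immediate from the implicit-function/local-inverse argument, just as before.

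The two points that need genuine (if small) care — and I expect the verification that $X(j)\in\mathcal{A}_G(\widetilde{W},T\widetilde{\Sigma})$ to be the only real content — are \emph{alignedness} and \emph{$G$-equivariance} of $X(j)$. For alignedness: if $\widetilde{p}(x_1)=\widetilde{p}(x_2)$ then, since $j\in(\Imb_f)_G$ is fiber-preserving, $\widetilde{p}\,j(x_1)=\widetilde{p}\,j(x_2)$; because $p(\Exp_a(\omega))=\Exp(p_*\omega)$, applying $\widetilde{p}_*$ to $X(j)(x_i)$ and using injectivity of $\Exp$ on the $\epsilon$-ball in $\widetilde{\O}$ forces $\widetilde{p}_*X(j)(x_1)=\widetilde{p}_*X(j)(x_2)$, which is exactly condition~(1) of Definition~\ref{def:aligned_sections}; the boundary-tangency condition~(2) follows from the metric being a product near $\partial_h\widetilde{\Sigma}$ and $\partial_v\widetilde{\Sigma}$, as in Lemma~\ref{logarithm}. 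For $G$-equivariance: given $g\in G$, pick $h\in H$ with $\widetilde{p}g=h\widetilde{p}$ (Lemma~\ref{lift}(i)); since $j$ is $G$-equivariant (for the automorphism $\phi(g)$-twisting built into $(\Imb_f)_G$) and $\Exp_a$ is $G$-equivariant, uniqueness of the short vector representing $j$ gives $X(j)(g(x))=g_*X(j)(x)$ — the same one-line computation as the displayed equivariance check in the proof of Theorem~\ref{sftheorem1}. Hence $X(j)\in\mathcal{A}_G(\widetilde{W},T\widetilde{\Sigma})$, $\TExp_a(X(j))$ is defined (since $X(j)$ is aligned and $\Exp_a$ exists on it pointwise), and the lemma follows. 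The only mild obstacle is making the choice of $\epsilon$ honestly $G$-uniform despite $\widetilde{W}$ being noncompact, which the equivariant metric handles.
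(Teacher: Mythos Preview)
Your proposal is correct and is precisely the equivariant adaptation of Lemma~\ref{lem:aligned_logarithm} that the paper intends; in fact the paper gives no proof at all, merely remarking that ``there is no difficulty in adapting Lemma~\ref{lem:aligned_logarithm} equivariantly'' before stating the lemma. Your detailed verification---the $G$-uniform choice of $\epsilon$ via a compact fundamental domain and the isometric $G$-action, the alignedness check from fiber-preservation of $j$, and the $G$-equivariance of $X(j)$ from uniqueness of the short vector together with $G$-equivariance of $\Exp_a$---fills in exactly what the paper leaves implicit.
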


\begin{lemma}[Extension Lemma for singular fiberings]\indexstate{Extension
Lemma!for singular fiberings}
Let $W$ be a vertical suborbifold of $\Sigma$, and $T$ a closed fibered
neighborhood in $\partial_v\Sigma$ of $T\cap \partial_vW$.  Then there
exists a continuous linear map $k\colon
\mathcal{A}_G(\widetilde{W},T\widetilde{\Sigma})\to
\mathcal{A}_G(\widetilde{\Sigma},T\widetilde{\Sigma})$ such that
$k(X)(x)= X(x)$ for all $X\in
\mathcal{A}_G(\widetilde{W},T\widetilde{\Sigma})$ and $x\in
\widetilde{W}$. If $X(x)= Z(x)$ for all $x\in
\widetilde{T}\cap\partial_v\widetilde{W}$, then $k(X)(x)= Z(x)$ for all
$x\in \widetilde{T}$. Moreover, $k(\mathcal{V}_G(\widetilde{W},
T\widetilde{\Sigma}))\subset
\mathcal{V}_G(\widetilde{\Sigma},T\widetilde{\Sigma})$.
\label{sflemmaC}
\end{lemma}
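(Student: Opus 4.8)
The plan is to adapt the proof of the bundle Extension Lemma~\ref{lem:fiber_preserving_aligned_extension}, carrying it out $G$-equivariantly on the covers $\widetilde{\Sigma}$ and $\widetilde{\O}$ and replacing each appeal to the plain Extension Lemma~\ref{extension} by the Equivariant Extension Lemma~\ref{orbextension}. I would work with the $G$-equivariant metric on $\widetilde{\Sigma}$ (a product near $\partial_h\widetilde{\Sigma}$, with vertical collar $\I$-fibers, and near $\partial_v\widetilde{\Sigma}$) and the $H$-equivariant metric on $\widetilde{\O}$ (a product near $\partial\widetilde{\O}$) fixed before Lemma~\ref{lift}. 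Since $G$ acts by isometries and, by the first part of Lemma~\ref{lift}, preserves the fibers of $\widetilde{p}$, it preserves the vertical--horizontal splitting; and, by the computation in the proof of Theorem~\ref{sftheorem1} (using Lemma~\ref{lift}(i)), horizontal lifts of $H$-equivariant vector fields on $\widetilde{\O}$ are $G$-equivariant. Let $\phi\colon G\to H$ be the surjection with $\widetilde{p}(gx)=\phi(g)(\widetilde{p}(x))$. Write $\widetilde{T}=\sigma^{-1}(T)$; as $T$ is fibered inside $\partial_v\Sigma=p^{-1}(\partial\O)$, we have $\widetilde{T}=\widetilde{p}^{-1}(\widetilde{S})$ for an $H$-invariant closed $\widetilde{S}\subseteq\partial\widetilde{\O}$. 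By Lemma~\ref{lift}(iii) the saturated submanifold $\widetilde{W}=\sigma^{-1}(W)$ has image $\widetilde{\mathcal W}:=\widetilde{p}(\widetilde{W})=\tau^{-1}(p(W))$, a submanifold of $\widetilde{\O}$ invariant under $H=\phi(G)$ whose quotient $p(W)$ is a compact suborbifold of $\O$ (compact because $\Sigma$, hence $W$, is), with $\widetilde{S}\cap\partial\widetilde{\mathcal W}=\widetilde{p}(\widetilde{T}\cap\partial_v\widetilde{W})$ and $p(T)$ a closed neighborhood in $\partial\O$ of $p(T)\cap\partial(p(W))$.

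Given $X\in\mathcal{A}_G(\widetilde{W},T\widetilde{\Sigma})$, I would split it $G$-equivariantly as $X=X_h+X_v$ into horizontal and vertical parts and treat them separately, just as in Lemma~\ref{lem:fiber_preserving_aligned_extension}. For the horizontal part, push down to $\overline{X}:=\widetilde{p}_*X_h$: this is a well-defined vector field on $\widetilde{\mathcal W}$ because $X$ is aligned, it is $H$-equivariant (the same computation as in the proof of Theorem~\ref{sftheorem1}), and it is tangent to $\partial\widetilde{\O}$ along $\partial\widetilde{\mathcal W}$ because $X$ is tangent to $\partial_v\widetilde{\Sigma}=\widetilde{p}^{-1}(\partial\widetilde{\O})$ along $\partial_v\widetilde{W}$, so $\overline{X}\in\mathcal{X}_H(\widetilde{\mathcal W},T\widetilde{\O})$. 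Now apply Lemma~\ref{orbextension} to the orbifold $\O$ (covering group $H$), the suborbifold $p(W)$, the neighborhood $\O$ itself, and $S=p(T)$, to get a continuous linear $H$-equivariant extension $\overline{k}(\overline{X})$ of $\overline{X}$ to $\widetilde{\O}$, supported near $\widetilde{\mathcal W}$ and vanishing on $\widetilde{S}$ whenever $\overline{X}$ vanishes on $\widetilde{S}\cap\partial\widetilde{\mathcal W}$. I would then define $k(X)_h$ to be the horizontal lift of $\overline{k}(\overline{X})$, i.e.\ $k(X)_h(x)=(\widetilde{p}\vert_{H_x})_*^{-1}(\overline{k}(\overline{X})(\widetilde{p}(x)))$. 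This field is horizontal, hence aligned and tangent to $\partial_h\widetilde{\Sigma}$; it is tangent to $\partial_v\widetilde{\Sigma}$ as a lift of a vector tangent to $\partial\widetilde{\O}$; it is $G$-equivariant; it equals $X_h$ on $\widetilde{W}$; and it vanishes on $\widetilde{T}=\widetilde{p}^{-1}(\widetilde{S})$ when $X$ vanishes on $\widetilde{T}\cap\partial_v\widetilde{W}$.

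For the vertical part, I would regard $X_v$ simply as a section of $T\widetilde{\Sigma}\vert_{\widetilde{W}}$ — it is tangent to $\partial_h\widetilde{\Sigma}$ (resp.\ $\partial_v\widetilde{\Sigma}$) along $\partial_h\widetilde{W}$ (resp.\ $\partial_v\widetilde{W}$), since $X$ is and $X_h$ always is, so $X_v\in\mathcal{X}_G(\widetilde{W},T\widetilde{\Sigma})$ — apply Lemma~\ref{orbextension} once more, now with $\Sigma$ in place of $\O$ (covering group $G$), the compact suborbifold $W$, the neighborhood $\Sigma$, and boundary datum $T$, and then take the pointwise vertical projection $k(X)_v$ of the resulting $G$-equivariant extension of $X_v$. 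Vertical projection is continuous, linear, $G$-equivariant (as $G$ preserves the splitting), fixes already-vertical vectors (so $k(X)_v=X_v$ on $\widetilde{W}$), and preserves tangency to each boundary stratum. Setting $k(X)=k(X)_h+k(X)_v$: continuity and linearity of $k$ are inherited from $\widetilde{p}_*$, horizontal lift, vertical projection, and the two instances of Lemma~\ref{orbextension}; $k(X)\vert_{\widetilde{W}}=X_h+X_v=X$; $k(X)\in\mathcal{A}_G(\widetilde{\Sigma},T\widetilde{\Sigma})$ because $k(X)_h$ is horizontal and aligned, $k(X)_v$ is vertical, both are tangent to the appropriate boundary strata, and $\widetilde{p}_*k(X)=\overline{k}(\overline{X})$ descends to $\widetilde{\O}$ (and $\TExp_a(k(X))$ is defined, since $\widetilde{\O}$ is complete and the fibers of $\widetilde{p}$ are compact, exactly as for the fields built in Lemma~\ref{sflemmaD}); the vanishing on $\widetilde{T}$ follows stratum by stratum from the two extensions; and if $X\in\mathcal{V}_G(\widetilde{W},T\widetilde{\Sigma})$, i.e.\ $X_h=0$, then $\overline{X}=0$, so $\overline{k}(\overline{X})=0$, so $k(X)_h=0$ and $k(X)=k(X)_v$ is vertical, giving $k(\mathcal{V}_G(\widetilde{W},T\widetilde{\Sigma}))\subseteq\mathcal{V}_G(\widetilde{\Sigma},T\widetilde{\Sigma})$.

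The only genuinely new ingredient beyond the bundle case is equivariance over the non-compact covers, and that is absorbed into Lemma~\ref{orbextension} together with the equivariance of horizontal lifts recorded in the proof of Theorem~\ref{sftheorem1}; so I expect the main obstacle to be not any single estimate but the bookkeeping at the two boundary strata $\partial_h\widetilde{\Sigma}$ and $\partial_v\widetilde{\Sigma}$. One must verify that the vertical--horizontal decomposition respects tangency to each stratum — which holds because, with a metric that is a product near $\partial_h\widetilde{\Sigma}$, the horizontal subspaces are tangent to the collar leaves there, while over $\partial_v\widetilde{\Sigma}$ the vertical subspace lies inside $T(\partial_v\widetilde{\Sigma})$ — and that the data fed to the two invocations of Lemma~\ref{orbextension} are compatible, so that $k(X)$ is a genuine aligned $G$-equivariant field vanishing exactly on $\widetilde{T}$ in the asserted case.
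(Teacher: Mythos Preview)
Your proposal is correct and follows essentially the same approach as the paper: split into horizontal and vertical parts, handle the horizontal part by pushing down to $\widetilde{\O}$, extending $H$-equivariantly via Lemma~\ref{orbextension}, and lifting horizontally; handle the vertical part by extending $G$-equivariantly on $\widetilde{\Sigma}$ and projecting back to the vertical subspace. The paper phrases the vertical step as ``extend as in Lemma~\ref{extension}\ldots\ equivariant since we use a $G$-equivariant metric and $G$-equivariant functions,'' which amounts to exactly your invocation of Lemma~\ref{orbextension} with $\Sigma$ in the role of~$\O$.
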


\begin{proof} We may assume that the metrics on $\widetilde{\Sigma}$ and
$\widetilde{\O}$ are $G$- and $H$-equivariant; in particular, $G$ takes
horizontal subspaces of $T\widetilde{\Sigma}$ to horizontal
subspaces. Notice that $\widetilde{p}_*$ carries $G$-invariant aligned
vector fields to $H$-invariant vector fields; this uses
Lemma~\ref{lift}(ii). It follows that the aligned exponential on
$\widetilde{\Sigma}$ is $G$-equivariant.  For let $X\in
\mathcal{A}_G(T\widetilde{\Sigma})$ and let $g\in G$. Let $x\in
\widetilde{\Sigma}$ and let $\widetilde{F}_x$ be the fiber of
$\widetilde{p}$ containing $x$. At $x$, $X(x)= X(x)_v+X(x)_h$. Since $g$ is
an isometry, $X(g(x))_v= g_*(X(x)_v)$ and $X(g(x))_h= g_*(X(x)_h)$. To find
$\Exp_a(X(x))$, we first find $\Exp_v(X(x)_v)$, that is, exponentiate
$X(x)_v$ using the metric induced on $\widetilde{F}_x$. This ends at a
point $x'\in \widetilde{F}_x$. Since $G$ acts as isometries,
$\Exp_v(g_*X(x)_v)= g\Exp_v(X(x)_v)=g(x')$.  Now, use Lemma~\ref{lift} to
obtain $\lambda\in H$ with $\lambda \widetilde{p} = \widetilde{p} g$. We
have $\lambda_*\widetilde{p}_*(X(x)_h) = \widetilde{p}_*(g_*(X(x)_h))=
\widetilde{p}_*(X(g(x))_h)$. Since $\lambda$ is an isometry, it carries the
geodesic in $\mathcal{O}$ determined by $\widetilde{p}_*(X(x)_h)$ to the
geodesic determined by $\widetilde{p}_*(X(g(x))_h)$. Therefore $g$ carries
the horizontal lift of $\widetilde{p}_*(X(x)_h)$ at $x'$ to the horizontal
lift of $\widetilde{p}_*(X(g(x))_h)$ at $g(x')$. So $g$ carries
$\Exp_a(X(x))$ to $\Exp_a(X(g(x)))$.

We can now proceed as in the proof of Lemma~\ref{lem:fiber_preserving_aligned_extension}.
Given a $G$-equivariant aligned section on $W$, extend the vertical part as
in Lemma~\ref{extension} and project the extension to the vertical
subspace. This process is equivariant since we use a $G$-equivariant metric
and $G$-equivariant functions to taper off the local extensions.  For the
horizontal part, project to $\widetilde{\O}$, extend $H$-equivariantly
using Lemma~\ref{orbextension}, and lift.
\end{proof}

\begin{theorem}
Let $W$ be a vertical suborbifold of $\Sigma$. Let $T$ be a closed
fibered neighborhood in $\partial_v\Sigma$ of $T\cap \partial_vW$.
Then
\begin{enumerate}
\item[{\rm (i)}] $\Imb_f(W,\Sigma\rel T)$ admits local
$\Diff_f(\Sigma\rel T)$ cross-sections, and
\item[{\rm (ii)}] $\Imb_v(W,\Sigma\rel T)$ admits local
$\Diff_v(\Sigma\rel T)$ cross-sections.
\end{enumerate}
\label{sftheorem2}
\end{theorem}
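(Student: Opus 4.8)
The plan is to run the now-standard three-step Palais recipe --- logarithm, extension, exponentiation --- performed $G$-equivariantly upstairs on $\widetilde{\Sigma}$ and then pushed back down to $\Sigma$, exactly as in the proof of the singular-fibered projection theorem (Theorem~\ref{sftheorem1}), but with the Logarithm and Extension Lemmas replaced by their vertical-suborbifold versions, Lemmas~\ref{sflemmaD} and~\ref{sflemmaC}. By Proposition~\ref{prop:inclusion} it suffices to produce a local cross-section at the inclusion $i_W$, equivalently at $i_{\widetilde{W}}$ upstairs. Throughout I would fix $G$- and $H$-equivariant metrics on $\widetilde{\Sigma}$ and $\widetilde{\O}$ that are products near the horizontal and vertical boundaries with $\I$-fibers vertical, as arranged in the paragraph preceding Lemma~\ref{lift}; by the second consequence of Lemma~\ref{lift} this makes the aligned exponential $\Exp_a$ for $\widetilde{p}\colon\widetilde{\Sigma}\to\widetilde{\O}$ a $G$-equivariant map, so $\TExp_a$ takes $G$-equivariant aligned vector fields to $G$-equivariant fiber-preserving smooth maps.

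First I would apply Lemma~\ref{sflemmaD} to obtain a neighborhood $U_1$ of $i_{\widetilde{W}}$ in $(\Imb_G)_f(\widetilde{W},\widetilde{\Sigma})$ and a continuous $X\colon U_1\to\mathcal{A}_G(\widetilde{W},T\widetilde{\Sigma})$ with $\Exp_a(X(j)(x))=j(x)$ on $\widetilde{W}$ and $X(i_{\widetilde{W}})=Z$. Because an element of $\Imb_f(W,\Sigma\rel T)$ sufficiently near the inclusion lifts to an embedding that agrees with $i_{\widetilde{W}}$ on $\widetilde{T}$, I may shrink $U_1$ so that such lifts $j$ satisfy $X(j)=Z$ on $\widetilde{T}\cap\partial_v\widetilde{W}$. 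Then Lemma~\ref{sflemmaC}, applied with the closed fibered neighborhood $\widetilde{T}$, gives a continuous linear $k\colon\mathcal{A}_G(\widetilde{W},T\widetilde{\Sigma})\to\mathcal{A}_G(\widetilde{\Sigma},T\widetilde{\Sigma})$ with $k(X)|_{\widetilde{W}}=X$, with $k(\mathcal{V}_G(\widetilde{W},T\widetilde{\Sigma}))\subset\mathcal{V}_G(\widetilde{\Sigma},T\widetilde{\Sigma})$, and with $k(X)=Z$ on $\widetilde{T}$ whenever $X=Z$ on $\widetilde{T}\cap\partial_v\widetilde{W}$.

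For the exponentiation step I would invoke Lemma~\ref{orblemmaB} in the enlarged complete setting used in the proof of Lemma~\ref{lem:aligned_exponentiation} and in the paragraph before Theorem~\ref{sftheorem1}, producing a neighborhood $J$ of $1_{\widetilde{\Sigma}}$ that consists of diffeomorphisms; then I choose a neighborhood $\widetilde{U}$ of $i_{\widetilde{W}}$ small enough that $j\mapsto\TExp_a(k(X(j)))$ is defined, has image in $J$, and therefore lands in $\Diff_G(\widetilde{\Sigma}\rel\widetilde{T})$, the $\rel\widetilde{T}$ control coming from the last clause of Lemma~\ref{sflemmaC}. Equivariance of $\TExp_a$ on aligned fields makes these maps fiber-preserving, and the $\mathcal{V}_G$-to-$\mathcal{V}_G$ clause makes vertical fields yield vertical diffeomorphisms. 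Exactly as in Theorem~\ref{sftheorem1}, I shrink once more so that each nearby embedding of $W$ in $\Sigma$ has a unique lift into $\widetilde{U}$; then ``lift, apply $\TExp_a\circ k\circ X$, project back to $\Sigma$'' defines $\chi$ on a neighborhood $U$ of $i_W$ in $\Imb_f(W,\Sigma\rel T)$ with values in $\Diff_f(\Sigma\rel T)$. A computation with $\sigma$, $\tau$, and $\widetilde{p}$ of the same form as the displayed chain at the end of the proof of Theorem~\ref{sftheorem1} gives $\chi(j)\cdot i_W=j$, proving (i); restricting $\chi$ to $\Imb_v(W,\Sigma\rel T)$ and using that $X$ carries vertical embeddings into $\mathcal{V}_G(\widetilde{W},T\widetilde{\Sigma})$, $k$ preserves $\mathcal{V}_G$, and $\TExp_a$ sends $\mathcal{V}_G$ into vertical diffeomorphisms yields a $\Diff_v(\Sigma\rel T)$ cross-section, proving (ii). (Proposition~\ref{theoremA} then delivers the associated local triviality statements, as in Corollary~\ref{corollary2}.)

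The main obstacle --- the only place needing care beyond transcribing the bundle argument of Theorem~\ref{theorem2} --- is the descent from $\widetilde{\Sigma}$ to $\Sigma$: one must simultaneously arrange that $\TExp_a\circ k\circ X$ lands among $G$-equivariant diffeomorphisms (so that it descends at all), that it respects the $\rel\widetilde{T}$ control (hence $\rel T$ downstairs), and that the chosen neighborhood of $i_W$ is small enough for lifts to exist and be unique, so that $\chi$ is well-defined and continuous. All three are handled by tools already in place --- Lemma~\ref{orblemmaB}, the boundary-control clauses of Lemmas~\ref{sflemmaD} and~\ref{sflemmaC}, and the orbifold-covering uniqueness argument from Theorem~\ref{sftheorem1} --- so the remainder is bookkeeping parallel to the bundle case.
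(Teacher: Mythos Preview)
Your proposal is correct and follows essentially the same route as the paper: reduce to the inclusion via Proposition~\ref{prop:inclusion}, then run the three-step Palais recipe equivariantly on $\widetilde{\Sigma}$ using Lemmas~\ref{sflemmaD} and~\ref{sflemmaC}, exponentiate with $\TExp_a$, and read off~(ii) from the $\mathcal{V}_G$-preservation clauses. The only cosmetic difference is in the exponentiation step: the paper simply invokes Lemma~\ref{lem:aligned_exponentiation} for the genuine bundle $\widetilde{p}\colon\widetilde{\Sigma}\to\widetilde{\O}$ (relying on the already-noted $G$-equivariance of $\Exp_a$ to keep everything equivariant), whereas you route through Lemma~\ref{orblemmaB} and are more explicit about the descent to $\Sigma$ via unique lifting, in the style of the proof of Theorem~\ref{sftheorem1}. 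The paper's write-up is terser and leaves the descent implicit in the definitions of $\Diff_f(\Sigma)$ and $\Imb_f(W,\Sigma)$ as quotients of the equivariant spaces upstairs; your version spells this out, which is harmless and arguably clearer.
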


\begin{proof} By Proposition~\ref{prop:inclusion}, it suffices to find
local cross-sections at the inclusion $i_W$.

By \index{Logarithm Lemma!for singular fiberings}Lemma~\ref{sflemmaD},
there are an open neighborhood $\widetilde{U}$ of the inclusion
$i_{\widetilde{W}}$ in $(\Imb_f)_{G}(\widetilde{W},\widetilde{\Sigma})$ and
a continuous map $X\colon
\widetilde{U}\to\mathcal{A}_G(\widetilde{W},T\widetilde{\Sigma})$ such that
for all $j\in \widetilde{U}$, $\Exp_a(X(j)(x))$ is defined for all $x\in
\widetilde{W}$ and $\Exp_a(X(j)(x))= j(x)$ for all $x\in \widetilde{W}$. By
\index{Extension Lemma!for singular fiberings}Lemma~\ref{sflemmaC}, there
exists a continuous linear map $k\colon
\mathcal{A}_G(\widetilde{W},T\widetilde{\Sigma})\to
\mathcal{A}_G(\widetilde{\Sigma},T\widetilde{\Sigma})$ such that $k(X)(x)=
X(x)$ for all $X\in \mathcal{A}_G(\widetilde{W},T\widetilde{\Sigma})$ and
$x\in \widetilde{W}$. Additionally, $k(X)(x)= Z(x)$ for all $x\in
\widetilde{T}$, and $k(\mathcal{V}_G(\widetilde{W},
T\widetilde{\Sigma}))\subset
\mathcal{V}_G(\widetilde{\Sigma},T\widetilde{\Sigma})$.

\index{Exponentiation Lemma!for fiber-preserving maps}Lemma~\ref{lem:aligned_exponentiation} now gives a neighborhood
$\widetilde{U}_1$ of $Z$ in
$\mathcal{A}_G(\widetilde{\Sigma},T\widetilde{\Sigma})$ such $\Exp_a(X)$
is defined for all $X\in \widetilde{U}_1$, and $\TExp_a$ has image in
$\Diff_f^K(E)$. Putting $U=X^{-1}\circ k^{-1}(\widetilde{U}_1)$, the
composition $\TExp_a\circ k \circ X\colon \widetilde{U}\to \Diff_f^K(E)$ is
the desired cross-section for~(i).

Since $X$ carries $\Imb_v(W,\Sigma)$ into
$\mathcal{V}_G(\widetilde{W},T\widetilde{\Sigma})$, $k$ carries
$\mathcal{V}_G(\widetilde{W},T\widetilde{\Sigma})$ into
$\mathcal{V}_G(\widetilde{\Sigma},T\widetilde{\Sigma})$, and $\TExp_a$
carries $\widetilde{U}_1\cap
\mathcal{V}_G(\widetilde{\Sigma},T\widetilde{\Sigma})$ into
$\Diff_v(\widetilde{\Sigma})$, this cross-section restricts on
$\Imb_v(W,\Sigma \rel T)$ to a $\Diff_v^L(\Sigma\rel T)$ cross-section,
giving~(ii).
\end{proof}

As in Section~\ref{restrict}, we have the following immediate corollaries.
\indexstate{Restriction Theorem!singular fiberings}%
\begin{corollary} Let $W$ be a vertical suborbifold of
$\Sigma$. Let $T$ be a fibered neighborhood in $\partial_v\Sigma$ of
$T\cap\partial_vW$. Then the following restrictions are locally
trivial:
\begin{enumerate}
\item[{\rm(i)}] $\Diff_f(\Sigma\rel T)\to \Imb_f(W,\Sigma\rel T)$, and
\item[{\rm(ii)}] $\Diff_v(\Sigma\rel T)\to
\Imb_v(W,\Sigma\rel T)$.
\end{enumerate}
\label{sfcorollary2}
\end{corollary}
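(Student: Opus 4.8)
The plan is to deduce this exactly as Corollary~\ref{corollary2} was deduced from Theorem~\ref{theorem2} in the honest bundle case: feed the local cross-sections just produced into the Palais principle, Proposition~\ref{theoremA}. The only new content of Corollary~\ref{sfcorollary2} over Theorem~\ref{sftheorem2} is the observation that a space of embeddings admitting local cross-sections for a group action makes every equivariant map into it locally trivial, and $\Sigma$ being compact there is no support-control parameter $L$ to track.

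\textbf{Step 1 (action and equivariance).} First I would record that $\Diff_f(\Sigma\rel T)$ acts on $\Imb_f(W,\Sigma\rel T)$ by left composition, $g\cdot j = g\circ j$, and that this is a well-defined continuous action: it descends from the left-composition action of $(\Diff_G)_f(\widetilde\Sigma\rel\widetilde T)$ on $(\Imb_G)_f(\widetilde W,\widetilde\Sigma\rel\widetilde T)$, which is continuous for the $\Cinf$-topology, and is compatible with passing to $G$-quotients. One checks in the usual way that $g\circ j$ again lies in $\Imb_f(W,\Sigma\rel T)$, using that $g$ is the identity on $T$ and preserves $\partial_v\Sigma$. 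The restriction map $\rho\colon \Diff_f(\Sigma\rel T)\to\Imb_f(W,\Sigma\rel T)$, $g\mapsto g|_W$, is then $\Diff_f(\Sigma\rel T)$-equivariant when the group acts on itself by left multiplication, since $\rho(h\circ g) = (h\circ g)|_W = h\circ(g|_W) = h\cdot\rho(g)$, and $\rho^{-1}(i_W)$ is nonempty, containing $1_\Sigma$. The same remarks apply verbatim to $\Diff_v(\Sigma\rel T)$ acting on $\Imb_v(W,\Sigma\rel T)$.

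\textbf{Step 2 (apply Palais).} Theorem~\ref{sftheorem2}(i) furnishes a local $\Diff_f(\Sigma\rel T)$ cross-section for $\Imb_f(W,\Sigma\rel T)$ at the inclusion $i_W$. Since $\Diff_f(\Sigma\rel T)$ acts transitively on the $\rho$-image of itself, a cross-section at $i_W$ propagates (as in Definition~\ref{def:localcrosssections}) to a cross-section at every point of that orbit, which is all Proposition~\ref{theoremA} requires. That proposition then gives directly that the equivariant map $\rho$ is locally trivial, with trivialization over a neighborhood $U$ of a point $x_0$ given by $U\times\rho^{-1}(x_0)\to\rho^{-1}(U)$, $(u,z)\mapsto\chi(u)\cdot z$. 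This is part~(i); part~(ii) is obtained identically from Theorem~\ref{sftheorem2}(ii), with $\Diff_v$, $\Imb_v$, and $\mathcal{V}_G$ replacing $\Diff_f$, $\Imb_f$, and $\mathcal{A}_G$.

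\textbf{Expected obstacle.} There is essentially none of substance: all of the analytic work --- the Logarithm and Extension Lemmas for singular fiberings (Lemmas~\ref{sflemmaD} and~\ref{sflemmaC}), the $G$-equivariance of the aligned exponential, and their assembly into a cross-section --- has already been done in and before Theorem~\ref{sftheorem2}. The only point demanding even minor care is the bookkeeping in Step~1: confirming that the composition action and the restriction map descend correctly through the $G$-quotients that define the orbifold mapping spaces of Section~\ref{sfiber}, and that the ``$\rel T$'' conditions are preserved by the action.
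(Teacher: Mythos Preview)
Your proposal is correct and matches the paper's approach exactly. The paper states this as an immediate corollary of Theorem~\ref{sftheorem2} via Proposition~\ref{theoremA}, just as Corollary~\ref{corollary2} followed from Theorem~\ref{theorem2}; you have simply written out the details of that deduction.
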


\begin{corollary} Let $V$ and $W$ be vertical
suborbifolds of $\Sigma$, with $W\subseteq V$. Let $T$ be a closed
fibered neighborhood in $\partial_v\Sigma$ of $T\cap\partial_vW$. Then
the following restrictions are locally trivial:
\begin{enumerate}
\item[{\rm(i)}] $\Imb_f(V,\Sigma\rel T)\to
\Imb_f(W,\Sigma\rel T)$, and
\item[{\rm(ii)}] $\Imb_v(V,\Sigma\rel T)\to
\Imb_v(W,\Sigma\rel W)$.
\end{enumerate}
\label{sfcorollary3}
\end{corollary}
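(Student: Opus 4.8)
The plan is to obtain both statements as formal consequences of Theorem~\ref{sftheorem2} together with Proposition~\ref{theoremA}, in exactly the way Corollary~\ref{corollary3} is deduced from Theorem~\ref{theorem2} in the bundle setting and Corollary~\ref{orbcoro3} in the orbifold setting. For part~(i), since $W\subseteq V$, restricting a fiber-preserving embedding of $V$ to $W$ --- first on the level of $G$-equivariant representatives in $(\Imb_f)_G(\widetilde{V},\widetilde{\Sigma})$, then descending to $\Sigma$ --- gives a well-defined map $\Imb_f(V,\Sigma\rel T)\to\Imb_f(W,\Sigma\rel T)$, which is continuous because the $\Cinf$-topologies on these spaces of embeddings of vertical suborbifolds are defined precisely so that restriction to a vertical suborbifold is continuous. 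The group $\Diff_f(\Sigma\rel T)$ acts on both spaces by composition on the left, and this restriction map is manifestly equivariant for those actions.

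First I would invoke Theorem~\ref{sftheorem2}(i): the space $\Imb_f(W,\Sigma\rel T)$ admits local $\Diff_f(\Sigma\rel T)$ cross-sections. (Its proof already builds in Proposition~\ref{prop:inclusion}, so these cross-sections exist at every point, not merely at the inclusion $i_W$.) Applying Proposition~\ref{theoremA} with $X=\Imb_f(W,\Sigma\rel T)$, structure group $\Diff_f(\Sigma\rel T)$, and the equivariant map the restriction from $\Imb_f(V,\Sigma\rel T)$, we conclude immediately that $\Imb_f(V,\Sigma\rel T)\to\Imb_f(W,\Sigma\rel T)$ is locally trivial. Part~(ii) is the same argument verbatim with $\Imb_v$ in place of $\Imb_f$ and $\Diff_v$ in place of $\Diff_f$, citing Theorem~\ref{sftheorem2}(ii); here the target control should be read as $\rel T$ (the ``$\rel W$'' displayed in the statement being a typographical slip), and one uses that the restriction carries vertical embeddings of $V$ to vertical embeddings of $W$ and that the cross-section produced by Theorem~\ref{sftheorem2}(ii) takes values in $\Diff_v(\Sigma\rel T)$.

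There is essentially no real obstacle here: all the substantive work has already been absorbed into Theorem~\ref{sftheorem2} (which rests in turn on the equivariant Logarithm, Extension and Exponentiation Lemmas~\ref{sflemmaD}, \ref{sflemmaC} and~\ref{lem:aligned_exponentiation}) and into the Palais mechanism of Proposition~\ref{theoremA}. The only points still needing a word are routine: the well-definedness and continuity of the restriction map on the orbifold quotients, and the bookkeeping of boundary control --- here one reads the hypothesis on $T$, as in Corollary~\ref{corollary3}, as furnishing a closed fibered neighborhood in $\partial_v\Sigma$ of $T\cap\partial_v V$, so that $\Imb_f(V,\Sigma\rel T)$ is defined in the sense of Definition~\ref{def:imbcontrol}; since $\partial_v W\subseteq\partial_v V$, this $T$ is then automatically a neighborhood of $T\cap\partial_v W$ as well, so $\Imb_f(W,\Sigma\rel T)$ is likewise defined and the cited cross-section result applies.
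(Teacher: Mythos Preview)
Your proposal is correct and matches the paper's approach exactly: the paper states this result as one of the ``immediate corollaries'' of Theorem~\ref{sftheorem2} via Proposition~\ref{theoremA}, with no further proof given. Your identification of the ``$\rel W$'' as a typographical slip for ``$\rel T$'' is also correct, as comparison with the analogous Corollary~\ref{corollary3} confirms.
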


\index{square!projection-restriction!singular fiberings}%
\index{projection-restriction square!singular fiberings}%
\begin{theorem} Let $W$ be a vertical suborbifold of
$\Sigma$. Let $T$ be a closed fibered neighborhood in
$\partial_v\Sigma$ of $T\cap\partial_vW$, and let $S= p(T)$. Then
all four maps in the following square are locally trivial:
$$\vbox{\halign{\hfil#\hfil\quad&#&\quad\hfil#\hfil\cr
$\Diff_f(\Sigma\rel T)$&$\longrightarrow$&%
$\Imb_f(W,\Sigma\rel T)$\cr
\noalign{\smallskip}
$\mapdown{}$&&$\mapdown{}$\cr
\noalign{\smallskip}
$\Diff(\O \rel S)$&$\longrightarrow$&%
$\Imb(p(W),\O \rel S)$\rlap{\ .}\cr}}$$\par
\label{sfsquare}
\end{theorem}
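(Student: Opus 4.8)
The plan is to transcribe the proof of Theorem~\ref{square} into the singular-fibered setting, replacing each ingredient by its Seifert/orbifold analogue. Write $T=p^{-1}(S)$ and $S=p(T)$, and note that $p(W)$ is a compact suborbifold of $\O$ and that $S$ is a closed neighborhood in $\partial\O$ of $S\cap\partial\,p(W)$ (this follows from $T$ being a closed fibered neighborhood in $\partial_v\Sigma$ of $T\cap\partial_vW$). Three of the four arrows are then immediate: the top arrow $\Diff_f(\Sigma\rel T)\to\Imb_f(W,\Sigma\rel T)$ is Corollary~\ref{sfcorollary2}(i); the left vertical arrow $\Diff_f(\Sigma\rel T)\to\Diff(\O\rel S)$ is Theorem~\ref{sfproject diffs}; and the bottom arrow $\Diff(\O\rel S)\to\Imb(p(W),\O\rel S)$ is Corollary~\ref{orbcoro2}, applied with suborbifold $\mathcal{W}=p(W)$ and ambient neighborhood $L=\O$. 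Commutativity of the square is formal: restricting a fiber-preserving diffeomorphism of $\Sigma$ to $W$ and then projecting to $\O$ gives the same embedding of $p(W)$ as projecting first and then restricting to $p(W)$, as one sees on the covering level $\widetilde\Sigma\to\widetilde\O$, where it is the evident commutativity of restriction and projection, and this descends through the covering quotients $\sigma,\tau$.

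The only arrow requiring an argument is the right vertical arrow $r\colon\Imb_f(W,\Sigma\rel T)\to\Imb(p(W),\O\rel S)$, $j\mapsto\overline{j}$. Via the surjective projection homomorphism $g\mapsto\overline{g}$ studied in Theorem~\ref{sfproject diffs} (the singular analogue of Definition~\ref{def:projection}), the group $\Diff_f(\Sigma\rel T)$ acts on $\Imb(p(W),\O\rel S)$ by $g\cdot i=\overline{g}\circ i$; with respect to this action and the left composition action on $\Imb_f(W,\Sigma\rel T)$, the map $r$ is equivariant. Hence, by Proposition~\ref{theoremA}, it suffices to show that $\Imb(p(W),\O\rel S)$ admits local $\Diff_f(\Sigma\rel T)$ cross-sections.

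To construct one at a given $i\in\Imb(p(W),\O\rel S)$, first apply Theorem~\ref{orbtheoremB} (with $\mathcal{W}=p(W)$, $L=\O$) to obtain a local $\Diff(\O\rel S)$ cross-section $\chi_1\colon U\to\Diff(\O\rel S)$ at $i$, and then apply Theorem~\ref{sftheorem1} to obtain a local $\Diff_f(\Sigma\rel T)$ cross-section $\chi_2\colon V\to\Diff_f(\Sigma\rel T)$ for the $\Diff_f(\Sigma\rel T)$-space $\Diff(\O\rel S)$ at the point $\chi_1(i)$. Set $U_1=\chi_1^{-1}(V)$; for $j\in U_1$ one computes $\overline{\chi_2(\chi_1(j))}\circ i=\chi_1(j)\circ i=j$, so $\chi_2\circ\chi_1\colon U_1\to\Diff_f(\Sigma\rel T)$ is the required local cross-section at $i$. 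Feeding this back into Proposition~\ref{theoremA} gives that $r$ is locally trivial, completing the proof.

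I expect the only genuine subtlety — rather than a real obstacle — to lie in the right vertical arrow, namely checking that $\chi_2\circ\chi_1$ genuinely lands in $\Diff_f(\Sigma\rel T)$ with the boundary control $\rel T$, and that the projection $g\mapsto\overline{g}$ is continuous and surjective across the two orbifold covering quotients; but the relevant facts (the $G$-equivariance of the aligned exponential, Lemma~\ref{lift}, and the singular version of the projection setup) have already been absorbed into Theorems~\ref{sftheorem1} and~\ref{orbtheoremB}. The structural argument is otherwise word-for-word the bundle case of Theorem~\ref{square}, with all the analytic content carried by the ingredient lemmas.
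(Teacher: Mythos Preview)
Your proof is correct and is exactly the argument the paper intends: the paper does not write out a separate proof of Theorem~\ref{sfsquare}, but its proof of the bundle version (Theorem~\ref{square}) is the template, and your transcription to the singular-fibered setting---using Corollary~\ref{sfcorollary2}(i), Theorem~\ref{sfproject diffs}, and Corollary~\ref{orbcoro2} for three of the arrows, and composing the cross-sections from Theorem~\ref{orbtheoremB} and Theorem~\ref{sftheorem1} for the right vertical arrow---matches that template line for line. The concerns you flag in your final paragraph are indeed already absorbed into those ingredient results.
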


\section{Spaces of fibered structures}
\label{sec:fibered_structures}

In this section, we examine spaces of fibered structures.
\begin{definition}
Let $p\colon \Sigma\to \O$ be a singular fibering. The 
\indexdef{space of fibered structures}\textit{space of
  fibered structures}
isomorphic to $p$, 
(also called the
\indexdef{space of singular fiberings}\textit{space of singular fiberings}
isomorphic to $p$) is the space of
cosets $\Diff(\Sigma)/\Diff_f(\Sigma)$.\par
\label{def:fibered_structures}
\end{definition}

Our proof of the next theorem requires an additional condition, although we
do not know that it is necessary:
\begin{definition}\indexdef{very good!singular fibering}\indexdef{singular
fiberings!very good}
A singular fibering $p\colon \Sigma\to \O$ is called \textit{very good}
if $\widetilde{\Sigma}$ may be chosen to be compact. \par
\label{def:very_good_fibered_structure}
\end{definition}

The main result of this section is the following fibration theorem.
\begin{theorem}\index{space of singular fiberings!Frechet
structure@Fr\'echet structure}
Let $p\colon \Sigma\to \O$ be a very
good singular fibering. Then the space of fibered structures isomorphic to
$p$ is a Fr\'echet manifold locally modeled on the
quotient $\mathcal{X}_G(\widetilde{\Sigma},
T\widetilde{\Sigma})/\mathcal{A}_G(\widetilde{\Sigma},T\widetilde{\Sigma})$.
The quotient map $\Diff(\Sigma)\to \Diff(\Sigma)/\Diff_f(\Sigma)$ is a
locally trivial fibering.
\label{thm:fibered_structures}
\end{theorem}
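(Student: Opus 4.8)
The plan is to run the Palais method and reduce everything to the construction of a single local cross-section. Since $\Diff(\Sigma)$ acts transitively on $\Diff(\Sigma)/\Diff_f(\Sigma)$, it suffices by Proposition~\ref{theoremA} (and the discussion following it) to produce a local $\Diff(\Sigma)$ cross-section $\chi$ for this action at the base coset $1_\Sigma\,\Diff_f(\Sigma)$: composing $\chi$ with the quotient map $q\colon\Diff(\Sigma)\to\Diff(\Sigma)/\Diff_f(\Sigma)$ simultaneously produces the local chart exhibiting $\Diff(\Sigma)/\Diff_f(\Sigma)$ as a Fr\'echet manifold modeled on $\mathcal{X}_G(\widetilde{\Sigma},T\widetilde{\Sigma})/\mathcal{A}_G(\widetilde{\Sigma},T\widetilde{\Sigma})$ and, by equivariance of $q$, the local triviality of $q$. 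As with all the results on singular fiberings, the construction is carried out $G$-equivariantly upstairs in $\widetilde{\Sigma}$ and then pushed down; this is the only place the hypothesis that $p$ is very good is used, since it guarantees that $\widetilde{\Sigma}$ is compact, hence that $\mathcal{X}_G(\widetilde{\Sigma},T\widetilde{\Sigma})$ and $\mathcal{A}_G(\widetilde{\Sigma},T\widetilde{\Sigma})$ are separable Fr\'echet spaces and that $\TExp$ and $\TExp_a$ are defined and give local charts near~$Z$.

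I would first pin down the model space by exhibiting a concrete closed complement $\mathcal{C}$ to $\mathcal{A}_G(\widetilde{\Sigma},T\widetilde{\Sigma})$ inside $\mathcal{X}_G(\widetilde{\Sigma},T\widetilde{\Sigma})$, so that $\mathcal{C}$ is canonically isomorphic to the quotient in the statement. Choosing a $G$-equivariant metric that is a product near both $\partial_h\widetilde{\Sigma}$ and $\partial_v\widetilde{\Sigma}$, decompose $X=X_v+X_h$ into vertical and horizontal parts and form the fiberwise average of $\widetilde{p}_*X_h$ over the (compact, metrized) fibers of $\widetilde{p}$; taking the horizontal lift of this average and adding $X_v$ gives a continuous linear projection of $\mathcal{X}_G(\widetilde{\Sigma},T\widetilde{\Sigma})$ onto $\mathcal{A}_G(\widetilde{\Sigma},T\widetilde{\Sigma})$ (a vertical field projects to itself, and an aligned field is fixed because its horizontal projection already descends to $\widetilde{\O}$), and $\mathcal{C}$ is its kernel. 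Geometrically $\mathcal{C}$ parameterizes the transverse displacements of a nearby fibered structure, and will play the role of the normal bundle of a submanifold in the proof of Theorem~\ref{thm:space_of_images}.

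The core of the argument is an analogue of the cross-section construction in Theorem~\ref{thm:space_of_images}, with the foliation of $\widetilde{\Sigma}$ by fibers of $\widetilde{p}$ playing the role of the submanifold. The new ingredient is a \emph{logarithm lemma for fibered structures}: for every fibered structure $\mathcal{F}$ sufficiently close to that of $p$, recording, for each fiber of $\widetilde{p}$, the normal displacement to the nearby fiber of (the lift of) $\mathcal{F}$ --- via a fixed tubular-neighborhood identification supplied by $\Exp$ --- should assemble into a well-defined continuous $G$-equivariant map $\mathcal{F}\mapsto X(\mathcal{F})\in\mathcal{C}$ with $X(p)=Z$. One then sets $\chi(\mathcal{F})$ to be the descent to $\Sigma$ of $\TExp(X(\mathcal{F}))$, which is defined and lands in $\Diff(\Sigma)$ by Lemmas~\ref{lem:exponentiation} and~\ref{lem:aligned_exponentiation} applied to $\widetilde{\Sigma}$, and one checks --- exactly as in Theorem~\ref{thm:space_of_images} --- that $\TExp(X(\mathcal{F}))$ carries each fiber of $\widetilde{p}$ onto the matching fiber of $\mathcal{F}$, so that $\TExp(X(\mathcal{F}))$ represents $\mathcal{F}$ in the coset space and $q\circ\chi=\mathrm{id}$. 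Inverting $\chi$ yields the chart, and Proposition~\ref{theoremA} applied to the equivariant map $q$ gives the local triviality, completing the proof of Theorem~\ref{thm:fibered_structures}.

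The hard part is exactly this ``tubular neighborhood for a fibration'' step: showing that the transverse displacement of a nearby fibered structure is well-defined, continuous, $G$-equivariant, valued in $\mathcal{C}$, and that its tame exponential genuinely recovers the fibered structure --- equivalently, that $\TExp(\mathcal{C})$ meets each nearby coset of $\Diff_f(\Sigma)$ exactly once. Because there is no inverse function theorem in the Fr\'echet category, this transversality has to be established by hand in the manner of Palais, and it is here that the aligned exponential of Section~\ref{exponent} is essential: its fiber-preserving property (Definition~\ref{def:aligned} and the remarks after it) is what guarantees that factoring off the transverse-displacement part of a coset representative lands one back in $\Diff_f(\Sigma)$, so that the equivariant Logarithm Lemma for singular fiberings (Lemma~\ref{sflemmaD}) can be used to extract $X(\mathcal{F})$ continuously. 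Finally, the usual care near the corner $\partial_h\widetilde{\Sigma}\cap\partial_v\widetilde{\Sigma}$ is handled as in the boundary versions of the earlier lemmas.
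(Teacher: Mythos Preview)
Your overall architecture matches the paper's: reduce to a local $\Diff(\Sigma)$ cross-section at the identity coset via Proposition~\ref{theoremA}, work $G$-equivariantly in the compact cover $\widetilde{\Sigma}$, and read off both the Fr\'echet chart and the local triviality from the cross-section. Your splitting $\mathcal{X}_G=\mathcal{A}_G\oplus\mathcal{C}$ by fiberwise-averaging the horizontal part is also sound and is, in fact, the infinitesimal shadow of what the paper does.

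The gap is exactly where you flag it. Your ``logarithm lemma for fibered structures'' needs, for each fiber $F$ of $\widetilde{p}$, a canonical choice of \emph{which} leaf of the nearby fibered structure $\mathcal{F}$ it should be matched with; ``the normal displacement to the nearby fiber'' does not determine this, because every leaf of $\mathcal{F}$ passing near $F$ is a candidate, and different points of $F$ see different displacements. Equivalently, if you try to define $X(\mathcal{F})$ by taking the ordinary logarithm of a coset representative $h$ and projecting to $\mathcal{C}$, you must check that the result is independent of $h$; since $\TExp$ is not a homomorphism, there is no reason $\TExp(Y_1)\TExp(Y_2)^{-1}\in\Diff_f$ should force $Y_1-Y_2\in\mathcal{A}_G$. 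The paper resolves this with a concrete geometric device you do not have: for each image fiber $F'$ of $\mathcal{F}$, project it to $\widetilde{\O}$ and take the Riemannian \emph{center of mass} (Karcher~\cite{Karcher}) of $\widetilde{p}(F')$; the vertical fiber over that point is declared to be the match $V(F')$. The horizontal vector field $X$ is then defined pointwise by $\Exp_a(X(z))=n(z)$, where $n(z)$ is the point of $V(F_z')$ whose normal fiber contains $z$, and the cross-section is $\chi(\mathcal{F})=\TExp_a(X)^{-1}$. This construction manifestly depends only on the image fibers, not on a representative, and the ``very good'' hypothesis is used to ensure the projected image fibers are compact sets lying in convex balls so that the center of mass exists and is unique. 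Your averaging projection is the linearization of this at $Z$, but the integrated, representative-independent version is what is actually required.
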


\index{straightening fibers|(}Here is the basic idea of the proof. Roughly
speaking, finding a local $\Diff(\Sigma)$ cross-section for
$\Diff(\Sigma)/\Diff_f(\Sigma)$ boils down to the problem of taking an
$h\in \Diff(\Sigma)$ that carries fibers of $\Sigma$ to fibers that are
nearly vertical, and finding, for each fiber $F$ of $\Sigma$, a ``nearest''
vertical fiber to $h(F)$. It is not obvious that such a choice is uniquely
determined, but there is a way to make one when $h$ is sufficiently close
to a fiber-preserving diffeomorphism. For then each $p(h(F))$ lies a very
small open ball set in $B$, and $p(h(F))$ has a unique \textit{center of
  mass} $c_{p(h(F))}$. The natural choice for the nearest fiber to $h(F)$
is~$p^{-1}(c_{p(h(F))})$.

Before beginning the proof, we must clarify the idea of center of mass in
this context. A useful reference for this is
H.~\index{Karcher}Karcher~\cite{Karcher}, which we will follow here.

\indexdef{center of mass}Let $A$ be a measure space of volume $1$ and let
$B$ be an open ball in a compact Riemannian manifold $M$. By making its
radius small enough, we may ensure that the closure $\overline{B}$ is a
geodesically convex ball (that is, any two points in $\overline{B}$ are
connected by a unique geodesic that lies in $\overline{B}$). Let $f\colon
A\to M$ be a measurable map such that $f(A)\subset B$. Define $P_f\colon
\overline{B}\to \R$ by
\[P_f(m)=\frac{1}{2}\int_Ad(m,f(a))^2\,dA\ .\]
Various estimates on the gradient of $P_f$, detailed in~\cite{Karcher},
show that $P_f$ is a convex function that has a unique minimum in $B$, and
this minimum is defined to be the center of mass $C_f$ of~$f$. From its
definition, $C_f$ is independent of the choice of~$B$, although it is the
existence of such a $B$ that serves to ensure that it is uniquely defined.

\begin{proof}[Proof of
Theorem~\ref{thm:fibered_structures}]
Consider first the case of an ordinary bundle $p\colon E\to B$ with $E$
compact. For each $x\in E$, the fiber containing $x$ will denoted by $F_x$.
For each coset $h\Diff_f(E)$, the set of images $\{h(F_x)\}$ is independent
of the coset representative, and we will refer to these submanifolds as
``image fibers'', reserving ``fibers'' for the original fibers for
$p$. When the coset $h\Diff_f(E)$ is clear from the context, the image
fiber containing $x$ will be denoted by~$F_x'$.

Write $n$ for the dimension of $E$ and $k$ for the dimension of $B$. The
tangent bundle of $E$ has an associated bundle $G_k(TE)$ whose fiber is the
Grassmannian of $k$-planes in $\R^n$, and selecting the horizontal
$k$-plane at each point defines a section $s_0\colon E\to G_k(TE)$. The
normal subspaces for the image fibering of $h\Diff_f(E)$ determine another
section $s\colon E\to G_k(TE)$, defining a function $\Diff(E)/\Diff_f(E)\to
\Maps(E,G_k(TE))$. This function is injective, since distinct fiberings
must have different normal spaces at some points, so imbeds
$\Diff(E)/\Diff_f(E)$ into the Fr\'echet space of sections from $M$ into
$G_k(TE)$. This defines the topology on $\Diff(E)/\Diff_f(E)$. In
particular, we can speak of image fiberings as being $\Cinf$-close to
vertical, meaning that the section $s$ is $\Cinf$-close to~$s_0$.
\longpage

We will first produce local $\Diff(E)$ cross-sections, then examine the
Fr\'echet structure on $\Diff(E)/\Diff_f(E)$. Since $\Diff(E)$ acts
transitively on $\Diff(E)/\Diff_f(E)$, it is enough to produce a local
cross-section at the identity coset~$1_E\Diff_f(E)$.

For $\epsilon>0$, denote by $H_\epsilon(F_x)$ the space of horizontal
vectors in $TE|_{F_x}$ of length less than $\epsilon$ that are carried into
$E$ by the aligned exponential $\Exp_a$. By compactness, there exists an
$\epsilon_0>0$ such that for every $x\in E$, $\Exp_a$ carries
$H_{\epsilon_0}(F_x)$ diffeomorphically onto a tubular neighborhood
$N_{\epsilon_0}(F_x)$ of $F_x$ in~$E$. We may also choose $\epsilon_0$ so
that each ball in $B$ of radius at most $\epsilon_0$ has convex closure.

By compactness of $E$, there exists a neighborhood $U$ of $1_E\Diff_f(E)$
in $\Diff(E)/\Diff_f(E)$ such that for each $h\Diff_f(E)\in U$, the image
fibering of $h\Diff_f(E)$ has the following property: For each $y\in E$,
there exists a fiber $F_x$ such that $F_y'\subset N_{\epsilon_0}(F_x)$, and
moreover if $F_x$ is any such fiber, then $F_y'$ meets each normal fiber of
$N_{\epsilon_0}(F_x)$ transversely in a single point.

\index{figures!figure11@canonical straightening}%
\begin{figure}
\labellist
\pinlabel $E$ [B] at 24 127
\pinlabel $B$ [B] at 24 26
\pinlabel $y$ [B] at 34 62
\pinlabel $x=h(y)$ [B] at 82 117
\pinlabel $F'_x=h(F_y)$ [B] at 85 130
\pinlabel $V(F'_x)$ [B] at 107 99
\pinlabel $F_y$ [B] at 24 82
\pinlabel $p(F'_x)$ [B] at 78 23
\endlabellist
\begin{center}
\includegraphics[width=50ex]{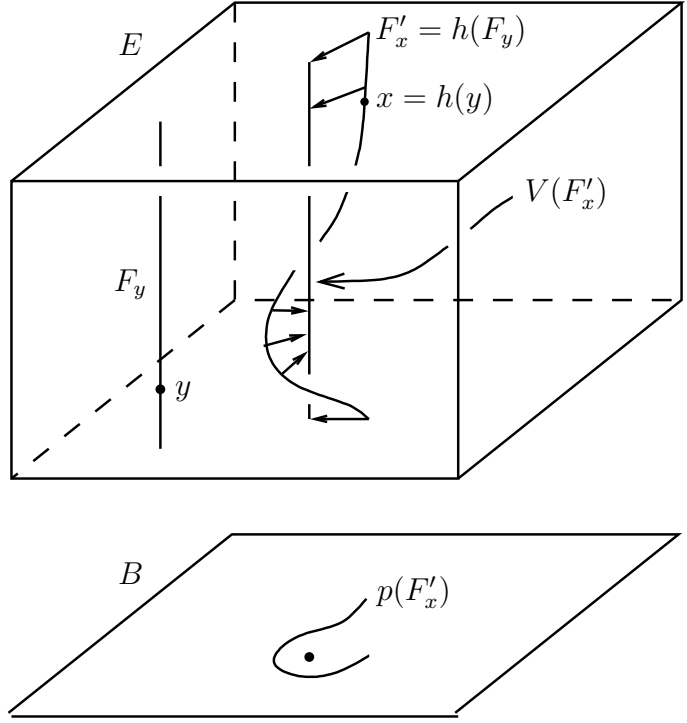}
\caption{Canonical straightening of a nearly vertical fiber. The dot in $B$
is the center of mass of the projection $p(F_x')$ of the image fiber
$F_x'$. The inverse image of the center of mass is the straightened fiber
$V(F_x')$, and some of the horizontal vector field $X$ is shown.}
\label{fig:straightening}
\end{center}
\end{figure}
Now we will set up the center-of-mass construction, illustrated in
Figure~\ref{fig:straightening}. Fix a coset
$h\Diff_f(E)$ and an image fiber $F_x'$, where $x=h(y)$ for some $y$. Let
$dF_x'$ be the volume form on $F_x'$ obtained from restriction of the
Riemannian metric on $TE$ to $TF_x'$, and define a measure $\mu_{F_x'}$ on
$F_x'$ of volume~$1$ by $\mu_{F_x'}(U)=\Vol(U)/\Vol(F_x')$.

Assume now that $h\,\Diff_f(E)$ is close enough to vertical that for each
image fiber $F_x'$, $p(F_x')$ lies in some $\epsilon_0$-ball.
The center of mass of $(F_x',m_{F_x'})$ is then defined, and we denote its
inverse image, a fiber of $p$, by $V(F_x')$.

For each $z\in E$, let $n(z)$ be the point of $V(F_z')$ such that the
normal fiber of $N_{\epsilon_0}(V(F_z'))$ at $n(z)$ contains $z$. There is
a unique horizontal vector $X(z)\in T_zE$ such that $\Exp_a(X(z))=n(z)$. To
see that the resulting horizontal vector field $X$ is smooth, we first
observe that changes of $z$ along the image fiber simply correspond to
changes of $n(z)$ along the fiber $V(F_z')$. As $z$ moves from image fiber
to image fiber, the projected images in $B$ of the image fibers are the
images of the original fibers of $E$ under the smooth map $p\circ h$. The
corresponding centers of mass change smoothly, and the remainder of the
construction presents no danger of loss of smoothness. Precomposing $h$ by
a fiber-preserving diffeomorphism does not change the image fibers, so
$X(h\Diff_f(E))$ is well-defined. If $h$ is fiber-preserving, then
$V(F_z')=F_z'$, $n(z)=z$, and $X(h)=Z$.

\newpage
For each image fibering $h\Diff_f(E)$ in some $\Cinf$-neighborhood
$U\Diff_f(E)$ of $1_E\Diff_f(E)$, we have defined a horizontal vector field
$X(h\Diff_f(E))$, for which applying the tame aligned exponential defines a
smooth map $g_{h\Diff_f(E)}$ that moves each image fiber onto a vertical
fiber. Since the coset $1_E\Diff_f(E)$ determines the zero vector field,
$g_{\Diff_f(E)}=1_E$. So by reducing the size of $U$, if necessary, each
$g_{h\Diff_f(E)}$ will be a diffeomorphism. A local $\Diff(E)$
cross-section $\chi\colon U\Diff_f(E)\to \Diff(E)$ is then defined by
sending $h\Diff_f(E)$ to $g_{h\Diff_f(E)}^{-1}$.

The aligned exponential has analogous local diffeomorphism properties to
the ordinary exponential, so we may use it to define a local chart for the
Fr\'echet manifold structure on $\Diff(E)$ at $1_E$, say $\TExp_a\colon
V\to \Diff(E)$, where $V$ is a neighborhood of $Z$ in
$\mathcal{X}(E,TE)$. Our cross-section $\chi\colon U\Diff_f(E)\to \Diff(E)$
takes $\Diff_f(E)$ to $1_E$, so by choosing $U$ small enough, we may assume
that $\chi$ has image in $V$. The local cross-section shows that every
fibering contained in $U\Diff_f(E)$ is the image of the vertical fibering
under a diffeomorphism $\chi(U\Diff_f(E))$ in $V$. For $X$ near $Z$, at
least, $\TExp_a(X)\in \Diff_f(E)$ if and only if $X\in \mathcal{A}(E,TE)$,
so the chart on $V$ descends to a chart for $\Diff(E)/\Diff_f(E)$, defined
on a neighborhood of $Z$ in $\mathcal{X}(E,TE)/\mathcal{A}(E,TE)$.

For the Fr\'echet space structure on $\mathcal{X}(E,TE)/\mathcal{A}(E,TE)$,
recall that the sections of a vector bundle over a smooth manifold form a
Fr\'echet space \cite[Example 1.1.5]{Hamilton}, and that a closed subspace
or quotient by a closed subspace of a Fr\'echet space is a Fr\'echet space
\cite[Section 1.2]{Hamilton}. As $\mathcal{X}(E,TE)$ is a closed subspace
of the space of all sections of $TE$, it is Fr\'echet. Since
$\mathcal{A}(E,TE)$ is a closed subspace,
$\mathcal{X}(E,TE)/\mathcal{A}(E,TE)$ is Fr\'echet as well.

In the case of a very good singular fibering $p\colon \Sigma\to \O$, we
carry out the previous construction working equivariantly in the bundle
$\widetilde{\Sigma}\to \widetilde{\O}$, which may be chosen with
$\widetilde{\Sigma}$ compact. Since we are using a $G$-equivariant
Riemannian metric on $\widetilde{\Sigma}$ and an $H$-equivariant one on
$\widetilde{\O}$, and $\widetilde{p}$ is equivariant, all parts of the
construction proceed equivariantly. Because $\widehat{\Sigma}$ is compact,
the image fibers of a $G$-equivariant diffeomorphism of
$\widetilde{\Sigma}$ will project under $\widetilde{p}$ to compact sets,
which are small when the fibers are nearly vertical, and consequently the
centers of mass will still be well-defined.
\end{proof}\index{straightening fibers|)}

\section{Restricting to the boundary or the basepoint}
\label{basept}

\index{suborbifold!of the boundary}Our restriction theorems deal with the
case when the suborbifold is properly embedded. By a simple doubling trick,
we can also extend to restriction to suborbifolds of the boundary.

\begin{proposition} Let $\Sigma\to\O $ be
a singular fibering. Let $S$ be a suborbifold of $\partial\O $,
and let $T= p^{-1}(S)$. Then
\begin{enumerate}
\item[{\rm(a)}] $\Imb(S,\partial\O )$ admits local
$\Diff(\O )$ cross-sections.
\item[{\rm(b)}] $\Imb_f(T,\partial_v{\Sigma})$ admits local
$\Diff_f(\Sigma)$ cross-sections.
\end{enumerate}
\label{restrict to boundary}
\end{proposition}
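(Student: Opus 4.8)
The plan is to deduce both statements from the restriction theorems already proved for closed orbifolds (Theorem~\ref{orbtheoremB}) and for singular fiberings (Theorem~\ref{sftheorem2}), by a two-stage extension: first extend an embedding of $S$, respectively $T$, to a diffeomorphism of the boundary face $\partial\O$, respectively $\partial_v\Sigma$, and then push that diffeomorphism off the boundary through a collar. By Proposition~\ref{prop:inclusion} it suffices in each case to build a local cross-section at the inclusion map.

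For (a): since $\O$ is compact, $\partial\O$ is a closed orbifold and $S$ is a compact suborbifold of it, so Theorem~\ref{orbtheoremB} (applied to each component of $\partial\O$, with ambient orbifold $\partial\O$, suborbifold $S$, empty boundary control) gives a neighborhood $U_1$ of $i_S$ in $\Imb(S,\partial\O)$ and a continuous $\chi_1\colon U_1\to\Diff(\partial\O)$ with $\chi_1(i_S)=1_{\partial\O}$ and $\chi_1(j)\vert_S=j$. It remains to produce a continuous ``collar extension'' $\chi_2$, defined near $1_{\partial\O}$ in $\Diff(\partial\O)$ with values in $\Diff(\O)$, satisfying $\chi_2(g)\vert_{\partial\O}=g$ and $\chi_2(1_{\partial\O})=1_{\O}$; then $\chi_2\circ\chi_1$ is the required cross-section. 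For this, fix an equivariant collar $c\colon\partial\O\times\I\hookrightarrow\O$ of the boundary and a smooth $\rho\colon\I\to\I$ with $\rho(0)=1$ and $\rho\equiv 0$ near $1$; for $g$ close to $1_{\partial\O}$ write $g=\TExp(Y)$ via the identity chart on $\Diff(\partial\O)$, and let $\chi_2(g)$ be the identity outside the collar and $c(y,t)\mapsto c(\TExp(\rho(t)Y)(y),t)$ on it. For $g$ close enough to the identity each $\TExp(sY)$, $0\le s\le 1$, is a diffeomorphism of $\partial\O$, so $\chi_2(g)$ is a diffeomorphism of $\O$ (it matches the identity near the outer end of the collar because $\rho$ vanishes there), it restricts to $g$ on $\partial\O$ since $\rho(0)=1$, it is continuous in $g$, and $\chi_2(1_{\partial\O})=1_\O$.

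For (b): the restriction $p\vert\colon\partial_v\Sigma\to\partial\O$ is itself a singular fibering over the closed orbifold $\partial\O$ --- its orbifold cover $\partial_v\widetilde\Sigma=\widetilde p^{-1}(\partial\widetilde\O)$ is a manifold, $\widetilde p\vert$ is surjective, locally trivial, with path-connected fibers, and the $G$- and $H$-actions restrict compatibly --- it has empty vertical boundary, and $T=p^{-1}(S)$ is a compact vertical suborbifold of it. Theorem~\ref{sftheorem2}(i), applied to this fibering with vertical suborbifold $T$ and empty boundary control, gives $\chi_1\colon U_1\to\Diff_f(\partial_v\Sigma)$ near $i_T$ with $\chi_1(i_T)=1$ and $\chi_1(j)\vert_T=j$. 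For the collar extension, take the collar of $\partial_v\Sigma$ in $\Sigma$ to be $c'=p^{-1}(c)\colon\partial_v\Sigma\times\I\hookrightarrow\Sigma$ of the collar $c$ above, so that it is $p$-compatible; for $g$ near $1$ write $g=\TExp_a(X)$ with $X$ a ($G$-equivariant) aligned vector field near $Z$, and set $\chi_2(g)$ to be the identity off the collar and $c'(z,t)\mapsto c'(\TExp_a(\rho(t)X)(z),t)$ on it. Since $\rho(t)X$ is again aligned, each $\TExp_a(\rho(t)X)$ is a fiber-preserving diffeomorphism of $\partial_v\Sigma$, and since $c'$ covers $c$, the map $\chi_2(g)$ is a fiber-preserving diffeomorphism of $\Sigma$; as before it restricts to $g$ on $\partial_v\Sigma$, is continuous in $g$, and sends $1$ to $1_\Sigma$. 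Then $\chi_2\circ\chi_1$ is the desired local cross-section, and Proposition~\ref{prop:inclusion} finishes both parts, since $i(S)$ and $j(T)$ are again compact (vertical) suborbifolds, so the construction applies verbatim at each point.

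The construction is routine; the main obstacle I expect is foundational bookkeeping: verifying that $\partial\O$ is an orbifold and $p\vert\colon\partial_v\Sigma\to\partial\O$ a singular fibering in the precise sense of Sections~\ref{orbifold} and~\ref{sfiber} (the boundary of a manifold orbifold cover must be a manifold cover of the boundary orbifold with the same covering group, so that equivariant metrics and functions, and in particular the charts $\TExp$ and $\TExp_a$ at the identity on $\Diff(\partial\O)$ and $\Diff_f(\partial_v\Sigma)$, are all available), and checking that the collars together with the product-near-boundary structure on $\Sigma$ can be chosen so that $\chi_2(g)$ is a genuine diffeomorphism of the manifold-with-corners $\Sigma$ that preserves $\partial_h\Sigma$ and lifts $G$-equivariantly upstairs.
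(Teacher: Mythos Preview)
Your approach is correct and follows a genuinely different route from the paper's. Both proofs share the same two-stage strategy---first extend an embedding of $S$ (respectively $T$) to a diffeomorphism of $\partial\O$ (respectively $\partial_v\Sigma$) via Theorem~\ref{orbtheoremB} (respectively Theorem~\ref{sftheorem2}), then extend that boundary diffeomorphism to one of $\O$ (respectively $\Sigma$)---but they diverge in the second stage. You carry out a direct collar extension: write $g=\TExp(Y)$ in the identity chart, and damp $Y$ off through a collar via a cutoff $\rho(t)$. The paper instead uses a \emph{doubling trick}: it forms the double $\Delta$ of $\O$ along $\partial\O$, regards $\Diff(\partial\O)$ as sitting inside $\Imb(\partial\O,\Delta)$, applies Theorem~\ref{orbtheoremB} a second time to obtain local $\Diff(\Delta)$ cross-sections, and then restricts to the copy of $\O\subset\Delta$ (after arranging that the cross-section values preserve the two sides); for (b) it doubles $\Sigma$ along $\partial_v\Sigma$ and invokes Theorem~\ref{sftheorem2} in the same way. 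The doubling approach reduces entirely to theorems already in hand, with no new construction, at the price of checking that the double is again an orbifold (respectively a singular fibering) in the paper's sense. Your collar argument is more hands-on but self-contained and avoids building the double; the foundational points you flag---that $p\vert\colon\partial_v\Sigma\to\partial\O$ is itself a singular fibering, that the collar and the charts can be taken equivariantly upstairs, and that the construction respects $\partial_h\Sigma$---are indeed the only things to verify, and are routine given Lemma~\ref{covering isometries} and the product-near-boundary metrics set up in Section~\ref{sfiber}.
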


\begin{proof} We first show that $\Diff(\partial\O )$
admits local $\Diff(\O )$ cross-sections. Let $\Delta$ be the double of $\O
$ along $\partial\O $, and regard $\O $ as a suborbifold of $\Delta$ by
identifying it with one of the two copies of $\O $ in $\Delta$. By
Theorem~\ref{orbtheoremB}, $\Imb(\partial\O , \Delta)$ admits local
$\Diff(\Delta)$ cross-sections. We may regard $\Diff(\partial\O )$ as a
subspace of $\Imb(\partial\O ,\Delta)$. Suppose that $\chi\colon
U\to\Diff(\Delta)$ is a local cross-section at a point in $\Imb(\partial\O
,\Delta)$ that lies in $\Diff(\partial\mathcal{O})$. Elements of
$\Diff(\Delta,\partial \O)$ that interchange the sides of $\O$ are far from
elements that preserve the sides, so by making $U$ smaller if necessary, we
may assume that all elements $f\in U$ such that $\chi(f)$ lies in
$\Diff(\Delta,\O)$ either preserve the sides of $\O$ or interchange them.
In the latter case, we postcompose $\chi$ with the diffeomorphism of
$\Delta$ that interchanges the two copies of $\O$, to assume that all such
elements preserve the sides. Then, sending $g$ to $\chi(g)\vert_{\O}$
defines a local $\Diff(\O )$ cross-section on $U\cap\Diff(\partial\O )$.

By Proposition~\ref{prop:inclusion}, for (a) it suffices to produce local
cross-sections at the inclusion $i_S$. By Theorem~\ref{orbtheoremB},
there is a local $\Diff(\partial\O )$ cross-section $\chi_1$ for
$\Imb(S,\partial \O )$ at $i_S$. Let $\chi_2$ be a local
$\Diff(\O )$ cross-section for $\Diff(\partial \O )$ at
$\chi_1(i_S)$. On a neighborhood $U$ of $i_S$ in
$\Imb(S,\partial\O )$ small enough so that $\chi_2\chi_1$ is
defined, the composition is the desired $\Diff(\O )$
cross-section. For if $j\in U$, then
$\chi_2(\chi_1(j))\circ i_S=\chi_2(\chi_1(j))\circ \chi_1(i_S)\circ i_S=
\chi_1(j)\circ i_S = j$.

The proof of (b) is similar. Double $\Sigma$ along $\partial_v\Sigma$
and apply Theorem~\ref{sftheorem2}, obtaining local $\Diff_f(\Sigma)$
cross-sections for $\Diff_f(\partial_v\Sigma)$. Apply it again to
produce local $\Diff_f(\partial_v\Sigma)$ cross-sections for
$\Imb_f(T,\partial_v\Sigma)$. Their composition, where defined, is a
local $\Diff_f(\Sigma)$ cross-section for
$\Imb_f(T,\partial_v\Sigma)$.
\end{proof}

An immediate consequence is
\index{Restriction Theorem!to boundary of orbifold}%
\begin{corollary} For a singular fibering $\Sigma\to\O $,
let $S$ be a suborbifold of $\partial\O $, and let
$T=p^{-1}(S)$. Then $\Diff(\O )\to \Imb(S,\partial\O )$ and
$\Diff_f(\Sigma)\to \Imb_f(T,\partial_v\Sigma)$ are locally trivial.
In particular, $\Diff(\O )\to\Diff(\partial\O )$ and
$\Diff_f(\Sigma)\to\Diff_f(\partial_v\Sigma)$ are locally trivial.\par
\label{special1}
\end{corollary}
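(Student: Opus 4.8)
The plan is to deduce the corollary formally from Proposition~\ref{restrict to boundary} using the Palais triviality criterion Proposition~\ref{theoremA}. First I would fix the relevant actions: $\Diff(\O)$ acts on itself by left translation and on $\Imb(S,\partial\O)$ by post-composition, via the restriction homomorphism $\Diff(\O)\to\Diff(\partial\O)$ followed by composition of embeddings; with respect to these, the restriction map $\Diff(\O)\to\Imb(S,\partial\O)$ sending $h$ to $(h\vert_{\partial\O})\vert_S$ is equivariant, since $(gh)\vert_S=(g\vert_{\partial\O})\circ(h\vert_S)$. Proposition~\ref{restrict to boundary}(a) asserts exactly that $\Imb(S,\partial\O)$ admits local $\Diff(\O)$ cross-sections for this action, so Proposition~\ref{theoremA} applies and shows $\Diff(\O)\to\Imb(S,\partial\O)$ is locally trivial. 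Running the same argument with $\Diff_f(\Sigma)$ in place of $\Diff(\O)$ (acting on $\partial_v\Sigma=p^{-1}(\partial\O)$, a union of fibers), $\Imb_f(T,\partial_v\Sigma)$ in place of $\Imb(S,\partial\O)$, and Proposition~\ref{restrict to boundary}(b) in place of part~(a), gives local triviality of $\Diff_f(\Sigma)\to\Imb_f(T,\partial_v\Sigma)$.

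For the ``in particular'' statements I would specialize to $S=\partial\O$, whence $T=p^{-1}(\partial\O)=\partial_v\Sigma$ by the defining convention of Section~\ref{sfiber}. Since $\partial\O$ is codimension zero in itself with empty frontier, an element of $\Imb(\partial\O,\partial\O)$ is, by Definition~\ref{def:suborbifolds}, a self-embedding that extends to an orbifold diffeomorphism of $\partial\O$, i.e.\ an element of $\Diff(\partial\O)$; thus $\Imb(\partial\O,\partial\O)=\Diff(\partial\O)$, and similarly $\Imb_f(\partial_v\Sigma,\partial_v\Sigma)=\Diff_f(\partial_v\Sigma)$. So the two maps just shown to be locally trivial are precisely $\Diff(\O)\to\Diff(\partial\O)$ and $\Diff_f(\Sigma)\to\Diff_f(\partial_v\Sigma)$.

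There is no genuine obstacle here: all the substance --- the construction of the local cross-sections, where the doubling trick of Proposition~\ref{restrict to boundary} and, beneath it, the orbifold and singular-fibered restriction theorems are used --- is already in place. The only things to verify are the bookkeeping identifying the intended $\Diff(\O)$- and $\Diff_f(\Sigma)$-actions and the equivariance of the restriction maps under them, and these are routine. The one point worth a moment's attention is that ``admits local cross-sections'' in Proposition~\ref{restrict to boundary} is to be read with respect to the same action under which the restriction map is equivariant --- which it is.
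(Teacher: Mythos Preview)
Your argument is correct and is exactly the intended one: the paper simply states the corollary as ``an immediate consequence'' of Proposition~\ref{restrict to boundary}, meaning via Proposition~\ref{theoremA} applied to the equivariant restriction maps, just as you spell out. Your handling of the ``in particular'' case by taking $S=\partial\O$ is also what is meant.
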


Another consequence is
\begin{corollary} Let $\mathcal{W}$ be a suborbifold of
$\mathcal{O}$. Then the restriction 
$\Imb(\mathcal{W},\O)\to\Imb(\mathcal{W}\cap \partial\mathcal{O},\partial\O )$ is locally
trivial.
\label{special2}
\end{corollary}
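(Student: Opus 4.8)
The plan is to exhibit the restriction map $r\colon \Imb(\mathcal{W},\O)\to\Imb(\mathcal{W}\cap\partial\O,\partial\O)$, $r(j)=j|_{\mathcal{W}\cap\partial\O}$, as a $\Diff(\O)$-equivariant map into a space that admits local $\Diff(\O)$ cross-sections, and then invoke Proposition~\ref{theoremA}. First I would dispose of the routine points. Whether $\mathcal{W}$ is properly embedded or codimension zero, our standing hypothesis that suborbifolds meet the collar $\partial\O\times\I$ in $\I$-fibers guarantees that $\mathcal{W}\cap\partial\O$ is a suborbifold of $\partial\O$ in the sense of Definition~\ref{def:suborbifolds}. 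Any $j\in\Imb(\mathcal{W},\O)$ extends to a diffeomorphism of $\O$ (Definition~\ref{def:suborbifolds}), and that diffeomorphism carries $\partial\O$ to $\partial\O$, so $j|_{\mathcal{W}\cap\partial\O}$ indeed belongs to $\Imb(\mathcal{W}\cap\partial\O,\partial\O)$; and $r$ is continuous because restriction of smooth maps is continuous in the $\Cinf$-topology.

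Next I would record the equivariance. Let $\Diff(\O)$ act on $\Imb(\mathcal{W},\O)$ by $g\cdot j=g\circ j$, and on $\Imb(\mathcal{W}\cap\partial\O,\partial\O)$ through the restriction homomorphism $\Diff(\O)\to\Diff(\partial\O)$, by $g\cdot k=(g|_{\partial\O})\circ k$. Then $r(g\circ j)=(g\circ j)|_{\mathcal{W}\cap\partial\O}=(g|_{\partial\O})\circ(j|_{\mathcal{W}\cap\partial\O})=g\cdot r(j)$, so $r$ is $\Diff(\O)$-equivariant.

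The one piece of content is that $\Imb(\mathcal{W}\cap\partial\O,\partial\O)$ admits local $\Diff(\O)$ cross-sections --- that is, cross-sections valued in the full diffeomorphism group of $\O$, not merely in $\Diff(\partial\O)$. This is exactly Proposition~\ref{restrict to boundary}(a) with $S=\mathcal{W}\cap\partial\O$. Granting it, Proposition~\ref{theoremA} applied to the equivariant map $r$ gives that $r$ is locally trivial, which is the assertion. Thus the corollary is an immediate consequence of the orbifold restriction theory of Section~\ref{orbifold} together with the doubling argument of Section~\ref{basept}.

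The proof therefore contains no genuine obstacle: the only subtle point --- extending a diffeomorphism of $\partial\O$ defined near the inclusion of $\mathcal{W}\cap\partial\O$ to a diffeomorphism of all of $\O$ restricting to it --- is already handled inside Proposition~\ref{restrict to boundary}, where it is produced by doubling $\O$ along $\partial\O$ and applying Theorem~\ref{orbtheoremB}. The main thing I would be careful to get right in writing is simply that the $\Diff(\O)$-action on $\Imb(\mathcal{W}\cap\partial\O,\partial\O)$ used here is the one through which $r$ is equivariant and for which Proposition~\ref{restrict to boundary}(a) supplies cross-sections, so that the hypotheses of Proposition~\ref{theoremA} are literally met.
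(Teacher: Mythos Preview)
Your proof is correct and follows essentially the same approach as the paper's. The only cosmetic difference is that you invoke Proposition~\ref{restrict to boundary}(a) directly to obtain local $\Diff(\O)$ cross-sections for $\Imb(\mathcal{W}\cap\partial\O,\partial\O)$, whereas the paper unpacks that proposition into its two constituent steps (Theorem~\ref{orbtheoremB} for $\Diff(\partial\O)$ cross-sections, then lifting to $\Diff(\O)$) before composing; in both cases the conclusion then comes from Proposition~\ref{theoremA} applied to the equivariant restriction map.
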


\begin{proof} By Theorem~\ref{orbtheoremB}, $\Imb(\mathcal{W}\cap
\partial\O ,\partial\O )$ admits local $\Diff(\partial\mathcal{O})$
cross-sections, and by Proposition~\ref{restrict to boundary},
$\Diff(\partial\O )$ admits local $\Diff(\O )$ cross-sections. Composing
them gives local $\Diff(\O )$ cross-sections for
$\Imb(\mathcal{W}\cap\partial\O ,\partial\mathcal{O})$.
\end{proof}

\begin{corollary} Let $W$ be a vertical suborbifold of
$\Sigma$. Then the restriction $\Imb_f(W,\Sigma)\to \Imb_f(W\cap
\partial_v\Sigma,\partial_v\Sigma)$ is locally trivial.
\label{special3}
\end{corollary}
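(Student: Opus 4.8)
The plan is to follow the template of Corollary~\ref{special2}, producing the fiber-preserving analogue of that argument. First I would check that the restriction map $\Imb_f(W,\Sigma)\to\Imb_f(W\cap\partial_v\Sigma,\partial_v\Sigma)$, $j\mapsto j|_{W\cap\partial_v\Sigma}$, is well-defined: an element of $\Imb_f(W,\Sigma)$ extends to a fiber-preserving diffeomorphism of $\Sigma$, so it carries $W\cap\partial_v\Sigma$ into $\partial_v\Sigma$, and its restriction extends to a fiber-preserving diffeomorphism of $\partial_v\Sigma$. The group $\Diff_f(\Sigma)$ acts on the source by postcomposition and on the target by first restricting an element to $\partial_v\Sigma$ and then postcomposing, and the restriction map is equivariant for these actions. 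Hence by Proposition~\ref{theoremA} it suffices to produce local $\Diff_f(\Sigma)$ cross-sections for $\Imb_f(W\cap\partial_v\Sigma,\partial_v\Sigma)$.

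Next I would observe that $\partial_v\Sigma\to\partial\O$ is itself a singular fibering in the sense of Definition~\ref{def:singular_fibering}: the lifts $\partial_v\widetilde\Sigma$ and $\partial\widetilde\O$ are manifolds, the covering maps $\sigma$ and $\tau$ restrict to regular orbifold coverings with the same groups $G$ and $H$, and $\widetilde p$ restricts to a surjective, locally trivial map with path-connected fibers; both total space and base are compact since $\Sigma$ is. Because $W$ is vertical we may write $W=p^{-1}(p(W))$, so $W\cap\partial_v\Sigma=p^{-1}(p(W)\cap\partial\O)$ is a union of fibers, that is, a vertical suborbifold of $\partial_v\Sigma$. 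Applying Theorem~\ref{sftheorem2}(i) to this singular fibering and this vertical suborbifold, with the boundary-control subset taken to be empty, gives local $\Diff_f(\partial_v\Sigma)$ cross-sections for $\Imb_f(W\cap\partial_v\Sigma,\partial_v\Sigma)$.

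Then I would invoke Proposition~\ref{restrict to boundary}(b), or rather the doubling construction in its proof, which produces local $\Diff_f(\Sigma)$ cross-sections for $\Diff_f(\partial_v\Sigma)$ by doubling $\Sigma$ along $\partial_v\Sigma$, applying Theorem~\ref{sftheorem2} in the double, and discarding the diffeomorphisms that interchange the two copies of $\Sigma$. Composing a local $\Diff_f(\partial_v\Sigma)$ cross-section for $\Imb_f(W\cap\partial_v\Sigma,\partial_v\Sigma)$ with a local $\Diff_f(\Sigma)$ cross-section for $\Diff_f(\partial_v\Sigma)$, on the overlap of their domains and re-based at the inclusion, yields the required local $\Diff_f(\Sigma)$ cross-sections, exactly as in Corollary~\ref{special2}; Proposition~\ref{theoremA} then completes the argument.

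I do not expect a genuine obstacle here: the statement is a direct transcription of the already-established base-orbifold case into the fiber-preserving setting, and every ingredient---the equivariance reduction via Proposition~\ref{theoremA}, the singular-fibered restriction theorem~\ref{sftheorem2}, and the doubling trick of Section~\ref{basept} for restricting to the vertical boundary---is available. The only points deserving a sentence of care are verifying that $\partial_v\Sigma\to\partial\O$ honestly satisfies Definition~\ref{def:singular_fibering} and the minor base-point bookkeeping in the composition of the two cross-sections, both handled precisely as in the proofs of Proposition~\ref{restrict to boundary} and Corollary~\ref{special2}.
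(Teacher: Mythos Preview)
Your proposal is correct and follows essentially the same approach as the paper. The paper's proof is more terse: it simply observes that the restriction map is $\Diff_f(\Sigma)$-equivariant and then cites Proposition~\ref{restrict to boundary}(b) directly to obtain local $\Diff_f(\Sigma)$ cross-sections for $\Imb_f(W\cap\partial_v\Sigma,\partial_v\Sigma)$, whereas you unpack the content of that proposition (the doubling trick plus Theorem~\ref{sftheorem2}) inline.
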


\begin{proof} The map is $\Diff_f(\Sigma)$-equivariant, and
Proposition~\ref{restrict to boundary}(b) shows that
$\Imb_f(W\cap\partial_v\Sigma,\partial_v\Sigma)$ admits local
$\Diff_f(\Sigma)$ cross-sections.
\end{proof}

Some applications of the fibration $\Diff(M)\to\Imb(V,M)$ concern the case
when the submanifold is a single point. Since in the fibered case a single
point is not usually a vertical submanifold, this case is not directly
covered by our previous theorems. The next proposition allows nonvertical
suborbifolds that are contained in a single fiber, so applies when the
submanifold is a single point. To set notation, let $p\colon \Sigma\to \O $
be a singular fibering. Let $P$ be a (properly-imbedded) suborbifold of
$\Sigma$ which is contained in a single fiber~$F$. Let $T$ be a fibered
closed subset of $\partial_v\Sigma$ which does not meet $F$. By
$\Imb_t(P,\Sigma-T)$ we denote the orbifold embeddings whose image is
contained in a single fiber of~$\Sigma-T$, and which extend to elements of
$\Diff_f(\Sigma\rel T)$.

\begin{proposition} Let $P$ be a suborbifold of $\Sigma$ which is
contained in a single fiber~$F$. Let $T$ be a fibered closed subset of
$\partial_v\Sigma$, which does not meet $F$. Then $\Imb_t(P,\Sigma-T)$
admits local $\Diff_f(\Sigma\rel T)$ cross-sections.\par
\label{restrict to basepoint}
\end{proposition}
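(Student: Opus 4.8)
The plan is to reduce this to the restriction theorems already proven for vertical suborbifolds, by replacing the non-vertical suborbifold $P\subset F$ with the vertical suborbifold $F$ itself, and then composing cross-sections. The key observation is that an element $j\in\Imb_t(P,\Sigma-T)$ carries $P$ into \emph{some} fiber $F'$ of $\Sigma-T$; the natural first move is to record which fiber, i.e.\ to factor the data of $j$ as ``a diffeomorphism of the base carrying $p(F)$ to $p(F')$'' together with ``an embedding of $P$ into $\Sigma$ whose image is a fixed fiber''. The honest way to produce a cross-section is to first straighten the target fiber back to $F$, then apply the fiberwise restriction theorem inside that one fiber.

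Here are the steps I would carry out. First, since $P$ is contained in the single fiber $F$, and $F$ is a compact vertical suborbifold disjoint from $T$, Theorem~\ref{sftheorem2}(i) (or its relative form with the control $\rel T$) gives a local $\Diff_f(\Sigma\rel T)$ cross-section $\chi_1$ for $\Imb_f(F,\Sigma\rel T)$ at the inclusion $i_F$. Second, there is a natural map $q\colon \Imb_t(P,\Sigma-T)\to \Imb_f(F,\Sigma\rel T)$ defined near $i_P$: given $j$ close to $i_P$, its image lies in a fiber $F_j$ close to $F$, and $p\circ j$ determines an embedding $\overline{\jmath}$ of $p(F)$ into $\O$ close to the inclusion; using the Equivariant Logarithm and Extension Lemmas (Lemmas~\ref{orblogarithm}, \ref{orbextension}) as in the proof of Theorem~\ref{sftheorem1}, lift $\overline{\jmath}$ to a fiber-preserving diffeomorphism $g_j$ of $\Sigma$ (supported away from $T$) carrying $F$ to $F_j$, and set $q(j)=g_j\circ i_F\in\Imb_f(F,\Sigma\rel T)$, with $q$ continuous and $q(i_P)=i_F$. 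Third, $\chi_1\circ q$ is then a continuous map into $\Diff_f(\Sigma\rel T)$ defined near $i_P$ with $\chi_1(q(i_P))=1_\Sigma$, and $\chi_1(q(j))$ carries $F$ onto $F_j$; post-composing $j$ by $\chi_1(q(j))^{-1}$ yields an embedding of $P$ whose image lies in $F$ itself, i.e.\ an element of $\Imb_t(P,F\rel\varnothing)$ close to $i_P$. Fourth, apply the restriction theorem \emph{inside the single fiber $F$}: $F$ is a compact orbifold (with the restricted metric), $P$ is a properly embedded suborbifold of $F$, so Theorem~\ref{orbtheoremB} gives a local $\Diff(F)$ cross-section $\chi_2$ for $\Imb(P,F)$ at $i_P$; any diffeomorphism of $F$ extends, via the vertical Extension and Exponentiation Lemmas (working in $\Diff_v(\Sigma\rel T)\subset\Diff_f(\Sigma\rel T)$, using that $F$ is disjoint from $T$), to a fiber-preserving diffeomorphism of $\Sigma$ that is the identity on $T$. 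Composing this extension-of-$\chi_2$ with $\chi_1\circ q$ gives the desired local $\Diff_f(\Sigma\rel T)$ cross-section at $i_P$; Proposition~\ref{prop:inclusion} then upgrades this to all of $\Imb_t(P,\Sigma-T)$.

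The main obstacle is the third/fourth step: verifying that the ``straighten the target fiber, then restrict within the fiber'' composition is genuinely continuous and genuinely lands in $\Imb_t(P,\Sigma-T)$ rather than merely in $\Imb(P,\Sigma)$. Specifically, one must check that after applying $\chi_1(q(j))^{-1}$ the image of $j$ lies \emph{exactly} in the fiber $F$ (so that $\chi_2$ applies), which requires that $q(j)$ carry $F$ precisely \emph{onto} $F_j$, not merely near it --- this is where the precise form of the fiber-preserving cross-section $\chi_1$ matters. One must also check that the extension of $\chi_2(\,\cdot\,)$ from $\Diff(F)$ to $\Diff_f(\Sigma\rel T)$ can be made continuous and canonical; this is exactly the vertical version of the Extension Lemma~\ref{sflemmaC} applied with $W=F$, together with Lemma~\ref{lem:aligned_exponentiation}, so no new machinery is needed, but the bookkeeping with the control $\rel T$ (using that $T$ is a fibered closed set disjoint from $F$, hence from a whole fibered neighborhood of $F$) has to be done carefully. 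Everything else is a routine assembly of the lemmas already established in this chapter.
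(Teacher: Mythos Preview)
Your approach is correct and essentially the same as the paper's: both straighten the target fiber $F_j$ back to $F$ via a fiber-preserving diffeomorphism, apply the orbifold restriction theorem inside $F$, and then extend the resulting element of $\Diff(F)$ to $\Diff_f(\Sigma\rel T)$. The paper streamlines your steps~1--2 by obtaining the straightening map directly as a composition of cross-sections for $\Imb(p(P),\O-S)$ (Theorem~\ref{orbtheoremB}) and $\Diff(\O\rel S)$ (Theorem~\ref{sftheorem1})---your $g_j$ is already the desired diffeomorphism, so feeding $q(j)=g_j\circ i_F$ back through your $\chi_1$ is redundant---and handles your step-4 extension by invoking the already-established cross-section for $\Imb_f(F,\Sigma-T)\supset\Diff(F)$ from Corollary~\ref{sfcorollary2} rather than rebuilding it from the vertical Extension Lemma.
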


\begin{proof}
Let $S=p(T)$. Notice that $p(P)$ is a point and is a properly embedded
suborbifold of $\O$, with orbifold structure determined by the local group
at $p(P)$. Each embedding $i\in\Imb_t(P,\Sigma)$ induces an orbifold
embedding $\overline{\imath}\colon p(P)\to\O-S$.

By Proposition~\ref{prop:inclusion}, it suffices to produce a local
cross-section at the inclusion $i_P$. By Theorem~\ref{orbtheoremB},
$\Imb(p(P),\O - S)$ has local $\Diff(\O \rel S)$ cross-sections, and by
Proposition~\ref{sftheorem1}, $\Diff(\mathcal{O}\rel S)$ has local
$\Diff_f(\Sigma\rel T)$ cross-sections. A suitable composition of these
gives a local $\Diff_f(\Sigma\rel T)$ cross-section $\chi_1$ for
$\Imb(p(P),\O - S)$ at $\overline{\imath_P}$. As remarked in Section~\ref{palais}, we may
assume that $\chi_1(\overline{\imath_P})$ is the identity diffeomorphism of $\Sigma$. By
\index{Restriction Theorem!orbifolds}%
Corollary~\ref{orbcoro2}, there exists a local $\Diff(F)$ cross-section
$\chi_2$ for $\Imb(P,F)$ at $i_P$, and we may assume that $\chi_2(i_P)$ is
the identity diffeomorphism of $F$. Let $\chi_3$ be a local
$\Diff_f(\Sigma\rel T)$ cross-section for $\Imb_f(F,\Sigma - T)$ at $i_F$
given by 
\index{Restriction Theorem!singular fiberings}%
Corollary~\ref{sfcorollary2}. Regarding $\Diff(F)$ as a subspace
of $\Imb_f(F,\Sigma- T)$, we may assume that the composition $\chi_3\chi_2$
is defined. On a sufficiently small neighborhood of $i_P$ in
$\Imb_t(P,\Sigma- T)$ define $\chi(j)\in\Diff_f(\Sigma\rel T)$ by
$$\chi(j)=\chi_1(p(j))\,(\chi_3\chi_2)(\chi_1(p(j))^{-1}\circ j)\ .$$

\noindent Then for $x\in P$ we have
\begin{align*}\chi(j)\circ i_P(x)
&=\chi_1(p(j))\,(\chi_3\chi_2)(\chi_1(p(j))^{-1}\circ j)\circ i_P(x)\\
&=\chi_1(p(j))\,\chi_1(p(j))^{-1} \circ j(x)\\
&=j(x)
\end{align*}
\end{proof}

\noindent This yields immediately
\indexstate{Restriction Theorem!to point}%
\begin{corollary} Let $W$ be a vertical
suborbifold of $\Sigma$ containing $P$. Then $\Diff_f(\Sigma\rel
T)\to\Imb_t(P,\Sigma- T)$ and $\Imb_f(W,\Sigma\rel T)\to
\Imb_t(P,\Sigma- T)$ are locally trivial.
\label{restrict embeddings to S}
\end{corollary}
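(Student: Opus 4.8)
The plan is to derive both local triviality assertions formally from Proposition~\ref{restrict to basepoint} via Proposition~\ref{theoremA}, exactly as the ``immediate'' corollaries earlier in the chapter (for instance Corollaries~\ref{corollary2} and~\ref{sfcorollary2}) were derived from their companion cross-section theorems. Proposition~\ref{restrict to basepoint} already provides local $\Diff_f(\Sigma\rel T)$ cross-sections for $\Imb_t(P,\Sigma-T)$, so all that remains is to see that each of the two restriction maps is $\Diff_f(\Sigma\rel T)$-equivariant and has target $\Imb_t(P,\Sigma-T)$.

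First I would note that $\Diff_f(\Sigma\rel T)$ acts on itself by left translation and on $\Imb_f(W,\Sigma\rel T)$ and $\Imb_t(P,\Sigma-T)$ by postcomposition, and that the restriction-to-$P$ maps $g\mapsto g|_P$ and $j\mapsto j|_P$ are equivariant since $(g\circ f)|_P=g\circ(f|_P)$. To see that these maps actually land in $\Imb_t(P,\Sigma-T)$: an element $g\in\Diff_f(\Sigma\rel T)$, being fiber-preserving, carries the fiber $F$ containing $P$ to a fiber, so $g(P)$ lies in a single fiber; and since $g$ is the identity on $T=p^{-1}(S)$, the induced map on $\O$ fixes $S$ pointwise, hence preserves $\O-S$, so the image of $F$ (whose $p$-image point lies in $\O-S$, because $T\cap F=\emptyset$) stays off $T$; finally $g|_P$ extends to $g$ itself, an element of $\Diff_f(\Sigma\rel T)$. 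For $j\in\Imb_f(W,\Sigma\rel T)$ the same reasoning applies to the fiber-preserving extension of $j$ to an element of $\Diff_f(\Sigma\rel T)$, which simultaneously extends $j|_P$; here one uses $P\subseteq W$.

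Having checked equivariance and the target identification, Proposition~\ref{theoremA} applied with $G=\Diff_f(\Sigma\rel T)$ and $X=\Imb_t(P,\Sigma-T)$ immediately gives that $\Diff_f(\Sigma\rel T)\to\Imb_t(P,\Sigma-T)$ and $\Imb_f(W,\Sigma\rel T)\to\Imb_t(P,\Sigma-T)$ are locally trivial. I do not anticipate any real obstacle here; the only item that needs any care is the bookkeeping just described showing that restriction to $P$ carries $\Imb_f(W,\Sigma\rel T)$ into $\Imb_t(P,\Sigma-T)$, which relies on the hypotheses $P\subseteq W$ and $T\cap F=\emptyset$ together with the vertical, fiber-preserving, and $\rel T$ conditions.
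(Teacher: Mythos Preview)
Your proposal is correct and is exactly the approach the paper has in mind: the paper simply states that Proposition~\ref{restrict to basepoint} ``yields immediately'' the corollary, meaning precisely the application of Proposition~\ref{theoremA} to the $\Diff_f(\Sigma\rel T)$-equivariant restriction maps that you spell out. Your additional bookkeeping verifying that the restrictions land in $\Imb_t(P,\Sigma-T)$ is more detail than the paper provides, but is sound and appropriate.
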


\section{The space of Seifert fiberings of a Haken 3-manifold}
\label{sfspace}

Let $p\colon\Sigma\to \O $ be a Seifert fibering of a Haken
manifold $\Sigma$. As noted in Section~\ref{sfiber}, $p$ is a singular
fibering. Denote by $\diff_f(\Sigma)$ the connected component of the
identity in $\Diff_f(\Sigma)$, and similarly for other spaces of
diffeomorphisms and embeddings. The main result of this section is the
following.
\begin{theorem} Let $\Sigma$ be a
Haken Seifert-fibered 3-manifold. Then the inclusion $\diff_f(\Sigma)\to
\diff(\Sigma)$ is a homotopy equivalence.
\label{space of fp homeos}
\end{theorem}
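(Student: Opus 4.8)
The plan is to deduce the homotopy equivalence $\diff_f(\Sigma)\to\diff(\Sigma)$ from the fibration theorem for the space of fibered structures (Theorem~\ref{thm:fibered_structures}) together with the known contractibility results for the components of $\Diff(\Sigma)$ for a Haken $3$-manifold. First I would observe that a Seifert fibering of a Haken manifold is a very good singular fibering: the base orbifold $\O$ has a manifold orbifold cover (this is where one must avoid the problematic lens space fiberings, but those are excluded since $\Sigma$ is Haken), and one can arrange the corresponding cover $\widetilde\Sigma$ of $\Sigma$ to be compact by passing to a finite-index characteristic cover — for a Haken Seifert manifold the base orbifold is good and a suitable finite cover of $\widetilde\O$ by a surface pulls back to a compact manifold cover of $\Sigma$. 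Hence Theorem~\ref{thm:fibered_structures} applies, and the quotient map $q\colon\Diff(\Sigma)\to\Diff(\Sigma)/\Diff_f(\Sigma)$ is a locally trivial fibering with fiber $\Diff_f(\Sigma)$.

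Next I would restrict this fibration over the component of the identity in the base. Let $\mathcal S$ denote the component of the identity coset in the space of fibered structures $\Diff(\Sigma)/\Diff_f(\Sigma)$; by Theorem~\ref{space of sf's} (the contractibility-of-components result quoted in the introduction) $\mathcal S$ is contractible. The restriction of $q$ over $\mathcal S$ is a fibration $q^{-1}(\mathcal S)\to\mathcal S$ with fiber $\Diff_f(\Sigma)$, and since the total space fibers over a contractible base, the inclusion of the fiber $\Diff_f(\Sigma)\hookrightarrow q^{-1}(\mathcal S)$ is a homotopy equivalence. Now I claim $q^{-1}(\mathcal S)$ is exactly the union of those components of $\Diff(\Sigma)$ that map into $\mathcal S$; in particular it contains $\diff(\Sigma)$, the identity component, and the restriction $\diff(\Sigma)\to\mathcal S$ is again a fibration whose fiber is the corresponding union of components of $\Diff_f(\Sigma)$ lying in $\diff(\Sigma)$. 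The long exact homotopy sequence of this fibration, together with the fact that $\mathcal S$ is contractible, shows that the fiber has the same homotopy groups, based at $1_\Sigma$, as $\diff(\Sigma)$; so the component of $1_\Sigma$ in this fiber, which is $\diff_f(\Sigma)$, is weakly equivalent to $\diff(\Sigma)$.

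To finish, I would upgrade the weak equivalence to a genuine homotopy equivalence using the CW-structure results of Chapter~\ref{ch:foundations}: both $\diff(\Sigma)$ and $\diff_f(\Sigma)$ have the homotopy type of CW-complexes (the first by Section~\ref{sec:Cinfinity}, the second because $\Diff_f(\Sigma)$ is a Fr\'echet manifold by the theorem at the end of Section~\ref{exponent} and hence dominated by a CW-complex), so a weak equivalence between them is a homotopy equivalence by Whitehead's theorem. The main obstacle in all of this is the input we are quoting rather than proving here: that the component $\mathcal S$ of the space of Seifert fiberings is contractible. That fact is itself a consequence of Hatcher's and Ivanov's theorems on $\Diff(\Sigma)$ for Haken manifolds combined with the classification of Seifert fiberings up to isotopy, but assembling it carefully — especially handling the exceptional small Seifert manifolds and keeping track of basepoints in the exact sequence — is the delicate part; everything else is formal manipulation of the fibration $q$ and its restrictions.
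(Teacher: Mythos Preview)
Your argument is circular. You invoke Theorem~\ref{space of sf's} (contractibility of the components of the space of Seifert fiberings) as input, but in the paper's logical development that theorem is a \emph{corollary} of Theorem~\ref{space of fp homeos}: its proof reads ``the connected component of the identity in $\Diff(\Sigma)/\Diff_f(\Sigma)$ is $\diff(\Sigma)/\diff_f(\Sigma)$; using Theorem~\ref{space of fp homeos}, the latter is contractible.'' So you are assuming what is to be proved. Your suggestion that the contractibility of $\mathcal S$ follows independently from Hatcher--Ivanov plus isotopy classification does not help: Hatcher--Ivanov computes $\pi_*(\diff(\Sigma))$, but to pass from that to $\pi_*(\mathcal S)$ via the fibration $\diff_f(\Sigma)\to\diff(\Sigma)\to\mathcal S$ you need exactly the homotopy type of $\diff_f(\Sigma)$, which is the content of the theorem.

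The paper's proof does the real work that your sketch defers. It computes $\pi_*(\diff_f(\Sigma))$ directly, by analyzing two fibrations: first the restriction $\diff_v(\Sigma)\to\diff(C)$ to a regular fiber $C$ (whose fiber has contractible components by an explicit $\mathbb{H}^2\times\R$ argument, Lemma~\ref{rel fiber}), which yields $\pi_1(\diff_v(\Sigma))$ in terms of whether $C$ is central; and second the projection fibration $\Diff_v(\Sigma)\cap\diff_f(\Sigma)\to\diff_f(\Sigma)\to\diff(\mathcal O)$ from Theorem~\ref{sfproject diffs}. A case analysis on the center of $\pi_1(\mathcal O)$ then matches $\pi_1(\diff_f(\Sigma))$ with the center of $\pi_1(\Sigma)$, which by Hatcher--Ivanov equals $\pi_1(\diff(\Sigma))$. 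The formal fibration manipulation you outline is correct once the contractibility of $\mathcal S$ is in hand, but that is the endpoint of the argument, not an available hypothesis.
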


Before proving Theorem~\ref{space of fp homeos}, we will derive some
consequences. Each element of $\Diff(\Sigma)$ carries the given fibering to
an isomorphic fibering, and $\Diff_f(\Sigma)$ is precisely the stabilizer
of the given fibering under this action. Following
Definition~\ref{def:fibered_structures}, we define the 
\indexdef{space of Seifert fiberings}\textit{space of Seifert fiberings} 
isomorphic to the given fibering to be the space of cosets
$\Diff(\Sigma)/\Diff_f(\Sigma)$. Since $\Sigma$ is not a lens space with
one or two exceptional fibers, $\Sigma$ is a singular fibering. Moreover,
every Seifert fibering other than the exceptional lens space ones is
finitely covered by an $S^1$-bundle (because apart from these cases, the
quotient orbifold has a finite orbifold covering by a manifold), so is a
very good singular fibering. So Theorem~\ref{thm:fibered_structures}
ensures that the space of Seifert fiberings isomorphic to the given one is
a separable Fr\'echet manifold, and the map
\[ \Diff(\Sigma)\to \Diff(\Sigma)/\Diff_f(\Sigma) \]
is a fibration. Note that since $\Diff(\Sigma)/\Diff_f(\Sigma)$ is a
Fr\'echet manifold, each connected component is a path component, and since
$\Diff(\Sigma)$ acts transitively on $\Diff(\Sigma)/\Diff_f(\Sigma)$, any
two components are homeomorphic.

\begin{theorem} Let $\Sigma$ be a Seifert-fibered Haken 3-manifold. Then 
each component of the space of Seifert fiberings of $\Sigma$ is
contractible.
\label{space of sf's}
\end{theorem}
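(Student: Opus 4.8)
The plan is to combine Theorem~\ref{thm:fibered_structures} with Theorem~\ref{space of fp homeos}. By Theorem~\ref{thm:fibered_structures}, since a Seifert-fibered Haken $3$-manifold $\Sigma$ is a very good singular fibering (as noted in the discussion preceding this theorem), the quotient map
\[
\Diff(\Sigma)\to \Diff(\Sigma)/\Diff_f(\Sigma)
\]
is a locally trivial fibering, and in particular a Serre fibration; moreover its base, the space of Seifert fiberings, is a Fr\'echet manifold, so its components are path components and are mutually homeomorphic. Thus it suffices to show that the component of the given fibering is contractible, and since the Fr\'echet manifold structure gives it the homotopy type of a CW complex, by Whitehead's theorem it is enough to show all its homotopy groups vanish.

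First I would examine the long exact homotopy sequence of the fibration $\Diff_f(\Sigma)\to\Diff(\Sigma)\to\Diff(\Sigma)/\Diff_f(\Sigma)$, based at the identity coset. The fiber over this point is $\Diff_f(\Sigma)$. The key input is Theorem~\ref{space of fp homeos}: the inclusion $\diff_f(\Sigma)\to\diff(\Sigma)$ of identity components is a homotopy equivalence, hence $\pi_k(\Diff_f(\Sigma))\to\pi_k(\Diff(\Sigma))$ is an isomorphism for all $k\geq 1$. Feeding this into the exact sequence
\[
\pi_k(\Diff_f(\Sigma))\to \pi_k(\Diff(\Sigma))\to \pi_k\big(\Diff(\Sigma)/\Diff_f(\Sigma)\big)\to \pi_{k-1}(\Diff_f(\Sigma))\to \pi_{k-1}(\Diff(\Sigma))
\]
shows that for $k\geq 2$ the first map is onto and the last map is injective, forcing $\pi_k(\Diff(\Sigma)/\Diff_f(\Sigma))=0$. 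For $k=1$ one gets that $\pi_1$ of the base is the cokernel of $\pi_1(\Diff_f(\Sigma))\to\pi_1(\Diff(\Sigma))$ modulo the image of $\pi_1$ of the base under the boundary map; the surjectivity statement from Theorem~\ref{space of fp homeos} (on $\pi_1$ of identity components) gives that this cokernel is trivial, so $\pi_1$ of the identity component of the base is trivial as well. Hence the identity component is weakly contractible, and being a Fr\'echet manifold, it is contractible; by homogeneity every component is contractible.

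The main subtlety — and the step deserving care — is bookkeeping at the level of $\pi_0$ and $\pi_1$: Theorem~\ref{space of fp homeos} is stated for the \emph{identity components} $\diff_f(\Sigma)$ and $\diff(\Sigma)$, so one must be careful that the component of the base containing $1_\Sigma\Diff_f(\Sigma)$ receives contributions only from $\diff(\Sigma)$ and $\diff_f(\Sigma)$. Concretely: the component of the identity coset in $\Diff(\Sigma)/\Diff_f(\Sigma)$ is the image of $\diff(\Sigma)$ under the quotient map, and the relevant portion of the long exact sequence involving $\pi_1$ and $\pi_0$ can be computed using $\diff(\Sigma)$ and $\diff_f(\Sigma)$ in place of the full diffeomorphism groups, because restricting a fibration to a union of components of the base is again a fibration with fiber a union of components of the original fiber. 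Once this restriction is set up correctly, the homotopy equivalence $\diff_f(\Sigma)\simeq\diff(\Sigma)$ immediately collapses all the homotopy groups of that component of the base, and the argument is complete.
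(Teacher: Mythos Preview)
Your overall strategy is the same as the paper's---use Theorem~\ref{thm:fibered_structures} to get the fibration $\Diff_f(\Sigma)\to\Diff(\Sigma)\to\Diff(\Sigma)/\Diff_f(\Sigma)$ and then feed in Theorem~\ref{space of fp homeos}---but there is a real gap at the $\pi_1$ step.

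From the long exact sequence
\[
\pi_1(\Diff_f(\Sigma))\to\pi_1(\Diff(\Sigma))\to\pi_1(\text{base})\to\pi_0(\Diff_f(\Sigma))\to\pi_0(\Diff(\Sigma)),
\]
Theorem~\ref{space of fp homeos} gives that $\pi_1(\Diff_f(\Sigma))\to\pi_1(\Diff(\Sigma))$ is an isomorphism, so $\pi_1(\text{base})$ embeds in $\ker\big(\pi_0(\Diff_f(\Sigma))\to\pi_0(\Diff(\Sigma))\big)$. You need this kernel to vanish, i.e.\ you need that a fiber-preserving diffeomorphism isotopic to $1_\Sigma$ is isotopic to $1_\Sigma$ \emph{through fiber-preserving diffeomorphisms}. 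Equivalently, $\Diff_f(\Sigma)\cap\diff(\Sigma)=\diff_f(\Sigma)$. This is not a consequence of Theorem~\ref{space of fp homeos}, which only concerns the identity components; it is a separate (nontrivial) result of Waldhausen. Your last paragraph tries to sidestep this by restricting to $\diff(\Sigma)$ and declaring the fiber to be $\diff_f(\Sigma)$, but the fiber of $\diff(\Sigma)\to\diff(\Sigma)/\diff_f(\Sigma)$ over the identity coset is $\diff(\Sigma)\cap\Diff_f(\Sigma)$, and identifying that with $\diff_f(\Sigma)$ is exactly the missing Waldhausen statement.

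The paper's proof invokes this explicitly: it cites Waldhausen for $\Diff_f(\Sigma)\cap\diff(\Sigma)=\diff_f(\Sigma)$, concludes that the identity component of the base is $\diff(\Sigma)/\diff_f(\Sigma)$, and then Theorem~\ref{space of fp homeos} makes that quotient contractible. Once you add that citation, your argument is complete and matches the paper's.
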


\begin{proof} As sketched on p.~85 of \cite{Waldhausen}, two
fiber-preserving diffeomorphisms of $\Sigma$ that are isotopic are isotopic
through fiber-preserving diffeomorphisms. That is, $\Diff_f(\Sigma)\cap
\diff(\Sigma))=\diff_f(\Sigma)$. Therefore the connected component of the
identity in $\Diff(\Sigma)/\Diff_f(\Sigma)$ is
$\diff(\Sigma)/(\Diff_f(\Sigma)\cap \diff(\Sigma))=
\diff(\Sigma)/\diff_f(\Sigma)$. Using Theorem~\ref{space of fp homeos}, the
latter is contractible.
\end{proof}

Theorem~\ref{space of sf's} shows that the space of Seifert fiberings of
$\Sigma$ is contractible when $\Diff_f(\Sigma)\to \Diff(\Sigma)$ is
surjective, that is, when every self-diffeomorphism of $\Sigma$ is isotopic
to a fiber-preserving diffeomorphism. Almost all Haken Seifert-fibered
$3$-manifolds have this property. The closed case is due to
F. Waldhausen~\cite{Waldhausen1}\index{Waldhausen} (see also \cite[Theorem
  8.1.7]{Orlik}), who showed that (among Haken manifolds) it fails only for
the $3$-torus, the double of the orientable $\I$-bundle over the Klein
bottle, and the \index{Hantsche-Wendt manifold}\textit{Hantsche-Wendt}
manifold, which is the manifold given by the Seifert invariants $\{-1;
(n_2,1); (2,1), (2,1)\}$ (see \cite[pp.\ 133, 138]{Orlik},
\cite[pp.\ 478-481]{Charlap-Vasquez}, \cite{Waldhausen1},
\cite{Hantsche-Wendt}). Topologically, the Hantsche-Wendt manifold is
obtained by taking two copies of the orientable $\I$-bundle over the Klein
bottle, one with the \index{meridional!fibering of $K_0$}meridional
fibering (the nonsingular fibering as an $S^1$-bundle over the M\"obius
band) and one with the \index{longitudinal!fibering of $K_0$}longitudinal
fibering (over the disk with two exceptional orbits of type $(2,1)$) and
gluing them together preserving the fibers on the boundary. It admits a
diffeomorphism interchanging the two halves, which is not isotopic to a
fiber-preserving diffeomorphism. For the bounded case, only $S^1$-bundles
over the disk, annulus or M\"obius band fail to have the property. This
appears as Theorem VI.18 of W. Jaco~\cite{Jaco}. We conclude:
\begin{theorem}\index{space of Seifert fiberings!contractibility} 
Let $\Sigma$ be a Seifert-fibered Haken $3$-manifold other 
than the Hantsche-Wendt manifold, the $3$-torus, the double of the
orientable $\I$-bundle over the Klein bottle, or an $S^1$-bundle over the
disk, annulus or M\"obius band. Then $\Diff_f(\Sigma)\to \Diff(\Sigma)$ is
a homotopy equivalence, that is, the space of Seifert fiberings of $\Sigma$
is contractible.\par
\label{thm:DiffSigmaHE}
\end{theorem}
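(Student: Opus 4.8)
The plan is to obtain the statement by combining Theorem~\ref{space of sf's} with the fibration of Theorem~\ref{thm:fibered_structures}, once one knows that for the manifolds not on the exceptional list every self-diffeomorphism of $\Sigma$ is isotopic to a fiber-preserving one. First I would check the hypotheses of Theorem~\ref{thm:fibered_structures}: a Haken $3$-manifold is not a lens space, so the Seifert fibering $p\colon\Sigma\to\O$ is a singular fibering, and it is very good because every Haken Seifert-fibered $3$-manifold is finitely covered by an $S^1$-bundle (its base orbifold has a finite covering by a manifold). Hence $\Diff(\Sigma)\to\Diff(\Sigma)/\Diff_f(\Sigma)$ is a locally trivial fibering whose fiber is $\Diff_f(\Sigma)$ and whose base, the space of Seifert fiberings of $\Sigma$, is a separable Fr\'echet manifold.

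The next step is to prove that this base is contractible. By Theorem~\ref{space of sf's} each of its path components is already contractible, so it remains only to show that it is path-connected. From the tail of the homotopy exact sequence of the fibration, the set of path components of $\Diff(\Sigma)/\Diff_f(\Sigma)$ is the quotient of $\pi_0(\Diff(\Sigma))$ by the image of $\pi_0(\Diff_f(\Sigma))$; thus the base is connected precisely when every self-diffeomorphism of $\Sigma$ is isotopic to a fiber-preserving diffeomorphism. Here I would invoke the literature: for closed $\Sigma$ this surjectivity is Waldhausen's theorem~\cite{Waldhausen1} (see also~\cite[Theorem~8.1.7]{Orlik}), which fails only for the $3$-torus, the double of the orientable $\I$-bundle over the Klein bottle, and the Hantsche-Wendt manifold; for $\Sigma$ with nonempty boundary it is Theorem~VI.18 of~\cite{Jaco}, which fails only for the $S^1$-bundles over the disk, annulus, or M\"obius band. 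Since $\Sigma$ is assumed to be none of these manifolds, the base is connected, hence contractible.

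Finally, a locally trivial fibering over a contractible (metrizable, hence paracompact) base is fiber-homotopy equivalent to the product fibering, so the fiber inclusion $\Diff_f(\Sigma)\to\Diff(\Sigma)$ is a homotopy equivalence. The equivalent formulation about the space of Seifert fiberings then follows at once from the definition of that space as $\Diff(\Sigma)/\Diff_f(\Sigma)$.

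I do not expect any serious obstacle: essentially all the work is packaged in Theorem~\ref{space of fp homeos} (and through it Theorem~\ref{space of sf's}) and in the Waldhausen--Jaco classification. The only points needing genuine care are the bookkeeping --- checking that the union of Waldhausen's three closed exceptions and Jaco's bounded exceptions is exactly the list excluded in the statement --- and confirming that the fibering is very good so that Theorem~\ref{thm:fibered_structures} applies.
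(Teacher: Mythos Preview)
Your proposal is correct and follows essentially the same route as the paper: the paper also sets up the fibration $\Diff(\Sigma)\to\Diff(\Sigma)/\Diff_f(\Sigma)$ via Theorem~\ref{thm:fibered_structures} (checking the very good hypothesis just as you do), invokes Theorem~\ref{space of sf's} for contractibility of components, and then cites Waldhausen~\cite{Waldhausen1} for the closed case and Jaco~\cite[Theorem~VI.18]{Jaco} for the bounded case to obtain connectedness of the base away from the listed exceptions. You have made explicit the homotopy-exact-sequence step and the fiber-homotopy-trivialization over a contractible base, which the paper leaves implicit, but the argument is the same.
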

\longpage

The remainder of this section will constitute the proof of
Theorem~\ref{space of fp homeos}.
\begin{lemma}
Let $\Sigma$ be a Seifert-fibered Haken
3-manifold, and let $C$ be a fiber of $\Sigma$. Then
each component of $\Diff_v(\Sigma\rel C)$ is contractible.
\label{rel fiber}
\end{lemma}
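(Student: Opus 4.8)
The plan is to show that $\Diff_v(\Sigma\rel C)$ deformation retracts onto a topological abelian group of ``twisted $S^1$-valued functions on $\O$ that vanish at $p(C)$'', and that such a group has contractible path components. First I would pass to the covering picture of Section~\ref{sfiber}: let $\widetilde p\colon\widetilde\Sigma\to\widetilde\O$ be the associated genuine $S^1$-bundle over the manifold $\widetilde\O$, with covering group $G$, chosen by Lemma~\ref{covering isometries} to act on $\widetilde\Sigma$ by isometries for a $G$-equivariant metric, so that $G$ acts on each fiber circle of $\widetilde p$ through $\Or(2)$. Using that $\sigma$ is a regular covering (here $\Sigma$ is a manifold, so $G$ acts freely and properly discontinuously), one checks that restriction of the quotient map identifies $\Diff_v(\Sigma\rel C)$ homeomorphically with the group $D$ of $G$-equivariant vertical diffeomorphisms of $\widetilde\Sigma$ that restrict to the identity on $\widetilde C:=\sigma^{-1}(C)$ (such a lift exists because $G$ surjects onto the deck group of $\widetilde C\to C$, and is unique because an element of $G$ fixing a point of $\widetilde C$ is trivial). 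Since every element of $D$ is the identity on the fibers of $\widetilde p$ lying over $\widetilde p(\widetilde C)$ and $\widetilde\O$ is connected, each element of $D$ preserves the orientation of every fiber of $\widetilde p$; hence $D$ is the group of $G$-equivariant sections of the bundle $\mathcal D\to\widetilde\O$ whose fiber over $\tilde x$ is $\Diff_+\!\big(\widetilde p^{-1}(\tilde x)\big)$, subject to taking the neutral element over $\widetilde C$.

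Next I would use the canonical deformation retraction of the group $\Diff_+(S)$ of a Riemannian circle $S$ onto its subgroup $\mathcal R(S)$ of rotations: lift $\phi$ to $\widetilde\phi\colon\R\to\R$, let $\rho(\phi)$ be the mean value of $\widetilde\phi(x)-x$, and slide $\widetilde\phi$ to the translation $x\mapsto x+\rho(\phi)$ along $(1-t)\widetilde\phi+t\big(x+\rho(\phi)\big)$; the $x$-derivative stays positive, so this is a genuine deformation retraction, it fixes the identity for all $t$, it is built from the metric on $S$ alone, and a direct computation shows it is equivariant under conjugation by isometries of $S$ (rotations and reflections alike). Consequently it assembles fiberwise into a $G$-equivariant deformation retraction of $\mathcal D$ onto the subbundle $\mathcal R\subseteq\mathcal D$ of fiberwise rotations, and passing to section spaces — and to the subspaces of sections that are the neutral element over $\widetilde C$ — yields a deformation retraction of $D$ onto the group $\mathcal G$ of $G$-equivariant sections of $\mathcal R$ that are the neutral element over $\widetilde C$.

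Finally I would observe that $\mathcal R$ is a bundle of circle groups: each fiber $\mathcal R\big(\widetilde p^{-1}(\tilde x)\big)$ is canonically a circle group, and $G$ acts by group automorphisms (trivially where the fiber orientation is preserved, by inversion where it is reversed), so $\mathcal G$ is a topological abelian group whose identity component is open, and all of its path components are homeomorphic to its identity component. Via the fiberwise universal covering $\R\to\mathcal R(S)$, the identity component of $\mathcal G$ is identified with the space of $G$-equivariant sections of the corresponding (twisted) $\R$-bundle that vanish over $\widetilde C$ — equivalently, sections of a line bundle over the orbifold $\O$ vanishing at $p(C)$ — which is a Fr\'echet space and hence contractible. (The covering map of section spaces is a homeomorphism onto the identity component: its kernel consists of $G$-equivariant, locally constant, hence constant, $\Z$-valued functions vanishing on the nonempty set $\widetilde C$, so is trivial; this is exactly the point at which the relative condition ``$\rel C$'' is used to eliminate the circle's worth of nontrivial homotopy that is present for $\Diff_v(\Sigma)$ without a basepoint constraint.) Therefore every path component of $\mathcal G$, and hence of $\Diff_v(\Sigma\rel C)$, is contractible.

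I expect the main obstacle to be the bookkeeping of the first paragraph — verifying carefully that $\Diff_v(\Sigma\rel C)$ really is (homeomorphic to) the stated equivariant section space, including the case where $p(C)$ is a regular point of $\O$ and $\pi_1^{\mathrm{orb}}(\O)$ is infinite, so that $\widetilde C$ has infinitely many components — together with checking that the fiberwise retraction of the second paragraph is smooth enough to induce a deformation retraction of the relevant Fr\'echet manifolds of sections. Hakenness itself enters only to guarantee that the Seifert fibering of $\Sigma$ is a genuine singular fibering, so that the base orbifold $\O$ is covered by a manifold.
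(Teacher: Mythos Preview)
Your proof is correct but takes a different route from the paper. The paper first removes $C$, observes that $\Sigma-C$ carries an $\H^2\times\R$ geometry (this is where Hakenness is really used: the base orbifold of $\Sigma-C$ has negative Euler characteristic and is not closed), and after retracting $\diff_v(\Sigma\rel C)$ to $\diff_v(\Sigma-C\rel N-C)$ it works directly in the universal cover $\H^2\times\R$. There a vertical diffeomorphism has the form $(x,s)\mapsto(x,s+F_2(x,s))$, and the single straight-line homotopy $K_t(x,s)=(x,s+(1-t)F_2(x,s))$ gives the contraction; equivariance under the deck group is checked by a short computation using $\Isom(\H^2\times\R)=\Isom(\H^2)\times\Isom(\R)$. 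Because the fibers in the universal cover are already copies of $\R$, the paper never needs your retraction of $\Diff_+(S^1)$ onto its rotation subgroup or the subsequent passage to the Lie-algebra bundle. Your approach, by contrast, keeps $\Sigma$ intact and uses only a manifold cover $\widetilde\Sigma\to\widetilde\O$ with $S^1$ fibers, which forces those two extra steps but stays squarely within the singular-fibering framework of Chapter~\ref{ch:Palais} and uses Hakenness only through goodness of the base orbifold. Both arguments are the same idea at bottom --- pass to a cover where vertical diffeomorphisms become a convex set --- but the paper's choice of the universal cover of $\Sigma-C$ reaches the convex picture in one step rather than two.
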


\begin{proof} Since $\Sigma$ is Haken, the base orbifold
of $\Sigma-C$ has negative Euler characteristic and is not
closed. It follows (see~\cite{Scott}) that $\Sigma-C$ admits an
$\H^2\times\R$ geometry. Thus there is an action of $\pi_1(\Sigma-C)$
on $\H^2\times\R$ such that every element preserves the $\R$-fibers
and acts as an isometry in the $\H^2$-coordinate.

It suffices to show that $\diff_v(\Sigma\rel C)$ is contractible. Let $N$
be a fibered solid torus neighborhood of $C$ in $\Sigma$. It is not
difficult to see (as in the argument below) that $\diff_v(\Sigma\rel C)$
deformation retracts to $\diff_v(\Sigma\rel N)$, which can be identified
with $\diff_v(\Sigma-C\rel N-C)$, so it suffices to show that the latter is
contractible. For $f\in\diff_v(\Sigma-C\rel N-C)$, let $F$ be a lift of $f$
to $\H^2\times\R$ that has the form $F(x,s)= (x,s+F_2(x,s))$, where
$F_2(x,s)\in\R$.

Since $f$ is vertically isotopic to the identity relative to $N-C$, we may
moreover choose $F$ so that $F_2(x,s)= 0$ if $(x,s)$ projects to
$N-C$. To see this, we choose the lift $F$ to fix a point in the inverse image
$W$ of $N-C$. Since $f$ is homotopic to the identity relative to $N-C$, $F$
is equivariantly homotopic to a covering translation relative to $W$. That
covering translation fixes the point in $W$, and therefore must be the
identity. Thus $F$ fixes $W$ and commutes with every covering translation.

Define $K_t$ by $K_t(x,s)= (x,s+(1-t)F_2(x,s))$. Since $K_0= F$
and $K_1$ is the identity, and each $K_t$ is the identity on the
inverse image of $N-C$, this will define a contraction of
$\Diff_v(\Sigma-C\rel N-C)$ once we have shown that each $K_t$ is
equivariant. Let $\gamma\in\pi_1(\Sigma-C)$. From~\cite{Scott},
$\Isom(\H^2\times\R)=\Isom(\H^2)\times\Isom(\R)$, so we can write
$\gamma(x,s)=(\gamma_1(x),\epsilon_\gamma s+\gamma_2)$, where
$\epsilon_\gamma= \pm 1$ and $\gamma_2\in\R$. Since $F\gamma=
\gamma F$, a straightforward calculation shows that
$$F_2(\gamma_1(x),\epsilon_\gamma s+\gamma_2)= \epsilon_\gamma
F_2(x,s)\ .$$

\noindent Now we calculate
\begin{align*}K_t\gamma(x,s)&= K_t(\gamma_1(x),\epsilon_\gamma
s+\gamma_2)\\
&= (\gamma_1(x),\epsilon_\gamma s +\gamma_2+(1-t)%
F_2(\gamma_1(x),\epsilon_\gamma s+\gamma_2))\\
&= (\gamma_1(x),\epsilon_\gamma s +\gamma_2+(1-t)%
\epsilon_\gamma F_2(x,s))\\
&= (\gamma_1(x),\epsilon_\gamma (s+(1-t)F_2(x,s)) +\gamma_2)\\
&= \gamma(x,s+(1-t)F_2(x,s))\\
&= \gamma K_t(x,s)\\
\end{align*}

\noindent showing that $K_t$ is equivariant.
\end{proof}

\begin{proof}[Proof of Theorem~\ref{space of fp homeos}] We first examine
$\diff_v (\Sigma)$. Choose a regular fiber $C$ and consider the restriction
$\diff_v (\Sigma)\to \imb_v(C,\Sigma)\cong\diff(C)\cong \diff(S^1)\simeq
\SO(2)$. By 
\index{Restriction Theorem!singular fiberings}%
Corollary~\ref{sfcorollary2}(ii), this is a fibration. By
Lemma~\ref{rel fiber}, each component of the fiber $\Diff_v(\Sigma\rel
C)\cap\diff_v(\Sigma)$ is contractible. It follows by the exact sequence
for this fibration that $\pi_q(\diff_v(\Sigma))\cong\pi_q(\SO(2))= 0$ for
$q\geq 2$, and for $q= 1$ we have an exact sequence 
\begin{small}
\[0\longrightarrow
\pi_1(\diff_v(\Sigma))\longrightarrow \pi_1(\diff(C))\longrightarrow
\pi_0(\Diff_v(\Sigma\rel C)\cap \diff_v(\Sigma))\longrightarrow 0\ .\]
\end{small}

We will first show that exactly one of the following holds.
\begin{enumerate}
\item[a)] $C$ is central and $\pi_1(\diff_v(\Sigma))\cong\Z$ generated by the
vertical $S^1$-action.
\item[b)] $C$ is not central and $\pi_1(\diff_v(\Sigma))$ is trivial.
\end{enumerate}

\noindent
Suppose first that the fiber $C$ is central in $\pi_1(\Sigma)$. Then
there is a vertical $S^1$-action on $\Sigma$ which moves the basepoint
(in $C$) once around $C$. This maps onto the generator of
$\pi_1(\diff(C))$, so $\pi_1(\diff_v(\Sigma))\to
\pi_1(\diff(C))$ is an isomorphism. Therefore
$\pi_1(\diff_v(\Sigma))$ is infinite cyclic, with generator
represented by the vertical $S^1$-action.

If the fiber is not central, then $\pi_1(\diff(C))\to
\pi_0(\Diff(\Sigma\rel C)\cap \diff_v(\Sigma))$ carries the
generator to a diffeomorphism of $\Sigma$ which induces an inner
automorphism of infinite order on $\pi_1(\Sigma,x_0)$, where $x_0$ is
a basepoint in $C$. Since elements of $\Diff(\Sigma\rel C)$ fix the
basepoint, this diffeomorphism (and its powers) are not in
$\diff(\Sigma\rel C)$. Therefore $\pi_1(\diff(C))\to
\pi_0(\Diff(\Sigma\rel C)\cap \diff_v(\Sigma))$ is injective, so
$\pi_1(\diff_v(\Sigma))$ is trivial.

Now let $\O$ be the quotient orbifold, and
consider the fibration of 
\index{Projection Theorem!singular fiberings}%
Theorem~\ref{sfproject diffs}:
\[\Diff_v(\Sigma)\cap \diff_f(\Sigma)
\longrightarrow \diff_f(\Sigma)\longrightarrow
\diff(\O )\ .\leqno{(*)}\]

Observe that $\diff(\O )$ is homotopy equivalent to the identity component
of the space of diffeomorphisms of the 2-manifold $\mathcal{O}-\Ecal $,
where $\Ecal$ is the exceptional set. Since $\Sigma$ is Haken, this
$2$-manifold is either a torus, annulus, disc with one puncture, M\"obius
band, or Klein bottle, or a surface of negative Euler
characteristic. Therefore $\diff(\O )$ is contractible unless $\chi(\O
-\mathcal{E})=0$, in which case $\mathcal{E}$ is empty and $\O$ is an
annulus or torus. Thus the higher homotopy groups of $\diff(\O)$ are all
trivial, and its fundamental group is isomorphic to the center of $\pi_1(\O
)$.  When this center is nontrivial, the elements of $\pi_1(\mathcal{O})$
are classified by their traces at a basepoint of $\O$. From the exact
sequence for the fibration $(*)$, it follows that
$\pi_q(\diff_f(\Sigma))= 0$ for~$q\geq 2$.

To complete the proof, we recall the result of \index{Hatcher}Hatcher~\cite{H} and
\index{Ivanov}Ivanov~\cite{I4}: for $M$ Haken, $\pi_q(\diff(M))$ is $0$ for $q\geq 2$ and
is isomorphic to the center of $\pi_1(M)$ for $q= 1$, and the elements of
$\pi_1(\diff(M))$ are classified by their traces at the basepoint. We
already have $\pi_q(\diff_f(\Sigma))= 0$ for $q\geq 2$, so it remains to
show that $\pi_1(\diff_f(\Sigma))\to \pi_1(\diff(\Sigma))$ is an
isomorphism.

\smallskip
\noindent {\sl Case I:} $\pi_1(\O )$ is centerless.
\smallskip

In this case $\diff(\O )$ is contractible, and either $C$ generates the
center or $\pi_1(\Sigma)$ is centerless. The exact sequence associated to
the fibration $(*)$ shows that
$\pi_1(\diff_v(\Sigma))=\pi_1(\Diff_v(\Sigma)\cap
\diff_f(\Sigma))\to\pi_1(\diff_f(\Sigma))$ is an isomorphism. Suppose $C$
generates the center. Since $\pi_1(\diff_v(\Sigma))$ is infinite cyclic
generated by the vertical $S^1$-action, Hatcher's theorem shows that the
composition
\[\pi_1(\diff_v(\Sigma)) \to
\pi_1(\diff_f(\Sigma))\to
\pi_1(\diff(\Sigma))\]

\noindent is an isomorphism. Therefore
$\pi_1(\diff_f(\Sigma))\to \pi_1(\diff(\Sigma))$ is an
isomorphism. If  $\pi_1(\Sigma)$ is
centerless, then $\pi_1(\diff(\Sigma))= 0$,
$\pi_1(\diff_f(\Sigma))\cong\pi_1(\diff_v(\Sigma))= 0$, and again
$\pi_1(\diff_f(\Sigma))\to \pi_1(\diff(\Sigma))$ is an
isomorphism.

\smallskip
\noindent {\sl Case II:} $\pi_1(\O )$ has center.

\smallskip
Assume first that $\O $ is a torus.  If $\Sigma$ is the $3$-torus, then by
considering the exact sequence for the fibration $(*)$, one can check
directly that the homomorphism
$\partial\colon\pi_1(\diff(\mathcal{O}))\rightarrow
\pi_0(\Diff_v(\Sigma)\cap\diff_f(\Sigma))$
is the zero map. We obtain the exact sequence
$$0\longrightarrow \Z
\longrightarrow \pi_1(\diff_f(\Sigma))\longrightarrow
\Z\times\Z\longrightarrow0\ .$$
\noindent
Since $\diff_f(\Sigma)$ is a topological group,
$\pi_1(\diff_f(\Sigma))$ is abelian and hence isomorphic to
$\Z\times\Z\times\Z$. The traces of the generating
elements generate the center of $\pi_1(\Sigma)$, which shows that
$\pi_1(\diff_f(\Sigma))\to\pi_1(\diff(\Sigma))$ is an
isomorphism.

Suppose that $\Sigma$ is not a $3$-torus. Then $\Sigma$ is a nontrivial
$S^1$-bundle over $\O$, $\pi_1(\Sigma)=\langle a,b,t \vbar tat^{-1}= a,
[a,b]=1,tbt^{-1}= a^nb\rangle$ for some integer $n$, and the fiber $a$
generates the center of~$\pi_1(\Sigma)$.

Let $b_0$ and $t_0$ be the image of the generators of $b$ and $t$
respectively in $\pi_1(\O )$. Now $\pi_1(\diff(\O ))\cong\Z\times\Z$
generated by elements whose traces represent the elements $b_0$ and
$t_0$. By lifting these isotopies we see that
$\partial\colon\pi_1(\diff(\mathcal{O}))\rightarrow\pi_0(\diff_v(\Sigma))$
is injective. Therefore $\pi_1(\diff_v(\Sigma))$ is isomorphic to
$\pi_1(\diff_f(\Sigma))$, and the result follows as in case~I.

Assume now that $\O $ is a Klein bottle. The $\Sigma$ is an $S^1$-bundle
over $\O$, $\pi_1(\Sigma)=\langle a,b,t \vbar tat^{-1}= a^{-1},
[a,b]=1,tbt^{-1}=a^{-n}b^{-1} \rangle$ for some integer $n$, with fiber
$a$, and $\pi_1(\O )=\langle b_0,t_0 \vbar t_0b_0t_0^{-1}=
b_0^{-1}\rangle$. Now $\pi_1(\diff(\O ))$ is generated by an isotopy whose
trace represents the generator of the center of $\pi_1(\diff(\O ))$, the
element $t_0^2$. Observe that $\pi_1(\Sigma)$ has center if and only if
$n= 0$. If $n=0$, then it follows that $\partial\colon\pi_1(\diff(\O
))\rightarrow \pi_0(\Diff_v(\Sigma)\cap\diff_f(\Sigma))$ is the zero map.
Hence $\pi_1(\diff_f(\Sigma))\rightarrow\pi_1(\diff(\O))$ is an isomorphism
and the generator of $\pi_1(\diff_f(\Sigma))$ is represented by an isotopy
whose trace represents the element $t^2$. By Hatcher's result,
$\pi_1(\diff_f(\Sigma))\rightarrow\pi_1(\diff(\Sigma))$ is an
isomorphism. If $n\neq 0$, then
$\partial\colon\pi_1(\diff(\mathcal{O}))\rightarrow
\pi_0(\Diff_v(\Sigma)\cap\diff_f(\Sigma))$ is injective. Since
$\pi_1(\Sigma)$ is centerless,
$\pi_1(\Diff_v(\Sigma)\cap\diff_f(\Sigma))=0$. This implies that
$\pi_1(\diff_f(\Sigma))=0$, and again Hatcher's result applies.

The cases where $\O$ is an annulus, disc with one puncture, or a
M\"obius band are similar to those of the torus and Klein bottle.
\end{proof}

\newpage\section{The Parameterized Extension Principle}
\label{sec:PEP}

As a final application of the methods of this section, we present a result
which will be used, often without explicit mention, in our later work. For
a parameterized family of diffeomorphisms $F\colon M\times W\to M$, we
denote the restriction $F\colon M\times\{u\}\to M$ by $F_u\in \Diff(M)$.
By a \indexdef{deformation!of parameterized family}\textit{deformation} 
of a parameterized family of diffeomorphisms
$F\colon M\times W\to M$, we mean a homotopy from $F$ to a parameterized
family $G\colon M\times W\to M$ of diffeomorphisms when $F$ and $G$ are
regarded as maps from $W$ to $\Diff(M,M)$.

\begin{theorem}[Parameterized Extension Principle]\indexstate{Parameterized Extension Principle}
Let $M$ and $W$ be compact smooth manifolds, let $M_0$ be a submanifold of
$M$, and let $U$ be an open subset of $M$ with $M_0\subset U$. Suppose that
$F\colon M\times W\to M$ is a parameterized family of diffeomorphisms of
$M$. If $g\in \Maps((M_0,M_0\cap \partial M)\times W,(M,\partial M))$ is
sufficiently close to $F\vert_{M_0\times W}$, then there is a deformation
$G$ of $F$ such that $G\vert_{M_0\times W}=g$, and $G=F$ on $(M-U)\times
W$. By selecting $g$ sufficiently close to $F\vert_{M_0\times W}$, $G$ may
be selected arbitrarily close to~$F$.
\label{thm:PEP}
\end{theorem}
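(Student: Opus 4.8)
The plan is to realize the statement as a parameterized version of the Palais extension construction underlying Theorem~\ref{palaistheoremB}, with $W$ playing the role of a parameter space. The key observation is that that construction produces, canonically from an embedding $j$ of $M_0$ near the inclusion, not merely an extending diffeomorphism but a compactly supported vector field $Y_j$ whose rescalings $\TExp(tY_j)$ give at once the required deformation and the closeness estimate.

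First I would fix a compact neighborhood $L$ of $M_0$ with $L\subseteq U$, and choose the metric on $M$ to be a product near $\partial M$ for which $M_0$ meets the collar $\partial M\times\I$ in $\I$-fibers (this is the standing hypothesis under which the relevant lemmas apply). Combining the Logarithm Lemma~\ref{logarithm}, the Extension Lemma~\ref{extension} (applied with the neighborhood $L$), and the Exponentiation Lemma~\ref{lem:exponentiation} (with $K=L$), and then shrinking to keep the extended field inside a convex neighborhood of $Z$ on which $\TExp$ is a diffeomorphism into $\Diff^L(M)$, I obtain a $\Cinf$-neighborhood $\mathcal{N}$ of the inclusion $i_{M_0}$ in $\Maps((M_0,M_0\cap\partial M),(M,\partial M))$ and a continuous map $j\mapsto Y_j$, $\mathcal{N}\to\mathcal{X}^L(M,TM)$, such that for every $j\in\mathcal{N}$ and $t\in\I$ the map $\TExp(tY_j)$ is a diffeomorphism lying in $\Diff^L(M)$, with $\TExp(Y_j)\vert_{M_0}=j$ (since $Y_j$ agrees with $X(j)$ on $M_0$ and $\Exp(X(j)(x))=j(x)$) and $Y_{i_{M_0}}=Z$. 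Thus $(j,t)\mapsto\TExp(tY_j)$ is continuous, with value $1_M$ whenever $j=i_{M_0}$ or $t=0$.

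Next, using compactness of $W$, continuity of $F$, and continuity of composition and inversion on the Fr\'echet manifolds of Chapter~\ref{ch:foundations}, I would fix the closeness tolerance so that whenever $g$ is within it of $F\vert_{M_0\times W}$, the family $j_w:=F_w^{-1}\circ g_w$ lies in $\mathcal{N}$ for every $w\in W$ and varies continuously with $w$; here $j_w$ does carry $M_0\cap\partial M$ into $\partial M$, because $g_w$ does and $F_w$ preserves $\partial M$. Put $Y_w:=Y_{j_w}$ and define $G\colon M\times W\times\I\to M$ by $G_{w,t}=F_w\circ\TExp(tY_w)$. Then $G_{\cdot,0}=F$; each $G_{w,t}$ is a diffeomorphism and $G$ is continuous, so $t\mapsto G_{\cdot,t}$ is a deformation of $F$. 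For the endpoint $G:=G_{\cdot,1}$ one computes $G_w\vert_{M_0}=F_w\circ(\TExp(Y_w)\vert_{M_0})=F_w\circ j_w=g_w$, so $G\vert_{M_0\times W}=g$. Since $Y_w$ vanishes off $L\subseteq U$, every $\TExp(tY_w)$ is the identity on $M-U$, hence $G_{w,t}=F_w$ there; in particular $G=F$ on $(M-U)\times W$. Finally, as $g\to F\vert_{M_0\times W}$ one has $j_w\to i_{M_0}$ uniformly in $w$, so $Y_w=Y_{j_w}\to Z$ uniformly by continuity of $j\mapsto Y_j$ together with $Y_{i_{M_0}}=Z$, whence $\TExp(tY_w)\to 1_M$ and $G_{w,t}\to F_w$ uniformly; thus $G$ and the whole deformation can be made arbitrarily $\Cinf$-close to $F$.

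The argument is thus an assembly of the three fundamental lemmas rather than anything requiring a new idea, and the step I expect to demand the most care is the parameterized bookkeeping: verifying that $w\mapsto F_w^{-1}\circ g_w$ is continuous and lands in the fixed neighborhood $\mathcal{N}$ once $g$ is uniformly $\Cinf$-close to $F\vert_{M_0\times W}$, with $\mathcal{N}$ and the tolerance chosen independently of $w$. This is routine given the Fr\'echet-manifold structures of Chapter~\ref{ch:foundations} and the compactness of $W$; the remaining delicacy, the behavior at $\partial M$, is exactly what the standing hypotheses on the metric are designed to handle.
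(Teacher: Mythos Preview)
Your proof is correct and follows essentially the same route as the paper's: reduce to the case $F_w=1_M$ (you do this by pre- and post-composing with $F_w^{\pm1}$, the paper by a one-line ``we may assume''), then apply the Logarithm, Extension, and Exponentiation Lemmas to build a compactly supported vector field whose tame exponential extends $g$ and whose rescalings give the deformation. Your version is in fact more explicit than the paper's on two points it leaves implicit: the choice of a compact $L\subseteq U$ in the Extension Lemma to secure $G=F$ on $(M-U)\times W$, and the construction of the deformation directly as $t\mapsto F_w\circ\TExp(tY_w)$ rather than by an appeal to local convexity.
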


\begin{proof}
We may assume that each $F_u$ is the identity on $M$. Provided that $g$ is
sufficiently close to $F\vert_{M_0\times W}$, the \index{Logarithm Lemma}Logarithm
Lemma~\ref{logarithm} gives sections $X(g_u)\in \mathcal{X}(M_0,TM)$ such
that $\Exp(X(g_u))(x)=g_u(x)$. Applying the Extension Lemma~\ref{extension}
gives a continuous linear map $k\colon \mathcal{X}(M_0,TM)\to
\mathcal{X}(M,TM)$ with $k(X)(x)=X(x)$ for $x\in M_0$. Finally, the
Exponentiation Lemma~\ref{lem:exponentiation} shows that for $g$ in some
neighborhood $U$ of the inclusion family (that is, the parameterized family
with each $g_u$ the inclusion of $M_0$ into $M$), each $\TExp\circ \,k\circ
X$ carries $U$ into parameterized families of diffeomorphisms.  By local
convexity, after making $U$ smaller, if necessary, the resulting
diffeomorphisms $G_u$ will be isotopic to the original $F_u$ by moving
along the unique geodesic between $G_u(x)$ and $F_u(x)$, giving the
required deformation.
\end{proof}

\chapter{Elliptic $3$-manifolds containing one-sided Klein bottles}
\label{ch:one-sided}

In this chapter, we will prove Theorem~\ref{thm:one-sided}.
Section~\ref{sec:M(m,n)} gives a construction of the elliptic $3$-manifolds
that contain a one-sided geometrically incompressible Klein bottle; they
are described as a family of manifolds $M(m,n)$ that depend on two integer
parameters $m,n\geq 1$. Section~\ref{sec:results} is a section-by-section
outline of the entire proof, which constitutes the remaining sections of
the chapter.

\section[The manifolds $M(m,n)$]
{The manifolds $M(m,n)$}
\label{sec:M(m,n)}

Let \indexsym{K0}{$K_0$}$K_0$ be a Klein bottle, which will later 
be the special ``base'' Klein bottle in $M(m,n)$, and write 
\index{pi1K0@$\pi_1(K_0)$!standard presentation}$\pi_1(K_0)=
\langle\,a,\,b\,\,\vert\,\,bab^{-1}= a^{-1}\, \rangle$. The four isotopy
(as well as homotopy) classes of unoriented essential simple closed curves
on $K_0$ are $b$, $ab$, $a$, and $b^2$, with $b$ and $ab$
orientation-reversing and $a$ and $b^2$ orientation-preserving.

Let \indexsymdef{P}{$P$ ($\protect\I$-bundle)}$P$ be the 
orientable $\I$-bundle over $K_0$. 
The free abelian group $\pi_1(\partial P)$ is generated by elements 
homotopic in $P$ to) $a$ and~$b^2$.

Let \indexsymdef{R}{$R$}$R$ be a solid torus containing a meridional 
$2$-disk with boundary $C$, a circle in $\partial R$. For a pair 
$(m,n)$ of relatively prime
integers, the $3$-manifold \indexsymdef{M(m,n)}{$M(m,n)$}$M(m,n)$ is 
formed by identifying $\partial R$ and $\partial P$ in such a way 
that $C$ is attached along a simple closed
curve representing the element $a^mb^{2n}$.  If $m=0$, the resulting
manifold is $\RP^3\#\RP^3$, while if $n=0$ it is $S^2\times S^1$. In
these cases $K_0$ is compressible, so from now on we will assume that
neither $m$ nor $n$ is zero. Since $M(-m,n)= M(m,n)$ and $M(-m,-n)=
M(m,n)$, we can and always will assume that both $m$ and $n$ are positive.

Each fibering of $K_0$ extends to a Seifert fibering of $M(m,n)$, for which
$P$ and $R$ are fibered submanifolds. If $K_0$ has the 
\index{longitudinal!fibering of $K_0$}longitudinal fibering, then in 
$\partial P$ the fiber represents $b^2$. The meridian
circle $C$ of $R$ equals $ma+nb^2$.  Choosing $p$ and $q$ so that
$mp-nq=1$, the element $L=qa+pb^2$ is a longitude of $R$, since the
intersection number $C\cdot L=mp-nq=1$. We find that $b^2=mL-qC$, so on $R$
the Seifert fibering has an exceptional fiber of order $m$, unless
$m=1$. If instead $K_0$ has the 
\index{meridional!fibering of $K_0$}meridional fibering, then the fiber
represents $a$ in $\partial R$, and since $a=pC-nL$, $R$ has an exceptional
fiber of order $n$, unless $n=1$. In terms of $m$ and $n$, then, the cases
discussed in Section~\ref{sec:SCtoday} are as follows: I is $m>1$ and
$n>1$, II is $m=1$ and $n>1$, III is $m>1$ and $n=1$, and IV is $m=n=1$.

The fundamental group of $M(m,n)$ has a presentation\indexsym{pi1M(m,n)}{$\pi_1(M(m,n))$}
\[\langle\,a,\,b\,\,\vert\,\,bab^{-1}= 
a^{-1},\, a^mb^{2n}= 1\,\rangle\ .\] 
\noindent Note that $a^{2m}=1$ and~$b^{4n}=1$.

If $n$ is odd, then 
\indexsym{Dstar4m}{$D^*_{4m}$}$\pi_1(M(m,n))\cong C_n\times D^*_{4m}$, where $C_n$ is
cyclic and
$$D^*_{4m}=\langle x,y \vbar x^2=y^m=(xy)^2\rangle$$
is the binary dihedral group. The $C_n$ factor is generated by $b^4$ and
the $D^*_{4m}$ factor by $x=b^n$ and $y=a$.

If $n$ is even, write $C_{4n}=\langle t\vbar t^{4n}=1\rangle$.  Let
$\Delta$ be the \index{diagonal subgroup}diagonal subgroup of index 
$2$ in $C_{4n}\times
D^*_{4m}$. That is, there is a unique homomorphism from $C_{4n}$ onto
$C_2$, and, since $m$ is odd, a unique homomorphism from $D^*_{4m}$ onto
$C_2$. The latter sends $y$ to $1$. Combining these homomorphisms sends
$C_{4n}\times D^*_{4m}$ onto $C_2$ with kernel $\Delta$. The element
$(t^{2n},y^m)$ is a central involution in $\Delta$, and $\pi_1(M(m,n))$ is
isomorphic to $\Delta/\langle (t^{2n},y^m)\rangle$. The correspondence is
that $a=(1,y)$ and $b=(t,x)$.

When $m=1$, the groups reduce in both cases to a cyclic group of order
$4n$. From \cite{B-W} or \cite{R2}, $M(1,n)=L(4n,2n-1)$.\
\index{M(m,n)@$M(m,n)$!is lens space when $m=1$|(}This
homeomorphism
can be seen directly as follows. Let $T$ be a solid torus with
$H_1(\partial T)$ the free abelian group generated by $\lambda$, a
longitude, and $\mu$, the boundary of a meridian disk. Let $C_1$ and $C_2$
be disjoint loops in $\partial T$, each representing $2\lambda+\mu$. There
is a M\"obius band $M$ in $T$ with boundary $C_2$. The double of $T$ is an
$S^2\times S^1$ in which $M$ and the other copy of $M$ form a one-sided
Klein bottle. The double has a Seifert fibering which is longitudinal on
the Klein bottle, nonsingular on its complement, and in which $C_1$ is a
fiber. If the attaching map in the doubling is changed by Dehn twists about
$C_1$, the resulting manifolds are of the form $M(1,n)$, since they still
have fiberings which are longitudinal on the Klein bottle and nonsingular
on its complement. Since $\mu$ intersects $C_1$ twice, the image of $\mu$
under $k$ Dehn twists about $C_1$ is
$\mu+2k(\mu+2\lambda)=4k\lambda+(2k+1)\mu$, so the resulting manifold is
$L(4k,2k+1)=L(4k,2k-1)$. It must equal $M(1,k)$ since $M(1,k)$ is the only
manifold of the form $M(1,n)$ with fundamental group $C_{4k}$.\index{M(m,n)@$M(m,n)$!is lens space when $m=1$|)}

As we have seen, with the longitudinal fibering the manifolds $M(m,n)$ have
fibers of orders $2$, $2$, and $m$, so in the terminology of \cite{M},
$M(2,n)$ is a \index{quaternionic manifold}quaternionic manifold, while 
for $m>2$, $M(m,n)$ is a (nonquaternionic) \index{prism manifold}prism manifold.

\newpage

\section[Outline of the proof]
{Outline of the proof}
\label{sec:results}\longpage

By Theorem~\ref{thm:pi0 Smale}, the inclusion $\Isom(M(m,n))\to
\Diff(M(m,n))$ is a bijection on path components, so we need only prove
that the inclusion $\isom(M(m,n))\to \diff(M(m,n))$ of the connected
components of the identity induces isomorphisms on all homotopy groups.
The rest of this chapter establishes this when at least one of $m$ or $n$
is greater than~$1$, that is, for Cases~I, II, and~III in
Section~\ref{sec:SCtoday}. The remaining possibility $M(1,1)$ is the lens
space $L(4,1)$, for which the Smale Conjecture holds by
Theorem~\ref{thm:SCforLensSpaces} proven in Chapter~\ref{ch:lens}.

In Section~\ref{isometry}, we give a calculation of the connected
components of the identity in the isometry groups of the $M(m,n)$, in the
process establishing the viewpoint and notation needed in
Section~\ref{fiberings}.

The first task in Section~\ref{fiberings} is to observe that the elements
of $\pi_1(M(m,n))$ preserve the fibers of the \index{Hopf fibering!of $S^3$}Hopf 
fibering of $S^3$. Consequently there is an induced Seifert fibering of 
the $M(m,n)$, which we call the \index{Hopf fibering!of $M(m,n)$}Hopf Seifert 
fibering of $M(m,n)$. A certain torus $T_0$
in $S^3$, vertical in the Hopf fibering, descends to a vertical Klein
bottle $K_0$ in $M(m,n)$ which we call the \index{base Klein bottle}base Klein 
bottle. On $K_0$, the
Hopf fibering of $M(m,n)$ restricts to the 
\index{longitudinal!fibering of $K_0$}longitudinal fibering in Cases~I
and~II and the 
\index{meridional!fibering of $K_0$}meridional fibering in Case~III. 
In Section~\ref{fiberings},
we also check that the isometries of $M(m,n)$ are fiber-preserving and act
isometrically on the quotient orbifold.

Most of Section~\ref{fiberings} is devoted to verifying two facts:
\begin{enumerate}
\item[(a)] The map from $\isom(M(m,n))$ to the space of fiber-preserving
isometric embeddings of $K_0$ into $M(m,n)$, defined by restriction to
$K_0$, is a homeomorphism onto the connected component of the inclusion
(Lemma~\ref{restriction}).
\item[(b)] The inclusion of the latter space into the space of all
fiber-preserving embeddings of $K_0$ into $M(m,n)$ that are isotopic to the
inclusion is a homotopy equivalence (Lemma~\ref{lem:isom_to_imb}).
\end{enumerate}
The big picture of what is going on here can be seen by consideration of
the three types of quotient orbifolds shown in table~\ref{Kleintable2}, which
correspond to Cases~I, II, and~III respectively. For the first two types,
$K_0$ is the inverse image of a geodesic arc connecting the two order~$2$ cone
points, and for the third type, $K_0$ is the inverse image of a ``great circle''
geodesic in $\RP^2$.  The inverse images of such geodesics are the images of
the fiber-preserving isometric embeddings isotopic to the inclusion, the
so-called \index{special!Klein bottle}``special'' Klein bottles. They 
are the translates of $K_0$ under
$\isom(M(m,n))$ (which also contains ``vertical'' isometries that take each
fiber to itself, so preserve each special Klein bottle). Our precise
description of $\isom(M(m,n))$ allows us to examine its effects on these
Klein bottles and establish fact~(a). For the first two types of quotient
orbifold, a fiber-preserving embedding of $K_0$ that is isotopic to the
inclusion carries $K_0$ onto the inverse image of an arc connecting two
order~$2$-cone points and isotopic (avoiding the third cone point, if there
is one) to a geodesic arc, and for the third type they carry $K_0$ onto the
inverse image of an essential circle in $\RP^2$. Fact~(b) for the third type
of orbifold boils down to the fact that the space of all essential
embeddings of the circle in $\RP^2$ is homotopy equivalent to the space of
geodesic embeddings (which is $L(4,1)$), and analogous properties of arcs
in the other two types of orbifolds. 

The reader who is comfortable with this summary of Sections~\ref{isometry}
and~\ref{fiberings} has little need to wade through their details.\longpage

The Smale Conjecture for the $M(m,n)$ reduces to Theorem~\ref{main}, which
says that the inclusion of the space of \textit{fiber-preserving}
embeddings of $K_0$ into $M(m,n)$ into the space of \textit{all} embeddings
of $K_0$ into $M(m,n)$ is a homotopy equivalence (on the connected
components of the inclusion $K_0\to M(m,n)$). This is the main content of
Section~\ref{mainthms}, and is obtained using the results of
Section~\ref{fiberings} and routine manipulation of exact sequences arising
from fibrations of various spaces of mappings.

The final three sections are the proof of Theorem~\ref{main}. One must
start with a family of embeddings of $K_0$ into $M(m,n)$ parameterized by
$D^k$, and change it by homotopy as an element of
$\maps(D^k,\imb(K_0,M(m,n)))$ to a family of fiber-preserving
embeddings. The embeddings are fiber-preserving at parameters in $\partial
D^k$, and this property must be retained so during the homotopy.
Sections~\ref{sec:generic position} and~\ref{sec:generic position families}
are auxiliary results needed for the main argument in
Section~\ref{parameterization}.

In Section~\ref{sec:generic position}, we analyze the situation when an
embedded Klein bottle $K$ meets $K_0$ in 
\index{generic position}``generic position,'' meaning that
all tangencies are of \index{finite multiplicity type}finite multiplicity type.
In $M(m,n)$, $K_0$ has a
standard neighborhood which is a twisted $\I$-bundle $P$, and $P-K_0$ has a
product structure $T\times (0,1]$ with each 
\indexsym{Tu}{$T_u$}$T_u=T\times\{u\}$ a fibered
``level'' torus.  The key result of the analysis is
Proposition~\ref{prop:circles}, which says for all $u$ sufficiently close
to $0$, each circle of $K\cap T_u$ is either inessential in $T_u$, or
represents $a$ or $b^2$ in $\pi_1(T_u)$. We will see below where this
critical fact is needed.

The proof of Proposition~\ref{prop:circles} uses a technique which may seem
surprising in our differentiable context. Since we may not have full
transversality, we go ahead and make the situation much less transverse, by
a process called \index{flattening}\textit{flattening}. It moves $K$ 
to a PL-embedded Klein
bottle that intersects $K_0$ in a $2$-complex, but still meets torus levels
for $u$ near $0$ in loops isotopic to their original intersection circles
\newpage
with $K$. For these flattened surfaces, combinatorial arguments can be used
to establish that those intersection circles are $a$- and
$b^2$-curves. Proposition~\ref{prop:circles} fails for $M(1,1)$, as we show
by example.

Section~\ref{sec:generic position families} recalls 
\index{Ivanov}Ivanov's idea~\cite{I2}
of perturbing a parameterized family of embeddings of $K_0$ into $M(m,n)$
so that each image meets $K_0$ in generic position. A bit of extra work is
needed to ensure that during a homotopy from our original family to the
generic position family, the embeddings remain fiber-preserving at
parameters in~$\partial D^k$.

Section~\ref{parameterization} is the argument to make a parameterized
family of embeddings $K_0\to K_t\subset M(m,n)$, $t\in D^k$,
fiber-preserving for the Hopf fibering on $M(m,n)$. The first step is a
minor technical trick needed to ensure that no $K_t$ equals $K_0$; this
allows Section~\ref{sec:generic position families} to be applied to assume
that the $K_t$ meet $K_0$ in generic position. Next, we use
\index{Hatcher}Hatcher's methods to simplify the intersections of the Klein
bottles $K_t$ with the torus levels \indexsym{Tu}{$T_u$}$T_u$. Each $K_t$
has finitely many associated torus levels $T_u$, obtained using
Proposition~\ref{prop:circles}. First, we eliminate intersections that are
contractible in $K_t$ (and hence in $T_u$). This part of the argument,
called Step~2, is a straightforward adaptation of Hatcher's arguments
from~\cite{Hold, Hnew}, but we give a fair amount of detail since these
methods are not widely used.\longpage

Step~3 is where the hard work from Section~\ref{sec:generic position} comes
into play. From our analysis of generic position configurations,
specifically Proposition~\ref{prop:circles}, we know that $K_t$ meets its
associated levels $T_u$ in circles that represent $a$ or $b^2$ in
$\pi_1(T_u)$. Now, $T_u$ separates $M(m,n)$ into a twisted $\I$-bundle
$P_u$, containing $K_0$, and a solid torus $R_u$. Some homological
arguments (which again break down for $M(1,1)$) show that a circle of
$K_t\cap T_u$ is a longitude of $R_u$ only if it is isotopic in $T_u$ to a
fiber. Hence any circles of $K_t\cap T_u$ that are \textit{not} isotopic to
fibers are also \textit{not longitudes of $R_u$}, and consequently
\textit{the annuli of $K_t\cap R_u$ that contain them are uniquely
boundary-parallel in $R_u$.} This allows us to once again apply 
\index{Hatcher}Hatcher's
parameterized methods to pull the annuli of $K_t\cap R_u$ whose boundary
circles are not isotopic in $T_u$ to fibers out of $R_u$, achieving that
\textit{every loop of $K_t\cap T_u$ is isotopic in $T_u$ to a fiber.}

Two tasks remain:
\begin{enumerate}
\item[(1)] Make $K_t$ intersect its associated levels $T_u$ in circles that
are fibers and are the images of fibers of $K_0$ under the embedding
$K_0\to M(m,n)$.
\item[(2)] Make the embeddings fiber-preserving on the intersections of
$K_t$ with the other pieces of $M(m,n)$, which are topologically either
twisted $\I$-bundles over $K_0$, product regions between levels, or
solid tori that are complements of twisted $\I$-bundles over~$K_0$.
\end{enumerate}
\newpage\noindent The underlying facts about fiber-preserving embeddings
needed for this are given in Step~4. The final part of the argument,
Step~5, applies these facts, working up the skeleta of a triangulation of
$D^k$, to complete the deformation.

\section[Isometries of elliptic $3$-manifolds]
{Isometries of elliptic $3$-manifolds}
\label{isometry}

In Section~\ref{sec:isometries}, we recalled the isometry groups of
elliptic $3$-manifolds. We will now present the calculations of these
groups--- actually, only the connected component $\isom(M)$ of the
identity--- for the elliptic $3$-manifolds that contain a geometrically
incompressible Klein bottle. Besides giving an opportunity to revisit the
beautiful interaction between the structure of $S^3$ as the unit
quaternions and the structure of $\SO(4)$, which will provide the setting
for some key technical results in Section~\ref{fiberings}.

\index{quaternions|(}%
Fix coordinates on $S^3$ as $\set{(z_0,z_1)\;|\;z_i\in\C, z_0\overline{z_0}
+ z_1\overline{z_1}=1}$. Its group structure as the unit quaternions can
then be given by writing points in the form $z_0+z_1j$, where $j^2=-1$ and
$jz_i=\overline{z_i}j$. The unique element of order~$2$ in $S^3$ is $-1$,
and it generates the center of $S^3$.

By $S^1$ we will denote the subgroup of points in $S^3$ with $z_1=0$, that
is, all quaternions of the form $z_0$, where $z_0$ lies in the unit circle
in $\C$.  Let \indexsymdef{xi}{$\xi_k$}$\xi_k=\exp(2\pi i/k)$, which
generates a cyclic subgroup $C_k\subset S^1$.  The elements $S^1\cup
S^1\,j$ form a subgroup \indexsym{O(2)}{$\Or(2)$}$\Or(2)^*\subset S^3$,
which is exactly the normalizer of $C_k$ if $k>2$.  Also contained in
$\Or(2)^*$ is the binary dihedral group 
\indexsym{Dstar4m}{$D^*_{4m}$}$D_{4m}^*$ generated by $x=j$ and
$y=\xi_{2m}$; its normalizer is $D_{8m}^*$.  By 
\indexsymdef{J}{$J$ (subgroup of $S^3$)}$J$ we denote the subgroup
of $S^3$ consisting of the elements with both $z_0$ and $z_1$ real. It is
the centralizer of~$j$.

The real part 
\indexsymdef{Re(z0,z1)}{$\Re(z_0+z_1j)$}$\Re(z_0+z_1j)$ is the real 
part $\Re(z_0)$ of the complex
number $z_0$, and the imaginary part 
\indexsymdef{Im(z0,z1)}{$\Im(z_0+z_1j)$}$\Im(z_0+z_1j)$ is $\Im(z_0)+z_1j$.
The usual inner product on $S^3$ is given by $z\cdot w=\Re(zw^{-1})$, where
$\Re(z_0+z_1j)=\Re(z_0)$. Consequently, left multiplication and right
multiplication by elements of $S^3$ are orthogonal transformations of
$S^3$, and there is a homomorphism 
$F\colon S^3\times S^3\to \SO(4)$
defined by $F(q_1,q_2)(q)=q_1 q q_2^{-1}$. It is surjective and has kernel
$\set{(1,1),(-1,-1)}$.

The quaternions with real part $0$ are the 
\index{pure imaginary quaternions}pure imaginary quaternions, and
form a subspace \indexsymdef{P(quaternions)}{$P$ (quaternions)}$P\subset S^3$ 
homeomorphic to $S^2$. In fact, $P$ is exactly the orthogonal complement of
$1$. Conjugation by elements of $S^3$ preserves $P$, defining a surjective
homomorphism $S^3\to \SO(3)$ with kernel~$\langle \pm1\rangle$.

Suppose that $G$ is a finite subgroup of $\SO(4)$ acting freely on
$S^3$. Since $\SO(4)$ is the full group of orientation-preserving
isometries of $S^3$, the orienta\-tion-preserving isometries 
\indexsymdef{Isom+S3/G}{$\Isom_+(S^3/G)$}$\Isom_+(S^3/G)$
are the quotient $\Norm(G)/G$, where 
\indexsymdef{NormG}{$\Norm(G)$}$\Norm(G)$ is the 
\index{normalizer}normalizer of $G$ in
$\SO(4)$. Assuming that the group $G$ is clear from the context, we denote
the isometry that an element $F(q_1,q_2)$ of $\Norm(G)$ induces on $S^3/G$
by $f(q_1,q_2)$.
\index{quaternions|)}

\index{isometry groups!calculation|(}%
\index{isometries!calculation of $\protect\isom(\pi_1(M(m,n)))$|(}Let
$G^*= F^{-1}(G)$, and let $G_L$ and $G_R$ be the projections of $G^*$
into the left and right factors of $S^3\times S^3$.  Notice that
$\Norm(G)/G\cong \Norm(G^*)/G^*$. The connected component of the identity
in $\Norm(G^*)$ is denoted by $\normalizer(G^*)$. Since $G^*$ is discrete,
these elements centralize $G^*$. Consequently, $\normalizer(G^*)$ is the product
$\normalizer(G_L)\times\normalizer(G_R)$ of the corresponding connected
normalizers of $G_L$ and $G_R$ in the $S^3$ factors. The connected
component of the identity in the isometry group of $S^3/G$ is then
$\isom(M)=\normalizer(G^*)/(G^*\cap\normalizer(G^*))$. We now compute
$\isom(M(m,n))$ for the four cases listed in Section~\ref{sec:SCtoday}:

\medskip
\noindent {\sl Case II and IV.} $m=1$.

The element $F(\xi_{4n}^{n-1},i)$ acts on $S^3$ by
$$F(\xi_{4n}^{n-1},i)(z_0+z_1j)=\xi_{4n}^{n-1}z_0(-i)+
\xi_{4n}^{n-1}z_1j(-i)=\xi_{4n}^{-1}z_0+\xi_{4n}^{2n-1}z_1j\ .$$ 
\noindent Consequently the quotient of $S^3$ by the subgroup generated by
$F(\xi_{4n}^{n-1},i)$ is $L(4n,2n+1)=L(4n,2n-1)=M(1,n)$. For some work in
Section~\ref{fiberings}, however, it is more convenient to use a conjugate
of this generator. Conjugation by
$F(1,\frac{1}{\sqrt{2}}i+\frac{1}{\sqrt{2}}j)$ moves $F(\xi_{4n}^{n-1},i)$
to $F(\xi_{4n}^{n-1},j)$. The latter will be our standard generator for
$G=\pi_1(M(1,n))$.

Letting $G$ be the group $C_{4n}$ generated by $F(\xi_{4n}^{n-1},j)$, $G_R$
is the cyclic subgroup of order $4$ generated by $j$, so
$\normalizer(G_R)=J$, and $G_L$ is generated by $\{\xi_{4n}^{n-1}, -1\}$.
If $n=1$, then $\xi_{4n}^{n-1}=1$ and $G_L=C_2$. If $n>1$ then
$\xi_{4n}^{n-1}$ has order $4n/\gcd(4n,n-1)=4n/\gcd(4,n-1)$, so $G_L$ is
$C_{4n}$ if $n$ is even, $C_{2n}$ if $n\equiv 3\mod 4$, and $C_{n}$ if
$n\equiv 1\mod 4$.

\begin{enumerate}
\item If $n=1$, then $\normalizer(G_L)=S^3$, and $\isom(M(1,1))\cong \SO(3)\times
S^1$, consisting of all isometries of the form $f(q,x)$ with $(q,x)\in
S^3\times J$.
\item If $n>1$, then $\normalizer(G_L)=S^1$, so $\isom(M(1,n))\cong S^1\times
S^1$, consisting of all isometries of the form $f(x_1,x_2)$ with
$(x_1,x_2)\in S^1\times J$.
\end{enumerate}

\noindent {\sl Case III.} $m>1$ and $n=1$.

We embed $G=D_{4m}^*$ in $\SO(4)$ as the subgroup
$F(D_{4m}^*\times\set{1})$. We have $G_L=D_{4m}^*$ and $G_R= C_2$, so
$\normalizer(G_L)\times \normalizer(G_R)=\{1\}\times S^3$. Therefore
$\isom(M(m,1))\cong \SO(3)$, and consists of all isometries of the form
$f(1,q)$.

\smallskip\noindent {\sl Case I.} $m>1$ and $n>1$.

If $n$ is odd, then $G= C_n\times D_{4m}^*$, and we embed $G$ in $\SO(4)$ as
$F(C_{2n}\times D_{4m}^*)$, so $G_L=C_{2n}$ and $G_R=D_{4m}^*$. If $n$ is
even, then $G$ is the image in $\SO(4)$ of the unique 
\index{diagonal subgroup}diagonal subgroup of
index $2$ in $C_{4n}\times D_{4m}^*$, so $G_L=C_{4n}$ and
$G_R=D_{4m}^*$. In either case, we have
$\normalizer(G_L)\times\normalizer(G_R)=S^1\times\set{1}$. Therefore $\isom(M(m,n))\cong
S^1$, and consists of all isometries of the form $f(x,1)$ with 
$x\in S^1$.\index{isometries!calculation of $\protect\isom(\pi_1(M(m,n)))$|)}
\index{isometry groups!calculation|)}
\smallskip

\index{table!$\isom(M(m,n))$ calculations}Table~\ref{Kleintable1} 
summarizes our calculations of $\Isom(M(m,n))$.
\begin{table}
\begin{scriptsize}
\renewcommand{\arraystretch}{1.5}
\setlength{\fboxsep}{0pt}
\fbox{%
\begin{tabular}{c|c|c}
$m,n$ values&$M$&$\isom(M)$\\
\hline
\hline
$m= n= 1$&$L(4,1)$&%
\makebox[13 ex][c]{$\SO(3)\times S^1$}$=$%
\makebox[27 ex][l]{\ $\set{f(q,x)\vbar (q,x)\in S^3\times J}$}\\
\hline
$m= 1,\,n>1$&$L(4n,2n-1)$&%
\makebox[13 ex][c]{$S^1\times S^1$}$=$%
\makebox[27 ex][l]{\ $\set{f(x,y)\vbar (x,y)\in S^1\times J}$}\\
\hline
$m>1,\,n=
1$&\begin{minipage}[m]{3.24cm}\begin{center}\rule[2.4ex]{0mm}{0mm}quaternionic 
($m=2$)\\
or \rule[-1.3ex]{0mm}{0mm}prism ($m>2$)\end{center}\end{minipage}&%
\makebox[13 ex][c]{$\SO(3)$}$=$%
\makebox[27 ex][l]{\ $\set{f(1,q)\vbar q\in S^3}$}\\
\hline
$m>1,\,n> 1$&\begin{minipage}[m]{3.24cm}\begin{center}\rule[2.4ex]{0mm}{0mm}quaternionic 
($m=2$)\\
or \rule[-1.3ex]{0mm}{0mm}prism ($m>2$)\end{center}\end{minipage}&%
\makebox[13 ex][c]{$S^1$}$=$%
\makebox[27 ex][l]{\ $\set{f(x,1)\vbar x\in S^1}$}\\
\end{tabular}}
\end{scriptsize}
\bigskip\medskip
\caption{Isometry groups of the $M(m,n)$}
\label{Kleintable1}
\end{table}

\section[Hopf fiberings and special Klein bottles]
{The Hopf fibering of $M(m,n)$ and special Klein bottles}
\label{fiberings}

From now on, we use $M$ to denote one of the manifolds $M(m,n)$ with $m>1$
or $n>1$. In this section, we construct certain Seifert fiberings of these
$M$, which we will call their Hopf fiberings, and examine the effect of
$\isom(M)$ on them. Also, we define certain vertical Klein bottles in $M$,
called special Klein bottles, are deeply involved in the reductions carried
out in Section~\ref{mainthms}. A certain special Klein bottle $K_0$, called
the base Klein bottle, will play a key role.

We will regard the $2$-sphere $S^2$ as $\C\cup\set{\infty}$. We speak of
antipodal points and orthogonal transformations on $S^2$ by transferring
them from the unit $2$-sphere using the stereographic projection that
identifies the point $(x_1,x_2,x_3)$ with $(x_1+x_2i)/(1-x_3)$. For
example, the antipodal map $\alpha$ is defined by
$\alpha(z)=-1/\overline{z}$.

As is well-known, the Hopf fibering on $S^3$ is an $S^1$-bundle structure
with projection map $H\colon S^3\to S^2$ defined by
$H(z_0,z_1)=z_0/z_1$. The left action of $S^1$ on $S^3$ takes each Hopf
fiber to itself, so preserves the Hopf fibering. The element $F(j,1)$ also
preserves it.  For $j(z_0+z_1j)=-\overline{z_1}+\overline{z_0}j$, so
$H(F(j,1)(z_0+z_1j))=-1\,/\,\overline{z_0/z_1}$. Right multiplication by
elements of $S^3$ commutes with the left action of $S^1$, so it preserves
the Hopf fibering, and there is an induced action of $S^3$ on $S^2$. In
fact, it acts orthogonally. For if we write $x=x_0+x_1j$ and $z=z_0+z_1j$,
we have $zx^{-1}=z_0\overline{x_0}+z_1\overline{x_1} +(z_1x_0-z_0x_1)j$, so
the induced action on $S^2$ carries $z_0/z_1$ to $
(z_0\overline{x_0}+z_1\overline{x_1})/(z_1x_0-z_0x_1)
=\displaystyle\frac{\overline{x}_0\left(\displaystyle\frac{z_0}{z_1}\right) +
  \overline{x}_1}
{-x_1\left(\displaystyle\frac{z_0}{z_1}\right) + x_0}
=\begin{pmatrix}\phantom{-}\overline{x_0}&\overline{x_1}\\ 
-x_1&x_0\end{pmatrix}(z_0/z_1)$.
The trace of this linear fractional transformation is real and lies between
$-2$ and $2$ (unless $x=\pm 1$, which acts as the identity on $S^2$), so it
is elliptic.  Its fixed points are
$\big(\,(x_0-\overline{x_0})\pm\sqrt{(x_0-\overline{x_0})^2 -
  4x_1\overline{x_1}\,}\,\,\big)/(2x_1)$, which are antipodal, so it is an
orthogonal transformation. Combining these observations, we see that the
action induced on $S^2$ via $H$ determines a surjective homomorphism
\indexsym{O(2)star}{$\Or(2)^*$}%
$h\colon \Ostar\times S^3\to\Or(3)$, given by $h(x_0,1)=1$ for $x_0\in
S^1$, $h(j,1)=\alpha$, and $h(1,x_0+x_1j)=
\begin{pmatrix}\phantom{-}\overline{x_0}&\overline{x_1}\\
-x_1&x_0\end{pmatrix}$. The kernel of $h$ is $S^1\times\set{\pm 1}$.

With the explicit embeddings selected in Section~\ref{isometry},
each of our groups $G=\pi_1(M)$ lies in $F(\Ostar\times S^3)$, so
preserves the Hopf fibering, and descends to a Seifert fibering on
$M(m,n)=S^3/G$.
\begin{definition}\indexdef{Hopf fibering!of $M(m,n)$}
The \textit{Hopf fibering} of $M(m,n)$ is the image of the Hopf fibering of
$S^3$ under the quotient map $S^3\to S^3/\pi_1(M(m,n))$. We will always use
the Hopf fibering on the
manifolds~$M(m,n)$.\par\label{def:Hopf_fibering}\end{definition}

The Hopf fibering $H\colon S^3\to S^2$ induces the orbit map $M(m,n)\to
S^2/h(G)$, and the orbit map is induced by the composition of $H$ followed
by the quotient map from $S^2$ to the quotient orbifold $S^2/h(G)$. The
quotient orbifolds for our fiberings are easily calculated using the
explicit embeddings of $G$ into $\SO(4)$ given in Section~\ref{isometry},
together with the facts that $h(j,1)=\alpha$, $h(1,\xi_{2m})=r_m$, the
(clockwise) rotation through an angle $2\pi/m$ with fixed points $0$ and
$\infty$, defined by $r_m(z)=\xi_m^{-1}z$, and $h(1,j)=t$, the rotation
through an angle $\pi$ with fixed points $\pm i$, defined by
$t(z)=-1/z$. Table~\ref{Kleintable2} lists the various cases, where
$(F;n_1,\dots,n_k)$ denotes the $2$-orbifold with underlying topological
space the surface $F$ and $k$ cone points of orders $n_1,\dots,n_k$.

\index{table!quotient orbifolds}
\begin{table}
\begin{center}
\labellist
\pinlabel $2$ [B] at -7 90
\pinlabel $2$ [B] at 68 89
\pinlabel $m$ [B] at 44 18
\pinlabel $2$ [B] at 151 135
\pinlabel $2$ [B] at 151 18
\pinlabel $(S^2;2,2,m)$ [B] at 32 -6
\pinlabel $(S^2;2,2)$ [B] at 138 -6
\pinlabel $(\RP^2;)$ [B] at 268 -6
\endlabellist
\includegraphics[width=54ex]{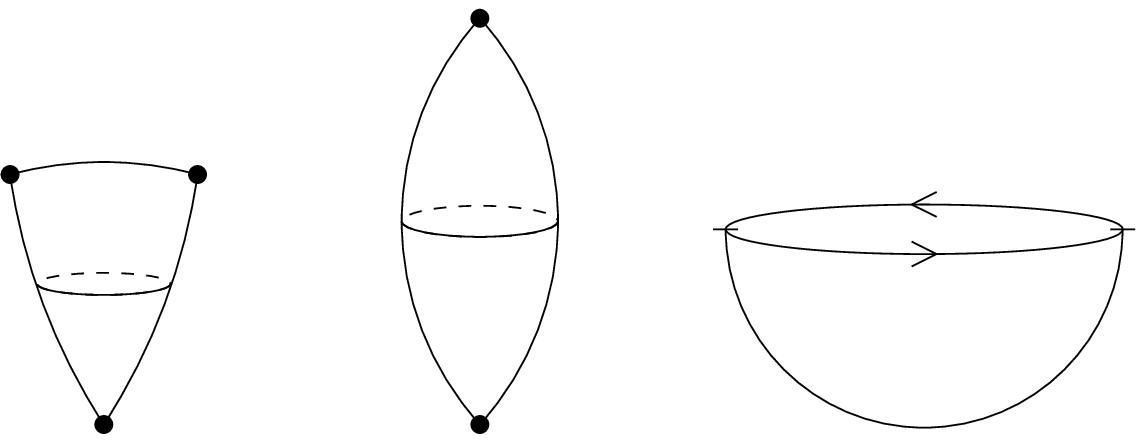}
\end{center}
\vspace{3 ex}
\begin{small}
\renewcommand{\arraystretch}{1.5}
\setlength{\fboxsep}{0pt}
\fbox{%
\begin{tabular}{c|c|c|c}
$m,n$ values&$h(\pi_1(M))$&$\orb$&$\isom(\orb)$\\
\hline
\hline
$m>1$, $n>1$&$D_{2m}=\langle r_m,t\rangle$&$(S^2;2,2,m)$&$\set{1}$\\
\hline
$m=1$, $n>1$&$C_2=\langle t\rangle$&$(S^2;2,2)$&$\SO(2)$\\
\hline
$m>1$, $n=1$&$C_2=\langle \alpha\rangle$&$(\RP^2;)$&$\SO(3)$\\
\end{tabular}}
\end{small}
\bigskip\medskip
\caption{Quotient orbifolds for the Hopf fiberings}
\label{Kleintable2}
\end{table}

Since $m>1$ or $n>1$, we have $\normalizer(\pi_1(M))\subset F(\Ostar\times
S^3)$, so $\isom(M)$ preserves the Hopf fibering. Since the quotient
orbifolds are the quotients of orthogonal actions on $S^2$, they have
metrics of constant curvature $1$, except at the cone points, where the
cone angle at an order $k$ cone point is $2\pi/k$. Table~\ref{Kleintable2} shows
the quotient orbifolds with shapes that suggest the symmetries for this
constant curvature metric. The isometry group of each orbifold $\orb$ is
the normalizer of its orbifold fundamental group $h(G)$ in the isometry
group $\Or(3)$ of $S^2$. The homomorphism $h$ induces a homomorphism
$\isom(M)\to \isom(\orb)$, and from the explicit description of $\isom(M)$
from Table~\ref{Kleintable1} we can use $h$ to compute the image. In each case,
all isometries in the connected component of the identity, $\isom(\orb)$,
are induced by elements of $\isom(M)$. (The groups $\isom(\orb)$ are
computed as $\normalizer(G)/(G\cap \normalizer(G))$ where $\normalizer(G)$
is the connected component of the identity in the normalizer of $G$ in
$\isom(S^2)=\SO(3)$. In particular, $\isom(\RP^2)=\SO(3)$, which can be
seen directly by noting that each isometry of $\RP^2$ lifts to an unique
orientation-preserving isometry of $S^2$.)

Our next task is to understand the fibered Klein bottles in $M$.
\begin{definition}
A torus $T\subset S^3$ \indexdef{special!torus}\textit{special} if its 
image in $S^2$ under $H$ is a
great circle. Klein bottles in $M$ that are the images of special tori in
$S^3$ are called 
\indexdef{special!Klein bottle}\textit{special Klein bottles.} 
A suborbifold in $\orb$ is
called \indexdef{special!suborbifold of $\protect\mathcal{O}$}\textit{special} 
when it is either
\begin{enumerate}
\item[\textup(i)] a one-sided geodesic circle (when $\orb=\RP^2$), or 
\item[\textup(ii)] a geodesic arc connecting two order-$2$ cone points (in
the other two cases).
\end{enumerate}
\label{def:special}
\end{definition}
\noindent Clearly special tori are vertical in the Hopf fibering. We remark
that special tori are Clifford tori, that is, they have induced curvature
zero in the usual metric on~$S^3$.

A Klein bottle in $M$ is special if and only if its image in $\orb$ is a
special suborbifold. To see this, consider a special torus $T$ in $S^3$. If
its image in $\orb$ is special, then its image in $M$ is a one-sided
submanifold, so must be a Klein bottle. Conversely, the projection of $T$
to $\orb$ must always be a geodesic, and if its image in $M$ is a
submanifold, then the projection to $\orb$ cannot have any self
intersections or meet a cone point of order more than $2$. And if the
projection is a circle, it is one-sided if and only if the image of $T$ in
$M$ is one-sided.

Note that the fibering on a special Klein bottle is meridional
(i.~e.~an $S^1$-bundle over $S^1$) in case (i),
and longitudinal (two exceptional fibers that are center circles of
M\"obius bands) in case (ii). From Table~\ref{Kleintable2}, we see that:
\begin{enumerate}
\item When $n=1$, special Klein bottles have the meridional fibering.
\item When $n>1$, special Klein bottles have the longitudinal fibering.
\end{enumerate}
\shortpage

Let $T_0$ be the fibered torus $H^{-1}(U)$, where $U$ is the unit circle in
$S^2$. Explicitly, $T_0$ consists of all $z_0+z_1j$ for which
$\abs{z_0}=\abs{z_1}=\frac{1}{\sqrt{2}}$. Observe that the isometries
\indexsym{OstartimesOstar}{$\protect\Ostar\protect\ttimes\protect\Ostar$}$F(\Ostar\times \Ostar)$ of $S^3$ leave $T_0$ invariant.  The action of
$F(\Ostar\times \Ostar)$ on $T_0$ can be calculated using the normalized
coordinates $[x_0,y_0]\in S^1\times S^1/\langle (-1,-1)\rangle$, where
$[x_0,y_0]$ corresponds to the point
$x_0\bigl(\frac{1}{\sqrt{2}}\,\overline{y_0}\,i+\frac{1}{\sqrt{2}}\,y_0\,j\bigr)$.
For $(z_0,w_0)\in S^1\times S^1$, we have $F(z_0,w_0)[x_0,y_0]=
[z_0x_0,w_0y_0]$. Also:
\begin{enumerate}
\item[(a)] $F(j,1)[x_0,y_0]=[-\overline{x_0},i\,y_0]$. Viewed in the
fundamental domain $\Im(x_0)\geq0$ for the involution on
$T_0=\{(x_0,y_0)\}$ that multiplies by $(-1,-1)$, this rotates the
$y_0$-coordinate through $\pi/2$, and reflects in the $x_0$-coordinate
fixing the point $i$.
\item[(b)] $F(1,j)[x_0,y_0]=[i\,x_0,-\overline{y_0}]$. Again viewing in the
fundamental domain $\Im(y_0)\geq0$ in $T_0$, this rotates the
$x_0$-coordinate through $\pi/2$, and reflects in the $y_0$-coordinate
fixing the point $i$.
\end{enumerate}
In fact, the restriction of 
\indexsym{OstartimesOstar}{$\protect\Ostar\protect\ttimes\protect\Ostar$}$F(\Ostar\times \Ostar)$ to $T_0$ is exactly
the group of all fiber-preserving isometries $\Isom_f(T_0)$.  The Hopf
fibers are the orbits of the action of $F(S^1\times \set{1})$ on~$T_0$, so
are the circles with constant $y_0$-coordinate. Using (a) and (b), we find
that $F(j,i)[x_0,y_0]=[\overline{x_0}, y_0]$ and
$F(i,j)[x_0,y_0]=[x_0, \overline{y_0}]$. The elements $F(z_0,w_0)$ act
transitively on $T_0$, and only the two reflections $F(i,j)$ and $F(j,i)$
and their composition fix $[1,1]$ and preserve the fibers, so together they
generate all the fiber-preserving isometries.

Since $F(\Ostar\times \Ostar)$ contains (each of our groups) $G$, the image
of $T_0$ in $M$ is a fibered submanifold $K_0$.
When $n=1$, the image of $T_0$ in $\orb$ is a geodesic circle which is the
center circle of a M\"obius band, and when $n>1$ its image is a geodesic
arc connecting two cone points of order $2$, so $K_0$ is a special Klein
bottle. 
\begin{definition}
The special Klein bottle $K_0$ is called the 
\indexdef{base Klein bottle}\textit{base Klein bottle} of
$M(m,n)$.
\label{def:baseKleinbottle}
\end{definition}
\noindent Since $K_0$ is special, it has the meridional or longitudinal
fibering according as $n=1$ or $n>1$.

Since $G$ acts by isometries on $S^3$, the subspace metric on $T_0$ induces
a metric on $K_0$ such that the inclusion of $K_0$ into $M$ is
isometric. Denote by $\isom_f(K_0,M)$ the connected component of the
inclusion in the space of all fiber-preserving isometric embeddings of
$K_0$ into~$M$. Since the isometries of $M$ are fiber-preserving, their
compositions with the inclusion determine a map
$\isom(M)\to\isom_f(K_0,M)$. By composition with the inclusion, we may
regard $\isom_f(K_0)$, the connected component of the identity in the group
of fiber-preserving isometries of~$K_0$, as a subspace of~$\isom_f(K_0,M)$.

\begin{lemma} 
If $m>1$ or $n>1$, then $\isom(M)\to\isom_f(K_0,M)$ is a
homeomorphism. Moreover,
\begin{enumerate}
\item[(i)] If $n=1$, then the elements $f(1,w_0)$ for $w_0\in S^1$ preserve
$K_0$, and restriction of this subgroup of $\isom(M)$ gives a homeomorphism
$S^1\to \isom_f(K_0)$.
\item[(ii)] If $n>1$, then the elements $f(x_0,1)$ for $x_0\in S^1$ preserve
$K_0$, and restriction of this subgroup of $\isom(M)$ gives a homeomorphism
$S^1\to \isom_f(K_0)$.
\end{enumerate}
\label{restriction}
\end{lemma}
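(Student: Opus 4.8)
The plan is to prove this by combining the explicit description of $\isom(M)$ from Section~\ref{isometry} (summarized in Table~\ref{Kleintable1}) with the explicit action of $F(\Ostar\times\Ostar)$ on $T_0$ computed just above. The statement has three interlocking parts: that $\isom(M)\to\isom_f(K_0,M)$ is a homeomorphism, and the identifications in (i) and (ii) of which one-parameter subgroup of $\isom(M)$ restricts to a homeomorphism onto $\isom_f(K_0)$. I would organize the argument around establishing that the restriction map is (a) well-defined and continuous, (b) injective, and (c) surjective onto the connected component of the inclusion, and then note that since both sides are finite-dimensional manifolds (being, respectively, a compact Lie group and a space of isometric embeddings of a compact Riemannian manifold, which is a manifold modeled on the finite-dimensional space of Killing fields along $K_0$) a continuous bijection that is a local homeomorphism is a homeomorphism.

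First I would handle continuity and well-definedness: since every isometry of $M$ is fiber-preserving (established in Section~\ref{fiberings} using $\normalizer(\pi_1(M))\subset F(\Ostar\times S^3)$), and since $K_0$ maps isometrically into $M$, composition of an element of $\isom(M)$ with the inclusion $K_0\hookrightarrow M$ yields a fiber-preserving isometric embedding; continuity is automatic from continuity of composition. That the image lands in the connected component of the inclusion follows because $\isom(M)$ is connected. For injectivity, I would use the fact that an isometry of a closed Riemannian manifold is determined by its $1$-jet at a single point: if two elements of $\isom(M)$ agree on $K_0$, they agree with their derivatives at points of $K_0$, hence are equal. The key geometric input here is that $K_0$ has nonempty interior as a subset in the relevant sense---more precisely, that the isometry and its derivative are pinned down---which is immediate since $K_0$ is a codimension-one submanifold.

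The main work, and the part I expect to be the principal obstacle, is surjectivity: showing that every fiber-preserving isometric embedding of $K_0$ into $M$ isotopic to the inclusion extends to an isometry of $M$ in the identity component. Here I would work upstairs in $S^3$: a fiber-preserving isometric embedding of $K_0$ isotopic to the inclusion lifts (using that $T_0\to K_0$ is the universal-type covering restricted from $S^3\to M$, and that the embedding is close to the inclusion on the relevant component) to a $\pi_1(M)$-equivariant fiber-preserving isometric embedding of $T_0$ into $S^3$. Such an embedding carries $T_0$ to a Clifford torus that is vertical in the Hopf fibering and whose image in $\orb=S^2/h(G)$ is a great circle or the appropriate geodesic; the space of such is acted on transitively by $F(\Ostar\times\Ostar)$, which as computed above realizes all of $\Isom_f(T_0)$. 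One then checks that the composition can be adjusted by an element of $F(\Ostar\times\Ostar)$ to be equivariant with respect to $G$, so it descends to an element of $\isom(M)$; tracking connected components shows it lies in the identity component. The bookkeeping of which elements of $F(\Ostar\times\Ostar)$ commute with $G$ (equivalently, lie in $\normalizer(G^*)$) is exactly the computation already done in Section~\ref{isometry}, so this reduces to matching against Table~\ref{Kleintable1}.

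Finally, for (i) and (ii): in case $n=1$ one has $\isom(M)=\set{f(1,q)\vbar q\in S^3}\cong\SO(3)$, and from formula~(b) above (and the description of the $F(\Ostar\times\Ostar)$-action on $T_0$) the elements $f(1,w_0)$ with $w_0\in S^1$ are precisely those preserving $K_0$ and acting fiber-preservingly, giving an isomorphism $S^1\to\isom_f(K_0)$ after noting the rotation $f(1,w_0)$ acts on $T_0$ by $[x_0,y_0]\mapsto[w_0x_0,\ol{w_0}y_0]$ (a fiber-preserving rotation) and that $\isom_f(K_0)$ is connected, hence a circle. In case $n>1$, $\isom(M)$ contains $\set{f(x_0,1)\vbar x_0\in S^1}$; since $F(z_0,w_0)[x_0,y_0]=[z_0x_0,w_0y_0]$, the subgroup $F(S^1\times\set1)$ acts on $T_0$ by rotating the $x_0$-coordinate, which is fiber-preserving when $n>1$ because then $K_0$ has the longitudinal fibering (fibers are not the constant-$y_0$ circles), so $f(x_0,1)$ preserves $K_0$ and restricts to an element of $\isom_f(K_0)$; again connectedness plus the count of fiber-preserving isometries of $T_0$ commuting with $G$ gives the homeomorphism $S^1\to\isom_f(K_0)$. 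In both cases the argument is a direct check against the two displayed action formulas, so I would present it tersely.
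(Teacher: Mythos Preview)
Your overall strategy---combine the explicit description of $\isom(M)$ from Section~\ref{isometry} with the explicit $F(\Ostar\times\Ostar)$-action on $T_0$, and work upstairs in $S^3$---is the paper's strategy too, but the execution has a real error in the surjectivity step and some smaller slips.

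The main gap is your transitivity claim. You write that the space of Hopf-vertical Clifford tori ``is acted on transitively by $F(\Ostar\times\Ostar)$, which as computed above realizes all of $\Isom_f(T_0)$.'' These two clauses contradict each other: the statement that $F(\Ostar\times\Ostar)$ restricts to all of $\Isom_f(T_0)$ says precisely that $F(\Ostar\times\Ostar)$ is the \emph{stabilizer} of $T_0$, so it fixes $T_0$ rather than moving it transitively among special tori. Moving $T_0$ to other special tori requires the full right $S^3$-factor acting by rotations on $S^2$. The paper avoids this muddle by factoring surjectivity into two pieces: first it checks on the quotient orbifolds $(S^2;2,2,m)$, $(S^2;2,2)$, $(\RP^2;)$ that $\isom(\orb)$ acts transitively on the special suborbifolds, and since $\isom(M)\to\isom(\orb)$ is onto, $\isom(M)$ acts transitively on special Klein bottles isotopic to $K_0$; second, it reduces to showing that every element of $\isom_f(K_0,M)$ carrying $K_0$ onto itself is a restriction from $\isom(M)$, which is a case-by-case computation of the identity component of $\Norm(G)$ inside $\Isom_f(T_0)=F(\Ostar\times\Ostar)$, carried out by locating $G$-invariant circles in $T_0$ that $\Norm(G)$ must preserve. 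This second step simultaneously yields~(i) and~(ii).

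Two smaller issues. Your injectivity argument (``agree on $K_0$ $\Rightarrow$ same $1$-jet'') forgets that the normal derivative along the codimension-one $K_0$ could be a reflection; the paper supplies the missing line---an element of $\isom(M)$ is isotopic to the identity, so cannot locally interchange the sides of $K_0$. And your action formulas are off: from $F(z_0,w_0)[x_0,y_0]=[z_0x_0,w_0y_0]$ one has $F(1,w_0)[x_0,y_0]=[x_0,w_0y_0]$, not $[w_0x_0,\ol{w_0}y_0]$; and rotation of the $x_0$-coordinate is fiber-preserving because Hopf fibers are the constant-$y_0$ circles in every case, not because of which fibering $K_0$ carries.
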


\begin{proof} 
For injectivity, suppose that an element of $\isom(M)$ fixes each point of
$K_0$. Since it is isotopic to the identity, it cannot locally interchange
the sides of $K_0$. Since it is an isometry, this implies it is the
identity on all of~$M$.

For surjectivity, we first examine the action of $\isom(M)$ on special
Klein bottles in~$M$.

For the quotient orbifolds of the form $(S^2;2,2)$, the special
suborbifolds are the portions of great circles running between the two cone
points, and for those of the form $(\RP^2;)$, they are the images of great
circles under $S^2\to \RP^2$. For those of the form $(S^2;2,2,m)$ with
$m>2$, the geodesic running between the two order-$2$ cone points is the
unique special suborbifold. In all of these cases, $\isom(\orb)$ acts
transitively on the special suborbifolds. In the remaining case of
$(S^2;2,2,2)$, there are three special suborbifolds corresponding to the
three nonisotopic special Klein bottles in $M$, and $\isom(\orb)$ acts
transitively on the special suborbifolds isotopic to~$K_0$. Since all
elements of $\isom(\orb)$ are induced by elements of $\isom(M)$, it follows
that in all cases, $\isom(M)$ acts transitively on the space of special
Klein bottles in $M$ that are isotopic to~$K_0$.

Since $\isom(M)$ acts transitively on the space of special Klein bottles
isotopic to $K_0$, it remains to check that any element of $\isom_f(K_0,M)$
that carries $K_0$ to $K_0$ is the restriction of an element of~$\isom(M)$.

Consider first the case when $m>1$ and $n=1$, so $G$ is
$F(D_{4m}^*\times\set{1})$ and $K_0$ has the meridional fibering. The
fiber-preserving isometry group $\Isom_f(K_0)$ is $\Norm(G)/G$ where
$\Norm(G)$ is the normalizer of $G$ in $\Isom_f(T_0)$. The elements in
$F(C_{2m}\times\set{1})$ rotate in the $x_0$-coordinate, while the element
$F(j,1)$ is as described in (a). So each element of $G-C_{2k}$ leaves
invariant a pair of circles each having constant $x_0$-coordinate. The
union of these invariant circles for all the elements of $G-C_{2k}$ must be
invariant under the action of $\Norm(G)$ on $T_0$, so the identity
component of $\Norm(G)$ consists only of $F(\set{1}\times
S^1)$. Consequently the elements $f(1,w_0)$ of $\isom(M)$ induce all
elements of $\isom_f(K_0)$, proving the surjectivity of
$\isom(M)\to\isom_f(K_0,M)$ and verifying assertion~(i).

For $m=1$ and $n>1$, $G$ is cyclic generated by $F(\xi_{4n}^{n-1},j)$ and
$K_0$ has the longitudinal fibering. Since $F(1,j)$ is as described in (b),
there is a pair of circles in $T_0$, each having constant $y_0$-coordinate
and each invariant under all elements of $G$ (these circles become the
exceptional fibers in $K_0$). Since these circles must be invariant under
the normalizer of $G$ in $\Isom(T_0)$, the identity component of $\Norm(G)$
consists only of $F(S^1\times \set{1})$. Therefore the isometries
$f(x_0,1)$ with $x_0\in S^1$ of $\isom(M)$ induce all elements of
$\isom_f(K_0)$, proving the surjectivity of $\isom(M)\to\isom_f(K_0,M)$ and
verifying assertion~(ii) for this case.

Finally, if both $m>1$ and $n>1$, then $G$ contains $F(1,j)$ and $K_0$ has
the longitudinal fibering. Again, the identity component of $\Norm(G)$ is
$F(S^1\times \set{1})$, and the isometries $f(x_0,1)$ with $x_0\in S^1$
induce all of $\isom_f(K_0)$.
\end{proof}

In the space of all smooth fiber-preserving embeddings of $K_0$ in $M$ (for
the appropriate fibering on $K_0$), let $\imb_f(K_0,M)$ denote the
connected component of the inclusion.

\begin{lemma} 
If either $m>1$ or $n>1$, then the inclusion
\[\isom_f(K_0,M)\to\imb_f(K_0,M)\] 
is a homotopy equivalence.
\label{lem:isom_to_imb}
\end{lemma}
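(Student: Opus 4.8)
The plan is to reduce the assertion to two facts that can be read off from the quotient orbifold $\orb$ and from the Seifert structure of $K_0$ itself. A fiber-preserving embedding $j\colon K_0\to M$ has a ``shadow'': the embedding $\overline{j}$ of the base $1$-orbifold $B_0=p(K_0)$ of the fibering of $K_0$ into $\orb$; when $j$ is an isometry, $\overline{j}$ is an isometric embedding whose image is a \emph{special} suborbifold of $\orb$ (a geodesic arc joining the two order-$2$ cone points in Cases~I and~II, a one-sided geodesic circle in $\RP^2$ in Case~III). First I would invoke the projection--restriction square for singular fiberings (Theorem~\ref{sfsquare}, with $W=K_0$ and empty boundary data): its right-hand vertical arrow shows that the shadow map $\Imb_f(K_0,M)\to \Imb(B_0,\orb)$ is locally trivial, hence so is its restriction $\imb_f(K_0,M)\to B_{\mathrm{diff}}$ onto the component $B_{\mathrm{diff}}$ of $\Imb(B_0,\orb)$ consisting of the suborbifold embeddings of $B_0$ isotopic to the inclusion; the fiber over the inclusion is a union of components of the vertical self-diffeomorphism group $\Diff_v(K_0)$, and since $\imb_f(K_0,M)$ is connected it is the identity component $\diff_v(K_0)$.

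Next I would build the parallel isometric fibration. By Lemma~\ref{restriction}, $\isom_f(K_0,M)$ is homeomorphic to $\isom(M)$, and (as recorded in Section~\ref{fiberings}) $\isom(M)$ surjects onto the identity component $\isom(\orb)$; this yields a fibration $\isom_f(K_0,M)\to B_{\mathrm{isom}}$ onto the space $B_{\mathrm{isom}}$ of special suborbifold embeddings of $B_0$ into $\orb$ isotopic to the inclusion, with fiber $\isom_v(K_0)$, the identity component of the vertical isometries of $K_0$. The inclusions assemble into a commuting ladder of fibrations,
\[\begin{array}{ccccc}
\isom_v(K_0)&\longrightarrow&\isom_f(K_0,M)&\longrightarrow&B_{\mathrm{isom}}\\
\big\downarrow&&\big\downarrow&&\big\downarrow\\
\diff_v(K_0)&\longrightarrow&\imb_f(K_0,M)&\longrightarrow&B_{\mathrm{diff}}
\end{array}\]
and by comparing the associated long exact homotopy sequences via the five lemma --- together with a routine check at the level of $\pi_0$ and $\pi_1$, where one uses connectedness of $\imb_f(K_0,M)$ and traces the generating circle actions --- it suffices to prove that the two outer columns are homotopy equivalences.

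For the left-hand column, $\isom_v(K_0)$ and $\diff_v(K_0)$ are identity components of groups of vertical maps of the Seifert-fibered Klein bottle $K_0$. A fiberwise deformation retraction of $\Diff(S^1)$ onto its rotation subgroup, carried out continuously over the base $1$-orbifold of $K_0$ and compatibly with its (possibly exceptional) fibers, exhibits $\diff_v(K_0)$ as deformation retracting onto $\isom_v(K_0)$; one checks separately for the meridional fibering ($n=1$) and the longitudinal fibering ($n>1$) that in each case both groups are contractible or homotopy equivalent to $S^1$ with $\pi_1$ generated by a uniform fiber rotation, which is by isometries. The right-hand column is the heart of the matter: one must show that the space $B_{\mathrm{isom}}$ of geodesic representatives is a deformation retract of the full isotopy component $B_{\mathrm{diff}}$. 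In Case~III this is exactly the classical fact that the space of essential embeddings of $S^1$ in $\RP^2$ deformation retracts to the geodesic embeddings, a space homeomorphic to $L(4,1)$; in Cases~I and~II it is the analogous statement for properly embedded essential arcs joining the two order-$2$ cone points, which, after cutting $\orb$ along $B_0$, reduces to a Gramain/convexity-type argument for essential arcs in a disk containing at most one interior cone point.

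I expect this last point --- the orbifold-level statement that geodesic suborbifold embeddings of $B_0$ form a deformation retract of their isotopy component, and the determination of the homeomorphism type of the geodesic space --- to be the main obstacle. It is also precisely here that the hypothesis ``$m>1$ or $n>1$'' is used: for $M(1,1)$ the base orbifold is $(S^2;2,2,2)$, the three isotopy classes of special Klein bottle become symmetric, and the arc analysis degenerates, consistent with the fact that $M(1,1)=L(4,1)$ is handled separately in Chapter~\ref{ch:lens}.
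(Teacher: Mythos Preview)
Your overall architecture is the same as the paper's: a ladder of fibrations over the quotient orbifold, with the middle vertical arrow being the map in question and the outer columns reduced to a comparison of vertical isometries versus vertical diffeomorphisms of $K_0$ (left) and isometric versus smooth embeddings of $\mathcal{K}_0$ in $\orb$ (right). The paper uses exactly this main diagram, invokes Theorem~\ref{sfsquare} for the bottom row, and then treats the three orbifold types separately.

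There is one genuine gap. Your sentence ``since $\imb_f(K_0,M)$ is connected it is the identity component $\diff_v(K_0)$'' is a non sequitur: connectedness of the total space does not force the fiber to be connected, only that $\pi_0$ of the fiber is covered by $\pi_1$ of the base. In the $n=1$ case (meridional fibering, $\orb=\RP^2$) the base $\imb(\mathcal{K}_0,\orb)$ has the homotopy type of $L(4,1)$, so $\pi_1\cong\Z/4$, and the fiber is in fact $\Diff_v(K_0)\cap\diff_f(K_0)$, which has \emph{two} contractible components (fiber-orientation-preserving and reversing), not one. Correspondingly, on the isometry side the fiber is $\Isom_v(K_0)\cap\isom_f(K_0)\cong C_2$, generated by $f(1,i)$. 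Your description ``both groups are contractible or homotopy equivalent to $S^1$'' is therefore wrong in Case~III; the left column is the inclusion $C_2\hookrightarrow(\text{two contractible components})$, and the top row is the double cover $\RP^3\to L(4,1)$. The fix is straightforward once you identify the fiber correctly, but as written your five-lemma comparison would not go through in this case.

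A minor factual slip: the quotient orbifold of $M(1,1)=L(4,1)$ is $(S^2;2,2)$, not $(S^2;2,2,2)$; the latter is the orbifold for $M(2,n)$ with $n>1$. The hypothesis $m>1$ or $n>1$ is used throughout Section~\ref{fiberings} (for instance in the explicit description of $\isom(M)$ and in Lemma~\ref{restriction}), not only at the arc-analysis step.
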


\begin{proof}
Let $\mathcal{K}_0$ be the image of $K_0$ in the quotient orbifold
$\mathcal{O}$ of the fibering on $M$. As we have seen, when $K_0$ has the
meridional fibering, $\mathcal{K}_0$ is a one-sided geodesic circle in
$\orb$, and when $K_0$ has the longitudinal fibering, $\mathcal{K}_0$ is a
geodesic arc connecting two order $2$ cone points of $\orb$. Let
$\imb(\mathcal{K}_0,\orb)$ denote the connected component of the inclusion
in the space of orbifold embeddings, and $\isom(\mathcal{K}_0,\orb)$ its
subspace of isometric embeddings, and let a subscript $v$ as in
$\Diff_v(K_0)$ indicate the vertical maps--- those that take each fiber to
itself.  Consider the following diagram, which we call the main diagram:
\begin{equation*}
\begin{CD}
\Isom_v(K_0)\cap \isom_f(K_0) @>>>  \isom_f(K_0,M) @>>>
\isom(\mathcal{K}_0,\mathcal{O})\\
@VVV @VVV @VVV\\
\Diff_v(K_0)\cap \diff_f(K_0) @>>>  \imb_f(K_0,M) @>>>
\imb(\mathcal{K}_0,\mathcal{O})
\end{CD}
\end{equation*}
\noindent in which the vertical maps are inclusions. The left-hand
horizontal arrows are inclusions, and the right-hand horizontal arrows take
each embedding to the embedding induced on the quotient objects. By
Theorem~\ref{sfsquare}, the bottom row is a fibration. We will now examine
the top row.

Suppose first that $n=1$, so that $\orb=(\RP^2;)$ and $\mathcal{K}_0$ is
the image of the unit circle $U$ of $S^2$. For this case,
$\isom(\mathcal{K}_0,\mathcal{O})$ can be identified with the unit tangent
space of $\RP^2$. For if we fix a unit tangent vector of $\mathcal{K}_0$,
the image of this vector under an isometric embedding is a unit tangent
vector to $\RP^2$, and each unit tangent vector of $\RP^2$ corresponds to
a unique isometric embedding of $\mathcal{K}_0$. To understand this unit
tangent space, note first that the unit tangent space of $S^2$ is $\RP^3$,
since each unit tangent vector to $S^2$ corresponds to a unique element of
$\SO(3)=\RP^3$. The unit tangent space of $S^2$ double covers the unit
tangent space of $\RP^2$, so the latter must be~$L(4,1)$.

Since the isometries of $M$ are all fiber-preserving,
there is a commutative diagram
\begin{equation*}
\begin{CD}
\isom(M) @>\overline{h}>> \isom(\mathcal{O}) \\
@V{\widetilde{\rho}}VV @V{\rho}VV\\
\isom_f(K_0,M) @>>> \isom(\mathcal{K}_0,\mathcal{O})
\end{CD}
\end{equation*}
\noindent
where $\overline{h}$ is induced by the homomorphism $h\colon \Ostar\times
S^3\to \Or(3)$ defined near the beginning of this section.  By
Lemma~\ref{restriction}, the restriction $\widetilde{\rho}$ is a
homeomorphism, and from Table~\ref{Kleintable2}, $\overline{h}$ is a
homeomorphism.  The restriction $\rho$ is a $2$-fold covering map, since
there are two isometries that restrict to the inclusion on $\mathcal{K}_0$:
the identity and the reflection across $\mathcal{K}_0$. This identifies the
second map of the top row of the main diagram as the $2$-fold covering map
from $\RP^3$ to $L(4,1)$, with fiber the vertical elements of
$\isom_f(K_0)$.  We will identify $\Isom_v(K_0)\cap\isom_f(K_0)$ as the
fiber of this covering map, by checking that it is $C_2$, generated by the
isometry $f(1,i)$. By part~(i) of Lemma~\ref{restriction}, the elements of
$\isom_f(K_0)$ are induced by the isometries $f(1,w_0)$ for $w_0\in
S^1$. Such an isometry is vertical precisely when $\overline{h}(1,w_0)$
acts as the identity or the antipodal map on $U$, since each fiber of $K_0$
is the image of the circles in $S^3$ which are the inverse images of antipodal
points of $U$ (since these are exactly the fibers of $T_0$ that are
identified by elements of $G=F(D_{4m}^*\times\set{1})$).  For $x_0\in U$,
we have
$\overline{h}(1,w_0)(x_0)=\begin{pmatrix}\overline{w_0}&0\\ 0&w_0\end{pmatrix}(x_0)=\overline{w_0^2}x_0$.
So $\overline{h}(1,w_0)$ is the identity or antipodal map of $U$ exactly
when $w_0=\pm 1$ or $\pm i$.  The cases $w_0=\pm 1$ give $f(1,1)$ and
$f(1,-1)$, which are the identity on $M$ since $F(-1,1)=F(1,-1)\in
G$. Since $f(1,-1)$ is already in $G$, $f(1,i)$ and $f(1,-i)$ are the same
isometry on $K_0$ and give the unique nonidentity element of
$\Isom_v(K_0)\cap\isom_f(K_0)$.

Suppose now that \mbox{$m= 1$}. This time, both $\widetilde{\rho}$ and
$\rho$ are homeomorphisms, since $\mathcal{K}_0$ is just a geodesic arc
connecting the two order-$2$ cone points of $\orb=(S^2;2,2)$. From
Tables~\ref{Kleintable1} and~\ref{Kleintable2}, $\overline{h}\colon \isom(M)\to \isom(\orb)$ is
just the projection from $S^1\times S^1$ to its second coordinate. The
first coordinate is left multiplication of $S^3$ by elements of $S^1$,
which by part~(ii) of Lemma~\ref{restriction} give exactly the elements of
$\isom_f(K_0)$. Since $\overline{h}(x_0,1)$ is the identity on $S^2$ for
all these $x_0$, $\isom_f(K_0)=\Isom_v(K_0)\cap \isom_f(K_0)$.  So the top
row of the main diagram is simply the product fibration $S^1\to S^1\times
S^1\to S^1$, where the second map is projection to the second
coordinate. 

Finally, if both $m>1$ and $n>1$, the quotient orbifold is $(S^2;2,2,m)$
and as seen in the proof of Lemma~\ref{restriction},
$\isom(\mathcal{K}_0)$ is a single point. Again part~(ii) of
Lemma~\ref{restriction} identifies $\isom_f(K_0,M)$ with the vertical
isometries $\Isom_v(K_0)$ that are isotopic to the identity. So the top row
of the main diagram is $S^1\to S^1\to\set{1}$.

In all three cases, the top row of the main diagram is a fibration. The
proof will be completed by showing that the rightmost and leftmost vertical
arrows of the main diagram are homotopy equivalences.

Suppose first that $n=1$. We have a commutative diagram whose vertical maps
are inclusions:
\begin{equation*}
\begin{CD}
\Isom(\orb \rel \mathcal{K}_0) @>>> \isom(\orb) @>>>
\isom(\mathcal{K}_0,\mathcal{O})\\
@VVV @VVV @VVV\\
\Diff(\orb \rel \mathcal{K}_0) @>>>  \diff(\orb) @>>>
\imb(\mathcal{K}_0,\mathcal{O})
\end{CD}
\end{equation*}
The bottom row is a fibration by 
\index{Restriction Theorem!orbifolds}%
Corollary~\ref{orbcoro2}, and we have
already seen how to identify the top row with the covering fibration
$C_2\to \RP^3\to L(4,1)$. Each component of $\Diff(\orb \rel
\mathcal{K}_0)$ can be identified with $\Diff(D^2\rel \partial D^2)$, which
is contractible by \cite{Smale}, so the left vertical arrow is a homotopy
equivalence. The middle arrow is a homotopy equivalence by the main result
of \cite{G}. Consequently the right vertical arrow is a homotopy
equivalence, which is also the right vertical arrow of the main diagram.

We have already seen that part~(i) of Lemma~\ref{restriction} identifies
$S^1$, the group of isometries of the form $f(1,w_0)$, with $\isom_f(K_0)$,
so that $f(1,i)$ is the nontrivial element of $\Isom_v(K_0)\cap
\isom_f(K_0)$.  The group $\Diff_v(K_0)\cap \diff_f(K_0)$ consists of two
contractible components, one in which the diffeomorphisms preserve the
orientation of each fiber and the other in which they reverse it
($\Diff_v(K_0)$ consists of four contractible components, these two and two
others represented by the same maps composed with a single Dehn twist about
a vertical fiber). The identity map and $f(1,i)$ are points in these two
components, so the left vertical arrow of the main diagram is also a
homotopy equivalence.

A detailed analysis of $\Diff_v(K_0)\cap \diff_f(K_0)$ can proceed by
regarding $K_0$ as a circle bundle over $S^1$, letting $s_0$ be a basepoint
in $S^1$ and $C$ be the fiber in $K_0$ which is the inverse image of $s_0$, and
examining the commutative diagram
\begin{footnotesize}
\begin{equation*}
\begin{CD}
\Diff_v(K_0\rel C)\cap \diff_f(K_0) @>>> \Diff_v(K_0)\cap \diff_f(K_0) @>>>
\Diff(C)\\
@VVV @VVV @VVV\\
\Diff_f(K_0\rel C)\cap \diff_f(K_0) @>>>  \diff_f(K_0) @>>>
\imb_f(C,K_0)\\
@VVV @VVV @VVV\\
\diff(S^1\rel s_0) @>>>  \diff(S^1) @>>>
\imb(s_0,S^1)
\end{CD}
\end{equation*}
\end{footnotesize}%
whose rows and columns are all fibrations (the first and middle rows using
Theorem~\ref{square}, the third row by the Palais-Cerf Restriction
Theorem, the first and middle columns by 
\index{Projection Theorem!singular fiberings}%
Theorem~\ref{sfproject diffs}, and
the third column by Theorem~\ref{square}). The spaces in this diagram are
homotopy equivalent to the spaces shown here:
\begin{equation*}
\begin{CD}
\Z @>>> C_2\times \R @>>>
C_2\times S^1\\ 
@VVV @VVV @VVV\\ 
\Z @>>> S^1 \times \R @>>> S^1\times S^1\\
@VVV @VVV @VVV\\ 
1 @>>> S^1 @>>> S^1
\end{CD}
\end{equation*}
\longpage

When $n>1$, the situation is quite a bit simpler. If $m=1$,
$\imb(\mathcal{K}_0,\orb)$ is just the embeddings of an arc in $S^2$
relative to two points, which is homotopy equivalent to
$\isom(\mathcal{K}_0,\orb)$. For the left vertical arrow,
$\Diff_v(\mathcal{K}_0)\cap \diff_f(K_0)$ has only one component, since a
vertical diffeomorphism which reverses the direction of the fibers induces
a nontrivial outer automorphism on $\pi_1(K_0)$. To see
that $\diff_v(\mathcal{K}_0)$ is homotopy equivalent to a circle, we can
fix a generic fiber $C$ and a point $c_0$ in $C$, then lift a vertical
diffeomorphism to a covering of $\mathcal{K}_0$ by $S^1\times\R$ and
equivariantly deform it to the isometry of $S^1\times \R$ that has the same
effect on a lift of $c_0$. This can be carried out canonically using the
$\R$-coordinate, so actually gives a deformation retraction to
$\isom_v(K_0)$. When $m>1$, the situation is the same except that
$\isom(\mathcal{K}_0,\orb)$ is a point and $\imb(\mathcal{K}_0,\orb)$ is
contractible.
\end{proof}

\section[Homotopy type of the space of diffeomorphisms]
{Homotopy type of the space of diffeomorphisms}
\label{mainthms}

We continue to use the notation of Section~\ref{fiberings}.  Our main
technical result shows that parameterized families of embeddings of the
base Klein bottle $K_0$ in $M$ can be deformed to families of
fiber-preserving embeddings:

\indexstate{Smale Conjecture!for incompressible Klein bottle case}%
\begin{theorem}
If either $m>1$ or $n>1$, then the inclusion
\[\imb_f(K_0,M)\rightarrow\imb(K_0,M)\] 
\noindent is a homotopy equivalence.
\label{main}
\end{theorem}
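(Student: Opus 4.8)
The plan is to prove the equivalent \emph{relative} statement. Both $\imb_f(K_0,M)$ and $\imb(K_0,M)$ have the homotopy type of CW-complexes (they are path components of the Fr\'echet manifolds $\Imb_f$ and $\Imb$ of Chapters~\ref{ch:foundations} and~\ref{ch:Palais}), so by Whitehead's theorem it suffices to show that the inclusion is a weak homotopy equivalence; since both spaces are by definition path-connected, this amounts to showing that for every $k\geq 1$ any map $\phi\colon(D^k,\partial D^k)\to(\imb(K_0,M),\imb_f(K_0,M))$ is homotopic, rel $\partial D^k$ and through maps into $\imb(K_0,M)$, to a map with image in $\imb_f(K_0,M)$. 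Phrased geometrically: starting from a family of embeddings $K_0\to K_t\subset M$, $t\in D^k$, that is already fiber-preserving for the Hopf fibering when $t\in\partial D^k$, one must deform it through embeddings, fixing it on $\partial D^k$, to a family of fiber-preserving embeddings.

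First I would record the local structure near $K_0$: it has a standard twisted $\I$-bundle neighborhood $P$, with $P-K_0$ a product $T\times(0,1]$ whose slices $T_u$ are fibered ``level'' tori, each separating $M$ into a twisted $\I$-bundle $P_u\supset K_0$ and a solid torus $R_u$. After a preliminary technical adjustment that pushes the family off $K_0$ so that no $K_t$ equals $K_0$, I would apply the parameterized generic-position result (an adaptation of Ivanov's perturbation argument, done so as to retain fiber-preservation at parameters in $\partial D^k$) to arrange that each $K_t$ meets $K_0$ in generic position. The essential structural input is Proposition~\ref{prop:circles}: for all $u$ sufficiently close to $0$, every circle of $K_t\cap T_u$ is inessential in $T_u$ or represents $a$ or $b^2$ in $\pi_1(T_u)$, and each $K_t$ carries finitely many such associated levels.

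Next comes the straightening, carried out in stages, always in families over $D^k$ and rel $\partial D^k$, using Hatcher's parameterized surgery methods. One step eliminates the circles of $K_t\cap T_u$ that are contractible in $K_t$, via a parameterized innermost-disk argument. The decisive step uses homological considerations in the solid torus $R_u$ to show that a circle of $K_t\cap T_u$ can be a longitude of $R_u$ only if it is isotopic in $T_u$ to a fiber; hence every component annulus of $K_t\cap R_u$ with non-fiber-isotopic boundary is \emph{uniquely} boundary-parallel in $R_u$, and Hatcher's parameterized techniques then push all such annuli out of $R_u$, leaving every loop of $K_t\cap T_u$ fiber-isotopic in $T_u$. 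Finally one makes $K_t$ meet its associated levels in actual fibers that are images of fibers of $K_0$, and makes the embeddings fiber-preserving on the remaining pieces of $M$ --- twisted $\I$-bundles over $K_0$, product regions between levels, and complementary solid tori --- by inducting up the skeleta of a triangulation of $D^k$ and invoking the extension lemmas for fiber-preserving embeddings of Chapter~\ref{ch:Palais}.

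The hard part will be the longitude/annulus step and, underlying it, Proposition~\ref{prop:circles} itself: proving that the intersection circles are $a$- or $b^2$-curves forces the ``flattening'' device of the generic-position section, where one deliberately makes $K_t$ far from transverse to $K_0$ in order to run combinatorial arguments on a PL picture --- and this is exactly where the whole scheme genuinely fails for $M(1,1)=L(4,1)$. A secondary difficulty, pervasive but routine, is that every one of Hatcher's simplification steps must be executed in a parameterized form that leaves the already fiber-preserving embeddings over $\partial D^k$ untouched, which is why one needs relative, family versions of generic position and of disk- and annulus-straightening rather than their classical single-map statements.
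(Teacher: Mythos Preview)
Your proposal is correct and follows essentially the same route as the paper: the preliminary push-off, parameterized generic position, Proposition~\ref{prop:circles}, Hatcher's disk and annulus eliminations, and the final skeletal induction correspond precisely to Steps~1--5 of Section~\ref{parameterization}. One small correction: the deformation is not literally \emph{rel} $\partial D^k$ throughout --- Step~1 (the push-off ensuring no $K_t$ equals $K_0$) moves the embeddings at parameters in $\partial D^k$ while keeping them fiber-preserving, so you are really showing $\pi_k(\imb(K_0,M),\imb_f(K_0,M))=0$ via homotopies of pairs rather than strictly rel boundary, exactly as your second paragraph already suggests.
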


\noindent Its proof will be given in Sections \ref{sec:generic position},
\ref{sec:generic position families}, and \ref{parameterization}. From
Theorem~\ref{main}, we can deduce the Smale Conjecture for our
$3$-manifolds for all cases except $M(1,1)$.

\begin{theorem} If $m>1$ or $n>1$, then
the inclusion
\[\Isom(M(m,n))\to \Diff(M(m,n))\] 
\noindent is a homotopy equivalence.
\label{smale1}
\end{theorem}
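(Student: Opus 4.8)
The plan is to deduce this from Theorem~\ref{main} by the usual fibration bookkeeping. By Theorem~\ref{thm:pi0 Smale} the inclusion $\Isom(M)\to\Diff(M)$ is already a bijection on path components, so it suffices to prove that the inclusion $\isom(M)\to\diff(M)$ of identity components is a homotopy equivalence; and since $\Diff(M)$ has the homotopy type of a CW-complex (Section~\ref{sec:Cinfinity}) while $\Isom(M)$ is a compact Lie group, it is enough to produce a weak homotopy equivalence. Write $M=M(m,n)$ and let $K_0\subset M$ be the base Klein bottle. Everything is compared to $\imb(K_0,M)$ by restriction to $K_0$.

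First I would record the relevant fibration: by the closed-manifold case of the Palais--Cerf Restriction Theorem (Corollary~\ref{palaiscoro2}), restriction $\Diff(M)\to\Imb(K_0,M)$ is a locally trivial fibration with fiber $\Diff(M\rel K_0)$ over the inclusion $i_{K_0}$. The crux is to show this fiber is weakly contractible, and here I would split $M$ along $K_0$: as in Section~\ref{sec:M(m,n)}, a tubular neighborhood $P$ of $K_0$ is the orientable $\I$-bundle over $K_0$, and $R=\overline{M-P}$ is a solid torus glued to $P$ along a torus $T=\partial R$. A quick homological check shows the fiberwise reflection of $P$ does not extend over $M$: the deck transformation of the orientation cover $T\to K_0$ induces conjugation by $b$ on $\pi_1(T)=\langle a,b^2\rangle$, sending $a\mapsto a^{-1}$ and $b^2\mapsto b^2$, hence the meridian $a^mb^{2n}$ to $a^{-m}b^{2n}$, which (as $m,n\ge 1$) is neither the meridian nor its inverse, so no extension over the solid torus $R$ is possible. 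Consequently every element of $\Diff(M\rel K_0)$ acts as $+1$ on the normal line bundle of $K_0$, and by the parametrized uniqueness of tubular neighborhoods, $\Diff(M\rel K_0)$ deformation retracts onto $\Diff(M\rel P)$. Extending by the identity over $P$ identifies $\Diff(M\rel P)$ with $\Diff(R\rel\partial R)=\Diff(S^1\times D^2\rel\partial)$, which is contractible by \index{Hatcher}Hatcher's theorem~\cite{HSmale} (equivalently $\Diff(D^3\rel\partial)\simeq *$). So $\Diff(M\rel K_0)$ is weakly contractible, in particular connected.

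With the fiber connected, the pullback of $\Diff(M)\to\Imb(K_0,M)$ over the path component $\imb(K_0,M)$ of $i_{K_0}$ has connected total space containing $1_M$, hence equal to $\diff(M)$; so restriction to $K_0$ is a fibration $r_0\colon\diff(M)\to\imb(K_0,M)$ with weakly contractible fiber, and therefore $r_0$ is a weak homotopy equivalence. On the other hand $r_0$ restricted to $\isom(M)$ is just the restriction-of-isometries map, which by Lemma~\ref{restriction} is a homeomorphism onto $\isom_f(K_0,M)$, which in turn includes into $\imb_f(K_0,M)$ by a homotopy equivalence (Lemma~\ref{lem:isom_to_imb}) and into $\imb(K_0,M)$ by a homotopy equivalence (Theorem~\ref{main}); so the composite $\isom(M)\hookrightarrow\diff(M)\to\imb(K_0,M)$ is a homotopy equivalence. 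By two-out-of-three, $\isom(M)\hookrightarrow\diff(M)$ is a weak homotopy equivalence, hence (being a map between spaces with CW homotopy type) an actual homotopy equivalence, and with Theorem~\ref{thm:pi0 Smale} this proves the theorem.

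The only genuinely nonroutine ingredient beyond Theorem~\ref{main} is the contractibility of $\Diff(S^1\times D^2\rel\partial)$, where Hatcher's proof of the $S^3$ case is invoked; the rest is manipulation of fibration exact sequences. The place I expect to need the most care is the identification of the fiber $\Diff(M\rel K_0)$ with $\Diff$ of a solid torus rel boundary, and in particular the verification that this fiber is connected — without that, the restriction fibration would descend only to some union of components of $\Diff(M)$ rather than to $\diff(M)$ itself, and the observation that the fiberwise reflection of a neighborhood of $K_0$ fails to extend over $M$ is exactly what closes that gap.
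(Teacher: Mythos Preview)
Your proof is correct and follows essentially the same strategy as the paper's: both show that restriction to $K_0$ gives a fibration $\diff(M)\to\imb(K_0,M)$ with contractible fiber, then chain together Lemma~\ref{restriction}, Lemma~\ref{lem:isom_to_imb}, and Theorem~\ref{main} to compare $\isom(M)$ with $\imb(K_0,M)$. Your organization is slightly more streamlined: the paper sets up a parallel fiber-preserving fibration $\diff_f(M)\to\imb_f(K_0,M)$, shows its fiber is contractible (via Theorem~\ref{sfproject diffs} and Smale's $\Diff(D^2\rel\partial D^2)\simeq *$), and passes through the intermediate conclusion $\diff_f(M)\simeq\diff(M)$ before reaching $\isom(M)\simeq\diff(M)$; you skip this detour and apply two-out-of-three directly. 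The price you pay is having to show all of $\Diff(M\rel K_0)$ is contractible rather than just its intersection with $\diff(M)$---hence your homological check that the fiberwise reflection of $P$ does not extend over $R$---whereas the paper can simply observe that elements of $\diff(M)$ are orientation-preserving and therefore cannot act by $-1$ on the normal line. One small citation point: the contractibility of $\Diff(S^1\times D^2\rel\partial)$ is not literally ``equivalently $\Diff(D^3\rel\partial)\simeq *$''; it follows from Hatcher's Haken result~\cite{H}, which in turn requires~\cite{HSmale}.
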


\begin{proof}[Proof of Theorem \ref{smale1} assuming 
Theorem \ref{main}.] 
By 
\newline Theorem \ref{thm:pi0 Smale}, the inclusion is a
bijection on path com\-po\-nents, so we will restrict attention to the
connected components of the identity map.

By \index{Restriction Theorem!singular fiberings}Corollary~\ref{sfcorollary2}, 
restriction of diffeomorphisms to
embeddings defines a fibration
\[ \Diff_f(M\rel K_0) \cap \diff_f(M) \to \diff_f(M) \to \imb_f(K_0,M)\ .\]

\noindent Since any diffeomorphism in this fiber is
orientation-preserving, it cannot locally interchange the sides of
$K_0$. Therefore the fiber may be identified with a subspace
consisting of path components of $\Diff_f(S^1\times D^2\rel S^1\times
\partial D^2)$. By 
\index{Projection Theorem!singular fiberings}%
Theorem~\ref{sfproject diffs}, there
is a fibration
\begin{footnotesize}
\begin{equation*}
\Diff_v(S^1\times D^2\rel S^1\times \partial D^2)\to \Diff_f(S^1\times
D^2\rel S^1\times \partial D^2)\to \Diff(D^2\rel\partial D^2)\ ,
\end{equation*}
\end{footnotesize}%
\noindent whose fiber is the group of vertical diffeomorphisms that take
each fiber to itself. The base is contractible by~\cite{Smale}, and it is
not difficult to show that the fiber is contractible, so the restriction
fibration becomes
\[ \diff_f(M\rel K_0) \to \diff_f(M) \to \imb_f(K_0,M)\]
\noindent with contractible fiber. Similarly there is a fibration
\[ \diff(M\rel K_0) \to \diff(M) \to \imb(K_0,M)\ .\]
\noindent The fact that it is a fibration is the Palais-Cerf
Restriction Theorem, and the contractibility of the fiber uses~\cite{H}. We
can now fit these into a diagram
\begin{equation*}
\begin{CD}
\diff_f(M\rel K_0) @>>> \diff_f(M) @>>>  \imb_f(K_0,M)\\
@VVV @VVV @VVV\\
\diff(M\rel K_0) @>>>  \diff(M) @>>>  \imb(K_0,M)\makebox[0pt][l]{\ .}
\end{CD}
\end{equation*}

\noindent The vertical maps are inclusions. By Theorem~\ref{main}, the
right hand vertical arrow is a homotopy equivalence. Since the fibers
are both contractible, it follows that $\diff_f(M)\to \imb_f(K_0,M)$,
$\diff(M)\to \imb(K_0,M)$, and $\diff_f(M)\to \diff(M)$ are homotopy
equivalences.

The right-hand square of the previous diagram is the bottom square of the
following diagram, whose vertical arrows are inclusions and whose
horizontal arrows are obtained by restriction of maps to $K_0$:
\begin{equation*}
\begin{CD}
\isom(M) @>>> \isom_f(K_0,M)\\
@VVV @VVV\\
\diff_f(M) @>>> \imb_f(K_0,M)\\
@VVV @VVV\\
\diff(M) @>>> \imb(K_0,M)
\end{CD}
\end{equation*}

\noindent 
From Lemma~\ref{restriction}, $\isom(M)\to \isom_f(K_0,M)$ is a
homeomorphism, and from Lemma~\ref{lem:isom_to_imb},
$\isom_f(K_0,M)\to\imb_f(K_0,M)$ is a homotopy equivalence.  We
conclude that $\isom(M)\to \diff_f(M)$ is a homotopy equivalence,
hence so is the composite $\isom(M) \to \diff(M)$.
\end{proof}

\section [Generic position configurations] {Generic position
configurations}
\label{sec:generic position}

Let $S$ and $T$ be smoothly embedded closed surfaces in a closed
$3$-manifold $M$. A point $x$ in $S\cap T$ is called a \indexdef{singular
  point!of ST@of $S\cap T$}\textit{singular} point if $S$ is not transverse
to $T$ at $x$. There is a concept of finite multiplicity of such singular
points, as described in Section~5 of \cite{I2} (another useful reference
for these ideas is~\cite{Bruce}). For a singular point $x$ of \index{finite
  multiplicity}finite multiplicity, either $x$ is an isolated point of
$S\cap T$, or $S\cap T$ meets a small disc neighborhood $D^2$ of $x$ in $T$
in a finite even number of smooth arcs running from $x$ to $\partial D^2$,
which are transverse intersections of $S$ and $T$ except at $x$
(cf.~Fig.\ 3, p.\ 1653 of \cite{I2}). Singular points are isolated on $T$,
so by compactness $S\cap T$ will have only finitely many singular points.

We say that the surfaces are \indexdef{generic position}\textit{in generic
position} if all singular points of intersection are of finite
multiplicity. Notice that $S\cap T$ is then a graph (with components that
may be circles or isolated points) whose vertices are the singular points.
Each vertex has even valence (possibly $0$), since along each of the arcs
of $S\cap T$ that emanates from the singular point, $S$ crosses over to the
(locally) other side of~$T$.

Finite multiplicity intersections have the additional property that if
$D^2\times [-1,1]$ is a product neighborhood of $x$ which meets $T$ in
$D^2\times\set{0}$, then for some $u_0>0$, $S$ meets $D^2\times\set{u}$
transversely for each $u$ with $0<\abs{u}\leq u_0$ \cite[Lemma~(5.4)]{I2}.
Consequently, if $T_u$ are the horizontal levels of a tubular neighborhood
of $T$, with $u\in(-1,0)\cup (0,1)$ if $T$ is two-sided, and $u\in(0,1)$ if
$T$ is one-sided, then $S$ is transverse to $T_u$ for all $u$ sufficiently
close to $0$.

Now we specialize to the base Klein bottle $K_0\subseteq M$, where as usual
$M$ denotes an $M(m,n)$ with either $m>1$ or $n>1$. To set notation, let
$T$ be the torus and fix a $2$-fold covering from $T\times [-1,1]$ to the
twisted $\I$-bundle neighborhood $P$ of $K_0$, so that $T\times \set{0}$ is
a $2$-fold covering of $K_0$, and so that for $0<u<1$, the image $T_u$ of
$T\times \{u\}$ is a fibered torus. We call the
\indexsymdef{Tu}{$T_u$}$T_u$~\indexdef{levels}\textit{levels.}

As usual, we write $\pi_1(K_0)= \langle\,a,\,b\,\,\vert\,\,bab^{-1}=
a^{-1}\, \rangle$. For the \index{meridional!fibering of $K_0$}meridional
fibering, the fiber represents $a$, and for the 
\index{meridional!fibering of $K_0$}longitudinal fibering, the
exceptional fibers represent $b$, and the generic fiber~$b^2$. We also
recall from Section~\ref{fiberings} that as a fibered submanifold of
$M(m,n)$, $K_0$ has the meridional or longitudinal fibering according as
$n=1$ or $n>1$.

Each $T_u$ is the boundary of a tubular neighborhood $P_u$ of $K_0$, and
also bounds the solid torus $\overline{M-P_u}$, which we denote
by~$R_u$. For each $u>0$, the elements $a$ and $b^2$ generate the free
abelian group $\pi_1(T_u)$, a subgroup of $\pi_1(P_u)$.\longpage

By a \indexdef{meridian}\textit{meridian} in $T_u$ we mean a simple loop in
$T_u$ which is essential in $T_u$ but contractible in $R_u$. The meridians
represent $(a^mb^{2n})^{\pm1}$ in $\pi_1(T_u)$. By a
\indexdef{longitude!in level}\textit{longitude} in $T_u$ we mean a simple loop in $T_u$
which represents a generator of the infinite cyclic group $\pi_1(R_u)$. The
longitudes represent elements of $\pi_1(T_u)$ of the form
$(a^pb^{2q}(a^mb^{2n})^k)^{\pm1}$, where $pn-qm=\pm1$, since these are
precisely the elements whose intersection number with the meridians is
$\pm1$. This leads us to the following observation.
\begin{lemma} Let $\ell$ be a loop in $T_u$ which represents $a$ or $b^2$ 
in $\pi_1(T_u)$. Then $\ell$ is not a meridian of $R_u$. If $(m,n)\neq
(1,1)$, and $\ell$ is a longitude of $R_u$, then $\ell$ is isotopic in
$T_u$ to a fiber of the Seifert fibering of $M(m,n)$.
\label{lem:longitudes are fibers}
\end{lemma}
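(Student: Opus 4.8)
\textbf{Proof plan for Lemma~\ref{lem:longitudes are fibers}.}
The statement is purely about primitive elements and intersection numbers in $\pi_1(T_u)\cong\Z a\oplus\Z b^2$, so the plan is to carry out the relevant arithmetic in this rank-$2$ lattice, using the descriptions of meridians and longitudes already assembled in the paragraph preceding the lemma. First I would recall that a meridian of $R_u$ represents $(a^mb^{2n})^{\pm1}$, so a loop representing $a$ (that is, the vector $(1,0)$) or $b^2$ (the vector $(0,1)$) is a meridian only if $(m,n)=(1,0)$ or $(m,n)=(0,1)$; since we have assumed from the outset that both $m$ and $n$ are positive, neither occurs, giving the first assertion immediately. (Strictly, one should also note that being a meridian only depends on the homotopy class up to sign and up to the fact that a meridian must be primitive, but $a$ and $b^2$ are primitive, so no subtlety arises.)

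For the second assertion, suppose $\ell$ represents $a$ and is a longitude of $R_u$. By the description in the preceding paragraph, a longitude represents $(a^pb^{2q}(a^mb^{2n})^k)^{\pm1}$ for some integers $p,q,k$ with $pn-qm=\pm1$; in coordinates this says $(1,0)=\pm(p+km,\,q+kn)$. Reading the second coordinate, $q+kn=0$, so $q=-kn$, and then $pn-qm=pn+knm=n(p+km)=\pm1$, forcing $n\mid 1$, i.e.\ $n=1$. Symmetrically, if $\ell$ represents $b^2$, the first coordinate of $(0,1)=\pm(p+km,q+kn)$ gives $p=-km$, whence $pn-qm=-m(q+kn)=\mp1$ forces $m=1$. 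Thus if $\ell$ represents $a$ then $n=1$, and if it represents $b^2$ then $m=1$. In the excluded case $(m,n)=(1,1)$ both can happen; but under the hypothesis $(m,n)\neq(1,1)$, an $a$-curve longitude forces $n=1$ hence $m>1$, and a $b^2$-curve longitude forces $m=1$ hence $n>1$.

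It remains to identify which of $a$, $b^2$ is the fiber class in each surviving case, and this is exactly what was recorded in Section~\ref{fiberings}: when $n=1$ the base Klein bottle $K_0$ carries the meridional fibering, whose generic fiber represents $a$; when $n>1$ it carries the longitudinal fibering, whose generic fiber represents $b^2$. So in the case forced by an $a$-curve longitude ($n=1$) the curve $\ell$ represents the fiber class $a$, and in the case forced by a $b^2$-curve longitude ($n>1$) it represents the fiber class $b^2$. Since any two simple closed curves in a torus representing the same primitive homology class are isotopic, $\ell$ is isotopic in $T_u$ to a fiber. The only mildly delicate point is making sure the bookkeeping of which fibering sits on $T_u$ matches the bookkeeping of the Seifert invariants used to describe meridians and longitudes of $R_u$; this is precisely the content of the computations in Section~\ref{sec:M(m,n)} and Section~\ref{fiberings}, so I would simply cite those rather than redo them. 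I expect no serious obstacle here — the lemma is a short lattice computation — the only thing to be careful about is the sign/primitivity conventions for meridians and longitudes and the bookkeeping just mentioned.
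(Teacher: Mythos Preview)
Your proposal is correct and is essentially the same lattice arithmetic as the paper's proof. The only difference is organizational: you first show that an $a$-longitude forces $n=1$ and a $b^2$-longitude forces $m=1$, then invoke the fibering bookkeeping, whereas the paper splits each case on $n$ first (e.g.\ for $\ell=a$: if $n=1$ it is already a fiber, if $n>1$ use $\gcd(q,n)=1$ and $q+kn=0$ to reach a contradiction); the underlying computation is identical.
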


\begin{proof}
Since neither of $m$ nor $n$ is $0$, $\ell$ cannot be a meridian of~$R_u$.

Suppose that $\ell$ represents $a$. If $n= 1$, then $\ell$ is a fiber of
$M(m,n)$. If $n>1$, then the longitudes are of the form
$(a^pb^{2q}(a^mb^{2n})^k)^{\pm1}$, where $pn-qm= \pm1$. If $a$ is a
longitude, then $q+kn=0$. But $q$ and $n$ are relatively prime, so this is
impossible.

Suppose now that $\ell$ represents $b^2$. If $n>1$, then $\ell$ is a fiber of
$M(m,n)$. If $n=1$, then the longitudes are of the form $(a(a^mb^2)^k)^{\pm
1}$. If $b^2$ is a longitude, then $1+km=0$. But when $n=1$, we have $m>1$,
so this is impossible.
\end{proof}
\noindent The lemma fails for $M(1,1)$, for in that case an $a$-circle is a
longitude of $R_u$ which is not isotopic to a fiber of the longitudinal
fibering, while a $b^2$-circle is a longitude not isotopic to a fiber of
the meridional fibering.

If $K$ is a Klein bottle in $M$ that meets $K_0$ in generic position, then
the intersection of $K$ with the nearby levels is restricted by the next
proposition, which is the main result of this section.
\begin{proposition} Suppose that $M= M(m,n)$ with $(m,n)\neq (1,1)$,
and let $K$ be a Klein bottle in $M$ which is isotopic to $K_0$ and meets
$K_0$ in generic position. Then there exists $u_0>0$ so that for each
$u\leq u_0$, $K$ is transverse to $T_u$, and each circle of $K\cap T_u$ is
either inessential in $T_u$, or represents $a$ or $b^2$ in $\pi_1(T_u)$.
\label{prop:circles}
\end{proposition}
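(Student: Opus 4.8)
The plan is to exploit the homological constraint coming from the fact that $K$ is a Klein bottle isotopic to $K_0$, together with the finite-multiplicity structure of the generic-position intersection to rule out all other isotopy classes of essential circles in the levels $T_u$. First I would invoke the finite-multiplicity discussion above: since $K\cap K_0$ has only finitely many singular points, all of finite multiplicity, there is a $u_0>0$ such that for every $u$ with $0<u\le u_0$ the surface $K$ is transverse to $T_u$, and moreover the isotopy class in $T_u$ of the collection of circles $K\cap T_u$ is independent of $u\le u_0$; call this collection $\mathcal{C}$. So it suffices to analyze $\mathcal{C}$ for a single small $u$.

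Next I would set up the homological bookkeeping. Fix such a level $T=T_u$, splitting $M$ as $P_u\cup_{T} R_u$ where $P_u$ is the twisted $\I$-bundle over $K_0$ and $R_u$ is a solid torus, with $\pi_1(T)$ free abelian on $a$ and $b^2$ and the meridian of $R_u$ representing $a^m b^{2n}$. Suppose some circle $c$ of $\mathcal{C}$ is essential in $T$ but represents neither $a$ nor $b^2$; among such circles choose one, and discard the inessential ones (which are allowed by the statement). The circles of $\mathcal{C}$ are pairwise disjoint simple closed curves in $T$, so the essential ones are all parallel; write their common class as $x = a^{\alpha} b^{2\beta}$ with $\gcd(\alpha,\beta)=1$, $(\alpha,\beta)\ne(\pm1,0),(0,\pm1)$. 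Now cut $K$ along $\mathcal{C}$: the components of $K\cap P_u$ and $K\cap R_u$ are subsurfaces whose boundary circles all lie in $T$ and represent $\pm x$ or are inessential. In $R_u$, a boundary circle representing $x$ bounds a surface only if it is either a meridian or, cut along a meridian, an annulus parallel to $T$ or a disk; a standard argument (incompressibility of $T$ in $R_u$ for non-meridians, irreducibility) shows each component of $K\cap R_u$ with essential boundary is a boundary-parallel annulus unless $x$ is a meridian of $R_u$. Since $K$ is isotopic to $K_0$, one can use the fact that $K$ and $K_0$ are homologous (both carry the nontrivial class in $H_2(M;\Z/2)$, or compare $\Z$-coefficient classes after choosing orientations on the two-sided pieces) to see that $K$ cannot be pushed entirely into $R_u$ or disjoint from $K_0$; this forces at least one essential intersection circle to be non-removable, and then comparing $[K]$ with $[K_0]$ in $H_2$ pins down $x$. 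Concretely, the algebraic intersection of $K$ with a core circle of $R_u$, and with $K_0$, must match those of $K_0$, and the only classes $x$ with $\gcd(\alpha,\beta)=1$ compatible with these intersection numbers and with $x$ not a meridian of $R_u$ are $a$ and $b^2$ — here is where the hypothesis $(m,n)\ne(1,1)$ is used, via Lemma~\ref{lem:longitudes are fibers}: for $(m,n)\ne(1,1)$, a longitude of $R_u$ representing $a$ or $b^2$ is isotopic to a fiber, while $a$ and $b^2$ are not meridians, so the arithmetic closes up, whereas for $M(1,1)$ an $a$- or $b^2$-curve can be an exotic longitude and the argument genuinely fails (as the remark after the proposition notes).

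I expect the main obstacle to be making the step ``some essential intersection circle is non-removable and its class is forced'' fully rigorous in the PL/flattening framework the authors clearly intend. The honest difficulty is that one does \emph{not} have full transversality of $K$ with $K_0$, only finite-multiplicity tangencies, so $K\cap K_0$ is a $2$-complex rather than a $1$-manifold; the subsurface-and-homology argument above applies cleanly to $K\cap T_u$ for $u$ small (where transversality \emph{does} hold), but linking back to the structure of $K\cap K_0$ — and in particular ruling out that an ``exotic'' class $x$ arises from a configuration where $K$ dips across $K_0$ repeatedly — seems to require the \emph{flattening} technique alluded to in the outline: deform $K$ to a PL Klein bottle $K'$ meeting $K_0$ in a $2$-complex but still cutting each small level $T_u$ in loops isotopic to the original circles of $K\cap T_u$, and then run a purely combinatorial/homological argument on $K'$. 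So my plan would be: (i) reduce to analyzing $\mathcal{C}=K\cap T_u$ for small $u$; (ii) flatten $K$ near $K_0$ to $K'$, preserving the isotopy classes of $K'\cap T_u$; (iii) on the flattened configuration, use the incompressibility of $T_u$ in $R_u$, the irreducibility of $M$, and the computation of which classes in $\pi_1(T_u)$ are meridians/longitudes (Lemma~\ref{lem:longitudes are fibers}) together with the homology class $[K']=[K_0]$ to conclude that every essential circle of $\mathcal{C}$ represents $a$ or $b^2$; (iv) observe that $(m,n)=(1,1)$ is exactly the case where step (iii) breaks, and exhibit the counterexample. Steps (ii) and (iii) are where essentially all the work lies.
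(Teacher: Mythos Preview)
Your reduction to a single small $u$ and your instinct that flattening is needed are both correct and match the paper. But step~(iii) has a genuine gap: the global datum $[K]=[K_0]$ together with the structure of $K\cap R_u$ does \emph{not} pin down the class $x$ of the essential intersection circles. If $x$ is neither a meridian nor a longitude of $R_u$, the annuli of $K\cap R_u$ are indeed boundary-parallel, but pushing them out of $R_u$ destroys the very intersection $K\cap T_u$ you are trying to constrain, and merely knowing that some isotoped copy of $K$ sits in $P_u$ with $[K]=[K_0]$ says nothing about the original~$x$. You also misidentify the role of Lemma~\ref{lem:longitudes are fibers}: that lemma concerns only loops that \emph{already} represent $a$ or $b^2$; it is used later (Step~3 of Section~\ref{parameterization}) and plays no part in this proposition.

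The idea you are missing is the covering involution. The level $T_u$ double-covers $K_0$, with deck transformation $\tau$ acting by $a\mapsto a^{-1}$, $b^2\mapsto b^2$; hence a loop in class $a^kb^{2\ell}$ and its $\tau$-image $a^{-k}b^{2\ell}$ have algebraic intersection $2k\ell$ in $T_u$. In the \emph{transverse} case, $K\cap P_u$ can be made vertical in the $\I$-bundle, so each circle of $K\cap T_u$ covers an embedded circle in $K_0$ and therefore has zero intersection with its $\tau$-image, forcing $k\ell=0$ directly. In the \emph{generic-position} case, after flattening, the $2r$ circles $A_1,\dots,A_{2r}$ of $K\cap T_u$ project to a $4$-valent graph $\Gamma\subset K_0$ whose vertices are covered by the intersections of $\cup A_i$ with $\cup\tau(A_i)$, giving at least $4r^2|k\ell|$ vertices. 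One then eliminates bigon faces of $K\cap K_0$ by isotopy and runs an Euler-characteristic count: with $E=2V$ and every remaining face a $({\geq}\,4)$-gon one gets $\chi(K\cap K_0)\le -V/2$, while $\chi(K\cap P_u)\ge -2r$ since it sits inside the Klein bottle $K$ with $2r$ boundary circles. Together these force $4r^2|k\ell|\le V\le 4r$, hence $r|k\ell|\le 1$. Only now does $(m,n)\neq(1,1)$ enter: the residual case $r=|k\ell|=1$ forces the two circles to represent $a^{\pm1}b^{\pm2}$ and to bound disks in $R_u$, which happens exactly when $(m,n)=(1,1)$.
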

\longpage

In order to prove Proposition~\ref{prop:circles}, we introduce a special
kind of isotopy. Suppose that $L_0$ is an embedded surface in a closed
$3$-manifold $N$. A piecewise-linearly embedded surface $S$ in $N$ is said
to be \indexdef{flattened surface}\textit{flattened} (with respect to $L_0$
and the choice of the $L_u$) if it satisfies the following conditions.
\begin{enumerate}
\item
There is a $4$-valent graph \indexsymdef{Gamma}{$\Gamma$ (graph in flattened surface)}$\Gamma$ (possibly
with components which are circles) contained in $L_0$ such that $S\cap L_0$
consists of the closures of some of the connected components of
$L_0-\Gamma$.
\item 
Each point $p$ in the interior of an edge of $\Gamma$ has a neighborhood
$U$ for which the quadruple $(U,U\cap L_0,U\cap S,p)$ is PL homeomorphic to
the configuration $(\R^3,\{(x,y,z)\;\vert\;z= 0\},
\{(x,y,z)\vbar\text{either\ } z= 0 \text{\ and\ } x\geq0,
\text{\ or\ } x= 0 \text{\ and\ } z\geq0\},\{0\})\,$ (see
Figure~\ref{fig:flattened surfaces}(a)).
\item
Each vertex $v$ of $\Gamma$ has a neighborhood $U$ for which the quadruple
$(U,U\cap L_0, U\cap S,v)$ is PL homeomorphic to the configuration
$(\R^3,\{(x,y,z)\;\vert\;z= 0\}, \{(x,y,z)\vbar \text{either\ }
z=0 \text{\ and\ } xy\leq0, \text{\ or\ } x=0 \allowbreak
\text{\ and\ }z\geq0,\text{\ or\ }y= 0 \text{\ and\ }z\leq0\},
\{0\})\,$ (see Figure~\ref{fig:flattened surfaces}(b)).
\end{enumerate}
\index{figures!figure2@flattened surfaces, local picture}%
\begin{figure}
\labellist
\pinlabel $(a)$ [B] at 8 93
\pinlabel $(b)$ [B] at 178 93
\pinlabel $x$ [B] at 30 10
\pinlabel $x$ [B] at 193 10
\pinlabel $y$ [B] at 144 37
\pinlabel $y$ [B] at 306 37
\pinlabel $z$ [B] at 91 103
\pinlabel $z$ [B] at 253 103
\endlabellist
\begin{center}
\includegraphics[width=8.8cm]{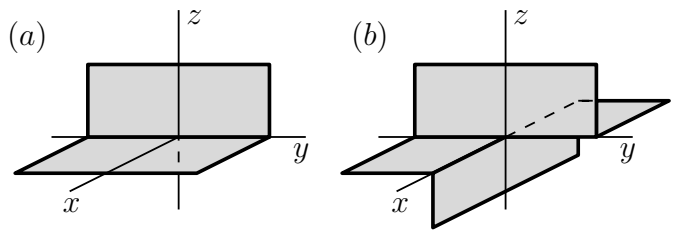}
\caption{Flattened surfaces, local picture.}
\label{fig:flattened surfaces}
\end{center}
\end{figure}
In Figure~\ref{fig:flattened surfaces}(a), the graph $\Gamma$ is the
intersection of $S$ with the $y$-axis. In Figure~\ref{fig:flattened
  surfaces}(b), $\Gamma$ is the intersection of $S$ with the union of the
$x$-~and $y$-axes, and in Figure~\ref{fig:flat_singularity}, it is the
intersection of the horizontal portion in $S_{1/2}\cap L_0$ with the four
vertical bands of $S_{1/2}$. The vertices of $\Gamma$ are exactly the
points that appear as the origin in a local picture as in
Figure~\ref{fig:flattened surfaces}(b) (or
Figure~\ref{fig:flattened_process}(b) below).

\begin{lemma} Let $S_0$ be a smoothly embedded surface in $N$ which
meets the one-sided surface $L_0$ in generic position. Denote by $L_u$ the
level surfaces in a tubular neighborhood $(L\times [-1,1])/((x,u)\sim
(\tau(x),-u))$ for some free involution $\tau$ of $L$ with quotient $L_0$
(so $L_u=L_{-u}$).
Then for some $u_0>0$, there is a PL isotopy $S_t$ from $S_0$ to a PL
embedded surface $S_1$ such that
\begin{enumerate}
\item[(i)] each $S_t$ is transverse to $L_u$ for $0< u \leq u_0$,
and
\item[(ii)] $S_1$ is flattened with respect to $L_0$.
\end{enumerate}
\label{flatten}
\end{lemma}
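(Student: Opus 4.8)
\textbf{Proof plan for Lemma~\ref{flatten}.}
The plan is to work locally near each component of $S_0\cap L_0$ and push $S_0$ into a ``flattened'' position, in a way that does not disturb the transversality to the nearby levels $L_u$. First I would fix the tubular neighborhood identification $(L\times[-1,1])/((x,u)\sim(\tau(x),-u))$ of $L_0$ and parameterize a neighborhood of $L_0$ in $N$ by coordinates $(x,y,z)$ in which $L_0=\{z=0\}$ and the levels $L_u$ are $\{z=\pm u\}$. Since $S_0$ meets $L_0$ in generic position, all singular points are of finite multiplicity, so (as recalled before the lemma, using \cite[Lemma~5.4]{I2}) there is a $u_0>0$ such that $S_0$ is transverse to $L_u$ for all $0<|u|\le u_0$; this handles (i) for $S_0$ itself, and the whole isotopy will be supported in the region $|z|<u_0/2$ so that transversality to $L_u$, $u_0/2\le|u|\le u_0$, is automatic, and we only need to monitor it near $L_0$.

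Next I would describe the flattening near the three local models. Away from the singular points, $S_0\cap L_0$ is a $1$-manifold (arcs and circles) meeting $L_0$ transversely; near such a point I would isotope $S_0$ so that locally it becomes the union of a piece of $L_0$ and a piece of the vertical plane perpendicular to $S_0\cap L_0$, i.e.\ the configuration of Figure~\ref{fig:flattened surfaces}(a) --- concretely, bend $S_0$ down so it lies flat against $\{z=0,\ x\ge 0\}$ and then turns upward along $\{x=0,\ z\ge 0\}$. Near a singular point of multiplicity $2k$, $S_0\cap L_0$ locally consists of $2k$ arcs emanating from the singular point (or is an isolated point, which is the trivial case); I would straighten these arcs to be radial, then build the flattened model by attaching $2k$ vertical half-bands, alternating up and down, along the sectors of $L_0$ between consecutive arcs, which after a local PL homeomorphism is the configuration in Figure~\ref{fig:flattened surfaces}(b) (the $4$-valent case drawn explicitly, the higher-valence cases being the evident generalization). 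The graph $\Gamma$ is then the union of $S_0\cap L_0$ (the radial arcs and the transverse circles/arcs) together with the curves where the vertical bands meet the horizontal part, and the defining conditions (1)--(3) of a flattened surface hold by construction. Throughout, each intermediate surface $S_t$ should be kept a graph over $L_0$ (with values in the $z$-direction) composed with the original embedding off a small neighborhood of $L_0$, so it stays embedded and transverse to all $L_u$ with $0<u\le u_0$.

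The main obstacle I anticipate is making the local flattening moves \emph{globally consistent} and simultaneously PL-isotopic to $S_0$: the bands attached at different singular points and along different arcs of $S_0\cap L_0$ must be glued together compatibly (the ``up/down'' pattern of a band is forced once we fix which side of $L_0$ the adjacent region of $S_0$ lies on, and one must check these choices are coherent around each edge and vertex of $\Gamma$), and one must verify that the concatenation of the local ambient isotopies --- suitably damped by a bump function in $(x,y,z)$ to be the identity outside the chosen neighborhoods --- is still an embedding at every time $t$ and never creates a new tangency with a level $L_u$. This is where I would spend the real effort; the rest is bookkeeping with the explicit normal-form models. Once $S_1$ is constructed, conditions (i) and (ii) hold directly: (i) because the isotopy was supported in $|z|<u_0/2$ and kept $S_t$ transverse to the levels there, and (ii) because $S_1$ is by construction locally modeled on Figure~\ref{fig:flattened surfaces}(a) and~(b) with the graph $\Gamma$ just described.
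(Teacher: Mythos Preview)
Your overall strategy is right, and your identification of the ``main obstacle'' is exactly on target --- but you have the resolution backwards, and this is a genuine gap. The up/down pattern along an arc $\alpha$ of $S_0\cap L_0$ is indeed forced by which side of $L_0$ the adjacent pieces of $S_0$ lie on, but these forced choices are \emph{not} always globally coherent: if you flatten inward from the two endpoints of $\alpha$, the two flattenings may want the vertical band to go up on opposite sides of $\alpha$. The paper does not try to make them agree. Instead it introduces a single \emph{crossover point} on $\alpha$, modeled exactly on Figure~\ref{fig:flattened surfaces}(b), where the up and down sides swap. These crossover points --- at most one per arc, and none on the circle components --- are the $4$-valent vertices of~$\Gamma$.

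This also means your picture of $\Gamma$ is off. The original singular points of $S_0\cap L_0$ (even the high-multiplicity ones) do \emph{not} become vertices of $\Gamma$. The paper's first move, carried out during $0\le t\le 1/2$, flattens a full disk neighborhood of each singular point $x$ onto $L_0$, so that near $x$ the intersection $S_{1/2}\cap L_0$ is a $2$-dimensional regular neighborhood of the $2n$ emanating arcs (as in Figure~\ref{fig:flat_singularity}); these disks end up in the interior of the region $S_1\cap L_0$. The graph $\Gamma$ is the \emph{frontier} of $S_1\cap L_0$ in $L_0$, and its only vertices are the crossover points introduced along the arcs in the second half of the isotopy. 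In particular there is no ``higher-valence generalization'' of Figure~\ref{fig:flattened surfaces}(b) to build --- the definition of flattened requires $\Gamma$ to be $4$-valent, and the construction delivers exactly that. Without the crossover-point mechanism your consistency check would simply fail on some arcs, and you would not arrive at a flattened surface in the required sense.
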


\begin{proof} Initially, $S_0$ meets $L_0$ in a graph, with tangencies at
the vertices and transverse intersections on the open edges. We have
already noted that there is a $u_0>0$ so that $S_0$ is transverse to $L_u$
for all $0< u\leq u_0$. The isotopy will only move $S_0$ in the
region where $0<u<u_0$. For $0\leq t\leq 1$, we denote by $S_t$ the image
of $S_0$ at time $t$. With respect to $u$, points of $S_t$ must move
monotonically toward~$L_0$, in such a way that the transversality required
by condition~(i) in the lemma is achieved.

Figure~\ref{fig:flat_singularity} illustrates the first portion of the
isotopy, near a singular point $x$ of $S_0\cap L_0$. During the time $0\leq
t\leq 1/2$, a $2$-disk neighborhood of $x$ in $S_0$ moves onto a $2$-disk
neighborhood of $x$ in $L_0$. In a neighborhood $U$ of $x,\;S_0\cap L_0$
consists of $x$ together with a (possibly empty) collection of arcs
$\alpha_1,\alpha_2,\ldots,\alpha_{2n}$ emanating from $x$, at which $S_0$
crosses alternately above and below $L_0$ as one travels around $x$ on
$S_0$. There is a neighborhood of $x$ for which the angles of intersection
of $S_0$ with $L_0$ are close to $0$; the isotopy moves points only within
such a neighborhood. At the end of the initial isotopy, there is a
neighborhood $U$ of $x$ for which $S_{1/2}\cap L_0\cap U$ is a regular
neighborhood in $L_0$ of $\cup_{i=1}^{2n}\alpha_i$. Near interior points of
the $\alpha_i$, $S_{1/2}$ is positioned as in
Figure~\ref{fig:flattened_process}(a), where $L_0$ is the horizontal plane
and $S_{1/2}$ travels ``up'' on one side of $\alpha_i$ and ``down'' on the
other. These isotopies may be performed simultaneously near each singular
point of intersection.
\index{figures!figure3@flattened saddle tangency}
\begin{figure}
\labellist
\pinlabel $S_{1/2}$ [B] at 15 110
\pinlabel $S_{1/2}$ [B] at 293 206
\pinlabel $S_{1/2}\cap L_0$ [B] at 139 86
\pinlabel $S_{1/2}$ [B] at 188 10
\endlabellist
\begin{center}
\includegraphics[width=6cm]{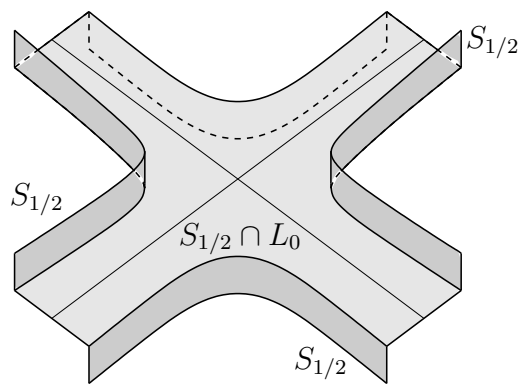}
\caption{A portion of the partially flattened surface $S_{1/2}$ near a point
of $S_0\cap L_0$ that was an ordinary saddle tangency. The intersecting 
diagonal lines are in $S_0\cap L_0$. The horizontal surface is in $S_{1/2}\cap
L_0$, while the darker vertical strips are in $S_{1/2}$ but not $L_0$.}
\label{fig:flat_singularity}
\end{center}
\end{figure}

\index{figures!figure4@flattened surface near original intersection arc}
\begin{figure}
\labellist
\pinlabel $(a)$ [B] at 5 93
\pinlabel $(b)$ [B] at 175 93
\pinlabel $x$ [B] at 30 10
\pinlabel $x$ [B] at 193 10
\pinlabel $y$ [B] at 158 45
\pinlabel $y$ [B] at 320 45
\pinlabel $z$ [B] at 93 103
\pinlabel $z$ [B] at 255 103
\pinlabel $\alpha$ [B] at -7 35
\pinlabel $\alpha$ [B] at 356 52
\endlabellist
\begin{center}
\includegraphics[width=9.8cm]{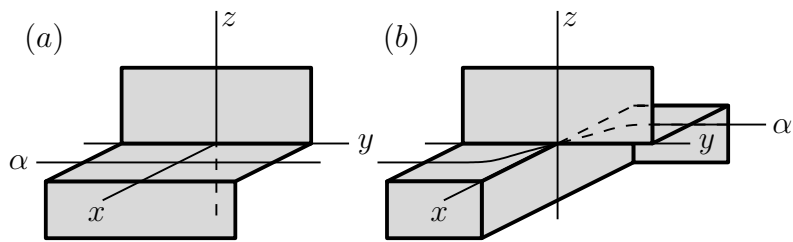}
\caption{Portions of a flattened surface near an original intersection arc
$\alpha$.}
\label{fig:flattened_process}
\end{center}
\end{figure}
The remainder of the isotopy will move points only in a small neighborhood
of the original (open) edges of $S_0\cap L_0$. Consider the closure
$\alpha$ of such an edge. Initially, $S_0$ and $L_0$ were tangent at its
endpoints (which may coincide), and nearly tangent near its endpoints, and
$S_{1/2}$ actually coincides with $L_0$ at points of $\alpha$ near the
endpoints. On the remainder, we continue to flatten $S_{1/2}$ so that it
meets $L_0$ is a neighborhood of $\alpha$, as shown locally in
Figure~\ref{fig:flattened_process}(a).

When the two flattenings from the ends of $\alpha$ meet somewhere in the
middle, it might happen that both go ``up'' on the same side of $\alpha$,
so that the flattening may be continued to achieve
Figure~\ref{fig:flattened_process}(a) at all points of the interior of
$\alpha$. It might happen, however, that one flattening goes ``up'' while
the other goes ``down'' on the same side of $\alpha$. In that case, we
flatten to the local configuration in
Figure~\ref{fig:flattened_process}(b), adding one such crossover point on
each such edge $\alpha$. 

This process starting with $S_{1/2}$ may be carried out simultaneously for
all intersection edges, giving the desired isotopy ending with a flattened
surface~$S_1$.
\end{proof}

We call an isotopy as in Lemma~\ref{flatten}, or the resulting PL surface,
a \indexdef{flattening!isotopy}\textit{flattening} of $S_0$. By property (i)
of the lemma, the collection of intersection circles in $L_u$ for $0<u\leq
u_0$ is changed only by isotopy in $L_u$. After flattening, each of these
circles projects through $S_1$ (i.~e.~vertical projection to the $xy$-plane
in Figure~\ref{fig:flattened surfaces}) to an immersed circle lying in
$\Gamma$, having a transverse self-intersection at each of its double
points (which can occur only at vertices of $\Gamma$.)

\begin{proof}[Proof of Proposition~\ref{prop:circles}.]
Suppose first that the intersection $K\cap K_0$ is transverse. Since $K$
must meet every nearby level $T_u$ transversely, it intersects $P_u$ in
M\"obius bands and annuli, which after isotopy of $K$ (keeping it
transverse to level tori) may be assumed to be vertical in the $\I$-bundle
structure. The projection of $T_u$ onto $K_0$ maps circles of $K\cap T_u$
onto circles of $K\cap K_0$ either homeomorphically or by two-fold
coverings. Only inessential and $a$- and $b^2$-circles can be inverse
images of embedded circles in~$K_0$. For suppose that a loop representing
$a^kb^{2\ell}$ covers an embedded circle. Then it must have zero
intersection number with its image under the covering transformation $\tau$
of $T_u$ over $K_0$. Since $a$ and $b^2$ have intersection number $1$ in
$T_u$, and $\tau(a)=a^{-1}$ and $\tau(b^2)=b^2$, the image represents
$a^{-k}b^{2\ell}$ and the intersection number is $2k\ell$. Therefore the
proposition holds when $K$ meets $K_0$ transversely.

Suppose now that $K\cap K_0$ contains singular points. By
Lemma~\ref{flatten}, we can flatten $K$ near $K_0$, without changing the
isotopy classes in $T_u$ of the loops $K\cap T_u$. After the flattening,
$K\cap K_0$ consists of a valence 4 graph $\Gamma$, which is the image of
the collection of disjoint simple closed curves $K\cap T_u$ under a
$2$-fold covering projection, together with some of the complementary
regions of $\Gamma$ in $K_0$, which we will call the \textit{faces.} Each
edge of $\Gamma$ lies in the closure of exactly one face. It is convenient
to choose an $\I$-fibering of $P_{u_0}$ so that $K\cap P_{u_0}$ lies in the
union of $K\cap K_0$ and the $\I$-fibers that meet~$\Gamma$.

Suppose for contradiction that one of the circles in $K\cap T_u$ represents
$a^kb^{2\ell}$ with $k\ell\neq0$. Since $K$ is geometrically incompressible
(if not, then $M$ would contain an embedded projective plane), there is an
isotopy of $K$ in $M$ which eliminates the circles of $K\cap T_u$ that are
inessential in $T_u$, without altering the remaining circles or destroying
the flattened position of $K\cap P_u$. 

At this point none of the components of $\Gamma$ can be a circle. If so,
then it would lie in a vertical annulus or M\"obius band in $K\cap P_u$,
and be the image of a $1$ or $2$-fold covering of a circle of $K\cap T_u$,
but we have seen that only inessential, $a$-, and $b^2$-circles in $T_u$
project along $\I$-fibers to imbedded circles in $K_0$. So we may assume
that $K\cap T_u$ consists of disjoint circles each representing
$a^kb^{2\ell}$. Since $K$ is isotopic to $K_0$, each loop in $T_u$ has zero
mod~$2$-intersection number in $M$ with $K$, and hence has even algebraic
intersection number with $K\cap T_u$. Therefore $K\cap T_u$ consists of an
even number of these circles; denote them by $A_1$, $A_2,\ldots\,$,
$A_{2r}$.

The vertices of $\Gamma$ are the images of the intersections of $\cup A_i$
with $\cup\tau(A_i)$ (note that by the properties of $\Gamma$, $\cup A_i$
and $\cup \tau(A_i)$ meet transversely). As above, we compute the
intersection number to be
\[\left(\cup A_i\right)\,\cdot\,\left(\cup \tau(A_i)\right)
   = (2r\,a^kb^{2\ell})\,\cdot\,(2r\,a^{-k}b^{2\ell})
   = 4r^2\,2k\ell\ .\]

\noindent Since $(\cup A_i) \cup (\cup \tau(A_i))$ is $\tau$-invariant,
each vertex of $\Gamma$ is covered by two intersections, so $\Gamma$ has at
least $4r^2\abs{k\ell}$ vertices.

\index{figures!figure5@removal of a bigon by isotopy}%
\begin{figure}
\labellist
\pinlabel $(a)$ [B] at 18 190
\pinlabel $(b)$ [B] at 533 190
\endlabellist
\begin{center}
\includegraphics[width=0.85\textwidth]{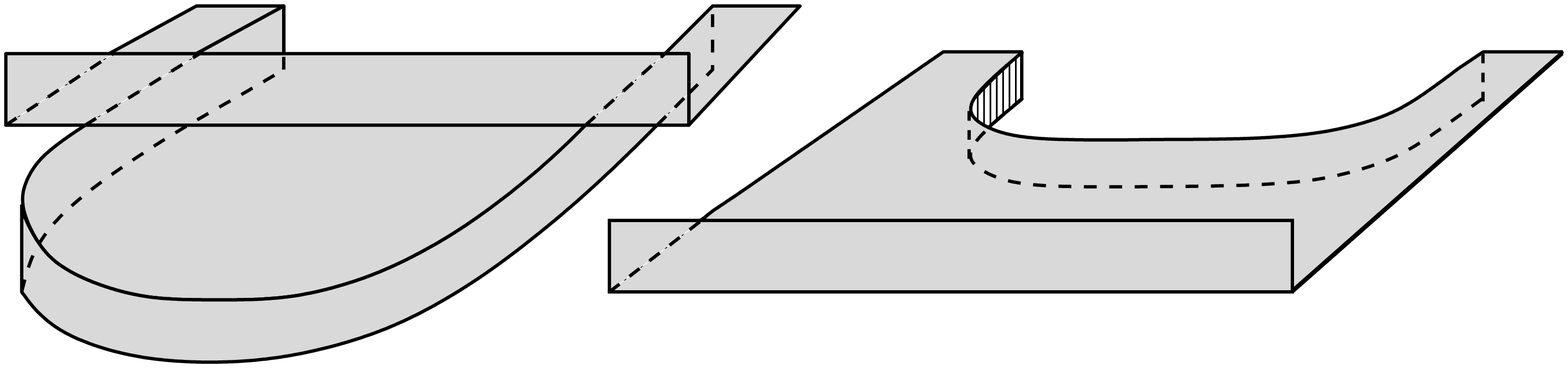}
\caption{Removal of a bigon by isotopy. The picture shows a portion of $K$ 
near a bigon face of $K\cap K_0$, and $K_0$ is the horizontal plane
containing the bigon. During the isotopy, the top vertical portion of $K$ 
in (a) moves forward and the bottom one moves backward, ending with $K$ in
the position shown in~(b). The bigon is eliminated from $K\cap K_0$, while
the other two portions of intersection faces seen in~(a) (which might be
portions of the same face) are joined by a new horizontal band in $K\cap
K_0$ seen in~(b).}
\label{fig:2-gon isotopy}
\end{center}
\end{figure}

We claim that each edge of $\Gamma$ runs between two distinct vertices of
$\Gamma$. Supposing to the contrary, we would see a crossing configuration
as Figure~\ref{fig:flattened surfaces}(b), for which starting at the origin
and traveling along one of the four edges of $\Gamma$ that meet there
returns to the origin along one of the other three edges without passing
through another vertex. Suppose, for example, that the edge starts with the
positive $y$-axis in Figure~\ref{fig:flattened surfaces}(b).  Consider the
right-hand orientation $(\vec{\jmath},-\vec{\imath},\vec{k})$ at the origin
in Figure~\ref{fig:flattened surfaces}(b). Travel out the edge $e$ along
the positive $y$-axis. On the edge, we can make a continuous choice of
local orientation $(\vec{\jmath}_t,-\vec{\imath}_t,\vec{k}_t)$ where
$\vec{\jmath}_t$ is a tangent vector to the edge, $-\vec{\imath}_t$ is the
inward normal of $K\cap K_0$, and $\vec{k}_t$ is the inward normal of
$\overline{K-K_0}$. Returning to the initial point of the edge, one
approaches the origin along either the negative $y$-axis or the positive or
negative $x$-axis, but on each of these axes the orientation
$(\vec{\jmath}_t,-\vec{\imath}_t,\vec{k}_t)$ is left-handed in
Figure~\ref{fig:flattened surfaces}(b), contradicting the fact that $M$ is
orientable.

A similar argument shows that each face of $K\cap K_0$ has an even number
of edges. Successive edges of a face meet at configurations as in
Figure~\ref{fig:flattened surfaces}(b), and the orientations described in
the previous paragraph change to the opposite orientation of $M$ each time
one passes to a new edge.

Consider a face that is a bigon. Since no edge has equal endpoints, the
face must have two distinct vertices, as in Figure~\ref{fig:2-gon
isotopy}(a). The isotopy of $K$ described in Figure~\ref{fig:2-gon
isotopy} eliminates this bigon and adds a band to $K\cap K_0$; this band
is a ($2$-dimensional) $1$-handle attached onto previous faces of $K\cap
K_0$, and either combines two previous faces or is added onto a single
previous face. Repeating, we move $K$ by isotopy (not changing the isotopy
classes of the loops of $K\cap T_u$) to eliminate all faces that are
bigons. No component of $\Gamma$ can be a circle, since as before this
would force $K\cap T_u$ to have a component that is inessential or is an
$a$-~or $b^2$-curve. So each face of $K\cap K_0$ now contains at least 
$4$ vertices.

The Euler characteristic of $K\cap P_u$ is at least $-2r$, since
$\chi(K)=0$ and $K\cap P_u$ has exactly $2r$ boundary components. Letting
$V$, $E$, and $F$ denote the number of vertices, edges, and faces of $K\cap
K_0$, we have $E=2V$ and $F\leq V/2$ (since each edge lies in exactly one
face and each face has at least $4$ edges). Therefore $-2r\leq \chi(K\cap
P_u)=\chi(K\cap K_0)\leq-V/2$ (note that the latter estimate does not
require that the faces themselves have Euler characteristic $1$). Since
$V\geq 4r^2\abs{k\ell}$, it follows that $r\abs{k\ell}\leq1$, forcing
$r=\abs{k\ell}=1$, $\chi(K\cap K_0)= -2$, $V= 4$, and $F=2$.
That is, $K\cap K_0$ consists of two faces, each a $4$-gon, meeting at
their four vertices. Since $\abs{k\ell}=1$, $\Gamma$ is the image of two
embedded circles of $T_u$ each representing $a^{\pm1}b^{\pm2}$. Since
$\chi(K\cap P_u)=\chi(K\cap K_0)=-2$ and $\chi(K)=0$, each of these circles
must bound a disk in $R_u$.  This contradicts the hypothesis that
$(m,n)\neq (1,1)$.
\end{proof}
\index{figures!figure6@an exceptional $K\cap K_0$ for $M(1,1)$}%
\begin{figure}
\begin{center}
\includegraphics[width=0.5\textwidth]{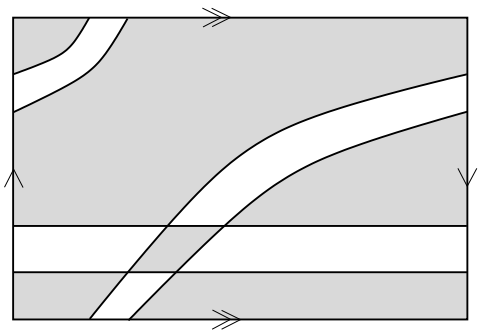}
\caption{The shaded region shows $K\cap K_0$ in $K_0$ for a case when
$r=|k\ell|=1$. Necessarily $M=M(1,1)$, which is excluded by hypothesis.}
\label{fig:intersection}
\end{center}
\end{figure}

Figure~\ref{fig:intersection} shows $K\cap K_0$ for a Klein bottle $K$ in
$M(1,1)$ that \textit{is} the flattening of a Klein bottle that meets every
$T_u$ close to $K_0$ in longitudes not homotopic to fibers, i.~e.~in loops
representing $ab^2$.

\newpage

\section [Generic position families] {Generic position families}
\label{sec:generic position families}

In this section, we achieve the necessary generic position for a
parameterized family. As usual, $M=M(m,n)$ with at least one of $m>1$ or
$n>1$. 

\begin{proposition}\index{generic position}
Let $F\colon D^k\to \Imb(K_0,M)$ be a parameterized family of embeddings of
the standard Klein bottle $K_0$ into $M$. Then every open neighborhood of
$F$ in $\Maps(D^k,\Imb(K_0,M))$ contains a map $G\colon D^k\to\Imb(K_0,M)$
for which $G(t)(K_0)$ is in generic position with respect to $K_0$ for all
$t\in D^k$. Moreover, we may select $G$ to be homotopic to $F$ within the
given neighborhood.\par
\label{perturb 2-manifolds}
\end{proposition}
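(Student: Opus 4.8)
The plan is to express ``$K=g(K_0)$ is in generic position with respect to $K_0$'' as a jet-transversality condition on $g$, and then apply Thom's jet transversality theorem in its parameterized form. Fix the twisted $\I$-bundle neighborhood $P$ of $K_0$ as in Section~\ref{sec:generic position}; only the restriction of $g$ to $g^{-1}(P)$ matters, and near any point of $K\cap K_0$ one may use a local defining function $\psi$ for $K_0$, so that the singular points of $g(K_0)\cap K_0$ are the critical points at level $0$ of $\psi\circ g$, and such a point has finite multiplicity precisely when it is a finitely determined critical point of $\psi\circ g$ (see Section~5 of \cite{I2}, and \cite{Bruce}). The essential input from singularity theory is that the set of non-finitely-determined germs, read off in a jet space $J^r$ of sufficiently high order $r$, is a (semialgebraic) subset of arbitrarily large codimension: concretely, we may fix $r$ so that the corresponding ``bad locus'' $\Sigma_r\subset J^r(K_0,M)$---the jets whose target point lies on $K_0$ and whose contact with $K_0$ there is not finitely determined---has codimension exceeding $k+2=\dim(D^k\times K_0)$.

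First I would form the $r$-jet extension of the family, $\Psi\colon D^k\times K_0\to J^r(K_0,M)$, $\Psi(t,x)=j^r_x(F(t))$. By the parameterized jet transversality theorem, every $\Cinf$-neighborhood of $F$ in $\Maps(D^k,\Imb(K_0,M))$ contains a residual---hence dense---set of families $G$ whose $r$-jet extension is transverse to $\Sigma_r$. By the choice of $r$, transversality to a submanifold of codimension greater than $\dim(D^k\times K_0)$ forces disjointness; so for such a $G$ and every $t\in D^k$, the surface $G(t)(K_0)$ meets $K_0$ only in singular points of finite multiplicity, i.e.\ is in generic position with $K_0$. Here one uses that $\Imb(K_0,M)$ is open in $\Maps(K_0,M)$, so that a small enough perturbation of $F$ still consists of embeddings at every parameter; compactness of $D^k$ and $K_0$ makes this uniform.

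For the homotopy assertion, I would invoke local convexity. If $G$ is chosen in a sufficiently small $\Cinf$-neighborhood of $F$, the homotopy moving each $G(t)(x)$ to $F(t)(x)$ along the unique short geodesic between them is a homotopy through embeddings---by the local convexity of $\Imb(K_0,M)$ recorded in Chapter~\ref{ch:foundations}---and it stays within the prescribed neighborhood of $F$. Hence $G$ is homotopic to $F$ inside that neighborhood, as required.

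The hard part will be the codimension bookkeeping: pinning down exactly which jet condition encodes ``every singular point of $K\cap K_0$ has finite multiplicity,'' and then citing or re-deriving the estimate that the locus of non-finitely-determined contact germs has codimension tending to infinity with the jet order---this is precisely what lets a \emph{finite}-parameter family avoid it altogether. The remaining ingredients (parameterized jet transversality, density of residual sets, the localization to the $\I$-bundle $P$ so that the global embedding condition is untouched, and the local-convexity homotopy) are routine.
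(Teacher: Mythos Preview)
Your proposal is correct and follows essentially the same approach as the paper. The paper outsources the existence of the generic-position perturbation to Lemma~(5.2) of \cite{I2} and \cite{Bruce} (the jet-transversality argument you sketch), and for the homotopy uses the same straight-line deformation you describe, implemented via sections of the normal bundle and the exponential map, with an explicit remark that scaling the section toward zero moves all partial derivatives toward those of $F$, hence keeps the homotopy inside the given $\Cinf$-neighborhood.
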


\begin{proof}
From Lemma~(5.2) of~\cite{I2} (see also~\cite{Bruce}), a $G$ with each
$G(t)(K_0)$ in generic position exists, and we need only verify that it may
also be selected to be homotopic to $F$ within the given neighborhood~$V$.

Each $F(t)$ determines a bundle map from the restriction $E$ of the tangent
bundle of $M$ to $K_0$ to the restriction $E(t)$ of the tangent bundle of
$M$ to $F(t)(K_0)$; in the directions tangent to $K_0$, it is the
differential of $F(t)$, and it takes unit normals to unit normals. At
each~$t$, the Fr\'echet manifold of $\Maps$-sections of $E(t)$ whose image
vectors all have length less than some sufficiently small $\epsilon$
corresponds, using the exponential map, to a neighborhood $W_\epsilon(t)$
of $F(t)$ in~$\Imb(K_0,M)$. In particular, the zero section corresponds
to~$F(t)$. Since $D^k$ is compact, we may fix a uniform value of $\epsilon$
for all~$F(t)$.

An $\epsilon$-small section $s(t)$, corresponding to an embedding $L(t)$,
is isotopic to the zero section by sending each $s(x)$ to $(1-s)s(x)$. Via
the exponential, this becomes a homotopy $L_s$ with each $L_0(t)=L(t)$ and
$L_1(t)=F(t)$, that is, a homotopy from $L$ to $F$ as elements of
$\Maps(D^k,\Imb(K_0,M))$. With respect to coordinates on $E(t)$, all
partial derivatives of $s(t)$ move closer to zero $s$ goes from $0$ to $1$,
so those of $L_s$ in $M$ move closer to those of $F$.  Consequently,
provided that $\epsilon$ was small enough and $G$ was selected so that each
$G(t)$ was in $W_\epsilon(t)\cap V$, the $G_s$ will remain in~$V$.
\end{proof}

We are now ready for the main result of Sections~\ref{sec:generic position}
and~\ref{sec:generic position families}.

\begin{theorem} Let $F\colon D^k\to \Imb(K_0,M)$ be a parameterized family 
of Klein bottles in $M$. Assume that if $t\in \partial D^k$, then $F(t)$ is
fiber-preserving and $F(t)(K_0)\neq K_0$. Then $F$ is homotopic relative to
$\partial D^k$ to a family $G$ such that for each $t\in D^k$, there exists
$u>0$ so that $G(t)(K_0)$ meets $T_u$ transversely and each circle of
$G(t)(K_0)\cap T_u$ is either inessential in $T_u$, or represents $a$ or
$b^2$ in $\pi_1(T_u)$.
\label{thm:circles}
\end{theorem}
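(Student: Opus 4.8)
The plan is to combine Proposition~\ref{prop:circles} with the parameterized generic position result of Proposition~\ref{perturb 2-manifolds}, taking care to preserve the fiber-preserving condition on $\partial D^k$. First I would dispose of a nuisance: the hypothesis guarantees $F(t)(K_0)\neq K_0$ only for $t\in\partial D^k$, whereas Proposition~\ref{perturb 2-manifolds} produces generic-position families but Proposition~\ref{prop:circles} applies to each individual Klein bottle only once we know it is in generic position with respect to $K_0$ (and in particular is not equal to $K_0$). So I would first perturb $F$ slightly, relative to $\partial D^k$, so that $F(t)(K_0)\neq K_0$ for \emph{all} $t$. This is a small matter: the set of embeddings $j$ with $j(K_0)=K_0$ is nowhere dense in $\imb(K_0,M)$ (it is a submanifold of infinite codimension, being contained in the image of $\Diff(M,K_0)$), and since $F$ already avoids it on the compact set $\partial D^k$, a generic small homotopy supported away from $\partial D^k$ pushes the whole family off it. I would phrase this using the local convexity of the mapping spaces (Section~\ref{sec:Cinfinity}) so that the homotopy can be taken arbitrarily small and constant on $\partial D^k$.

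Next I would apply Proposition~\ref{perturb 2-manifolds} to obtain $G'$ with each $G'(t)(K_0)$ in generic position with respect to $K_0$, and with $G'$ homotopic to $F$ within a prescribed neighborhood. The one subtlety is the boundary control: Proposition~\ref{perturb 2-manifolds} as stated gives no information on $\partial D^k$. I would handle this by observing that for $t\in\partial D^k$, $F(t)$ is fiber-preserving, hence $F(t)(K_0)$ is a vertical Klein bottle, and a vertical Klein bottle already meets $K_0$ in generic position (its intersection with $K_0$ is a union of fibers, or after a preliminary vertical isotopy it is transverse to $K_0$ except along vertical circles, which one checks are finite-multiplicity singular configurations; alternatively, one puts $F(t)(K_0)$ in general vertical position relative to $K_0$ using the fiber-preserving isotopy machinery of Chapter~\ref{ch:Palais}). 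So I would run the generic-position perturbation only in the complement of a collar of $\partial D^k$, using a partition-of-unity/cutoff argument to taper the perturbation to the identity as $t\to\partial D^k$, exactly as in the proof of the Parameterized Extension Principle (Theorem~\ref{thm:PEP}); the embeddings at $\partial D^k$ stay fiber-preserving and already in generic position, so nothing needs to be done there.

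With the family $G'$ in hand, each $G'(t)(K_0)$ is a Klein bottle isotopic to $K_0$ meeting $K_0$ in generic position, and $M=M(m,n)$ with $(m,n)\neq(1,1)$ since at least one of $m,n$ exceeds $1$. Proposition~\ref{prop:circles} then applies \emph{pointwise}: for each $t$ there is a $u_0(t)>0$ such that for all $u\leq u_0(t)$, $G'(t)(K_0)$ is transverse to $T_u$ and each circle of $G'(t)(K_0)\cap T_u$ is inessential or represents $a$ or $b^2$ in $\pi_1(T_u)$. This is exactly the conclusion required, once we note that the statement of the theorem only asks for the \emph{existence} of such a $u>0$ for each $t$ separately, not a uniform one. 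So taking $G=G'$ completes the argument, provided the homotopy from $F$ to $G'$ was kept relative to $\partial D^k$, which the cutoff construction ensures.

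\textbf{Main obstacle.} The routine-looking part — applying Proposition~\ref{prop:circles} — is genuinely immediate; the real work is the boundary bookkeeping. The hard part will be making precise that the generic-position perturbation of Proposition~\ref{perturb 2-manifolds} can be localized away from $\partial D^k$ while simultaneously (i) keeping $G(t)(K_0)\neq K_0$ for all $t$, and (ii) not disturbing the fiber-preserving embeddings at $\partial D^k$, which must already be in (or be put into) generic position with respect to $K_0$. The cleanest route is to first arrange, by a fiber-preserving isotopy supported near $\partial D^k$ and using Corollary~\ref{sfcorollary2} or Theorem~\ref{sfsquare}, that the vertical Klein bottles $F(t)(K_0)$ for $t\in\partial D^k$ are transverse to $K_0$ along vertical circles (hence in generic position), and then run all subsequent perturbations rel $\partial D^k$. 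Once this is set up correctly, the quantitative estimates are exactly those already carried out in Propositions~\ref{perturb 2-manifolds} and~\ref{thm:PEP}, and no new analysis is needed.
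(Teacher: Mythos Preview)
Your overall architecture---perturb to generic position on the interior, taper off near $\partial D^k$, then invoke Proposition~\ref{prop:circles} pointwise---is the same as the paper's. The gap is in your treatment of the boundary. Your claim that a vertical Klein bottle ``already meets $K_0$ in generic position'' is not generally true. Two vertical Klein bottles project to arcs or circles in the quotient orbifold $\mathcal{O}$; whenever those projected curves are tangent at a point of $\mathcal{O}$, the Klein bottles are tangent along the entire fiber over that point. A whole circle of tangencies is not a finite-multiplicity configuration in the sense of Section~\ref{sec:generic position}, so generic position fails. Your fallback---first move the boundary family by a fiber-preserving isotopy to make the projected curves transverse---would change $F$ on $\partial D^k$, contradicting the ``relative to $\partial D^k$'' conclusion you are trying to prove.

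The paper avoids this entirely by not seeking generic position with $K_0$ at or near the boundary. Instead it observes that for $t\in\partial D^k$ the \emph{conclusion} of the theorem already holds directly: since $F(t)$ is fiber-preserving and $F(t)(K_0)\neq K_0$, Sard's theorem produces a level $T_u$ met transversely by $F(t)(K_0)$, and the intersection circles are fibers, hence $a$- or $b^2$-curves. Transversality with a fixed $T_u$, together with the isotopy class of the intersection circles, is an open condition, so one gets a collar $U=\partial D^k\times I$ and an open neighborhood $V$ of $F$ in which the conclusion persists at all parameters in $U$. One then applies Proposition~\ref{perturb 2-manifolds} globally to get a homotopy $G'_s$ staying inside $V$, and tapers it off on the collar: on $\partial D^k\times\{r\}$ perform only the portion $G'_s$ for $0\le s\le r$. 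On $\overline{D^k-U}$ one has full generic position and Proposition~\ref{prop:circles} applies; on $U$ one never leaves $V$, so the conclusion holds for the other reason. Your preliminary step of perturbing so that $F(t)(K_0)\neq K_0$ for all interior $t$ is unnecessary here: the generic-position perturbation of Proposition~\ref{perturb 2-manifolds} already destroys any coincidence $G'_1(t)(K_0)=K_0$.
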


\begin{proof}
We first note that any embedded Klein bottle in $M$ must meet $K_0$, since
otherwise it would be embedded in the open solid torus $\overline{M-K_0}$,
so would admit an embedding into $3$-dimensional Euclidean space.

Recall that $M(m,n)$ is constructed from the $\I$-bundle $P$ over $K_0$ by
attaching a solid torus. There is a $u$-coordinate on $P$, $0\leq u\leq 1$,
such that the points with $u=0$ are $K_0$ and for each $0<u\leq 1$, the
points with $u$-coordinate equal to $u$ are a ``level'' torus $T_u$.
Fixing a parameter $t\in\partial D^k$, let $f\colon F(t)^{-1}(P-K_0)\to \I$
be the composition of $F(t)$ with projection to the $u$-coordinate of
$P-K_0$. Since $F(t)(K_0)$ must meet $K_0$, and by hypothesis, $F(t)(K_0)$
does not equal $K_0$, the image of $f$ contains an interval. By Sard's
Theorem, almost all values of $u$ are regular values of $f$, so there is
some level $T_u$ such that $F(t)(K_0)$ meets $T_u$ transversely. 

Since transversality is an open condition and $\partial D^k$ is compact,
there is a finite collection of open sets in $D^k$ whose union contains
$\partial D^k$ and such that on each open set, there is a corresponding
level $T_u$ such that $F(t)(K_0)$ meets $T_u$ transversely for every $t$ in
the open set. At points of $\partial D^k$, the intersection curves are
fibers, so must be either $a$- or $b^2$-circles in~$\pi_1(T_u)$. Choose a
collar neighborhood $U=\partial D^k\times \I$ of $\partial D^k$, with
$\partial D^k\times\{0\}=\partial D^k$, such that the closure
$\overline{U}$ is contained in the union of these open sets. Since
transversality is an open condition, there is an open neighborhood $V$ of
$F$ in $\Maps(D^k,\Maps(K_0,M))$ such that for any map $G$ in $V$ and any
$t\in U$, $G(t)$ is transverse to one of the corresponding levels, and
$G(t)(K_0)$ intersects that level in loops representing either $a$
or~$b^2$.

Apply Proposition~\ref{perturb 2-manifolds} to obtain a homotopy $G_s'$
from $F$ to a map $G_1'$ for which $G_1'(t)$ meets $K_0$ in generic
position for every $t\in D^k$, and such that each $G_s'$ lies in
$V$. Define a new homotopy $G_s$ that equals $G_s'$ on $\overline{D^k-U}$
and carries out only the portion of $G'_s$ from $s=0$ to $s=r$ on each
$\partial D^k\times \{r\}\subset U$. In particular, $G_s$ is a homotopy
relative to~$\partial D^k$.

At each point $t$ of $U$, $G_s(t)$ lies in $V$ so is transverse to some
level $T_u$ and $G_s(K_0)$ intersects that level in loops representing
either $a$ or~$b^2$. On $\overline{D^k-U}$, $G_s(K_0)$ meets $K_0$ in
generic position, so by Proposition~\ref{prop:circles}, it meets all $T_u$,
for $u$ sufficiently close to $0$, transversely in loops which are either
inessential in $T_u$ or represent $a$ or $b^2$ in~$\pi_1(T_u)$.
\end{proof}


\section[Parameterization]{Parameterization}
\label{parameterization}

In this section we will complete the proof of Theorem~\ref{main}. By definition,
both $\imb(K_0,M)$ and $\imb_f(K_0,\allowbreak M)$ are connected, so
$\pi_0(\imb(K_0,M),\imb_f(K_0,M))=0$. To prove that the higher relative
homotopy groups vanish, we begin with a smooth map $F\colon D^k\rightarrow
\imb(K_0,M)$, where $k\geq1$, which takes all points of $\partial D^k$ to
$\imb_f(K_0,M)$. We will deform $F$, possibly changing the embeddings at
parameters in $\partial D^k$ but retaining the property that they are
fiber-preserving, to a family which is fiber-preserving at every
parameter. In fact, all deformations will be relative to $\partial D^k$,
except for the first step.

\smallskip
\noindent\textsl{Step 1:} Obtain generic position\index{generic position}
\smallskip

In order to apply Theorem~\ref{thm:circles}, we must ensure that no
$F_t(K_0)$ equals $K_0$ for $t\in \partial D^k$. Select a smooth isotopy
$J_s$ of $M$, $0\leq s\leq 1$, with the following properties:
\begin{enumerate}
\item[(a)]
$J_0$ is the identity of $M$.
\item[(b)]
Each $J_s$ is fiber-preserving.
\item[(c)]
$J_1(K_0)\neq F(t)(K_0)$ for any $t\in \partial D^k$.
\end{enumerate}
One construction of $J_s$ is as follows. As elaborated in
Section~\ref{fiberings}, the image of $K_0$ in the quotient orbifold
$\mathcal{O}$ of the Hopf fibering is either a geodesic arc or a geodesic
circle. Let $A$ denote the image, let $S$ be the endpoints of $A$ if $A$ is
an arc and the empty set if $A$ is a circle, and let $T$ be the inverse
image of $S$ in $M$. Consider an isotopy $j_t$ of $\mathcal{O}$, relative
to $S$, that moves $A$ to an arc or circle $A'$ of large length.

By 
\index{Projection Theorem!singular fiberings}%
Theorem~\ref{sfproject diffs}, the map $\Diff_f(M \rel T)\to \Diff(\O
\rel S)$ induced by projection is a fibration, so $j_t$ lifts to an isotopy
$J_t$ of $M$ with $J_0$ the identity map of $M$. The image of $J_1(K_0)$ is
$A'$.  Since $F(t)$ is fiber-preserving for each $t\in \partial D^k$, its
image is an arc or circle, and by compactness of $\partial D^k$, there is a
maximum value for the lengths of these images. Provided that $A'$ was
selected to have length larger than this maximum, $J_1(K_0)$ cannot equal
any $F(t)(K_0)$ for $t\in \partial D^k$.


We now perform a deformation $F_s$ of $F$ such that each
$F_s(t)=J_s^{-1}\circ F(t)$. At each $t\in \partial D^k$, each $F_s(t)$ is
fiber-preserving, and $F_1(t)(K_0)=J_1^{-1}(F(t)(K_0))\neq K_0$. We can now
apply Theorem~\ref{thm:circles} to further deform $F$ relative to $\partial
D^k$ so that for each $t$, there is a value $u$, $0<u\leq 1$, so that
\begin{enumerate}
\item[(1)] $F(t)$ is transverse to $T_u$.
\item[(2)] Every circle of $K_t \cap T_u$ is either inessential in $T_u$,
or represents either $a$ or $b^2$ in $\pi_1(T_u)$.
\end{enumerate}

From now on, we will write $K_t$ for~$F(t)(K_0)$.

\smallskip
\noindent\textsl{Step 2:} Eliminate inessential intersection circles
\smallskip

The next step is to get rid of inessential intersections. Consider a single
$K_t$ and its associated level $T_u$. Notice first that each circle $c$ of
$K_t\cap T_u$ that bounds a (necessarily unique) $2$-disk $D_T(c)$ in $T_u$
also bounds a unique $2$-disk $D_K(c)$ in $K_t$, since $K_t$ is
geometrically incompressible. We claim that if $D_K(c)$ is innermost among
all such disks on $K_t$, then the interior of $D_K(c)$ is disjoint from
$T_u$. If not, then there is a smaller disk $E$ in $D_K(c)$ such that
$\partial E$ is essential in $T_u$ and the interior of $E$ is disjoint from
$T_u$. Now $E$ cannot be contained in $P_u$, since $T_u$ is incompressible
in $P_u$, so $E$ must be a meridian disk of $R_u$. But then, $\partial E$
is a circle of $K_t\cap T_u$ which is essential in $T_u$ but does not
represent either $a$ or $b^2$, contradicting~(2) and establishing the
claim. We conclude that if $D_K(c)$ is innermost, then $D_K(c)$ and
$D_T(c)$ bound a unique $3$-ball $B(c)$ in $M$ that meets $T_u$ only
in~$D_T(c)$.

We now follow the procedure of \index{Hatcher}Hatcher described in
\cite{Hold,Hnew} to deform the family $F$ to eliminate the circles of
$K_t\cap T_u$ that are inessential in $T_u$. It is not difficult to adapt
the procedure to our situation, in fact a few simplifications occur, but
since this is a crucial part of our argument and these methods are
unfamiliar to many, we will navigate through the details. We will follow
\cite{Hnew}, as it is an easier read than~\cite{Hold}, and its numbered
formulas are convenient for referencing in our discussion. Start at the
proof of the main theorem on p.~2 of~\cite{Hnew}. Our $K_t$ and $T_u$ are
in the role of the surfaces $M_t$ and $N_i$ in~\cite{Hnew}. We ignore the
points called $p_t$ there, which are irrelevant for us (since a loop can
bound at most one disk in $K_t$ or $T_u$). Only notational substitutions
are needed to obtain the conditions (1)-(3) and (5)-(6), $(5_\epsilon)$,
and $(6_\epsilon)$ in~\cite{Hnew} (condition~(4) there concerns the
irrelevant $p_t$), and the conditions called~(a) and~(b) there are assumed
inductively as before. We have already seen that the disks $D_K(c_t)$ and
$D_T(c_t)$ bound a unique $3$-ball $B(c_t)$--- this replaces the hypothesis
of the main theorem of~\cite{Hnew} that any two essential $2$-spheres in
$M$ are isotopic. The argument that the boundary of $B(c_t)$ has a corner
with angle less than $\pi$ along $c_t$ applies in our case, since $T_u$
cannot be contained in the $3$-ball $B(c_t)$.

The individual isotopies that make up the deformations called $M_{tu}$
in~\cite{Hnew} (so they would be called $K_{tu}$ for us)
are constructed as before. A crucial point in Hatcher's method is that if
the isotopies pushing $D_K(c_t)$ and $D_K(c_t')$ across $B(c_t)$ and
$B(c_t')$ overlap in time, then the balls $B(c_t)$ and $B(c_t')$ must be
disjoint, ensuring that the isotopies have disjoint support and do not
interfere with each other.  The verification that such a $B(c_t)$ and
$B(c_t')$ must be disjoint proceeds as in~\cite{Hnew}: If the isotopies
overlap in time, then
\begin{enumerate}
\item[(i)] $D_K(c_t)$ is disjoint from $D_K(c_t')$ (condition~$(5_\epsilon)$),
\item[(ii)] $D_T(c_t)$ and $D_T(c_t')$ lie in different levels $T_u$ and $T_{u'}$
(condition~$(6_\epsilon)$), and
\item[(iii)] $D_K(c_t)$ is disjoint from $T_{u'}$ and $D_K(c_t')$ is
disjoint from $T_u$ (condition~(b)).
\end{enumerate}
Conditions (i)-(iii) show that the boundaries of $B(c_t)$ and $B(c_t')$ are
disjoint, so $B(c_t)$ and $B(c_t')$ are either disjoint or nested. But
neither $T_u$ nor $T_{u'}$ is contained in a $3$-ball in $M$, so nesting
would contradict~(iii). The remainder of the proof is completed without
significant modification.

At the end of this process, for each $t\in D^k$, there is a value $u>0$ so
that in place of~(2) above we have

\begin{enumerate}
\item[(2$^{\prime}$)] Every intersection circle of $K_t$ with $T_u$
represents either $a$ or $b^2$ in $\pi_1(T_u)$.
\end{enumerate}

\smallskip
\noindent\textsl{Step 3:} Make the intersection circles fibers
\smallskip

Since $a$ and $b^2$ are nontrivial elements of $\pi_1(M)$, the circles of
$K_t\cap T_u$ are essential in $K_t$ as well, so each component of $K_t\cap
R_u$ must be either an annulus or a M\"obius band. In fact, M\"obius bands
cannot occur.  For the center circle of such a M\"obius band would have
intersection number $1$ with $K_t$ and intersection number $0$ with $K_0$,
contradicting the fact that $K_t$ is isotopic to~$K_0$.

We will use the procedure of \cite{Hold,Hnew}, this time to pull the annuli
$K_t\cap R_u$ out of the $R_u$. The details of adapting~\cite{Hold,Hnew} are not
quite as straightforward as in Step~2.  Again, the $K_t$ and $T_u$ are in
the role of $M_t$ and $N_t$ respectively, and the points $p_t$ are
irrelevant. Setting notation, for each parameter $t$ in a ball $B_i$ in
$D^k$, $K_t$ is transverse to a level $T_{u_i}$, and $K_t\cap R_{u_i}$ is a
collection of annuli. These annuli are in the role of the disks $D_M(c)$
of~\cite{Hnew}. Denote by $C_t^i$ the annuli of $K_t\cap R_{u_i}$ whose
boundary circles are not isotopic in $T_{u_i}$ to fibers. Notice that
$C_t^i$ is empty for parameters $t$ in $\partial D^k$. By condition
(2$^{\prime}$) and Lemma~\ref{lem:longitudes are fibers}, any circle of
$K_t\cap R_u$ that is not isotopic in $T_u$ to a fiber is also not a
longitude of $R_u$. So each such annulus $a_t$ is parallel across a region
$W(a_t)$ in $R_{u_i}$ to a uniquely determined annulus $A_t$ in~$T_{u_i}$.

Let $C_t$ be the union of the $C_t^i$ for which $t\in B_i$.  Any two annuli
of $C_t$ are either nested or disjoint on~$K_t$. Again we consider
functions $\varphi_t\colon C_t\to (0,1)$ such that
$\varphi_t(a_t)<\varphi_t(a_t')$ whenever $a_t\subset a_t'$; this is the
version of condition~(5) needed for our case. For example, we may take
$\varphi_t(a_t)$ to be the area in $K_0$ of the inverse image of $a_t$ with
respect to the embedding $K_0\to K_t$, where the area of $K_0$ is
normalized to $1$. The transversality trick of~\cite{Hnew} achieves
condition~(6) as before, conditions~$(5_\epsilon)$ and~$(6_\epsilon)$ are
again true by compactness, and conditions~(a) and~(b) are assumed
inductively.

The angles of the regions $W(a_t)$ along the circles $a_t\cap T_{u_i}$ are
less than $\pi$, this time simply because $a_t$ is contained in~$R_{u_i}$.

Again, the key point in defining the isotopies that pull the $a_t$ across
the regions $W(a_t)$ and out of the $R_{u_i}$ is that is two of the
isotopies on such regions $W(a_t)$ and $W(a_t')$ overlap in time,
then $W(a_t)$ and $W(a_t')$ must be disjoint. When they do overlap in time,
we have
\begin{enumerate}
\item[(i)] $a_t$ is disjoint from $a_t'$ (condition~$(5_\epsilon)$), and
\item[(ii)] $A_t$ and $A_t'$ lie in different levels $T_u$ and $T_{u'}$
(condition~$(6_\epsilon)$).
\end{enumerate}
We also have
\begin{enumerate}
\item[(iii)] $a_t$ is disjoint from $T_{u'}$, and $a_t'$ is disjoint from
$T_u$. 
\end{enumerate}
To see this, choose notation so that $T_{u'}\subset R_u$. Then $a_t'$ is
disjoint from $T_u$ since $a_t'\subset R_u$. If $a_t$ were to meet $T_{u'}$,
then there would be a circle of $a_t\cap T_{u'}$ that is parallel in
$\overline{R_u-R_{u'}}$ to a circle of $a_t\cap T_u$. The latter is not a
longitude of $R_u$, so the circle of $a_t\cap T_{u'}$ is not a longitude of
$R_{u'}$. So $a_t$ contains an annulus of $K_t\cap R_{u'}$ that is in
$C_t$, contradicting condition~$(5_\epsilon)$ (that is, such an annulus
would already have been eliminated earlier in the isotopy).

Conditions (i) and (ii) show that the boundaries of $W(a_t)$ and $W(a_t')$
are disjoint, so $W(a_t)$ and $W(a_t')$ are either disjoint or
nested. Suppose for contradiction that they are nested. Again we choose
notation so that $R_{u'}\subset R_u$. Since $a_t$ is disjoint from
$T_{u'}$, and $a_t'\subset W(a_t)$, we must have $R_{u'}\subset W(a_t)$. It
follows that $W(a_t)$ contains a loop that generates $\pi_1(R_{u'})$ and
hence generates $\pi_1(R_u)$. But $W(a_t)$ is a regular neighborhood of the
annulus $a_t$, and the boundary circles of $a_t$ were not longitudes of
$R_u$, so this is contradictory. The remaining steps of the argument
require no non-obvious modifications.

At the end of this process, we have in addition to (1) that
\begin{enumerate}
\item[(2$^{\prime\prime}$)] Every circle of $K_t \cap T_u$ is isotopic in
       $T_u$ to a fiber of the Seifert fibering on $M$.
\end{enumerate}

\smallskip
\noindent\textsl{Step 4:} Establish lemmas needed for the final step
\smallskip

To complete the argument, we require two technical lemmas.

\begin{lemma} Let $T$ be a torus with a fixed
$S^1$-fibering, and let $C_n=\cup_{i=1}^nS_i$ be a union of $n$ distinct
fibers. Then $\imb_f(C_n,T)\to\imb(C_n,T)$ is a  homotopy equivalence.
The same holds for the Klein bottle with either the meridional fibering or
the longitudinal singular fibering.\par
\label{lem:fiberpreserving1}
\end{lemma}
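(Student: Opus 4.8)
\textbf{Proof proposal for Lemma~\ref{lem:fiberpreserving1}.}
The plan is to set up a commutative diagram of restriction fibrations and peel off one fiber at a time, reducing the claim to the base case of a single fiber, which can be handled directly. Write $C_n=S_1\cup\dots\cup S_n$ and $C_{n-1}=S_1\cup\dots\cup S_{n-1}$. By Corollary~\ref{sfcorollary3} (and its orbifold/Seifert-fibered analogue, Corollary~\ref{orbcoro3}, in the Klein bottle cases), restricting an embedding of $C_n$ to $C_{n-1}$ gives fibrations, and restricting further to $S_n$ gives fibrations, all compatibly in the fiber-preserving and unrestricted settings. Concretely, I would consider
\begin{equation*}
\begin{CD}
\imb_f(C_n,T\rel C_{n-1}) @>>> \imb_f(C_n,T) @>>> \imb_f(C_{n-1},T)\\
@VVV @VVV @VVV\\
\imb(C_n,T\rel C_{n-1}) @>>> \imb(C_n,T) @>>> \imb(C_{n-1},T)
\end{CD}
\end{equation*}
where the rows are fibrations and the vertical maps are inclusions. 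By induction on $n$ the right vertical map is a homotopy equivalence, so it suffices to show the left vertical map is a homotopy equivalence; that is, I have reduced to understanding the space of embeddings of a single fiber $S_n$ into the complement of $C_{n-1}$, rel nothing.

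For the base case, the complement $T-C_{n-1}$ (or its analogue in the Klein bottle cases) is a disjoint union of open fibered annuli (or, for the longitudinal fibering of the Klein bottle with all the $S_i$ regular fibers, still fibered annuli once one cuts along enough fibers; the exceptional fibers are handled separately since an embedded fiber isotopic to an exceptional fiber is forced to \emph{be} a translate within a Möbius-band neighborhood). So I am reduced to the model statement: for a fibered open annulus $A=S^1\times(0,1)$ fibered by the $S^1$ factor, the inclusion $\imb_f(S^1,A)\to\imb(S^1,A)$ of the components containing an essential (fiber-isotopic) embedding is a homotopy equivalence. Both spaces deformation retract onto the space of ``affine'' embeddings: a fiber-preserving embedding is $\theta\mapsto(\theta+\phi,c)$ for $\phi\in S^1$, $c\in(0,1)$, giving $S^1\times(0,1)\simeq S^1$; and an arbitrary essential embedded loop in $A$ can be straightened to such an affine loop canonically (project to the $(0,1)$-coordinate, take a suitable average or use the standard ``monotone reparameterization then linear interpolation'' retraction, which is continuous in the embedding and preserves fiber-preserving embeddings). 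Thus both sides are homotopy equivalent to $S^1$ and the inclusion induces the identity on $\pi_1$, hence is a homotopy equivalence. The Klein bottle cases differ only in bookkeeping of which fibers are exceptional and in using the singular-fibering restriction corollaries in place of the bundle ones.

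The main obstacle I anticipate is the base-case straightening in the presence of exceptional fibers for the longitudinal singular fibering of the Klein bottle: an embedded curve isotopic to an exceptional fiber lies in a Möbius-band neighborhood, and one must check that the space of such embeddings deformation retracts (fiber-preservingly) onto the core circle, compatibly with the fiber-preserving subspace. This is a routine but slightly fussy $2$-dimensional argument — essentially the classification of curves on a Möbius band up to isotopy plus a canonical normalization — and it is the only place where the fibered and unfibered spaces could conceivably fail to match up. Everything else is the standard inductive dismantling via the restriction fibrations of Section~\ref{sfiber} together with the exact homotopy sequence.
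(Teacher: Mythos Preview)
Your inductive scheme has a genuine gap at the base case. When $n=1$ you have $C_0=\emptyset$, so the ``complement'' $T-C_{n-1}$ is the whole torus (or Klein bottle), not a union of open annuli. Thus your model annulus statement does not apply, and you have given no argument that $\imb_f(S_1,T)\to\imb(S_1,T)$ is a homotopy equivalence. This is precisely the nontrivial step: computing the homotopy type of the space of essential embedded circles in a closed surface. The paper handles it by passing through a basepoint fibration and invoking Gramain's results on surface diffeomorphism groups to show that $\imb((S^1,x_0),(F,x_0))$ is weakly contractible for $F$ a torus, Klein bottle, annulus, or M\"obius band. You need an input of this strength; it is not bookkeeping.

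There is also a secondary gap in your model annulus argument. The ``project to $(0,1)$, average, linearly interpolate'' retraction does not preserve embeddings: an essential curve in $S^1\times(0,1)$ whose $S^1$-coordinate is not monotone will self-intersect as you flatten the $(0,1)$-coordinate. Making this canonical requires essentially the same surface-theoretic input (contractibility of the relevant relative diffeomorphism groups) that the paper cites from~\cite{G}.

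Structurally your approach is close to the paper's---both peel off circles via restriction fibrations---but the paper restricts to a \emph{single} fiber $S_n$ (so the base of the fibration is the already-established $n=1$ case and the fiber is an $(n-1)$-circle problem in the cut-open annulus), whereas you restrict to $C_{n-1}$. Either decomposition works once the single-circle case in the closed surface is in hand, but that case is the heart of the matter and cannot be absorbed into the annulus model.
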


\begin{proof} 
First consider a surface $F$ other than the $2$-sphere, the disk, or the
projective plane, with a base point $x_0$ in the interior of $F$ and an
embedding $S^1\subset F$ with $x_0\in S^1$ which does not bound a disk in
$F$. In the next paragraph, we will sketch an argument using \cite{G} that
$\imb((S^1,x_0),(\interior(F), x_0))$ has trivial homotopy groups. The
approach is awkward and unnatural, but we have found no short, direct way
to deduce this fact from \cite{G} or other sources.

By the\index{Palais-Cerf Restriction Theorem}
Palais-Cerf Restriction Theorem, there is a fibration
\[ \Diff(F \rel S^1)\cap \diff(F,x_0) \to  \diff(F,x_0)\to 
\imb((S^1,s_0),(\interior(F), x_0)) \ .\] Since $F$ is not the $2$-sphere,
disk, or projective plane, Proposition~2 of \cite{G} shows that
$\diff(F,x_0)$ has the same homotopy groups as $\diff_1(F,x_0)$, the
subgroup of diffeomorphisms that induce the identity on the tangent space
at $x_0$, and by Theorem~2 of \cite{G}, the latter is contractible. So we
have isomorphisms
\[ \pi_{q+1}(\imb((S^1,s_0),(\interior(F), x_0))) \cong
\pi_q(\Diff(F \rel S^1)) \]
for $q\geq 1$, and
\[ \pi_1(\imb((S^1,s_0),(\interior(F), x_0))) \cong
\pi_0(\Diff(F \rel S^1) \cap \diff(F,x_0))\ . \] 
Proposition~6 of \cite{G} shows that the components of $\Diff(F \rel S^1)$
are contractible, so it remains to see that only one component of $\Diff(F
\rel S^1)$ is contained in $\diff(F,x_0)$. That is, if $h\in \Diff(F \rel
S^1)\cap \diff(F,x_0)$, then $h$ is isotopic to the identity relative to
$S^1$. This is an exercise in surface theory, using Lemma~1.4.2
of~\cite{Waldhausen}.

We now start with the torus case of the lemma. Choose notation so that the
$S_i$ lie in cyclic order as one goes around $T$, and fix basepoints $s_i$
in $S_i$ for each $i$.  Consider the diagram
\begin{equation*}
\begin{CD}
\imb_f(S_n,T\rel s_n) @>>> \imb_f(S_n,T) @>>>  \imb(s_n,T)\\
@VVV @VVV @VV=V\\
\imb(S_n,T\rel s_n) @>>>  \imb(S_n,T) @>>>  \imb(s_n,T)\makebox[0pt][l]{\ .}
\end{CD}
\end{equation*}

\noindent 
The first row is a fibration by 
\index{Restriction Theorem!to point}%
Corollary~\ref{restrict embeddings to S}
and the second by the Palais-Cerf Restriction Theorem. The fiber of the
top row is homeomorphic to $\Diff_+(S_n\rel s_n)$, the group of
orientation-preserving diffeomorphisms, which is contractible. We have
already seen that the fiber of the second row is contractible.
Therefore the middle vertical arrow is a homotopy equivalence. For
$n= 1$, this completes the proof, so we assume that $n\geq 2$.

Let $A$ be the annulus that results from cutting $T$ along $S_n$, and let
$A_0$ be the interior of $A$. Consider the diagram
\begin{equation*}
\begin{CD}
\imb_f(S_{n-1}, A_0\rel s_{n-1}) @>>> \imb_f(S_{n-1},A_0) 
@>>> \imb(s_{n-1},A_0)\\
@VVV @VVV @VV=V\\
\imb(S_{n-1}, A_0\rel s_{n-1}) @>>> \imb(S_{n-1},A_0)
@>>> 
\imb(s_{n-1},A_0)
\makebox[0pt][l]{\ .}
\end{CD}
\end{equation*}
\noindent As in the previous diagram, the rows are fibrations. 
As before, the fibers are contractible, so the middle vertical
arrow is a homotopy equivalence. Now consider the diagram
\begin{small}
\begin{equation*}
\begin{CD}
\imb_f(C_{n-1},A_0\rel S_{n-1}) @>>> \imb_f(C_{n-1},A_0) @>>>
\imb_f(S_{n-1},A_0)\\
@VVV @VVV @VVV\\
\imb(C_{n-1},A_0\rel S_{n-1}) @>>> \imb(C_{n-1},A_0) @>>>  \imb(S_{n-1},A_0)
\makebox[0pt][l]{\ .}
\end{CD}
\end{equation*}
\end{small}

\noindent 
The top row is a fibration by Corollary~\ref{corollary3}, and the bottom by
the Palais-Cerf Restriction Theorem. The right vertical arrow was shown to
be a homotopy equivalence by the previous diagram.  For $n=2$, both fibers
are points, so the middle vertical arrow is a homotopy equivalence. But
$\imb_f(C_{n-1},A_0\rel S_{n-1})$ can be identified with
$\imb_f(C_{n-2},A_0-S_{n-1})$, and similarly for the non-fiber-preserving
spaces. So induction on $n$ shows that the middle vertical arrow is a
homotopy equivalence for any value of~$n$.

To complete the proof, we use the diagram
\begin{equation*}
\begin{CD}
\imb_f(C_n,T\rel S_n) @>>> \imb_f(C_n,T) @>>>  \imb_f(S_n,T)\\
@VVV @VVV @VVV\\
\imb(C_n,T\rel S_n) @>>>  \imb(C_n,T) @>>>  \imb(S_n,T)\makebox[0pt][l]{\ .}
\end{CD}
\end{equation*}
The rows are fibrations, as in the previous diagram. The right-hand
vertical arrow is the case $n=1$, already proven, and the map between
fibers can be identified with $\imb_f(C_{n-1}, A_0)\to \imb(C_{n-1}, A_0)$,
which has been shown to be a homotopy equivalence for all $n$.

For the Klein bottle case, the proof is line-by-line the same in the case
of the meridional fibering. For the longitudinal singular fibering, the only
difference is that rather than an annulus $A$, the first cut along $S_n$
produces either one or two M\"obius bands.
\end{proof}

\begin{lemma}
Let $\Sigma$ be a compact $3$-manifold with nonempty boundary and having a
fixed Seifert fibering, and let $F$ be a vertical $2$-manifold properly
embedded in $\Sigma$. Let $\imb_{\partial f}(F,\Sigma)$ be the connected
component of the inclusion in the space of (proper) embeddings for which
the image of $\partial F$ is a union of fibers. Then $\imb_f(F,\Sigma)\to
\imb_{\partial f}(F,\Sigma)$ is a homotopy equivalence.
\label{lem:fiberpreserving2}
\end{lemma}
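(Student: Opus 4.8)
The plan is to prove the lemma in two stages, each an instance of comparing a pair of restriction fibrations and applying the five lemma, so that the whole argument reduces to the known contractibility of diffeomorphism groups of Haken $3$-manifolds and of solid tori relative to their boundaries. Throughout, small letters denote basepoint components, as usual. \emph{First stage: reduction to embeddings fixing $\partial F$.} Since $F$ is vertical, $\partial F=F\cap\partial_v\Sigma$ is a union of fibres, so for every $j\in\imb_{\partial f}(F,\Sigma)$ the restriction $j|_{\partial F}$ is a fibre-preserving embedding of $\partial F$ into $\partial_v\Sigma$. Restriction to $\partial F$ thus gives maps $\imb_f(F,\Sigma)\to\imb_f(\partial F,\partial_v\Sigma)$ and $\imb_{\partial f}(F,\Sigma)\to\imb_f(\partial F,\partial_v\Sigma)$ with the same target. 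The first is locally trivial by Corollary~\ref{special3}; the second is $\Diff_f(\Sigma)$-equivariant for the post-composition actions, and $\imb_f(\partial F,\partial_v\Sigma)$ admits local $\Diff_f(\Sigma)$ cross-sections by Proposition~\ref{restrict to boundary}(b) (applied with $T=\partial F=p^{-1}(p(\partial F))$), so it too is locally trivial, by Proposition~\ref{theoremA}. The fibre of the first over $i_{\partial F}$ is $\imb_f(F,\Sigma\rel\partial F)$ and that of the second is $\imb(F,\Sigma\rel\partial F)$, since an embedding equal to the inclusion on $\partial F$ automatically has vertical boundary. Comparing the long exact homotopy sequences of these two fibrations, whose maps on base spaces are the identity, reduces the lemma to showing that the inclusion $\imb_f(F,\Sigma\rel\partial F)\to\imb(F,\Sigma\rel\partial F)$ is a homotopy equivalence.

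\emph{Second stage: the rel-$\partial F$ case.} Here one compares the two restriction fibrations obtained by restricting diffeomorphisms to $F$. By Corollary~\ref{sfcorollary2}(i), with $W=F$ and the whole vertical boundary $\partial_v\Sigma$ playing the role of $T$, the restriction $\diff_f(\Sigma\rel\partial\Sigma)\to\imb_f(F,\Sigma\rel\partial F)$ is locally trivial with fibre $\diff_f(\Sigma\rel F\cup\partial\Sigma)$; by the Palais--Cerf Restriction Theorem (Corollary~\ref{palaiscoro2}, with $L=M=\Sigma$ and $S=\partial\Sigma$) the restriction $\diff(\Sigma\rel\partial\Sigma)\to\imb(F,\Sigma\rel\partial F)$ is locally trivial with fibre $\diff(\Sigma\rel F\cup\partial\Sigma)$. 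The inclusions of the fibre-preserving into the full spaces form a morphism of fibrations whose map on base spaces is the one we must prove is a homotopy equivalence, so by the five lemma it is enough that the maps on total spaces and fibres be homotopy equivalences. A compact Seifert-fibred $3$-manifold with non-empty boundary is either a solid torus or a Haken manifold, and, taking $\Sigma$ connected without loss of generality, the same holds for each component $\Sigma_i$ of $\Sigma$ cut along $F$ (the fibring restricts to the $\Sigma_i$, each of which is compact with non-empty boundary). Hence $\diff(\Sigma\rel\partial\Sigma)$ and $\diff(\Sigma\rel F\cup\partial\Sigma)=\prod_i\diff(\Sigma_i\rel\partial\Sigma_i)$ are contractible, by Hatcher's theorem for Haken manifolds~\cite{H} together with the classical contractibility of $\Diff(S^1\times D^2\rel S^1\times\partial D^2)$; and $\diff_f(\Sigma\rel\partial\Sigma)$ and $\diff_f(\Sigma\rel F\cup\partial\Sigma)$ are contractible by the rel-$\partial$ form of the fibration $\diff_v(\Sigma\rel\partial\Sigma)\to\diff_f(\Sigma\rel\partial\Sigma)\to\diff(\O\rel\partial\O)$ of Theorem~\ref{sfproject diffs}, whose base (diffeomorphisms of a bounded $2$-orbifold rel boundary) is contractible and whose fibre is contractible by the argument of Lemma~\ref{rel fiber} (or directly in the cases with Euclidean base orbifold). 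So all the relevant vertical maps are homotopy equivalences, the five lemma finishes the rel-$\partial F$ case, and the first stage then yields the lemma.

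\emph{The main obstacle.} The conceptual skeleton is short; the substance is the bookkeeping. One must check the hypotheses of the fibration theorems against the standing boundary conventions (metrics that are products near $\partial\Sigma$, vertical submanifolds meeting the collars in $\I$-fibres, and the right choices of the control data $T$, $S$, $L$), keep track of which components of the fibres enter the exact-sequence comparison, and --- most substantively --- re-derive the contractibility of $\diff_f(\,\cdot\,\rel\partial)$ in the bounded case by adapting the arguments of Section~\ref{sfspace}, together with a case check of the few small pieces that can occur in $\Sigma$ cut along $F$ (products $F\times\I$, or fibred solid tori with an exceptional fibre); each such piece is routine but must be dispatched on its own.
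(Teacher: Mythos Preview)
Your two-stage architecture is exactly the paper's: first reduce to the rel-$\partial F$ statement by comparing restriction fibrations over $\imb_f(\partial F,\partial\Sigma)$, then handle that case by comparing the restriction fibrations from $\diff_f(\Sigma\rel\partial\Sigma)$ and $\diff(\Sigma\rel\partial\Sigma)$ to the respective embedding spaces.

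There is, however, a genuine gap in the second stage. You identify the fibre of $\diff_f(\Sigma\rel\partial\Sigma)\to\imb_f(F,\Sigma\rel\partial F)$ as $\diff_f(\Sigma\rel F\cup\partial\Sigma)$, but the actual fibre over the inclusion is $\Diff_f(\Sigma\rel F\cup\partial\Sigma)\cap\diff_f(\Sigma\rel\partial\Sigma)$: diffeomorphisms fixing $F\cup\partial\Sigma$ that are isotopic to the identity rel $\partial\Sigma$, \emph{not necessarily} rel $F\cup\partial\Sigma$. The same applies to the non-fiber-preserving row. Your contractibility arguments (Hatcher for the full groups, the $\diff_v\to\diff_f\to\diff(\mathcal{O})$ fibration for the fiber-preserving ones) show that the \emph{components} of these fibres are contractible, but that does not finish the five-lemma comparison: you still need that the inclusion induces a bijection
\[
\pi_0\bigl(\Diff_f(\Sigma\rel F\cup\partial\Sigma)\cap\diff_f(\Sigma\rel\partial\Sigma)\bigr)\;\longrightarrow\;\pi_0\bigl(\Diff(\Sigma\rel F\cup\partial\Sigma)\cap\diff(\Sigma\rel\partial\Sigma)\bigr).
\]
This is the substantive step, and it requires outside input that your proposal never invokes: surjectivity needs Jaco's Lemma~VI.19 (a diffeomorphism of a bounded Seifert manifold that is fiber-preserving on $\partial\Sigma$ is isotopic rel boundary to a fiber-preserving diffeomorphism), and injectivity needs Waldhausen's result that fiber-preserving diffeomorphisms which are isotopic are isotopic through fiber-preserving diffeomorphisms. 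Your ``main obstacle'' paragraph alludes to component-tracking, but the argument as written asserts the wrong fibre and omits these two ingredients, so the proof is incomplete.
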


To prove Lemma~\ref{lem:fiberpreserving2}, we need a preliminary result.

\begin{lemma} The following maps
induced by restriction are fibrations.
\begin{enumerate}
\item[{\rm(i)}] $\imb(F,\Sigma)\to \imb(\partial F,\partial \Sigma)$
\item[{\rm(ii)}] $\imb_{\partial f}(F,\Sigma)\to \imb_f(\partial
F,\partial \Sigma)$
\item[{\rm(iii)}] $\imb_f(F,\Sigma)\to \imb_f(\partial F,\partial \Sigma)$.
\end{enumerate}
\label{sublemma for fiberpreserving2}
\end{lemma}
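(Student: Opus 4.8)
The plan is to prove Lemma~\ref{sublemma for fiberpreserving2} by reducing each of the three restriction maps to an already-established restriction theorem, using the fact that $\partial\Sigma$ is a codimension-zero vertical submanifold near which we have good collar structure. Concretely, I would fix a fibered collar $\partial\Sigma\times\I$ of $\partial\Sigma$ in $\Sigma$ (vertical, since $\partial\Sigma=\partial_v\Sigma$ in the relevant sense and the metric is a product near $\partial_v\Sigma$), so that $F$ meets this collar in a product $\partial F\times\I$. For part~(i), the map $\imb(F,\Sigma)\to\imb(\partial F,\partial\Sigma)$ factors as $\imb(F,\Sigma)\to\imb(\partial F\times\I,\partial\Sigma\times\I)\to\imb(\partial F,\partial\Sigma)$; the first is the codimension-zero restriction map which is locally trivial by Corollary~\ref{palaiscoro3} (with $S=\partial\Sigma$, thinking of $\partial F\times\I$ as a neighborhood of $\partial F$ in $\Sigma$ restricted suitably), and the second is a deformation-retraction-type fibration. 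Alternatively, and more cleanly, I would invoke the doubling trick of Proposition~\ref{restrict to boundary}: double $\Sigma$ along $\partial\Sigma$ and double $F$ along $\partial F$, so that $\partial\Sigma$ becomes an interior vertical submanifold of the double $D\Sigma$ and $\partial F$ becomes an interior vertical submanifold of $DF$; then restriction $\imb(DF,D\Sigma)\to\imb(\partial F, \partial\Sigma)$ is locally trivial by Corollary~\ref{palaiscoro3} applied inside the double, and elements fixing a neighborhood of $\partial\Sigma$ (or preserving the two sides) restrict back to give the asserted local cross-sections for $\imb(F,\Sigma)\to\imb(\partial F,\partial\Sigma)$.

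For parts~(ii) and~(iii), I would run exactly the same doubling argument but in the fibered category, using the Seifert-fibered (singular fibering) versions of the restriction theorem. Since $\Sigma$ is Seifert-fibered and $\partial\Sigma=\partial_v\Sigma$, doubling along $\partial_v\Sigma$ produces a closed (or boundaryless-along-the-seam) Seifert-fibered manifold $D\Sigma$ as in the proof of Proposition~\ref{restrict to boundary}(b), and $DF$ is a vertical suborbifold of it. Then Corollary~\ref{sfcorollary3}(i) (restriction of fiber-preserving embeddings of one vertical suborbifold to a smaller one) gives that $\imb_f(DF,D\Sigma)\to\imb_f(\partial F,\partial\Sigma)$ is locally trivial, and restricting to the half $\Sigma$ exactly as in the proof of Proposition~\ref{restrict to boundary}(b) yields that $\imb_{\partial f}(F,\Sigma)\to\imb_f(\partial F,\partial\Sigma)$ and $\imb_f(F,\Sigma)\to\imb_f(\partial F,\partial\Sigma)$ admit local $\Diff_f(\Sigma)$ (respectively $\Diff_{\partial f}(\Sigma)$) cross-sections. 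Since the maps are equivariant with respect to the appropriate group actions, Proposition~\ref{theoremA} then gives local triviality.

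The one point requiring care is distinguishing $\imb_{\partial f}$ from $\imb_f$: an element of $\imb_{\partial f}(F,\Sigma)$ is only required to send $\partial F$ to a union of fibers of $\partial\Sigma$, not to be fiber-preserving on all of $F$. So for part~(ii) I would use that near $\partial\Sigma$ the embedding is determined (up to the collar) by its behavior on $\partial F$ together with a transverse coordinate, and the local cross-section is built by composing a local cross-section from $\imb_f(\partial F,\partial\Sigma)$ into $\Diff_f(\partial\Sigma)$, extending over the collar using the product structure, and then extending by the identity into the rest of $\Sigma$; this lands in the stabilizer of the given structure for the $\imb_{\partial f}$ action. The verification that this extension is canonical and continuous is the routine-but-fiddly part.

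\textbf{Main obstacle.} The genuinely delicate step is \textbf{part~(iii)}, the statement that $\imb_f(F,\Sigma)\to\imb_f(\partial F,\partial\Sigma)$ is a fibration: here both source and target are fiber-preserving, but the group that acts naturally and for which the map is equivariant is $\Diff_f(\Sigma)$, and one must produce a local $\Diff_f(\Sigma)$ cross-section for $\imb_f(\partial F,\partial\Sigma)$ near the inclusion. This is where the doubling must be done entirely within the singular-fibered setting and one must check that the doubled fibering is again a (very good) singular fibering so that Theorem~\ref{sftheorem2} and its corollaries apply, and that the elements of $\Diff_f(D\Sigma)$ produced by the cross-section can be cut back to elements of $\Diff_f(\Sigma)$ — i.e.\ that they preserve the two copies of $\Sigma$. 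As in the proof of Proposition~\ref{restrict to boundary}, this is handled by observing that fiber-preserving diffeomorphisms interchanging the two sides of $\Sigma$ in $D\Sigma$ are bounded away from those preserving the sides, so shrinking the cross-section's domain suffices; but spelling this out in the fibered category, keeping track of the vertical/horizontal structure through the seam, is the part that needs the most attention.
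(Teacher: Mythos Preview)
Your approach is essentially correct but considerably more laborious than the paper's, and you miss the key shortcut for part~(ii).

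Parts~(i) and~(iii) are \emph{already established} in the paper as Corollaries~\ref{special2} and~\ref{special3} respectively; the doubling argument you describe is precisely how those corollaries were proved in Section~\ref{basept}, so you are re-deriving results that can simply be cited. In particular, your identification of~(iii) as the ``main obstacle'' is misplaced: Corollary~\ref{special3} states exactly that $\Imb_f(W,\Sigma)\to\Imb_f(W\cap\partial_v\Sigma,\partial_v\Sigma)$ is locally trivial, and all the work about doubling in the fibered category, checking the doubled fibering is a singular fibering, and cutting back to one side has already been done there.

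The real point you miss is the one-line argument for~(ii). By definition, $\imb_{\partial f}(F,\Sigma)$ consists of those embeddings in $\imb(F,\Sigma)$ whose restriction to $\partial F$ is a union of fibers, i.e.\ it is precisely the preimage of $\imb_f(\partial F,\partial\Sigma)$ under the map in~(i). Since the restriction of a locally trivial fibration to any subspace of the base is again locally trivial over that subspace, (ii) follows immediately from~(i). Your proposed construction of explicit cross-sections for~(ii) via collar extensions would work, but it is unnecessary.
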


\begin{proof} Parts~(i) and~(iii) are cases of 
Corollaries~\ref{special2} and~\ref{special3}. Part (ii) follows from part
(i) since $\imb_{\partial f}(F,\Sigma)$ is the inverse image of $\imb_f(\partial
F,\partial\Sigma)$ under the fibration of part~(i).
\end{proof}

\begin{proof}[Proof of Lemma~\ref{lem:fiberpreserving2}.]
First we use the following fibration from 
\index{Projection Theorem!singular fiberings}%
Theorem~\ref{sfproject diffs}:
\[\Diff_v(\Sigma\rel\partial\Sigma)\cap
\diff_f(\Sigma\rel\partial\Sigma)
\to\diff_f(\Sigma\rel\partial\Sigma)
\to \diff(\mathcal{O}\rel\partial \mathcal{O})\]

\noindent where $\mathcal{O}$ is the quotient orbifold of $\Sigma$ and as
usual $\Diff_v$ indicates the diffeomorphisms that take each fiber to
itself. The full orbifold diffeomorphism group of $\mathcal{O}$ can be
identified with a subspace consisting of path components of the
diffeomorphism group of the $2$-manifold $B$ obtained by removing the cone
points from~$\mathcal{O}$ (the subspace for which the permutation of
punctures respects the local groups at the cone points). Since $\partial B$
is nonempty, $\diff(B\rel\partial B)$ and therefore
$\diff(\mathcal{O}\rel\partial \mathcal{O})$ are contractible. Since
$\pi_1(\diff(\mathcal{O}\rel\partial \mathcal{O}))$ is trivial, the exact
sequence of the fibration shows that $\Diff_v(\Sigma\rel\partial\Sigma)\cap
\diff_f(\Sigma\rel\partial\Sigma)$ is connected, so is equal to
$\diff_v(\Sigma\rel\partial\Sigma)$. It is not difficult to see that each
component of $\Diff_v(\Sigma\rel\partial\Sigma)$ is contractible (see
Lemma~\ref{rel fiber} for a similar argument), so we conclude that
$\diff_f(\Sigma\rel\partial\Sigma)$ is contractible.

Next, consider the diagram
\begin{footnotesize}
$$\begin{array}{@{}c@{\;\,}c@{\;\,}c@{\;\,}c@{\;\,}c@{}}
\Diff_f(\Sigma\rel F\cup \partial \Sigma)\cap\diff_f(\Sigma\rel\partial\Sigma) &
\longrightarrow                              &
\diff_f(\Sigma\rel\partial\Sigma)            &
\longrightarrow                              &
\imb_f(F,\Sigma\rel\partial F)\\
\Big\downarrow & & \Big\downarrow & & \Big\downarrow\\
\Diff(\Sigma\rel F\cup\partial\Sigma)\cap\diff(\Sigma\rel\partial\Sigma) &
\longrightarrow                              &
\diff(\Sigma\rel\partial\Sigma)              &
\longrightarrow                              &
\imb(F,\Sigma\rel\partial F)
\end{array}$$
\end{footnotesize}

\noindent where the rows are fibrations by 
\index{Restriction Theorem!Palais-Cerf!relative version}%
\index{Restriction Theorem!singular fiberings}%
Corollaries~\ref{sfcorollary2}
and~\ref{palaiscoro2}. We have already shown that the components of
$\Diff_f(\Sigma\rel\partial\Sigma)$ and (by cutting along $F$) the
components of $\Diff_f(\Sigma\rel F\cup\partial\Sigma)$ are
contractible. By~\cite{H} (which, as noted in \cite{H}, extends to $\Diff$
using \cite{HSmale}), the components of $\Diff(\Sigma\rel\partial\Sigma)$
and $\Diff(\Sigma\rel F\cup\partial\Sigma)$ are contractible. Therefore to
show that $\imb_f(F,\Sigma\rel\partial F)\to\imb(F,\Sigma\rel\partial F)$
is a homotopy equivalence it is sufficient to show that
$\pi_0(\Diff_f(\Sigma\rel F\cup \partial \Sigma)\cap
\diff_f(\Sigma\rel\partial\Sigma))\to \pi_0(\Diff(\Sigma\rel
F\cup\partial\Sigma)\cap \diff(\Sigma\rel\partial\Sigma))$ is bijective. It
is surjective because every diffeomorphism of a Seifert-fibered
$3$-manifold which is fiber-preserving on the (non-empty) boundary is
isotopic relative to the boundary to a fiber-preserving diffeomorphism
(Lemma~VI.19 of W. Jaco~\cite{Jaco}). It is injective because
fiber-preserving diffeomorphisms that are isotopic are isotopic through
fiber-preserving diffeomorphisms (see
Waldhausen~\cite{Waldhausen}\index{Waldhausen}).

The proof is completed by the following diagram in which the rows
are fibrations by parts (iii) and (ii) of Lemma~\ref{sublemma for
fiberpreserving2}, and we have verified that the left vertical arrow
is a homotopy equivalence.
\begin{equation*}
\begin{CD}
\imb_f(F,\Sigma \rel\partial F) @>>>  \imb_f(F,\Sigma) @>>>
\imb_f(\partial F,\partial \Sigma)\\
@VVV @VVV @VV=V\\
\imb(F,\Sigma \rel\partial F) @>>>  \imb_{\partial f}(F,\Sigma) @>>>
\imb_f(\partial F,\partial \Sigma)
\end{CD}
\end{equation*}
\end{proof}

\smallskip
\noindent\textsl{Step 5:} Complete the proof
\smallskip

We can now complete the proof of Theorem~\ref{main} by deforming the family
$F$ to a fiber-preserving family. Since~(1) and~(2$^{\prime\prime}$) are
open conditions, we can cover $D^k$ by convex $k$-cells $B_j$, $1\leq j\leq
r$, having corresponding levels $T_{u_j}$ for which (1) and
(2$^{\prime\prime}$) hold throughout $B_j$. Also, we may slightly change
the $u$-values, if necessary, to assume that the $u_i$ are distinct. It is
convenient to rename the $B_j$ so that $u_1<u_2<\dots<u_r$, that is, so
that the levels $T_{u_j}$ sit farther away from $K_0$ as $j$ increases.

Choose a PL triangulation $\Delta$ of $D^k$ sufficiently fine so that each
$i$-cell lies in at least one of the $B_j$. The deformation of $F$ will
take place sequentially over the $i$-skeleta of $\Delta$. It will never be
necessary to change $F$ at points of~$\partial D^k$.

Suppose first that $\tau$ is a $0$-simplex of $\Delta$. Let $j_1<j_2<\dots
<j_s$ be the values of $j$ for which $\tau\subseteq B_j$.  By condition
(2$^{\prime\prime}$), each intersection circle of $K_\tau$ with each
$T_{j_q}$ is isotopic in $T_{j_q}$ to a fiber of the Seifert fibering. We
claim that they are also isotopic on $K_\tau$ to an image of a fiber of
$K_0$ under $F(\tau)$.  Since $K_\tau$ is isotopic to $K_0$ and the
intersection circles are two-sided in $K_\tau$, each intersection circle is
isotopic in $M$ to an $a$-loop or a $b^2$-loop in $K_0$. When $m= 1$,
$b^2$ is the generic fiber of $M$, and $a$ is not isotopic in $M$ to $b^2$
since $a= b^{2n}$ and $n\neq 1$. When $n= 1$, $a$ is the fiber of $M$,
and $b^2$ is not isotopic to $a$ since $a^m= b^2$ and $m>1$. So the
isotopy from $K_\tau$ to $K_0$ carries the intersection loops to loops in
$K_0$ representing the fiber. But $a$-loops are nonseparating and
$b^2$-loops are separating, so the intersection loops must be isotopic in
$K_\tau$ to the image of the fiber of $K_0$ under $F(\tau)$.

We may deform the parameterized family near $\tau$, retaining transverse
intersection with each $T_{u_j}$ for which $\tau\in B_j$, so that the
intersection circles of $K_\tau$ with these $T_{u_j}$ are fibers and images of
fibers. To accomplish this, first change $F(\tau)$ by an ambient isotopy of
$M$ that preserves levels and moves the intersection circles onto fibers in
the $T_{u_j}$. Now, consider the inverse images of these circles in $K_0$. We
have seen that there is an isotopy that moves them to be fibers, changing
$F(\tau)$ by this isotopy (and tapering it off in a small neighborhood of
$\tau$ in $D^k$) we may assume that the intersection circles are fibers of
$K_\tau$ as well. Now, using Lemma~\ref{lem:fiberpreserving2} successively
on the solid torus $R_{u_s}$, the product regions
$\overline{R_{u_{j-1}}-R_{u_j}}$ for $j= j_s,j_{s-1},\dots,j_2$, and the
twisted $\I$-bundle $P_{u_{j_1}}$, deform $F(\tau)$ to be
fiber-preserving. These isotopies preserve the levels $T_{u_j}$ for which
$\tau\in B_j$, so may be tapered off near $\tau$ so as not to alter any
other transversality conditions.

Inductively, suppose that $F(t)$ is fiber-preserving for each $t$ lying in
any $i$-simplex of $\Delta$. Let $\tau$ be an $(i+1)$-simplex of
$\Delta$. For each $t\in\partial\tau$, $F(t)$ is fiber-preserving.
Consider a level $T_{u_j}$ for which $\tau\subset B_j$. For each $t\in
\tau$, the restriction of $F(t)$ to the inverse image of $T_{u_j}$ is a
parameterized family of embeddings of a family of circles into $T_{u_j}$,
which embeds to fibers at each $t\in \partial \tau$. By
Lemma~\ref{lem:fiberpreserving1}, there is a deformation of $F|_\tau$,
relative to $\partial \tau$, which makes each $K_t\cap T_{u_j}$ consist of
fibers in $T_{u_j}$. We may select the deformation so as to move image
points of each $F(t)$ only very near $T_{u_j}$, and thereby not alter
transversality with any other $T_{u_\ell}$. Now, the restriction of the
$F(t)^{-1}$ to the intersection circles is a family of embeddings of a
collection of circles into $K_0$, which are fibers at points in
$\partial\tau$. Using Lemma~\ref{lem:fiberpreserving1} we may alter
$F\vert_\tau$, relative to $\partial \tau$ and without changing the images
$F(t)(K_0)$, so that the intersection circles are fibers of $K_0$ as
well. We repeat this for all $\ell$ such that $\tau\subset B_\ell$. Using
Lemma~\ref{lem:fiberpreserving2} as before, proceeding from $R_{u_{j_s}}$
to $P_{u_{j_1}}$, deform $F$ on $\tau$, relative to $\partial\tau$, to be
fiber-preserving for all parameters in $\tau$. This completes the induction
step and the proof of Theorem~\ref{main}.

\chapter{Lens spaces}
\label{ch:lens}

Recall that we always use the term \textit{lens space} will mean a
$3$-dimensional lens space \indexsym{L(m,q)}{$L(m,q)$}$L(m,q)$ with 
$m\geq 3$. In addition, we always select $q$ so that $1\leq q<m/2$.

In this chapter, we will prove Theorem~\ref{thm:SCforLensSpaces}, the Smale
Conjecture for Lens Spaces.  The argument is regrettably quite lengthy. It
uses a lot of combinatorial topology, but draws as well on some mathematics
unfamiliar to many low-dimensional topologists. We have already seen some
of that material in earlier chapters, but we will also have to use the
\index{Rubinstein-Scharlemann!method}Rubinstein-Scharlemann method, reviewed in
Section~\ref{sec:Rubinstein-Scharlemann}, and some results from singularity
theory, presented in Section~\ref{sec:generalposition}.

The next section is a comprehensive outline of the entire proof. We hope
that it will motivate the various technical complications that ensue.
\longpage

\section[Outline of the proof]
{Outline of the proof}
\label{sec:outline}

Some initial reductions, detailed in Section~\ref{sec:reduction}, reduce
the Smale Conjecture for Lens Spaces to showing that the inclusion
$\diff_f(L)\to \diff(L)$ is an isomorphism on homotopy groups. Here,
$\diff(L)$ is the connected component of the identity in $\Diff(L)$, and
$\diff_f(L)$ is the connected component of the identity in the group of
diffeomorphisms that are fiber-preserving with respect to a Seifert
fibering of $L$ induced from the Hopf fibering of its universal cover,
$S^3$. To simplify the exposition, most of the argument is devoted just to
proving that $\diff_f(L)\to \diff(L)$ is surjective on homotopy groups,
that is, that a map from $S^d$ to $\diff(L)$ is homotopic to a map into
$\diff_f(L)$. The injectivity is obtained in Section~\ref{sec:Dk} by a
combination of tricks and minor adaptations of the main program.

Of course, a major difficulty in working with elliptic $3$-manifolds is
their lack of incompressible surfaces. In their place, we use another
structure which has a certain degree of essentiality, called a
\index{sweepout}\textit{sweepout.} This means a structure on $L$ as a
quotient of $P\times \I$, where $P$ is a torus, in which $P\times \{0\}$ and
$P\times \{1\}$ are collapsed to core circles of the solid tori of a
genus~1 Heegaard splitting of $L$. For $0<u\leq 1$, $P\times\{t\}$ becomes a
Heegaard torus in $L$, denoted by $P_u$, and called a
\index{levels}\textit{level.} The sweepout is chosen so that each $P_u$ is a
union of fibers. Sweepouts are examined in Section~\ref{sec:RSgraphic}.

Start with a parameterized family of diffeomorphisms $f\colon L\times
S^d\to L$, and for $u\in S^d$ denote by $f_u$ the restriction of $f$ to
$L\times\{u\}$.  The procedure that deforms $f$ to make each $f_u$
fiber-preserving has three major steps.

Step~1 (``finding good levels'') is to perturb $f$ so that for each $u$,
there is some pair $(s,t)$ so that $f_u(P_u)$ intersects $P_t$
transversely, in a collection of circles each of which is either essential
in both $f_u(P_s)$ and $P_t$ (a
\index{biessential}\textit{biessential}\index{intersection!biessential}intersection),
or inessential in both (a
\index{discal}\index{intersection!discal}\textit{discal}intersection), and at
least one intersection circle is biessential. This pair is said to
intersect in \index{good position}\textit{good position,} and if none of the
intersections is discal, in \index{very good!position}\textit{very good
position.}  These concepts are developed in Section~\ref{sec:very good
position}, after a preliminary examination of annuli in solid tori in
Section~\ref{sec:preliminaries}.

To accomplish Step~1, the methodology of Rubinstein and Scharlemann in
\cite{RS} is adapted. This is reviewed in 
\index{Rubinstein-Scharlemann!graphic}Section~\ref{sec:Rubinstein-Scharlemann}. 
First, one perturbs $f$ to be in ``general position,'' as defined in
Section~\ref{sec:generalposition}. The intersections of the $f_u(P_s)$ and
$P_t$ are then sufficiently well-controlled to define a
\index{graphic}\textit{graphic} in the square $\I^2$. That is, the pairs
$(s,t)$ for which $f_u(P_s)$ and $P_t$ do not intersect transversely form a
graph embedded in the square. The complementary regions of this graph in
$\I^2$ are labeled according to a procedure in \cite{RS}, and in
Section~\ref{sec:goodregions} we show that the properties of general
position salvage enough of the combinatorics of these labels developed in
\cite{RS} to deduce that at least one of the complementary regions consists
of pairs in good position.
\longpage

Perhaps the hardest work of the proof, and certainly the part that takes us
furthest from the usual confines of low-dimensional topology, is the
verification that sufficient \index{general position}``general position''
can be achieved. Since we use parameterized families, we must allow
$f_u(P_s)$ and $P_t$ to have large numbers of tangencies, some of which may
be of high order. It turns out that to make the combinatorics of \cite{RS}
go through, we must achieve that at each parameter \textit{there are at
  most finitely many pairs $(s,t)$ where $f_u(P_s)$ and $P_t$ have multiple
  or high-order tangencies} (at least, for pairs not extremely close to the
boundary of the square).  To achieve the necessary degree of general
position, we use results of a number of people, notably J. W. \index{Bruce}Bruce
\cite{Bruce} and F. \index{Sergeraert}Sergeraert \cite{Sergeraert}.

The need for this kind of general position is indicated in
Section~\ref{sec:examples}, where we construct a pair of sweepouts of
$S^2\times S^1$ with all tangencies of Morse type, but having no pair of
levels intersecting in good position. Although we have not constructed a
similar example for an $L(m,q)$, we see no reason why one could not exist.
\newpage

Step~2 (``from good to very good'') is to deform $f$ to eliminate the
discal intersections of $f_u(P_s)$ and $P_t$, for certain pairs in good
position that have been found in Step~1, so that they intersect in very
good position. This is an application of \index{Hatcher}Hatcher's
parameterization methods~\cite{H}. One must be careful here, since an
isotopy that eliminates a discal intersection can also eliminate a
biessential intersection, and if all biessential intersections were
eliminated by the procedure, the resulting pair would no longer be in very
good position.\index{biessential!after basic isotopy}
Lemma~\ref{lem:no biessential elimination} ensures that not
all biessential intersections will be eliminated.

Step~3 (``from very good to fiber-preserving'') is to use the pairs in very
good position to deform the family so that each $f_u$ is fiber-preserving.
This is carried out in Sections~\ref{sec:lemmas}
and~\ref{sec:fiber-preserving families}. The basic idea is first to use the
biessential intersections to deform the $f_u$ so that $f_u(P_s)$ actually
equals $P_t$ (for certain $(s,t)$ pairs that originally intersected in good
position), then use known results about the diffeomorphism groups of
surfaces and Haken $3$-manifolds to make the $f_u$ fiber-preserving on
$P_s$ and then on its complementary solid tori. This process is technically
complicated for two reasons. First, although a biessential intersection is
essential in both tori, it can be contractible in one of the complementary
solid tori of $P_t$, and $f_u(P_s)$ can meet that complementary solid torus
in annuli that are not parallel into $P_t$. So one may be able to push the
annuli out from only one side of $P_t$. Secondly, the fitting together of
these isotopies requires one to work with not just one level but many
levels at a single parameter.

Two natural questions are whether 
\index{Bonahon!method for $\pi_0(\protect\Diff(L))$}Bonahon's original 
method for determining
the mapping class group $\pi_0(\Diff(L))$ \cite{Bonahon} can be adapted to
the parameterized setting, and whether our methodology can be used to
recover his results. Concerning the first question, we have had no success
with this approach, as we see no way to perturb the family to the point
where the method can be started at each parameter. For the second, the
answer is yes. In fact, the key geometric step of \cite{Bonahon} is the
isotopy uniqueness of genus-one Heegaard surfaces in $L$, which was already
reproven in Rubinstein and \index{Scharlemann}Scharlemann's original
work~\cite[Corollary~6.3]{RS}.
\longpage\longpage

\section[Reductions]
{Reductions}
\label{sec:reduction}

In this section, we carry out some initial reductions. The Conjecture
will be reduced to a purely topological problem of deforming
parameterized families of diffeomorphisms to families of
diffeomorphisms that preserve a certain Seifert fibering of~$L$.

By Theorem~\ref{thm:pi0 Smale}, it is sufficient to prove that
$\isom(L)\to\diff(L)$ is a homotopy equivalence. And we have seen that this
follows once we prove that $\isom(L)\to\diff(L)$ is a homotopy
equivalence.

Section~1.4 of \cite{M} gives a certain way to embed $\pi_1(L)$ into
$\SO(4)$ so that its action on $S^3$ is fiber-preserving for the
fibers of the Hopf bundle structure of $S^3$. Consequently, this bundle
structure descends to a Seifert fibering of $L$, which we call the
\index{Hopf fibering!of lens space}\textit{Hopf fibering} of $L$. 
If $q=1$, this Hopf fibering is actually an $S^1$-bundle structure, 
while if $q>1$, it has two exceptional fibers with
invariants of the form $(k,q_1)$, $(k,q_2)$ where $k=m/\gcd(q-1,m)$ (see
Table~4 of \cite{M}). We will always use the Hopf fibering as the
Seifert-fibered structure of $L$.\longpage

Theorem~2.1 of \cite{M} shows that (since $m>2$) every
orientation-preserving isometry of $L$ preserves the Hopf fibering on
$L$. In particular, $\isom(L)\subset \diff_f(L)$, so there are inclusions
\[\isom(L)\to \diff_f(L)\to \diff(L)\ .\]

\begin{theorem}
The inclusion $\isom(L)\to \diff_f(L)$ is a homotopy equivalence.
\label{thm:reduce to fiber-preserving}
\end{theorem}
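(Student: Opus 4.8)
The plan is to reduce the statement to known facts about the Seifert-fibered structure of $L$ together with the fibration machinery of Chapter~\ref{ch:Palais}, exactly paralleling the strategy used for the $M(m,n)$ in Section~\ref{mainthms}. Since the Hopf fibering is a Seifert fibering of $L$, it is a singular fibering (in fact very good, being finitely covered by an $S^1$-bundle), so Theorems~\ref{sfproject diffs} and~\ref{sftheorem2} and their corollaries apply. First I would identify, for each of the seven cases of Table~\ref{tab:lens spaces}, the quotient orbifold $\orb$ of the Hopf fibering and its isometry group $\isom(\orb)$; in every case $\orb$ is a small spherical $2$-orbifold ($S^2$ with two or three cone points, or $\RP^2$ with cone points), so $\diff(\orb)$ is homotopy equivalent to a compact Lie group and, more precisely, $\isom(\orb)\to\diff(\orb)$ is a homotopy equivalence by Smale's theorem (\cite{Smale}) combined with the straightforward analysis of diffeomorphism groups of such orbifolds (see for instance \cite{G} for the $\RP^2$ case). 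This is the analogue of the isometry-of-the-base input used in Lemma~\ref{lem:isom_to_imb}.

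Next I would use the Projection Theorem~\ref{sfproject diffs} to get the fibration
\[
\Diff_v(L)\cap\diff_f(L)\longrightarrow \diff_f(L)\longrightarrow \diff(\orb)\ ,
\]
and the companion fibration for $\isom$, namely $\Isom_v(L)\cap\isom_f(L)\to \isom_f(L)\to\isom(\orb)$. The fiber of the first is the identity component of the group of vertical diffeomorphisms; since $L$ is a Seifert-fibered rational homology sphere with small base orbifold, the vertical isometries already realize all of $\pi_0$ and the higher homotopy of $\Diff_v(L)$, so $\Isom_v(L)\cap\isom_f(L)\to \Diff_v(L)\cap\diff_f(L)$ is a homotopy equivalence. (When $q=1$ the fiber $C$ is central in $\pi_1(L)$ and the vertical $S^1$-action generates $\pi_1(\diff_v(L))\cong\Z$; when $q>1$ one argues as in Lemma~\ref{rel fiber} and the discussion following the fibration $(*)$ in Section~\ref{sfspace}, using that the complement of an exceptional fiber is small Seifert-fibered. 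The key point is that $\Diff_v(L)\cap\diff_f(L)$ is connected with contractible or circle components, matched on the nose by isometries.) Then the five-lemma applied to the map of fibrations, with the base map $\isom(\orb)\to\diff(\orb)$ a homotopy equivalence and the fiber map a homotopy equivalence, yields that $\isom(L)\to\diff_f(L)$ induces isomorphisms on all homotopy groups; since both are homotopy equivalent to CW complexes (Section~\ref{sec:Cinfinity}), Whitehead's theorem finishes it.

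The main obstacle I expect is the verification that $\Isom_v(L)\cap\isom_f(L)\to\Diff_v(L)\cap\diff_f(L)$ is a homotopy equivalence, i.e.\ a complete and uniform understanding of the vertical diffeomorphism group of $L$ across all cases of Table~\ref{tab:lens spaces}, including the precise component counts and the behavior of Dehn twists about a regular fiber. For $q=1$ this is essentially the $S^1$-bundle case and is routine; for $q>1$, where the base is $(S^2;k,q_1,q_2)$-type, one must be careful with the exceptional fibers, and the cleanest route is to restrict to a regular fiber $C$ and use the fibration $\Diff_v(L\rel C)\cap\diff_v(L)\to\diff_v(L)\to\imb_v(C,L)\simeq\SO(2)$ from Corollary~\ref{sfcorollary2}(ii), showing the fiber is contractible by an explicit $\H^2\times\R$-geometry argument on $L-C$ exactly as in Lemma~\ref{rel fiber}. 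A secondary (but genuinely routine) obstacle is bookkeeping: one must treat the $m$ odd versus $m$ even subcases and the $q^2\equiv\pm1$ subcases separately, since these change $\isom(\orb)$ and hence the precise form of the fibration, but in each the argument is the same template.
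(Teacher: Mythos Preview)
Your overall strategy---compare the projection fibrations
\[
\Diff_v(L)\cap\diff_f(L)\longrightarrow\diff_f(L)\longrightarrow\diff(\orb)
\quad\text{and}\quad
S^1\longrightarrow\isom(L)\longrightarrow\isom(\orb)
\]
and apply the five-lemma---is exactly the paper's approach, and the paper's proof is correspondingly short. Two corrections are in order, though.

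First, the base orbifold for the Hopf fibering of $L(m,q)$ is always either $S^2$ (when $q=1$) or $(S^2;k,k)$ with two cone points of equal order (when $q>1$); the three-cone-point and $\RP^2$ possibilities you list do not occur for lens spaces. So there are only two cases, not seven, and in both the verification that $\isom(\orb)\to\diff(\orb)$ is a homotopy equivalence is immediate: for $q=1$ it is Smale's $\SO(3)\simeq\diff(S^2)$, and for $q>1$ one is comparing $S^1$ with the identity component of the diffeomorphism group of an annulus.

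Second, and more substantively, your plan for the vertical fiber has a gap. You propose to invoke Lemma~\ref{rel fiber} and an $\H^2\times\R$ structure on $L-C$, but Lemma~\ref{rel fiber} is stated and proved for Haken $\Sigma$, and $L$ is not Haken; moreover $L-C$ need not carry an $\H^2\times\R$ geometry (for $q=1$ the base of $L-C$ is a disk, of positive orbifold Euler characteristic). The correct argument is much more elementary: vertical diffeomorphisms of a Seifert fibering over a simply-connected base $\orb$ are essentially sections of a $\diff(S^1)$-bundle over $\orb$, so $\diff_v(L)\simeq\maps(\orb,S^1)$; since $\orb$ is simply connected and $\pi_k(S^1)=0$ for $k\geq 2$, this space deformation retracts to the constant maps, i.e.\ to $S^1$, which is precisely the vertical isometry group. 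The paper compresses this to a single sentence (``the left-hand vertical arrow is a homotopy equivalence in both cases''), but that is the content.
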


\begin{proof}
The argument is similar to the latter part of the proof of 
Theorem~\ref{smale1}, so we only give a sketch. There is a diagram
\begin{equation*}
\begin{CD}
S^1 @>>> \isom(L) @>>>  \isom(L_0)\\ 
@VVV @VVV @VVV { }\\
\vertical(L) @>>>  \diff_f(L) @>>>  \diff_{orb}(L_0)
\end{CD}
\end{equation*}
\noindent
where $L_0$ is the quotient orbifold and $\diff_{orb}(L_0)$ is the group of
orbifold diffeomorphisms of $L_0$, and $\vertical(L)$ is the group of
vertical diffeomorphisms. The first row is a fibration, in fact an
$S^1$-bundle, and the second row is a fibration by Theorem~\ref{sfproject
diffs}. The vertical arrows are inclusions. When $q=1$, $L_0$ is the
$2$-sphere and the right-hand vertical arrow is the inclusion of $\SO(3)$
into $\diff(S^2)$, which is a homotopy equivalence by \cite{Smale}. When
$q>1$, $L_0$ is a $2$-sphere with two cone points, $\isom(L_0)$ is
homeomorphic to $S^1$, and $\diff_{orb}(L_0)$ is essentially the connected
component of the identity in the diffeomorphism group of the annulus. Again
the right-hand vertical arrow is a homotopy equivalence. The left-hand
vertical arrow is a homotopy equivalence in both cases, so the middle
arrow is as well.
\end{proof}

Theorem~\ref{thm:reduce to fiber-preserving} reduces the Smale Conjecture
for Lens Spaces to proving that the inclusion $\diff_f(L)\to \diff(L)$ is a
homotopy equivalence. For this it is sufficient to prove that for all
$d\geq 1$, any map $f\colon (D^d,S^{d-1})\to (\diff(L),\diff_f(L))$ is
homotopic, through maps taking $S^{d-1}$ to $\diff_f(L)$, to a map from
$D^d$ into $\diff_f(L)$. To simplify the exposition, we work
until the final section with a map $f\colon S^d\to \diff(L)$ and show
that it is homotopic to a map into $\diff_f(L)$. In the final section, we
give a trick that enables the entire procedure to be adapted to maps
$f\colon (D^d,S^{d-1})\to (\diff(L),\diff_f(L))$, completing the proof.

\section[Annuli in solid tori]
{Annuli in solid tori}
\label{sec:preliminaries}

Annuli in solid tori will appear frequently in our work. Incompressible
annuli present little difficulty, but we will also need to examine
compressible annuli, whose behavior is more complicated. In this section,
we provide some basic definitions and lemmas.

A loop $\alpha$ in a solid torus $V$ is called a \indexdef{longitude!in
  solid torus}\textit{longitude} if its homotopy class is a generator of the
infinite cyclic group $\pi_1(V)$.  If in addition there is a product
structure $V=S^1\times D^2$ for which $\alpha=S^1\times\set{0}$, then
$\alpha$ is called a \indexdef{core region}\textit{core circle} of $V$. A
subset of a solid torus $V$ is called a \indexdef{core region}\textit{core
  region} when it contains a core circle of $V$. An embedded circle in
$\partial V$ which is essential in $\partial V$ and contractible in $V$ is
called a \indexdef{meridian!of solid torus}\textit{meridian} of $V$; a
properly embedded disk in $V$ whose boundary is a meridian is called a
\indexdef{meridian disk}\textit{meridian disk} of~$V$.

Annuli in solid tori will always be assumed to be properly embedded, which
for us includes the property of being transverse to the boundary, unless
they are actually contained in the boundary. The next three results are
elementary topological facts, and we do not include proofs.
\begin{proposition}\label{prop:boundary-parallel annuli}
Let $A$ be a boundary-parallel annulus in a solid torus $V$, which
separates $V$ into $V_0$ and $V_1$, and for $i=0,1,$ let $A_i=V_i\cap
\partial V$.  Then $A$ is parallel to $A_i$ if and only if $V_{1-i}$ is a
core region.\par
\end{proposition}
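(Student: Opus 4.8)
The plan is to prove the two implications separately; the forward one, once in hand, also cleans up the non-longitudinal case of the converse for free.

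\emph{Forward ($A$ parallel to $A_i$ $\Rightarrow$ $V_{1-i}$ a core region).} Here ``$A$ parallel to $A_i$'' means $V_i$ is a product region $A_i\times\I$ attached to $V$ along the boundary subsurface $A_i=A_i\times\{0\}\subset\partial V$, with $A=A_i\times\{1\}$. I would argue that such a $V_i$ is just a collar of $A_i$, so that $V_{1-i}=\overline{V-V_i}$ is $V$ with a collar of part of its boundary deleted --- an ambient-isotopy-trivial modification. Hence a core circle of $V$ can be pushed into $V_{1-i}$, which is therefore a core region. (Alternatively, van Kampen applied to $V=V_i\cup_A V_{1-i}$, with $A\hookrightarrow V_i$ a homotopy equivalence, shows $\pi_1 V_{1-i}\to\pi_1 V$ is an isomorphism; since $V_{1-i}$ is irreducible --- a submanifold of the irreducible $V$, no sphere of which can bound a ball containing the solid torus $V_i$ --- it is itself a solid torus whose core generates $\pi_1 V$.)

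\emph{Converse ($V_{1-i}$ a core region $\Rightarrow$ $A$ parallel to $A_i$).} Since $V_{1-i}$ contains a core circle of $V$, the inclusion gives a surjection $\pi_1 V_{1-i}\to\pi_1 V\cong\Z$. As $A$ is boundary-parallel, it is parallel to $A_j$ for some $j\in\{0,1\}$ (the subsurface of $\partial V$ it cobounds a product region with must be $A_0$ or $A_1$). If $j=i$ we are done, so I would handle $j=1-i$: then $V_{1-i}\cong A_{1-i}\times\I$, so $\pi_1 V_{1-i}$ is generated by the core $c$ of $A_{1-i}$, one of the two parallel curves of $\partial A$; surjectivity forces $[c]$ to generate $\pi_1 V$, i.e.\ $\partial A$ has longitudinal slope. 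It then remains to prove the symmetry statement: \emph{if $\partial A$ is longitudinal and $A$ is parallel to one of $A_0,A_1$, it is parallel to the other.} Say $A$ is parallel to $A_0$; then $V_0$ is a boundary collar of $A_0$, so (as in the forward part) $W:=V_1$ is a solid torus with $\pi_1 W\to\pi_1 V$ an isomorphism. Because $\partial A$ is longitudinal, the cores of both $A$ and $A_1$ generate $\pi_1 W$, so $A$ is a longitudinal annulus in $\partial W$ with complementary annulus $A_1$; choosing a product structure $W=S^1\times D^2$ with $A=S^1\times\beta$, $\beta\subset\partial D^2$ an arc, one sees $D^2$ is a bigon on the arcs underlying $A$ and $A_1$, hence $W\cong A_1\times\I$ exhibits $A$ as parallel to $A_1$. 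This yields $A$ parallel to $A_i$.

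\emph{Main obstacle.} The real content is the longitudinal symmetry statement, and behind it the identification of the complementary region as a genuine, standardly embedded solid torus; this rests on two elementary but slightly fussy points --- that deleting a collar of a boundary subsurface is ambient-isotopy-trivial, and that an irreducible compact $3$-manifold with torus boundary and $\pi_1\cong\Z$ is a solid torus. I expect the bookkeeping around the implicit corner structure in ``$A$ parallel to $A_i$'' to be the other nuisance, so I would pin down conventions for that first. Finally, the forward implication gives the remaining qualitative picture for free: when $\partial A$ is not longitudinal, $A$ is parallel to exactly one of $A_0,A_1$ (the one whose complementary region is the core region), since parallelism to both would make both $V_0$ and $V_1$ core regions, forcing $\partial A$ longitudinal.
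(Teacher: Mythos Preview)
The paper does not prove this proposition: immediately before it, the authors write ``The next three results are elementary topological facts, and we do not include proofs,'' and Proposition~\ref{prop:boundary-parallel annuli} is the first of the three. So there is nothing to compare against.

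Your argument is correct. The forward direction via the collar description of $V_i$ is clean and gives exactly what is needed (a core circle pushed into $V_{1-i}$). For the converse, your reduction to the longitudinal symmetry statement and the explicit product structure on $W=V_1$ are fine; in effect you are proving the relevant half of the paper's Proposition~\ref{prop:longitudinal annuli} (also stated without proof) along the way. One small remark: your parenthetical alternative for the forward direction, ending with ``a solid torus whose core generates $\pi_1 V$,'' only shows that the core of $V_{1-i}$ is a \emph{longitude} of $V$, which in the paper's terminology is weaker than being a core circle (a longitude could in principle be knotted). Your main collar argument avoids this issue entirely, so I would drop or tighten the parenthetical.
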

\noindent 

\begin{proposition} 
Let $A$ be a properly embedded annulus in a solid torus $V$, which
separates $V$ into $V_0$ and $V_1$, and let $A_i=V_i\cap \partial V$. The
following are equivalent:
\begin{enumerate}
\item
$A$ contains a longitude of $V$.
\item
$A$ contains a core circle of $V$.
\item
$A$ is parallel to both $A_0$ and $A_1$.
\item
Both $V_0$ and $V_1$ contain longitudes of $V$.
\item
Both $V_0$ and $V_1$ are core regions of $V$.
\end{enumerate}
\label{prop:longitudinal annuli}
\end{proposition}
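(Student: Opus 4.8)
The plan is to prove the five conditions equivalent through a cycle of implications, most of which are immediate from the definitions; the only non-formal inputs are Proposition~\ref{prop:boundary-parallel annuli}, the standard fact that an incompressible properly embedded annulus in a solid torus is boundary-parallel, and the remark that each inclusion $V_i\hookrightarrow V$ is $\pi_1$-injective, since a homomorphism $\mathbb{Z}\to\mathbb{Z}$ with infinite image is injective. For the easy steps: (2)$\Rightarrow$(1), since a core circle of $V$ generates $\pi_1(V)$ by definition and hence is a longitude; (2)$\Rightarrow$(5), since if a core circle $c$ lies on $A$ one pushes $c$ slightly to each side of $A$ and the supporting ambient isotopy carries a defining product structure along, so the two pushed-off curves are core circles lying in $V_0$ and in $V_1$, making both $V_i$ core regions; (5)$\Rightarrow$(4), since a core region contains a core circle, which is a longitude; and (3)$\Rightarrow$(5), since (3) in particular says $A$ is boundary-parallel, so Proposition~\ref{prop:boundary-parallel annuli} applies and ``$A$ parallel to $A_0$'' yields ``$V_1$ is a core region'' while ``$A$ parallel to $A_1$'' yields ``$V_0$ is a core region'', which together are~(5).

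The one homological step is (4)$\Rightarrow$(1). Apply Mayer--Vietoris to $V=V_0\cup_A V_1$. As all pieces are connected, $H_0(A)\to H_0(V_0)\oplus H_0(V_1)$ is injective, so the connecting map $H_1(V)\to H_0(A)$ vanishes; since also $H_2(V)=0$, this leaves a short exact sequence
\[0\longrightarrow H_1(A)\longrightarrow H_1(V_0)\oplus H_1(V_1)\longrightarrow H_1(V)\longrightarrow0\ .\]
By (4) each $V_i$ contains a longitude of $V$, so $H_1(V_i)\to H_1(V)$ is onto, hence an isomorphism; therefore the kernel of $H_1(V_0)\oplus H_1(V_1)\to H_1(V)$ is the ``diagonal'' infinite cyclic subgroup, and $H_1(A)\to H_1(V_0)\oplus H_1(V_1)$ carries the class of the core of $A$ to a generator of that diagonal. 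In particular its $V_0$-component generates $H_1(V_0)$, and composing with the isomorphism $H_1(V_0)\to H_1(V)$ shows that the core of $A$ represents a generator of $H_1(V)=\pi_1(V)$; thus the core of $A$ is a longitude of $V$ lying on $A$, which is~(1).

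It remains to prove (1)$\Rightarrow$(2), and this also picks up (1)$\Rightarrow$(3). If $A$ contains a longitude $\alpha$, then $\alpha$ is essential on $A$, so the core of $A$ is a longitude of $V$; hence $A$ is incompressible and therefore boundary-parallel, say parallel to $A_0$, so $V_0$ is a collar, $V_0\cong A\times I$. Since the core of $A$ is longitudinal, $\partial A$ consists of longitudinal curves on $\partial V$, so the complementary annulus $A_1$ also has longitudinal core, is incompressible in the solid torus $V_1$, and is therefore boundary-parallel in $V_1$; as $\partial V_1=A\cup A_1$, this forces $V_1$ to be a collar $V_1\cong A\times I$ as well, giving~(3). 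Gluing the two collars along $A$ exhibits $V$ as $A\times[-1,1]\cong(S^1\times J)\times[-1,1]\cong S^1\times D^2$ (with $A=S^1\times J$), under which the core of $A$ becomes $S^1\times\{\text{center}\}$; thus the core of $A$ is a core circle of $V$ contained in $A$, which is~(2). This closes the loop: (1)$\Leftrightarrow$(2) and (1)$\Rightarrow$(3)$\Rightarrow$(5)$\Rightarrow$(4)$\Rightarrow$(1), so all five conditions are equivalent.

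I expect the genuine obstacle to be the step (1)$\Rightarrow$(2): converting ``$A$ contains a longitude'' into ``$A$ contains a bona fide core circle'' requires upgrading boundary-parallelism of $A$ to the two-sided collar picture $V\cong A\times[-1,1]$, using that a longitudinal core on $\partial V$ forces \emph{both} complementary pieces to be collars, and then choosing the identification $J\times[-1,1]\cong D^2$ so that the core of $A$ lands on the product core. Everything else is either definitional or a single application of Proposition~\ref{prop:boundary-parallel annuli} or Mayer--Vietoris; this step is where the actual three-dimensional geometry, rather than bookkeeping, enters.
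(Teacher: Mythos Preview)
The paper does not actually prove this proposition; it is listed among ``elementary topological facts'' for which ``we do not include proofs.'' So there is nothing to compare against, and the relevant question is simply whether your argument is sound.

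Your proof is essentially correct and well-organized. The Mayer--Vietoris argument for (4)$\Rightarrow$(1) is clean: the short exact sequence forces $H_1(V_0)\oplus H_1(V_1)\cong\mathbb{Z}^2$, and since each factor surjects onto $H_1(V)\cong\mathbb{Z}$ neither can be trivial, so both are $\mathbb{Z}$ and both maps are isomorphisms; the core of $A$ then hits a generator of the diagonal kernel, hence a generator of $H_1(V)=\pi_1(V)$.

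There is one sloppy sentence in your (1)$\Rightarrow$(3) step. You write that ``$A_1$ \ldots\ is incompressible in the solid torus $V_1$, and is therefore boundary-parallel in $V_1$,'' but $A_1$ lies \emph{in} $\partial V_1$, so ``boundary-parallel'' does not literally apply. What you actually need is the direct observation that $V_1$ is a solid torus (being the complement of a boundary collar in $V$) whose boundary torus is split by the two longitudinal curves $\partial A$ into the two annuli $A$ and $A_1$; a meridian disk of $V_1$ then meets each of $A$ and $A_1$ in a single essential arc, and cutting along it exhibits $V_1$ as $(\text{disk})\times I$, regluing to $A\times I$. That is the content of your claim ``this forces $V_1$ to be a collar $V_1\cong A\times I$,'' and once stated this way the rest of your argument goes through unchanged. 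With this small rewording, your proof is complete.
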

\noindent An annulus satisfying the conditions in
Proposition~\ref{prop:longitudinal annuli} is said to be
\indexdef{longitudinal!annulus}\textit{longitudinal.} A 
longitudinal annulus must be incompressible.

\begin{proposition} Let $V$ be a solid torus and let $\cup A_i$ be a union 
of disjoint boundary-parallel annuli in $V$. Let $C$ be a core circle of
$V$ that is disjoint from $\cup A_i$. For each $A_i$, let $V_i$ be the
closure of the complementary component of $A_i$ that does not contain $C$,
and let $B_i=V_i\cap \partial V$. Then $A_i$ is parallel to $B_i$.
Furthermore, either
\begin{enumerate}
\item
no $A_i$ is longitudinal, and exactly one component of $V-\cup A_i$
is a core region, or
\item
every $A_i$ is longitudinal, and every component of $V-\cup A_i$ is a core
region.
\end{enumerate}
\label{prop:core circles}
\end{proposition}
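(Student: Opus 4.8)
The plan is to reduce everything to Proposition~\ref{prop:boundary-parallel annuli}, which already tells us that each $A_i$ is parallel to $B_i$ (take $V_{1-i}$ there to be the component containing $C$, which is a core region since $C$ is a core circle). The only real content is the dichotomy between (1) and (2), so I would first argue the easy implications and then isolate the combinatorial heart.

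First I would set up notation by cutting $V$ along $\cup A_i$. Since the $A_i$ are disjoint boundary-parallel annuli, $V-\cup A_i$ has components $W_0,\dots,W_k$, and exactly one of them, say $W_0$, contains the core circle $C$; this $W_0$ is automatically a core region. Each other component $W_j$ lies ``between'' or ``outside'' various $A_i$, and because each $A_i$ is parallel to $B_i$ (the side not containing $C$), each such $W_j$ with $j\ge 1$ is a product region $(\text{annulus})\times\I$ unless it is itself bounded by a single $A_i$ on one side and by a collar of part of $\partial V$; in all cases $W_j$ for $j\ge 1$ is a core region if and only if it contains a longitude of $V$, by Proposition~\ref{prop:longitudinal annuli}. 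So the statement I must prove is: either no $A_i$ is longitudinal and $W_0$ is the unique core region, or every $A_i$ is longitudinal and every $W_j$ is a core region.

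The key step is the observation that being longitudinal is a ``contagious'' condition along the nesting structure of the $A_i$. Concretely: if some $A_{i_0}$ is longitudinal, then by Proposition~\ref{prop:longitudinal annuli} it is parallel to \emph{both} of its complementary pieces, so $V_{i_0}$ (the side away from $C$) is a core region, hence contains a longitude; but $V_{i_0}$ is built from $W_0$ minus the pieces cut off by the other $A_i$'s lying on the $C$-side of $A_{i_0}$, together with those pieces' own structure. I would run an induction on the number of annuli, peeling off an annulus $A_i$ that is ``outermost'' (closest to $\partial V$ on the side away from $C$): if it is longitudinal, apply Proposition~\ref{prop:longitudinal annuli} to see the small side is a core region and the large side $V\setminus(\text{small side})$ is again a solid torus with one fewer annulus and a core circle, so induction gives that all remaining annuli are longitudinal and all remaining regions are core regions; conversely if $A_i$ is not longitudinal, its small side $V_i$ is \emph{not} a core region (again Proposition~\ref{prop:longitudinal annuli}), and the large side is a solid torus with one fewer annulus to which induction applies, yielding case~(1). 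The base case of a single annulus is exactly Proposition~\ref{prop:boundary-parallel annuli} combined with Proposition~\ref{prop:longitudinal annuli}.

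The main obstacle I anticipate is bookkeeping rather than conceptual: making precise the claim that cutting off the ``small side'' of an outermost annulus again produces a solid torus containing $C$ with the remaining $A_i$ still disjoint boundary-parallel annuli, and that longitudinality of the remaining annuli is unchanged by this operation (i.e.\ a longitude of the sub-solid-torus is a longitude of $V$ because the small side, whether a core region or a product region, does not absorb the generator of $\pi_1$). Once that stability is nailed down, the induction runs cleanly and the two alternatives (1) and (2) fall out as the ``no outermost annulus was ever longitudinal'' versus ``the first longitudinal one forces all the rest'' cases. Since the excerpt explicitly says these are elementary topological facts with proofs omitted, I would keep the write-up short, citing Propositions~\ref{prop:boundary-parallel annuli} and~\ref{prop:longitudinal annuli} at each step.
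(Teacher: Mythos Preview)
The paper explicitly omits the proof, declaring it (together with the two preceding propositions) an ``elementary topological fact,'' so there is no argument to compare against. Your overall strategy---derive ``$A_i$ is parallel to $B_i$'' from Proposition~\ref{prop:boundary-parallel annuli}, then establish the dichotomy via Proposition~\ref{prop:longitudinal annuli}---is the right one, and your first paragraph handles the parallelism claim correctly.

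There is, however, a genuine gap in your inductive step. When you peel off an outermost $A_i$ and apply the induction hypothesis to the remaining annuli in the smaller solid torus, the hypothesis only gives you a \emph{dichotomy}: either all remaining $A_j$ are longitudinal or none are. You then assert, without justification, that the outcome matches the status of $A_i$ (longitudinal $\Rightarrow$ case~(2), non-longitudinal $\Rightarrow$ case~(1)). Nothing in your argument rules out the mixed situation where $A_i$ is longitudinal but some other $A_j$ is not, or vice versa. Your ``contagion'' remark gestures at the right idea but the mechanism you offer (that $V_{i_0}$ contains a longitude) does not by itself force the \emph{other} annuli to be longitudinal.

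The missing observation is short: the boundary circles $\partial A_i$ are pairwise disjoint essential curves on the torus $\partial V$, hence all lie in a single isotopy class. So either every $\partial A_i$ is a longitude of $V$, or none is. Since a boundary-parallel annulus has core isotopic to its boundary circles, this means either every $A_i$ contains a longitude (and is longitudinal by Proposition~\ref{prop:longitudinal annuli}) or none does. Once you insert this, the dichotomy is immediate---no induction needed---and your remaining analysis of which components are core regions goes through: in the non-longitudinal case, any component other than the one containing $C$ lies in some $V_i$, which is not a core region by Proposition~\ref{prop:longitudinal annuli}; in the longitudinal case, your induction (or a direct product-structure argument) shows every component contains a longitude.
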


There are various kinds of compressible annuli in solid tori. For example,
there are boundaries of regular neighborhoods of properly embedded arcs,
possibly knotted. Also, there are annuli with one boundary circle a
meridian and the other a contractible circle in the boundary torus. When
both boundary circles are meridians, we call the annulus 
\indexdef{meridional!annulus}\textit{meridional.}
As shown in Figure~\ref{fig:meridional annuli}, meridional annuli are not
necessarily boundary-parallel.
\index{figures!figure7@meridional annuli in a solid torus}%
\begin{figure}
\includegraphics[width=7cm]{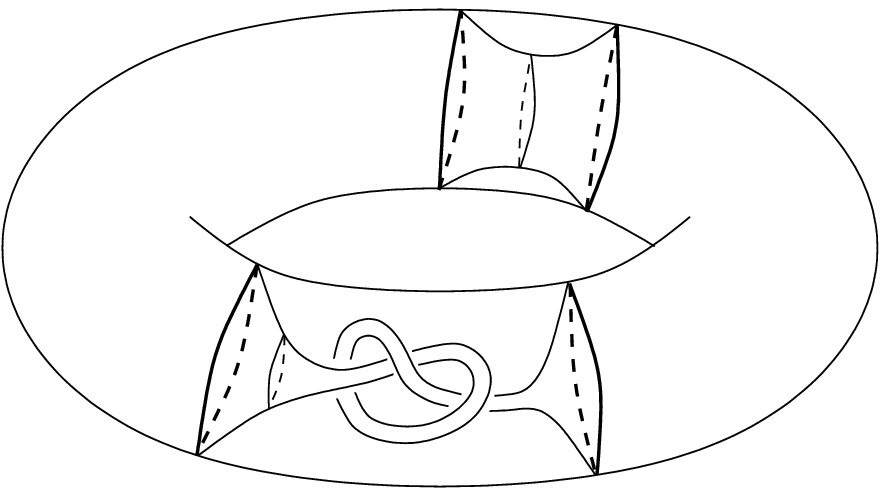}
\caption{\index{meridional!annulus!figure}Meridional annuli in a solid torus.}
\label{fig:meridional annuli}
\end{figure}\index{examples!example2@meridional annuli in a solid torus}

Although meridional annuli need not be boundary-parallel, they behave
homologically as though they were, and as a consequence any family of
meridional annuli misses some longitude.
\begin{lemma}
Let $A_1,\ldots\,$, $A_n$ be disjoint meridional annuli in a solid torus
$V$. Then:
\begin{enumerate}
\item
Each $A_i$ separates $V$ into two components, $V_{i,0}$ and $V_{i,1}$, for
which $A_i$ is incompressible in $V_{i,0}$ and compressible in $V_{i,1}$.
\item $V_{i,1}$ contains a meridian disk of $V$.
\item $\pi_1(V_{i,0})\to \pi_1(V)$ is the zero homomorphism.
\item The intersection of the $V_{i,1}$ is the unique
component of the complement of $\cup A_i$ that contains a longitude of $V$.
\end{enumerate}
\label{lem:meridional annuli}
\end{lemma}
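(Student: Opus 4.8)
The statement to prove is Lemma~\ref{lem:meridional annuli}, concerning a disjoint family of meridional annuli $A_1,\dots,A_n$ in a solid torus $V$. The plan is to argue each part in turn, with the structure coming from a single core circle $C$ that is disjoint from $\cup A_i$ (such a $C$ exists since the $A_i$ are properly embedded and can be pushed off a core).

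First I would establish part (1). Since each $A_i$ is a meridional annulus, both boundary circles are meridians, hence bound meridian disks $D_i^0, D_i^1$ in $V$; capping $A_i$ with these disks gives a $2$-sphere $S_i$ in (a slight enlargement of) $V$, which separates. Because $V$ is irreducible, $S_i$ bounds a ball, and tracking how $A_i$ sits relative to this ball shows that $A_i$ separates $V$ into two pieces. To identify which side is compressible: one of the two meridian disks of $\partial V$ cobounded with $A_i$ can be isotoped rel boundary to lie entirely on one side; that side, call it $V_{i,1}$, contains a meridian disk of $V$ (this is part (2)), so $A_i$ is compressible there. On the other side $V_{i,0}$, the annulus $A_i$ is incompressible: a compressing disk would, together with a subdisk of $A_i$, give a meridian disk of $V$ in $V_{i,0}$; but then $V_{i,0}$ would contain a meridian disk and $C$ would have to cross $A_i$ (it meets any meridian disk), contradicting $C\cap\cup A_i=\emptyset$. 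This simultaneously yields (2), and (3) follows since $V_{i,0}$ deformation retracts onto a subsurface of $\partial V$ missing a meridian — explicitly, $V_{i,0}$ lies in the ball bounded by $S_i$ union a collar, so $\pi_1(V_{i,0})\to\pi_1(V)$ kills the generator; one checks the composite factors through $\pi_1$ of a ball-like region.

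For part (4): the core circle $C$ lies in exactly one component $W$ of $V-\cup A_i$, and $W$ contains a longitude (namely $C$ is one). I claim $W=\bigcap_i V_{i,1}$. Indeed, $C$ must meet every meridian disk of $V$, so $C$ cannot be contained in any $V_{i,0}$ (which carries no meridian, by (3)); hence $C\in V_{i,1}$ for every $i$, so $W\subseteq\bigcap V_{i,1}$. Conversely, suppose some other component $W'$ of $V-\cup A_i$ contained a longitude $\ell$. Then $\ell$ generates $\pi_1(V)$, but $\ell$ lies on the ``$V_{i,0}$ side'' of at least one $A_i$ (since $W'\ne W$, the $A_i$'s separating $W'$ from $W$ have $W'$ on a side not containing $C$, which is the incompressible side $V_{i,0}$ by the characterization above — here I would use Proposition~\ref{prop:core circles}, or its underlying reasoning, to organize the combinatorics of which component is a core region). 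That contradicts (3). So $W$ is the unique component containing a longitude, which is the assertion.

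The main obstacle I anticipate is part (1), specifically the careful verification of which side of $A_i$ is compressible and the proof that $V_{i,0}$ is genuinely incompressible and $\pi_1$-trivial into $V$ — the cap-off-with-meridian-disks maneuver needs to be done so that the resulting ball is controlled, and one must be slightly careful that the two capping disks for $A_i$ can be chosen disjoint and disjoint from the other $A_j$. Once the local picture at a single $A_i$ is pinned down, parts (2), (3) are immediate corollaries, and part (4) is a clean intersection argument using the core circle $C$ together with Proposition~\ref{prop:core circles}. Since the excerpt explicitly says these are ``elementary topological facts,'' I would keep the write-up brief, citing irreducibility of $V$ and standard innermost-disk arguments rather than belaboring them.
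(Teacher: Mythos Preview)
Your approach is essentially correct but genuinely different from the paper's. The paper argues almost entirely homologically: separation comes from the observation that every loop in $\partial V$ (hence in $V$) has even algebraic intersection with $A_i$; compressibility of $A_i$ in $V$ is immediate since $\pi_1(A_i)\to\pi_1(V)$ is zero, and irreducibility of $V$ forces incompressibility on the other side; part~(2) comes from surgering $A_i$ along its compressing disk; parts~(3) and~(4) are handled by Mayer--Vietoris, the key point for~(4) being that the $V_{i,1}$ are totally ordered by inclusion (else $H_1(V_{i,1})\to H_1(V)$ would factor through some $H_1(V_{j,0})\to H_1(V)$, which is zero), so $\cap V_{i,1}$ equals the smallest $V_{k,1}$. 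No core circle appears anywhere.

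Your route via a core circle $C$ disjoint from $\cup A_i$ is a legitimate alternative and arguably more geometric. The crux of your argument for~(1)---that both sides cannot contain meridian disks, since $C$ would then have to lie in both---is clean and correct. A few points to tighten: your ``cap off with meridian disks to get a sphere'' step for separation is more delicate than you acknowledge (those disks may hit $A_i$), and the paper's intersection-number argument is simpler here. Your invocation of Proposition~\ref{prop:core circles} for part~(4) is misplaced, since that proposition assumes boundary-parallel annuli; but you don't actually need it---the dual graph of the decomposition by the $A_i$ is a tree (each $A_i$ separates), and for any component $W'\neq W$ the first edge on the tree-path from $W$ to $W'$ is an $A_j$ with $W'\subset V_{j,0}$, giving $\cap V_{i,1}=W$ directly. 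With these adjustments your proof goes through; it trades the paper's Mayer--Vietoris bookkeeping for a single well-chosen core circle.
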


\begin{proof}
For each $i$, every loop in $V$ has even algebraic intersection with $A_i$,
since every loop in $\partial V$ does, so $A_i$ separates $V$. Since $A_i$
is not incompressible, it must be compressible in one of its complementary
components, $V_{i,1}$, and since $V$ is irreducible, $A_i$ must be
incompressible in the other complementary component, $V_{i,0}$.

Notice that $V_{i,1}$ must contain a meridian disk of $V$. Indeed, if $K$ is
the union of $A_i$ with a compressing disk in $V_{i,1}$, then two of the
components of the frontier of a regular neighborhood of $K$ in $V$ are
meridian disks of $V_{i,1}$. Consequently, $\pi_1(V_{i,0})\to \pi_1(V)$ is
the zero homomorphism. The Mayer-Vietoris sequence shows that $H_1(A_i)\to
H_1(V_{i,0})$ and $H_1(V_{i,1})\to H_1(V)$ are isomorphisms.

Let $V_1$ be the intersection of the $V_{i,1}$, and let $V_0$ be the union
of the $V_{i,0}$. The Mayer-Vietoris sequence shows that $V_1$ is
connected, and that $H_1(V_1)\to H_1(V)$ is an isomorphism, so $V_1$
contains a longitude of $V$. For any $i,j$, either $V_{i,1}\subseteq
V_{j,1}$ or $V_{j,1}\subseteq V_{i,1}$, since otherwise $H_1(V_{i,1})\to
H_1(V_{j,0})\to H_1(V)$ would be the zero homomorphism. Therefore the
intersection $V_1=\cap V_{i,1}$ is equal to some $V_{k,1}$, and in
particular it contains a longitude of $V$. No other complementary
component of $\cup A_i$ contains a longitude, since each such component
lies in $V_{k,0}$, all of whose loops are contractible in~$V$.
\end{proof}

\section[Heegaard tori in very good position]
{Heegaard tori in very good position}
\label{sec:very good position}

A \indexdef{Heegaard torus}\textit{Heegaard torus} in a lens space $L$ is a
torus that separates $L$ into two solid tori.  In this section we will
develop some properties of Heegaard tori.  Also, we introduce the concepts
of discal and biessential intersection circles, good position, and very
good position, which will be used extensively in later sections.

When $P$ is a Heegaard torus bounding solid tori $V$ and $W$, and $Q$ is a
Heegaard torus contained in the interior of $V$, $Q$ need not be parallel
to $\partial V$. For example, start with a core circle in $V$, move a small
portion of it to $\partial V$, then pass it across a meridian disk of $W$
and back into $V$.  This moves the core circle to its band-connected sum in
$V$ with an $(m,q)$-curve in $\partial V$. By varying the choice of band---
for example, by twisting it or tying knots in it--- and by iterating this
construction, one can construct complicated knotted circles in $V$ which
are isotopic in $L$ to a core circle of $V$. The boundary of a regular
neighborhood of such a circle is a Heegaard torus of $L$. But here is one
restriction on Heegaard tori

\begin{proposition}
Let $P$ be a Heegaard torus in a lens space $L$, bounding solid tori $V$
and $W$. If a loop $\ell$ embedded in $P$ is a core circle for a solid
torus of some genus-$1$ Heegaard splitting of $L$, then $\ell$ is a
longitude for either $V$ or $W$.
\label{prop:Heegaard loops}
\end{proposition}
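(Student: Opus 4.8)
The plan is to work in the universal cover $S^3$ and use the fact that a genus-$1$ Heegaard splitting of $L$ is unique up to isotopy (this is Bonahon's theorem, reproven in \cite[Corollary~6.3]{RS}, and is quoted later in this chapter). First I would fix a standard Heegaard torus $P_0$ for $L$, bounding solid tori $V_0$ and $W_0$, with $\ell_0\subset P_0$ a core circle of $W_0$ (so $\ell_0$ is a longitude of $V_0$). Given the loop $\ell\subset P$ that is a core circle of some genus-$1$ splitting, the uniqueness theorem provides an ambient isotopy of $L$ carrying that splitting to the standard one, hence carrying $\ell$ to a loop isotopic (in $L$) to $\ell_0$; composing, we reduce to the situation where $\ell$ is \emph{isotopic in $L$ to $\ell_0$}, while $P$ is an arbitrary Heegaard torus containing $\ell$. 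What must be shown is that $\ell$ is then a longitude of $V$ or of $W$, where $P=\partial V=\partial W$.

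The key step is a homological/$\pi_1$ computation on the torus $P$. Since $\ell$ is isotopic in $L$ to a core circle, $\ell$ generates a $\Z/m$ direct summand of $\pi_1(L)\cong\Z/m$; equivalently, $[\ell]$ generates $\pi_1(L)$ under the map $\pi_1(P)\to\pi_1(L)$. Write $\pi_1(P)=\Z\mu_V\oplus\lambda_V$ where $\mu_V$ is the meridian of $V$ and $\lambda_V$ a longitude of $V$; the meridian $\mu_W$ of $W$ is then $\pm(m\lambda_V+q\mu_V)$ up to the choice of $\lambda_V$. A primitive class $[\ell]=x\lambda_V+y\mu_V$ in $\pi_1(P)$ (primitive because $\ell$ is embedded and essential — if $\ell$ were inessential in $P$ it would bound a disk and could not be a core circle of a splitting of a lens space with $m\ge 3$) maps to a generator of $\pi_1(L)=\pi_1(W)/\langle\mu_W\rangle\cong\Z/m$ iff $\gcd$ of its coefficient against $\mu_W$ with $m$ is $1$. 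Carrying this out, I expect to find that $x\equiv\pm1$ or $y\equiv 0$ forces $[\ell]$ to be $\pm\lambda_V$ (a longitude of $V$) or $\pm\lambda_W$ (a longitude of $W$); the arithmetic is the content of the proof, and one has to be careful that "generates $\pi_1(L)$" is exactly the right hypothesis extracted from "core circle of a genus-$1$ splitting."

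The main obstacle I anticipate is precisely pinning down the correct algebraic condition on $[\ell]$: "being a core circle of \emph{some} genus-$1$ Heegaard splitting" is an a priori topological condition, and I need the uniqueness of genus-$1$ splittings to convert it into the clean statement that $[\ell]$ generates $\pi_1(L)$ (or, dually, that $\ell$ is isotopic in $L$ into the standard $P_0$ as a longitude). Once that conversion is in hand, the remainder is the elementary observation that a primitive class on the Heegaard torus $P$ which generates $H_1(L)$ must pair trivially with one of the two meridians $\mu_V$, $\mu_W$ — because pairing with $\mu_V$ (respectively $\mu_W$) computes, up to sign, the image of $[\ell]$ in $H_1(W)\cong\Z$ (respectively $H_1(V)\cong\Z$), and if neither image vanished then $[\ell]$ would be divisible in $H_1(L)$ by a nontrivial factor of $m$, contradicting that it generates. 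A class pairing trivially with $\mu_V$ is $\pm\lambda_W$, hence a longitude of $W$; a class pairing trivially with $\mu_W$ is $\pm\lambda_V$, a longitude of $V$. This dichotomy is exactly the assertion of the proposition.
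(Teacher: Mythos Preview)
Your approach has a genuine gap: the algebraic condition you extract from the hypothesis is too weak to yield the conclusion. You reduce ``$\ell$ is a core circle of some genus-$1$ splitting'' to ``$[\ell]$ generates $\pi_1(L)\cong\Z/m$,'' but the latter does \emph{not} force $\ell$ to be a longitude of $V$ or of $W$. Concretely, take $L=L(5,2)$ with $\mu_W=5\lambda_V+2\mu_V$, and let $\ell=2\lambda_V+3\mu_V$. This is primitive in $H_1(P)$, and its image in $\pi_1(L)=\Z/5$ is $2$, a generator. Yet its image in $\pi_1(V)\cong\Z$ is $2$ and its image in $\pi_1(W)\cong\Z$ is $2\cdot 2-3\cdot 5=-11$, so $\ell$ is a longitude of neither. (And indeed this $\ell$ is \emph{not} a core circle of any genus-$1$ splitting: $\pi_1(L-\ell)\cong\langle a,b\mid a^2=b^{11}\rangle$ is not cyclic.) Your final paragraph also conflates ``pairing trivially with $\mu_V$'' with ``being a longitude''; a primitive class with $\alpha\cdot\mu_V=0$ is $\pm\mu_V$ itself, not a longitude of anything.

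The paper's argument avoids all of this by using the hypothesis directly on the \emph{complement}: since $\ell$ is a core of some genus-$1$ splitting, $L-\ell$ is an open solid torus, so $\pi_1(L-\ell)\cong\Z$. On the other hand, since $\ell$ cannot be a meridian of $V$ or $W$ (else $L$ would be $S^3$), the annulus $P-\ell$ gives a splitting $\pi_1(L-\ell)\cong\Z*_\Z\Z$ with injective edge maps. For this amalgam to be infinite cyclic, one of those edge maps must be surjective, i.e.\ $\ell$ is a longitude of $V$ or of $W$. This is a three-line argument that needs no appeal to Bonahon's uniqueness theorem; your route, even if patched to track the full condition $\pi_1(L-\ell)\cong\Z$ rather than just the image in $H_1(L)$, would end up redoing exactly this computation after an unnecessary detour.
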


\begin{proof}
Since $L$ is not simply-connected, $\ell$ is not a meridian for either $V$
or $W$, consequently $\pi_1(\ell)\to \pi_1(V)$ and $\pi_1(\ell)\to
\pi_1(W)$ are injective. So $P-\ell$ is an open annulus separating
$L-\ell$, making $\pi_1(L-\ell)$ a free product with amalgamation
$\Z*_{\Z}\Z$. Since $\ell$ is a core circle, $\pi_1(L-\ell)$ is infinite
cyclic, so at least one of the inclusions of the amalgamating subgroup to
the infinite cyclic factors is surjective.
\end{proof}

Let $F_1$ and $F_2$ be transversely intersecting embedded surfaces in the
interior of a $3$-manifold $M$. A component of $F_1\cap F_2$ is called
\indexdef{discal}\textit{discal} when it is contractible in both $F_1$ and
$F_2$, and \indexdef{biessential}\textit{biessential} when it is essential in
both. We say that $F_1$ and $F_2$ are \indexdef{good position}\textit{in good
position} when every component of their intersection is either discal or
biessential, and at least one is biessential, and we say that they are
\indexdef{very good!position}\textit{in very good position} when they are in
good position and every component of their intersection is biessential.
\longpage\longpage

Later, we will go to considerable effort to obtain pairs of Heegaard tori
for lens spaces that intersect in very good position. Even then, the
configuration can be complicated. Consider a Heegaard torus $P$ bounding
solid tori $V$ and $W$, and another Heegaard torus $Q$ that meets $P$ in
very good position. When the intersection circles are not meridians for
either $V$ or $W$, the components of $Q\cap V$ and $Q\cap W$ are annuli
that are incompressible in $V$ and $W$, and must be as described in
Proposition~\ref{prop:core circles}. But if the intersection circles are
meridians for one of the solid tori, say $V$, then $Q\cap V$ consists of
meridional annuli, and as shown in Figure~\ref{fig:bad torus}, they need
not be boundary-parallel. To obtain that configuration, one starts with a
torus $Q$ parallel to $P$ and outside $P$, and changes $Q$ by an isotopy
that moves a meridian $c$ of $Q$ in a regular neighborhood of a meridian
disk of $P$. First, $c$ passes across a meridian in $P$, then shrinks down
to a small circle which traces around a knot. Then, it expands out to
another meridian in $P$ and pushes across. The resulting torus meets $P$ in
four circles which are meridians for $V$, and meets $V$ in two annuli, both
isotopic to the non-boundary-parallel annulus in Figure~\ref{fig:meridional
annuli}. The next lemma gives a small but important restriction on
meridional annuli of $Q\cap V$.
\index{figures!figure8@Heegaard torus with bad meridional annuli}%
\begin{figure}
\labellist
\pinlabel $P$ [B] at 239 125
\pinlabel $Q$ [B] at 140 78
\endlabellist
\includegraphics[width=0.55\textwidth]{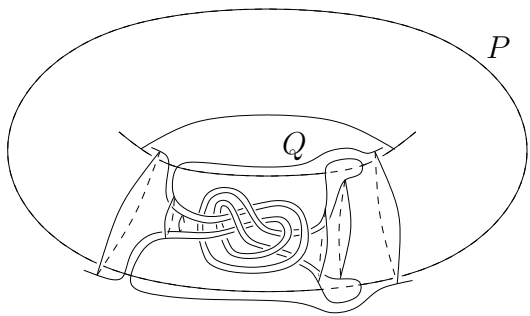}
\caption{Heegaard tori in very good
position with non-boundary-parallel meridional annuli.}
\label{fig:bad torus}
\end{figure}

\begin{lemma}
Let $P$ be a Heegaard torus which separates a lens space into two solid
tori $V$ and $W$.  Let $Q$ be another Heegaard torus whose intersection
with $V$ consists of a single meridional annulus $A$. Then $A$ is
boundary-parallel in $V$.
\label{lem:one annulus}
\end{lemma}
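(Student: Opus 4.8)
The plan is to analyze the two halves $Q\cap V=A$ and $Q\cap W$ directly, using the fact that $Q$ is a Heegaard torus. Since $A$ is a meridional annulus, both boundary circles of $A$ are meridians of $V$, hence they bound meridian disks of $V$; consequently $A$ together with two such disks forms a $2$-sphere, and since $L$ is irreducible this sphere bounds a ball. This already shows $A$ separates $V$ into two pieces $V_0,V_1$ with $A$ incompressible in $V_0$ and compressible in $V_1$, exactly as in Lemma~\ref{lem:meridional annuli}(1); $V_1$ contains a meridian disk of $V$ and $\pi_1(V_0)\to\pi_1(V)$ is zero. What remains is to show $V_0$ is a product $A\times\I$, equivalently that $A$ is parallel to the annulus $V_0\cap P$.

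First I would pin down the structure of $Q\cap W$. The circles of $Q\cap P$ are meridians of $V$, so they are longitudes of $W$ (a meridian of $V$ cannot be a meridian of $W$, as $L$ is not simply connected, and on the Heegaard torus $P$ the only other isotopy classes available are longitudes of $W$ together with slopes of higher intersection number, but $Q\cap P$ consists of the boundary of the annulus $A$, whose two components are parallel in $P$, so they must be a $\partial V$-meridian that is simultaneously a $\partial W$-longitude). Actually the cleaner route: $Q$ is a Heegaard torus, so it is incompressible in its complement on each side; $Q\cap W$ is then a union of incompressible annuli in the solid torus $W$ whose boundary circles are longitudes of $W$, so by Proposition~\ref{prop:longitudinal annuli} each such annulus is longitudinal, and by Proposition~\ref{prop:core circles} the components of $W-(Q\cap W)$ are all core regions of $W$. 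In particular, $W$ lies in a collar $P\times\I$ of $P$ pushed slightly to the $W$ side, union some core region; more precisely one of the complementary regions of $Q$ in $L$ is a solid torus $V'$ with $\partial V'=Q$, and $V'\supset A$-side, while the other, $W'$, is a solid torus on the far side.

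Now the key step. Let $N$ be the component of $L-Q$ that contains $V_0$ (the incompressible side of $A$ in $V$). Since $Q$ is a Heegaard torus, $N$ is a solid torus with $\partial N=Q$. The annulus $A=Q\cap V$ cuts $N$; on one side of $A$ inside $N$ we have $V_0$, and $V_0\cap\partial N = V_0\cap Q = A$ while $V_0\cap P = $ an annulus $A'$ in $P$ cobounding with $A$ in $\overline{L-V}=W$-side... I would instead argue: $V_0$ is a compact $3$-manifold with $\partial V_0 = A\cup A'$ where $A'\subset P$, and $\pi_1(V_0)\to\pi_1(V)$ is trivial while $\pi_1(V_0)\to\pi_1(L-N)$ factors through $\pi_1(W')\cong\Z$; combined with the fact that $A$ is incompressible in $V_0$, an Euler characteristic / Waldhausen-type argument (or direct inspection: $V_0$ is a Haken manifold with incompressible annulus boundary component $A$, $\chi(V_0)=0$, and $H_1(A)\to H_1(V_0)$ an isomorphism by Mayer-Vietoris as in Lemma~\ref{lem:meridional annuli}) forces $V_0\cong A\times\I$. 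The hypothesis that there is only a \emph{single} annulus $A$ is exactly what makes $V_0$ have just these two boundary annuli, with no extra complementary pieces to obstruct the product structure; this is the crux, and is where the lemma genuinely fails when $Q\cap V$ has several meridional annuli (as in Figure~\ref{fig:bad torus}).

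\textbf{Main obstacle.} The hard part is the last step: ruling out that $V_0$ is a nontrivial Haken manifold with the given boundary and fundamental-group data — i.e.\ showing the incompressible side of a single meridional annulus in a solid torus is always a product. I expect this to come down to a clean homological argument (the Mayer--Vietoris computation already in Lemma~\ref{lem:meridional annuli} gives $H_*(V_0)\cong H_*(A)$) together with the observation that $A$ is incompressible in $V_0$ and $V_0\subset V$ a solid torus, so $V_0$ is a handlebody-like piece; an irreducible $3$-manifold with the homology of $S^1$, incompressible toral-annular boundary $A$, and a second boundary annulus, sitting inside a solid torus, must be $A\times\I$. I would verify this by cutting $V_0$ along a meridian disk of the ambient $V$ (which meets $V_0$ in disks and arcs, since the meridian disk of $V$ can be taken disjoint from the meridian-disk compressions on the $V_1$ side) and doing an innermost-disk / outermost-arc reduction to see $V_0$ is a product. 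The bookkeeping there is the only real work; everything else is an assembly of Propositions~\ref{prop:longitudinal annuli}, \ref{prop:core circles} and Lemma~\ref{lem:meridional annuli}.
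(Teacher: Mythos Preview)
Your proposal has two factual errors in the middle section and does not close the main gap. First, in $L(m,q)$ with $m\ge3$ a meridian of $V$ is \emph{not} a longitude of $W$: it has algebraic intersection $\pm m$ with a meridian of $W$ on $P$. Second, a Heegaard torus is compressible on both sides (each complementary solid torus has a meridian disk), so ``$Q$ is incompressible in its complement on each side'' is false. The salvageable fact is that $A'=Q\cap W$ is a single incompressible, non-longitudinal annulus in $W$, hence uniquely boundary-parallel there---but your stated reasons for this are wrong, and in any case you do not use it.

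More importantly, the step you flag as the main obstacle really is the whole problem, and your proposed method cannot finish it. Showing $V_0\cong A\times\I$ from properties of $V_0\subset V$ alone is impossible: Figure~\ref{fig:meridional annuli} already exhibits a single meridional annulus in a solid torus that is not boundary-parallel, so the hypothesis that $A$ extends to a Heegaard torus $Q$ must be used essentially. Your plan to cut $V_0$ along a meridian disk $D$ of $V$ also fails, because any such $D$ made disjoint from $A$ must lie in $V_1$, not $V_0$ (if $D\subset V_0$ then $\partial D\subset A_0$ is isotopic in the torus $\partial V_0=A\cup A_0$ to a core of $A$, contradicting incompressibility of $A$ in $V_0$), so it gives no information about $V_0$. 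The paper supplies the missing idea, and it is a different argument: assume for contradiction that $A$ is not boundary-parallel, let $X$ be the solid torus bounded by $Q$ that contains $A_0=V_0\cap\partial V$, and analyze $A_0$ as a properly embedded annulus in $X$. If $A_0$ is incompressible in $X$, then (not being parallel to $A$) it is parallel to $\overline{\partial X-A}$, so $V_0$ contains a core circle of $X$; since $\pi_1(V_0)\to\pi_1(V)$ is zero this forces $L$ simply connected. If $A_0$ is compressible in $X$, its compressing disk together with a compressing disk for $A$ in $V_1\subset Y$ form a $2$-sphere having algebraic intersection $\pm1$ with a longitude of $V$ lying in $V_1$, contradicting irreducibility of $L$.
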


\begin{proof}
From Lemma~\ref{lem:meridional annuli}, $A$ separates $V$ into two
components $V_0$ and $V_1$, such that $A$ is compressible in $V_1$ and
$V_1$ contains a longitude of $V$. Suppose that $A$ is not
boundary-parallel in~$V$.

Let $A_0=V_0\cap \partial V$. Of the two solid tori in $L$ bounded by $Q$,
let $X$ be the one that contains $A_0$, and let $Y$ be the other one. Since
$Q\cap V$ consists only of $A$, $Y$ contains $V_1$, and in particular
contains a compressing disk for $A$ in $V_1$ and a longitude for $V$.

Suppose that $A_0$ were incompressible in $X$. Since $A_0$ is not
parallel to $A$, it would be parallel to $\overline{\partial
X-A}$. So $V_0$ would contain a core circle of $X$. Since $\pi_1(V_0)\to
\pi_1(V)$ is the zero homomorphism, this implies that $L$ is
simply-connected, a contradiction. So $A_0$ is compressible in $X$. A
compressing disk for $A_0$ in $X$ is part of a $2$-sphere that meets $Y$
only in a compressing disk of $A$ in $V_1$. This $2$-sphere has algebraic
intersection $\pm 1$ with the longitude of $V$ in $V_1$,
contradicting the irreducibility of $L$.
\end{proof}

Regarding $D^2$ as the unit disk in the plane, for $0<r<1$ let 
\indexsymdef{rD2}{$rD^2$}$rD^2$
denote $\{(x,y)\;|\;x^2+y^2\leq r^2\}$.  A solid torus $X$ embedded in a
solid torus $V$ is called 
\indexdef{concentric solid torus}\textit{concentric in $V$} if there is some product
structure $V=D^2\times S^1$ such that $X=rD^2\times S^1$. Equivalently, $X$
is in the interior of $V$ and some (hence every) core circle of $X$ is a
core circle of $V$.

The next lemma shows how we will use Heegaard tori that meet in very good
position.
\begin{lemma}
Let $P$ be a Heegaard torus which separates a lens space into two solid
tori $V$ and $W$.  Let $Q$ be another Heegaard torus, that meets $P$ in
very good position, and assume that the annuli of $Q\cap V$ are
incompressible in $V$. Then at least one component $C$ of $V-(Q\cap V)$
satisfies both of the following:
\begin{enumerate}
\item
$C$ is a core region for $V$.
\item
Suppose that $Q$ is moved by isotopy to a torus $Q_1$ in $W$, by pushing
the annuli of $Q\cap V$ one-by-one out of $V$ using isotopies that move
them across regions of $V-C$, and let $X$ be the solid torus bounded by
$Q_1$ that contains $V$. Then $V$ is concentric in $X$.
\item
After all but one of the annuli have been pushed out of $V$, the
image $Q_0$ of $Q$ is isotopic to $P$ relative to $Q_0\cap P$.
\end{enumerate}
\label{lem:pushout}
\end{lemma}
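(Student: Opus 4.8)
The plan is to analyze the components of $V-(Q\cap V)$ using Proposition~\ref{prop:core circles}. Since the annuli of $Q\cap V$ are incompressible and $Q$ meets $P$ in very good position (so the intersection circles are essential in both $P$ and $Q$), each annulus of $Q\cap V$ is incompressible in $V$. Choose a core circle $C_0$ of $V$ disjoint from $Q\cap V$ (such a circle exists since a regular neighborhood of $Q\cap V$ is a union of incompressible annuli, and its complement in $V$ contains a core region by Proposition~\ref{prop:core circles}). By Proposition~\ref{prop:core circles}, the component $C$ of $V-(Q\cap V)$ containing $C_0$ is a core region for $V$, giving (1). In the subcase where every annulus is longitudinal, every component is a core region and any choice works; otherwise $C$ is the unique core region.

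For (2), I would push the annuli out of $V$ one at a time. Proposition~\ref{prop:core circles} tells us that for each annulus $A_i$, writing $V_i$ for the closure of the complementary component of $A_i$ not containing $C_0$, we have $A_i$ parallel to $B_i=V_i\cap\partial V$; so pushing $A_i$ across $V_i$ (away from $C$) carries it out of $V$. After pushing all of them out, $Q$ becomes a torus $Q_1$ lying entirely in $W$, and the solid torus $X$ bounded by $Q_1$ on the side containing $V$ is obtained from $V$ by adjoining the collar-like regions $V_i$, each of which is a product region $A_i\times\I$ attached along $\partial V$. Hence $X$ deformation retracts onto $V$ (indeed $X$ is an ambient-isotopic enlargement of $V$ by parallel collars), so a core circle of $V$ remains a core circle of $X$; since $V\subset\interior(X)$, this is exactly the statement that $V$ is concentric in $X$ (using the equivalent characterization of concentricity stated just before the lemma). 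The key point making the core circle of $V$ a core circle of $X$ is that each region $V_i$ we add is a product on a boundary annulus, so it does not change the isotopy class of the core in the enlarged solid torus.

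For (3), after pushing out all but one annulus $A$, the image $Q_0$ of $Q$ meets $V$ in the single annulus $A$, and $Q_0\cap P = \partial A$. I would apply Lemma~\ref{lem:one annulus} (or rather, in the incompressible case, Proposition~\ref{prop:boundary-parallel annuli}): $A$ is boundary-parallel in $V$, and since $C\subset V-A$ is a core region on one side, $A$ is parallel to the annulus on the \emph{other} side, namely $\overline{\partial V - (Q_0\cap P)}$ intersected appropriately. That parallelism is a product region between $A$ and a subannulus of $P$ with the same boundary $Q_0\cap P$; the remaining part of $Q_0$ (lying in $W$) together with the complementary annulus of $P$ cobound a region too, once one checks the $W$-side is also product-like — this uses that $Q_0$ is a Heegaard torus and $V$ is concentric in the solid torus it bounds. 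Gluing these two product regions along $Q_0\cap P$ exhibits an isotopy from $Q_0$ to $P$ fixing $Q_0\cap P$.

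The main obstacle I anticipate is part (3), specifically controlling the $W$-side of $Q_0$ after the pushout: one must verify that once only one annulus remains in $V$, the torus $Q_0$ is genuinely parallel to $P$ and not merely ``parallel on the $V$-side.'' This requires knowing that the single remaining annulus $A$ together with $Q_0\cap W$ forms a Heegaard torus isotopic to $P$, which in turn relies on the concentricity established in (2) and on the isotopy-uniqueness of Heegaard tori in lens spaces (from \cite{RS}). Parts (1) and (2) are essentially bookkeeping with Proposition~\ref{prop:core circles}, but (3) requires genuinely assembling the two complementary product regions and checking their compatibility along $Q_0\cap P$, which is where careful attention to the annuli's framings and to which side is the core region will be needed.
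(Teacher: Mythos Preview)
Your approach to (1) via Proposition~\ref{prop:core circles} is fine, but the real content of the lemma lies in (2) and especially (3), and here there is a genuine gap. You choose $C$ purely from the $V$-side and hope to verify (3) afterward. This fails in general: in the single-annulus case, write $\partial A$ as separating $P$ into $A_1\cup A_2$ and put $A'=Q\cap W$. If $A$ happens to be longitudinal in $V$ (so both complementary regions are core regions and, as you say, ``any choice works'' for (1)) but $A'$ is parallel in $W$ only to $A_1$, then choosing $C$ on the $A_2$-side forces you to push $A$ across $A_1$---and then $Q_0$ is \emph{not} isotopic to $P$ rel $Q_0\cap P$, because on the $W$-side $A'$ is parallel to the wrong complementary annulus. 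The correct $C$ must be coordinated with the parallelism of $A'$ in $W$, not chosen in advance from the $V$-side.

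The paper proceeds in the opposite order. It first reduces (by pushing out outermost annuli) to the case of a single annulus $A$, then analyzes $A'=Q\cap W$ directly: if $A'$ is longitudinal both sides work; if $A'$ is incompressible non-longitudinal it is boundary-parallel to a unique $A_i$; if $A'$ is compressible it is meridional and Lemma~\ref{lem:one annulus} gives boundary-parallelism anyway. In the delicate subcase where $A'$ is parallel to $A_1$ but not $A_2$, Proposition~\ref{prop:Heegaard loops} forces $A$ to be longitudinal in $V$, so one may take $C$ on the $A_1$-side and push $A$ across $A_2$ instead. This $W$-side analysis is what determines $C$, and it is exactly the missing idea. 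Your proposed fallback to isotopy-uniqueness of Heegaard tori from \cite{RS} does not obviously give an isotopy \emph{relative to} $Q_0\cap P$, and in any case invokes machinery that this lemma is meant to help rebuild in the parameterized setting.
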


\begin{proof}
Assume first that $Q\cap V$ has only one component $A$. Then $\partial A$
separates $P$ into two annuli, $A_1$ and $A_2$. Since $A$ is incompressible
in $V$, it is parallel in $V$ to at least one of the $A_i$, say $A_1$. Let
$A'=Q\cap W$.

If $A'$ is longitudinal, then $A'$ is parallel in $W$ to $A_2$. So pushing
$A$ across $A_1$ moves $Q$ to a torus in $W$ parallel to $P$, and the lemma
holds, with $C$ being the region between $A$ and $A_2$. An isotopy from $Q$
to $P$ can be carried out relative to $Q\cap P$, giving the last statement
of the lemma. Suppose that $A'$ is not longitudinal. If $A'$ is
incompressible, then it is boundary parallel in $W$. If $A'$ is not
incompressible, then since $P$ and $Q$ meet in very good position, $A'$ is
meridional, and by Lemma~\ref{lem:one annulus} it is again
boundary-parallel in $W$. If $A'$ is parallel to $A_2$, then we are
finished as before. If $A'$ is parallel to $A_1$, but not to $A_2$, then
there is an isotopy moving $Q$ to a regular neighborhood of a core circle
of $A_1$. By Proposition~\ref{prop:Heegaard loops}, $A$ is longitudinal, so
must also be parallel in $V$ to $A_2$. In this case, we take $C$ to be the
region between $A$ and~$A_1$.

Suppose now that $Q\cap V$ and hence also $Q\cap W$ consist of $n$ annuli,
where $n>1$. By isotopies pushing outermost annuli in $V$ across $P$, we
obtain $Q_0$ with $Q_0\cap V$ consisting of one annulus $A$. At least one
of its complementary components, call it $C$, satisfies the lemma. Let $Z$
be the union of the regions across which the $n-1$ annuli were
pushed. Since $C$ is a core region, $C\cap (V-Z)$ is also a core region
(since a core circle of $V$ in $C$ can be moved, by the reverse of the
pushout isotopies, to a core circle of $V$ in $C\cap (V-Z)$).  So $C\cap
(V-Z)$ satisfies the conclusion of the lemma.
\end{proof}

Here is a first consequence of Lemma~\ref{lem:pushout}.
\begin{corollary}
Let $P$ be a Heegaard torus which separates a lens space into two solid
tori $V$ and $W$, and let $Q$ be another Heegaard torus separating it into
$X$ and $Y$. Assume that $Q$ meets $P$ in very good position. If the
circles of $P\cap Q$ are meridians in $X$ or in $Y$ (respectively, in $X$
and in $Y$), then they are meridians in $V$ or in~$W$ (respectively, in $V$
and in $W$). An analogous assertion holds for longitudes. \par
\label{coro:comeridian}
\end{corollary}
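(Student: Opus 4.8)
\textbf{Proof proposal for Corollary~\ref{coro:comeridian}.}

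The plan is to exploit Lemma~\ref{lem:pushout} to convert the hypothesis on $Q$ into the corresponding statement for $P$. First I would treat the meridian case. Suppose the circles of $P\cap Q$ are meridians in $X$. Then the annuli of $Q\cap V$ and $Q\cap W$ all have boundary consisting of meridians of $X$; in particular each such annulus, viewed as sitting inside $X$, is a meridional annulus, so by Lemma~\ref{lem:meridional annuli}(3) the inclusion of each ``inessential side'' induces the zero map on $\pi_1$, and more to the point the annuli are incompressible in $X$ only on one side. I want instead to show the circles are meridians in $V$ (or $W$). The key point is that a circle $c\subset P\cap Q$ is a meridian of $V$ if and only if it bounds a disk in $V$, equivalently if and only if it is not a longitude of $V$ and not essential-but-non-bounding — but in a solid torus every essential circle on the boundary is either a meridian or has nonzero winding number, so $c$ is a meridian of $V$ iff $c$ is nullhomotopic in $V$. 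So I would argue: if $c$ were \emph{not} a meridian of $V$, then since $c$ lies on the Heegaard torus $P$ and $c$ is a core circle of one of the solid tori of the genus-$1$ splitting given by $Q$ (because $c$ is a meridian of $X$, hence $c$ bounds a meridian disk of $X$, hence a neighborhood of $c$ pushed slightly is\ldots) — actually the cleanest route is Proposition~\ref{prop:Heegaard loops}: a circle on $P$ that is a longitude of a Heegaard solid torus must be a longitude of $V$ or of $W$, so its ``dual'' property (being a meridian of $X$) forces it to be a meridian of $V$ or $W$ by the symmetric statement. Let me restructure.

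The honest approach: a circle $c$ on the Heegaard torus $P$ is determined up to isotopy on $P$ by a primitive class in $H_1(P)\cong\Z^2$; being a meridian of $X$ means $c$ is nullhomotopic in $X$ but essential in $P$. Since $P\cap Q$ is a nonempty collection of parallel circles on both $P$ and $Q$ (they cobound annuli in $V$, $W$, $X$, $Y$), all the intersection circles represent the same class $[c]$ on $P$. If $[c]$ is a meridian class for $X$, then $c$ bounds a meridian disk $D\subset X$; pushing $D$ slightly off $Q$ and using that $Q$ meets $P$ in very good position, $D$ can be isotoped to lie in one of $V$ or $W$, because $c$ is essential in $P$ and a meridian disk of $X$ with boundary on $P$ meets $P$ only in its boundary after the good-position reduction — here is where I'd invoke Lemma~\ref{lem:pushout}: after pushing all but one annulus of $Q\cap V$ out, the image $Q_0$ of $Q$ is isotopic to $P$ rel $Q_0\cap P$, and under this isotopy a meridian disk of $X$ (or its counterpart in the pushed-off picture) becomes a meridian disk of $V$ or of $W$. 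Thus $c$ is a meridian of $V$ or of $W$. For the ``in $X$ and in $Y$'' version one runs the argument on both sides, or simply notes that a circle on $P$ that is simultaneously a meridian of $X$ and of $Y$ forces the two solid tori to be glued by a map sending meridian to meridian, which (since $m\ge 3$) cannot happen unless $P$ itself has that property, giving the conclusion for $V$ and $W$. The longitude assertion is the mirror image: ``longitude of $X$'' means core-region-carrying, and Proposition~\ref{prop:longitudinal annuli} together with Lemma~\ref{lem:pushout}(1)--(2) shows the complementary component structure is preserved under the pushout, so a longitudinal circle for $X$ is longitudinal for $V$ or $W$.

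I expect the main obstacle to be the case where $Q\cap V$ consists of several meridional annuli that are \emph{not} boundary-parallel — exactly the pathology displayed in Figure~\ref{fig:bad torus}. In that situation one cannot simply isotope $Q$ to $P$ keeping control, and Lemma~\ref{lem:one annulus} only handles a single annulus. The resolution will be to use Lemma~\ref{lem:meridional annuli}(4): among the complementary components of $\cup A_i$ in $V$ there is a unique one $V_1$ containing a longitude of $V$, and all the others lie in the zero-$\pi_1$ part; the meridian class on $P$ is detected by $H_1$, and the Mayer--Vietoris computation in that lemma pins down $[c]\in H_1(P)$ as the meridian class of $V$ regardless of how knotted the annuli are. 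So the homological content of Lemma~\ref{lem:meridional annuli} does the work that the isotopy classification cannot, and combining it with Proposition~\ref{prop:Heegaard loops} for the longitude half should close the argument cleanly.
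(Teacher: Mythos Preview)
Your second paragraph contains the right idea and is essentially the paper's argument: push the annuli of $Q\cap V$ out using Lemma~\ref{lem:pushout}, obtain $Q_0$ isotopic to $P$ relative to $Q_0\cap P$, and observe that this isotopy carries $\{X,Y\}$ to $\{V,W\}$, so circles of $Q_0\cap P$ that were meridians (or longitudes) of $X$ or $Y$ are meridians (or longitudes) of $V$ or $W$; since the isotopy fixes those circles and all circles of $P\cap Q$ are parallel on $P$, the conclusion follows.

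The gap is that you never verify the hypothesis of Lemma~\ref{lem:pushout}: it requires the annuli of $Q\cap V$ to be \emph{incompressible in $V$}. This holds precisely when the circles of $P\cap Q$ are not meridians of $V$. Since they cannot be meridians of both $V$ and $W$ (those are distinct primitive classes on $P$), you may simply choose notation so that they are not meridians of $V$---this is the paper's one-line ``we may choose notation so that the annuli of $Q\cap V$ are incompressible in $V$.'' Your third paragraph then evaporates: the case you worry about, where $Q\cap V$ consists of non-boundary-parallel meridional annuli in $V$, is exactly the case excluded by this choice of notation. You do not need Lemma~\ref{lem:meridional annuli} or any homological workaround; moreover, your proposed use of it is circular, since applying that lemma to the annuli of $Q\cap V$ presupposes they are meridional in $V$, which is part of what you are trying to establish. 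Fix the notation step and drop the third paragraph, and your argument is the paper's proof.
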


\begin{proof}
We may choose notation so that the annuli of $Q\cap V$ are incompressible
in $V$.  Use Lemma~\ref{lem:pushout} to move $Q$ out of $V$. After all but
one annulus has been pushed out, the image $Q_0$ of $Q$ is isotopic to $P$
relative to $Q_0\cap P$. That is, the original $Q$ is isotopic to $P$ by an
isotopy relative to $Q_0\cap P$. If the circles of $Q\cap P$ were
originally meridians of $X$ or $Y$, then in particular those of $Q_0\cap P$
are meridians of $X$ or $Y$ after the isotopy, that is, of $V$ or $W$.  The
``and'' assertion and the case of longitudes are similar.
\end{proof}

\newpage
\section[Sweepouts, and levels in very good positions]
{Sweepouts, and levels in very good position}
\label{sec:RSgraphic}

In this section we will define sweepouts and related structures. Also, we
will prove an important technical lemma concerning pairs of sweepouts
having levels that meet in very good position.
\longpage\longpage

By a \indexdef{sweepout}\textit{sweepout} of a closed orientable
$3$-manifold, we mean a smooth map $\tau\colon P\times [0,1]\to M$, where
$P$ is a closed orientable surface, such that
\begin{enumerate}
\item
$T_0=\tau(P\times\set{0})$ and $T_1=\tau(P\times \set{1})$ are disjoint
graphs with each vertex of valence $3$.
\item 
Each $T_i$ is a union of a collection of smoothly embedded arcs and circles
in $M$.
\item
$\tau\vert_{P\times(0,1)}\colon P\times (0,1)\to M$ is a diffeomorphism
onto $M-(T_0\cup T_1)$.
\item
Near $P\times\partial I$, $\tau$ gives a mapping cylinder neighborhood of
$T_0\cup T_1$.
\end{enumerate}
Associated to any $t$ with $0<t<1$, there is a Heegaard splitting
$M=V_t\cup W_t$, where 
\indexsymdef{Vt}{$V_t$, $W_t$}$V_t=\tau(P\times [0,t])$ and 
$W_t=\tau(P\times
[t,1])$. For each $t$, $T_0$ is a deformation retract of $V_t$ and $T_1$ is
a deformation retract of $W_t$.  We denote $\tau(P\times \set{t})$ by
$P_t$, and call it a \indexdef{level!of sweepout}\textit{level} of
$\tau$. Also, for $0<s< t<1$ we denote $\tau(P\times [s,t])$ by
\indexsymdef{R(s,t)}{$R(s,t)$}$R(s,t)$. Note that any genus-$1$ Heegaard
splitting of $L$ provides sweepouts with $T_0$ and $T_1$ as core circles of
the two solid tori, and the Heegaard torus as one of the levels.

A sweepout $\tau\colon P\times [0,1]\to M$ induces a continuous projection
function $\pi\colon M\to [0,1]$ by the rule $\pi(\tau(x,t))=t$.  By
composing this with a smooth bijection from $[0,1]$ to $[0,1]$ all of whose
derivatives vanish at $0$ and at $1$, we may reparameterize $\tau$ to
ensure that $\pi$ is a smooth map. We always assume that $\tau$ has been
selected to have this property.

By a \indexdef{spine}\textit{spine} for a closed connected surface $P$, we
mean a $1$-dimensional cell complex in $P$ whose complement consists of
open disks.

The next lemma gives very strong restrictions on levels of two different
sweepouts of a lens space that intersect in very good position.
\begin{lemma}
Let $L$ be a lens space. Let $\tau \colon T\times[0,1]\to L$ be a sweepout
as above, where $T$ is a torus. Let $\sigma\colon T\times[0,1]\to L$ be
another sweepout, with levels $Q_s=\sigma(T\times\set{s})$.  Suppose that
for $t_1<t_2$, $s_1\neq s_2$, and $i=1,2$, $Q_{s_i}$ and $P_{t_i}$
intersect in very good position, and that $Q_{s_1}$ has no discal
intersections with $P_{t_2}$. If $Q_{s_1}$ has nonempty intersection with
$P_{t_2}$, then either
\begin{enumerate}
\item
every intersection circle of $Q_{s_1}$ with $P_{t_2}$ is biessential, and
consequently $Q_{s_1}\cap R(t_1,t_2)$ contains an annulus with one boundary
circle essential in $P_{t_1}$ and the other essential in $P_{t_2}$, or
\item
for $i=1,2$, $Q_{s_i}\cap P_{t_i}$ consists of meridians of $W_{t_i}$, and
$Q_{s_1}\cap R(t_1,t_2)$ contains a surface $\Sigma$ which is a homology
from a circle of $Q_{s_1}\cap P_{t_1}$ to a union of circles in $P_{t_2}$.
\end{enumerate}
\label{lem:hitting levels}
\end{lemma}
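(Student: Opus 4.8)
The plan is to analyze the circles of $Q_{s_1}\cap P_{t_2}$ using the hypotheses on very good position and the absence of discal intersections, then either land directly in case~(1) or push through to case~(2) via the meridional annulus machinery of Section~\ref{sec:preliminaries}. First I would observe that since $Q_{s_1}$ and $P_{t_2}$ have no discal intersections and $Q_{s_1}\cap P_{t_2}\neq\emptyset$, every component of $Q_{s_1}\cap P_{t_2}$ is essential in $P_{t_2}$; the question is whether each is also essential in $Q_{s_1}$. If every such circle is biessential, we are in case~(1): a component of $Q_{s_1}\cap R(t_1,t_2)$ is then an annulus (or more complicated vertical-type surface), and I would use the fact that $Q_{s_1}$ meets $P_{t_1}$ in biessential circles too (very good position at level $t_1$) to trace an annular component with one boundary essential in $P_{t_1}$ and the other essential in $P_{t_2}$. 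The key point is that $Q_{s_1}$, being a torus, intersected with the product region $R(t_1,t_2)$ in essential circles on both ends, must contain such a connecting annulus by an Euler-characteristic count on the pieces of $Q_{s_1}$ cut by the $P_{t_i}$'s.

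The remaining case is when some circle of $Q_{s_1}\cap P_{t_2}$ is inessential in $Q_{s_1}$ (hence bounds a disk there) while being essential in $P_{t_2}$. Here I would argue that such a circle must be a meridian of one of the solid tori $V_{t_2}$, $W_{t_2}$: it bounds a disk $D$ in $Q_{s_1}$, and $D$ lies on one side of $P_{t_2}$, giving a compression, so the circle is a meridian of $V_{t_2}$ or $W_{t_2}$. I expect the orientation/side bookkeeping to force it to be a meridian of $W_{t_2}$ (consistently with the orientation conventions of the sweepout, since $T_1$ is the spine of the $W$-side); then Corollary~\ref{coro:comeridian}, applied via the very good position of $Q_{s_2}$ with $P_{t_2}$, upgrades this to the statement that $Q_{s_i}\cap P_{t_i}$ consists of meridians of $W_{t_i}$ for $i=1,2$. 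Once we know the intersection circles at level $t_2$ are meridians of $W_{t_2}$, I would build the homology surface $\Sigma$ in $R(t_1,t_2)$: take a suitable component of $Q_{s_1}\cap R(t_1,t_2)$ bounded by a circle of $Q_{s_1}\cap P_{t_1}$ on one end and circles of $Q_{s_1}\cap P_{t_2}$ on the other, and check using Lemma~\ref{lem:meridional annuli} (on how meridional annuli of $Q_{s_1}\cap V_{t_2}$ behave homologically) and the structure of $Q_{s_1}$ as a torus that this component realizes the required homology.

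The main obstacle I anticipate is the dichotomy itself: showing that one cannot have a \emph{mixture} of biessential circles and $Q_{s_1}$-inessential (meridional) circles in $Q_{s_1}\cap P_{t_2}$ — that is, ruling out an intermediate configuration where case~(1) fails for some circle but the full meridional conclusion of case~(2) does not hold for all circles. Handling this should come down to a careful homological argument: all circles of $Q_{s_1}\cap P_{t_2}$ are parallel in $P_{t_2}$ (since they are disjoint, embedded, and essential in the torus $P_{t_2}$), so they all represent $\pm$ the same primitive class; if one of them is a meridian of $W_{t_2}$, they all are, and then none can be biessential with respect to the $Q_{s_1}$-side in a way compatible with $Q_{s_1}$ being a Heegaard torus. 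Making this rigorous — in particular keeping straight which side of $P_{t_2}$ contains the compressing disks and invoking Proposition~\ref{prop:Heegaard loops} to constrain what a biessential circle on the Heegaard torus $Q_{s_1}$ can be — is where the bulk of the work lies. The secondary technical nuisance will be the precise construction and verification of the surface $\Sigma$ in case~(2), where $Q_{s_1}\cap V_{t_2}$ may consist of non-boundary-parallel meridional annuli (cf. Figure~\ref{fig:meridional annuli} and Figure~\ref{fig:bad torus}), so one cannot simply isotope things apart and must argue at the level of homology classes.
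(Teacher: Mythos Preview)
There are two real gaps. First, your opening claim is wrong: ``no discal intersections'' only rules out circles inessential in \emph{both} surfaces, so a circle of $Q_{s_1}\cap P_{t_2}$ could perfectly well be essential in $Q_{s_1}$ and inessential in $P_{t_2}$. The paper handles this separately, by finding an annulus in $Q_{s_1}$ with one boundary inessential in $P_{t_2}$ and the other essential in $P_{t_1}\cup P_{t_2}$, whose interior is disjoint from both levels, and deriving a contradiction. Second, your ``orientation/side bookkeeping'' for why an innermost compressing disk $D\subset Q_{s_1}$ must lie in $W_{t_2}$ rather than $V_{t_2}$ is not the mechanism. The actual reason $D$ cannot be a meridian disk of $V_{t_2}$ is that $V_{t_1}\subset V_{t_2}$ (since $t_1<t_2$), so $D$ would meet $P_{t_1}$ in a circle essential in $P_{t_1}$ but bounding a subdisk of $D\subset Q_{s_1}$, violating very good position at level~$t_1$.

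The deeper structural problem is that you barely use $Q_{s_2}$, whereas the disjointness of $Q_{s_1}$ and $Q_{s_2}$ is the engine of the proof. The paper's case split is not on the circles of $Q_{s_1}\cap P_{t_2}$ but on whether the circles of $Q_{s_2}\cap P_{t_2}$ are meridians of $W_{t_2}$. If they are not, then $D$ cannot be a meridian disk of $W_{t_2}$ either, since $\partial D$ would have nonzero intersection number in $P_{t_2}$ with the non-meridional circles of $Q_{s_2}\cap P_{t_2}$, yet is disjoint from them; combined with the previous paragraph this rules out any $Q_{s_1}$-inessential circle and gives conclusion~(1). If they are meridians of $W_{t_2}$, you have omitted the hardest sub-case: when \emph{every} circle of $Q_{s_1}\cap P_{t_2}$ is contractible in $Q_{s_1}$, you cannot build $\Sigma$ at all, and the paper rules this out by pushing $Q_{s_2}$ out of $V_{t_2}$ via Lemma~\ref{lem:pushout} and tracking a spine of $Q_{s_1}$ and a surviving meridian disk to reach a contradiction with $Q_{s_2}$ being a Heegaard torus. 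Your proposed invocation of Corollary~\ref{coro:comeridian} does not substitute for any of this: that corollary transfers meridian information from the $X_s/Y_s$ side to the $V_t/W_t$ side of a single pair in very good position, not between different $\tau$-levels.
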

\noindent Figure~\ref{fig:sigma} illustrates case (2) of
Lemma~\ref{lem:hitting levels}.

We mention that to apply Lemma~\ref{lem:hitting levels} when $t_1>t_2$, we
interchange the roles of $V_{t_i}$ and $W_{t_i}$. The intersection circles
in case (2) are then meridians of the $V_{t_i}$ rather than the $W_{t_i}$.
\index{figures!figure9@complicated configuration of intersecting levels}%
\begin{figure}
\labellist
\pinlabel $P_{t_1}$ at -20 5
\pinlabel $P_{t_2}$ at -20 130
\pinlabel $Q_{s_1}$ at 370 230
\pinlabel $\Sigma$ [B] at 147 56
\endlabellist
\includegraphics[width=7cm]{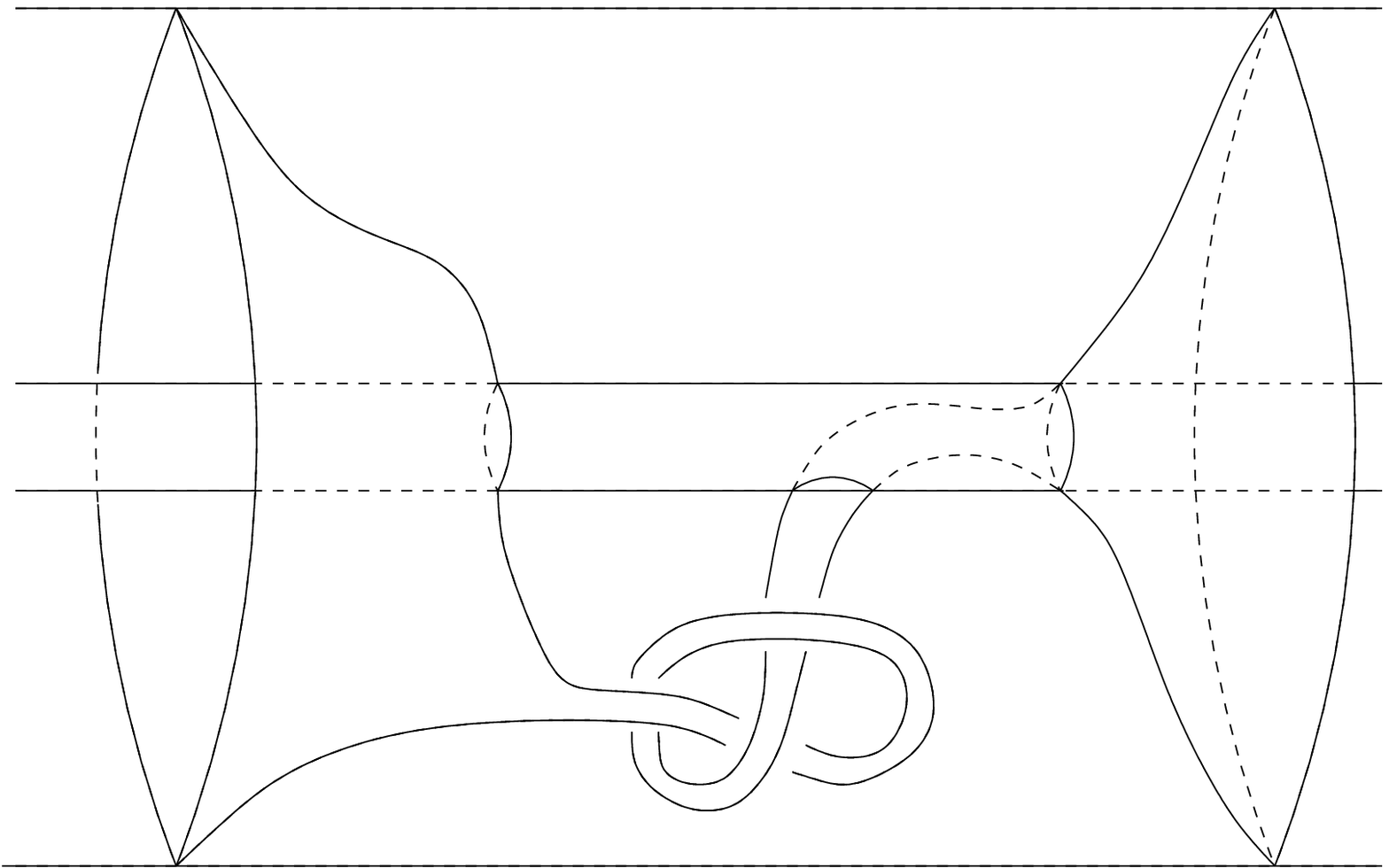}
\caption{Case (2) of Lemma \ref{lem:hitting levels}}
\label{fig:sigma}
\end{figure}

\begin{proof}[Proof of Lemma~\ref{lem:hitting levels}]
Assume for now that the circles of $Q_{s_2}\cap P_{t_2}$ are not meridians
of $W_{t_2}$.

We first rule out the possibility that there exists a circle of
$Q_{s_1}\cap P_{t_2}$ that is inessential in $Q_{s_1}$. If so, there would
be a circle $C$ of $Q_{s_1}\cap P_{t_2}$, bounding a disk $D$ in $Q_{s_1}$
with interior disjoint from $P_{t_2}$. Since $Q_{s_1}$ and $P_{t_2}$ have
no discal intersections, $C$ is essential in $P_{t_2}$, so $D$ is a
meridian disk for $V_{t_2}$ or $W_{t_2}$. It cannot be a meridian disk for
$V_{t_2}$, for then some circle of $D\cap P_{t_1}$ would be a meridian of
$V_{t_1}$, contradicting the fact that $Q_{s_1}$ and $P_{t_1}$ meet in very
good position. But $D$ cannot be a meridian disk for $W_{t_2}$, since $D$
is disjoint from $Q_{s_2}$ and the circles of $Q_{s_2}\cap P_{t_2}$ are not
meridians of~$W_{t_2}$.

We now rule out the possibility that there exists a circle of $Q_{s_1}\cap
P_{t_2}$ that is essential in $Q_{s_1}$ and inessential in $P_{t_2}$.
There is at least one biessential intersection circle of $Q_{s_1}$ with
$P_{t_1}$, hence also an annulus $A$ in $Q_{s_1}$ with one boundary circle
inessential in $P_{t_2}$ and the other essential in either $P_{t_1}$ or
$P_{t_2}$, with no intersection circle of the interior of $A$ with
$P_{t_1}\cup P_{t_2}$ essential in $A$.  The interior of $A$ must be
disjoint from $P_{t_1}$, since $Q_{s_1}$ meets $P_{t_1}$ in very good
position. It must also be disjoint from $P_{t_2}$, by the previous
paragraph. So, since $A$ has at least one boundary circle in $P_{t_2}$, it
is properly embedded either in $R(t_1,t_2)$ or in $W_{t_2}$. It cannot be
in $R(t_1,t_2)$, since it has one boundary circle inessential in $P_{t_2}$
and the other essential in $P_{t_1}\cup P_{t_2}$. So $A$ is in $W_{t_2}$,
and since one boundary circle is inessential in $P_{t_2}$, the other must
be a meridian, contradicting the assumption that no circle of $Q_{s_2}\cap
P_{t_2}$ is a meridian of $W_{t_2}$. Thus conclusion~(1) holds when circles
of $Q_{s_2}\cap P_{t_2}$ are not meridians of~$W_{t_2}$.

Assume now that the circles of $Q_{s_2}\cap P_{t_2}$ are meridians
of~$W_{t_2}$. We will achieve conclusion~(2).

Suppose first that some circle of $Q_{s_1}\cap P_{t_2}$ is essential in
$Q_{s_1}$. Then there is an annulus $A$ in $Q_{s_1}$ with one boundary
circle essential in $P_{t_1}$, the other essential in $P_{t_2}$, and all
intersections of the interior of $A$ with $P_{t_1}\cup P_{t_2}$ inessential
in $A$. Since $Q_{s_1}$ meets $P_{t_1}$ in very good position, the interior
of $A$ must be disjoint from $P_{t_1}$. So $A\cap R(t_1,t_2)$ contains a
planar surface $\Sigma$ with one boundary component a circle of
$Q_{s_1}\cap P_{t_1}$ and the other boundary components circles in
$P_{t_2}$ which are meridians in $W_{t_2}$, giving the conclusion~(2) of
the lemma.

Suppose now that every circle of $Q_{s_1}\cap P_{t_2}$ is contractible in
$Q_{s_1}$. We will show that this case is impossible. An intersection
circle innermost on $Q_{s_1}$ bounds a disk $D$ in $Q_{s_1}$ which is a
meridian disk for $W_{t_2}$, since $\partial D$ is essential in $P_{t_2}$
and disjoint from $Q_{s_2}\cap P_{t_2}$.  Now, use Lemma~\ref{lem:pushout}
to push $Q_{s_2}\cap V_{t_2}$ out of $V_{t_2}$ by an ambient isotopy of
$L$.  Suppose for contradiction that one of these pushouts, say, pushing an
annulus $A_0$ in $Q_{s_2}$ across an annulus in $P_{t_2}$, also eliminates a
circle of $Q_{s_1}\cap P_{t_1}$. Let $Z$ be the region of parallelism
across which $A_0$ is pushed. Since $Z$ contains an essential loop of
$Q_{s_1}$, and each circle of $Q_{s_1}\cap P_{t_2}$ is contractible in
$Q_{s_1}$, $Z$ contains a spine of $Q_{s_1}$. This spine is isotopic in $Z$
into a neighborhood of a boundary circle of $A_0$. Since this boundary
circle is a meridian of $W_{t_2}$, every circle in the spine is
contractible in $L$. This contradicts the fact that $Q_{s_1}$ is a Heegaard
torus. So the pushouts do not eliminate intersections of $Q_{s_1}$ with
$P_{t_1}$, and after the pushouts are completed, the image of $Q_{s_1}$
still meets $P_{t_1}$.

During the pushouts, some of the intersection circles of $Q_{s_1}$ with
$P_{t_2}$ may disappear, but not all of them, since the pushouts only move
points into $W_{t_2}$. So after the pushouts, there is a circle of
$Q_{s_1}\cap P_{t_2}$ that bounds a innermost disk in $Q_{s_1}$ (since all
the original intersection circles of $Q_{s_1}$ with $P_{t_2}$ bound disks
in $Q_{s_1}$, and the new intersection circles are a subset of the old
ones). Since the boundary of this disk is a meridian of $W_{t_2}$, the disk
it bounds in $Q_{s_1}$ must be a meridian disk of $W_{t_2}$. The image of
$Q_{s_2}$ lies in $W_{t_2}$ and misses this meridian disk, contradicting
the fact that $Q_{s_2}$ is a Heegaard torus.
\end{proof}

\newpage
\section[The Rubinstein-Scharlemann graphic]
{The Rubinstein-Scharlemann graphic}
\label{sec:Rubinstein-Scharlemann}

\index{Rubinstein-Scharlemann!method|(}The
purpose of this section is to present a number of definitions, and to
sketch the proof of Theorem~\ref{thm:RS} below, originally from~\cite{RS}.
It requires the hypothesis that two sweepouts meet in general position in a
strong sense that we call Morse general position. In
Section~\ref{sec:goodregions}, this proof will be adapted to the weaker
concept of general position developed in Section~\ref{sec:generalposition}.

Consider a smooth function $f\colon (\R^2,0)\to (\R,0)$. A critical point
of $f$ is 
\indexdef{stable!critical point}\indexdef{critical point!stable}\textit{stable} 
when it is locally equivalent under smooth
change of coordinates of the domain and range to $f(x,y)=x^2+y^2$ or
$f(x,y)=x^2-y^2$. The first type is called a 
\indexdef{center!critical point type}%
\indexdef{critical point!center type}\textit{center,} and the second a 
\indexdef{saddle!critical point type}%
\indexdef{critical point!saddle type}\textit{saddle.} An unstable critical
point is called a 
\indexdef{birth-death point}%
\indexdef{critical point!birth-death type}%
\textit{birth-death} point if it is locally $f(x,y)=x^2+y^3$.

Let $\tau\colon P\times [0,1]\to M$ be a sweepout as in
Section~\ref{sec:RSgraphic}. As in that section, we denote
$\tau(P\times\set{0,1})$ by $T$, $\tau(P\times\set{t})$ by $P_t$,
$\tau(P\times[0,t])$ by $V_t$, and $\tau(P\times[t,1])$ by $W_t$.  For
a second sweepout $\sigma\colon Q\times [0,1]\to M$, we denote
$\sigma(Q\times\set{0,1})$ by $S$, $\sigma(Q\times\set{s})$ by $Q_s$,
$\sigma(Q\times[0,s])$ by $X_s$, and $\sigma(Q\times[s,1])$ by $Y_s$.  We
call 
\indexsymdef{Qs}{$Q_s$}$Q_s$ a 
\indexdef{sigmalevel@$\sigma$-level}\textit{$\sigma$-level} and 
\indexsymdef{Pt}{$P_t$}$P_t$ 
a~\indexdef{taulevel@$\tau$-level}\textit{$\tau$-level.}

A tangency of $Q_s$ and $P_t$ at a point $w$ is said to be 
\indexdef{Morse type}\textit{of Morse type} at $w$ if in some local 
$xyz$-coordinates with origin at $w$, $P_t$
is the $xy$-plane and $Q_s$ is the graph of a function which has a stable
critical point or a birth-death point at the origin.  Note that this
condition is symmetric in $Q_s$ and $P_t$. We may refer to a tangency as
stable or unstable, and as a center, saddle, or birth-death point.

A tangency of $S$ with a $\tau$-level is said to be 
\indexdef{stable!tangency}\textit{stable} if there
are local $xyz$-coordinates in which the $\tau$-levels are the planes
$\R^2\times\{z\}$ and $S$ is the graph of $z=x^2$ in the $xz$-plane. In
particular, the tangency is isolated and cannot occur at a vertex of
$S$. There is an analogous definition of stable tangency of $T$ with a
$\sigma$-level.

We will say that $\sigma$ and $\tau$ are in 
\indexdef{Morse general position}\textit{Morse general position}
when the following hold:
\begin{enumerate}
\item
$S$ is disjoint from $T$, 
\item 
all tangencies of $S$ with $\tau$-levels and of $T$ with $\sigma$-levels
are stable,
\item 
all tangencies of $\sigma$-levels with $\tau$-levels are of Morse type, and
only finitely many are birth-death points,
\item
each pair consisting of a $\sigma$-level and a $\tau$-level has at most two
tangencies, and
\item
there are only finitely many pairs consisting of
a $\sigma$-level and a $\tau$-level with two tangencies, and for each of
these pairs both tangencies are stable.
\end{enumerate}

Suppose that $P$ is a Heegaard surface in $M$, bounding a handlebody
$V$. We define a 
\indexdef{precompression}\textit{precompression} or 
\indexdef{precompressing disk}\textit{precompressing disk} for
$P$ in $V$ to be an embedded disk $D$ in $M$ such that
\begin{enumerate}
\item $\partial D$ is an essential loop in $P$, 
\item $D$ meets $P$ transversely at $\partial D$,
and $V$ contains a neighborhood of $\partial D$,
\item the interior of $D$ is transverse to $P$, and its intersections with
$P$ are discal.
\end{enumerate}
Provided that $M$ is irreducible, a precompression for $P$ in $V$ is
isotopic relative to a neighborhood of $\partial D$ to a compressing disk
for $P$ in $V$. In particular, if the Heegaard splitting is strongly
irreducible, then the boundaries of a precompression for $P$ in $V$ and a
precompression for $P$ in $\overline{M-V}$ must intersect.

The following concept due to A. Casson and C. McA.\ Gordon \cite{CG} is a
crucial ingredient in \cite{RS}. A Heegaard splitting $M=V\cup_P W$ is
called \indexdef{strongly irreducible!Heegaard splitting}\textit{strongly irreducible} when every compressing disk for $V$
meets every compressing disk for $W$. A sweepout is called 
\indexdef{strongly irreducible!sweepout}\textit{strongly
irreducible} when the associated Heegaard splittings are strongly
irreducible. We can now state the main technical result of \cite{RS}.
\begin{theorem}[Rubinstein-Scharlemann]
Let $M\neq S^3$ be a closed orientable $3$-manifold, and let
$\sigma,\tau\colon F\times[0,1]\to M$ be strongly irreducible sweepouts of
$M$ which are in Morse general position. Then there exists $(s,t)\in
(0,1)\times (0,1)$ such that $Q_s$ and $P_t$ meet in good position.
\label{thm:RS}
\end{theorem}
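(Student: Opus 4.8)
The plan is to follow the argument of Rubinstein--Scharlemann \cite{RS}, assigning labels to the regions of the Rubinstein--Scharlemann graphic and deriving a contradiction if no region of pairs in good position exists. First I would recall the setup: since $\sigma$ and $\tau$ are in Morse general position, the set of pairs $(s,t)$ for which $Q_s$ and $P_t$ fail to intersect transversely forms a graph $\mathcal{G}$ (the \emph{graphic}) embedded in the open square $(0,1)\times(0,1)$, whose vertices are the finitely many pairs with a birth-death tangency or with two stable tangencies, and whose edges record a single stable center or saddle tangency. The complement of $\mathcal{G}$ (together with the four edges of the square) decomposes $(0,1)^2$ into finitely many open regions, and on each region the isotopy class of the transverse intersection $Q_s\cap P_t$, and the combinatorial type of the picture, is constant.

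Next I would introduce the labeling. For a region $R$, and a pair $(s,t)\in R$, one looks at the components of $Q_s\cap P_t$ and asks which ones are essential/inessential in $Q_s$ and in $P_t$, and, for the inessential-in-$Q_s$ ones, whether the disk they bound in $Q_s$ is a precompressing disk lying in $X_s$ or in $Y_s$; similarly from the $P_t$ side. Assuming for contradiction that no region is in good position, every region either has all intersection circles discal, or has an intersection circle that is essential in exactly one of the two surfaces. From this one extracts, for each region, a label in a small set recording ``$Q_s$ can be isotoped off $P_t$ to one side'' or ``$P_t$ compresses into $X_s$'', etc.\ --- precisely the labels $A$, $B$ (and their barred versions, or the $X/Y$, $V/W$ variants) used in \cite{RS}. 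The key combinatorial lemma is that adjacent regions (sharing an edge of $\mathcal{G}$) have compatible labels, because crossing a single stable tangency changes the intersection pattern in a controlled way (a center tangency creates/destroys an innermost inessential circle; a saddle tangency bands two circles together). This forces the labeling to extend consistently, and in particular propagates labels inward from the boundary of the square: near $t=0$ the level $P_t$ is a tiny tube around the spine $T$, so $Q_s$ is disjoint from or very simply positioned with respect to $P_t$, pinning down the labels on regions adjacent to the bottom edge, and similarly for the other three edges.

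The heart of the argument, and the step I expect to be the main obstacle, is the global combinatorial contradiction: using strong irreducibility, one shows that the labels forced along the four edges of the square are mutually incompatible in the sense that they cannot be joined up by any labeled region structure --- concretely, a region must exist carrying ``conflicting'' labels from the two sweepout directions (e.g.\ $P_t$ precompresses into $V_t$ while $Q_s$ precompresses into $X_s$ on the same region), and strong irreducibility of the Heegaard splittings (every compressing disk for one side meets every compressing disk for the other) is exactly what rules this out unless some intersection circle is biessential --- that is, unless some region is in good position. I would organize this via an Euler-characteristic or winding-number count on the square, as in \cite{RS}: the way the labels can change around the boundary circle of the square is constrained, and the constraint is violated unless a good-position region is present. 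The hypothesis $M\neq S^3$ enters to guarantee that the Heegaard tori are genuinely incompressible as surfaces (no level is a sphere bounding a ball) and that the relevant essential loops are noncontractible, so that ``biessential'' is a nonvacuous condition; irreducibility of $M$ is used to upgrade precompressing disks to honest compressing disks. Finally I would note that all of this is exactly the content of \cite{RS}, and that the present section merely sketches it because Section~\ref{sec:goodregions} will reprove the combinatorial core under the weaker general-position hypothesis of Section~\ref{sec:generalposition}, which is what is actually needed for the parameterized families later.
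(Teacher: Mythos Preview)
Your outline follows the same route as the paper's review of \cite{RS}: the graphic, a labeling of its regions, compatibility of labels across edges, and boundary behavior forcing a global obstruction. Two points where your sketch is imprecise compared to the paper's account:

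First, the labeling scheme. You mention labels recording ``$Q_s$ can be isotoped off $P_t$ to one side'' and ``$P_t$ compresses into $X_s$'', but the paper (following \cite{RS}) uses a specific two-tier scheme: capital letters $A,B,X,Y$ when a precompressing disk is present, and lowercase $a,b,x,y$ when there is no precompression but a spine of one surface lies entirely in one complementary handlebody of the other. The lowercase letters are what handle regions where all intersections are discal; without them, such regions would already appear unlabeled, and the argument would collapse. Lemma~\ref{lem:no lone small letters} (a lowercase letter always comes paired with one from the other sweepout) is what makes this work.

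Second, the contradiction step. You describe it as finding a single region with conflicting labels that strong irreducibility forbids, organized by ``an Euler-characteristic or winding-number count''. The actual mechanism is obstruction-theoretic and more structured: one builds a cellular map from the dual complex $K$ of a fine triangulation of $I^2$ to a fixed $2$-complex (the \emph{Diagram}), with $\partial K$ mapping essentially onto the diagonal circle. Extending over $2$-cells requires not only the edge-compatibility you state (property (RS\ref{item:edge labeling})) but also a vertex condition (RS\ref{item:2-cell labeling}): at a valence-$4$ vertex of the graphic, if all four letter-types $\sA,\sB,\sX,\sY$ occur among the adjacent regions then two opposite regions must be unlabeled. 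This is Lemma~5.7 of \cite{RS} and is a separate argument using strong irreducibility; you do not mention it, and without it the extension over $2$-cells dual to such vertices fails.
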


We will now review the proof of Theorem~\ref{thm:RS}. The closure in $\I^2$
of the set $(s,t)$ for which $Q_s$ and $P_t$ have a tangency is a graph
$\Gamma$. On $\partial I^2$, it can have valence-$1$ vertices corresponding
to valence-$3$ vertices of $S$ or $T$, and valence-$2$ vertices
corresponding to points of tangency of $S$ with a $\tau$-level or $T$ with
a $\sigma$-level (see p.~1008 of \cite{RS}, see also \cite{KS} for an
exposition with examples). In the interior of $\I^2$, it can have
valence-$4$ vertices which correspond to a pair of levels which have two
stable tangencies, and valence-$2$ vertices which correspond to pairs of
levels having a birth-death tangency.

The components of the complement of $\Gamma$ in the interior of $\I^2$ are
called 
\indexdef{regions!of graphic}\textit{regions.}  
Each region is either unlabeled or bears a label
consisting of up to four letters. The labels are determined by the
following conditions on $Q_s$ and $P_t$, which by transversality hold
either for every $(s,t)$ or for no $(s,t)$ in a region.\index{labels!of regions}
\begin{enumerate}
\item
If $Q_s$ contains a precompression for $P_t$ in $V_t$ (respectively, in
$W_t$), the region receives the letter $A$ (respectively, $B$).
\label{item:AB}
\item
If $P_t$ contains a precompression for $Q_s$ in $X_s$ (respectively, in
$Y_s$), the region receives the letter $X$ (respectively, $Y$).
\label{Item:XY}
\item
If the region has neither an $A$-label nor a $B$-label, and $V_t$
(respectively, $W_t$), contains a spine of $Q_s$, the region receives the
letter $b$ (respectively, $a$).
\item
If the region has neither an $X$-label nor a $Y$-label, and $X_s$
(respectively, $Y_s$), contains a spine of $P_t$, the region receives the
letter $y$ (respectively, $x$).
\end{enumerate}

With these conventions, $Q_s$ and $P_t$ are in good position if and only if
the region containing $(s,t)$ is unlabeled.  To check this, assume first
that they are in good position.  Since all intersections are biessential or
discal, neither surface can contain a precompressing disk for the other,
and since there is a biessential intersection circle, the complement of one
surface cannot contain a spine for the other. For the converse, an
intersection circle which is not biessential or discal leads to a
precompression as in~(1) or~(2), so assume that all intersections are
discal. Then the complement of the intersection circles in $Q_s$ contains
a spine, so the region has either an $a$- or $b$-label, and by the same
reasoning applied to $P_t$ the region has either an $x$- or $y$-label. This
verifies the assertion, as well as the following lemma.

\begin{lemma}
If the label of a region contains the letter $a$ or $b$, then it must also
contain either $x$ or $y$. Similarly, if it contains $x$ or $y$, then it
must also contain $a$ or $b$.
\label{lem:no lone small letters}
\end{lemma}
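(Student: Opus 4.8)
\textbf{Proof plan for Lemma~\ref{lem:no lone small letters}.}
The plan is to unpack the definitions of the labels in items~(3) and~(4) and use the combinatorics of the intersection circles, exactly as in the verification of the good-position criterion preceding the statement. First I would fix a region and a point $(s,t)$ in it, and suppose its label contains $a$ or $b$; I must produce an $x$ or a $y$. By condition~(3), having an $a$- or $b$-label means the region has no $A$- or $B$-label, and one of $V_t$ or $W_t$ contains a spine of $Q_s$. The key observation is that if $V_t$ (or $W_t$) contains a spine of $Q_s$, then $Q_s$ has no biessential intersection circle with $P_t$: a biessential circle is essential in $Q_s$, hence meets every spine of $Q_s$, but a spine lying entirely in $V_t$ is disjoint from $P_t$. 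Combined with the absence of an $A$- or $B$-label (which rules out any intersection circle that is inessential in $P_t$ but essential in $Q_s$, since such a circle bounds a disk in $P_t$ whose innermost subdisk is a precompression for $Q_s$), this forces every circle of $Q_s\cap P_t$ to be inessential in $Q_s$.

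Next I would argue that this makes the complement of $Q_s\cap P_t$ in $P_t$ contain a spine of $P_t$. Each circle of $Q_s\cap P_t$ is inessential in $Q_s$, so bounds a disk in $Q_s$; an innermost such disk meets $P_t$ only in its boundary and is therefore a precompressing disk for $P_t$ unless that boundary circle is also inessential in $P_t$. Since the region has no $A$- or $B$-label, there is no precompression for $P_t$ in either handlebody, so every circle of $Q_s\cap P_t$ is discal. A collection of disjoint contractible circles on the torus $P_t$ bounds disjoint disks, and removing disjoint disks from a closed surface cannot separate off a spine---more precisely, $P_t$ minus the union of these disks still contains a spine of $P_t$, since a spine can be chosen in the complement of finitely many disjoint disks. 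That spine lies in $X_s$ or in $Y_s$ (it is on one side of $Q_s$ because its underlying surface $P_t$ meets $Q_s$ only in the discal circles, and pushing the spine slightly off $P_t$ in the appropriate normal direction places it in one complementary region). Hence by condition~(4) the region receives an $x$- or $y$-label. The symmetric argument, interchanging the roles of $(\sigma,Q)$ and $(\tau,P)$, handles the case where the label contains $x$ or $y$.

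The main obstacle I anticipate is the bookkeeping needed to make ``the complement of finitely many disjoint discal circles on $P_t$ contains a spine of $P_t$'' and ``that spine lies in $X_s$ or $Y_s$'' precise---one must check that when all intersection circles are discal, the surface $Q_s$ really does lie on one side locally near each such circle in a way compatible with extracting a spine of $P_t$ on a single side, rather than the two surfaces weaving back and forth. This is exactly the content already implicit in the paragraph establishing the good-position criterion, so in practice I would simply invoke that argument; the lemma is essentially a restatement of one direction of that verification, and I would keep the proof to a few sentences referencing the preceding discussion rather than re-deriving everything.
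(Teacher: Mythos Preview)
Your approach matches the paper's, which simply notes that the preceding verification of the good-position criterion also establishes the lemma; your final paragraph recognizing this is exactly right. Two small corrections to the detailed argument: First, the parenthetical in your first paragraph is mislabeled---an intersection circle inessential in $P_t$ but essential in $Q_s$ would produce a precompression for $Q_s$ (an $X$- or $Y$-label), not for $P_t$, so the absence of $A$/$B$ does not rule it out. Fortunately this case is already excluded by your spine argument: \emph{any} circle essential in $Q_s$ must meet every spine of $Q_s$, so the spine lying off $P_t$ forces all intersection circles to be inessential in $Q_s$, not just the biessential ones. Second, before invoking condition~(4) you need to check that there is no $X$- or $Y$-label; this follows immediately since all intersection circles are inessential in $Q_s$, hence $P_t$ contains no precompression for $Q_s$.
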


We call the data consisting of the graph $\Gamma\subset I^2$ and the
labeling of a subset of its regions the 
\indexdef{Rubinstein-Scharlemann!graphic}%
\indexdef{graphic!Rubinstein-Scharlemann}\textit{Rubinstein-Scharlemann
graphic} associated to the sweepouts. Regions of the graphic are called
\indexdef{adjacent regions in graphic}\textit{adjacent} if there is an 
edge of $\Gamma$ which is contained in both of their closures.

At this point, we begin to make use of the fact that the sweepouts are
strongly irreducible. The labels will then have the following properties,
where\index{labels!key properties}\indexdef{RS1@(RS1), (RS2), (RS3)}
\indexsymdef{sA}{$sA$, $sB$, $sX$, $sY$}$\sA$ stands for either of $A$ and $a$, and 
$\sB$, $\sX$, and $\sY$ are defined similarly.  
\newcounter{RScounter}
\begin{list}{(RS\arabic{RScounter})}
{\usecounter{RScounter}
\setlength{\labelwidth}{7 ex}
\setlength{\leftmargin}{10 ex}
\setlength{\labelsep}{2 ex}
\setlength{\parsep}{5pt}
}
\item
A label cannot contain both an $\sA$ and a $\sB$, or both an
$\sX$ and a $\sY$ (direct from the labeling rules and the
definition of strong irreducibility).
\label{item:noAB}
\item
If the label of a region contains $\sA$, then the label of any
adjacent region cannot contain $\sB$. Similarly for $\sX$ and
$\sY$ (Corollary~5.5 of~\cite{RS}).
\label{item:edge labeling}
\item
If all four letters $\sA$, $\sB$, $\sX$, and $\sY$ appear in the labels of
the regions that meet at a valence-$4$ vertex of $\Gamma$, then two
opposite regions must be unlabeled (Lemma~5.7 of \cite{RS}).
\label{item:2-cell labeling}
\end{list}

Property (RS\ref{item:edge labeling}) warrants special comment, since it
will play a major role in our later work. The analysis of labels of
adjacent regions given in Section~5 of \cite{RS} uses only the fact that
for the points $(s,t)$ in an open edge of $\Gamma$, the corresponding $Q_s$
and $P_t$ have a single stable tangency. The open edges of the more general
graphics we will use for the diffeomorphisms in parameterized families in
general position will still have this property, so the labels of their
graphics will still satisfy property~(RS\ref{item:edge labeling}). 
They will not satisfy property~(RS\ref{item:2-cell labeling}), indeed
the $\Gamma$ for their graphics can have vertices of high valence, so
property~(RS\ref{item:2-cell labeling}) will not even be meaningful.

We now analyze the labels of regions whose closures meet $\partial I^2$, as
on p.~1012 of \cite{RS}. Consider first a region whose closure meets the
side $s=0$ (we consider $s$ to be the horizontal coordinate, so this is the
left-hand side of the square).  The region must contains points $(s,t)$
with $s$ arbitrarily close to~$0$. These correspond to $Q_s$ which are
extremely close to $S_0$. For almost all $t$, $S_0$ is transverse to $P_t$,
and for sufficiently small $s$ any intersection of such a $P_t$ with $Q_s$
must be an essential circle of $Q_s$ bounding a disk in $P_t$ that lies in
$X_s$, in which case the region must have an $X$-label. If $P_t$ is
disjoint from $Q_s$, then $P_t$ lies in $Y_s$ so the region has an
$x$-label. That is, all such regions have an $\sX$-label. Similarly, the
label of any region whose closure meets the edge $t=0$ (respectively,
$s=1$, $t=1$) contains $\sA$ (respectively, $\sY$, $\sB$).

We will set up some of the remaining steps a bit differently from those of
\cite{RS}, so that their adaptation to our later arguments will be more
transparent. We have seen that it is sufficient to prove that there exists
an unlabeled region in the graphic defined by the sweepouts. To accomplish
this, Rubinstein and Scharlemann use the shaded subset of the square shown
in Figure~\ref{fig:Diagram}. It is a simplicial complex in which each of
the four triangles is a $2$-simplex. Henceforth we will refer to it as
\indexdef{Diagram}\textit{the Diagram.}
\index{figures!figure91@the Diagram}%
\begin{figure}
\labellist
\pinlabel $\sB\sX$ at -8 154
\pinlabel $\sB$ at 72 154
\pinlabel $\sB\sY$ at 158 154
\pinlabel $\sX$ at -11 74
\pinlabel $\sY$ at 157 74
\pinlabel $\sA\sX$ at -8 -9
\pinlabel $\sA$ at 74 -9
\pinlabel $\sA\sY$ at 158 -9
\endlabellist
\includegraphics[width=3.2cm]{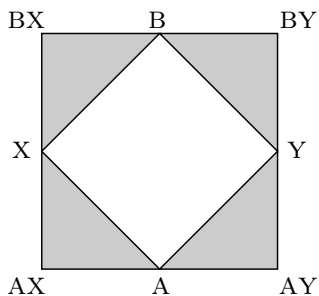}
\caption{The Diagram.}
\label{fig:Diagram}
\end{figure}

Suppose for contradiction that every region in the
Rubin\-stein-Schar\-le\-mann graphic is labeled.  Let $\Delta$ be a
triangulation of $\I^2$ such that each vertex of $\Gamma$ and each corner of
$\I^2$ is a $0$-simplex, and each edge of $\Gamma$ is a union of
$1$-simplices. Let $K$ be $\I^2$ with the structure of a regular $2$-complex
dual to $\Delta$. We observe the following properties 
of~$K$:\indexdef{K1@(K1), (K2), (K3)}
\newcounter{Kcounter}
\begin{list}{(K\arabic{Kcounter})}
{\usecounter{Kcounter}
\setlength{\labelwidth}{7 ex}
\setlength{\leftmargin}{10 ex}
\setlength{\labelsep}{2 ex}
\setlength{\parsep}{5pt}
}
\item
Each $0$-cell of $K$ lies in the interior of a side of
$\partial I^2$ or in a region.\par
\label{item:K1}
\item
Each $1$-cell of $K$ either lies in $\partial I^2$, or is disjoint from
$\Gamma$, or crosses one edge of $\Gamma$ transversely in one point.
\label{item:K2}
\item
Each $2$-cell of $K$ either contains no vertex of $\Gamma$, in which case
all of its $0$-cell faces that are not in $\partial I^2$ lie in one region
or in two adjacent regions, or contains one vertex of $\Gamma$, in which
case all of its $0$-cell faces which do not lie in $\partial I^2$ lie in
the union of the regions whose closures contain that vertex.
\label{item:K3}
\end{list}

We now construct a map from $K$ to the Diagram. First, each $0$-cell in
$\partial K$ is sent to one of the single-letter $0$-simplices of the
diagram: if it lies in the side $s=0$ (respectively, $t=0$, $s=1$, $t=1$)
then it is sent to the $0$-simplex labeled $\sX$ (respectively, $\sA$,
$\sY$, $\sB$).  Similarly, any $1$-cell in a side of $\partial K$ is sent
to the $0$-simplex that is the image of its endpoints, and the four
$1$-cells in $\partial K$ dual to the original corners are sent to the
$1$-simplex whose endpoints are the images of the endpoints of the
$1$-cell. Notice that $\partial K$ maps essentially onto the circle
consisting of the four diagonal $1$-simplices of the Diagram.

We will now show that if there is no unlabeled region, this map extends to
$K$, a contradiction. Since an unlabeled region produces pairs $Q_s$ and
$P_t$ that meet in good position, this will complete the proof sketch of
Theorem~\ref{thm:RS}.

Now we consider cells of $K$ that do not lie entirely in $\partial K$. Each
$0$-cell in the interior of $K$ lies in a region.  
By\index{RS1@(RS1), (RS2), (RS3)} (RS\ref{item:noAB}),
the label of each $0$-cell has a form associated to one of the
$0$-simplices of the Diagram, and we send the $0$-cell to that $0$-simplex.

Consider a $1$-cell of $K$ that does not lie in $\partial K$. Suppose it
has one endpoint in $\partial K$, say in the side $s=0$ (the other cases
are similar). The other endpoint lies in a region whose closure meets the
side $s=0$, so its label contains $\sX$. Therefore the images of the
endpoints of the $1$-cell both contain $\sX$, so lie either in a
$0$-simplex or a $1$-simplex of the Diagram. We extend the map to the
$1$-cell by sending it into that $0$- or $1$-simplex.  Suppose the $1$-cell
lies in the interior of $K$.  Its endpoints lie either in one region or in
two adjacent regions. If the former, or the latter and the labels of the
regions are equal, we send the $1$-cell to the $0$-simplex for that
label. If the latter and the labels of the regions are different, then
property (RS\ref{item:edge labeling}) shows that the labels span a unique
$1$-simplex of the Diagram, in which case we send the $1$-cell to that
$1$-simplex.

Assuming that the map has been extended to the $1$-cells in this way,
consider a $2$-cell of $K$. Suppose first that it has a face that meets
the side $s=0$ (the other cases are similar). Then each of its $0$-cell
faces lies in one of the sides $s=0$, $t=0$, or $t=1$, or in a region whose
closure meets $s=0$. In the latter case, we have seen that the label of the
region must contain $\sX$, so it cannot contain $\sY$, and in particular it
cannot be a single letter $\sY$. In no case does the $0$-cell map to the
vertex $\sY$ of the Diagram, so the image of the boundary of the $2$-cell
maps into the complement of that vertex in the Diagram. Since that
complement is contractible, the map extends over the $2$-cell.

Suppose now that the $2$-cell lies entirely in the interior of $K$.  If it
is dual to a $0$-simplex of $\Delta$ that lies in a region or in the
interior of an edge of $\Gamma$, then all its $0$-cell faces lie in a
region or in two adjacent regions. In this case, all of its $1$-dimensional
faces map into some $1$-simplex of the Diagram, so the map on the faces
extends to a map of the $2$-cell into that $1$-simplex. Suppose the
$2$-cell is dual to a vertex of $\Gamma$. Its faces lie in the union of
regions whose closures contain the vertex. If the vertex has valence $2$,
then all $0$-cell faces lie in two adjacent regions (actually, in this
case, the regions must have the same label) and the map extends to the
$2$-cell as before. If the vertex has valence $4$, then by
(RS\ref{item:2-cell labeling}), the labels of the four regions whose
closures contain the vertex must all avoid at least one of the four
letters. This implies that the boundary of the $2$-cell of $K$ maps into a
contractible subset of the Diagram. So again the map can be extended over
the $2$-cell, giving us the desired contradiction.

We emphasize that the map from $K$ to the Diagram carries each $1$-cell of
$K$ to a $0$-simplex or a $1$-simplex of the Diagram, principally due to
property~(RS\ref{item:edge labeling})\index{Rubinstein-Scharlemann!method|)}.

\section[Graphics having no unlabeled region]
{Graphics having no unlabeled region}
\label{sec:examples}\index{examples!example3@graphic with no unlabeled region|(}

One cannot hope to perturb a parameterized family of sweepouts to be in
Morse general position. One must allow for the possibility of levels having
tangencies of high order, and having more than two tangencies. We will see
in Section~\ref{sec:generalposition} that all such phenomena can be
isolated at the vertices of the graph $\Gamma$ in the graphic. In
particular, the $(s,t)$ that lie on the open edges of $\Gamma$ will still
correspond to pairs of levels that have a single stable tangency, and
therefore their associated graphics will still have property
(RS\ref{item:edge labeling}). Achieving this property for the edges of
$\Gamma$ will require considerable effort, so before beginning the task, we
will show that the hard work really is necessary. We will give here an
example of a pair of sweepouts on $S^2\times S^1$ (that is, on $L(0,1)$)
which have a graphic with no unlabeled region. It will be clear that what
goes wrong is the existence of edges of $\Gamma$ that consist of pairs
having multiple tangencies, and the corresponding failure of the graphic to
have property~(RS\ref{item:edge labeling}).

We do not have an explicit counterexample of this kind on a lens space,
which would be even more complicated to describe, but we think it is fairly
clear that the construction, which starts with a simple pair of sweepouts
and ``closes up'' a good region in their graphic, could be carried out on a
typical pair of sweepouts.

This section is not part of the proof of the Smale Conjecture for Lens
Spaces, and can be read independently (provided that one is familiar with
Rubinstein-Scharlemann graphics and their labeling scheme).

The first step is to construct a pair of sweepouts of $S^2\times S^1$, with
the graphic shown on the left in Figure~\ref{fig:graphics}.  In
Figure~\ref{fig:graphics}, the edges of pairs for which the corresponding
levels have a single center tangency are shown as dotted. The four corner
regions are not labeled, since their labels are the same as the regions
that are adjacent to them along an edge of centers.

After constructing the sweepouts that produce the first graphic, we will
see how to move one of the sweepouts by isotopy to ``collapse'' the
unlabeled region. Two edges of the first graphic are moved to coincide,
producing the graphic on the right in Figure~\ref{fig:graphics}. The three
open edges that lie on the diagonal $y=x$ consist of pairs of levels which
have two saddle tangencies. The two vertices where the edges labeled $1$
and $4$ cross the diagonal at points corresponding to pairs having three
saddle tangencies.
\index{figures!figure92@graphic with no unlabeled region}%
\begin{figure}
\labellist
\pinlabel $B$ at 224 290
\pinlabel $B$ at 600 290
\pinlabel $by$ at 285 285
\pinlabel \begin{scriptsize}$by$\end{scriptsize} at 660 291
\pinlabel $Y$ at 290 225
\pinlabel $Y$ at 655 232
\pinlabel $bx$ at 132 220
\pinlabel $bx$ at 512 220
\pinlabel $4$ at 277 185
\pinlabel $4$ at 648 185
\pinlabel $3$ at 215 170
\pinlabel $2,3$ at 510 170
\pinlabel $2$ at 120 170
\pinlabel $P_t$ at -15 170
\pinlabel $P_t$ at 356 170
\pinlabel $1$ at 56 155
\pinlabel $1$ at 426 155
\pinlabel $X$ at 45 107
\pinlabel $X$ at 418 98
\pinlabel $ay$ at 210 115
\pinlabel $ay$ at 564 115
\pinlabel $ax$ at 55 55
\pinlabel \begin{scriptsize}$ax$\end{scriptsize} at 415 45
\pinlabel $A$ at 107 43
\pinlabel $A$ at 466 47
\pinlabel $Q_s$ at 172 -10
\pinlabel $Q_s$ at 542 -10
\endlabellist
\includegraphics[width=12cm]{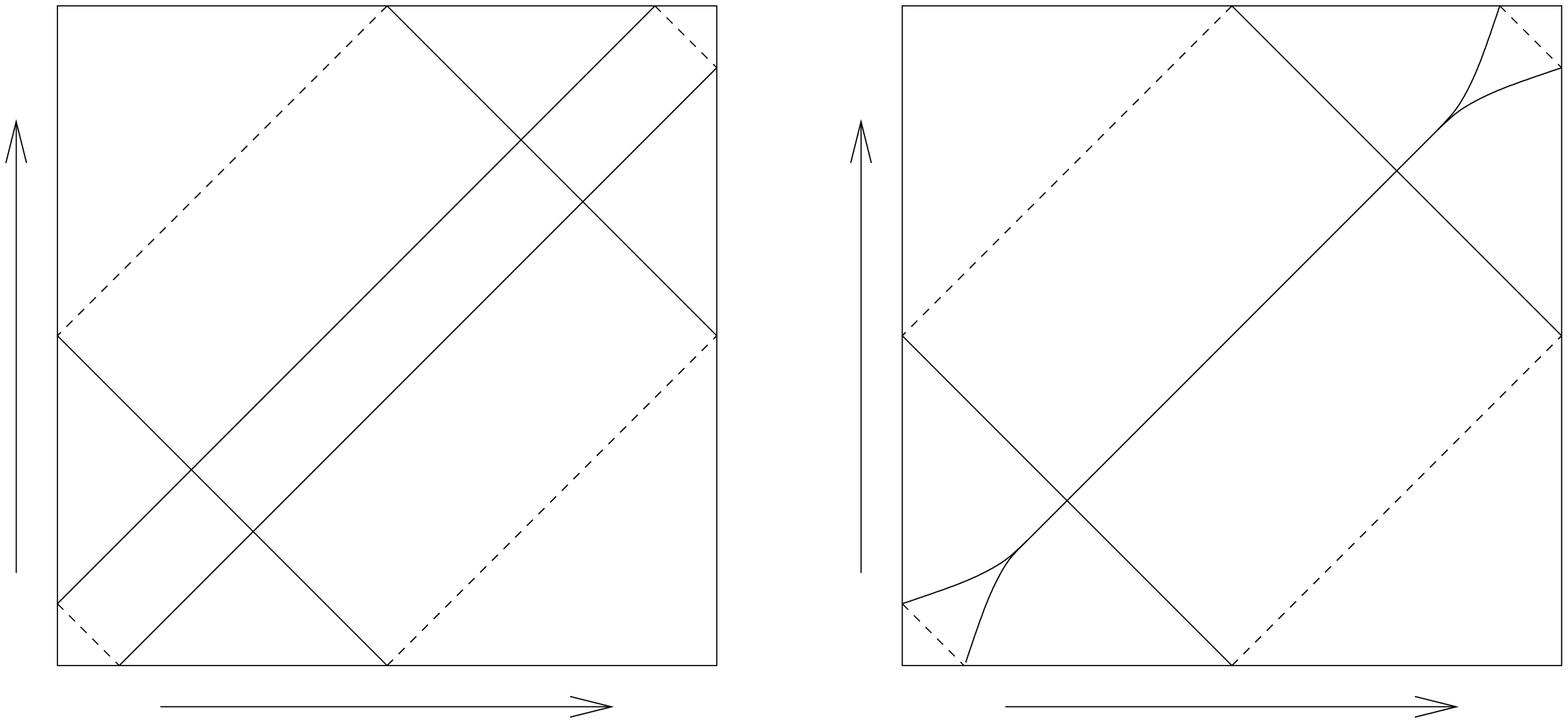}
\caption{Graphics before and after deformation.}
\label{fig:graphics}
\end{figure}

As it is rather difficult to visualize the sweepouts directly, we describe
them by level pictures for various $P_t$.  The $Q_s$ appear as level curves
in each $P_t$. Here are some general conventions:
\begin{enumerate}
\item[(i)] A solid dot is a center tangency.
\item[(ii)] An open dot (i.~e.~a tiny circle) is a point in one of the singular
circles $S_i$ of the $Q_s$-sweepout.
\item[(iii)] Double-thickness lines are intersections with a $Q_s$ that
have more than one tangency.
\item[(iv)] Dashed lines are biessential intersection circles.
\end{enumerate}

In a picture of a $P_t$, the level curves $P_t\cap Q_s$ that contain
saddles appear as curves with self-crossings, and we label the crossings
with $1$, $2$, $3$, or $4$ to indicate which edge of the graphic in
Figure~\ref{fig:graphics} contains that $(s,t)$-pair. For a fixed $t$,
$s(n)$ will denote the $s$-level of saddle~$n$. That is, in the graphic the
edge of $\Gamma$ labeled $n$ contains the point $(s(n),t)$.

Figure~\ref{fig:Morse} shows some $P_t$ with $t\leq 1/2$, for a sweepout of
$S^2\times S^1$ whose graphic is the one shown in the left of
Figure~\ref{fig:graphics}. Here are some notes on Figure~\ref{fig:Morse}.
\begin{enumerate}
\item In (a)-(f), the circles $x=\text{constant}$ are longitudes of $V_t$,
and the circles $y=\text{constant}$ are meridians.
\item The point represented by the four corners is the point of $P_t$ with
largest $s$-level. In (a) it is a tangency of $P_{1/2}$ with $S_1$, and in
(b)-(f) it is a center tangency of $P_t$ with $Q_{t+1/2}$.
\item The open dots in the interior of the squares are intersections of
$P_t$ with $S_0$. In (a) it is a tangency of $P_{1/2}$ with $S_0$, in
(b)-(e) they are transverse intersections. In (f), $P_t$ is disjoint from
$S_0$.
\item In (b), saddle~1 has appeared. Circles of $Q_s\cap P_t$ with $s<s(1)$
are essential in $Q_s$, and these $(s,t)$ lie in the region labeled $X$ in
the graphic. Circles of $Q_s\cap P_t$ with $s(1)<s<s(2)$ enclose the
figure-$8$ in (b), which is $P_t\cap Q_{s(1)}$. They are inessential in
both $Q_s$ and $P_t$, and these $(s,t)$ lie in the region labeled $bx$. The
vertical dotted lines are biessential intersections corresponding to a pair
in the unlabeled region. Finally, one crosses $Q_{s(3)}$, and eventually
reaches the center tangency.
\item The horizontal level curves shown in (f) are meridians of $V_t$ that
bound disks in the $Q_s$ that contain them. This $(s,t)$ lies in the region
labeled $A$ in the graphic.
\end{enumerate}

For $t>1/2$, the intersection pattern of $P_t$ with the $Q_s$ is isomorphic
to the pattern for $P_{1-t}$, by an isomorphism for which $Q_s$ corresponds
to $Q_{1-s}$. As one starts $t$ at $1/2$ and moves upward through
$t$-levels, saddle $4$ appears inside the component of $P_t-Q_{s(3)}$ that
is an open disk, and expands until the level where $s(3)=s(4)$. The
biessential intersection circles in (a)-(d) are again longitudes in $V_t$
and in $W_t$, and the horizontal intersection circles in (f) are meridians
of $W_t$. These $(s,t)$ lie in the region labeled $B$ in the graphic. This
completes the description of the sweepouts in Morse general position.

Figure~\ref{fig:nogood} shows some $P_t$ for a sweepout of $S^2\times S^1$
whose graphic is the one shown in the right of Figure~\ref{fig:graphics}.
This sweepout is obtained from the previous one by an isotopy that moves
parts of the $Q_s$ levels down (to lower $t$-levels) near saddle $2$ and
up near saddle $3$. Again, the portion that is shown fits together with a
similar portion for $1/2\leq t\leq 1$. As $t$ increases past $1/2$, saddle
$4$ appears in the component of $P_t-S_{s(2)}$ that contains the point
which appears as the four corners.
\index{figures!figure93@sweepout with no unlabeled region}%
\begin{center}
\begin{figure} 
\labellist
\pinlabel $(a)$ at 111 588
\pinlabel $(b)$ at 398 588
\pinlabel $(c)$ at 111 300
\pinlabel $(d)$ at 398 300
\pinlabel $(e)$ at 111 12
\pinlabel $(f)$ at 398 12
\scriptsize
\pinlabel $2$ at 98 662
\pinlabel $3$ at 168 715
\pinlabel $2$ at 388 662
\pinlabel $3$ at 457 715
\pinlabel $1$ at 390 742
\pinlabel $2$ at 98 375
\pinlabel $3$ at 169 429
\pinlabel $1$ at 100 474
\pinlabel $3$ at 457 427
\pinlabel $1$ at 388 482
\pinlabel $2$ at 390 402
\pinlabel $2$ at 101 122
\pinlabel $3$ at 170 151
\pinlabel $1$ at 120 199
\pinlabel $3$ at 430 140
\pinlabel $1$ at 400 199
\endlabellist
\includegraphics[height=0.81\textheight]{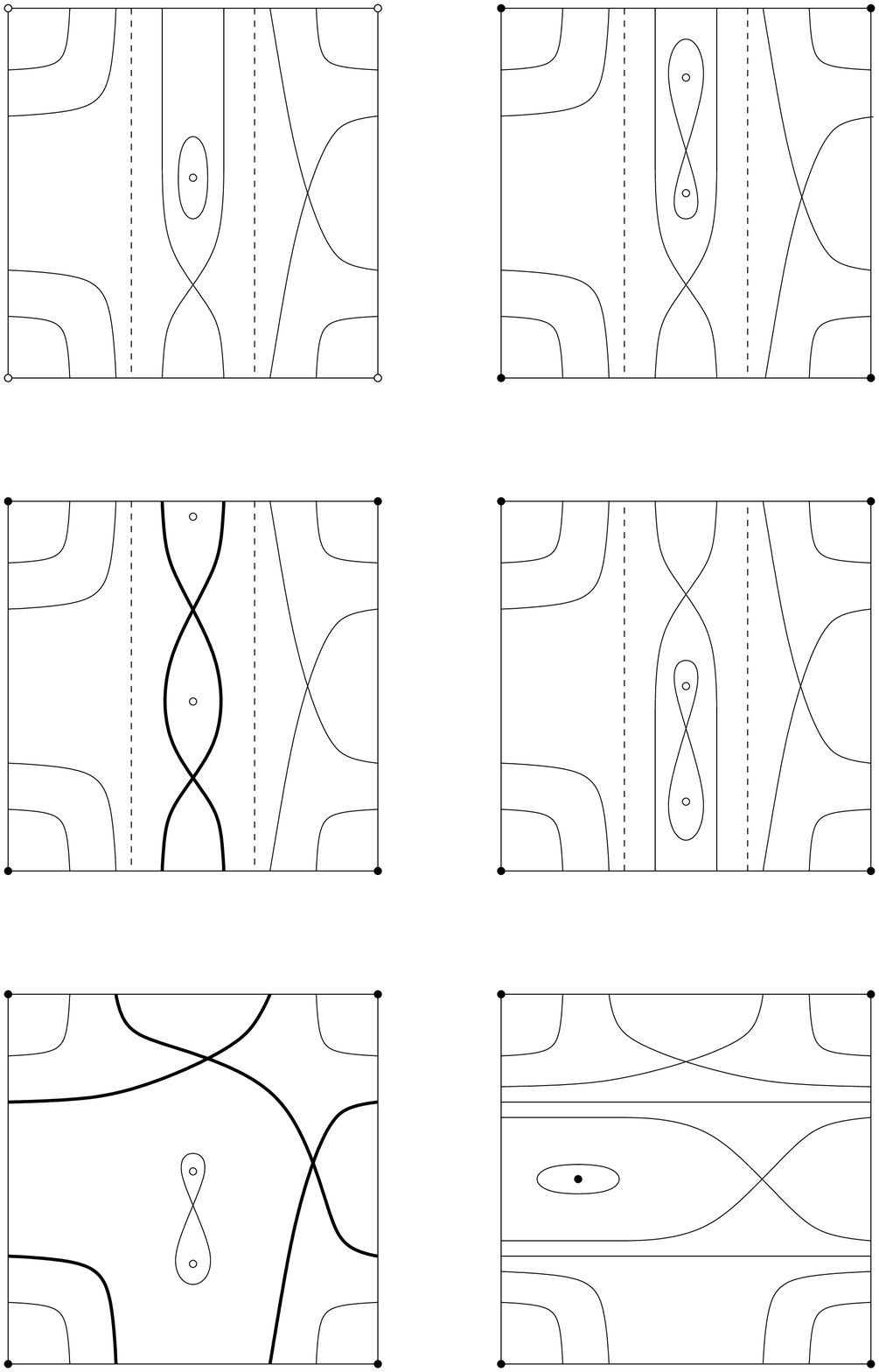}
\caption[sweepout2]{Intersections of the $Q_s$ with fixed $P_t$ as $t$
decreases from $1/2$ to $0$, for the sweepouts with an unlabeled region.
\begin{enumerate}
\item[(a)] $P_{1/2}$.
\item[(b)] $P_t$ where $s(1)<s(2)<s(3)$.
\item[(c)] $P_t$ where $s(1)=s(2)$.
\item[(d)] $P_t$ where $s(2)<s(1)<s(3)$.
\item[(e)] $P_t$ where $s(1)=s(3)$.
\item[(f)] $P_t$ where $s(3)<s(1)$, and after saddle 2 changes to a center.
\end{enumerate}}
\label{fig:Morse}
\end{figure}
\end{center}

\index{figures!figure93@sweepout with no unlabeled region}%
\begin{figure} 
\labellist
\pinlabel $(a)$ at 111 588
\pinlabel $(b)$ at 398 588
\pinlabel $(c)$ at 111 300
\pinlabel $(d)$ at 398 300
\pinlabel $(e)$ at 111 12
\pinlabel $(f)$ at 398 12
\scriptsize
\pinlabel $2$ at 86 644
\pinlabel $3$ at 173 715
\pinlabel $2$ at 374 645
\pinlabel $3$ at 461 715
\pinlabel $1$ at 384 784
\pinlabel $2$ at 97 373
\pinlabel $3$ at 146 427
\pinlabel $1$ at 99 482
\pinlabel $3$ at 431 438
\pinlabel $1$ at 399 487
\pinlabel $2$ at 375 377
\pinlabel $2$ at 97 85
\pinlabel $3$ at 166 119
\pinlabel $1$ at 113 201
\pinlabel $3$ at 430 139
\pinlabel $1$ at 399 199
\endlabellist
\includegraphics[height=0.85\textheight]{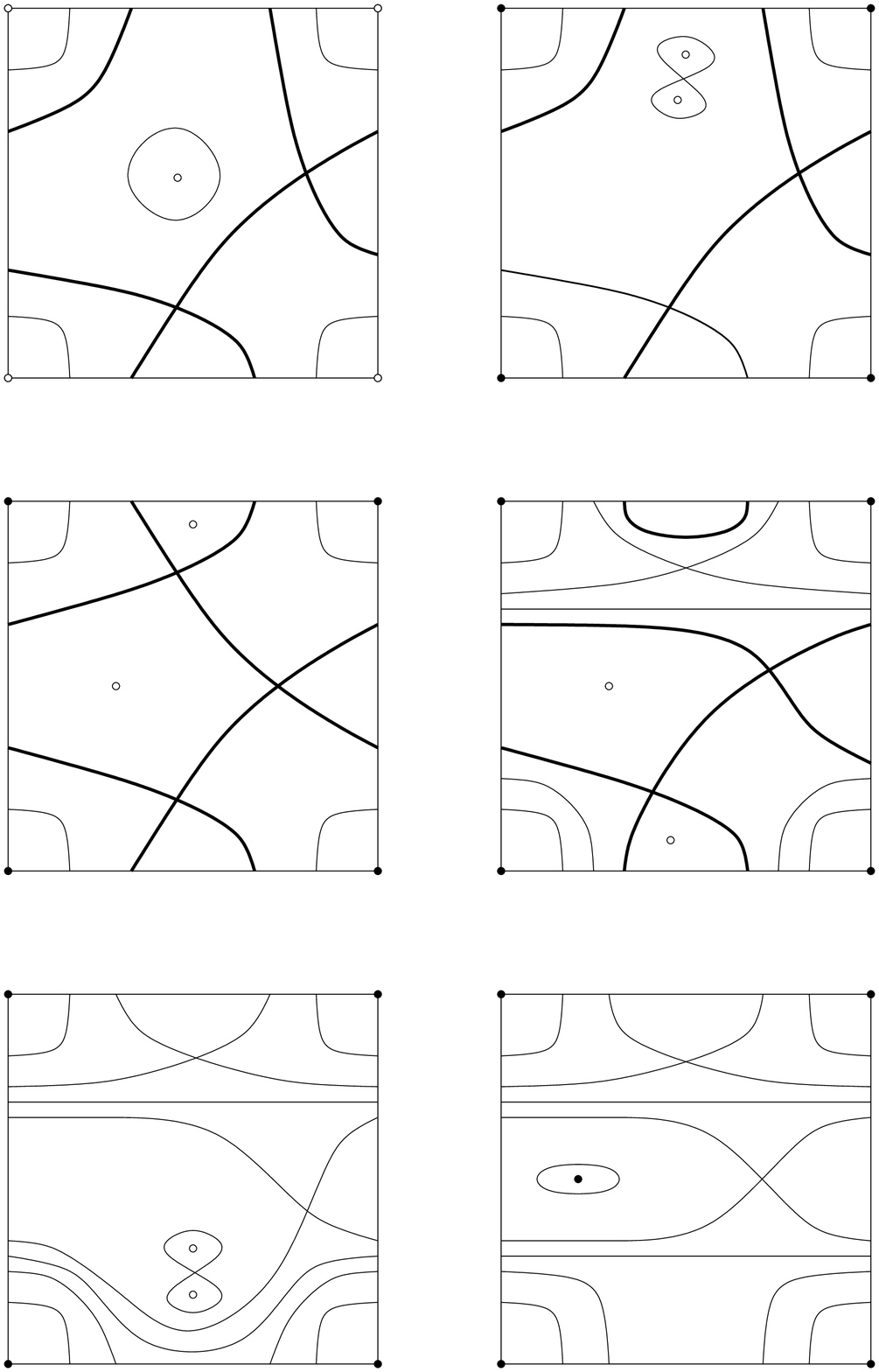}
\caption[sweepout2]{Intersections of the $Q_s$ with fixed $P_t$ as $t$
decreases from $1/2$ to $0$, for the sweepouts with no unlabeled region.
\begin{enumerate}
\item[(a)] $P_{1/2}$.
\item[(b)] $P_t$ where $s(1)<s(2)=s(3)$.
\item[(c)] $P_t$ where $s(1)=s(2)=s(3)$.
\item[(d)] $P_t$ where $s(2)=s(3)<s(1)$.
\item[(e)] $P_t$ where $s(2)<s(3)<s(1)$.
\item[(f)] $P_t$ where $s(3)<s(1)$, and after saddle 2 changes to a center.
\end{enumerate}}
\label{fig:nogood}
\end{figure}\index{examples!example3@graphic with no unlabeled region|)}

\section[Graphics for parameterized families]
{Graphics for parameterized families}
\label{sec:generalposition}

In this section we prove that a parameterized family of sweepouts can be
perturbed so that a suitable graphic exists at each parameter. As discussed
in Section~\ref{sec:examples}, in a parameterized family one must allow for
the possibility of levels having tangencies of high order, and having more
than two tangencies.

Additional complications arise because one cannot avoid having parameters
where the singular sets of the sweepouts intersect, or where the singular
sets have high-order tangencies with levels. We sidestep these
complications by working only with sweepout parameters that lie in an
interval $[\epsilon,1-\epsilon]$. The graphic is only considered to exist
on the square $[\epsilon,1-\epsilon]\times[\epsilon,1-\epsilon]$, which we
call $I^2_\epsilon$. The number $\epsilon$ is chosen so that the labels of
regions whose closure meets a side of $I^2_\epsilon$ will be known to
include certain letters. Just as before, this will ensure that the map to
the Diagram be essential on the boundary of the dual complex~$K$.

These considerations motivate our definition of a general position family
of diffeomorphisms. As usual, let $M$ be a closed orientable $3$-manifold
and $\tau\colon P\times [0,1]\to M$ a sweepout with singular set $T=T_0\cup
T_1$ and level surfaces $P_t$ bounding handlebodies $V_t$ and $W_t$. Let
$f\colon M\times W\to M$ be a parameterized family of diffeomorphisms,
where $W$ is a compact manifold. For $u\in W$ we denote the restriction of
$f$ to $M\times\set{u}$ by $f_u$. When a choice of parameter $u$ has been
fixed, we denote $f_u(P_s)$ by $Q_s$, and $f_u(V_s)$ and $f_u(W_s)$ by
$X_s$ and $Y_s$ respectively. When $Q_s$ meets $P_t$ transversely, a label
is assigned to $(s,t)$ as in Section~\ref{sec:Rubinstein-Scharlemann}.

A preliminary definition will be needed. We say that a positive number
$\epsilon$ 
\indexdef{border label control}\textit{gives border label control} for 
$f$ if the following hold
at each parameter $u$:
\begin{enumerate}
\item
If $t\leq 2\epsilon$, then there exists $r$ such that $Q_r$ meets $P_t$
transversely and contains a compressing disk of $V_t$.
\item
If $t\geq 1-2\epsilon$, then there exists $r$ such that $Q_r$ meets $P_t$
transversely and contains a compressing disk of $W_t$.
\item
If $s\leq 2\epsilon$, then there exists $r$ such that $P_r$ meets $Q_s$
transversely and contains a compressing disk of $X_s$.
\item
If $s\geq 1-2\epsilon$, then there exists $r$ such that $P_r$ meets $Q_s$
transversely and contains a compressing disk of $Y_s$.
\end{enumerate}

Throughout this section, a \indexdef{graph}\textit{graph} is a compact
space which is a disjoint union of a CW-complex of dimension $\leq 1$ and
circles. The circles, if any, are considered to be open edges of the graph.

We say that $f$ is 
\indexdef{general position!with respect to sweepout}\textit{in general position} 
(with respect to the
sweepout $\tau$) if there exists $\epsilon >0$ such that $\epsilon$ gives
border label control for $f$ and such that the following hold for each
parameter $u\in W$.\indexdef{GP1@(GP1), (GP2), (GP3)}
\newcounter{GPcounter}
\begin{list}{(GP\arabic{GPcounter})}
{\usecounter{GPcounter}
\setlength{\labelwidth}{7 ex}
\setlength{\leftmargin}{9 ex}
\setlength{\labelsep}{2 ex}
\setlength{\parsep}{5pt}
}
\item 
For each $(s,t)$ in $I^2_\epsilon$, $Q_s\cap P_t$ is a graph. At each
point in an open edge of this graph, $Q_s$ meets $P_t$ transversely. At
each vertex, they are tangent.
\item
The $(s,t)\in I^2_\epsilon$ for which $Q_s$ has a tangency with $P_t$ form
a graph $\Gamma_u$ in $I^2_\epsilon$.
\item
If $(s,t)$ lies in an open edge of $\Gamma_u$, then $Q_s$ and $P_t$ have a
single stable tangency.
\end{list}

The next lemma is immediate from the definition of border label control and
the labeling rules for regions.  It does not require that we be working
with lens spaces, so we state it as a lemma with weaker hypotheses.
\begin{lemma}\indexstate{border label control!and labels in graphic}%
\indexstate{labels!and border label control in graphics}
Suppose that $f\colon M\times W\to M$ is in general position with respect
to $\tau$. Assume that $M\neq S^3$ and that the Heegaard splittings
associated to $\tau$ are strongly irreducible. Suppose that $\epsilon$
gives border label control for $f$.
\begin{enumerate}
\item
If $t\leq \epsilon$, then the label of $(s,t)$ contains $A$.
\item
If $t\geq 1-\epsilon$, then the label of $(s,t)$ contains $B$.
\item
If $s\leq \epsilon$, then the label of $(s,t)$ contains $X$.
\item
If $s\geq 1-\epsilon$, then the label of $(s,t)$ contains $Y$.
\end{enumerate}
\label{lem:borderlabelconstraints}
\end{lemma}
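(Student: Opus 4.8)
The plan is to derive each of the four conclusions directly from the corresponding clause in the definition of border label control, using only the labeling rules for regions set out in Section~\ref{sec:Rubinstein-Scharlemann}. The four cases are entirely symmetric — one comes from the others by interchanging the roles of $\sigma$ and $\tau$, and of the ``inner'' and ``outer'' sides of a sweepout — so I would write out case~(1) in full and then say ``the other three cases are proven analogously, exchanging the roles of $Q_s$ and $P_t$ and of the two handlebodies.''

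For case~(1): fix a parameter $u\in W$ and a pair $(s,t)\in I^2_\epsilon$ with $t\le\epsilon$. First I would observe that, since we are in general position with $\epsilon$ giving border label control, the hypothesis that $t\le\epsilon\le 2\epsilon$ yields some $r$ such that $Q_r$ meets $P_t$ transversely and $Q_r$ contains a compressing disk $D$ for $V_t$ in $V_t$. A genuine compressing disk is in particular a precompressing disk (its interior is disjoint from $P_t$, so the intersection condition~(3) in the definition of precompression is vacuously met, and $\partial D$ is essential in $P_t$ with $V_t$ containing a neighborhood of it). By the labeling rule~(\ref{item:AB}) of Section~\ref{sec:Rubinstein-Scharlemann}, the region of the graphic at parameter $u$ containing $(r,t)$ therefore receives the letter $A$. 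The remaining point is that this forces an $A$ in the label of $(s,t)$ as well: I would move from $(r,t)$ to $(s,t)$ along the horizontal segment $\{(s',t)\ :\ s'\text{ between }r\text{ and }s\}$, noting that for all but finitely many $s'$ on this segment $Q_{s'}$ meets $P_t$ transversely (since the segment meets $\Gamma_u$ in finitely many points, by (GP2)), and on each open subinterval between consecutive crossings of $\Gamma_u$ the label is constant and contains $A$ unless it instead contains $B$. But by property~(RS\ref{item:edge labeling}), which holds here because each open edge of $\Gamma_u$ consists of pairs with a single stable tangency by (GP3), adjacent regions cannot have labels containing an $\sA$ and an $\sB$ respectively; since the first subinterval's label contains $A$, no subinterval's label can contain $B$, hence none can lose the $A$ either — a region whose label contains neither $A$, $a$, $B$, nor $b$ would have to have a spine of $Q_s$ in $V_t$ or in $W_t$ and hence an $a$ or $b$ label, contradiction with the labeling rules — so every subinterval's label, and in particular the label of $(s,t)$, contains $\sA$. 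Finally, since $t\le\epsilon$ rules out $(s,t)$ having a $B$-label (again by adjacency along the segment, or more simply because the same argument run from the $A$ at $(r,t)$ already excludes $\sB$), the $\sA$ in the label of $(s,t)$ must be $A$, not merely $a$: an $a$-label by rule~(3) only occurs in regions with no $A$- or $B$-label, yet we have just shown there is no $B$-label, and if there were no $A$-label the region would be unlabeled-in-$\{A,B\}$, forcing the $a$; but then the same $A$-producing disk $D\subset Q_r$, isotoped along the family, still lies in $Q_{s}$ as a precompression... — here I would instead simply note that rule~(\ref{item:AB}) applies verbatim at $(s,t)$ because the presence of a compressing disk for $V_t$ inside $Q_s$ is a property constant on regions, and transport of the disk $D$ along the horizontal segment (staying transverse except at the finitely many $\Gamma_u$-crossings, where an isotopy across the tangency preserves the existence of a precompressing disk) shows $Q_s$ contains a precompression for $P_t$ in $V_t$; hence the label of $(s,t)$ contains the capital $A$.

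The main obstacle is the last step — upgrading a lower-case $\sA$ to the capital $A$, i.e.\ showing that the \emph{compressing} disk, not merely a spine, survives. I expect the cleanest route is the transport-of-the-disk argument: the segment $\{s'\}\times\{t\}$ misses the vertices of $\Gamma_u$, the region-labeling is locally constant, and at each of the finitely many edge-crossings a single stable tangency is created or destroyed, across which a precompressing disk for $V_t$ contained in $Q_{s'}$ can be pushed off the tangency and so persists; thus the property ``$Q_{s'}$ contains a precompression for $P_t$ in $V_t$'' is constant along the segment and, being true at $s'=r$ by border label control, is true at $s'=s$. That is exactly the hypothesis of labeling rule~(\ref{item:AB}), giving the capital $A$ at $(s,t)$. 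The other three statements follow by the same argument with the evident substitutions.
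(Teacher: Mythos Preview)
The paper offers no proof beyond the single sentence ``immediate from the definition of border label control and the labeling rules for regions,'' so you are supplying an argument where the authors supply none.

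Your argument has a real gap, however. Border label control gives one particular $r$ for which the region at $(r,t)$ carries the letter $A$; you then try to transport this to $(s,t)$ along the horizontal segment. Neither version of your transport succeeds:

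\begin{itemize}
\item \emph{Label-chasing via \textup{(RS2)}.} That property only forbids an $\sA$-region from being adjacent to a $\sB$-region; it does not force a region adjacent to an $\sA$-region to itself carry $\sA$. Nothing you say rules out a region along the segment whose label is, for instance, just $X$. Your parenthetical ``a region whose label contains neither $A,a,B,b$ would have to have a spine of $Q_s$ in $V_t$ or $W_t$'' misreads rule~(3): that rule awards $a$ or $b$ only when one of the handlebodies \emph{does} contain a spine of $Q_{s'}$, which fails as soon as $Q_{s'}\cap P_t$ contains a circle essential in $Q_{s'}$. Lemma~\ref{lem:no lone small letters} constrains only the lowercase letters and does not exclude a lone capital.
\item \emph{Disk-transport.} You assert that a precompressing disk $D\subset Q_{s'}$ for $P_t$ in $V_t$ survives each single stable tangency. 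At a saddle where $\partial D$ merges with an intersection circle lying \emph{outside} $D$ in $Q_{s'}$, the resulting circle need no longer bound any disk in $Q_{s''}$, so the precompression can genuinely vanish. This is precisely the phenomenon underlying the edge-by-edge label analysis in \cite{RS}; labels \emph{do} change across edges, and (RS2) records only the constraint on how, not a prohibition.
\end{itemize}

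So both the ``keep $\sA$'' step and the ``upgrade to $A$'' step fail. What the application in Section~\ref{sec:goodregions} actually requires is that regions whose closures meet the side $t=\epsilon$ of $I^2_\epsilon$ carry an $\sA$-label; the paper's ``immediate'' appears to take for granted a boundary analysis closer to the original Rubinstein--Scharlemann one (using that $P_t$ is a thin neighborhood of $T_0$) rather than the abstract segment-transport you attempt.
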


Here is the main result of this section.
\begin{theorem}\index{general position!of parameterized family}
Let $f\colon M\times W\to M$ be a parameterized family of diffeomorphisms.
Then by an arbitrarily small deformation, $f$ can be put into general
position with respect to $\tau$.
\label{thm:generalposition}
\end{theorem}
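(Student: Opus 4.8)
The plan is to perturb $f$ through a sequence of arbitrarily small deformations, each responsible for achieving one of the conditions in the definition of general position. First I would fix the parameter $\epsilon$ giving border label control. Note that such an $\epsilon$ exists for the unperturbed $f$ (at parameters near the boundary of $[0,1]^2$, the levels $Q_r$ and $P_t$ are extremely close to standard Heegaard tori and the required compressing disks are visible by transversality with the singular sets for almost all $r$), and border label control is an open condition on $f$, so it persists under small deformations; thus we may fix $\epsilon$ once and for all and only worry about conditions (GP1)--(GP3) on the compact square $I^2_\epsilon$, where the singular sets $S=f_u(T)$ and $T$ of the two sweepouts stay uniformly separated from the relevant levels.

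The core of the argument is an application of parameterized transversality and the singularity-theory results of Bruce~\cite{Bruce} and Sergeraert~\cite{Sergeraert}, together with the jet-transversality theorem. The map under consideration is the two-parameter family of maps $(s,t,u)\mapsto$ (the pair $(f_u(P_s),P_t)$ of surfaces, encoded via the difference of their defining functions in the collar coordinates of the sweepout $\tau$), thought of as a family of functions on $P$ parameterized by $(s,t,u)\in I^2_\epsilon\times W$. I would first deform $f$ (using jet transversality, fixing the sweepout $\tau$) so that this family is transverse to the natural stratification of the appropriate jet space by the $\mathcal{K}$- or $\mathcal{A}$-equivalence classes of singularities. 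Since $\dim(I^2_\epsilon\times W)=k+2$ where $k=\dim W$, transversality forces the ``bad'' locus --- pairs of levels with a non-stable tangency, or with more than one tangency, or where a singular set of one sweepout is tangent to a level of the other --- to have codimension making it a subset of $\Gamma_u$ of dimension $\leq 0$ fiberwise, i.e.\ finitely many points at each parameter $u$. Here one uses the finite-determinacy and versality results of Bruce and Sergeraert to control how high-order tangencies can be stabilized within a family of the given dimension; this is precisely the place where Morse tangencies alone do not suffice and the heavier singularity theory is needed. After this deformation, at each $u$ the set $Q_s\cap P_t$ is the zero set of a function with isolated degenerate critical points only over isolated $(s,t)$, hence a graph (GP1); the tangency locus $\Gamma_u$ is a graph in $I^2_\epsilon$ (GP2); and on the open edges of $\Gamma_u$ the tangency is a single stable (Morse: center or saddle) tangency (GP3).

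The main obstacle, and the bulk of the real work, is (GP3): ensuring that the \emph{open edges} of $\Gamma_u$ consist of pairs having exactly one stable tangency, with all failures (multiple tangencies, birth-death or worse degeneracies, interactions with the singular sets) confined to the vertices of $\Gamma_u$. In the non-parameterized Rubinstein--Scharlemann setting this is routine Morse theory, but here the family has $k+2$ essential parameters, so a priori one expects strata of high-order multisingularities of positive dimension; the point is that after the transversality deformation these strata meet the fibers $\{u\}\times I^2_\epsilon$ in dimension $\le 0$, so they contribute only vertices to $\Gamma_u$. Carrying this out requires a careful setup of the multijet space (for the ``more than two tangencies'' phenomena one uses multijet transversality à la Thom--Mather) and an appeal to the specific normal forms and stability statements in \cite{Bruce} and \cite{Sergeraert} to guarantee that generically the only codimension-one phenomenon along $\Gamma_u$ is the single Morse tangency. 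The remaining conditions (GP1), (GP2), and the persistence of border label control are then comparatively formal consequences. Finally, since every deformation used is a transversalization, each can be taken arbitrarily $C^\infty$-small, and their composition keeps $f$ arbitrarily close to the original family, completing the proof.
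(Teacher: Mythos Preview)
Your outline names the right ingredients (Bruce, Sergeraert, jet transversality) but misses the key structural move and leaves the central step unjustified.

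\textbf{The missing reduction.} You treat $(s,t)$ as two symmetric parameters and look for transversality of a map from $I^2_\epsilon\times W$ into a jet space. The paper instead absorbs $t$ as the \emph{value} of a function: it studies $\pi\circ f_u\circ\tau\colon P\times I\to\R$, viewed as a map $I\times W\to\Maps(P,\R)$, $(s,u)\mapsto k_{(s,u)}$. Then a tangency of $Q_s$ with $P_t$ is a critical point of $k_{(s,u)}$ with critical value $t$, and ``two tangencies at the same level $t$'' is simply ``two critical points with the same value''---exactly the condition that $k_{(s,u)}\notin\mathcal{F}_0$ in Sergeraert's stratification. This eliminates any need for multijet transversality: the pathologies you want to confine to vertices of $\Gamma_u$ are precisely the $s$ for which $k_{(s,u)}\in\mathcal{F}_{>0}$.

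\textbf{The unjustified step.} Your claim that transversality of the total family forces the bad locus to be $0$-dimensional \emph{fiberwise over $u$} is the whole difficulty, and it does not follow from standard (or multijet) transversality, which controls only the total dimension. Bruce's theorems give fiberwise conclusions, but only of the form ``$F_u$ is transverse except at a discrete set of points in the source''---not ``discrete set of bad parameter values.'' The paper's Theorem~\ref{thm:Morse weak transversality} establishes exactly the needed statement: for a residual set of maps $I\times W\to\Maps(P,\R)$, each $F_u^{-1}(\mathcal{F}_{>0})$ is finite. Its proof is not a direct transversality argument. It uses Sergeraert's local product structure $V\cong U\times\R^n$ near each $f\in\mathcal{F}_n$, shows (Proposition~\ref{prop:Sergeraert}) that $F_{\geq 1}\subset\R^n$ lies in the zero set of a nontrivial polynomial (via a semialgebraic argument on the critical-point equations), and then proves a bespoke result (Proposition~\ref{prop:polynomial_Bruce}) that for generic $G\colon I\times W\to\R^n$, each $G_u^{-1}(P^{-1}(0))$ is finite---this last by computing that the jets of paths lying in $P^{-1}(0)$ to order $k$ form a set whose codimension tends to infinity with $k$.

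\textbf{Smaller gaps.} Border label control is not automatic for the unperturbed $f$: one must first perturb so that $f_u(T)$ meets $T$ only finitely often and is not contained in any single level $P_t$ (Subsection~\ref{sec:borderlabel}). And once the perturbed function $k$ is in hand, one must actually build the deformation of $f$: the paper does this by vertical vector fields (in the $\I$-direction of the sweepout) tapered off near $T$ and near $f_u^{-1}(T)$, then exponentiated; this tapering is why border label control on a $2\epsilon$-collar is arranged first, so that the region where the graphic is uncontrolled lies outside~$I^2_\epsilon$.
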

\noindent The proof of Theorem~\ref{thm:generalposition} will constitute
the remainder of this section. Since the argument is rather long, we will
break it into subsections. Until Subsection~\ref{sec:borderlabel}, $M$ can
be a closed manifold of arbitrary dimension~$m$.
\shortpage

\subsection{Weak transversality}\index{weak transversality|(}
Although individual maps may be put transverse to a submanifold of the
range, it is not possible to perturb a parameterized family so that each
individual member of the family is transverse. But a very nice result 
J. W. Bruce, Theorem~1.1 of~\cite{Bruce}, allows one to simultaneously
improve the members of a family.
\begin{theorem}[J. W. Bruce]\indexstate{Bruce!weak general position theorem}
Let $A$, $B$ and $U$ be smooth manifolds and $C\subset B$ a submanifold.
There is a residual family of mappings $F\in \Maps(A\times U,B)$ such
that:
\begin{enumerate}
\item[(a)] For each $u\in U$, the restriction $F_u=F|_{A\times \{u\}}\colon
A\to B$ is transverse to $C$ except possibly on a discrete set of points.
\item[(b)] For each $u\in U$, the set $F_u^{-1}(C)$ is a smooth submanifold
of codimension equal to the codimension of $C$ in $B$, except possibly at a
discrete set of points. At each of these exceptional points $F_u^{-1}(C)$
is locally diffeomorphic to the germ of an algebraic variety, with the
exceptional point corresponding to an isolated singular point of the
variety.
\end{enumerate}
\label{thm:Bruce1}
\end{theorem}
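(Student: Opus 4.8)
The statement is Theorem~1.1 of Bruce's paper~\cite{Bruce}, so in our text the proof is a reference to that source; we sketch the idea here. The plan is to pass to jet space and apply a parametrized transversality theorem. Write $n=\dim A$ and let $p$ be the codimension of $C$ in $B$. Transversality of $F_u$ to $C$ at a point $x$ with $F_u(x)=y\in C$ means that the composite of the differential $d(F_u)_x\colon T_xA\to T_yB$ with the quotient projection $T_yB\to T_yB/T_yC$ is surjective, which is a condition on the $1$-jet of $F_u$ at $x$. Inside $J^1(A,B)$ one singles out the set $W$ of $1$-jets $(x,y,L)$ with $y\in C$ for which the induced map $T_xA\to T_yB/T_yC$ has rank less than $p$, and stratifies $W$ by this rank. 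A rank count in the space of linear maps $\R^n\to\R^p$ shows that the corank-one stratum --- the generic failure of transversality --- has codimension exactly $n+1$ in $J^1(A,B)$, and the deeper strata have strictly larger codimension.

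The first step is to invoke the parametrized (Thom) transversality theorem to arrange, after an arbitrarily small deformation of $F$, that the evaluation map $\Phi\colon A\times U\to J^1(A,B)$ given by $\Phi(x,u)=j^1F_u(x)$ is transverse to every stratum of $W$; one simultaneously controls the relevant higher-jet (Thom--Boardman) strata. Then the preimage under $\Phi$ of each stratum is a submanifold of $A\times U$ of codimension $\ge n+1$, and by a codimension argument --- the delicate point, carried out in~\cite{Bruce} --- the intersection of the non-transversality locus with a slice $A\times\{u\}$ is $0$-dimensional for \emph{every} $u$. Thus for each $u$ the bad set $B_u=\{x : F_u \text{ is not transverse to } C \text{ at } x\}$ is discrete, and on $A-B_u$ the preimage theorem makes $F_u^{-1}(C)$ a smooth submanifold of codimension $p$; this gives~(a) and the manifold part of~(b).

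For the local normal form in~(b), one analyses a corank-one point of $B_u$: there $F_u^{-1}(C)=g^{-1}(0)$ for a map germ $g\colon(\R^n,x)\to(\R^p,0)$ that is a submersion away from $x$ and whose differential drops rank by one at $x$, and transversality of $\Phi$ to the higher-jet strata forces $g$ to be finitely determined, hence equivalent to a polynomial germ with the degeneracy concentrated at the single point, so that $F_u^{-1}(C)$ is there the germ of an algebraic variety with an isolated singularity. I expect the main obstacle to be precisely the uniformity over \emph{every} parameter $u$ rather than over a residual set of parameters: a generic individual $F_u$ is already transverse to $C$, so the whole content is that the failures forced on a family can be confined to isolated points with controlled local geometry, which is exactly what the stratification of $W$ and its higher-jet refinements is arranged to deliver.
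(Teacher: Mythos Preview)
Your proposal is appropriate: the paper does not prove this theorem but simply cites it as Theorem~1.1 of~\cite{Bruce}, offering only a one-paragraph remark (after Theorem~\ref{thm:Bruce2}) that the method is to define, in jet space, locally algebraic subsets of increasing codimension containing the bad jets, and then apply a variant of Thom transversality (Lemma~1.6 of~\cite{Bruce}). Your sketch is consistent with this and in fact supplies more detail than the paper does, so it is entirely acceptable as the paper's own treatment of this statement.
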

\noindent
That is, $F_u^{-1}(C)$ is smooth except at isolated points where it has
topologically a nice cone-like structure.  It is not assumed that any of
the manifolds involved is compact.

Theorem 1.3 of \cite{Bruce} is a version of Theorem~\ref{thm:Bruce1} in
which $C$ is replaced by a bundle $\phi\colon B\to D$. The statement is:
\begin{theorem}[J. W. Bruce]\indexstate{Bruce!weak general position theorem!for parameterized family}
For a residual family of mappings $F\in \Maps(A\times U,B)$, the
conclusions of Theorem~\ref{thm:Bruce1} hold for all submanifolds
$C=\phi^{-1}(d)$, $d\in D$.
\label{thm:Bruce2}
\end{theorem}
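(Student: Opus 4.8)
\textbf{Proof proposal for Theorem~\ref{thm:Bruce2}.} The plan is to deduce the parameterized (bundle) version from the absolute version, Theorem~\ref{thm:Bruce1}, by a standard local-to-global argument that trivializes the bundle $\phi\colon B\to D$ over a countable cover and then intersects the resulting residual sets. First I would fix a locally finite countable atlas $\{D_\alpha\}$ of $D$ over which $\phi$ is trivial, choosing trivializations $\phi^{-1}(D_\alpha)\cong C_\alpha\times D_\alpha$, where $C_\alpha$ is a fixed model fiber; I would also refine so that each $D_\alpha$ has compact closure contained in a slightly larger trivializing chart $D_\alpha'$. Over $D_\alpha'$, the submanifolds $C=\phi^{-1}(d)$ for $d\in D_\alpha$ are exactly the slices $C_\alpha\times\{d\}$, and the condition ``$F_u$ satisfies the conclusions of Theorem~\ref{thm:Bruce1} with respect to $\phi^{-1}(d)$ for every $d$'' can be rephrased as a transversality-type condition on the composite map $A\times U\to B\to D_\alpha'$ together with the slice structure.

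The key step is to recognize that for a fixed chart, requiring the conclusion for \emph{all} slices $\phi^{-1}(d)$ simultaneously is itself an instance of Theorem~\ref{thm:Bruce1}: one applies that theorem with the submanifold $C$ taken to be the zero section (or rather the diagonal-type locus) in an auxiliary space built from $B\times D_\alpha'$, so that the $F_u$-preimage of a slice corresponds to the $u$-slice of the preimage of this single submanifold. Concretely, over $\phi^{-1}(D_\alpha')\cong C_\alpha\times D_\alpha'$ one considers the ``difference'' setup: the relevant bad set is where the $D_\alpha'$-component of $F$ fails to be submersive onto $D_\alpha'$ in the appropriate jet sense, and Theorem~\ref{thm:Bruce1} applied to an appropriate target submanifold in a jet space yields a residual set $\mathcal{R}_\alpha\subset \Maps(A\times U,B)$ of maps for which, for every $u$ and every $d\in D_\alpha$, $F_u^{-1}(\phi^{-1}(d))$ is smooth of the correct codimension away from a discrete set of cone-like singular points. (Alternatively, and perhaps more cleanly, one invokes directly the jet-transversality machinery underlying Bruce's proof rather than the stated Theorem~\ref{thm:Bruce1}; since we are allowed to assume Theorem~\ref{thm:Bruce1} and Bruce's paper \cite{Bruce} is cited, I would cite Theorem~1.3 of \cite{Bruce} for exactly this reduction and only sketch the chart argument.)

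Then I would finish by a Baire-category argument: each $\mathcal{R}_\alpha$ is residual in $\Maps(A\times U,B)$ with the appropriate Whitney topology (the space of smooth maps between manifolds is a Baire space in this topology), and since the cover $\{D_\alpha\}$ is countable, the intersection $\bigcap_\alpha \mathcal{R}_\alpha$ is again residual. For any $F$ in this intersection and any $d\in D$, choosing $\alpha$ with $d\in D_\alpha$ shows that $F$ satisfies the conclusions of Theorem~\ref{thm:Bruce1} for the submanifold $C=\phi^{-1}(d)$. This gives the residual family asserted in the theorem.

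The main obstacle I anticipate is \textbf{the reduction step}: packaging ``the conclusion holds for all fibers $\phi^{-1}(d)$ at once'' as a single application of Theorem~\ref{thm:Bruce1} (or of the jet-transversality theorem behind it). One must be careful that the auxiliary submanifold in the jet space is chosen so that its $F_u$-preimage really does control \emph{all} slices uniformly and so that the exceptional discrete set in the conclusion matches up correctly after slicing in $u$; the local-triviality of $\phi$ is exactly what makes this packaging possible, but verifying that the cone-like singularity structure is preserved under the identification requires tracking the local algebraic-variety statement through the trivialization. Since the paper explicitly cites Theorem~1.3 of \cite{Bruce} for precisely this statement, in practice I would keep the proof short: state the chart-and-Baire-category skeleton above and attribute the hard local analysis to \cite{Bruce}.
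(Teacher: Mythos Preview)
The paper does not give its own proof of this theorem. It is stated as Theorem~1.3 of Bruce~\cite{Bruce} and simply cited. The only commentary the paper offers on the proof is the paragraph immediately following the statement, which sketches Bruce's actual method: one defines, in an appropriate jet space, a locally algebraic subset containing the jets of all maps failing the weak transversality condition, observes that its codimension grows with the jet order, and applies a variant of Thom transversality (Lemma~1.6 of~\cite{Bruce}) to avoid it. This is a direct jet-space argument, not a reduction to Theorem~\ref{thm:Bruce1}.

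Your proposed route---trivialize $\phi$ over a countable cover, obtain for each chart a residual set handling all fibers over that chart, then intersect---is different in structure, and you have correctly identified its weak point. The reduction step, where you need ``the conclusion holds for every fiber $\phi^{-1}(d)$ with $d$ in a chart'' to follow from a \emph{single} application of Theorem~\ref{thm:Bruce1} to some auxiliary submanifold, is not something Theorem~\ref{thm:Bruce1} gives you: that theorem handles one fixed submanifold $C$, and the passage from one $C$ to a one-parameter (or higher) family of $C$'s is exactly the content of Theorem~\ref{thm:Bruce2}. Your suggestion to instead invoke the underlying jet-transversality machinery is precisely what Bruce does, and at that point you are no longer reducing to Theorem~\ref{thm:Bruce1} but reproving Theorem~\ref{thm:Bruce2} from scratch. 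Since you conclude by proposing to cite~\cite{Bruce} for the hard part anyway, your proposal and the paper's treatment end up in the same place: neither proves the theorem, both defer to Bruce.
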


We should comment on the significance of the residual subset in these two
theorems. The method of proof of these theorems is to define, in an
appropriate jet space, a locally algebraic subset which contains the jets
of all the maps that fail these weak transversality conditions. These
subsets have increasing codimension as higher-order jets are taken. A
variant of \index{Thom transversality}Thom transversality (Lemma~1.6 of
\cite{Bruce}) allows one to perturb a parameterized family of maps so that
these jets are avoided and the conclusion holds.  When $A$ and $W$ are
compact, the image of $A\times W$ will lie in the open complement of the
locally algebraic sets of sufficiently high codimension. Consequently, any
map sufficiently close to the perturbed map will also satisfy the
conclusions of the theorems. In all of our applications, the spaces
involved will be compact, and \textit{we tacitly assume that the result of
  any procedure holds on an open neighborhood of the perturbed map.}
\shortpage

We now adapt the methodology of Bruce to prove a version of
Theorem~\ref{thm:Bruce1} in which the submanifold $C$ is replaced by the
zero set of a nontrivial polynomial. We will prove it only for the case
when $A=I$, although a more general version should be possible.

\begin{proposition}
Let $P\colon \R^n\to \R$ be a nonzero polynomial and put $V=P^{-1}(0)$.
Let $W$ be compact. Then for all $G$ in an open dense subset of
$\Maps(I\times W,\R^n)$, each $G_u^{-1}(V)$ is finite.
\label{prop:polynomial_Bruce}
\end{proposition}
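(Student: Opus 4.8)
\textbf{Proof plan for Proposition~\ref{prop:polynomial_Bruce}.}
The plan is to follow Bruce's jet-transversality methodology, adapted to the fact that the ``bad set'' $V = P^{-1}(0)$ is no longer a submanifold but an algebraic hypersurface, possibly with singularities of all orders. First I would stratify $V$ by the order of vanishing: let $V_k$ be the set of points where $P$ and all its partial derivatives of order $< k$ vanish but some $k$-th order partial derivative does not. Each $V_k$ is a locally closed algebraic set, and since $P$ is a nonzero polynomial of some degree $d$, we have $V_k = \emptyset$ for $k > d$, so there are only finitely many strata. Away from the singular locus (i.e.\ on $V_1 \setminus V_2$), $V$ is a smooth hypersurface, and Bruce's Theorem~\ref{thm:Bruce1} (applied with $A = I$, $B = \R^n$, $C = V_1\setminus V_2$) already tells us that for a residual family $G$, each $G_u^{-1}(V_1\setminus V_2)$ is a $0$-dimensional submanifold of $I$, hence discrete, hence finite by compactness of $I$.

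The remaining work is to control $G_u^{-1}(V_k)$ for $k \geq 2$. The key point is a dimension count in jet space. The condition that $x$ lie in $V_k$ (a point of multiplicity $\geq k$ of $V$) is $\binom{n+k-1}{k}$ independent polynomial conditions on the $(k-1)$-jet of $P$ at $x$ — equivalently, membership of the jet $j^{k-1}P(x)$ in a locally algebraic subvariety $\Sigma_k$ of the appropriate jet bundle whose codimension is $\binom{n+k-1}{k} \geq n$ for $k\geq 2$ (this is where $\dim A = \dim I = 1$ is used: we need the codimension to exceed $\dim A$, which for $n \geq 1$ and $k\geq 2$ it does since $\binom{n+k-1}{k}\geq \binom{n+1}{2} > 1$). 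I would then invoke Bruce's parameterized Thom transversality lemma (Lemma~1.6 of~\cite{Bruce}): for $G$ in a residual subset of $\Maps(I\times W, \R^n)$, the jet extension $j^{k-1}(P\circ G_u)\colon I \to J^{k-1}(I,\R)$ is, for every $u$, transverse to $\Sigma_k$ (or rather to its stratification) simultaneously for all $k$. Since $\operatorname{codim}\Sigma_k > 1 = \dim I$ for $k \geq 2$, transversality forces $(P\circ G_u)$ to have no jets in $\Sigma_k$ at all; that is, $G_u^{-1}(V_k) = \emptyset$ for $k\geq 2$, for every $u$, and every $G$ in the residual set.

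Combining the two parts: for $G$ in the intersection of the two residual (hence residual, hence dense) subsets of $\Maps(I\times W,\R^n)$, each $G_u^{-1}(V) = G_u^{-1}(V_1\setminus V_2) \cup \bigcup_{k\geq 2} G_u^{-1}(V_k) = G_u^{-1}(V_1\setminus V_2)$ is finite. Finally, to upgrade from ``residual'' to ``open dense'': since $W$ (and $I$) are compact, the image of $I\times W$ under $G$ lies in the open complement of the high-codimension locally algebraic sets in jet space that are being avoided, so the transversality conclusions persist under small perturbations of $G$ — exactly the compactness remark already flagged in the discussion after Theorem~\ref{thm:Bruce2}. Hence the set of good $G$ contains an open neighborhood of each of its members, and being also dense, it is open and dense.

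\textbf{Main obstacle.} The delicate step is the jet-space dimension count that makes the multiplicity-$\geq k$ strata $\Sigma_k$ of codimension genuinely $\geq n \geq 1$, uniformly, together with checking that Bruce's Lemma~1.6 applies to these locally algebraic (non-smooth) subvarieties and yields transversality of $j^{k-1}(P\circ G_u)$ simultaneously at all parameters $u$ — this is precisely the technical heart of~\cite{Bruce} and must be quoted carefully rather than reproved. A secondary subtlety is that $V$ near a singular point need not be a topological cone on a single hypersurface, so one should phrase the argument in terms of the vanishing-order stratification rather than trying to describe $V$ itself; the stratification has finitely many pieces precisely because $P$ is a polynomial, which is where the polynomial hypothesis (as opposed to merely real-analytic) is genuinely used.
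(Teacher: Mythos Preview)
Your stratification-by-multiplicity approach has a genuine gap at the codimension count. The conditions ``$P$ and all its partials of order $<k$ vanish at $x$'' are conditions on $x\in\R^n$, not on some freely varying jet, and they need not be independent. Take $P(x,y)=x^2$ in $\R^2$: every point of $V=\{x=0\}$ has multiplicity exactly~$2$, so $V_1\setminus V_2=\emptyset$ and $V_2=V$ has codimension~$1$. Your argument would then force $G_u^{-1}(V)=\emptyset$ for generic $G$, which is false---a generic path $I\to\R^2$ meets the line $\{x=0\}$ in finitely many points, not none. The confusion is between the $(k-1)$-jet of $P$ at $G_u(t)$ (which lives in $J^{k-1}(\R^n,\R)$ and is determined by the fixed $P$, so cannot be made transverse to anything by perturbing $G$) and the $(k-1)$-jet of $P\circ G_u$ at $t$ (which lives in $J^{k-1}(I,\R)$ and does depend on $G$). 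You set up the first and then apply transversality as if for the second.

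The paper works entirely in the jet space of \emph{paths}. It defines $Z_k\subset J_0^k(1,n)$ to be the set of $k$-jets of $\alpha\colon(\R,0)\to\R^n$ for which $P\circ\alpha$ vanishes to order $k$ at $0$, and proves directly (Lemma~\ref{lem:Jacobian}) that $\operatorname{codim} Z_k\to\infty$ as $k\to\infty$, via an explicit computation of the Jacobian of the map $P_*\colon J_0^k(1,n)\to J_0^k(1,1)$ induced by composition with $P$. The point of that computation is precisely that it works for \emph{any} nonconstant polynomial $P$, with no independence assumption on the partials of $P$---one only needs that \emph{some} high-order partial of $P$ is nonzero, which is automatic since $P$ is a nonzero polynomial. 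Once $\operatorname{codim} Z_k$ exceeds $\dim(I\times W)$, Bruce's Lemma~1.6 makes the jet extensions $j^kG_u$ miss $Z_k$ entirely, so no $t_0$ can be a limit point of $G_u^{-1}(V)$. This is the step you correctly flagged as the main obstacle; the resolution just has to be carried out in the right jet space.
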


\begin{proof}
Let $J_0^k(1,n)$ be the space of germs of degree-$k$ polynomials from
$(\R,0)$ to $\R^n$; an element of $J_0^k(1,n)$ can be written as
$(a_{1,0}+a_{1,1}t+\cdots+a_{1,k}t^k,\ldots,a_{n,0}+a_{n,1}t+\cdots+a_{n,k}t^k)$,
so that $J_0^k(1,n)$ can be identified with $\R^{(k+1)n}$. Note that the
jet space $J^k(I,\R^n)$ can be regarded as $I\times J_0^k(1,n)$, by
identifying the jet of $\alpha\colon I\to \R^n$ at $t_0$ with the jet of
$\alpha(t-t_0)$ at $0$.

Define a polynomial map $P_*\colon J_0^k(1,n)\to J_0^k(1,1)$ by applying
$P$ to the $n$-tuple $(a_{1,0}+a_{1,1}t+\cdots+a_{1,k}t^k,
\ldots,a_{n,0}+a_{n,1}t+\cdots+a_{n,k}t^k)$, and then taking only the terms
up to degree~$k$. The inverse image $P_*^{-1}(0)$ is the set of $k$-jets
$\alpha$ in $\R^n$ such that $P(\alpha(0))=0$ and the first $k$ derivatives
of $P\circ \alpha$ vanish at $t=0$, that is, the set of germs of paths that
lie in $V$ up to $k^{th}$-order.

\begin{lemma} If $P$ is nonconstant, then the codimension of $P_*^{-1}(0)$
goes to $\infty$ as $k\to \infty$.
\label{lem:Jacobian}
\end{lemma}

\begin{proof}[Proof of Lemma~\ref{lem:Jacobian}]
It suffices to show that the rank of the Jacobian of $P_*$ goes to $\infty$
as $k\to \infty$. For notational simplicity, we will give the proof for
$P(X,Y)$, and it will be evident how the argument extends to the general
case. 

Write $a=a_0+a_1t+a_2t^2+\cdots$ and $b=b_0+b_1t+b_2t^2+\cdots$, and
examine~$P(a,b)$. We have $P_*(a,b) = Q_0+ Q_1t + Q_2t^2+\cdots$ where each
$Q_i$ is a (finite) polynomial in $\R[a_0,b_0,a_1,b_1,\ldots]$.  Notice
that $Q_j=\dfrac{1}{j!}\dfrac{\partial^j P_*}{\partial
t^j}\bigg\vert_{t=0}$.

\newpage
It is instructive to calculate a few derivatives of $P_*(a,b)$. We have
\begin{align*}
\dfrac{\partial P_*}{\partial t} &= a'P_X + b'P_Y\\
\dfrac{\partial^2 P_*}{\partial t^2} &= a''P_X + b''P_Y + (a')^2P_{XX} +
2a'b'P_{XY} + (b')^2P_{YY}\\
\dfrac{\partial^3 P_*}{\partial t^3} &= a'''P_X + b'''P_Y + a''a'P_{XX} +
(a''b'+ a'b'')P_{XY} + b''b'P_{YY}\\
&+ 2a'a''P_{XX} + (2a''b' + 2a'b'')P_{XY} + 2b'b''P_{YY}\\
&+ (a')^3P_{XXX} + 3(a')^2 b' P_{XXY} + 3a'(b')^2 P_{XYY} + (b')^3P_{YYY}\\
&= a'''P_X + b'''P_Y + 3a''a'P_{XX} +
3(a''b'+ a'b'')P_{XY} + 3b''b'P_{YY}\\
&+ (a')^3P_{XXX} + 3(a')^2 b' P_{XXY} + 3a'(b')^2 P_{XYY} + (b')^3P_{YYY}
\end{align*}
and at $t=0$ these become
\begin{align*}
Q_1 &= a_1P_X(a_0,b_0) + b_1P_Y(a_0,b_0)\\
2! Q_2 &= 2a_2P_X(a_0,b_0) + 2b_2P_Y(a_0,b_0)\\ 
&+ a_1^2P_{XX}(a_0,b_0) + 2a_1b_1P_{XY}(a_0,b_0) + b_1^2P_{YY}(a_0,b_0)\\
3! Q_3 &= 6a_3P_X(a_0,b_0) + 6b_3P_Y(a_0,b_0) + 6a_1a_2P_{XX}(a_0,b_0)\\ 
&+ 6(a_2b_1+ a_1b_2)P_{XY}(a_0,b_0) + 6b_1b_2P_{YY}(a_0,b_0) +
a_1^3P_{XXX}(a_0,b_0)\\ 
&+ 3a_1^2 b_1 P_{XXY}(a_0,b_0) + 3a_1b_1^2 P_{XYY}(a_0,b_0) + b_1^3P_{YYY}(a_0,b_0)
\end{align*}
Induction shows that in general, writing $K_{rX,sX}$ for 
$\dfrac{\partial^{r+s} P}{\partial^rX\partial^sY}(a_0,b_0)$, there are
positive constants $c_{i_1\cdots i_rj_1\cdots j_s}$ so that for large $N$,
\begin{equation} Q_N = \sum K_{rX,sY}
\Big(\sum c_{i_1\cdots i_rj_1\cdots j_s} a_{i_1}\cdots a_{i_r}
b_{j_1}\cdots b_{j_s}\Big)\ .
\label{eqn:QN}
\end{equation}
For large $N$, all partial derivatives $K_{rX,sY}$ of $P$ at $(a_0,b_0)$
appear, and some must be nonzero since $P$ is a polynomial. Notice also
that there is no cancellation due to values of the $K_{rX,sY}$, since each
monomial term $a_{i_1}\cdots a_{i_r}b_{j_1}\cdots b_{j_s}$ appears just
once.

Any given $a_i$ appears in some of the monomial terms of $Q_N$ 
for all sufficiently large $N$. On the other hand, $Q_N$ contains no $a_i$
or $b_i$ with $i>N$, so $\dfrac{\partial Q_i}{\partial a_j}$ vanishes for
$j>i$, and similarly for $\dfrac{\partial Q_j}{\partial b_i}$. Therefore if
we truncate at $t^k$, the Jacobian $\begin{bmatrix}\Big(\dfrac{\partial
A_i}{\partial a_j}\Big) & \Big(\dfrac{\partial A_i}{\partial b_j}\Big)\\
\end{bmatrix}$ is a $(k+1)\times 2(k+1)$ matrix consisting of (two, since 
we are in the case of a two-variable $P(X,Y)$) upper triangular blocks:
\[\begin{bmatrix} * & 0 & 0 & \cdots & 0 & * & 0 & 0 & \cdots & 0 \\
* & * & 0 & \cdots & 0 & * & * & 0 & \cdots & 0 \\
&&&\ddots&&&&&\ddots\\
* & * & & \cdots & 0 & * & * & & \cdots & 0\\
* & * & & \cdots & * & * & * & & \cdots & *\\
\end{bmatrix}\ .
\]

If the lemma is false, then there is some maximal rank of these Jacobians
as $k\to \infty$. That is, there are, say, $m$ rows such that every row is
an $\R$-linear combination of these rows. For values of $k$ much larger
than $m$, all of these $m$ rows have zeros in the upper triangular part of
the two blocks. On the other hand, Equation~\ref{eqn:QN} and the
observations that follow it show that for each fixed $j$, $\dfrac{\partial
A_i}{\partial a_j}$ is nonzero for sufficiently large $i$. This completes
the proof of Lemma~\ref{lem:Jacobian}.
\end{proof}

For each $k$, put $Z_k=P_*^{-1}(0)$.  Lemma~\ref{lem:Jacobian} shows that
the codimension of $Z_k$ in $J^k_0(1,n)$ goes to $\infty$ as $k\to
\infty$. If $\alpha\colon(\R,0)\to \R^n$ is a germ of a smooth map, and $0$
is a limit point of $\alpha^{-1}(V)$, then all derivatives of $P\circ
\alpha$ vanish at $0$. That is, the $k$-jet of $\alpha$ at $t=0$ is
contained in $Z_k$ for every~$k$.

By Lemma~1.6 of \cite{Bruce}, there is a residual set of maps $G\in
\Maps(I\times W,\R^n)$ such that the jet extensions $j^kG\colon I\times
W\to J^k(I,\R^n)$ defined by $j^kG(t,u)=j^kG_u(t)$ are transverse to $I\times
Z_k$. For $k+1$ larger than the dimension of $I\times W$, this says that
no point of $G_u^{-1}(0)$ is a limit point, so each $G_u^{-1}(0)$ is
finite.
\end{proof}\index{weak transversality|)}

\subsection{Finite singularity type}
For our later work, we will need some ideas from singularity theory.  Let
$g\colon (\R^m,0)\to (\R^p,0)$ be a germ of a smooth map. There is a
concept of 
\index{finite singularity type|(}\index{singularity!finite type|(}\textit{finite singularity type} for $g$, whose definition is
readily available in the literature (for example, \cite[p.~117]{Bruce}).
The basic idea of the proof of Theorem~\ref{thm:Bruce1} (given as
Theorem~1.1 in \cite{Bruce}) is to regard the submanifold $C$ locally as
the inverse image of $0$ under a submersion $s$, then to perturb $f$ so that for
each $u$, the critical points of $s\circ f_u$ are of finite singularity
type. In fact, this is exactly the definition of what it means for $f_u$ to
be weakly transverse to $C$. In particular, when $C$ is a point, the
submersion can be taken to be the identity, so we have:
\begin{proposition} Let $f\colon M\to \R$ be smooth. If $f$ is weakly
transverse to a point $r\in \R$, then at each critical point in
$f^{-1}(r)$, the germ of $f$ has finite singularity type.
\label{prop:FST_weaktransverse}
\end{proposition}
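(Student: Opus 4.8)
The statement to prove is Proposition~\ref{prop:FST_weaktransverse}: if $f\colon M\to\R$ is weakly transverse to a point $r\in\R$, then at each critical point of $f$ lying in $f^{-1}(r)$, the germ of $f$ has finite singularity type. The plan is essentially to unwind the definitions and observe that this is the special case $C=\set{r}$ of the local reformulation of weak transversality used in the proof of Theorem~\ref{thm:Bruce1}. First I would recall the setup from \cite{Bruce}: a submanifold $C\subseteq B$ is realized locally near a point of $C$ as $C = s^{-1}(0)$ for a germ of a submersion $s\colon (B,c)\to(\R^k,0)$, where $k=\operatorname{codim}(C)$, and the condition ``$f_u$ is weakly transverse to $C$ at a point $x\in f_u^{-1}(C)$'' means precisely that the germ of $s\circ f_u$ at $x$ has finite singularity type (this is the definition recorded on p.~117 of \cite{Bruce} and is what drives the conclusion of Theorem~\ref{thm:Bruce1}).

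The core of the argument is then the remark that when $C=\set{r}$ is a single point of $B=\R$, it has codimension $1$ and can be realized globally as the zero set of the submersion $s\colon \R\to\R$, $s(y)=y-r$, which is a diffeomorphism. Composing with a diffeomorphism does not change the germ-equivalence class relevant to finite singularity type (finite singularity type is invariant under left and right composition with germs of diffeomorphisms — this is immediate from the algebraic definition in terms of the local algebra $Q(g)$, or from any of the standard characterizations). Hence the germ of $s\circ f = f - r$ at a critical point $x\in f^{-1}(r)$ has finite singularity type if and only if the germ of $f$ does. Since weak transversality of $f$ to $r$ is by definition the finiteness of singularity type of $s\circ f$ at each such point, we conclude that the germ of $f$ at each critical point in $f^{-1}(r)$ has finite singularity type, which is the assertion.

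I do not anticipate a genuine obstacle here: the statement is a definitional specialization, and the only thing requiring a word of justification is the invariance of finite singularity type under post-composition with a diffeomorphism, which is standard. The main care needed is simply to state the definitions precisely enough — in particular to be explicit that ``weakly transverse to a point'' unpacks, via the submersion $y\mapsto y-r$, to ``of finite singularity type at each critical point of the preimage'' — so that the reader sees the two phrases are literally the same condition and no further work is hidden. A short paragraph citing \cite[p.~117]{Bruce} for the definition and noting the diffeomorphism-invariance suffices for the complete proof.
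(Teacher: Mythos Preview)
Your proposal is correct and matches the paper's treatment exactly: the paper does not give a separate proof, but simply observes in the sentence preceding the proposition that when $C$ is a point the submersion $s$ can be taken to be the identity, so weak transversality unwinds directly to finite singularity type of the germ. Your version with $s(y)=y-r$ and the remark on diffeomorphism-invariance is the same argument written out a bit more explicitly.
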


Let $f$ and $g$ be 
\index{germs of smooth maps!types of equivalence}%
\index{equivalence!of germs of smooth maps}germs of smooth maps 
from $(\R^m,a)$ to $(\R^p,f(a))$.
They are said to be 
\indexsymdef{Aequivalent}{$\protect\mathcal{A}$-equivalent}\textit{$\mathcal{A}$-equivalent} 
if there exist a germ
$\varphi_1$ of a diffeomorphism of $(\R^m,a)$ and a germ $\varphi_2$ of a
diffeomorphism of $(\R^p,f(a))$ such that $g=\varphi_2\circ f\circ
\varphi_1$. If $\varphi_2$ can be taken to be the identity, then $f$ and
$g$ are called 
\indexsymdef{Requivalent}{$\protect\mathcal{R}$-equivalent}\textit{$\mathcal{R}$-equivalent} (for
\textit{right-equivalent}).
There is also a notion of 
contact 
equivalence,
denoted by $\mathcal{K}$-equivalence, whose definition is readily
available, for example in \cite{Wall}. It is implied by
$\mathcal{A}$-equivalence.

We use 
\indexsymdef{jkf}{$j^kf$}$j^kf$ to denote the \index{jets}$k$-jet of $f$; 
for fixed coordinate systems at
points $a$ and $f(a)$ this is just the Taylor polynomial of $f$ of degree
$k$. For $\mathcal{G}$ one of $\mathcal{A}$, $\mathcal{K}$, or
$\mathcal{R}$, one says that $f$ is 
\indexdef{finitely determined}finitely $\mathcal{G}$-determined if
there exists a $k$ so that any germ $g$ with $j^kg=j^kf$ must be
$\mathcal{G}$-equivalent to $f$.  In particular, if $f$ is finitely
$\mathcal{G}$-determined, then for any fixed choice of coordinates at $a$
and $f(a)$, $f$ is $\mathcal{G}$-equivalent to a polynomial.

The elaborate theory of singularities of maps from $\R^m$ to $\R^p$
simplifies considerably when $p=1$.
\begin{lemma} Let $f$ be the germ of a map from $(\R^m,0)$ to $(\R,0)$,
with $0$ is a critical point of $f$. The following are equivalent.
\begin{enumerate}
\item[(i)] $f$ has finite singularity type.
\item[(ii)] $f$ is finitely $\mathcal{A}$-determined.
\item[(iii)] $f$ is finitely $\mathcal{R}$-determined.
\item[(iv)] $f$ is finitely $\mathcal{K}$-determined.
\end{enumerate}
\label{lem:FST}
\end{lemma}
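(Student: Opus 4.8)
\textbf{Proof plan for Lemma~\ref{lem:FST}.} The plan is to prove the cycle of implications (iv)$\Rightarrow$(iii)$\Rightarrow$(ii)$\Rightarrow$(i)$\Rightarrow$(iv), which is the most economical route since several arrows are either immediate or standard in the codimension-$p=1$ setting. The implication (ii)$\Rightarrow$(iii) and (iii)$\Rightarrow$(iv) go the wrong way for this cycle, so instead I would exploit the special fact that for function germs $f\colon(\R^m,0)\to(\R,0)$ the group $\mathcal{A}$ of source-and-target changes of coordinates reduces essentially to $\mathcal{R}$ together with the one-dimensional target diffeomorphisms. Concretely: a target diffeomorphism germ $\varphi_2\colon(\R,0)\to(\R,0)$ is just a reparameterization of the value, and for a function with a critical point at $0$ one can absorb $\varphi_2$ into $\varphi_1$ by a Morse-lemma-with-parameters argument, so that $\mathcal{A}$-equivalence and $\mathcal{R}$-equivalence of critical function germs coincide. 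This gives (ii)$\Leftrightarrow$(iii) directly. The implication (iii)$\Rightarrow$(iv) is trivial since $\mathcal{R}$-equivalence implies $\mathcal{K}$-equivalence (any right-equivalence is in particular a contact equivalence with trivial target component), so finite $\mathcal{R}$-determinacy a fortiori gives finite $\mathcal{K}$-determinacy. Thus the genuine content is the equivalence of (i) and (iv), i.e.\ that finite singularity type is the same as finite $\mathcal{K}$-determinacy, together with (iv)$\Rightarrow$(iii).

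First I would recall the algebraic characterizations. For a function germ $f$, finite $\mathcal{K}$-determinacy is equivalent to the local algebra $Q(f)=\mathcal{E}_m/(f,\partial_1 f,\ldots,\partial_m f)$ being finite-dimensional over $\R$ (equivalently, the Milnor/Tjurina-type number being finite); this is the classical Mather--Gaffney finite determinacy criterion specialized to $p=1$. On the other hand, ``finite singularity type'' for a map germ $g\colon(\R^m,0)\to(\R^p,0)$ is defined (as in \cite[p.~117]{Bruce}) by finiteness of the appropriate deformation algebra, and when $p=1$ this deformation algebra is precisely $Q(f)$ above. Hence (i) and (iv) are both equivalent to $\dim_\R Q(f)<\infty$, which I would simply cite from the standard references (\cite{Wall}, and the discussion in \cite{Bruce}), giving (i)$\Leftrightarrow$(iv) with essentially no work beyond unwinding definitions. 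For (iv)$\Rightarrow$(iii), I would invoke the stronger classical statement that finite $\mathcal{K}$-determinacy of a \emph{function} germ implies finite $\mathcal{R}$-determinacy: this is a known phenomenon for $p=1$ (contact-finite function germs are right-finite), provable via the relation between the $\mathcal{R}$- and $\mathcal{K}$-tangent spaces and the fact that in the target $\R$ the extra freedom in $\mathcal{K}$ over $\mathcal{R}$ is controlled by the ideal $(f)$, which by Nakayama contributes only finitely. Closing the loop, (iii)$\Rightarrow$(ii) by the coordinate-change remark above, and (ii)$\Rightarrow$(i) because finite $\mathcal{A}$-determinacy forces the Jacobian ideal to have finite colength (otherwise one could produce arbitrarily high-order non-equivalent perturbations).

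Laying this out in order: (1) establish that for critical function germs, $\mathcal{A}$-, $\mathcal{R}$-equivalence coincide, so (ii)$\Leftrightarrow$(iii); (2) note (iii)$\Rightarrow$(iv) trivially; (3) cite Mather--Gaffney in the hypersurface case to get (iv)$\Leftrightarrow$``$\dim_\R Q(f)<\infty$''; (4) identify the finite-singularity-type deformation algebra of a function germ with $Q(f)$, yielding (i)$\Leftrightarrow$(iv); (5) deduce the full equivalence of (i)--(iv). The main obstacle I anticipate is step (1): carefully proving that target reparameterizations can be absorbed into source reparameterizations for germs with a critical point requires a parametrized/Morse-type normal form argument and some care about whether the critical value is nondegenerate in the relevant directions — if one is not careful this absorption can fail at very degenerate germs. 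I expect, however, that for the finitely-determined germs in question the standard argument (write $f=\varphi_2\circ g$ and use the inverse function theorem on $\varphi_2$ together with the fact that $g$ is itself finitely determined) goes through cleanly, so that the entire lemma reduces to bookkeeping with the known equivalences in singularity theory, all of which are available in \cite{Bruce} and \cite{Wall}.
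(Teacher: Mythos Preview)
Your step~(1) is wrong: $\mathcal{A}$- and $\mathcal{R}$-equivalence do \emph{not} coincide for critical function germs. Take $f(x)=x_1^2+\cdots+x_m^2$; it is $\mathcal{A}$-equivalent to $-f$ via $\varphi_2(y)=-y$, but not $\mathcal{R}$-equivalent to $-f$ since $f\circ\varphi_1\ge 0$ for every source diffeomorphism $\varphi_1$. So the target reparametrization cannot in general be absorbed into the source, and the obstacle you anticipated is fatal to that argument. (What \emph{is} true, and is part of the content of the lemma, is that finite $\mathcal{A}$-determinacy and finite $\mathcal{R}$-determinacy are equivalent for function germs---but not because the equivalence relations themselves coincide.)

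Your numbered summary (1)--(5) also has a logical gap: from (ii)$\Leftrightarrow$(iii), (iii)$\Rightarrow$(iv), and (i)$\Leftrightarrow$(iv) you cannot deduce (iv)$\Rightarrow$(ii). The missing arrow is exactly the (iv)$\Rightarrow$(iii) that you mentioned earlier as a ``known phenomenon'' but then dropped from the final plan---and that is the genuinely nontrivial step, not a side remark.

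The paper's route is much shorter and avoids both problems. The equivalence (i)$\Leftrightarrow$(iv) holds in all dimensions: Bruce's definition of finite singularity type \cite[p.~117]{Bruce} is literally Mather's criterion \cite[Prop.~(3.6)(a)]{Mather} for finite $\mathcal{K}$-determinacy (alternatively one cites Corollary~III.6.9 of~\cite{GWLP}); this is your steps~(3)--(4), phrased as a direct identification rather than via the algebra $Q(f)$. Two of the implications among (ii), (iii), (iv) are immediate from the group inclusions $\mathcal{R}\subset\mathcal{A}\subset\mathcal{K}$, and the one nontrivial arrow closing the cycle is supplied by Corollary~2.13 of Wall~\cite{Wall}. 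So your instinct to reduce everything to citations from \cite{Bruce}, \cite{Mather}, and \cite{Wall} was exactly right; the error was attempting a direct geometric proof of (ii)$\Leftrightarrow$(iii) rather than letting the group inclusions give the easy directions and Wall's theorem the hard one.
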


\begin{proof}
In all dimensions, $f$ is finitely $\mathcal{K}$-determined if and only if
it is of finite singularity type (Corollary~III.6.9 of~\cite{GWLP}, or
alternatively the definition of finite singularity type of
J. Bruce\index{Bruce} \cite[p.~117]{Bruce} is exactly the condition given
in Proposition~(3.6)(a) of J. Mather\index{Mather}~\cite{Mather} for $f$ to
be finitely $\mathcal{K}$-determined).  Therefore (i) is equivalent to
(iv).  Trivially (ii) implies (iii), and (iii) implies (iv), and by
Corollary~2.13 of \cite{Wall}, (iv) implies~(ii).
\end{proof}

\subsection{Semialgebraic sets}
Recall (see for example Chapter~I.2 of \cite{GWLP}) that the class of
\index{semialgebraic sets|(}\textit{semialgebraic} subsets 
of $\R^m$ is defined to be the smallest
Boolean algebra of subsets of $\R^m$ that contains all sets of the form
$\{x\in \R^m\;\vert\;p(x)>0\}$ with $p$ a polynomial on $\R^m$. The
collection of semialgebraic subsets of $\R^m$ is closed under finite
unions, finite intersections, products, and complementation. The inverse
image of a semialgebraic set under a polynomial mapping is semialgebraic. A
nontrivial fact is the 
\index{Tarski-Seidenberg Theorem}Tarski-Seidenberg Theorem (Theorem~II.2(2.1) of
\cite{GWLP}), which says that a polynomial image of a semialgebraic set is
a semialgebraic set. Here is an easy lemma that we will need later.

\begin{lemma} Let $S$ be a semialgebraic subset of $\R^n$. If $S$ has empty
interior, then $S$ is contained in the zero set of a nontrivial polynomial
in $\R^n$.
\par
\label{lem:semialgebraic}
\end{lemma}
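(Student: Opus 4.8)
The plan is to prove the contrapositive in the form: if $S$ is semialgebraic and is \emph{not} contained in the zero set of any nontrivial polynomial, then $S$ has nonempty interior. First I would use the fact that $S$ admits a finite semialgebraic cell decomposition (the cylindrical algebraic decomposition, or equivalently the stratification of semialgebraic sets into finitely many connected semialgebraic manifolds; see \cite{GWLP}, Chapter~I.2). Write $S=\bigcup_{i} S_i$ as a finite disjoint union of connected semialgebraic submanifolds $S_i\subseteq \R^n$, each of some dimension $d_i$. Let $d=\max_i d_i$ be the dimension of $S$.

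The key dichotomy is whether $d=n$ or $d<n$. If some $S_i$ has dimension $n$, then being an $n$-dimensional submanifold of $\R^n$ it is open in $\R^n$, so $S$ has nonempty interior and we are done. So I would show that the alternative, $d<n$, forces $S$ to lie in the zero set of a nontrivial polynomial. Each $S_i$ has dimension $d_i\le d<n$, so its Zariski closure $\overline{S_i}^{\,\mathrm{Zar}}$ is a proper algebraic subset of $\R^n$, hence is contained in $Z(p_i)$ for some nonzero polynomial $p_i$ (one can take $p_i$ to be any nonzero element of the ideal of polynomials vanishing on $S_i$, which is nontrivial precisely because $S_i$ is not Zariski-dense). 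Then $S=\bigcup_i S_i\subseteq \bigcup_i Z(p_i)=Z\bigl(\prod_i p_i\bigr)$, and $\prod_i p_i$ is a nonzero polynomial since $\R[x_1,\dots,x_n]$ is an integral domain. This gives the contrapositive, hence the lemma.

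The main obstacle, such as it is, is simply invoking the right structural fact about semialgebraic sets: that a semialgebraic set of dimension less than $n$ is Zariski-contained in a hypersurface, equivalently that a semialgebraic set of empty interior has dimension $<n$. Both of these are standard consequences of cell decomposition and the fact that the real dimension of a semialgebraic set equals the Krull dimension of its Zariski closure (see \cite{GWLP}). An alternative, even more elementary route avoiding dimension theory: since $S$ has empty interior, so does its closure's interior argument is not immediate, so I prefer the dimension-theoretic approach; but one could also argue directly that the set of $x\in\R^n$ with a neighborhood disjoint from $S$ is open and dense (using that semialgebraic sets with empty interior are ``thin''), and then that the complement, which contains $S$, is a semialgebraic set of lower dimension, iterating down. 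Either way the essential content is the finiteness and tameness built into the semialgebraic category, and the polynomial is produced by multiplying together the finitely many defining polynomials of the Zariski closures of the pieces.
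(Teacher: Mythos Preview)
Your argument is correct, but it takes a genuinely different route from the paper's. You invoke cell decomposition (CAD), dimension theory, and Zariski closures; the paper's proof is much more elementary and works directly from the definition. The paper observes that any semialgebraic set is a finite union of basic sets of the form $\bigcap_{i=1}^r \{p_i\geq 0\}\cap \bigcap_{j=1}^s \{q_j>0\}$, so it suffices to handle one such piece. If a point $x$ of such a piece satisfies $p_i(x)>0$ for all $i$, then all the defining inequalities are strict at $x$, and continuity of the polynomials gives an open neighborhood of $x$ inside the piece, contradicting empty interior. Hence every point lies in some $\{p_i=0\}$, so the piece is contained in $Z(\prod_i p_i)$; multiplying over the finitely many pieces finishes.

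Your approach has the virtue of being the ``structural'' explanation (empty interior $\Leftrightarrow$ dimension $<n$ $\Rightarrow$ proper Zariski closure), and it generalizes cleanly. The paper's approach buys you a two-line proof requiring nothing beyond the definition of semialgebraic and continuity of polynomials---no stratification, no dimension theory, no Zariski closures. Since the lemma is used only as a technical step en route to Proposition~\ref{prop:Sergeraert}, the lighter argument is preferable here.
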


\begin{proof}
Since the union of the zero sets of two polynomials is the zero set of
their product, it suffices to consider a single semialgebraic set of the
form $(\cap_{i=1}^r \{x\;\vert\;p_i(x)\geq 0\})\cap (\cap_{j=1}^s
\{x\;\vert\;q_j(x) > 0\})$ where $p_i$ and $q_j$ are nontrivial
polynomials. We will show that if $S$ is of this form and has empty
interior, then $r\geq 1$ and $S$ is contained in the zero set of
$\prod_{i=1}^r p_i$. Suppose that $x\in S$ but all $p_i(x)>0$. Since all
$q_j(x)>0$ as well, there is an open neighborhood of $x$ on which all $p_i$
and all $q_j$ are positive. But then, $S$ has nonempty interior.
\end{proof}\index{finite singularity type|)}%
\index{singularity!finite type|)}\index{semialgebraic sets|)}

\subsection{The codimension of a real-valued function}
\label{subsec:Sergeraert1}\index{codimension!of real-valued function|(}

It is, of course, fundamentally important that the Morse functions form an
open dense subset of $\Maps(M,\R)$. But a great deal can also be said
about the non-Morse functions. There is a ``natural'' stratification of
$\Maps(M,\R)$ by subsets $\mathcal{F}_i$, where 
\indexdef{stratification}stratification here
means that the $\mathcal{F}_i$ are disjoint subsets such that for every $n$
the union $\cup_{i=0}^n \mathcal{F}_i$ is open. The functions in
$\mathcal{F}_n$ are those of ``codimension'' $n$, which we will define
below. In particular, $\mathcal{F}_0$ is exactly the open dense subset of
Morse functions.

The union $\cup_{i=0}^\infty \mathcal{F}_i$ is not all of
$\Maps(M,\R)$. However, the residual set $\Maps(M,\R)-\cup_{i=0}^\infty
\mathcal{F}_i$ is of ``infinite codimension,'' and any parameterized family
of maps $F\colon M\times U\to \R$ can be perturbed so that each $F_u$ is of
finite codimension. In fact, by applying Theorem~\ref{thm:Bruce2} to the
trivial bundle $1_{\R}\colon \R\to \R$ and noting
Proposition~\ref{prop:FST_weaktransverse}, we may perturb any parameterized
family so that each $F_u$ is of finite singularity type at each of its
critical points. The definition of $f\in \Maps(M,\R)$ being of finite
codimension, given below, is exactly equivalent to the algebraic condition
given in (3.5) of Mather\index{Mather}~\cite{Mather} for $f$ to be finitely
$\mathcal{A}$-determined at each of its critical points (as noted in
\cite{Mather}, this part of (3.5) was first due to Tougeron
\cite{Tougeron1}, \cite{Tougeron2}). By Lemma~\ref{lem:FST}, this is
equivalent to $f$ having finite singularity type at each of its critical
points.  We summarize this as
\begin{proposition}\index{codimension!and singularity type}%
\index{singularity type!and finite codimension}
A map $f\in \Maps(M,\R)$ is of finite codimension 
if and only if it has finite singularity type at each of its critical
points.\par
\label{prop:FST_codimension}
\end{proposition}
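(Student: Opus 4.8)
The plan is to recall the relevant algebraic definitions and then chase through the chain of equivalences already assembled in this subsection. First I would state precisely what ``finite codimension'' means for $f\in\Maps(M,\R)$: it is the algebraic condition of Mather~\cite{Mather}, formula (3.5), applied at each critical point of $f$ --- namely that the local algebra associated to the germ of $f$ at each critical point satisfies the finite-codimension condition making $f$ finitely $\mathcal{A}$-determined there. (Away from critical points $f$ is a submersion and there is nothing to check, so only the behavior at critical points matters; note that since $M$ is compact the critical set is compact, hence a perturbed family has only finitely many critical points to worry about at each parameter, but that observation is not needed for the statement itself.)

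The forward direction is then immediate from the literature citations already invoked: if $f$ has finite codimension, then by the cited part of (3.5) of~\cite{Mather} (originally due to Tougeron, \cite{Tougeron1,Tougeron2}) $f$ is finitely $\mathcal{A}$-determined at each of its critical points, and by Lemma~\ref{lem:FST}, item~(ii) implies item~(i), so $f$ has finite singularity type at each critical point. For the converse, suppose $f$ has finite singularity type at each critical point. By Lemma~\ref{lem:FST} again --- this time item~(i) implies item~(ii) --- $f$ is finitely $\mathcal{A}$-determined at each critical point, which is exactly the algebraic condition of Mather's (3.5) that defines finite codimension. So the two conditions coincide.

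The only real content beyond bookkeeping is making sure the ``codimension'' notion used here is literally the Mather (3.5) condition rather than some a priori different stratification-theoretic notion; I would dispose of this by simply \emph{defining} finite codimension via (3.5) (as the excerpt already announces it will: ``The definition of $f\in \Maps(M,\R)$ being of finite codimension, given below, is exactly equivalent to the algebraic condition given in (3.5) of Mather''), so that Proposition~\ref{prop:FST_codimension} becomes a direct translation through Lemma~\ref{lem:FST}. The main obstacle, such as it is, is purely expository: ensuring the reader sees that ``at each of its critical points'' is handled uniformly --- i.e.\ finite singularity type, finite $\mathcal{A}$-determinacy, and finite codimension are each defined as ``the corresponding local condition holds at every critical point,'' so the equivalence of the local notions (Lemma~\ref{lem:FST}) upgrades verbatim to the equivalence of the global notions. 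There is no genuine analytic difficulty; the proposition is a repackaging of Lemma~\ref{lem:FST} together with the identification of Mather's determinacy condition with the codimension filtration.
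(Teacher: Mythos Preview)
Your proposal is correct and matches the paper's approach exactly: the paper does not give a separate proof but states the proposition as a summary of the paragraph preceding it, which asserts that the (Sergeraert) definition of finite codimension coincides with Mather's condition (3.5) for finite $\mathcal{A}$-determinacy at each critical point (citing Tougeron), and then invokes Lemma~\ref{lem:FST} to pass to finite singularity type. Your observation that the only content is identifying the codimension notion with Mather's (3.5) is precisely the point, and the paper handles it the same way---by assertion with references rather than by argument.
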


\index{Sergeraert}We now recall material from Section~7 of
\cite{Sergeraert}. Denote the smooth sections of a bundle $E$ over $M$ by
$\Gamma(E)$. Until we reach Theorem~\ref{thm:Morse weak transversality}, we
will denote $\Maps(M,\R)$ by $C(M)$. For a compact subset $K\subset \R$,
define $\Diff_K(\R)$ to be the diffeomorphisms of $\R$ supported on $K$.

Fix an element $f\in C(M)$ and a compact subset $K\subset \R$ for which
$f(M)$ lies in the interior of $K$. Define $\Phi\colon \Diff(M)\times
\Diff_K(\R)\to C(M)$ by $\Phi(\varphi_1,\varphi_2)=\varphi_2\circ
f\circ\varphi_1$. The differential of $\Phi$ at $(1_M,1_{\R})$ is defined
by $D(\xi_1,\xi_2)=f_*\xi_1+\xi_2\circ f$. Here, $\xi_1\in \Gamma(TM)$,
which is regarded as the tangent space at $1_M$ of $\Diff(M)$, $\xi_2\in
\Gamma_K(T\R)$, similarly identified with the tangent space at $1_{\R}$ of
$\Diff_K(\R)$, and $f_*\xi_1+\xi_2\circ f$ is regarded as an element of
$\Gamma(f^*T\R)$, which is identified with $C(M)$.  The 
\textit{codimension}
\indexsymdef{cdimf}{$\cdim(f)$}$\cdim(f)$ of $f$ 
is defined to be the real codimension of the image of $D$
in~$C(M)$. As will be seen shortly, the codimension of $f$ tells the real
codimension of the $\Diff(M)\times \Diff_K(\R)$-orbit of $f$ in~$C(M)$.

Suppose that $f$ has finite codimension $c$. In Section~7.2 of
\cite{Sergeraert}, a method is given for computing $\cdim(f)$ using the
critical points of $f$.  Fix a critical point $a$ of $f$, with critical
value $f(a)=b$.  Consider $D_a\colon \Gamma_a(TM) \times C_b(\R)\to
C_a(M)$, where a subscript as in $\Gamma_a(TM)$ indicates the germs at $a$
of $\Gamma(TM)$, and so on. Notice that the codimension of the image
of $D_a$ is finite, indeed it is at most $c$.

Let $A$ denote the ideal $f_*\Gamma_a(TM)$ of $C_a(M)$. This can be
identified with the ideal in $C_a(M)$ generated by the partial derivatives
of $f$. An argument using Nakayama's Lemma \cite[p.~645]{Sergeraert} shows
that $A$ has finite codimension in $C_a(M)$, and that some power of
$f(x)-f(a)$ lies in $A$. Define $\cdim(f,a)$ to be the dimension of
$C_a(M)/A$, and $\dim(f,a,b)$ to be the smallest $k$ such that
$(f(x)-f(a))^k\in A$.

Here is what these are measuring. The ideal $A$ tells what local
deformations of $f$ at $a$ can be achieved by precomposing $f$ with a
diffeomorphism of $M$ (near $1_M$), thus $\cdim(f,a)$ measures the
codimension of the $\Diff(M)$-orbit of the germ of $f$ at $a$.  The
additional local deformations of $f$ at $a$ that can be achieved by
postcomposing with a diffeomorphism of $\R$ (again, near $1_{\R}$) reduce
the codimension by $k$, basically because Taylor's theorem shows that the
germ at $a$ of any $\xi_2(f(x))$ can be written in terms of the powers
$(f(x)-f(a))^i$, $i<k$, plus a remainder of the form $K(x)(f(x)-f(a))^k$,
which is an element of the ideal $A$. Thus $\cdim(f,a)-\dim(f,a,b)$ is the
codimension of the image of $D_a$. For a noncritical point or a stable
critical point such as $f(x,y)=x^2-y^2$ at $(0,0)$, this local codimension
is $0$, but for unstable critical points it is positive.

Now, let $\dim(f,b)$ be the maximum of $\dim(f,a,b)$, taken over the
critical points $a$ such that $f(a)=b$ (put $\dim(f,b)=0$ if $b$ is not a
critical value). The codimension of $f$ is then $\sum_{a\in
M}\cdim(f,a)-\sum_{b\in\R}\dim(f,b)$.

Here is what is happening at each of the finitely many critical values $b$
of $f$. Let $a_1,\ldots\,$, $a_\ell$ be the critical points of $f$ with
$f(a_i)=b$, and for each $i$ write $f_i$ for the germ of $f-f(a_i)$ at
$a_i$. Consider the element $(f_1,\ldots,f_\ell)\in
C_{a_1}(M)/A_1\oplus\cdots\oplus C_{a_\ell}(M)/A_\ell$. The integer
$\dim(f,b)$ is the smallest power of $(f_1,\ldots,f_\ell)$ that is trivial
in $C_{a_1}(M)/A_1\oplus\cdots\oplus C_{a_\ell}(M)/A_\ell$.  The sum
$\sum_i \cdim(f,a_i)$ counts how much codimension of $f$ is produced by the
inability to achieve local deformations of $f$ near the $a_i$ by
precomposing with local diffeomorphisms at the $a_i$. This codimension is
reduced by $\dim(f,b)$, because the germs of the additional deformations
that can be achieved by postcomposition with diffeomorphisms of $\R$ near
$b$ are the linear combinations of $(1,\ldots,1)$, $(f_1,\ldots,f_\ell)$,
$(f_1^2,\ldots,f_\ell^2),\ldots\,$, $(f_1^{k-1},\ldots,f_\ell^{k-1})$. Thus
the contribution to the codimension from the critical points that map to
$b$ is $\sum_i\cdim(f,a_i)-\dim(f,b)$, and summing over all critical values
gives the codimension of $f$.\index{codimension!of real-valued function|)}

\subsection{The stratification of $\Maps(M,\R)$ by codimension}
\label{subsec:Sergeraert2}\index{stratification!of space of real-valued functions|(}

The functions whose codimension is finite and equal to $n$ form the stratum
$\mathcal{F}_n$. In particular, $\mathcal{F}_0$ are the Morse functions,
$\mathcal{F}_1$ are the functions either having all critical points stable
and exactly two with the same critical value, or having distinct critical
values and all critical points stable except one which is a birth-death
point. Moving to higher strata occurs either from more critical points
sharing a critical value, or from the appearance of more singularities of
positive but still finite local codimension.

We use the natural notations $\mathcal{F}_{\geq n}$ for $\cup_{i\geq
n}\mathcal{F}_i$, $\mathcal{F}_{> n}$ for $\cup_{i > n}\mathcal{F}_i$,
and so on. In particular, $\mathcal{F}_{\geq 0}$ is the set of all
elements of $C(M)$ of finite codimension, and $\mathcal{F}_{> 0}$ is the
set of all elements of finite codimension that are not Morse functions.

The main results of \cite{Sergeraert} (in particular, Theorem~8.1.1 and
Theorem~9.2.4) show that the Sergeraert stratification is locally trivial,
in the following sense.
\begin{theorem}[Sergeraert]\indexstate{Sergeraert!Stratification Theorem} Suppose that
$f\in \mathcal{F}_n$. Then there is a neighborhood $V$ of $f$ in
$C(M)$ of the form $U\times \R^n$, where
\begin{enumerate}
\item $U$ is a neighborhood of $1$ in $\Diff(M)\times \Diff_K(\R)$, and
\item
there is a stratification $\R^n=\cup_{i=0}^n F_i$, such that
$\mathcal{F}_i\cap V=U\times F_i$.
\end{enumerate}
\label{thm:Sergeraert}
\end{theorem}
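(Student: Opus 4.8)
The plan is to reduce the statement to the two cited results of \index{Sergeraert}Sergeraert, Theorem~8.1.1 and Theorem~9.2.4 of~\cite{Sergeraert}, and to explain how they assemble; we would not reconstruct Sergeraert's hard analysis here. First I would unwind the hypothesis: by Proposition~\ref{prop:FST_codimension}, $f\in\mathcal{F}_n$ means exactly that $f$ has finite singularity type at each of its (finitely many) critical points, equivalently, by Lemma~\ref{lem:FST}, that $f$ is finitely $\mathcal{A}$-determined at each critical point. Let $G=\Diff(M)\times\Diff_K(\R)$ act on $C(M)=\Maps(M,\R)$ by $(\varphi_1,\varphi_2)\cdot g=\varphi_2\circ g\circ\varphi_1$, and recall from Subsection~\ref{subsec:Sergeraert1} that the integer $n=\cdim(f)$ is precisely the real codimension in $C(M)$ of the image of the differential $D$ of the orbit map at $f$, and that it is computed purely locally, as $\sum_a\cdim(f,a)-\sum_b\dim(f,b)$ over the critical points $a$ and critical values $b$.

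Second I would build a global finite-dimensional unfolding. Finite $\mathcal{A}$-determinacy gives, near each critical point $a_i$, a polynomial normal form together with a versal unfolding with finite-dimensional base; multiplying these by bump functions supported near the $a_i$ and adding, one obtains a smooth map $u\colon\R^n\to C(M)$ with $u(0)=0$, supported in a neighborhood of the critical set, whose derivative $Du(0)$ maps onto a complement of $\Img(D)$ in $C(M)$. The dimension of this complement being exactly $n$ is the content of the codimension bookkeeping recalled in Subsection~\ref{subsec:Sergeraert1}: the $\cdim(f,a_i)$ account for the deformations not reachable by precomposition with local diffeomorphisms, and the $\dim(f,b)$ correct for those recovered by postcomposition with diffeomorphisms of $\R$ near the critical values.

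Third, form
\[
\Phi\colon G\times\R^n\longrightarrow C(M),\qquad
\Phi(\varphi_1,\varphi_2,\lambda)=\varphi_2\circ\bigl(f+u(\lambda)\bigr)\circ\varphi_1 .
\]
By construction $D\Phi$ at the basepoint is surjective, with its $\R^n$-directions spanning a complement to $\Img(D)$. The substantive step, which is exactly Theorems~8.1.1 and~9.2.4 of~\cite{Sergeraert}, is to upgrade this infinitesimal surjectivity to a genuine local product chart: an inverse-function-theorem argument in the tame Fr\'echet category, using the finite-codimension (equivalently finite-singularity-type) hypothesis to obtain the needed tame estimates, produces a homeomorphism of a neighborhood $V$ of $f$ in $C(M)$ with $U\times\R^n$, where $U$ is a neighborhood of $1$ in $G$. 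The stratification of $\R^n$ is then defined by $F_i=\{\lambda: f+u(\lambda)\in\mathcal{F}_i\}$; because codimension is $G$-invariant and, by Subsection~\ref{subsec:Sergeraert1}, depends only on the germs of $f+u(\lambda)$ at its critical points, one gets $\mathcal{F}_i\cap V=U\times F_i$, and upper semicontinuity of codimension (visible from the locally algebraic jet conditions cutting out the higher strata, as in Lemma~\ref{lem:semialgebraic}) shows that $\cup_{i\le k}F_i$ is open, so $\{F_i\}$ is indeed a stratification.

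The main obstacle is the third step. Passing from surjectivity of $D\Phi$ to an actual local trivialization $V\cong U\times\R^n$ is a genuine piece of hard analysis: one is effectively inverting a map between infinite-dimensional (tame Fr\'echet) manifolds whose linearization has infinite-dimensional kernel, and controlling this requires Sergeraert's Nash--Moser-type machinery together with his analysis of the orbit geometry near a finite-codimension function. We would simply invoke Theorems~8.1.1 and~9.2.4 of~\cite{Sergeraert} for this rather than redevelop it.
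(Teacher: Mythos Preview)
Your proposal is correct and matches the paper's treatment: the paper does not prove this theorem either, but cites it as Sergeraert's result (Theorems~8.1.1 and~9.2.4 of~\cite{Sergeraert}) and gives only a brief description of the transverse slice $E=\{f+\sum x_if_i\}$ with the $f_i$ a basis for the cokernel of $D$, exactly as you do. Your write-up is slightly more detailed---you anticipate the bump-function construction of the $f_i$ near critical points, which the paper defers to the proof of Proposition~\ref{prop:Sergeraert}, and you add the upper-semicontinuity remark for the stratification---but the approach is the same.
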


The inner workings of this result are as follows. Select elements
$f_1,\ldots,\,$ $f_n\in C(M)$ that represent a basis for the quotient of
$C(M)$ by the image of the differential $D$ of $\Phi$ at $(1_M,1_{\R})$.  For
$x=(x_1,\ldots,x_n)\in \R^n$, the function $g_x=f+\sum_{i=1}^n x_i f_i$ is
an element of $C(M)$. If the $x_i$ are chosen in a sufficiently small ball
around $0$, which is again identified with $\R^n$, then these $g_x$ form a
copy $E$ of $\R^n$ ``transverse'' to the image of $\Phi$. Then, $F_i$ is
defined to be the intersection $E\cap \mathcal{F}_i$. A number of subtle
results on this local structure and its relation to the action of
$\Diff(M)\times \Diff_K(\R)$ are obtained in \cite{Sergeraert}, but we will
only need the local structure we have described here.

We remark that $F_n$ is not necessarily just $\{0\}\in \R^n$, that is, the
orbit of $f$ under $\Diff(M)\times \Diff_K(\R)$ might not fill up the
stratum $\mathcal{F}_n$ near $f$. This result, due to H. Hendriks
\index{Hendriks}\cite{Hendriks}, has been interpreted as saying that the Sergeraert
stratification of $C(M)$ is \textit{not} locally trivial (a source of some
confusion), or that it is ``pathological'' (which we find far too
pejorative). 

Denoting $\cup_{i\geq 1}F_i$ by $F_{\geq 1}$, we have the following key
technical result.

\begin{proposition}
For some coordinates on $E$ as $\R^n$, there are a neighborhood $L$ of
$0$ in $\R^n$ and a nonzero polynomial $p$ on $\R^n$ such that $p(L\cap
F_{\geq 1})=0$.
\label{prop:Sergeraert}
\end{proposition}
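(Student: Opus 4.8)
The goal is to show that the set $F_{\geq 1}$ of non-Morse functions of finite codimension, viewed inside the finite-dimensional transversal $E \cong \R^n$ near a fixed $f \in \mathcal{F}_n$, is contained in the zero set of some nontrivial polynomial on $E$. The plan is to combine the local structure of the Sergeraert stratification (Theorem~\ref{thm:Sergeraert}) with the observation that being a Morse function is, locally in $E$, an algebraic condition on the jets of the family $g_x = f + \sum x_i f_i$ at its critical points, and then invoke Lemma~\ref{lem:semialgebraic}: a semialgebraic set with empty interior lies in the zero set of a nonzero polynomial.

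First I would set up the finite-dimensional bookkeeping. Since $f$ has finite codimension, by Proposition~\ref{prop:FST_codimension} it has finite singularity type at each of its finitely many critical points $a_1, \dots, a_\ell$, so each germ of $f$ at $a_i$ is finitely $\mathcal{A}$-determined, hence finitely $\mathcal{R}$-determined (Lemma~\ref{lem:FST}), hence $\mathcal{R}$-equivalent to a polynomial of some bounded degree $k$. The same determinacy degree $k$ works uniformly for all $g_x$ with $x$ in a small ball $L$ around $0$ (the codimension cannot jump up on a neighborhood, and the set of critical points varies controllably since $f$ has isolated, finitely-determined singularities). So after shrinking $L$, every critical point of every $g_x$, $x \in L$, lies near one of the $a_i$, and the $\mathcal{R}$-equivalence class of the germ of $g_x$ there is determined by its $k$-jet. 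I would then describe the "critical locus with labeled jets" map $\Psi$ sending $x \in L$ to the tuple of $k$-jets of $g_x$ at its critical points: because the $f_i$ are fixed smooth functions and differentiation and evaluation at a point are smooth (indeed the critical points satisfy a system of equations whose solutions can be parameterized using the implicit function theorem once one has diagonalized the non-degenerate directions), the components of $\Psi$ are real-analytic — and in fact, after choosing coordinates, polynomial in $x$ and in the local coordinates near the $a_i$, which is what matters for applying the Tarski–Seidenberg machinery.

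Next I would identify the condition "$g_x$ is Morse" with an open semialgebraic condition. A germ at a critical point is a nondegenerate quadratic (up to $\mathcal{R}$-equivalence) exactly when its Hessian is nonsingular and there is no other critical point with the same critical value; the first is the non-vanishing of a determinant (a polynomial in the $2$-jet), and the "distinct critical values" condition is the non-vanishing of finitely many differences of critical values. Thus $L \cap \mathcal{F}_0 = L \setminus F_{\geq 1}$ is the locus in $L$ where a certain finite collection of polynomial functions of $x$ (the Hessian determinants at the approximate critical points, and the pairwise differences of critical values) are all nonzero; this is an open semialgebraic subset of $\R^n$, and by Sergeraert's theorem it is dense near $0$ (Morse functions are dense). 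Therefore its complement $L \cap F_{\geq 1}$ is a closed semialgebraic set with empty interior. By Lemma~\ref{lem:semialgebraic} it is contained in the zero set of a nontrivial polynomial $p$ on $\R^n = E$, which is exactly the assertion (with the coordinates on $E$ being those in which the critical-point and Hessian data are polynomial).

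The main obstacle, and the step I would spend the most care on, is making rigorous the claim that "$g_x$ Morse" is cut out semialgebraically in $x$ — i.e. that the critical points of $g_x$ and their Hessians depend on $x$ in a way controlled by polynomial (or at least Nash/semialgebraic) data, so that Tarski–Seidenberg applies and one genuinely lands in the polynomial zero-set rather than merely a closed nowhere-dense analytic set. This requires using finite determinacy to replace the transcendental germs by their $k$-jets (so only finitely many Taylor coefficients, depending smoothly on $x$, enter), and then arguing that one may further change coordinates on $E$ so these finitely many coefficient functions become polynomial — here one leans on the fact that $f$ and the $f_i$ are fixed, the critical set of $g_x$ near each $a_i$ is the zero set of $\nabla g_x$ which can be solved by the implicit function theorem on the complementary-to-kernel directions and reduced to the kernel directions, and the resulting reduced map is finitely determined hence Nash-equivalent to a polynomial one. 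Once that reduction is in place, everything else — density of Morse functions from Sergeraert, emptiness of the interior, and the appeal to Lemma~\ref{lem:semialgebraic} — is immediate.
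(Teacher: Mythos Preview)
Your overall strategy matches the paper's: show $L\cap F_{\geq 1}$ is semialgebraic with empty interior, then invoke Lemma~\ref{lem:semialgebraic}. The empty-interior and final steps are fine. The gap is in how you obtain semialgebraicity.

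You propose to track the critical points of $g_x$ via the implicit function theorem, read off their $k$-jets, and then change coordinates on $E$ so that this data becomes polynomial in $x$. This does not work. When $f$ has a degenerate critical point at $a_i$ (which it must, since $f\in\mathcal{F}_n$ with $n\geq 1$), the critical points of $g_x$ near $a_i$ do not form a smooth graph over $x$: they collide, split, and bifurcate as $x$ varies, so there is no implicit-function parameterization to appeal to. Even along strata where such a parameterization exists locally, the dependence on $x$ is merely smooth, and the vague appeal to ``Nash-equivalence'' does not yield a global polynomial change of coordinates on~$E$.

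The idea you are missing is that the coordinates on $E$ are \emph{determined by the choice of the basis elements} $f_1,\dots,f_n$ for the cokernel of $D$, and this choice is free. The paper exploits this: first choose local coordinates at each critical point $a_i$ in which $f$ itself is a polynomial $p_i$ (possible by finite $\mathcal{R}$-determinacy, Lemma~\ref{lem:FST}); then choose each $f_j$ to be a polynomial $q_{i,j}$ in those same coordinates near $a_i$, cut off to be zero elsewhere. For $x$ in a small enough ball $L$, every critical point of $g_x$ lies in the region where the cutoffs equal $1$, so $g_x\vert_{W_i}=p_i+\sum x_{i,j}q_{i,j}$ is genuinely polynomial in both the spatial variables $\overline{u_i}$ and the parameters $\overline{x_i}$. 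Now the critical locus $\{\nabla_u G_i=0\}$, the zero-Hessian locus, and the equal-critical-value locus are all algebraic subsets of $(\overline{u_i},\overline{x_i})$-space (or products thereof), and Tarski--Seidenberg projects them to semialgebraic subsets of $L$. No tracking of individual critical points is needed; the whole incidence variety is handled at once.
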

\begin{proof}
We will begin with a rough outline of the proof. Using Lemma~\ref{lem:FST},
we may choose local coordinates at the critical points of $f$ for which $f$
is polynomial near each critical point. We will select the $f_i$ in the
construction of the transverse slice $E=\R^n$ to be polynomial on these
neighborhoods.  Now $F_{\geq 1}$ consists exactly of the choices of
parameters $x_i$ for which $f+\sum x_if_i$ is not a Morse function, since
they are the intersection of $E$ with $\mathcal{F}_{\geq 1}$. We will show
that they form a semialgebraic set. But $F_{\geq1}$ has no interior, since
otherwise (using Theorem~\ref{thm:Sergeraert}) the subset of Morse
functions $\mathcal{F}_0$ would not be dense in $C(M)$. So
Lemma~\ref{lem:semialgebraic} shows that $F_{\geq 1}$ lies in the zero set
of some nontrivial polynomial.

Now for the details. Recall that $m$ denotes the dimension of $M$. Consider
a single critical value $b$, and let $a_1,\ldots\,$, $a_\ell$ be the
critical points with $f(a_i)=b$. Fix coordinate neighborhoods $U_i$ of the
$a_i$ with disjoint closures, so that $a_i$ is the origin $0$ in $U_i$.  By
Lemma~\ref{lem:FST}, $f$ is finitely $\mathcal{R}$-determined near each
critical point, so on each $U_i$ there is a germ $\varphi_i$ of a
diffeomorphism at $0$ so that $f\circ \varphi_i$ is the germ of a
polynomial. That is, by reducing the size of the $U_i$ and changing the
local coordinates, we may assume that on each $U_i$, $f$ is a polynomial
$p_i$. As explained in Subsection~\ref{subsec:Sergeraert1}, the
contribution to the codimension of $f$ from the $a_i$ is the dimension of
the quotient
\[ Q_b=\big(\oplus_{i=1}^\ell C_{a_i}(U_i)/A_i\big)/B\]
where $B$ is the vector subspace spanned by $\{1,
(p_1(x)-b,\ldots,p_\ell(x)-b),\ldots,\allowbreak
((p_1(x)-b)^{k-1},\ldots,(p_\ell(x)-b)^{k-1})\}$. Choose $q_{i,j}$, $1\leq
j\leq n_i$, where $q_{i,j}$ is a polynomial on $U_i$, so that the germs of
the $q_{i,j}$ form a basis for $Q_b$. Fix vector spaces $\Lambda_i\cong
\R^{n_i}=\{(x_{i,1},\ldots,x_{i,n_i})\}$; these will eventually be some of
the coordinates on~$E$.

In each $U_i$, select round open balls $V_i$ and $W_i$ centered at $0$ so
that $W_i\subset\overline{W_i} \subset V_i\subset \overline{V_i}\subset
U_i$.  We select them small enough so that the closures in $\R$ of their
images under $f$ do not contain any critical value except for $b$. Fix a
smooth function $\mu\colon M\to [0,1]$ which is $1$ on $\cup
\overline{W_i}$ and is $0$ on $M-\cup V_i$, and put $f_{i,j}=\mu\cdot
q_{i,j}$, a smooth function on all of $M$. Now choose a product $L=\prod_i
L_i$, where each $L_i$ is a round open ball centered at $0$ in $\Lambda_i$,
small enough so that if each $(x_{i,1},\ldots, x_{i,n_i})\in L_i$, then
each critical point of $f+\sum x_{i,j} f_{i,j}$ either lies in $\cup W_i$,
or is one of the original critical points of $f$ lying outside of~$\cup
U_i$.

We repeat this process for each of the finitely many critical values of
$f$, choosing additional $W_i$ and $L_i$ so small that all critical points
of $f+\sum x_{i,j} f_{i,j}$ lie in $\cup W_i$. That is, these perturbations
of $f$ are so small that each of the original critical points of $f$ breaks
up into critical points that lie very near the original one and far from
the others.

The sum of all $n_i$ is now $n$. We again use $\ell$ for the number of
$U_i$, and write $\Lambda$ and $L$ for the direct sum of all the
$\Lambda_i$ and the product of all the $L_i$ respectively. For $x\in
L$, write $g_x=f+\sum x_{i,j}f_{i,j}$. It remains to show that the set of
$x$ for which $g_x$ is not a Morse function--- that is, has a critical
point with zero Hessian or has two critical points with the same value---
is contained in a union of finitely many semialgebraic sets.

Denote elements of $W_i$ by $\overline{u_i} = (u_{i,1},\ldots,u_{i,m})$,
and similarly for elements $\overline{x_i}$ of $L_i$.  Define $G_i\colon
W_i\times L_i\to \R$ by $G_i(\overline{u_i},\overline{x_i}) =
p_i(\overline{u_i})+\sum_{j=1}^{n_i}x_{i,j}q_{i,j}(\overline{u_i})$. Note
that for $x=(\overline{x_1},\ldots,\overline{x_\ell})$,
$(G_i)_{\overline{x_i}}$ is exactly the restriction of $g_x$ to $W_i$.

We introduce one more notation that will be convenient.  For $X\subseteq
L_i$ define $E(X)$ to be the set of all
$(\overline{x_1},\ldots,\overline{x_\ell})$ in $L$ such that
$\overline{x_i}\in X$. When $X$ is a semialgebraic subset of $L_i$, $E(X)$
is a semialgebraic subset of $L$. Similarly, if $X\times Y\subseteq
L_i\times L_j$, we use $E(X\times Y)$ to denote its extension to a subset
of $L$, that is, $E(X)\cap E(Y)$.

For each $i$, let $S_i$ be the set of all $(\overline{u_i},\overline{x_i})$
in $W_i\times L_i$ such that $\partial G_i/\partial u_{i,j} $ for
$1\leq j\leq n_i$ all vanish at $(\overline{u_i},\overline{x_i})$, that is,
the pairs such that $\overline{u_i}$ is a critical point of
$(G_i)_{\overline{x_i}}$.  Since $S_i$ is the intersection of an algebraic
set in $\R^m\times \R^{n_i}$ with $W_i\times L_i$, and the latter are round
open balls, $S_i$ is semialgebraic. Let $H_i$ be the set of all
$(\overline{u_i},\overline{x_i})$ in $W_i\times L_i$ such that the Hessian
of $(G_i)_{\overline{x_i}}$ vanishes at $\overline{u_i}$, again a
semialgebraic set. The intersection $H_i\cap S_i$ is the set of all
$(\overline{u_i},\overline{x_i})$ such that $(G_i)_{\overline{x_i}}$ has an
unstable critical point at $\overline{u_i}$. By the 
\index{Tarski-Seidenberg Theorem}Tarski-Seidenberg
Theorem, its projection to $L_i$ is a semialgebraic set, which we will
denote by $X_i$. The union of the $E(X_i)$, $1\leq i\leq \ell$, is
precisely the set of $x$ in $L$ such that $g_x$ has an unstable critical
point.

\newpage
Now consider $G_i\times G_i\colon S_i\times S_i-\Delta_i\to \R^2$, where
$\Delta_i$ is the diagonal in $S_i\times S_i$.  Let
$\widetilde{Y_i}=(G_i\times G_i)^{-1}(\Delta_{\R^2})$, where
$\Delta_{\R^2}$ is the diagonal of $\R^2$. Now, let $\Delta_i'$ be the set
of all $((\overline{u_i},\overline{x_i}),
(\overline{u_i}',\overline{x_i}'))$ in $W_i\times L_i\times W_i\times L_i$
such that $\overline{x_i}=\overline{x_i}'$. Then the projection of
$\widetilde{Y_i}\cap \Delta_i'$ to its first two coordinates is the set of
all $(\overline{u_i},\overline{x_i})$ in $W_i\times L_i$ such that
$\overline{u_i}$ is a critical point of $(G_i)_{\overline{x_i}}$ and
$(G_i)_{\overline{x_i}}$ has another critical point with the same value.
The projection to the second coordinate alone is the set $Y_i$ of
$\overline{x_i}$ for which $(G_i)_{\overline{x_i}}$ has two critical points
with the same value.

Finally, for $i\neq j$, consider $G_i\times G_j\colon S_i\times S_j\to
\R^2$ and let $\widetilde{Y_{i,j}}$ be the inverse image of $\Delta_{\R^2}$. Let
$Y_{i,j}$ be the projection of $\widetilde{Y_{i,j}}$ to a subset of
$L_i\times L_j$.  The union of the $E(Y_i)$ and the $E(Y_{i,j})$ is
precisely the set of all $x$ such that $g_x$ has two critical points with
the same value. Since these are semialgebraic sets, the proof is complete.
\end{proof}

Here is the main result of this subsection.
\begin{theorem}
Let $M$ and $W$ be compact smooth manifolds. Then for a residual set of
smooth maps $F$ from $I\times W$ to $\Maps(M,\R)$, the following hold.
\begin{enumerate}
\item[(i)] $F(I\times W)\subset \mathcal{F}_{\geq 0}$.
\item[(ii)] Each $F_u^{-1}(\mathcal{F}_{>0})$ is finite.
\end{enumerate}
\label{thm:Morse weak transversality}
\end{theorem}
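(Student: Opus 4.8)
The plan is to deduce Theorem~\ref{thm:Morse weak transversality} by combining the machinery assembled in the preceding subsections: the Bruce-type weak transversality results (Theorems~\ref{thm:Bruce1} and~\ref{thm:Bruce2}), Proposition~\ref{prop:polynomial_Bruce} on avoiding the zero set of a polynomial, the equivalence of finite codimension with finite singularity type (Proposition~\ref{prop:FST_codimension}), and Proposition~\ref{prop:Sergeraert} giving a local polynomial model for the non-Morse strata. Throughout we identify $\Maps(M,\R)$ with $C(M)$ as in Subsection~\ref{subsec:Sergeraert1}.

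For part~(i), the plan is to apply Theorem~\ref{thm:Bruce2} to the trivial bundle $1_{\R}\colon \R\to \R$. This yields a residual set of parameterized families $F\colon I\times W\to \R$ (thinking of $F$ as a map $I\times W\times M\to \R$, or equivalently applying Bruce's theorem with $A=M$, $U=I\times W$, and $B=\R$ fibered trivially over itself) for which, at each parameter $u$, the map $F_u\colon M\to \R$ is weakly transverse to each point of $\R$. By Proposition~\ref{prop:FST_weaktransverse}, this means precisely that at each critical point of $F_u$ the germ of $F_u$ has finite singularity type, and by Proposition~\ref{prop:FST_codimension} this is exactly the statement that $F_u\in\mathcal{F}_{\geq 0}$. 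So first I would record that the Bruce-residual families automatically satisfy~(i).

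For part~(ii), the plan is to work locally near a given family $F$ already satisfying~(i), using compactness of $I\times W$ and the Sergeraert stratification. Fix $F$ with $F(I\times W)\subset\mathcal{F}_{\geq 0}$, and cover the compact image $F(I\times W)$ by finitely many neighborhoods of the form furnished by Theorem~\ref{thm:Sergeraert}: around each $f\in\mathcal{F}_n$ there is $V\cong U\times\R^n$ with $\mathcal{F}_{>0}\cap V = U\times F_{\geq 1}$, where by Proposition~\ref{prop:Sergeraert} the set $F_{\geq 1}$ lies, on a smaller ball $L\subset\R^n$, in the zero set of a nontrivial polynomial $p$. Over such a chart the condition ``$F_u\in\mathcal{F}_{>0}$ near the relevant critical value pattern'' becomes: the $\R^n$-component of $F_u$ lands in $p^{-1}(0)$. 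Composing $F$ with the projection $V\to\R^n$, we have for each chart a map $I\times W\to\R^n$ (at least on the open subset of parameters whose image lies in that chart), and Proposition~\ref{prop:polynomial_Bruce} — after a further arbitrarily small perturbation, and using a partition-of-unity/patching argument to keep the perturbations compatible over the finite cover — makes each $G_u^{-1}(p^{-1}(0))$ finite. Taking a common refinement over the finitely many charts, $F_u^{-1}(\mathcal{F}_{>0})$ is a finite union of finite sets, hence finite, establishing~(ii). Since each step is an arbitrarily small deformation, and residual sets in the relevant function spaces are closed under finite intersection and under the patching, the set of $F$ satisfying both~(i) and~(ii) is residual.

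The main obstacle I anticipate is the bookkeeping needed to pass from the \emph{local} statement of Proposition~\ref{prop:Sergeraert} (valid only in a Sergeraert chart around a single $f$, with coordinates on the transverse slice $E\cong\R^n$ that depend on $f$) to a \emph{global} perturbation of $F$ on all of $I\times W$: one must cover $F(I\times W)$ by finitely many such charts, observe that the images of $F_u$ may straddle several charts, and patch together the perturbations given by Proposition~\ref{prop:polynomial_Bruce} in each chart using a partition of unity subordinate to the cover, all while ensuring the perturbations remain small enough that property~(i) (finite codimension, an open condition once one is in $\mathcal{F}_{\geq 0}$, per the tacit openness convention stated after Theorem~\ref{thm:Bruce2}) is preserved and that the finiteness conclusions from the various charts are not destroyed by subsequent patches. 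A secondary technical point is checking that Proposition~\ref{prop:polynomial_Bruce}, stated for $\Maps(I\times W,\R^n)$ with $A=I$, genuinely applies here since the sweepout parameter giving the relevant ``$I$'' direction is one-dimensional — this is why the statement of the theorem is phrased for $I\times W$ rather than a higher-dimensional parameter domain, and care is needed that the projection-to-$\R^n$ map really does have a $1$-dimensional ``$I$-factor'' in its domain.
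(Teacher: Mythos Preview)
Your proposal is correct and follows essentially the same route as the paper: Bruce's Theorem~\ref{thm:Bruce2} for part~(i), then Sergeraert charts together with Proposition~\ref{prop:Sergeraert} and Proposition~\ref{prop:polynomial_Bruce} for part~(ii). The paper resolves the patching obstacle you flag not by a partition of unity but by partitioning $I$ into subintervals and triangulating $W$ finely enough that each piece $J\times\Delta$ maps entirely into a single Sergeraert chart $U_f\times L_f$ (or into $\mathcal{F}_0$); one then perturbs only the $L_f$-coordinate on each piece sequentially, using the openness remark after Theorem~\ref{thm:Bruce2} to keep later perturbations small enough not to destroy finiteness on earlier pieces.
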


\begin{proof}
Start with a smooth map $G\colon I\times W\to \Maps(M,\R)$.  Regarding
it as a parameterized family of maps $M\times (I\times W)\to \R$, we apply
Theorem~\ref{thm:Bruce2} to perturb $G$ so that each $G_u$ is weakly
transverse to the points of $\R$. By
Proposition~\ref{prop:FST_codimension}, this implies that $G(I\times
W)\subset \mathcal{F}_{\geq 0}$. Since $I\times W$ is compact, $G(I\times
W)\subset \mathcal{F}_{\leq n}$ for some~$n$.

For each $f\in \mathcal{F}_{>0}$, choose a neighborhood $V_f=U_f\times
\R^n$ as in Theorem~\ref{thm:Sergeraert}. Using
Proposition~\ref{prop:Sergeraert}, we may select a neighborhood $L_f$ of
$0$ in $\R^n$ and a nonzero polynomial $p_f\colon L_f\to \R$ such that
$p_f(L\cap F_{i\geq 1})=0$.
\longpage\longpage

Now, partition $\I$ into subintervals and triangulate $W$ so that for each
subinterval $J$ and each simplex $\Delta$ of maximal dimension in the
triangulation, $G(J\times \Delta)$ lies either in $\mathcal{F}_0$ or in
some $U_f\times L_f$. Fix a particular $J\times \Delta$.  If $G(J\times
\Delta)$ lies in $\mathcal{F}_0$, do nothing. If not, choose $f$ so that
$G(J\times \Delta)$ lies in $U_f\times L_f$.  Let $\pi\colon U_f\times
L_f\to L_f$ be the projection, so that $p_f\circ \pi(U_f\times
F_{\geq 1})=0$. By Proposition~\ref{prop:polynomial_Bruce}, we
may perturb $G\vert_{J\times \Delta}$ (changing only its $L_f$-coordinate
in $U_f\times L_f$) so that for each $u\in \Delta$,
$G_u\vert_J^{-1}(\mathcal{F}_{i\geq 1})$ is finite, and any map
sufficiently close to $G\vert_{J\times \Delta}$ on $J\times \Delta$ will
have this same property. As usual, of course, this is extended to a
perturbation of~$G$.

This process can be repeated sequentially on the remaining $J\times
\Delta$. The perturbations must be so small that the property of having
each $G_u\vert_J^{-1}(\mathcal{F}_{i\geq 1})$ finite is not lost on
previously considered sets. When all $J\times \Delta$ have been considered,
each $G_u^{-1}(\mathcal{F}_{i\geq 1})$ is finite.
\end{proof}\index{stratification!of space of real-valued functions|)}

\subsection{Border label control}\index{border label control|(}
\label{sec:borderlabel}
We now return to the case when $M$ is a closed $3$-manifold, as in the
introduction of Section~\ref{sec:generalposition}. In this subsection, we
will obtain a deformation of $f\colon M\times W\to M$ for which some
$\epsilon$ gives border label control.

We begin by ensuring that no $f_u$ carries a component of the singular set
$T$ of $\tau$ into $T$. Consider two circles $C_1$ and $C_2$ embedded in
$M$. By Theorem~\ref{thm:Bruce1}, applied with $A=C_1\times W$, $B=M$, and
$C=C_2$, we may perturb $f\vert_{C_1\times W}$ so that for each $u\in W$,
$f_u\vert_{C_1}$ meets $C_2$ in only finitely many points.

Recall that $T$ consists of smooth circles and arcs in $M$. Each arc is
part of some smoothly embedded circle, so $T$ is contained in a union
$\cup_{i=1}^n C_i$ of embedded circles in $M$. By a sequence of
perturbations as above, we may assume that at each $u$, each $f_u(C_i)$
meets each $C_j$ in a finite set (including when $i=j$), so that $f_u(T)$
meets $T$ in a finite set.

The next potential problem is that at some $u$, $f_u(T_0)$ or $f_u(T_1)$
might be contained in a single level $P_t$. Recall that the notation
$R(s,t)$, introduced in Section~\ref{sec:RSgraphic}, means
$\tau^{-1}([s,t])$.  For some $\delta>0$, every $f_u(T_0)$ meets
$R(3\delta,1-3\delta)$, since otherwise the compactness of $W$ would lead
to a parameter $u$ for which $f_u(T_0)\subset T$.  Let $\phi\colon
R(\delta,1-\delta)\to [\delta,1-\delta]$ be the restriction of the map
$\pi(\tau(x,t))=t$. This $\phi$ makes $R(\delta,1-\delta)$ a bundle with
fibers that are level tori. As before, let $C_1$ be one of the circles
whose union contains $T$. Only the most superficial changes are needed to
the proof of Theorem~\ref{thm:Bruce2} given in \cite{Bruce} so that it
applies when $\phi$ is a bundle map defined on a codimension-zero
submanifold of $B$ rather than on all of $B$; the only difference is that
the subsets of jets which are to be avoided are defined only at points of
the subspace rather than at every point of $B$. Using this slight
generalization of Theorem~\ref{thm:Bruce2} (and as usual, the Parameterized
Extension Principle), we perturb $f$ so that each $f_u\vert_{C_1}$ is
weakly transverse to each $P_t$ with $\delta\leq t\leq 1-\delta$. Since
$C_1$ is $1$-dimensional, weakly transverse implies that $f_u(C_1)$ meets
each such $P_t$ in only finitely many points.  Repeating for the other
$C_i$, we may assume that each $f_u(T_0)$ meets the $P_t$ with $\delta\leq
t \leq 1-\delta$ in only finitely many points. We also choose the
perturbations small enough so that each $f_u(T_0)$ still meets
$R(2\delta,1-2\delta)$. So $f_u^{-1}(P_t)\cap T_0$ is nonempty and finite
at least some~$t$.  In particular, $\pi(f_u(T_0))$ contains an open set, so
by \index{Sard's Theorem}Sard's Theorem applied to $\pi\circ
f_u\vert_{T_0}$, for each $u$, there is an $r$ so that $f_u(T_0)$ meets
$P_r$ transversely in a nonempty set (we select $r$ so that $P_r$ does not
contain the image of a vertex of $T_0$). For a small enough $\epsilon$, a
component of $X_s\cap P_r$ will be a compressing disk of $X_s$ whenever
$s\leq 2\epsilon$, and by compactness of $W$, there is an $\epsilon$ such
that for every $u$, there is a level $P_r$ such that some component of
$X_s\cap P_r$ contains a compressing disk of $X_s$ whenever $s\leq
2\epsilon$.

Applying the same procedure to $T_1$, we may assume that for every $u$,
there there is a level $P_r$ such that some component of $Y_s\cap P_r$ is a
compressing disk of $Y_s$ whenever $s\geq 1-2\epsilon$.

Let $h\colon M\times W\to M$ be defined by $h(x,u)=f_u^{-1}(x)$.  Fix new
sweepouts on the $M\times \{u\}$, given by $f_u\circ \tau$, so that $h_u$
carries the levels of this sweepout to the original $P_t$. Applying the
previous procedure to $h$, making sure that all perturbations are small
enough to preserve the conditions developed for $f$, and perhaps making
$\epsilon$ smaller, we may assume that for each $u$, there is a level $Q_r$
such that $V_t\cap Q_r$ is a compressing disk of $V_t$ whenever $t\leq
2\epsilon$, and a similar $Q_r$ for $W_t$ with $t\geq 1-2\epsilon$. Thus
the number $\epsilon$ gives border label control for $f$. Since border
label control holds, with the same $\epsilon$, for any map sufficiently
close to $f$, we may assume it is preserved by all future
perturbations.\index{border label control|)}

\subsection{Building the graphics}
It remains to deform $f$ to satisfy conditions\index{GP1@(GP1), (GP2), (GP3)}
(GP1), (GP2), and (GP3).  As
before, let $i\colon I\to \R$ be the inclusion, and consider the smooth map
$i\circ \pi \circ f\circ (\tau\times 1_W)\colon P\times \I\times W\to \R$.
Regard this as a family of maps from $\I$ to $\Maps(P,\R)$, parameterized
by $W$. Apply Theorem~\ref{thm:Morse weak transversality} to obtain a
family $k\colon P\times \I\times W\to \R$. For each $I\times \{u\}$, there
will be only finitely many values of $s$ in $\I$ for which the restriction
$k_{(s,u)}$ of $k$ to $P\times \{s\}\times \{u\}$ is not a Morse
function. At these levels, the projection from $Q_s$ into the transverse
direction to $P_t$ is an element of some $\mathcal{F}_n$, so each tangency
of $Q_s$ with $P_t$ looks like the graph of a critical point of finite
multiplicity. This will ultimately ensure that condition (GP1) is attained
when we complete our deformations of~$f$.

We will use $k$ to obtain a deformation of the original $f$, by moving
image points vertically with respect to the levels of the range. This would
not make sense where the values of $k$ fall outside $(0,1)$, so the motion
will be tapered off so as not to change $f$ at points that map near $T$. It
also would not be well-defined at points of $T\times W$, so we taper off
the deformation so as not to change $f$ near $T\times W$. The fact that $f$
is unchanged near $T\times W$ and near points that map to $T$ will not
matter, since border label control will allow us to ignore these regions in
our later work.

Regard $P\times \I\times W$ as a subspace of $P\times \R\times W$. For
each $(x,r,u)\in P\times \I\times W$, let $w_{(x,r,u)}'$ be
$k(x,r,u)-i\circ \pi \circ f_u\circ \tau(x,r)$, regarded as a tangent vector to
$\R$ at $i\circ \pi\circ f_u\circ \tau(x,r)$.

We will taper off the $w_{(x,r,u)}'$ so that for each fixed $u$ they will
produce a vector field on $M$. Fix a number $\epsilon$ that gives border
label control for $f$, and a smooth function $\mu\colon \R\to I$ which
carries $(-\infty,\epsilon/4]\cup [1-\epsilon/4,\infty)$ to $0$ and carries
$[\epsilon/2,1-\epsilon/2]$ to $1$. Define $w_{(x,r,u)}$ to be
$\mu(r)\,\mu(i\circ \pi \circ f_u\circ \tau(x,r))\,w_{(x,r,u)}'$. These
vectors vanish whenever $r\notin [\epsilon/4,1-\epsilon/4]$ or $i\circ
\pi\circ f_u\circ \tau(x,r,u)\notin [\epsilon/4,1-\epsilon/4]$, that is,
whenever $\tau(x,r)$ or $f_u\circ \tau(x,r)$ is close to $T$.  Using the
map $i\circ \pi\colon M\to \R$, we pull the $w_{(x,r,u)}$ 
back to vectors in $M$ that
are perpendicular to $P_t$; this makes sense near $T$ since the
$w_{(x,r,u)}$ are zero at these points). For each $u$, we obtain at each
point $f_u\circ \tau(x,r)\in M$ a vector $v_{(x,r,u)}$ that points in the
$\I$-direction (i.~e.~is perpendicular to $P_t$) and maps to $w_{(x,r,u)}$
under~$(i\circ \pi)_*$.

If $k$ was a sufficiently small perturbation, the $v_{(x,r,u)}$ define a
smooth map $j_u\colon M\to M$ by $j_u(\tau(x,r))=\Exp(v_{(x,r,u)})$. Put
$g_u=j_u\circ f_u$.  Since $\mu(r)=1$ for $\epsilon/2\leq r \leq
1-\epsilon/2$, we have $i\circ \pi \circ g_u\circ \tau(x,r)=k(x,r,u)$
whenever both $\epsilon/2\leq r\leq 1-\epsilon/2$ and $\epsilon/2\leq
i\circ \pi \circ f_u\circ \tau(x,r)\leq 1-\epsilon/2$. The latter condition
says that $f_u\circ \tau(x,r)$ is in $P_s$ for some $\epsilon/2\leq s \leq
1-\epsilon/2$. Assuming that $k$ was close enough to $i\circ \pi \circ
f\circ (\tau\times 1_W)$ so that each $\pi\circ g_u\circ \tau(x,r)$ is
within $\epsilon/4$ of $\pi\circ f_u\circ \tau(x,r)$, the equality $i\circ
\pi \circ g_u\circ \tau(x,r)=k(x,r,u)$ holds whenever $\tau(x,r)$ is in a
$P_s$ and $g_u\circ \tau(x,r)$ is in a $P_t$ with $\epsilon\leq s,t\leq
1-\epsilon$.

Carrying out this construction for a sequence of $k$ that converge to
$i\circ \pi \circ f\circ (\tau\times 1_W)$, we obtain vector fields
$v_{(x,r,u)}$ that converge to the zero vector field. For those
sufficiently close to zero, $g$ will be a deformation of $f$. Choosing $g$
sufficiently close to $f$, we may ensure that $\epsilon$ still gives border
label control for~$g$.

We will now analyze the graphic of $g_u$ on $I^2_\epsilon$. For $s,t\in
[\epsilon,1-\epsilon]$, $\pi\circ g_u(x)$ equals $k_{(s,u)}(x)$ whenever
$x\in P_s$ and $g_u(x)\in P_t$. Therefore the tangencies of $g_u(P_s)$ with
$P_t$ are locally just the graphs of critical points of $k_{(s,u)}\colon
P\to \R$, so $g$ has property~(GP1).

Let $s_1,\ldots\,$, $s_n$, be the values of $s$ in $[\epsilon,1-\epsilon]$
for which $k_{(s_i,u)}\colon P\to \R$ is not a Morse function.  Each
$k_{(s_i,u)}$ is still a function of finite codimension, so has finitely
many critical points. Those critical points having critical values in
$[\epsilon,1-\epsilon]$ produce the points of the graphic of $g_u$ that lie
in the vertical line $s=s_i$, as suggested in Figure~\ref{fig:graphic}.  We
declare the $(s_i,t)$ at which $k_{(s_i,u)}$ has a critical point at $t$ to
be vertices of $\Gamma_u$.

When $s$ is not one of the $s_i$, $k_{(s,u)}$ is a Morse function, so any
tangency of $g_u(P_s)$ with $P_t$ is stable, and there is at most one such
tangency. Since these tangencies are stable, all nearby tangencies are
equivalent to them and hence also stable, so in the graphic for $g_u$ in
$I^2_\epsilon$, the pairs $(s,t)$ corresponding to levels with a single
stable tangency form ascending and descending arcs as suggested in
Figure~\ref{fig:graphic}. These arcs may enter or leave $I^2_\epsilon$, or
may end at a point corresponding to one of the finitely many points of the
graphic with $s$-coordinate equal to one of the $s_i$.  We declare the
intersection points of these arcs with $\partial I_\epsilon$ to be vertices
of $\Gamma_u$.  The conditions (GP2) and (GP3) have been achieved,
completing the proof of Theorem~\ref{thm:generalposition}.
\index{figures!figure94@ascending and descending arcs in a graphic}%
\begin{figure}
\labellist
\pinlabel $s_i$ at 66 -9
\pinlabel $s_{i+1}$ at 188 -9
\pinlabel $s$ at 264 -3
\endlabellist
\includegraphics[width=6.5cm]{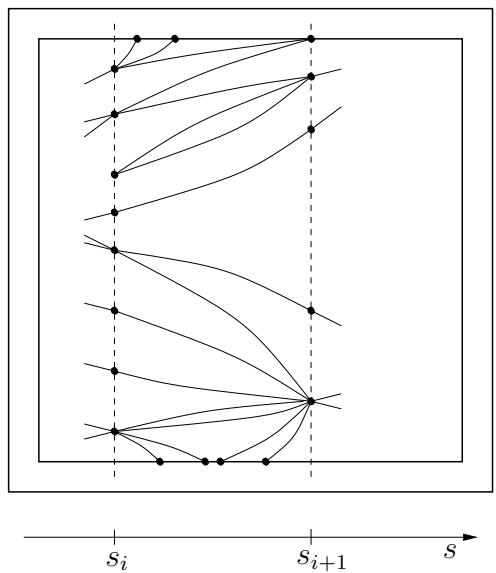}
\caption{A portion of the graphic of $g_u$.}
\label{fig:graphic}
\end{figure}

\section[Finding good regions]
{Finding good regions}
\label{sec:goodregions}

In this section, we adapt the arguments of
Section~\ref{sec:Rubinstein-Scharlemann} to general position families.  The
graphics associated to the $f_u$ of a general position family $f\colon
M\times W\to M$ satisfy property\index{RS1@(RS1), (RS2), (RS3)}
(RS\ref{item:noAB}) of
Section~\ref{sec:Rubinstein-Scharlemann} (provided that the Heegaard
splittings associated to the sweepout are strongly irreducible) and
property (RS\ref{item:edge labeling}) (since the open edges of the $\Gamma$
correspond to pairs of levels that have a single stable tangency, see the
remark after the definition of (RS\ref{item:edge labeling}) in
Section~\ref{sec:Rubinstein-Scharlemann}), but not property
(RS\ref{item:2-cell labeling}). Indeed, property (RS\ref{item:2-cell
labeling}) does not even make sense, since the vertices of $\Gamma$ can
have high valence. Property (RS\ref{item:noAB}) is what allows the map
from the $0$-cells of $K$ to the $0$-simplices of the Diagram to be
defined. Property (RS\ref{item:edge labeling}) (plus conditions on regions
near $\partial K$, which we will still have due to border label control)
allows it to be extended to a cellular map from the $1$-skeleton of $K$ to
the $1$-skeleton of the Diagram. What ensures that it still extends to the
$2$-cells is a topological fact about pairs of levels whose intersection
contains a common spine, Lemma~\ref{lem:common spine}. Because it involves
surfaces that do not meet transversely, its proof is complicated and
somewhat delicate. Since the proof does not introduce any ideas needed
elsewhere, the reader may wish to skip it on a first reading, and go
directly from the statement of Lemma~\ref{lem:common spine} to the last
four paragraphs of the section.

We specialize to the case of a parameterized family $f\colon L\times W\to
L$, where $L$ is a lens space and $W$ is a compact manifold. We retain the
notations $P_t$, $Q_s$, $V_t$, $W_t$, $X_s$, and $Y_s$ of
Section~\ref{sec:generalposition}. As was mentioned above, properties
(RS\ref{item:noAB}) and (RS\ref{item:edge labeling}) already hold for the
labels of the regions of the graphic of each~$f_u$.

\begin{theorem}
Suppose that $f\colon L\times W\to L$ is in general position with respect
to~$\tau$. Then for each $u$, there exists $(s,t)$ such that $Q_s$ meets
$P_t$ in good position.\par
\label{thm:finding good levels}
\end{theorem}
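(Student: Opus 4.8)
The plan is to adapt the argument sketched in Section~\ref{sec:Rubinstein-Scharlemann} for the Morse general position case, working at each fixed parameter $u\in W$. Fix $u$, write $\Gamma=\Gamma_u$, and form the Rubinstein-Scharlemann graphic on $I^2_\epsilon$ using the labeling rules of Section~\ref{sec:Rubinstein-Scharlemann}. Suppose for contradiction that no region is unlabeled, so every pair of levels $(Q_s,P_t)$ meets in a position that is not good. As in Section~\ref{sec:Rubinstein-Scharlemann}, the goal is to construct a map from the dual complex $K$ of a triangulation $\Delta$ of $I^2_\epsilon$ to the Diagram of Figure~\ref{fig:Diagram} that is essential on $\partial K$, which is impossible since the Diagram is contractible. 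The $0$-cells are sent to $0$-simplices using property (RS\ref{item:noAB}), which holds because the Heegaard splittings of $L$ are strongly irreducible (here one must also recall that $L\neq S^3$, which is guaranteed since $m\geq 3$). The $1$-cells are sent to $0$- or $1$-simplices using property (RS\ref{item:edge labeling}), which holds for general position families because the open edges of $\Gamma$ consist of pairs having a single stable tangency (conditions (GP2) and (GP3)), together with Lemma~\ref{lem:borderlabelconstraints} for the $1$-cells whose $0$-cell faces lie near $\partial I^2_\epsilon$: border label control forces those labels to contain the appropriate one of $A$, $B$, $X$, $Y$, so the images avoid the opposite vertex of the Diagram.

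The main obstacle is extending the map over the $2$-cells of $K$, since property (RS\ref{item:2-cell labeling}) is not available---the vertices of $\Gamma$ in a general position family can have arbitrarily high valence, so (RS\ref{item:2-cell labeling}) is not even meaningful. For a $2$-cell dual to a $0$-simplex lying in a region or in the interior of an edge of $\Gamma$, all its $0$-cell faces lie in one region or two adjacent ones, so by (RS\ref{item:edge labeling}) the boundary maps into a single $1$-simplex and the extension is routine, exactly as in Section~\ref{sec:Rubinstein-Scharlemann}. For a $2$-cell whose faces meet $\partial I^2_\epsilon$, Lemma~\ref{lem:borderlabelconstraints} again forces all the relevant labels to share a common letter among $A,B,X,Y$, so the boundary maps into the complement of one vertex, which is contractible. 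The delicate case is a $2$-cell dual to a high-valence vertex $v$ of $\Gamma$ in the interior: its $0$-cell faces lie in the union of the regions whose closures contain $v$, and we must show the images of these labels span a contractible subcomplex of the Diagram, i.e.\ that some one of the four letters is avoided by all of them. This is where a replacement for (RS\ref{item:2-cell labeling}) is needed, and I expect it to come from a topological lemma---of the flavor of Lemma~\ref{lem:common spine} referenced in the section's opening paragraph---asserting that one cannot simultaneously have pairs of levels (arising from regions around a single vertex) with conflicting labels, because that would produce two levels whose intersection contains a common spine of each yet also carries too much essentiality, or force a precompressing disk on each side of a strongly irreducible splitting.

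Concretely, I would structure the proof as: (1) recall the labeling scheme and verify (RS\ref{item:noAB}) and (RS\ref{item:edge labeling}) carry over verbatim; (2) invoke Lemma~\ref{lem:borderlabelconstraints} to pin down labels of regions whose closures meet the sides of $I^2_\epsilon$; (3) set up the triangulation $\Delta$, its dual $K$, and record properties (K\ref{item:K1})--(K\ref{item:K3}) with the modification that a $2$-cell may be dual to a vertex of arbitrary valence; (4) define the map on $\partial K$ essentially onto the four diagonal $1$-simplices of the Diagram; (5) extend over $0$- and $1$-cells using steps (1)--(2); (6) extend over $2$-cells, dividing into the three cases above, with the high-valence interior vertex case handled by the topological input (Lemma~\ref{lem:common spine} or its consequence), which shows the labels of the regions meeting at $v$ cannot realize all of $sA,sB,sX,sY$ in a way that obstructs extension; (7) conclude that the map $K\to\text{Diagram}$ exists, contradicting that it is essential on $\partial K$ while the Diagram is contractible. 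Hence some region is unlabeled, and as noted in Section~\ref{sec:Rubinstein-Scharlemann} a pair $(s,t)$ in an unlabeled region has $Q_s$ and $P_t$ in good position, which is exactly the assertion. The work in steps (3)--(6), especially accommodating high-valence vertices and supplying the substitute for (RS\ref{item:2-cell labeling}), is the heart of the argument; the rest is bookkeeping parallel to \cite{RS}.
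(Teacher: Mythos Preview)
Your outline matches the paper's approach closely: the same contradiction setup, the same use of (RS\ref{item:noAB}) and (RS\ref{item:edge labeling}), Lemma~\ref{lem:borderlabelconstraints} for the border, the dual complex $K$, and the recognition that the only new difficulty is extending over a $2$-cell dual to a high-valence vertex $v_0=(s_0,t_0)$ of $\Gamma$.

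Where you are vague is precisely step~(6), and here the paper's argument is sharper than your description suggests. It is organized as a dichotomy on whether $Q_{s_0}\cap P_{t_0}$ contains a spine of $P_{t_0}$. If it does, Lemma~\ref{lem:common spine} produces a core circle of $V_{t_0}$ (or $W_{t_0}$) disjoint from $Q_{s_0}$; this circle remains a core circle of $V_t$ and disjoint from $Q_s$ for all $(s,t)$ near $(s_0,t_0)$, so the letter $A$ (resp.\ $B$) cannot occur in any adjacent region, and Lemma~\ref{lem:no lone small letters} then rules out a lone $a$ (resp.\ $b$) as well. If $Q_{s_0}\cap P_{t_0}$ does \emph{not} contain a spine of $P_{t_0}$, Lemma~\ref{lem:common spine} is not used at all: instead there is an essential loop $C$ in $P_{t_0}$ disjoint from $Q_{s_0}$, persisting for nearby $(s,t)$, and any precompressing disk boundary in $Q_s$ for $P_t$ must miss $C$. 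Since meridians of $V_t$ and $W_t$ have nonzero algebraic intersection in $P_t$, they cannot both be disjoint from $C$, so all precompressions lie on one side and again one of $A$, $B$ is excluded from every nearby label. Your phrasing (``cannot realize all of $\sA,\sB,\sX,\sY$'', ``force a precompressing disk on each side'') conflates these two cases and misses that the conclusion is always the exclusion of a single letter from $\{\sA,\sB\}$, which is exactly what is needed to map the $2$-cell boundary into the complement of one vertex of the Diagram.
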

\noindent The proof of Theorem~\ref{thm:finding good levels} will
constitute the remainder of this section.

We first prove a key geometric lemma that is particular to lens spaces.
\begin{lemma}
Let $f\colon L \times W\to L$ be a parameterized family of diffeomorphisms
in general position, and let $(s,t)\in I^2_\epsilon$. If $Q_s\cap P_t$
contains a spine of $P_t$, then either $V_t$ or $W_t$ contains a core
circle which is disjoint from $Q_s$.\par
\label{lem:common spine}
\end{lemma}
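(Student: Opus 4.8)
The approach I would take is to convert the hypothesis about the spine into a statement about how $P_t$ cuts up the solid tori $X_s$ and $Y_s$, and then to apply the solid‑torus lemmas of Section~\ref{sec:preliminaries} together with Proposition~\ref{prop:Heegaard loops}. Write $\Sigma\subseteq Q_s\cap P_t$ for the given spine of $P_t$ and $D_1,\dots,D_k$ for the closures of the complementary disks of $\Sigma$ in $P_t$, so $\partial D_i\subseteq\Sigma\subseteq Q_s$. The goal is to produce a core circle of $V_t$ or of $W_t$ lying in $X_s$ or in $Y_s$.

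The first step is to install a two–colouring. Because $f$ is in general position with respect to $\tau$, at the pair $(s,t)\in I^2_\epsilon$ the set $Q_s\cap P_t$ is a graph containing $\Sigma$ in which $Q_s$ meets $P_t$ transversely along the open edges and with finite‑multiplicity tangencies at the vertices (conditions (GP1)--(GP3) of Section~\ref{sec:generalposition}). Transversality along an open edge means the two local sheets of $P_t$ sit on opposite sides of $Q_s$, and the finite‑multiplicity local models (cf.\ the discussion of generic position in Section~\ref{sec:generic position}) force the side to alternate around each vertex; hence the complementary regions of $Q_s\cap P_t$ in $P_t$ carry a consistent ``$X_s$ or $Y_s$'' label with adjacent regions opposite. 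Next I would clean up, isotoping $Q_s$ with support disjoint from $\Sigma$ to remove the components of $Q_s\cap P_t$ interior to the $D_i$ by innermost‑disk arguments and irreducibility of $L$; after this $Q_s\cap P_t=\Sigma$, each $D_i$ has interior disjoint from $Q_s$ and so lies wholly in $\overline{X_s}$ or wholly in $\overline{Y_s}$, and the colouring becomes a proper two‑colouring of the disks.

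Having done this, I would push the interior of each $D_i$ off $P_t$ to the side of $Q_s$ dictated by its colour; an $X$–type disk can be pushed into $\operatorname{int}(V_t)\cap X_s$ or into $\operatorname{int}(W_t)\cap X_s$, producing in either case a properly embedded disk in the solid torus $X_s$ lying in $\overline{V_t}$ or in $\overline{W_t}$ and disjoint (off its boundary) from both $P_t$ and $Q_s$, and similarly for $Y$–type disks and $Y_s$. Each $\partial D_i$ is either inessential in $Q_s$ (whence the push‑off cuts a ball off $X_s$, resp.\ $Y_s$) or a meridian of $X_s$ (resp.\ $Y_s$), and in the latter case also a meridian of $V_t$ or of $W_t$ by Proposition~\ref{prop:Heegaard loops}. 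Then I would run the dichotomy: if some $\partial D_i$ is essential in $Q_s$, the corresponding push‑off disk, placed on (say) the $V_t$ side, is a meridian disk of $X_s$ contained in $\overline{V_t}$, so that $V_t\cap X_s$ contains a core region of $X_s$, and Propositions~\ref{prop:core circles} and~\ref{prop:longitudinal annuli} together with Lemma~\ref{lem:meridional annuli} convert this into a core circle of $V_t$ or $W_t$ lying in $X_s$ or $Y_s$; if on the other hand every $\partial D_i$ is inessential in $Q_s$, one argues that $\Sigma$ then lies in a disk of $Q_s$, which (using the disks $D_i$ to rebuild $P_t$ and irreducibility of $L$) again exhibits one of $V_t,W_t$ as having a core disjoint from $Q_s$.

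The main obstacle is making Steps~2 and~3 rigorous in the non‑transverse setting: the tangency vertices of $\Sigma$, where $Q_s$ and $P_t$ are tangent of finite but possibly high multiplicity, must be handled carefully both for the consistency of the colouring and, above all, for the simultaneous push‑offs of the disks $D_i$, which share the edges of $\Sigma$ as boundary arcs and all meet at the tangency vertices; this is exactly the delicacy flagged in the introduction to the section. The cleanup of Step~2 (which must not destroy the spine) and the precise solid‑torus argument of Step~4 that turns ``$V_t$ contains a meridian disk of $X_s$'' (or the all‑inessential case) into the desired core circle are the other points that require genuine work, and are where the lemmas of Section~\ref{sec:preliminaries} are used in earnest.
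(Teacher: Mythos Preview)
Your outline has a decisive gap: it never uses the hypothesis $m>2$, yet the lemma is false for $L(2,1)$ --- immediately after its proof the paper exhibits a flattened Heegaard torus in $L(2,1)$ meeting $P_{1/2}$ with two meridian disks on each side. Any correct argument must invoke $m$, and yours does not. More concretely, the step claiming that an essential $\partial D_i$ is ``also a meridian of $V_t$ or of $W_t$ by Proposition~\ref{prop:Heegaard loops}'' cannot be right: $\partial D_i$ bounds the disk $D_i$ in $P_t$, so it is \emph{inessential} in $P_t$ and is certainly not a meridian of either $V_t$ or $W_t$; moreover Proposition~\ref{prop:Heegaard loops} concerns loops that are core circles of a Heegaard solid torus and concludes they are longitudes, which is neither the hypothesis nor the conclusion you want. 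Thus even granting your cleanup (itself problematic, since circles of $Q_s\cap P_t$ interior to a $D_i$ that happen to be essential in $Q_s$ cannot be removed by innermost-disk isotopies of $Q_s$), the dichotomy in Step~4 does not lead anywhere: a meridian disk of $X_s$ lying inside $V_t$ does not by itself yield a core circle of $V_t$ or $W_t$ disjoint from $Q_s$.

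The paper's proof takes a genuinely different route. It first \emph{flattens} $Q_s$ so that $Q_s\cap P_t$ becomes a two-dimensional complex $X$, then simplifies $X$ via ``tunnel moves'' and ``bigon moves'' (verifying at each stage that the isotopy does not create core circles) until $\overline{Q_s\setminus X}$ consists of disks properly embedded in $V_t$ or in $W_t$. If there are $2k_1$ meridian disks in $V_t$ and $2k_2$ in $W_t$, an Euler-characteristic count on the faces of $X$, combined with the fact that meridians of $V_t$ and $W_t$ have algebraic intersection $\pm m$ in $P_t$, gives $k_1+k_2 \geq k_1 k_2 m$. For $m>2$ this forces $k_1=0$ or $k_2=0$, so one of $V_t$, $W_t$ meets $Q_s$ only in boundary-parallel disks and therefore contains a core circle missing $Q_s$. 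The appearance of $m$ in this inequality is the crux, and it is precisely the ingredient your approach lacks.
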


\begin{proof}
We will move $Q_s$ by a sequence of isotopies of $L$. All isotopies will
have the property that if $V_t-Q_s$ (or $W_t-Q_s$) did not contain a core
circle of $V_t$ (or $W_t$) before the isotopy, then the same is true after
the isotopy.  We say this succinctly with the phrase that the isotopy
\indexdef{core circles!does not create}\textit{does not create core circles.}
Typically some of the isotopies will not be smooth, so we work in the PL
category. At the end of an initial ``flattening'' isotopy, $Q_s$ will
intersect $P_t$ nontransversely in a $2$-dimensional simplicial complex $X$
in $P_t$ whose frontier consists of points where $Q_s$ is PL embedded but
not smoothly embedded.  A sequence of simplifications called tunnel moves
and bigon moves, plus isotopies that push disks across balls, will make
$Q_s\cap P_t$ a single component $X_0$, which will then undergo a few
additional improvements.  After this has been completed, an Euler
characteristic calculation will show that a core circle disjoint from the
repositioned $Q_s$ exists in either $V_t$ or $W_t$, and consequently one
existed for the original~$Q_s$.

\index{flattening}The first step is to perform a so-called ``flattening''
isotopy. Such isotopies were already described in detail in
Lemma~\ref{flatten}, but we will give a self-contained construction here.

Since $f$ is in general position, $Q_s\cap P_t$ is a $1$-complex satisfying
the property\index{GP1@(GP1), (GP2), (GP3)}~(GP1) of
Section~\ref{sec:generalposition}. Each isolated vertex of $Q_s\cap P_t$ is
an isolated tangency of $Q_s\cap P_t$, so we can move $Q_s$ by a small
isotopy near the vertex to eliminate it from the intersection. After this
step, $Q_s\cap P_t$ is a graph $\Gamma$ which contains a spine of $Q_s\cap
P_t$, such that each vertex of $\Gamma$ has positive valence.

By property (GP1), each vertex $x$ of $\Gamma$ is a point where $Q_s$ is
tangent to $P_t$, and the edges of $\Gamma$ that emanate from $x$ are arcs
where $Q_s$ intersects $P_t$ transversely. Along each arc, $Q_s$ crosses
from $V_t$ into $W_t$ or vice versa, so there is an even number of these
arcs. Near $x$, the tangent planes of $Q_s$ are nearly parallel to those of
$P_t$, and there is an isotopy that moves a small disk neighborhood of $x$
in $Q_s$ until it coincides with a small disk neighborhood of $x$ in
$P_t$. Perform such isotopies near each vertex of~$\Gamma$. This enlarges
$\Gamma$ in $Q_s\cap P_t$ to the union of $\Gamma$ with a union $E$ of
disks, each disk containing one of the original vertices.

The closure of the portion of $\Gamma$ that is not in $E$ now consists of a
collection of arcs and circles where $Q_s$ intersects $P_t$ transversely,
except at the endpoints of the arcs, which lie in $E$. Consider one of
these arcs, $\alpha$. At points of $\alpha$ near $E$, the tangent planes to
$Q_s$ are nearly parallel to those of $P_t$, and starting from each end
there is an isotopy that moves a small regular neighborhood of a portion of
$\alpha$ in $Q_s$ onto a small regular neighborhood of the same portion of
$\alpha$ in $P_t$. This flattening process can be continued along
$\alpha$. If it is started from both ends of $\alpha$, it may be possible
to flatten all of a regular neighborhood of $\alpha$ in $Q_s$ onto one in
$P_t$. This occurs when the vectors in a field of normal vectors to
$\alpha$ in $Q_s$ are being moved to normal vectors on the same side of
$\alpha$ in $P_t$. If they are being moved to opposite sides, then we
introduce a point where the configuration is as in
Figure~\ref{fig:flattened surfaces}, in which $P_t$ appears as the
$xy$-plane, $\alpha$ appears as the points in $P_t$ with $x=-y$, and $Q_s$
appears as the four shaded half- or quarter-planes. These points will be
called 
\indexdef{crossover point}\textit{crossover} points. Perform such isotopies in disjoint
neighborhoods of all the arcs of $\Gamma-E$. For the components of $\Gamma$
that are circles of transverse intersection points, we flatten $Q_s$ near
each circle to enlarge the intersection component to an annulus.

At the end of this initial process, $\Gamma$ has been been enlarged to a
$2$-complex $X$ in $Q_s\cap P_t$ that is a regular neighborhood of
$\Gamma$, except at the crossover points where $\Gamma$ and $X$ look
locally like the antidiagonal $x=-y$ of the $xy$-plane and the set of
points with $xy\leq 0$. We will refer to $X$ as a 
\indexdef{pinched regular neighborhood}\textit{pinched regular
neighborhood} of~$\Gamma$.

Since $\Gamma$ originally contained a spine of $P_t$, $X$ contains two
circles that meet transversely in one point that lies in the interior (in
$P_t$) of $X$. Therefore $X$ contains a common spine of $P_t$ and~$Q_s$.
Let $X_0$ be the component of $X$ that contains a common spine of $Q_s$ and
$P_t$. All components of $P_t-X_0$ and $Q_s-X_0$ are open disks. Let
$X_1=X-X_0$, and for each $i$, denote $\Gamma\cap X_i$ by~$\Gamma_i$.

The next step will be to move $Q_s$ by isotopy to remove $X_1$ from
$Q_s\cap P_t$. These isotopies will be fixed near $X_0$.  Some of them will
have the effect of joining two components of $V_t-Q_s$ (or of $W_t-Q_s$)
into a single component of $V_t-Q_s$ (or of $W_t-Q_s$) for the repositioned
$Q_s$, so we must be very careful not to create core circles.

The frontier of $X_1$ in $P_t$ is a graph $\Fr(X_1)$ for which each vertex
is a crossover point, and has valence $4$ (as usual, our ``graphs'' can
have open edges that are circles). Its edges are of two types: \textit{up}
edges, for which the component of $\overline{Q_s-X}$ that contains the edge
lies in $W_t$, and \textit{down} edges, for which it lies in $V_t$. At each
disk of $E$, the up and down edges alternate as one moves around $\partial
E$ (see Figure~\ref{fig:updown}). For each of the arcs of $\Gamma_1-E$, the
flattening process creates an up edge on one side and a down edge on the
other, but there is a fundamental difference in the way that the up and
down edges appear in $Q_s$ and $P_t$. As shown in Figure~\ref{fig:updown},
up edges (the solid ones) and down edges (the dotted ones) alternate as one
moves around a crossover point, while in $Q_s$ they occur in adjacent
pairs. This is immediate upon examination of Figure~\ref{fig:flattened
surfaces}.
\index{figures!figure95@up and down edges}%
\begin{figure}
\labellist
\pinlabel $P_t$ at 240 30
\pinlabel $Q_s$ at 740 30
\endlabellist
\includegraphics[width=\textwidth]{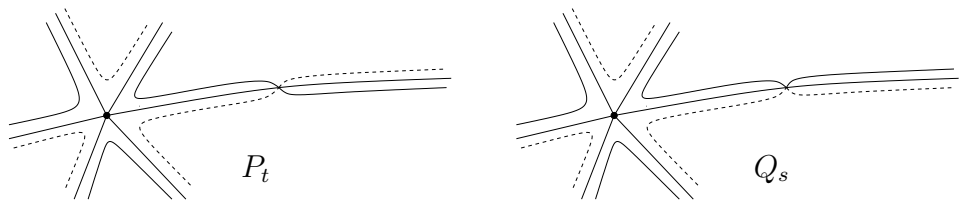}
\caption{Up and down edges of $X$ as they appear in $P_t$ and $Q_s$.}
\label{fig:updown}
\end{figure}

For our inductive procedure, we start with a pinched regular neighborhood
$X_1\subset Q_s\cap P_t$ of a graph $\Gamma_1$ in $Q_s\cap P_t$, all of
whose vertices have positive even valence.  Moreover, the edges of the
frontier of $X_1$ are up or down according to whether the portion of
$\overline{Q_s-X}$ that contains them lies in $W_t$ or $V_t$. We call this
an \textit{inductive configuration.}

To ensure that our isotopy process will terminate, we use the complexity
$-\chi(\Gamma_1)-\chi(\Fr(X_1))+N$, where $N$ is the number of components
of $\Gamma_1$.  Since all vertices of $\Gamma_1$ and $\Fr(X_1)$ have
valence at least~$2$, each of their components has nonpositive Euler
characteristic, so the complexity is a non-negative integer. The remaining
isotopies will reduce this complexity, so our procedure must terminate.

We may assume that the complexity is nonzero, since if $N=0$ then $X_1$ is
empty. Consider $X_1$ as a subset of the union of open disks
$Q_s-X_0$. Since $X_1$ is a pinched regular neighborhood of a graph with
vertices of valence at least~$2$, it separates these disks, and we can find
a closed disk $D$ in $Q_s$ with $\partial D\subset X_1$ and $D\cap
X=\partial D$.  It lies either in $V_t$ or $W_t$. Assume it is in $W_t$
(the case of $V_t$ is similar), in which case all of its edges are up
edges. Since $\partial D\subset P_t-X_0$, $\partial D$ bounds a disk $D'$
in $P_t-X_0$.  Since the interior of $D$ is disjoint from $P_t$, $D\cup D'$
bounds a $3$-ball $\Sigma$ in~$L$. Of course, $D'$ may contain portions of
the component of $X_1$ that contains $\partial D'$, or other components
of~$X_1$. Let $X_1'$ be the component of $X_1$ that contains $\partial D'$;
it is a pinched regular neighborhood of a component $\Gamma_1'$
of~$\Gamma$.

Suppose that $X_1'$ contains some vertices of $\Gamma_1$ of valence more
than $2$. We will perform an isotopy of $Q_s$ that we call a 
\indexdef{tunnel move}\textit{tunnel
move,} illustrated in Figure~\ref{fig:tunnel}, that reduces the complexity
of the inductive configuration. Near the vertex, select an arc in $X_1'$
that connects the edge of $\Fr(X_1')$ in $D'$ with another up edge of
$\Fr(X_1')$ that lies near the vertex (this arc may lie in $D'$, in a
portion of $X_1$ contained in $D'$). An isotopy of $Q_s$ is performed near
this arc, that pulls an open regular neighborhood of the arc in $X_1'$ into
$W_t$. This does not change the interior of $V_t-Q_s$ (it just adds the
regular neighborhood of the arc to $V_t-Q_s$), but in $W_t$ it creates a
tunnel that joins two different components of $W_t-Q_s$.  One of these
components was in $\Sigma$, so the isotopy cannot create core
circles. After the tunnel move, we have a new inductive configuration. The
Euler characteristic of $\Gamma_1$ has been increased by the addition of
one vertex, while $\chi(\Fr(X_1))$ and $N$ are unchanged, so the new
inductive configuration is of lower complexity. The procedure continues by
finding a new $D$ and $D'$ and repeating the process.
\index{figures!figure96@tunnel arc and tunnel move}%
\begin{figure}
\labellist
\pinlabel $D'$ at 167 132
\endlabellist
\includegraphics[width=11cm]{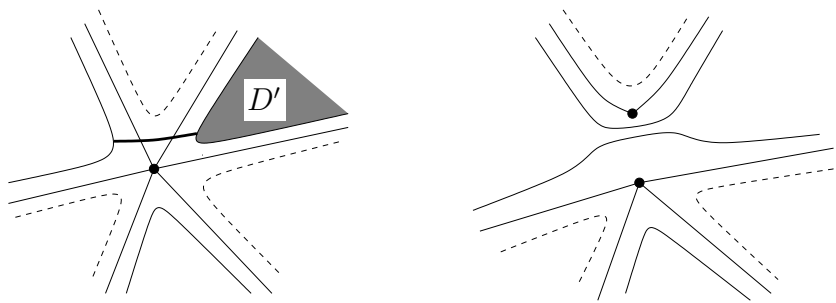}
\caption{A portion of $P_t$ showing a tunnel arc in $X_1$, and the new
$\Gamma_1$ and $X_1$ after the tunnel move.}
\label{fig:tunnel}
\end{figure}

When a $D$ has been found for which no tunnel moves are possible, all
vertices of $\Gamma_1'$ (if any) have valence $2$. Suppose that $X_1'$
contains crossover points. It must contain an even number of them, since up
and down edges at crossover points alternate in $P_t$. Some portion of
$X_1'$ is a disk $B$ whose frontier consists of two crossover points and
two edges of $\Fr(X_1)$, each connecting the two crossover points. We will
use a \indexdef{bigon move}\textit{bigon move} as in the proof of
Proposition~\ref{prop:circles}. A bigon move is an isotopy of $Q_s$,
supported in a neighborhood of $B$, that repositions $Q_s$ and replaces a
neighborhood of $B$ in $X$ with a rectangle containing no crossover
points. Figure~\ref{fig:2-gon isotopy} illustrates this isotopy. It cannot
create core circles, indeed such an isotopy changes the interiors of
$V_t-Q_s$ and $W_t-Q_s$ only by homeomorphism.

Since bigon moves increase the Euler characteristic of $\Fr(X_1)$, without
changing $\Gamma_1$ or $N$, they reduce complexity. So we eventually arrive
at the case when $X_1'$ is an annulus. Assume for now that the interior of
$D'$ is disjoint from $X_1$.  There is an isotopy of $Q_s$ that pushes $D$
across $\Sigma$, until it coincides with $D'$.  This cannot create core
circles, since its effect on the homeomorphism type of $W_t-Q_s$ is simply
to remove the component $\Sigma-Q_s$. Perform a small isotopy that pulls
$D'$ off into the interior of $V_t$, again creating no new core circles. An
annulus component of $X_1$ has been eliminated, reducing the complexity. If
$D'\cap X_1=X_1'$, then a similar isotopy eliminates~$X_1'$.

Suppose now that the interior of $D'$ contains components of $X_1$ other
than perhaps $X_1'$.  Let $X_1''$ be their union. It is a pinched regular
neighborhood of a union $\Gamma_1''$ of components of $\Gamma_1$. If
$\Gamma_1''$ has vertices of valence more than $2$, then tunnel moves can
be performed. These cannot create new core circles, since they do not
change the interior of $V_t-Q_s$, and in $W_t-Q_s$ they only connect
regions that are contained in $\Sigma$. If no tunnel move is possible, but
there are crossover points, then a bigon move may be performed. So we may
assume that every component of $X_1''$ is an annulus.

Let $S$ be a boundary circle of $X_1''$ innermost on $D'$, bounding a disk
$D''$ in $D'$ whose interior is disjoint from $X$. Let $E''$ be the disk in
$Q_s$ bounded by $S$, so that $D''\cup E''$ bounds a $3$-ball $\Sigma''$
in~$L$. Note that $E''$ does not contain $X_0$, since then a spine of $P_t$
would be contained in the $2$-sphere $E''\cup D''$.

We claim that if $(V_t -Q_s)\cup (E''\cap V_t)$ contains a core circle of
$V_t$, then $V_t-Q_s$ contained a core circle of $V_t$ (and analogously for
$W_t$).  The closures of the components of $E''- P_t$ are planar surfaces,
each lying either in $V_t$ or $W_t$. Let $F$ be one of these, lying (say)
in $V_t$. Its boundary circles lie in $P_t-X_0$, so bound disks in $P_t$.
The union of $F$ with these disks is homotopy equivalent to $S^2\vee(\vee
S^1)$ for some possibly empty collection of circles, so a regular
neighborhood in $V_t$ of the union of $F$ with these disks is a punctured
handlebody $Z(F)$ meeting $P_t$ in a union of disks.  Suppose that $C$ is a
core circle in $V_t$ that is disjoint from $Q_s-F$.  We may assume that $C$
meets $\partial Z(F)$ transversely, so cuts through $Z(F)$ is a collection
of arcs. Since $Z(F)$ is handlebody meeting $P_t$ only in disks, there is
an isotopy of $C$ that pushes the arcs to the frontier of $Z(F)$ in $V_t$
and across it, removing the intersections of $C$ with $F$ without creating
new intersections (since the arcs need only be pushed slightly outside of
$Z(F)$). Performing such isotopies for all components of $E''-X_1$ in $V_t$
produces a core circle disjoint from $E''$, proving the claim.

By virtue of the claim, an isotopy that pushes $E''$ across $\Sigma''$
until it coincides with $D''$ does not create core circles. Then, a slight
additional isotopy pulls $D''$ and the component of $X_1$ that contained
$\partial D''$ off of $P_t$, reducing the complexity.

Since we can always reduce a nonzero complexity by one of these isotopies,
we may assume that $Q_s\cap P_t=X_0$. The frontier $\Fr(X_0)$ in $P_t$ is
the union of a graph $\Gamma_2$, each of whose components has vertices of
valence~$4$ corresponding to crossover points, and a graph $\Gamma_3$ whose
components are circles.

A component of $\Gamma_3$ must bound both a disk $D_Q$ in
$\overline{Q_s-X_0}$ and a disk $D_P$ in $\overline{P_t-X_0}$. Since
$Q_s\cap P_t=X_0$, the interiors of $D_P$ and $D_Q$ are disjoint, and $D_Q$
lies either in $V_t$ or in $W_t$. So we may push $D_Q$ across the $3$-ball
bounded by $D_Q\cup D_P$ and onto~$D_P$, without creating core circles.
Repeating this procedure to eliminate the other components of $\Gamma_3$,
we achieve that the frontier of $Q_s\cap P_t$ equals the graph $\Gamma_2$.

Figure~\ref{fig:meridian disks} shows a possible intersection of $Q_s$ with
$P_t$ at this stage. The shaded region is $Q_s\cap P_t$; it is a union of a
(solid) octagon, two bigons, and a square. The closure of $Q_s-(Q_s\cap
P_t)$ consists of two meridian disks in $V_t$, bounded by the circles $C_1$
and $C_2$, and two boundary-parallel disks in $W_t$, bounded by the circles
$C_3$ and $C_4$.
\index{figures!figure97@flattened torus with two meridian disks}%
\begin{figure}
\labellist
\pinlabel $C_1$ at 78 -5
\pinlabel $C_2$ at 175 -5
\pinlabel $C_4$ at 59 59
\pinlabel $C_3$ at 90 125
\endlabellist
\includegraphics[width=8cm]{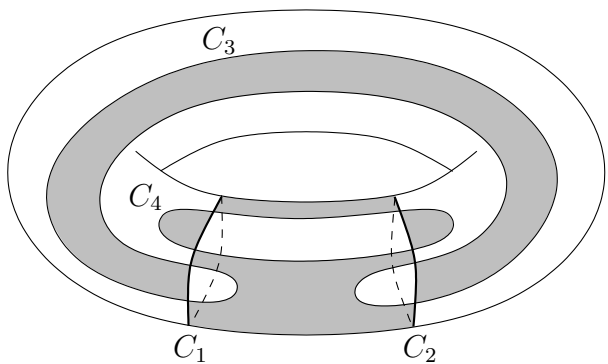}
\caption{A flattened torus containing two meridian disks.}
\label{fig:meridian disks}
\end{figure}\index{examples!example5@flattened torus with two meridian disks}%

Suppose that $Q_s$ now contains $2k_1$ meridian disks of $Q_s$ in $V_t$ and
$2k_2$ meridian disks in $W_t$ (their numbers must be even since $Q_s$ is
zero in $H_2(L)$), and a total of $k_0$ boundary-parallel disks in $V_t$
and $W_t$. Since $\chi(Q_s)=0$, we have $\chi(Q_s\cap P_t)=-k_0-2k_1-2k_2$.
To prove the lemma, we must show that either $k_1$ or $k_2$ is $0$.

Let $V$ be the number of vertices of $\Gamma_2$. Since all of its vertices
have valence $4$, $\Gamma_2$ has $2V$ edges. The remainder of $Q_s\cap P_t$
consists of $2$-dimensional faces. Each of these faces has boundary
consisting of an even number of edges, since up and down edges alternate
around a face.  If some of the faces are bigons, such as two of the faces
in Figure~\ref{fig:meridian disks}, they may be eliminated by bigon moves
(which will also change $V$). These may create additional components of the
frontier of $X_0$ that are circles, indeed this happens in the example of
Figure~\ref{fig:meridian disks}. These are eliminated as before by moving
disks of $Q_s$ onto disks in $P_t$. After all bigons have been eliminated,
each face has at least four edges, so there are at most $V/2$ faces. So we
have $\chi(Q_s\cap P_t)\leq V-2V+V/2=-V/2$.

Each boundary-parallel disk in $Q_s\cap V_t$ or $Q_s\cap W_t$ contributes
at least two vertices to the graph, since at each crossover point, $X_0$
crosses over to the other side in $P_t$ of the boundary of the disk. This
gives at least $2k_0$ vertices. The meridian disks on the two sides
contribute at least $2k_1\cdot 2k_2\cdot m$ additional vertices, where
$L=L(m,q)$, since the meridians of $V_t$ and $W_t$ have algebraic
intersection $\pm m$ in $P_t$. Thus $V\geq 2k_0+4k_1k_2m$. We calculate:
\[-k_0-2k_1-2k_2=\chi(Q_s\cap P_t)\leq -V/2\leq -k_0 -2k_1k_2m\ .\]
Since $m>2$, this can hold only when either $k_1$ or $k_2$ is $0$.
\end{proof}

Lemma~\ref{lem:common spine} fails (at the last sentence of the proof) for
the case of $L(2,1)$. Indeed, there is 
a\index{examples!example6@flattened torus in $L(2,1)$ with two meridian disks}%
flattened Heegaard torus in
$L(2,1)$ which meets $P_{1/2}$ in four squares and has two meridian disks
on each side. In a sketch somewhat like that of Figure~\ref{fig:meridian
disks}, the boundaries of these disks are two meridian circles and two
$(2,1)$-loops intersecting in a total of $8$ points, and cutting the torus
into $8$ squares. There are two choices of four of these squares to
form~$Q_s\cap P_t$.

Now, we will complete the proof of Theorem~\ref{thm:finding good
levels}. As in Section~\ref{sec:Rubinstein-Scharlemann}, assume for
contradiction that all regions are labeled, and triangulate $I^2_\epsilon$.
The map on the $1$-skeleton is defined exactly as in
Section~\ref{sec:Rubinstein-Scharlemann}, using
Lemma~\ref{lem:borderlabelconstraints} and the fact that the labels satisfy
property~(RS\ref{item:edge labeling})\index{RS1@(RS1), (RS2), (RS3)}. Using
Lemma~\ref{lem:borderlabelconstraints}, each $1$-cell maps either to a
$0$-simplex or a $1$-simplex of the Diagram, and exactly as before the
boundary circle of $K$ maps to the Diagram in an essential way. The
contradiction will be achieved once we show that the map extends over the
$2$-cells.

There is no change from before when the $2$-cell meets $\partial K$ or lies
in the interior of $K$ but does not contain a vertex of $\Gamma$, so we fix
a $2$-cell in the interior of $K$ that is dual to a vertex $v_0$ of
$\Gamma$, located at a point~$(s_0,t_0)$.

Suppose first that $Q_{s_0}\cap P_{t_0}$ contains a spine of $P_{t_0}$. By
Lemma~\ref{lem:common spine}, either $V_{t_0}$ or $W_{t_0}$ has a core
circle $C$ which is disjoint from $Q_{s_0}$; we assume it lies in
$V_{t_0}$, with the case when it lies in $W_{t_0}$ being similar.  The
letter $A$ cannot appear in the label of any region whose closure contains
$v_0$, since $C$ is a core circle for all $P_t$ with $t$ near $t_0$, and
$Q_s$ is disjoint from $C$ for all $s$ near $s_0$.  By Lemma~\ref{lem:no
lone small letters}, any letter $a$ that appears in the label of one of the
regions whose closure contains $v_0$ must appear in a combination of either
$ax$ or $ay$, so none of these regions has label $\sA$. Since each $1$-cell
maps to a $0$- or $1$-simplex of the Diagram, the map defined on the
$1$-cells of $K$ maps the boundary of the $2$-cell dual to $v_0$ into the
complement of the vertex $\sA$ of the Diagram. Since this complement is
contractible, the map can be extended over the $2$-cell.

Suppose now that $Q_{s_0}\cap P_{t_0}$ does not contain a spine of
$P_{t_0}$. Then there is a loop $C_{(s_0,t_0)}$ essential in $P_{t_0}$ and
disjoint from $Q_{s_0}$. For every $(s,t)$ near $(s_0,t_0)$, there is a
loop $C_{(s,t)}$ essential in $P_t$ and disjoint from $Q_s$, with the
property that $C_{(s,t)}$ is a meridian of $V_t$ (respectively $W_t$) if and
only if $C_{(s_0,t_0)}$ is a meridian of $V_{t_0}$ (respectively $W_{t_0}$).
In particular, any intersection circle of $Q_s$ and $P_t$ which bounds a
disk in $Q_s$ which is precompressing for $P_t$ in $V_t$ or in $W_t$ must
be disjoint from $C_{(s,t)}$. Since the meridian disks of $V_t$ and $W_t$
have nonzero algebraic intersection, the meridians for $V_t$ and $W_t$
cannot both be disjoint from $C_{(s,t)}$. So for all $(s,t)$ in this
neighborhood of $(s_0,t_0)$, either all disks in $Q_s$ that are
precompressions for $P_t$ are precompressions in $V_t$, or all are
precompressions in $W_t$. In the first case, the letter $B$ does not appear
in the label of any of the regions whose closure contain $v_0$, while in
the second case, the letter $A$ does not. In either case, the extension to
the $2$-cell can now be obtained just as in the previous paragraph. This
completes the proof of Theorem~\ref{thm:finding good levels}.

\section[From good to very good]
{From good to very good}
\label{sec:from good to very good}

\index{good position}\index{very good!position|(}By virtue of
Theorem~\ref{thm:finding good levels}, we may perturb a parameterized
family of diffeomorphisms of $M$ so that at each parameter $u$, some level
$P_t$ and some image level $f_u(P_s)$ meet in good position. In this
section, we use the methodology \index{Hatcher}of A. Hatcher \cite{H, Hold} (see
\cite{Hnew} for a more detailed version of \cite{Hold}, see 
\index{Ivanov}also N. Ivanov
\cite{I4}) to change the family so that we may assume that $P_t$ and
$f_u(P_s)$ meet in very good position. In fact, we will achieve a rather
stronger condition on discal intersections.

Following our usual notation, we fix a sweepout $\tau\colon P\times
[0,1]\to M$ of a closed orientable $3$-manifold $M$, and give $P_t$, $V_t$,
and $W_t$ their usual meanings. Given a parameterized family of
diffeomorphisms $f\colon M\times W\to M$, we give $f_u$, $Q_s$, $X_s$, and
$Y_s$ their usual parameter-dependent meanings. From now on, we refer to
the $P_t$ as \index{levels}\textit{levels} and the $Q_s$ as 
\indexdef{image labels}\indexdef{labels!image}\textit{image levels.}

Throughout this section, we assume that for each $u\in W$, there is a pair
$(s,t)$ such that $Q_s$ and $P_t$ are in good position. Before stating the
main result, we will need to make some preliminary selections.

By transversality, being in good position is an open condition, so there
exist a finite covering of $W$ by open sets $U_i$, $1\leq i\leq n$, and
pairs $(s_i,t_i)$, so that for each $u\in U_i$, $Q_{s_i}$ and $P_{t_i}$
meet in good position. By shrinking of the open cover, we can and always
will assume that all transversality and good-position conditions that hold
at parameters in $U_i$ actually hold on~$\overline{U_i}$.

We want to select the sets and parameters so that at parameters in $U_i$,
$Q_{s_i}$ is transverse to $P_{t_j}$ for all $t_j$. First note that for any
$s$ sufficiently close to $s_i$, $Q_s$ is transverse to $P_{t_i}$ at all
parameters of $U_i$ (here we are already using our condition that the
transversality for the $Q_{s_i}$ holds for all parameters in
$\overline{U_i}$). On $U_1$, $Q_{s_1}$ is already transverse to
$P_{t_1}$. \index{Sard's Theorem}Sard's Theorem ensures that at each $u\in
U_2$, there is a value $s$ arbitrarily close to $s_2$ such that $Q_s$ is
transverse to $P_{t_1}$ at all parameters in a neighborhood of $u$. Replace
$U_2$ by finitely many open sets (with associated $s$-values), for which on
each of these sets the associated $Q_s$ are transverse to $P_{t_1}$. The
new $s$ are selected close enough to $s_2$ so that these $Q_s$ still meet
$P_{t_2}$ in good position. Repeat this process for $U_3$, that is, replace
$U_3$ by a collection of sets and associated values of $s$ for which the
associated $Q_s$ are transverse to $P_{t_1}$ and still meet $P_{t_3}$ in
good position.  Proceeding through the remaining original $U_i$, we have a
new collection, with many more sets $U_i$, but only the same $t_i$ values
that we started with, and at each parameter in one of the new $U_i$,
$Q_{s_i}$ is transverse to $P_{t_1}$ as well as to $P_{t_i}$. Now proceed
to $P_{t_2}$. For the $U_i$ whose associated $t$-value is not $t_2$, we
perform a similar process, and we also select the new $s$-values so close
to $s_i$ that the new $Q_s$ are still transverse to $P_{t_1}$ and still
meet their associated $P_{t_i}$ in good position. After finitely many
repetitions, all $Q_{s_i}$ are transverse to each~$P_{t_j}$.

We may also assume the $U_i$ are connected, by making each connected
component a $U_i$. Since transversality is an open condition, we are free
to replace $s_i$ by a very nearby value, while still retaining the good
position of $Q_{s_i}$ and $P_{t_i}$ and the transverse intersection of
$Q_{s_i}$ with all $P_{t_j}$, for all parameters in $U_i$, and similarly we
may reselect any $t_j$. So (with the argument in the previous paragraph now
completed) we can and always will assume that all $s_i$ are distinct, and
all $t_i$ are distinct.

\newpage
We can now state the main result of this section. With notation as above:
\begin{theorem}
Let $f\colon W\to \diff(M)$ be a parameterized family, such that for each
$u$ there exists $(s,t)$ such that $Q_s$ and $P_t$ meet in good
position. Then $f$ may be changed by homotopy so that there exists a
covering $\set{U_i}$ as above, with the property that for all $u\in
U_i$, $Q_{s_i}$ and $P_{t_i}$ meet in very good position, and $Q_{s_i}$
has no discal intersection with any $P_{t_j}$. If these conditions already
hold for all parameters in some closed subset $W_0$ of $W$, then the
deformation of $f$ may be taken to be constant on some neighborhood of
$W_0$.\par
\label{thm:from good to very good}
\end{theorem}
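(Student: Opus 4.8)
The plan is to fix a covering $\set{U_i}$ and parameters $(s_i,t_i)$ arranged as in the paragraphs preceding the statement, and then, one index $i$ at a time, to apply a parameterized elimination of discal intersection circles in the style of Hatcher \cite{H,Hold,Hnew}. For a fixed $i$, at each $u\in\overline{U_i}$ the intersection $Q_{s_i}\cap P_{t_i}$ is in good position, so every discal circle bounds a disk $D_K$ in $Q_{s_i}$ and a disk $D_P$ in $P_{t_i}$; choosing $D_K$ innermost in $Q_{s_i}$, irreducibility of $M$ gives a unique $3$-ball $B$ bounded by $D_K\cup D_P$, and one pushes $D_K$ across $B$ by an ambient isotopy. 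The key point, exactly as in Step 2 of the proof of Theorem~\ref{main}, is that when the supports of two such pushing isotopies overlap in time, the corresponding balls are disjoint or nested, and nesting is excluded because no level torus is contained in a $3$-ball; this is what allows the individual isotopies to be assembled into a deformation of the whole family over $\overline{U_i}$. One must also arrange, by perturbing the $s_i$ slightly at the outset (using transversality as an open condition, as in the setup paragraphs), that the finitely many levels $P_{t_j}$ are simultaneously transverse to $Q_{s_i}$ throughout $\overline{U_i}$, and then run the same discal-elimination argument with $Q_{s_i}$ against each $P_{t_j}$ in turn; since there are only finitely many pairs $(i,j)$, finitely many rounds suffice.

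The first real subtlety, and I expect the main obstacle, is the one flagged in the outline just before the statement and isolated as Lemma~\ref{lem:no biessential elimination} (referenced in Section~\ref{sec:from good to very good}'s surrounding text as ``\textit{ensures that not all biessential intersections will be eliminated}''): an isotopy that removes a discal circle can simultaneously remove biessential circles, and if it removed \emph{all} of them the pair $(Q_{s_i},P_{t_i})$ would drop out of good position entirely, wrecking the invariant we are trying to maintain. So the elimination procedure must be organized to destroy only discal intersections --- concretely, one always pushes across a ball $B$ meeting $P_{t_i}$ only in $D_P$, and one checks (via the corner-angle argument of \cite{Hnew}, which goes through here because $P_{t_i}$ is not contained in $B$) that such a push removes the chosen discal circle and possibly other discal circles lying in $D_P$, but never a biessential one. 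Since good position only requires \emph{at least one} biessential circle, and biessential circles are never created or destroyed by these moves, good position --- indeed now very good position, since all discal circles are gone --- persists at every parameter of $U_i$. A parallel but slightly more delicate bookkeeping is needed to guarantee that eliminating discal circles of $Q_{s_i}$ against $P_{t_i}$ does not re-introduce discal circles of $Q_{s_i}$ against some already-processed $P_{t_j}$: here one uses that the $t_j$ are distinct and that the supports of the pushing isotopies are confined to small neighborhoods of the balls $B$, so by choosing the order of processing and shrinking supports one keeps earlier conditions intact, exactly the inductive ``conditions (a) and (b)'' device of \cite{Hnew}.

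For the relative statement, suppose the conclusion already holds on a closed subset $W_0\subseteq W$: at every $u\in W_0$, the pairs $(Q_{s_i},P_{t_i})$ are in very good position and $Q_{s_i}$ meets every $P_{t_j}$ with no discal circle. Then over a neighborhood of $W_0$ there simply are no discal circles to eliminate (using that the absence of discal intersections, being a transversality-type condition, is open --- after shrinking the $U_i$ so that all conditions holding on $U_i$ hold on $\overline{U_i}$, as stipulated), so each pushing isotopy has empty support near $W_0$. One builds the deformation of $f$ using a partition of unity subordinate to the $U_i$ to taper the individual isotopies, arranging the tapering functions to vanish on a neighborhood of $W_0$; this is a routine application of the Parameterized Extension Principle (Theorem~\ref{thm:PEP}), which lets one realize the family of ambient isotopies as an actual deformation of $f$ that is constant wherever the isotopies are trivial. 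The resulting deformation is constant on a neighborhood of $W_0$, as required, and at its end every $u$ lies in some $U_i$ with $(Q_{s_i},P_{t_i})$ in very good position and $Q_{s_i}$ free of discal intersections with all $P_{t_j}$, completing the proof.
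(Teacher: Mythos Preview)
Your proposal contains a concrete error and a structural gap. The error is the assertion that a basic isotopy ``removes the chosen discal circle and possibly other discal circles lying in $D_P$, but never a biessential one,'' and that ``biessential circles are never created or destroyed by these moves.'' This is false: the paper states explicitly, just before Lemma~\ref{lem:no biessential elimination}, that a basic isotopy \emph{can} remove a biessential component of some $Q_{s_k}(u)\cap P_{t_k}$, and the proof of that lemma begins by assuming exactly that a biessential component lies in the ball $E$ and is destroyed. What Lemma~\ref{lem:no biessential elimination} actually guarantees is only that \emph{at least one} biessential component survives in each $Q_{s_k}\cap P_{t_k}$; its proof is a genuine argument about meridian disks and spines, not a triviality. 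Your claim that the ball $B$ meets $P_{t_i}$ only in $D_P$ is likewise unjustified here, because for $j\neq i$ the pair $(Q_{s_i},P_{t_j})$ is merely transverse, not in good position, so the interior of $D'_c$ can meet $P_{t_k}$ in circles essential in $P_{t_k}$.

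The structural gap is your plan to process ``one index $i$ at a time'' and then ``against each $P_{t_j}$ in turn.'' A basic isotopy is an ambient isotopy of $L$: at a parameter $u\in U_i\cap U_k$ it moves \emph{every} $Q_{s_k}(u)$, not just $Q_{s_i}(u)$, so eliminating discal circles for one pair can reintroduce them for pairs already processed. Your appeal to ``choosing the order of processing and shrinking supports'' does not address this; there is no evident way to confine the support so as to avoid all previously treated image levels. The paper avoids this entirely by treating \emph{all} discal intersections across all $(i,j)$ at once: it forms a single set $\mathcal{C}$, orders it by one auxiliary function $\Psi$, and performs the basic isotopies in that global order. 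This simultaneous treatment is also why the new complication of Lemma~\ref{lem:nested E} arises --- balls $E_{c_1}$, $E_{c_2}$ coming from \emph{different} $Q_{s_i}$'s can be nested even when $c_1,c_2$ are unrelated in both the $<_P$ and $<_Q$ orderings (Figure~\ref{fig:nested}) --- and the paper handles nesting not by excluding it (your ``no level torus in a $3$-ball'' argument does not apply across different image levels) but by a further reduction of $G_0$ that deletes the inner circle's sheet once the outer isotopy will remove it anyway.
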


Before starting the proof, we introduce a simplifying convention. Although
strictly speaking, $Q_{s_i}$ is meaningful at every parameter, as is every
$Q_s$, throughout the remainder of this section we speak of $Q_{s_i}$ only
for parameters in $\overline{U_i}$. That is, unless explicitly stated
otherwise, an assertion made about $Q_{s_i}$ means that the assertion holds
at parameters in $\overline{U_i}$, but not necessarily at other
parameters. Also, to refer to $Q_{s_i}$ at a single parameter $u$, we use
the notation $Q_{s_i}(u)$. By our convention, $Q_{s_i}(u)$ is meaningful
only when $u$ is a value in~$\overline{U_i}$.

Now, to preview some of the complications that appear in the proof of
Theorem~\ref{thm:from good to very good}, consider the problem of removing,
just for a single parameter $u\in U_i$, a discal component $c$ of the
intersection of $Q_{s_i}(u)$ with some $P_{t_j}$. Suppose that the disk
$D'$ in $Q_{s_i}(u)$ bounded by $c$ is innermost among all disks in
$Q_{s_i}(u)$ bounded by discal intersections of $Q_{s_i}(u)$ with the
$P_{t_k}$. Note that $D'$ can contain a nondiscal intersection of
$Q_{s_i}(u)$ with a $P_{t_k}$; such an intersection will be a meridian of
either $V_{t_k}$ or $W_{t_k}$ (although $k$ cannot equal $i$, since
$Q_{s_i}(u)$ and $P_{t_i}$ meet in good position). Let $D$ be the disk in
$P_{t_j}$ bounded by $c$, so that $D\cup D'$ is the boundary of a $3$-ball
$E$. There is an isotopy of $f_u$ that moves $D'$ across $E$ to $D$, and on
across $D$, eliminating $c$ and possibly other intersections of the
$Q_{s_\ell}(u)$ with the $P_{t_k}$.  We will refer to this as a 
\indexdef{basic isotopy}\indexdef{isotopy!basic}\textit{basic
isotopy.}

It is possible for a basic isotopy to remove a biessential component of
some $Q_{s_k}(u)\cap P_{t_k}$. Examples are a bit complicated to describe,
but involve ideas similar to the construction in Figure~\ref{fig:bad
torus}. Fortunately, the following lemma ensures that good position is not
lost.
\begin{lemma}\indexstate{biessential!after basic isotopy}
After a basic isotopy as described above, each $Q_{s_k}(u)\cap P_{t_k}$ still
has a biessential component.
\label{lem:no biessential elimination}
\end{lemma}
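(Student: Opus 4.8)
�The plan is to analyze exactly which biessential components of $Q_{s_k}(u)\cap P_{t_k}$ could possibly be destroyed by the basic isotopy, and show that at least one must survive. Recall the setup: the basic isotopy is supported in a $3$-ball $E$ bounded by $D\cup D'$, where $D'\subset Q_{s_i}(u)$ is \emph{innermost} among all disks bounded by discal intersections of $Q_{s_i}(u)$ with the levels $P_{t_\ell}$, and $D\subset P_{t_j}$ has the same boundary circle $c$. The isotopy pushes $D'$ across $E$ onto $D$ and slightly past it. The only intersection circles of any $Q_{s_k}(u)$ with $P_{t_k}$ that can be affected are those whose relevant pieces lie inside $E$, or more precisely those lying in the region swept out. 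So the first step is to pin down: a component of $Q_{s_k}(u)\cap P_{t_k}$ can only be eliminated if it (or a nearby parallel copy) lies inside $E$.

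Next I would use the innermost hypothesis on $D'$. Because $D'$ is innermost among discal-bounding disks on $Q_{s_i}(u)$ over \emph{all} the $P_{t_\ell}$, the interior of $D'$ meets the levels only in circles that are essential in the relevant $P_{t_\ell}$ (hence meridians of $V_{t_\ell}$ or $W_{t_\ell}$, as noted in the paragraph preceding the lemma). Now fix $k$ and consider $Q_{s_k}(u)\cap P_{t_k}$. Suppose for contradiction that \emph{every} biessential component of $Q_{s_k}(u)\cap P_{t_k}$ is eliminated by the basic isotopy. Since $Q_{s_k}(u)$ and $P_{t_k}$ meet in good position (by the standing hypothesis defining $U_k$, retained on $\overline{U_k}$), there is at least one biessential component $c'$ to begin with. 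If $c'$ is destroyed, then $c'$ bounds a disk on $P_{t_k}$ lying in $E$ (or $c'$ lies on $D'$ itself and gets pushed onto $P_{t_j}$), which — since $c'$ is essential in $P_{t_k}$ — forces $E$ to contain a meridian disk of $V_{t_k}$ or $W_{t_k}$, hence $E\cap P_{t_k}$ contains such a disk. The key point is a homological/separation argument: $E$ is a $3$-ball, and $P_{t_k}$ is a Heegaard torus, so $P_{t_k}$ cannot be contained in $E$; tracking how many biessential circles can simultaneously be swallowed, combined with the fact that the two boundary pieces $D$ and $D'$ of $E$ each meet $P_{t_k}$ in \emph{inessential} circles (discal or boundaries of discal pieces), shows that the count of biessential circles of $Q_{s_k}(u)\cap P_{t_k}$ is preserved modulo the cancellations, and in particular not all of them can vanish.

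The cleanest way to run the counting is probably via intersection numbers in $P_{t_k}$, exactly as in the proof of Proposition~\ref{prop:circles}: the biessential circles of $Q_{s_k}(u)\cap P_{t_k}$ are parallel curves on the torus $P_{t_k}$ of some fixed nonzero slope, they occur in a fixed algebraic count determined by the homology class of $Q_{s_k}(u)$ (which is unchanged by isotopy — $Q_{s_k}(u)$ stays a Heegaard torus isotopic to its original), and a basic isotopy supported in a ball $E$ can only remove them in cancelling pairs with opposite signs. Since the net algebraic intersection of $Q_{s_k}(u)$ with a spanning curve of $P_{t_k}$ is a nonzero constant when the biessential circles have a slope distinct from that curve, it cannot drop to zero, so a biessential circle survives.

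The main obstacle I anticipate is handling the interaction between the ball $E$ and the \emph{other} levels $P_{t_k}$, $k\neq j$: the disk $D'$ may pierce several $P_{t_k}$ in meridian circles, so $E$ is not disjoint from those levels, and one must argue carefully that pushing $D'$ across $E$ does not cause a cascade of eliminations that wipes out all biessential circles of some $Q_{s_k}(u)\cap P_{t_k}$ simultaneously. Controlling this requires using the innermost choice of $D'$ to ensure the pieces of the other levels inside $E$ are ``unknotted'' meridian disks, plus the observation that the isotopy's effect on $Q_{s_k}(u)\cap P_{t_k}$ for $k\neq i$ is limited to the subdisk of $D'\cup D$ it actually sweeps, which meets $P_{t_k}$ only in inessential-on-$P_{t_k}$ circles except where it crosses as a meridian — and meridians of $V_{t_k}$ or $W_{t_k}$ are \emph{not} biessential, so their removal does not threaten the biessential count. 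Assembling these observations into a clean invariant (the algebraic count of biessential intersection circles, which is an isotopy invariant and changes only by cancelling pairs under a ball-supported isotopy) is what makes the lemma go through.
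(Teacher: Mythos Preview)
Your proposed invariant---the algebraic intersection count of the biessential circles with a dual curve on $P_{t_k}$---is identically zero and therefore proves nothing. The torus $Q_{s_k}(u)$ is a Heegaard torus, so it bounds a solid torus and is null-homologous in $L$; hence $Q_{s_k}(u)\cdot\gamma=0$ for every loop $\gamma$ in $L$. Concretely, the biessential circles of $Q_{s_k}(u)\cap P_{t_k}$ are parallel on $P_{t_k}$ but alternate in sign as $Q_{s_k}$ crosses back and forth between $V_{t_k}$ and $W_{t_k}$, so their signed count is already zero before the isotopy. The statement ``a ball-supported isotopy can only remove them in cancelling pairs'' is true but vacuous: the constraint $0=0$ does not prevent all biessential circles from disappearing.

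There is also a factual error: you write that ``meridians of $V_{t_k}$ or $W_{t_k}$ are not biessential.'' A meridian of $V_{t_k}$ is by definition essential in $P_{t_k}$; it is biessential exactly when it is also essential in $Q_{s_k}$, which certainly can happen. In fact this is the delicate case. The paper's proof proceeds by direct geometric analysis rather than a homological count: assuming a biessential component lies in $E$, one first observes $k\neq j$ (since a spine of $Q_{s_k}$ cannot sit in a ball, some circle of $Q_{s_k}\cap D$ is essential in $Q_{s_k}$), then uses the innermost hypothesis on $D'$ to locate a meridian disk $D_0'\subset D'$ for $V_{t_k}$ or $W_{t_k}$, and derives a contradiction in each of the two cases by a spine/Heegaard argument or by producing an annulus in $Q_{s_k}$ forcing an essential curve of $P_{t_k}$ to be contractible in $V_{t_k}$. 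Your outline does not reach either of these mechanisms, and the homological shortcut you propose cannot substitute for them.
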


\begin{proof}
Throughout the proof of the lemma, $Q_s$ is understood to mean~$Q_s(u)$.

Suppose that a biessential component of some $Q_{s_k}\cap P_{t_k}$ is
contained in the ball $E$, and hence is removed by the isotopy. Since a
spine of $Q_{s_k}$ cannot be contained in a $3$-ball, there must be a
circle of intersection of $Q_{s_k}$ with $D$ that is essential in
$Q_{s_k}$. This implies that $k\neq j$. Now $D'$ must have nonempty
intersection with $P_{t_k}$, since otherwise $P_{t_k}$ would be contained
in $E$. An intersection circle innermost on $D'$ cannot be inessential in
$P_{t_k}$, since $c$ was an innermost discal intersection on $Q_{s_i}$, so
$D'$ contains a meridian disk $D_0'$ for either $V_{t_k}$ or $W_{t_k}$.
Choose notation so that $D$ is contained in~$V_{t_k}$ (that is, $t_j<t_k$).

Suppose first that $D_0'\subset V_{t_k}$. The basic isotopy pushing $D'$
across $E$ moves $Q_{s_k}\cap E$ into a small neighborhood of $D$, so that
it is contained in $V_{t_k}$. If there is no longer any biessential
intersection of $Q_{s_k}$ with $P_{t_k}$, then the complement in $V_{t_k}$
of the original $D_0'$ contains a spine of $Q_{s_k}$ (since the original
intersection of $Q_{s_k}$ with $D$ contained a loop essential in $Q_{s_k}$,
the spine of $Q_{s_k}$ is now on the $V_{t_k}$-side of $P_{t_k}$). This is
a contradiction, since $Q_{s_k}$ is a Heegaard torus.

Suppose now that $D_0'\subset W_{t_k}$. Since the biessential circles of
$Q_{s_k}\cap P_{t_k}$ are disjoint from $D_0'$, they are meridians for
$W_{t_k}$ and hence are essential in $V_{t_k}$. Now, let $A$ be innermost
among the annuli on $Q_{s_k}$ bounded by a biessential component $C$ of
$Q_{s_k}\cap P_{t_k}$ and a circle of $Q_{s_k}\cap D$. Since $Q_{t_k}$ and
$P_{t_k}$ meet in good position, the intersection of the interior of $A$
with $P_{t_k}$ is discal. This implies that $C$ is contractible in
$V_{t_k}$, a contradiction.
\end{proof}\longpage

\begin{proof}[Proof of Theorem~\ref{thm:from good to very good}]
We will adapt the approach used by \index{Hatcher}Hatcher \cite{H}. The
principal difference for us is that in \cite{H}, there is only a single
domain level, whereas we have the different $Q_{s_i}$ on the sets~$U_i$.

The first step is to construct a family $h_{u,t}$, $0\leq t \leq 1$ of
isotopies of the $f_u=h_{u,0}$, which eliminates the discal intersections
of every $Q_{s_i}(u)$ with every $P_{t_j}$. Let $\mathcal{C}$ be the set of
discal intersection curves of all $Q_{s_i} \cap P_{t_j}$ for all $u$, where
as previously explained, this refers only to the $Q_{s_i}(u)$ with $u\in
U_i$. Since $Q_{s_i}$ is transverse to $P_{t_j}$ at all $u\in
\overline{U_i}$, the curves in $\mathcal{C}$ fall into finitely many
families which vary by isotopy as the parameter $u$ moves over (the
connected set) $\overline{U_i}$. Thus we may regard $\mathcal{C}$ as a
disjoint union containing finitely many copies of each $\overline{U_i}$. It
projects to $W$, with the inverse image of $u$ consisting of the discal
intersection curves $\mathcal{C}_u$ of the $Q_{s_i}(u)$ and $P_{t_j}$ for
which $u\in U_i$. By assumption, no element of $\mathcal{C}$ projects to
any parameter $u\in W_0$.

Each $c \in \mathcal{C}_u$ bounds unique disks $D_c \subset P_{t_j}$ and
$D'_c \subset Q_{s_i}(u)$ for some $i$ and $j$. The inclusion relations among
the $D_c$ define a partial ordering $<_P$ on $\mathcal{C}_u$, by the rule
that $c_1 <_P c_2$ when $D_{c_1} \subset D_{c_2}$. Similarly, $c_1 <_Q c_2$
when $D'_{c_1} \subset D'_{c_2}$.

If $c$ is minimal for $<_Q$, then $D'_c \cup D_c$ is an embedded $2$-sphere
in $M$ which bounds a $3$-ball $E_c$.  
By\index{biessential!after basic isotopy} 
Lemma~\ref{lem:no biessential elimination}, the basic isotopy
that pushes $D'_c$ across $E_c$ to $D_c$ and on to the other side of $D_c$
retains the property that every $Q_{s_k}(u)\cap P_{s_k}$ has a biessential
intersection. This ensures that when all discal intersections have been
eliminated, each $Q_{s_k}(u)\cap P_{t_k}$ will still intersect, so they
will be in very good position.

Shrink the open cover $\set{U_i}$ to an open cover $\set{U_i'}$ for
which each $\overline{U_i'}\subset U_i$. To construct the $h_{u,t}$,
Hatcher introduced an auxiliary function $\Psi\colon \mathcal{C}\to (0,2)$
that gives the order in which the elements of $\mathcal{C}$ are to be
eliminated, and allows the basic isotopies to be tapered off as one nears
the frontier of $U_i$. Denoting by $\psi_u$ the restriction of $\Psi$ to
$\mathcal{C}_u$, we will select $\Psi$ so that the following conditions are
satisfied:
\begin{enumerate}
\item $\psi_u (c) < \psi_u (c')$ whenever $c <_Q c'$
\item $\psi_u (c) < 1 $ if $c \subset Q_{s_i}(u)$ and $u \in \overline{U_i'}$
\item $\psi_u (c) > 1$ if $c \subset Q_{s_i}(u)$ and $u \in
\overline{U_i}-U_i$.
\end{enumerate}
One way to construct such a $\Psi$ is to choose a Riemannian metric on
$\tau(P\times (0,1))$ for which each $P_t$ has area $1$, and define
$\Psi_0(c)$ to be the area of $f_u^{-1}(D_c')$ in $P_{s_i}$. Then,
choose continuous functions $\alpha_i$ which are $0$ on
$\overline{U_i'}$ and $1$ on $W-U_i$, and define
$\Psi(c)=\Psi_0(c)+\alpha_i(u)$ for $c\subset Q_{s_i}(u)$.

Roughly speaking, the idea of Hatcher's construction is to have $h_{u,t}$
perform the basic isotopy that eliminates $c$ during a small time interval
$I_u(c)$ which starts at the number $\psi_u(c)$. In order to retain control
of this process, preliminary steps must be taken to ensure that basic
isotopies that move points in intersecting $3$-balls $E_c$ do not occur at
the same time.

If $c$ is a discal intersection of $U_{s_i}$ and $P_{t_j}$, we denote $U_i$
and $U_i'$ by $U(c)$ and $U'(c)$. For a fixed isotopic family of $c\in
\mathcal{C}$ with $c \subset Q_{s_i}$, the points $(u,\psi_u(c))$ form a
$d$-dimensional sheet $i(c)$ lying over $\overline{U(c)}$, where $d$ is the
dimension of $W$. If $i(c_1)$ meets $i(c_2)$, then by the first property of
$\Psi$, $c_1$ and $c_2$ cannot be $<_Q$-related.

Thicken each $i(c)$ to a plate $I(c)$ intersecting each $\set{u} \times
[0,2]$ in an interval $I_u(c)=[\psi_u (c), \psi_u (c) + \epsilon]$, for
some small positive $\epsilon$. This interval will contain the $t$-support of
the portion of $h_{u,t}$ that eliminates $c$, assuming that all other loops
in $\mathcal{C}_u$ with smaller $\psi_u$-value have already been
eliminated. By condition~(1), $c$ will be $<_Q$-minimal at the times $t\in
I_u(c)$. Since $\mathcal{C}_u$ is empty for $u\in W_0$, the $h_{u,t}$ will
be constant for all $u\in W_0$.

Choose the $\epsilon$ small enough so that $I(c_1) \cap I(c_2)$ is nonempty
only near the intersections of $i(c_1)$ and $i(c_2)$. This ensures that if
basic isotopies eliminating $c_1$ and $c_2$ occur on overlapping time
intervals, then $c_1$ and $c_2$ are $<_Q$-unrelated. Also, choose
$\epsilon$ small enough so that $I_u(c)\subset [0,1]$ whenever $u\in
U_i'$.

Write $G_0$ for the union of the $i(c)$, and $G$ for the union of the
$I(c)$.

It may happen that for some $c_1,c_2\in \mathcal {C}_u$ with $\psi_u (c_1)
< \psi_u (c_2)$, we have $c_2 <_P c_1$. In this case the isotopy which
eliminates $c_1$ will also eliminate $c_2$. So reduce $G$ by deleting all
points $(u,\psi_u (c_2))$ such that $\psi_u (c_1) < \psi_u (c_2)$ for some
$c_1$ with $c_2 <_P c_1$. Make a corresponding reduction of $I(c_2)$ by
deleting points $t \in I_u(c_2)$ such that $t > \psi_u (c_1)$ for some
$c_1$ with $c_2 <_P c_1$.

There is a subtle danger here. Suppose that in the previous paragraph,
$u\in U(c_1)-\overline{(U'(c_1))}$. If part of $I_u(c_1)$ extends into
$(1,2]$, then the isotopy eliminating $c_1$ may not be completed, and
therefore $c_2$ would not be eliminated. If $u\in
\overline{U(c_2)}-\overline{U'(c_2)}$ this does not matter, since we only
need to complete the elimination of $c_2$ at parameters in $U'(c_2)$. But
the plate thickness $\epsilon$ must be selected small enough so that
$I_u(c_2)$ lies in $[0,1]$ at all $u$ for which there is a $c_2$ with $u\in
\overline{U'(c_2)}$ and $\psi_u(c_2)>\psi_u(c_1)$. This is possible because
the set of such $u$ is a compact subset of $U_i$.

At values of $t$ where the interiors of $I(c_1)$ and $I(c_2)$ still
overlap, $c_1$ and $c_2$ are $<_Q$-unrelated, and the reduction just made
ensures that they are $<_P$-unrelated. In Hatcher's context, all
intersections are discal, so the combined effect of these is to eliminate
the possibility of simultaneous isotopies on intersecting $3$-balls
$E_{c_1}$ and $E_{c_2}$. In our context, however, $E_{c_1}$ and $E_{c_2}$
can intersect on overlaps of $I(c_1)$ and $I(c_2)$ even when $c_1$ and
$c_2$ are neither $<_P$-related nor $<_Q$-related. Figure~\ref{fig:nested}
shows a simple example. The intersections of $P_{t_1}$ with $Q_{s_2}$, are
not discal, nor are the intersections of $P_{t_2}$ with $Q_{s_1}$, but
$Q_{s_2}$ has a discal intersection with $P_{t_2}$ inside $E(c_1)$.
When this happens, however, $E_{c_1}$ and $E_{c_2}$
must be either disjoint or nested:\par
\index{figures!figure98@nested ball regions}%
\begin{figure}
\labellist
\pinlabel $Q_{s_2}$ at 227 90
\pinlabel $Q_{s_1}$ at 274 218
\pinlabel $P_{t_1}$ at 377 69
\pinlabel $P_{t_2}$ at 384 169
\endlabellist
\includegraphics[width=0.75\textwidth]{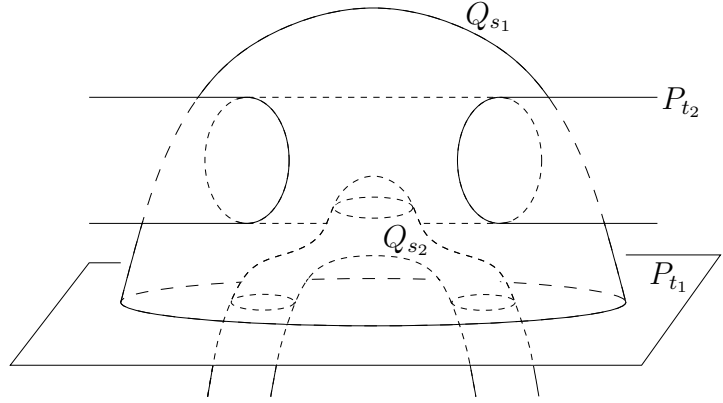}
\caption{Nested ball regions for basic isotopies.}
\label{fig:nested}
\end{figure}\index{examples!example7@nested ball regions}

\begin{lemma}
Suppose that $c_1$ and $c_2$ are $<_Q$-minimal discal intersections, and
are neither $<_P$-related nor $<_Q$-related. Then $\partial E_{c_1}$ and
$\partial E_{c_2}$ are disjoint.
\label{lem:nested E}
\end{lemma}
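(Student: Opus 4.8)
The plan is to prove the two spheres $\partial E_{c_1}$ and $\partial E_{c_2}$ are disjoint by checking directly that each of the four pieces of which they are composed is disjoint from each piece of the other. Write $\partial E_{c_r}=D_{c_r}\cup D'_{c_r}$ for $r=1,2$, where $D_{c_r}$ is the disk bounded by $c_r$ in some level $P_{t_{j_r}}$ and $D'_{c_r}$ is the disk bounded by $c_r$ in some image level $Q_{s_{i_r}}(u)=f_u(P_{s_{i_r}})$. It then suffices to show that $D_{c_1}\cap D_{c_2}$, $D'_{c_1}\cap D'_{c_2}$, $D'_{c_1}\cap D_{c_2}$, and $D'_{c_2}\cap D_{c_1}$ are all empty.

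First I would record that $c_1$ and $c_2$ are disjoint: if $i_1\neq i_2$ then $Q_{s_{i_1}}(u)\cap Q_{s_{i_2}}(u)=f_u(P_{s_{i_1}}\cap P_{s_{i_2}})=\emptyset$ because $s_{i_1}\neq s_{i_2}$, while if $i_1=i_2$ then $c_1$ and $c_2$ are distinct components of the transverse intersection of $Q_{s_{i_1}}(u)$ with $P_{t_{j_1}}\cup P_{t_{j_2}}$ (using that distinct levels $P_{t_{j_1}}\neq P_{t_{j_2}}$ of $\tau$ are disjoint). The same dichotomy disposes of $D_{c_1}\cap D_{c_2}$ and of $D'_{c_1}\cap D'_{c_2}$: if the two disks lie in distinct levels (respectively distinct image levels) they are disjoint outright; if they lie in the same torus, then since their boundary curves are disjoint the disks are either disjoint or one is contained in the other, and containment would make $c_1$ and $c_2$ $<_P$-related (respectively $<_Q$-related), contrary to hypothesis (here one uses that an inessential simple closed curve in a torus bounds a unique disk, so the nested disk is exactly the $D$ or $D'$ attached to the inner curve).

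The mixed terms are the crux, and I will treat $D'_{c_1}\cap D_{c_2}$, the other being symmetric. This set lies in $Q_{s_{i_1}}(u)\cap P_{t_{j_2}}$, which is a disjoint union of circles by transversality, so a priori it is a union of circles and arcs. I would first rule out arcs by showing $c_1=\partial D'_{c_1}$ is disjoint from $D_{c_2}$ and $c_2=\partial D_{c_2}$ is disjoint from $D'_{c_1}$: when the relevant surfaces are disjoint this is automatic, and otherwise (say $j_1=j_2$ for the first statement) $c_1$ is a connected curve in $P_{t_{j_1}}$ disjoint from $c_2=\partial D_{c_2}$, hence lies either in $\interior(D_{c_2})$ — forcing $D_{c_1}\subset D_{c_2}$ and thus $c_1<_P c_2$, a contradiction — or in the complementary region, where it misses $D_{c_2}$. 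Thus $D'_{c_1}\cap D_{c_2}$ is a union of circles lying in $\interior(D'_{c_1})\cap\interior(D_{c_2})$. But any such circle $e$ bounds a proper subdisk of $D'_{c_1}$, so it is inessential in $Q_{s_{i_1}}(u)$, and it bounds a proper subdisk of $D_{c_2}$, so it is inessential in $P_{t_{j_2}}$; hence $e$ is a discal intersection of $Q_{s_{i_1}}(u)$ with $P_{t_{j_2}}$, so $e\in\mathcal{C}_u$ and $D'_e$ is the subdisk of $D'_{c_1}$, giving $e<_Q c_1$ — contradicting the $<_Q$-minimality of $c_1$. So $D'_{c_1}\cap D_{c_2}=\emptyset$, and the same argument with the roles of the indices reversed, using $<_Q$-minimality of $c_2$, gives $D'_{c_2}\cap D_{c_1}=\emptyset$.

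The main obstacle is not any single step but the bookkeeping: one must keep straight which level each of $c_1,c_2$ sits on and handle the mixed intersection $D'\cap D$ of disks that live in transverse surfaces. The device that makes this work is the observation that a stray intersection circle interior to a disk on both the $Q$-side and the $P$-side is automatically discal, hence belongs to $\mathcal{C}_u$ and sits below $c_1$ (or $c_2$), so minimality kills it; this is also exactly where it matters that $\mathcal{C}$ was defined using discal intersections of each $Q_{s_i}$ with \emph{all} the $P_{t_j}$, and that no good-position hypothesis for the pair $(Q_{s_{i_1}},P_{t_{j_2}})$ is needed.
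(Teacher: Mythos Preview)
Your proof is correct and follows essentially the same route as the paper's: decompose each sphere as $D_{c_r}\cup D'_{c_r}$, use the failure of $<_P$- and $<_Q$-relatedness to handle the like-with-like pieces, and use $<_Q$-minimality to rule out intersection circles in the mixed pieces $D'_{c_1}\cap D_{c_2}$ and $D_{c_1}\cap D'_{c_2}$. The paper's version is much terser and glosses over the boundary bookkeeping (ruling out arcs) that you spell out carefully, but the underlying argument is identical.
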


\begin{proof}
Since $c_1$ and $c_2$ are not $<_Q$-related, $D'(c_1)$ and $D'(c_2)$ are
disjoint, and since they are not $<_P$-related, $D(c_1)$ and $D(c_2)$ are
disjoint. An intersection circle of $D(c_1)$ and $D'(c_2)$ would be smaller
than $c_2$ in the $<_Q$-ordering, and similarly an intersection circle of
$D'(c_1)$ and $D(c_2)$ would be smaller than $c_1$ in the $<_Q$-ordering.
\end{proof}

When $E_{c_1}$ and $E_{c_2}$ are nested, say, $E_{c_2}$ lies in $E_{c_1}$,
a basic isotopy that removes $c_1$ will also remove $c_2$. So we make the
further reduction in $G_0$ of deleting all $(u,\psi_u (c_2))$ for which
there is a $c_1$ such that $i(c_1)$ meets $i(c_2)$, $\psi_u (c_1) < \psi_u
(c_2)$, and $E_{c_2}\subset E_{c_1}$. Also, reduce $I(c_2)$ by removing
any $t$ in $I_u(c_2)$ with $t>\psi_u(c_1)$. Again, this may require the
plate thickness to be decreased to ensure that $I_u(c_1)$ lies in $[0,1]$
at parameters in $U(c_1)$ where $u\in \overline{U'(c_2)}$

For fixed $u \in W$, the basic isotopies are combined by proceeding upward
in $W\times [0,2]$ from $t = 0 $ to $t = 1$, performing each basic isotopy
involving $c$ on the interval $I_u(c)$. Condition~(3) on the $\psi_u$
ensures that the basic isotopies involving $c\subset Q_{s_i}(u)$ taper off
at parameters near the frontier of $U_i$.  On a reduced interval
$I_u(c)$, which is an initial segment of $[\psi_u(c), \psi_u(c) +
\epsilon]$, perform only the corresponding initial portion of the basic
isotopy.  On the overlaps of the $I(c)$, perform the corresponding basic
isotopies concurrently; the reductions of the $I(c)$ have ensured that
these basic isotopies will have disjoint supports.  Since $\epsilon$ was
chosen small enough so that $I_u(c)\subset [0,1]$ whenever $u\in U_i'$,
the basic isotopies involving $Q_{s_i}$ will be completed at all $u$ in
$U_i'$. Since $\mathcal{C}_u$ is empty for $u\in W_0$, no isotopies take
place at parameters in $W_0$.

The remaining concern is that the basic isotopies eliminating $c\subset
Q_{s_i}(u)$ must be selected so that they fit together continuously in the
parameter $u$ on $U_i$. This can be achieved using the method in the
last paragraph on p.~345 of \cite{H} (which applies in the smooth category
by virtue of \cite{HSmale}, see also the more detailed version in~\cite{Hnew}).
\end{proof}\index{very good!position|)}

\section[Setting up the last step]
{Setting up the last step}
\label{sec:lemmas}

In this section, we present some technical lemmas that will be needed for
the final stage of the proof.

The first two lemmas give certain uniqueness properties for the fiber of
the Hopf fibration on $L$. Both are false for $\RP^3$, so require our
convention that $L=L(m,q)$ with $m>2$, and as usual we select $q$ so that
$1\leq q< m/2$. From now on, we endow $L$ with the Hopf fibering and
assume that our sweepout of $L$ is selected so that each $P_t$ is a union
of fibers. Consequently the exceptional fibers, if any, will be components
of the singular set~$S$.

\begin{lemma}\index{Heegaard torus}
Let $P$ be a Heegaard torus in $L$ which is a union of fibers, bounding
solid tori $V$ and $W$. Suppose that a loop in $P$ is a longitude for $V$
and for $W$. Then $q=1$ and the loop is isotopic in $P$ to a fiber.
\label{lem:bilongitude}
\end{lemma}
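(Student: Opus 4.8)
The plan is to exploit the presentation of $\pi_1(L)=\Z/m$ coming from the Heegaard splitting $L=V\cup_P W$ together with the fact that $P$, being a union of fibers, is compatible with the Hopf fibering. Fix a fiber $h$ on $P$, and let $\mu_V,\mu_W\subset P$ be meridians of $V$ and $W$ respectively; in $H_1(P)\cong\Z^2$ we have $\mu_V\cdot\mu_W=\pm m$ (this is the standard computation of the order of $\pi_1$ of the lens space), and the Hopf-fiber class $h$, being a longitude for each solid torus when $q=1$ and otherwise an ``exceptional'' slope, has a specific intersection number with each of $\mu_V$ and $\mu_W$ that I would read off from the Seifert invariants of the Hopf fibering recorded just before the lemma (namely $(k,q_1),(k,q_2)$ with $k=m/\gcd(q-1,m)$, and $h$ a regular fiber). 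The key point: a loop $\ell\subset P$ is a longitude of $V$ iff $\ell\cdot\mu_V=\pm1$ in $H_1(P)$, and likewise for $W$. So the hypothesis says there is a primitive class $\ell$ with $\ell\cdot\mu_V=\pm1$ and $\ell\cdot\mu_W=\pm1$.

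First I would show this forces $q=1$. Write $\mu_W=a\,\ell+b\,\mu_V$ in the basis $\{\ell,\mu_V\}$ of $H_1(P)$ (possible since $\ell$ is primitive and $\ell\cdot\mu_V=\pm1$, so $\{\ell,\mu_V\}$ is a basis). Then $\mu_V\cdot\mu_W=\pm a$, so $|a|=m$; and $\ell\cdot\mu_W=\pm b$, so $|b|=1$. Hence $\mu_W=\pm(m\,\ell\pm\mu_V)$ up to sign, which shows $\mu_W\equiv\pm\mu_V \pmod{m\,\ell}$; intersecting with $h$ and using that $h\cdot\ell$, $h\cdot\mu_V$, $h\cdot\mu_W$ are determined by the fibering, I get a congruence among the Seifert data that, when $q>1$, is incompatible with $k=m/\gcd(q-1,m)<m$ — concretely, the two exceptional fibers of the Hopf fibering have slopes whose meridian-intersection numbers with $h$ are $k$, not $1$, and one checks $\mu_V$ and $\mu_W$ cannot both meet $\ell$ once unless $k=m$, i.e.\ unless $\gcd(q-1,m)=1$; combined with the constraint that actually forces $q=1$. (This arithmetic is routine once the intersection numbers are tabulated, so I would not belabor it.)

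Once $q=1$, the Hopf fibering is a genuine $S^1$-bundle over $S^2$, $h$ is a longitude of both $V$ and $W$ simultaneously, hence $h\cdot\mu_V=\pm1=h\cdot\mu_W$. Now both $\ell$ and $h$ are primitive classes in $H_1(P)\cong\Z^2$ meeting $\mu_V$ in $\pm1$; two such classes differ by a multiple of $\mu_V$, say $\ell=\pm h + c\,\mu_V$. Similarly, meeting $\mu_W$ in $\pm1$ forces $\ell=\pm h + c'\,\mu_W$. Subtracting, $c\,\mu_V=c'\,\mu_W$ in $H_1(P)$; but $\mu_V$ and $\mu_W$ are linearly independent over $\Q$ (their intersection number is $\pm m\neq0$), so $c=c'=0$ and $\ell=\pm h$. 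Thus $\ell$ is isotopic in $P$ to the fiber. I would phrase the last paragraph purely in terms of $H_1(P)$ and the two meridian slopes, invoking Proposition~\ref{prop:Heegaard loops} only if needed to know a priori that $\ell$ is a longitude of $V$ \emph{or} $W$ (here it's a longitude of both by hypothesis, so that is not needed).

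\textbf{Main obstacle.} The only real work is the $q>1$ case: ruling it out requires having the precise intersection numbers of the regular fiber $h$ with $\mu_V$ and $\mu_W$ in the Hopf fibering of $L(m,q)$, which I would extract from Table~4 of \cite{M} and the Seifert invariants $(k,q_1),(k,q_2)$, $k=m/\gcd(q-1,m)$. I expect the inequality $k<m$ (equivalently $\gcd(q-1,m)>1$, which holds since $q>1$ and one must also handle $q-1$ and $m$ having common factors carefully) to be exactly what obstructs a loop from being a longitude on both sides, but pinning down the sign/torsion bookkeeping so that ``longitude of $V$ and of $W$'' translates into a false congruence mod $m$ is the delicate step. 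Everything after ``$q=1$'' is elementary linear algebra over $\Z$ in $H_1(P)$.
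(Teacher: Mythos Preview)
You have done the essential computation but then miss the finish line and take an unnecessary detour. The lens-space parameter $q$ is already encoded in the Heegaard splitting, independently of the fibering: one may choose a longitude $a$ of $V$ so that, with $b=\mu_V$, the meridian of $W$ is $\mu_W=ma+qb$ (this is the defining gluing of $L(m,q)$, using the paper's convention $1\le q<m/2$). Your bilongitude $\ell$, being a longitude of $V$, has the form $a+kb$, and your condition $\ell\cdot\mu_W=\pm1$ then reads $q-km=\pm1$. Since $1\le q<m/2$ and $m>2$, this forces $k=0$ and $q=1$ in one line. Equivalently, substituting $\ell=a+kb$ into your own relation $\mu_W=\pm m\ell\pm\mu_V$ and comparing coefficients gives $q\equiv\pm1\pmod m$. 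The fiber plays no role up to this point, so your route through the Seifert data is unnecessary---and your expected obstruction $\gcd(q-1,m)>1$ is in any case false (take $L(5,2)$, where $\gcd(1,5)=1$).

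For the second half, the paper simply notes that $k=0$ gives $\ell=a$, and since for $q=1$ the Hopf fibering is nonsingular, its fiber is itself a longitude of both $V$ and $W$, hence also equals $a$. Your route via $\ell=\pm h+c\mu_V=\pm h+c'\mu_W$ also works, though you should address the case where the two signs disagree (it would force $m\mid 2$).
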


\begin{proof}
Let $a$ and $b$ be loops in $P$ which are respectively a longitude and a
meridian of $V$, and with $a$ determined by the condition that $ma+qb$ is a
meridian of $W$. Let $c$ be a loop in $P$ which is a longitude for both $V$
and $W$. Since $c$ is a longitude of $V$, it has (for one of its two
orientations) the form $a + kb$ in $H_1(P)$ for some $k$. The intersection
number of $c$ with $ma+qb$ is $q-km$, which must be $\pm1$ since $c$ is a
longitude of $W$. Since $1\leq q < m/2$ and $m>2$, this implies that
$k=0$ and $q=1$. Since $k=0$, $c$ is uniquely determined and $c=a$.  Since
$q=1$, the Hopf fibering is nonsingular, so the fiber is a longitude of
both $V$ and $W$ and hence is isotopic in $P$ to~$c$.
\end{proof}

\begin{lemma}
Let $h\colon L\to L$ be a diffeomorphism isotopic to the identity, with
$h(P_s)=P_t$. Then the image of a fiber of $P_s$ is isotopic in $P_t$ to a
fiber.\par
\label{lem:coincident levels}
\end{lemma}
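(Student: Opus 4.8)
The plan is to reduce the statement to the previous lemma, Lemma~\ref{lem:bilongitude}, by analyzing where a fiber of $P_s$ can go under a diffeomorphism isotopic to the identity. Since $h$ is isotopic to the identity, it acts trivially on $\pi_1(L)\cong \Z/m$, so it carries each element of $\pi_1(L)$ represented by a loop in a level torus to the same element. The key structural fact is that $P_t$ separates $L$ into solid tori $V_t$ and $W_t$, and the homotopy class of a fiber $C$ of $P_t$ in $\pi_1(L)$ is a distinguished element: it generates the image of $\pi_1(P_t)$ in $\pi_1(L)$ in a way tied to the Seifert structure. First I would record that a fiber of $P_s$ is a longitude of \emph{at least one} of the two solid tori $V_s$, $W_s$: indeed the core circles of $V_s$ and $W_s$ are fibers (since the sweepout is chosen so that $T_0$, $T_1$ are fibers, as stated at the end of the excerpt and as used for the Hopf fibering), and any fiber of $P_s$ is isotopic in $L$ to a core circle, hence is a longitude of $V_s$ or of $W_s$. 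The same holds for $P_t$.

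Next I would use $h$ to transport the configuration. Write $C_s$ for a fiber of $P_s$ and let $C' = h(C_s) \subset P_t$. Since $h$ carries $V_s$ and $W_s$ to $V_t$ and $W_t$ in some order (a self-diffeomorphism of $L$ isotopic to the identity carries the splitting $L = V_s \cup_{P_s} W_s$ to the isotopic splitting $L = h(V_s)\cup_{P_t} h(W_s)$, and by the isotopy-uniqueness of genus-one Heegaard splittings — reproven in \cite{RS}, Corollary~6.3, as noted in the excerpt — these coincide up to relabeling with $V_t \cup_{P_t} W_t$), the curve $C'$ is a longitude of whichever of $V_t$, $W_t$ corresponds to the solid torus of $P_s$ in which $C_s$ is a longitude. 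So $C'$ is an embedded loop in $P_t$ that is a longitude of at least one of $V_t$, $W_t$; moreover, being the image of a fiber, $C'$ is a core circle of a genus-$1$ Heegaard splitting of $L$, so by Proposition~\ref{prop:Heegaard loops} it is a longitude of $V_t$ or $W_t$ in any case — but we want to pin down that it is isotopic in $P_t$ to a fiber.

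The main obstacle is distinguishing the case where $C'$ is a longitude of both $V_t$ and $W_t$ (handled directly by Lemma~\ref{lem:bilongitude}, which forces $q=1$ and $C'$ isotopic in $P_t$ to a fiber) from the case where $C'$ is a longitude of exactly one of them, say $V_t$. In that latter case I would argue as follows. A fiber $C_t$ of $P_t$ is also a longitude of exactly one of $V_t$, $W_t$ (when $q>1$; when $q=1$ it is a longitude of both and Lemma~\ref{lem:bilongitude} already applies to any bilongitude, so assume $q>1$). The essential point is that the isotopy class in $P_t$ of a loop that is a longitude of $V_t$ but not of $W_t$ is \emph{unique}: in $H_1(P_t)$ with basis $a$ (longitude of $V_t$), $b$ (meridian of $V_t$), such a loop is $\pm(a + kb)$, and the constraint that it \emph{not} be a longitude of $W_t$ (whose meridian is $ma+qb$) together with the Heegaard-loop constraint of Proposition~\ref{prop:Heegaard loops} — it must be a longitude of $V_t$ \emph{or} $W_t$ — is automatically satisfied, so I must instead use that $C'$, being the homeomorphic image under $h$ of a fiber, inherits the property of being isotopic \emph{on the Heegaard torus} to the fiber. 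Concretely: $h|_{P_s}\colon P_s \to P_t$ is an isotopy equivalence of tori, and since $h$ is isotopic to the identity of $L$, the induced map on $H_1(L)$ is the identity, which forces $h_*$ to carry the class $[C_s]\in H_1(P_s)$ to a class in $H_1(P_t)$ having the same image in $H_1(L)=\Z/m$, namely the class of a fiber up to sign and up to adding multiples of a meridian of $V_t$ — but a fiber is primitive in $H_1(P_t)$ and the only primitive classes with the correct image in $\Z/m$ and realizable by an embedded longitude of $V_t$ form a single isotopy class, that of the fiber. I would make this last sentence precise by an elementary $H_1$ computation in the torus, using $m>2$ to rule out the spurious possibilities exactly as in the proof of Lemma~\ref{lem:bilongitude}. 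This completes the argument; the delicate point to write carefully is the identification of the two solid-torus splittings via \cite{RS} and the primitivity/uniqueness bookkeeping in $H_1(P_t)$.
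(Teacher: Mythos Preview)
Your approach has a genuine gap at the very first step. You claim that a fiber of $P_s$ is a longitude of at least one of $V_s$, $W_s$, arguing that the core circles $T_0, T_1$ are fibers and ``any fiber of $P_s$ is isotopic in $L$ to a core circle.'' This is false when $q>1$. As the paper records just before Lemma~\ref{lem:bilongitude}, for $q>1$ the Hopf fibering has two exceptional fibers of order $k=m/\gcd(q-1,m)>1$, and these are exactly $T_0$ and $T_1$. A regular fiber in $P_s$ therefore represents $k$ times the core in $\pi_1(V_s)$, so it is neither a longitude of $V_s$ nor of $W_s$, and is not isotopic in $L$ to either core. Your subsequent case analysis (bilongitude versus longitude-of-exactly-one) and the appeal to Proposition~\ref{prop:Heegaard loops} then have nothing to stand on. The vague $H_1$ argument at the end (``the only primitive classes with the correct image in $\Z/m$ and realizable by an embedded longitude of $V_t$ form a single isotopy class'') inherits the same error and is not salvageable as written.

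The paper's proof avoids this trap by never asserting that the fiber is a longitude. After reducing to $s=t$ via a fiber-preserving diffeomorphism, it works directly with $h_*\colon H_1(P)\to H_1(P)$ in the basis $a,b$ (longitude and meridian of $V$, with $ma+qb$ the meridian of $W$). If $h(V)=V$, uniqueness of meridian disks plus $h\simeq \mathrm{id}_L$ forces $h_*(b)=b$ and $h_*(ma+qb)=ma+qb$, hence $h_*=\mathrm{id}$ on $H_1(P)$ and the fiber class is fixed. If $h(V)=W$, an intersection-number computation using $m>2$ and $1\le q<m/2$ forces $q=1$; only then does the fiber become a bilongitude, and Lemma~\ref{lem:bilongitude} finishes. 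Incidentally, your invocation of \cite{RS} for the Heegaard-splitting uniqueness is unnecessary here: since $h(P_s)=P_t$ by hypothesis, $h$ automatically carries $\{V_s,W_s\}$ to $\{V_t,W_t\}$.
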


\begin{proof}
Composing $f$ with a fiber-preserving diffeomorphism of $L$ that moves
$P_s$ to $P_t$, we may assume that $s=t$. Write $P$, $V$, and $W$ for
$P_t$, $V_t$, and $W_t$. Let $a$ and $b$ be loops in $P$ selected as in the
proof of Lemma~\ref{lem:bilongitude}, and write $h_*\colon H_1(P)\to
H_1(P)$ for the induced isomorphism.

Suppose first that $h(V)=V$. Since the meridian disk of $V$ is unique up to
isotopy, we have $h_*(b)=\pm b$. Since $h$ is isotopic to the identity on
$L$ and $m>2$, $h$ is orientation-preserving and induces the identity on
$\pi_1(V)$. This implies that $h_*(b)=b$. Similar considerations for $W$
show that $h_*(ma+qb)=ma+qb$, so $h_*(a)=a$. Thus $h_*$ is the identity on
$H_1(P)$ and the lemma follows for this case.

Suppose now that $h(V)=W$. Then $h$ is orientation-reversing on $P$. Since
$h$ must take a meridian of $V$ to one of $W$, we have
$h_*(b)=\epsilon(ma+qb)$ where $\epsilon=\pm1$. Writing $h_*(a)=ua+vb$,
we find that $1=a\cdot b=-h_*(a)\cdot h_*(b)=-\epsilon(qu-mv)$. The facts
that $h$ is isotopic to the identity on $L$, $a$ generates $\pi_1(L)$, and
$b$ is $0$ in $\pi_1(V)$ imply that $u\equiv 1\pmod m$, so modulo $m$ we
have $1\equiv -\epsilon q$. Since $1\leq q< m/2$, this forces $q=1$,
$\epsilon=-1$, and $h_*(b)=-ma-b$.  Since $a$ has intersection number
$-1$ with the meridian $-ma-b$ of $W$, it is also a longitude of $W$.
Since $h$ is a homeomorphism interchanging $V$ and $W$, $h(a)$ is also a
longitude of $V$ and of $W$, and an application of
Lemma~\ref{lem:bilongitude} completes the proof.
\end{proof}

We now give several lemmas which allow the deformation of diffeomorphisms
and embeddings to make them fiber-preserving or level-preserving. The first
is just a special case of Theorem~\ref{space of fp homeos}:

\begin{lemma}
Let $X$ be either a solid torus or $S^1\times S^1\times \I$, with a
fixed Seifert fibering.  Then the inclusion $\diff_f(X)\to \diff(X)$ is a
homotopy equivalence.\par
\label{lem:fiber-preserving on X}
\end{lemma}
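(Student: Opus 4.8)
The plan is to deduce this from Theorem~\ref{space of fp homeos}, which asserts that $\diff_f(\Sigma)\to\diff(\Sigma)$ is a homotopy equivalence for any Haken Seifert-fibered $3$-manifold $\Sigma$. Both a solid torus (with a fixed Seifert fibering, which in the exceptional-fiber case is a fibered solid torus) and $S^1\times S^1\times\I$ are compact Seifert-fibered $3$-manifolds with nonempty boundary, hence Haken, so Theorem~\ref{space of fp homeos} applies directly to each. Thus the inclusion $\diff_f(X)\to\diff(X)$ is a homotopy equivalence in both cases, which is exactly the assertion of Lemma~\ref{lem:fiber-preserving on X}.

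First I would observe that the solid torus $V$, fibered so that it is a fibered solid torus over a disk with at most one exceptional fiber, is an irreducible $3$-manifold with nonempty incompressible boundary (the boundary torus is incompressible since $V$ is not a ball), hence is Haken; and $S^1\times S^1\times\I$ is likewise Haken, with incompressible boundary consisting of two tori. Both carry the given Seifert fibering, which (since each is not one of the special small cases--- indeed the solid torus and $T^2\times\I$ are not on the exception list of Theorem~\ref{thm:DiffSigmaHE}, though we do not even need that here) makes them Seifert-fibered Haken $3$-manifolds in the sense required by Theorem~\ref{space of fp homeos}. Then I would simply invoke Theorem~\ref{space of fp homeos} with $\Sigma=X$: the inclusion $\diff_f(X)\to\diff(X)$ is a homotopy equivalence. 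For completeness one could also note that the relevant Seifert fiberings of the solid torus and of $T^2\times\I$ are not lens spaces with one or two exceptional fibers, so they are genuine singular fiberings in the sense of Section~\ref{sfiber}, and the machinery of that section applies without caveat.

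There is essentially no obstacle here, since the statement is a direct specialization of the already-proven Theorem~\ref{space of fp homeos}; the only point requiring a word is the verification that the spaces in question are Haken and Seifert-fibered with the given fibering, which is immediate. If one wanted a self-contained argument not citing Theorem~\ref{space of fp homeos}, the harder route would be to run through the fibrations of Theorem~\ref{sfproject diffs} and Lemma~\ref{rel fiber} directly for these two specific manifolds; but since Theorem~\ref{space of fp homeos} is available and stated for all Haken Seifert-fibered $3$-manifolds, the short deduction is the natural one.
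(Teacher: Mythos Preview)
Your approach is exactly the paper's: the lemma is introduced there with the sentence ``The first is just a special case of Theorem~\ref{space of fp homeos},'' and no further argument is given. One small correction to your write-up: the boundary torus of a solid torus is \emph{compressible} (a meridian disk compresses it), so your stated reason for the solid torus being Haken is wrong; the correct justification is the standard fact that every compact orientable irreducible $3$-manifold with nonempty boundary is Haken.
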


Lemma~\ref{lem:fiber-preserving on X} guarantees that if $g\colon\Delta\to
\diff(X)$ is a continuous map from an $n$-simplex, $n\geq 1$, with
$g(\partial \Delta)\subset \diff_f(X)$, then $g$ is homotopic relative to
$\partial \Delta$ to a map with image in $\diff_f(X)$.

The next lemma is a $2$-dimensional version of Theorem~\ref{space of fp
homeos}, and can be proven using surface theory. In fact, it can be proven
by applying Theorem~\ref{space of fp homeos} to $T\times \I$, although that
would be a strange way to approach it.
\begin{lemma} Let $T$ be a torus with a fixed
$S^1$-fibering. Let $\Diff_h(T)$ be the subgroup of $\Diff(T)$ consisting
of the diffeomorphisms that take some fiber to a loop isotopic to a fiber.
Then the inclusion $\Diff_f(T)\to \Diff_h(T)$ is a homotopy equivalence.
\label{lem:fiberpreserving0}
\end{lemma}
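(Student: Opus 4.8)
The plan is to reduce to the identity components and then compute both sides explicitly. First I would observe that $\Diff_h(T)$ is exactly the union of those components of $\Diff(T)$ whose mapping class fixes the fiber class $[F]\in H_1(T)$ up to sign: an essential embedded loop in $T$ is isotopic to a fiber precisely when it represents $\pm[F]$, and $[F]$ is primitive. In particular $\diff_h(T)=\diff(T)$. By contrast $\Diff_f(T)$ is a proper subgroup, not a union of components, but $\pi_0(\Diff_f(T))\to\pi_0(\Diff_h(T))$ is a bijection: every mapping class fixing $\pm[F]$ is realized by a linear diffeomorphism of $T=S^1\times S^1$ preserving the product fibering (surjectivity), and injectivity is the surface-theoretic analogue, provable by elementary two-dimensional arguments, of the fact used earlier (via Waldhausen) that a fiber-preserving diffeomorphism isotopic to the identity is fiber-preservingly isotopic to the identity; that is, $\Diff_f(T)\cap\diff(T)=\diff_f(T)$. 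Since $\Diff_f(T)$ and $\Diff_h(T)$ are topological groups, left translation by a fixed element identifies every component of each with its identity component, compatibly with the inclusion, so it suffices to prove that $\diff_f(T)\to\diff(T)$ is a homotopy equivalence.

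To compute $\diff_f(T)$ I would apply the Projection Theorem (Theorem~\ref{project diffs}) to the $S^1$-bundle $T\to S^1$, giving a fibration $\Diff_v(T)\to\Diff_f(T)\to\Diff(S^1)$, and restrict it over identity components to obtain a fibration $\diff_v(T)\to\diff_f(T)\to\diff(S^1)$; the projection is onto $\diff(S^1)\simeq\SO(2)$ because a rotation of the base extends, via $(x,y)\mapsto(x+\theta,y)$, to a fiber-preserving diffeomorphism isotopic to the identity. A vertical diffeomorphism is a smooth loop of diffeomorphisms of the fiber, so $\Diff_v(T)\simeq\maps(S^1,\Diff(S^1))$; its identity component consists of null-homotopic loops in $\Diff_+(S^1)\simeq\SO(2)$ and deformation retracts onto the constant rotations, so $\diff_v(T)\simeq S^1$, while the degree-$d$ components with $d\neq0$ act nontrivially on $H_1(T)$ (fixing the fiber class but shearing the base class) and hence are not in $\diff(T)$, confirming that the fiber of the fibration is $\diff_v(T)$. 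Using the loop of vertical rotations $(x,y)\mapsto(x,y+t)$, which generates $\pi_1(\diff_v(T))\cong\Z$, and the loop of horizontal rotations $(x,y)\mapsto(x+t,y)$, which maps to a generator of $\pi_1(\diff(S^1))\cong\Z$, the (split) exact sequence of the fibration shows $\diff_f(T)$ has $\pi_q=0$ for $q\ge2$ and $\pi_1\cong\Z^2$ generated by these two loops.

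Finally I would compare with $\diff(T)$, which is classically homotopy equivalent to the translation torus, with $\pi_q=0$ for $q\ge2$ and $\pi_1\cong\Z^2$ generated by the two one-parameter translation subgroups. The two loops above are precisely the vertical and horizontal translation subgroups, so the inclusion $\diff_f(T)\hookrightarrow\diff(T)$ carries a basis of $\pi_1(\diff_f(T))$ to a basis of $\pi_1(\diff(T))$; thus the inclusion is an isomorphism on $\pi_1$, trivially so on $\pi_0$ and on $\pi_q$ for $q\ge2$, hence a weak homotopy equivalence, and since both groups are Fr\'echet manifolds (so have the homotopy type of CW complexes, Section~\ref{sec:Cinfinity}) it is a homotopy equivalence; combined with the first paragraph this proves the lemma. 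The step requiring the most care is the $\pi_0$ bookkeeping — verifying $\Diff_f(T)\cap\diff(T)=\diff_f(T)$ and that the components of the two groups then correspond — together with checking in the last step that the inclusion genuinely sends a basis of $\pi_1$ to a basis, rather than merely observing that both groups are $\Z^2$. (As remarked in the statement, one could instead deduce the lemma from Theorem~\ref{space of fp homeos} applied to $T\times\I$ with the evident Seifert fibering, but translating between $\Diff(T\times\I)$ and $\Diff(T)$, $\Diff_h(T)$ is itself fiddly, so the direct surface-theoretic route seems cleaner.)
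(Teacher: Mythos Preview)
Your proof is correct. The paper does not actually give a proof of this lemma: it merely remarks that the result ``is a $2$-dimensional version of Theorem~\ref{space of fp homeos}, and can be proven using surface theory,'' and notes (as you also do) that one could alternatively apply Theorem~\ref{space of fp homeos} to $T\times\I$, ``although that would be a strange way to approach it.'' Your argument is precisely the direct surface-theoretic route the paper has in mind, carried out in full: reduce to identity components via the $\pi_0$ bookkeeping, analyze $\diff_f(T)$ through the Projection Theorem fibration over $\diff(S^1)$, and compare with the classical $\diff(T)\simeq T$. The identification of the two generating translation loops is exactly what is needed to see that the inclusion induces an isomorphism on $\pi_1$, not merely an abstract isomorphism $\Z^2\cong\Z^2$.
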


For $e\in (0,1)$ we let $eD^2$ denote the concentric disk of radius $e$ in
the standard disk $D^2\subset \R^2$. Let $X$ be either a solid torus
$D^2\times S^1$, or $T\times \I$ where $T$ is a torus. Let $F=\cup F_i$ be a
disjoint union of finitely many tori. Fix an inclusion of $F$ into $X$ such
that each $F_i$ is of the form $\partial (e_iD^2\times S^1)$, in the solid
torus case, or of the form $T\times \set{e_i}$, in the $T^2\times \I$ case,
for distinct numbers $e_i$ in $(0,1)$. Let $\imb_{\textit{int}}(F,X)$ be
the connected component of the inclusion in the space of all embeddings of
$F$ into the interior of $X$, and let $\imb_{\textit{conc}}(F,X)$ be the
connected component of the inclusion in the set of embeddings for which
each $F_i$ is of the form $\partial (eD^2)\times S^1$ or $T\times \set{e}$
for some $e\in (0,1)$. We omit the proof of the next lemma, which is
analogous to Lemma~\ref{lem:fiberpreserving1}.
\begin{lemma}
Let $X$ be a Seifert-fibered solid torus or $S^1\times S^1\times \I$.  Then
the inclusion $\imb_{\text{conc}}(F,X)\to
\imb_{\text{int}}(F,X)$ is a homotopy equivalence.\par
\label{lem:level-preserving on F}
\end{lemma}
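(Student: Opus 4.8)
<br />

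The plan is to prove Lemma~\ref{lem:level-preserving on F} by the same kind of fibration-and-induction argument used for Lemma~\ref{lem:fiberpreserving1}, now one dimension higher. The key observation is that $\imb_{\text{conc}}(F,X)$ is, up to homotopy, just a space of ordered configurations of concentric tori, which can be analyzed by peeling off the $F_i$ one at a time in radial order, each time cutting $X$ along the outermost remaining $F_i$ and obtaining a smaller product region or solid torus on which the inductive hypothesis applies.

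Here is the sequence of steps I would carry out. Order the components so that $e_1<e_2<\dots<e_n$, and first treat the base case $n=1$. For $X=S^1\times S^1\times \I$, a single concentric torus $T\times\{e\}$ is determined by its value $e\in(0,1)$ together with the isotopy class of the embedding of $T$ onto $T\times\{e\}$; the space $\imb_{\text{int}}(F,X)$ restricts (via the fibration $\imb_{\text{int}}(F,X)\to \imb(F,\partial X)$-type maps, or more directly by considering the $\I$-coordinate of the image) so that both $\imb_{\text{conc}}(F,X)$ and $\imb_{\text{int}}(F,X)$ deformation retract to $(0,1)\times\diff(T^2)$-type data, and the inclusion is a homotopy equivalence because a torus embedded in the interior of $T\times\I$ in the component of $T\times\{e\}$ is isotopic, staying in the interior, to a concentric one (a standard innermost-disk/incompressibility argument: the complement pieces are product regions, so one may push the torus to a level). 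For $X$ a solid torus the base case is the same, using that a torus embedded in $\interior(V)$ isotopic to $\partial(eD^2\times S^1)$ is incompressible, hence parallel to the boundary, hence isotopic to a concentric one. For the inductive step, let $X_n$ be the closure of the complementary component of $F_n$ not containing $F_1,\dots,F_{n-1}$; this $X_n$ is a product region $T\times\I$ (in both cases, once $e_n$ is the largest radius). Consider the commutative diagram with rows the restriction fibrations
\begin{equation*}
\begin{CD}
\imb_{\text{conc}}(C_{n-1},X\rel F_n) @>>> \imb_{\text{conc}}(F,X) @>>> \imb_{\text{conc}}(F_n,X)\\
@VVV @VVV @VVV\\
\imb_{\text{int}}(C_{n-1},X\rel F_n) @>>> \imb_{\text{int}}(F,X) @>>> \imb_{\text{int}}(F_n,X)
\end{equation*}
where the base spaces are the $n=1$ case (a homotopy equivalence by the base step), and the fibers are identified with the corresponding embedding spaces of $C_{n-1}=F_1\cup\dots\cup F_{n-1}$ into the cut-open manifold $\overline{X-X_n}$, which is again a Seifert-fibered solid torus or $T\times\I$; by induction on $n$ the left vertical arrow is a homotopy equivalence. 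The rows being fibrations follows from Corollaries~\ref{sfcorollary3}, \ref{special2}, \ref{special3}, and the Palais--Cerf Restriction Theorem, exactly as in Lemma~\ref{lem:fiberpreserving1}. By the five lemma on the long exact sequences, the middle arrow is a homotopy equivalence.

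The main obstacle I anticipate is the base case $n=1$: making precise the claim that a torus embedded in the interior of $X$, in the appropriate isotopy component, can be isotoped within the interior to a concentric torus, and that the space of such isotopies has the right homotopy type. For $X$ a solid torus this rests on incompressibility of the boundary-parallel torus plus uniqueness of the product structure; for $X=T\times\I$ it is essentially the Waldhausen-type fact that $\diff(T\times\I\rel T\times\partial\I)$ and the space of two-sided incompressible tori in a product region behave as expected, which could also be extracted by applying Lemma~\ref{lem:fiberpreserving2} or Theorem~\ref{space of fp homeos} to $T\times\I$ (as the lemma statement itself suggests). A subtlety to watch is the parameterized version: one cannot isotope each torus in a family independently, so the base-case equivalence must be produced as a map of spaces, most cleanly by exhibiting both $\imb_{\text{conc}}$ and $\imb_{\text{int}}$ as total spaces fibering over $\diff$ of the torus (or over $\maps((0,1),\cdot)$) with contractible fibers, and checking the inclusion is compatible with these fibrations. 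Once the base case is secured, the inductive step is routine diagram-chasing of exactly the form already carried out twice in the proof of Lemma~\ref{lem:fiberpreserving1}.
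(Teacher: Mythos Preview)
Your approach is exactly what the paper intends: it omits the proof entirely, saying only that it ``is analogous to Lemma~\ref{lem:fiberpreserving1}.'' Your induction on the number of components via restriction fibrations, peeling off the outermost torus and reducing to a smaller solid torus or $T\times\I$, is precisely that analogy carried out one dimension up.
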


\section[Deforming to fiber-preserving families]
{Deforming to fiber-preserving families}
\label{sec:fiber-preserving families}

\begin{theorem}
Let $L=L(m,q)$ with $m>2$ and let $f\colon S^d\to \diff(L)$.  Then
$f$ is homotopic to a map into $\diff_f(L)$.
\label{thm:make fiber-preserving}
\end{theorem}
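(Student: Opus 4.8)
The plan is to pass $f$ through the three main technical theorems of the chapter and then deform it to a fiber-preserving family by working cell-by-cell over a triangulation of $S^d$. Fix a sweepout $\tau\colon P\times[0,1]\to L$ with $P$ a torus and all levels $P_t$ unions of Hopf fibers; this exists since the standard Heegaard torus for the Hopf fibering is a union of fibers, and the associated genus-$1$ Heegaard splittings are strongly irreducible because the meridians of the two solid tori meet with intersection number $\pm m\neq 0$ in $P$. By Theorem~\ref{thm:generalposition}, after an arbitrarily small deformation $f$ is in general position with respect to $\tau$; by Theorem~\ref{thm:finding good levels}, at each $u$ there is a pair $(s,t)$ with $Q_s=f_u(P_s)$ and $P_t$ in good position; and by Theorem~\ref{thm:from good to very good}, after a homotopy there are a finite open cover $\{U_i\}_{i=1}^n$ of $S^d$ and parameter pairs $(s_i,t_i)$, with all $s_i$ distinct and all $t_i$ distinct, such that for $u\in U_i$ the tori $Q_{s_i}(u)$ and $P_{t_i}$ meet in very good position and $Q_{s_i}(u)$ has no discal intersection with any $P_{t_j}$. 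Relabel so $t_1<\dots<t_n$, shrink to a cover $\{U_i'\}$ with $\overline{U_i'}\subset U_i$, and fix a triangulation $\Delta$ of $S^d$ so fine that each simplex lies in some $U_i'$.

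The deformation proceeds by induction on the skeleta of $\Delta$: assume $f$ is fiber-preserving over the $(k-1)$-skeleton, and let $\sigma$ be a $k$-simplex with index set $J_\sigma=\{\,i:\sigma\subset U_i'\,\}$. For $i\in J_\sigma$ and $u\in\sigma$, the biessential circles of $Q_{s_i}(u)\cap P_{t_i}$ are available; choosing notation so that the annuli of $Q_{s_i}(u)\cap V_{t_i}$ are incompressible in $V_{t_i}$, Lemma~\ref{lem:pushout} supplies a sequence of pushout isotopies of $L$ — moving these annuli across regions of $V_{t_i}$ one at a time, the last step being the isotopy of $Q_{s_i}(u)$ onto $P_{t_i}$ relative to their intersection — whose composition carries $Q_{s_i}(u)$ onto $P_{t_i}$. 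Post-composing $f$ with these isotopies, continuously in $u\in\sigma$ and tapered off so as to agree with the already-constructed family near $\partial\sigma$, we arrange $f_u(P_{s_i})=P_{t_i}$ for all $i\in J_\sigma$. The delicate point is that the straightening must be run for all $i\in J_\sigma$ at once, in order of increasing $t_i$, and Lemma~\ref{lem:hitting levels} is precisely the tool controlling how $Q_{s_i}$ meets the other levels $P_{t_j}$ and how its pushout regions interact with the other $Q_{s_j}$, so that straightening one pair does not destroy the very good position — in particular the presence of a biessential intersection circle — of the others.

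Once $f_u(P_{s_i})=P_{t_i}$ for all $i\in J_\sigma$, the levels $\{P_{t_i}:i\in J_\sigma\}$ cut $L$ into a solid torus, product regions $\cong T^2\times\I$, and a solid torus, and the $f_u$ carry the corresponding decomposition by the $P_{s_i}$ to it. On each $P_{s_i}$, the diffeomorphism $f_u|_{P_{s_i}}\colon P_{s_i}\to P_{t_i}$ sends a fiber to a loop isotopic in $P_{t_i}$ to a fiber by Lemma~\ref{lem:coincident levels}, so by Lemma~\ref{lem:fiberpreserving0}, together with Lemma~\ref{lem:level-preserving on F} to hold the remaining $P_{s_j}$ in level position, we may deform $f|_\sigma$ relative to $\partial\sigma$ to make each $f_u|_{P_{s_i}}$ fiber-preserving (with image still $P_{t_i}$). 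Then on each complementary solid torus or product region $X$, $f_u|_X$ is fiber-preserving on $\partial X$, and Lemma~\ref{lem:fiber-preserving on X} deforms it relative to $\partial X$ to a fiber-preserving diffeomorphism; done rel boundary, this does not disturb the other pieces or $\partial\sigma$. Hence each $f_u$, $u\in\sigma$, is fiber-preserving, completing the induction; taking $k=d$ gives a map $S^d\to\diff_f(L)$ homotopic to $f$.

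The main obstacle is the coordination in the second step: straightening several $Q_{s_i}$ onto their levels $P_{t_i}$ at a single parameter when those levels sit at distinct heights, compounded by the asymmetry that a biessential circle may be contractible in one of the two solid tori bounded by $P_{t_i}$, so that the annuli of $Q_{s_i}\cap L$ can be pushed out from only one side (the situation behind Lemma~\ref{lem:one annulus} and Lemma~\ref{lem:pushout}). Keeping the pushout regions for different indices from interfering, and tapering all of these isotopies and the subsequent fiber-preserving deformations off consistently across $\partial\sigma$ while preserving every transversality and very-good-position condition already arranged on lower skeleta, is the heart of the work of Sections~\ref{sec:lemmas} and~\ref{sec:fiber-preserving families}, with Lemmas~\ref{lem:hitting levels} and~\ref{lem:pushout} providing the geometric control that makes it go through.
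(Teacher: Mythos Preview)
Your overall strategy is correct and matches the paper's: apply the three preparatory theorems, then use the very-good-position data to straighten the $Q_{s_i}$ onto levels, make them vertical, and finish on complementary pieces with Lemma~\ref{lem:fiber-preserving on X}. But your execution of the middle step glosses over organizing ideas that the paper introduces precisely because the direct approach you sketch does not go through.

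First, ``choosing notation so that the annuli of $Q_{s_i}(u)\cap V_{t_i}$ are incompressible in $V_{t_i}$'' is not a free choice. Whether those annuli are incompressible in $V_{t_i}$ or in $W_{t_i}$ is forced by whether the intersection circles are meridians of $W_{t_i}$ or of $V_{t_i}$; the paper calls the two cases \emph{$V$-cored} and \emph{$W$-cored}. When $q=1$ there is a third case, \emph{bilongitudinal}, in which the circles are longitudes on both sides (Lemma~\ref{lem:bilongitude}); here Lemma~\ref{lem:pushout} gives no preferred direction and the paper handles these levels by an entirely different mechanism (Step~5), using the fact that the intersection circles are already isotopic to fibers. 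Your uniform ``push onto $P_{t_i}$'' does not separate these cases.

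Second, the paper does \emph{not} perform the pushouts simplex-by-simplex as you propose. Steps~1 and~2 push the cored $Q_{s_i}$ out of $V_{t_j}$ (respectively $W_{t_j}$) \emph{globally over $S^d$}, using Hatcher's $\Psi$-function method, \emph{before} any triangulation is chosen. Only after each cored $Q_{s_i}$ is disjoint from $P_{t_i}$ (with $V_{t_i}$ or $W_{t_i}$ concentric in $X_{s_i}$ or $Y_{s_i}$) does the paper triangulate and proceed up skeleta. Moreover, in Step~3 the cored $Q_{s_i}$ are pushed onto \emph{some} level torus, not onto the fixed $P_{t_i}$; insisting on the specific target $P_{t_i}$ would clash with what has already been done on $\partial\sigma$ for indices in $J_\delta\setminus J_\sigma$. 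The noninterference you need is organized not by Lemma~\ref{lem:hitting levels} alone but by the block decomposition of $B_\Delta$ into maximal $\Delta$-compatible pieces and their \emph{target regions}, together with the nesting Lemma~\ref{lem:nesting}, which rules out configurations like Figure~\ref{fig:inconsistent levels}. Your last paragraph correctly flags this coordination as the heart of the matter, but the concrete machinery---the cored/bilongitudinal trichotomy, the global pushouts preceding the triangulation, and the block/target-region bookkeeping---is what actually makes it work.
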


\begin{proof}
Applying Theorems~\ref{thm:generalposition}, \ref{thm:finding good levels},
and~\ref{thm:from good to very good}, we may assume that $f$ satisfies the
conclusion of Theorem~\ref{thm:from good to very good}. That is, there are
pairs $(s_i,t_i)$ and an open cover $\set{U_i}$ of $S^d$ with the property
that for every $u\in U_i$, $Q_{s_i}(u)$ and $P_{t_i}$ meet in very good
position, and $Q_{s_i}(u)$ meets every $P_{t_j}$ transversely, with no
discal intersections. The $U_i$ are selected to be connected, so the
intersection $Q_{s_i}(u)\cap P_{t_j}$ is independent, up to isotopy in
$P_{t_j}$, of the parameter~$u$. We remind the reader of our convention
that assertions about $Q_{s_i}$ implicitly mean ``for every $u\in U_i$.''
We can and always will assume that conditions stated for parameters in
$U_i$ actually hold for all parameters in~$\overline{U_i}$.

Since the $t_j$ are distinct, we may select notation so that
$t_1<t_2<\cdots <t_m$. The corresponding $s_i$ typically are not in
ascending order. Figure~\ref{fig:si_chaos} shows a schematic picture of a
block of three levels for which the image levels $Q_{s_1}$, $Q_{s_2}$, and
$Q_{s_3}$ have $s_1<s_3<s_2$.
\index{figures!figure99@out of order level tori}%
\begin{figure}
\labellist
\pinlabel $Q_{s_3}$ at -13 40
\pinlabel $P_{t_1}$ at -12 73
\pinlabel $P_{t_3}$ at -13 101
\pinlabel $Q_{s_1}$ at 161 26
\pinlabel $Q_{s_2}$ at 161 55
\pinlabel $P_{t_2}$ at 159 88
\endlabellist
\includegraphics[width=4cm]{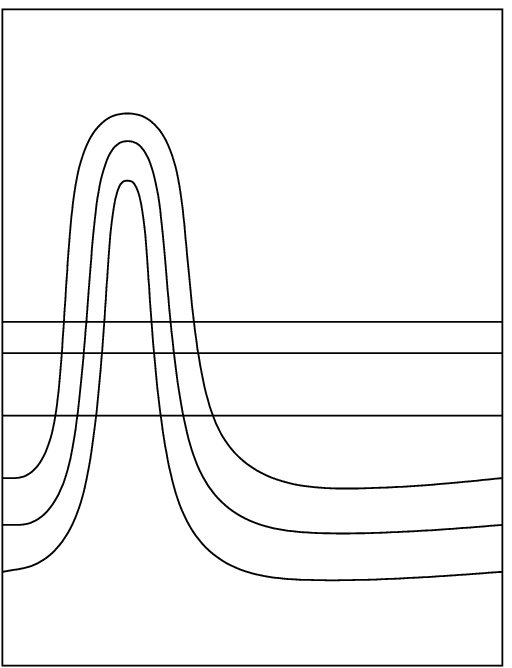}
\caption{A block of level tori with the $Q_{s_i}$ out of order.}
\label{fig:si_chaos}
\end{figure}\index{examples!example8@level tori out of order}

The basic idea of the proof is to make the $f_u$ fiber-preserving on the
$P_{s_i}$, then use Lemma~\ref{lem:fiber-preserving on X} to make the $f_u$
fiber-preserving on the complementary $S^1\times S^1\times \I$ or solid tori
of the $P_{s_i}$-levels. We must be very careful that none of the isotopic
adjustments to a $Q_{s_i}$ destroys any condition that must be preserved on
the other~$Q_{s_j}$.
\longpage

Before listing the steps in the proof of Theorem~\ref{thm:make
fiber-preserving}, a definition is needed. For each $i$, the intersection
circles of $Q_{s_i}\cap P_{t_i}$ cannot be meridians in both $V_{t_i}$ and
$W_{t_i}$, so $Q_{s_i}$ must satisfy exactly one of the following:
\begin{enumerate}
\item
The circles of $Q_{s_i}\cap P_{t_i}$ are not longitudes or meridians for
$V_{t_i}$, so the annuli of $Q_{s_i}\cap V_{t_i}$ are uniquely boundary
parallel in $V_{t_i}$.
\item
The circles of $Q_{s_i}\cap P_{t_i}$ are longitudes or meridians for
$V_{t_i}$, but are not longitudes or meridians for $W_{t_i}$, so the annuli
of $Q_{s_i}\cap W_{t_i}$ are uniquely boundary parallel in $W_{t_i}$.
\item
The circles of $Q_{s_i}\cap P_{t_i}$ are longitudes both for $V_{t_i}$ and
for $W_{t_i}$.
\end{enumerate}
In the first case, we say that $Q_{s_i}$ and $P_{t_i}$ are
\indexdef{Vcored@$V$-cored}\textit{$V$-cored,} in the second that 
they are \indexdef{Wcored@$W$-cored}\textit{$W$-cored,} and in the
third that they are 
\indexdef{bilongitudinal!levels}\textit{bilongitudinal.} If they are 
either $V$-cored or
$W$-cored, we say they are 
\indexdef{cored}\textit{cored.} Lemma~\ref{lem:bilongitude} shows
that the bilongitudinal case can occur only when $q=1$, and then only when
the intersection circles are isotopic in $P_{t_i}$ to fibers of the Hopf
fibering.

We can now list the steps in the procedure. In this list, and in the
ensuing details, ``push $Q_{s_i}$'' means perform a deformation of $f$ that
moves $Q_{s_i}$ as stated, and preserves all other conditions needed.
Making $Q_{s_i}$ ``vertical'' (at a parameter $u$) means making the
restriction of $f_u$ to $P_{s_i}$ fiber-preserving.  When we say that
something is done ``at all parameters of $U_i$,'' we mean that a
deformation of $f$ will be performed, and that $U_i$ is replaced by a
smaller set, so that the result is achieved for all parameters in the new
$\overline{U_i}$, while retaining all other needed properties (such as
that $\set{U_i}$ is an open covering of $S^d$).
\begin{enumerate}
\item[1.]  Push the $Q_{s_i}$ that meet $P_{t_j}$ out of $V_{t_j}$, for all
the $V$-cored $P_{t_j}$, at all parameters in $U(t_j)$. At the end of this
step, each $Q_{s_i}$ that was $V$-cored is parallel to $P_{t_i}$.
\vspace*{0.5 ex}
\item[2.]  Push the $Q_{s_i}$ that meet $P_{t_j}$ out of $W_{t_j}$, for all
the $W$-cored $P_{t_j}$, at all parameters in $U(t_j)$. At the end of this
step, each $Q_{s_i}$ that was $W$-cored is parallel to $P_{t_i}$.
\end{enumerate}
\noindent 
These first two steps are performed using a method of 
\index{Hatcher}Hatcher like that of
the proof of Section~\ref{sec:from good to very good}, although simpler.
After they are completed, a triangulation of $S^d$ is fixed with mesh
smaller than a Lebesgue number for the open cover by the $U_i$. Each of
the remaining steps is performed by inductive procedures that move up the
skeleta of the triangulation, achieving the objective for $Q_{s_i}$ at all
parameters that lie in a simplex completely contained in $U_i$.
\begin{enumerate}
\item[3.]  Push the $Q_{s_i}$ that originally were cored so that each one
equals some level torus. These level tori may vary from parameter to
parameter.\par
\vspace*{0.5 ex}
\item[4.]  
Push the $Q_{s_i}$ that originally were cored to be vertical.
\vspace*{0.5 ex}
\item[5.]
Push the bilongitudinal $Q_{s_i}$ to be vertical.
\vspace*{0.5 ex}
\item[6.]  
Use Lemma~\ref{lem:fiber-preserving on X} to make $f_u$ fiber-preserving
on the complementary $S^1\times S^1\times \I$ or solid tori of the
$P_{s_i}$-levels.
\end{enumerate}

The underlying fact that allows all of this pushing to be carried out
without undoing the results of the previous work is Lemma~\ref{lem:hitting
levels}. Its use involves the concepts of 
\textit{compatibility} and \textit{blocks,} which we will now define.
\newpage

Recall that $R(t_i,t_j)$ means the closure of the region between $P_{t_i}$
and $P_{t_j}$. For a connected subset $Z$ of $S^d$, which in practice will
be either a single parameter or a simplex of a triangulation, denote by
$B_Z$ the set of $t_i$ such that $Z\subset U_i$. Elements $t_i$ and
$t_j$ of $B_Z$ are called 
\indexdef{compatible!Zcompatible@$Z$-compatible parameters}%
\indexsymdef{Zcompatible}{$Z$-compatible}\textit{$Z$-compatible} when $Q_{s_i}(u)\cap
P_{t_i}$ and $Q_{s_k}(u)\cap P_{t_k}$ are homotopic in $R(t_i,t_k)$ for
every $t_k\in B_Z$ with $t_i<t_k\leq t_j$.

Because our family $f$ satisfies the conclusion of Theorem~\ref{thm:from
good to very good}, Lemma~\ref{lem:hitting levels} has the following
consequence: if $t_i$ and $t_j$ are $u$-compatible for any $u$,
then $P_{t_i}$ and $P_{t_j}$ are both $V$-cored, or both $W$-cored, or both
bilongitudinal. The next proposition is also immediate from
Lemma~\ref{lem:hitting levels}.
\begin{proposition} Suppose that $t_i,t_j,t_k\in B_Z$. Then at parameters
in $Z$, $Q_{s_k}$ can meet both $P_{t_i}$ and $P_{t_j}$ only if $t_i$
and $t_j$ are $Z$-compatible.
\label{prop:hitting levels}
\end{proposition}\longpage\longpage

For a simplex $\Delta$, write $B_{\Delta}=\set{b_1,\ldots,b_m}$ with each
$b_i<b_{i+1}$, and for each $i\leq m$ define $a_i$ to be the $s_j$ for
which $b_i=t_j$. Decompose $B_{\Delta}$ into maximal $\Delta$-compatible
blocks $C_1=\set{b_1,b_2,\ldots,b_{\ell_1}}$,
$C_2=\set{b_{\ell_1+1},\allowbreak\ldots,\allowbreak b_{\ell_2}},\ldots\,$,
$C_r=\set{b_{\ell_{r-1}+1},\ldots,b_{\ell_r}}$, with $\ell_r=m$.  Since the
blocks are maximal, Proposition~\ref{prop:hitting levels} shows that
$Q_{a_i}$ is disjoint from $P_{b_j}$ if $b_i$ and $b_j$ are not in the same
block. In steps~3 through~6, this disjointness will ensure that isotopies
of these $Q_{a_i}$ do not disturb the results of previous work.

Note that if $b_i$ and $b_j$ lie in the same block, then either both
$P_{b_i}$ and $P_{b_j}$ are $V$-cored, or both are $W$-cored, or both are
bilongitudinal. Thus we can speak of $V$-cored blocks, and so on.

When $\delta$ is a face of $\Delta$, $B_\Delta\subseteq
B_\delta$. Therefore if $b_i$ and $b_j$ in $B_\Delta$ are
$\delta$-compatible, then they are $\Delta$-compatible. So for each block
$C$ of $B_\delta$, $C\cap B_\Delta$ is contained in a block of $B_\Delta$.
However, levels that are not compatible in $B_\delta$ may become compatible
in $B_\Delta$, since the $t_i$ for intervening levels in $B_\delta$ may
fail to be in $B_\Delta$. Typically, the intersections of blocks of
$B_\delta$ with $B_\Delta$ will combine into larger blocks in~$B_\Delta$.

We should emphasize that during steps~1 through~6, the blocks of $B_Z$, and
whether a level $P_{t_i}$ is $V$-cored, $W$-cored, or bilongitudinal, are
defined with respect to the original configuration, not the new positioning
after the procedure begins. Indeed, after steps~1 and~2, many of the
$Q_{s_i}$ will be disjoint from their~$P_{t_i}$.

We now fill in the details of these procedures.

\vspace*{0.5 ex}\noindent \textsl{Step 1: Push the $Q_{s_i}$ that meet
$P_{t_j}$ out of $V_{t_j}$, for all the $V$-cored $P_{t_j}$, at all
parameters in $U(t_j)$.}\vspace*{0.5 ex}

We perform this in order of increasing $t_j$ for the $V$-cored image
levels. Begin with $t_1$. If $Q_{s_1}$ is $W$-cored or bilongitudinal, do
nothing. Suppose it is $V$-cored. Then for each $u$ in $U(t_1)$, the
$Q_{s_j}(u)$ that meet $P_{t_1}$ intersect $V_{t_1}$ in a union of
incompressible uniquely boundary-parallel annuli.  Since any such $Q_{s_j}$
are transverse to $P_{t_1}$ at each point of $U(t_j)$, the set of
intersection annuli $Q_{s_j}\cap V_{t_1}$ falls into finitely many isotopic
families, with each family a copy of the connected set $U(t_j)$.  For each
$j$ with $U(t_1)\cap U(t_j)$ nonempty, let $\mathcal{A}_j$ be the
collection of the annuli $Q_{s_j}\cap V_{t_1}$, over all parameters in
$U(t_j)$, and let $\mathcal{A}$ be the union of these~$\mathcal{A}_j$. The
nonempty intersection of $U(t_1)$ and $U(t_j)$ ensures that the loops of
$Q_{s_j}\cap P_{t_1}$ and $Q_{s_1}\cap P_{t_1}$ are all in the same isotopy
class in~$P_{t_1}$.

One might hope to push these families of annuli out of $V_{t_1}$ one at a
time, beginning with an outermost one, but an outermost family might not
exist. There could be a sequence $U(t_{j_1}),\ldots\,$, $U(t_{j_k})$ such
that $U(t_{j_i})\cap U(t_{j_{i+1}})$ is nonempty for each $i$,
$U(t_{j_k})\cap U(t_{j_1})$ is nonempty, and for some parameters $u_{j_i}$
in $U(t_{j_i})$, an annulus $Q_{s_{j_{i+1}}}(u_{j_i})\cap V_{t_1}$ lies
outside one of $Q_{s_{j_i}}(u_{j_i})\cap V_{t_1}$ for each $i$, and an
annulus of $Q_{s_{j_1}}(u_{j_k})\cap V_{t_1}$ lies outside one of
$Q_{s_{j_k}}(u_{j_k})\cap V_{t_1}$. Since an outermost family might not
exist, we will need to utilize the method of Hatcher as in the proof of
Theorem~\ref{thm:from good to very good}, but only a simple version of it.

Shrink the $U_i$ slightly, obtaining a new open cover by sets $U_i'$
with $\overline{U_i'}\subset U_i$.
We will use a function $\Psi\colon \mathcal{A}\to (0,2)$, so that at each
parameter $u$, the restriction $\psi_u$ of $\Psi$ to the annuli at that
parameter has the property that $\psi_u(A_1)<\psi_u(A_2)$ whenever $A_1,
A_2\in \mathcal{A}_i$ and $A_1$ lies in the region of parallelism between
$A_2$ and $\partial V_{t_1}$. Moreover, we will have $\psi_u(A)<1$ whenever
$A\in \mathcal{A}_i$ and $u\in \overline{U_i'}$, while $\psi_u(A)>1$ for
$u$ near the boundary of $U_i$. We construct $\Psi$ by letting
$\Psi_0(A)$ be the volume of the region of parallelism between $A$ and an
annulus in $\partial V_{t_1}$ (assuming that the volume of $L$ has been
normalized to $1$ to ensure that $\Psi_0(A)<1$), then adding on auxiliary
values $\alpha_i(u)$ as in the proof of Theorem~\ref{thm:from good to
very good}.

Form the union $G_0\subset S^d\times (0,2)$ of the $(u,\psi_u(A))$ as in
the proof of Theorem~\ref{thm:from good to very good}, and thicken each of
its sheets as was done there, obtaining an interval for each parameter.
These intervals tell the supports of the isotopies that push the annuli of
$Q_{s_j}\cap V_{t_1}$ out of $V_{t_1}$. If two sheets of $\mathcal{A}$
cross in $S^d\times (0,2)$, then the corresponding regions of parallelism
have the same volume, so must be disjoint and the isotopies can be
performed simultaneously without interference. At each individual
parameter $u$, each annulus is outermost during the time it is being
pushed out of $V_{t_1}$, but the times need to be different since there may
be no outermost family.

\newpage

After the process is completed, $Q_{s_j}$ will lie outside of $V_{t_1}$ at
all parameters in $\overline{U(t_j)'}$, whenever $U(t_j)$ had nonempty
intersection with $U(t_1)$. Replacing each $U(t_j)$ by $U(t_j)'$, we have
$Q_{s_j}$ pushed out of $V_{t_1}$ at all parameters in these $U(t_j)$.
Moreover, Lemma~\ref{lem:pushout}(2) shows that $V_{t_1}$ is concentric in
either $X_{s_1}$ or $Y_{s_1}$ at all parameters in~$U(t_1)$.
\longpage\longpage

Some of the $Q_{s_k}$ for which $U(t_k)$ did not meet $U(t_1)$ may be moved
by the isotopies of the $Q_{s_j}$ at parameters in $U(t_j)\cap U(t_k)$. The
condition that these $Q_{s_k}$ meet $P_{t_1}$ transversely may be lost, but
this will not matter, because these intersections never matter when
$U(t_k)$ does not meet~$U(t_1)$.

Now consider $t_2$. Again, we do nothing if $Q_{s_2}$ is $W$-cored or
bilongitudinal, so suppose that it is $V$-cored. Use the Hatcher process as
before, to push annuli in the $Q_{s_j}$ out of $V_{t_2}$, when $Q_{s_j}$
meets $P_{t_2}$ and $U(t_j)$ meets $U(t_2)$.  Notice that these $Q_{s_j}$
cannot meet $V_{t_1}$ at parameters in $U(t_1)$.  For if $t_2$ is not
$u$-compatible with $t_1$ at some parameters in $U(t_1)$, then (by
Lemma~\ref{lem:hitting levels}) $Q_{s_j}$ cannot meet both $P_{t_2}$ and
$P_{t_1}$, while if it is $u$-compatible at some parameter in $U(t_1)$,
then it has already been pushed out of $V_{t_1}$.  And $V_{t_1}$ cannot lie
in any of the regions of parallelism for the pushouts from $V_{t_2}$, since
the intersection circles of the $Q_{s_j}$ with $P_{t_2}$ are not longitudes
in $V_{t_2}$.

After these pushouts are completed, if $i=1$ or $i=2$ and $Q_{s_i}$ was
$V$-cored, then $V_{t_i}$ is concentric in either $X_{s_i}$ or $Y_{s_i}$ at
all parameters in~$U_i$.

We continue working up the increasing $t_i$ in this way.  At the end of
this process, $V_{t_i}$ is concentric in either $X_{s_i}$ or $Y_{s_i}$ for
all $i$ such that $Q_{s_i}$ was $V$-cored, and at all parameters in
$U_i$. For $Q_{s_i}$ that were $W$-cored or bilongitudinal, the
intersections $Q_{s_i}\cap P_{t_i}$ have not been disturbed at parameters
in $U_i$. We have not introduced any new intersections of $Q_{s_i}$ with
$P_{t_j}$, so we still have the property that at any parameter $u$ in
$U_i\cap U(t_j)$, $Q_{s_j}$ can meet $P_{t_i}$ only if $t_i$ and $t_j$
were originally $u$-compatible.

\vspace*{0.5 ex}\noindent \textsl{Step 2: Push the $Q_{s_i}$ that meet
$P_{t_j}$ out of $W_{t_j}$ for all the $Q_{s_j}$ that are $W$-cored, at all
parameters in $U(t_j)$.}\vspace*{0.5 ex}

The entire process is repeated with $W$-cored levels, except that we start
with $t_m$ and proceed in order of decreasing $t_i$. Each $W$-cored
$Q_{s_i}$ is pushed out of $W_{t_i}$, and at the end of the process
$W_{t_i}$ is concentric in either $X_{s_i}$ or $Y_{s_i}$ at all parameters
in $U_i$, whenever $Q_{s_i}$ was $W$-cored. No intersection of a
$Q_{s_j}$ with a $V$-cored or bilongitudinal level $P_{t_i}$ is changed at
any parameter in~$U_i$.

For the remaining steps, we fix a triangulation of $S^d$ with mesh smaller
than a Lebesgue number for $\set{U_i}$, which will ensure that
$B_\Delta$ is nonempty for every simplex~$\Delta$. We will no longer
proceed up or down all $t_i$-levels, working on the sets $U_i$, but
instead will work inductively up the skeleta of the triangulation. Recall
that each $B_\Delta$ is decomposed into blocks, according to the original
intersections of the $Q_{s_i}$ and $P_{t_i}$ before steps~1 and~2 were
performed.\longpage

\vspace*{0.5 ex}\noindent \textsl{Step 3: Push the $Q_{s_i}$ that were
originally cored so that each one equals some level torus.}\vspace*{0.5 ex}

We will proceed inductively up the skeleta of the triangulation, moving
cored $Q_{s_i}$ to level tori, without changing $Q_{s_k}\cap P_{s_k}$ for
the bilongitudinal $Q_{s_k}$. We want to use the fact that $V_{t_i}$ (or
$W_{t_i}$) is concentric with $X_{s_i}$ or $Y_{s_i}$ to push $Q_{s_i}$ onto
a level torus, but when moving multiple levels at a given parameter, there
is a consistency condition needed. As shown in Figure~\ref{fig:inconsistent
levels}, it might happen that $V_{t_i}$ is concentric in $X_{s_i}$ while
$V_{t_j}$ is concentric in $Y_{s_j}$. Then, we might not be able to push
$Q_{s_i}$ and $Q_{s_j}$ onto level tori without disrupting other
levels. The following lemma rules out this bad configuration.
\index{figures!figure991@inconsistent nesting}%
\begin{figure}
\labellist
\pinlabel $V_{t_i}$ at 128 40
\pinlabel $X_{s_i}$ at 235 120
\pinlabel $Y_{s_i}$ at 288 128
\pinlabel $V_{t_j}$ at 40 170
\pinlabel $Y_{s_j}$ at 75 275
\pinlabel $X_{s_j}$ at 125 320
\endlabellist
\includegraphics[width=7cm]{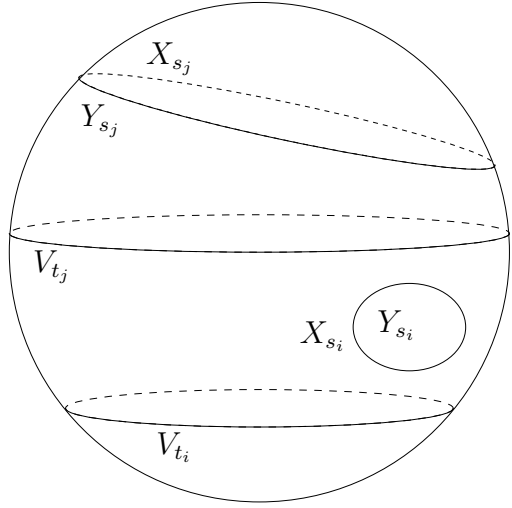}
\caption{Hypothetical inconsistent nesting: $V_{t_i}\subset X_{s_i}$ and
$V_{t_j}\subset Y_{s_j}$.}
\label{fig:inconsistent levels}
\end{figure}\index{examples!example9@hypothetical inconsistent nesting}

\begin{lemma} Suppose, after steps~1 and~2 have been completed,
that $u\in U_i\cap U(t_j)$, $t_i<t_j$, and that
$Q_{s_i}$ is $V$-cored.
\begin{enumerate}
\item The region between $Q_{s_i}$ and $Q_{s_j}$ does not contain a core
circle of $V_{t_i}$.
\item Suppose that $t_i$ and $t_j$ are $u$-compatible, and $V_{t_i}$ is
concentric in $Z_{s_i}$ where $Z$ is $X$ or $Z$ is $Y$. Then $V_{t_j}$ is
concentric in $Z_{s_j}$.
\item If $t_i$ and $t_j$ are not $u$-compatible, then $Q_{s_i}$ is parallel
to $P_{t_i}$ in $R(t_i,t_j)$.\par
\end{enumerate}\par
\noindent The analogous statement holds when $Q_{s_j}$ is $W$-cored
and $W_{t_j}$ is concentric in $Z_{s_j}$.
\label{lem:nesting}
\end{lemma}
\noindent 

\begin{proof}
It suffices to consider the case when $Q_{s_i}$ is $V$-cored. In the
situation at the start of Step~1 above, when annuli in the $Q_{s_k}$
were being pushed out of $V_{t_i}$, the intersection of $Q_{s_i}\cup
Q_{s_j}$ with $V_{t_i}$ was a union $F$ of incompressible nonlongitudinal
annuli. Since $Q_{s_i}$ met $P_{t_i}$, $F$ was nonempty. By
Proposition~\ref{prop:core circles}, exactly one complementary region of
$F$ in $V_{t_i}$ contained a core circle $C$ of $V_{t_i}$. For at least one
of $s_i$ and $s_j$, say for $s_k$, $Q_{s_k}$ met this complementary region.

Since the annuli of $F$ are nonlongitudinal, there is an embedded circle
$C'$ in $Q_{s_k}$ that is homotopic in the core region to a proper multiple
of $C$. If $C$ were in the region $R=f_u(R(s_i,s_j))$ between $Q_{s_i}$ and
$Q_{s_j}$, then the embedded circle $C'$ in $\partial R$ would be a proper
multiple in $\pi_1(R)$, which is impossible since $R$ is homeomorphic to
$S^1\times S^1\times \I$. This proves~(1).

Assume that $t_i$ and $t_j$ are $u$-compatible and suppose that
$V_{t_i}\subset X_{s_i}$ and $V_{t_j}\subset Y_{s_j}$.  Then $C$ is
contained in $X_{s_i}\cap Y_{s_j}$, forcing $s_i>s_j$ and $C$ in the region
between $Q_{s_i}$ and $Q_{s_j}$, contradicting (1). The case of
$V_{t_i}\subset Y_{s_i}$ and $V_{t_j}\subset X_{s_j}$ is similar, so (2)
holds.

For (3), if $t_i$ and $t_j$ are not $u$-compatible, then $Q_{s_i}$ was
initially disjoint from $P_{t_j}$, and hence is disjoint after steps~1
and~2. By Lemma~\ref{lem:pushout}(2), $V_{t_i}$ is concentric in $X_{s_i}$
or $Y_{s_i}$ after steps~1 and~2, and part~(3) follows.
\end{proof}

It will be convenient to extend our previous notation $R(s,t)$ for the
closure of the region between $P_s$ and $P_t$, by putting $R(0,t)=V_t$,
$R(t,1)=W_t$, and $R(0,1)=L$.

We will now define target regions. The isotopies that we will use in the
rest of our process will only change values within a single target region,
ensuring that the necessary positioning of the $Q_{s_i}$ is retained.  Let
$\Delta$ be a simplex of the triangulation, and recall the decomposition of
$B_\Delta=\set{b_1,\ldots,b_m}$ into maximal $\Delta$-compatible blocks
$C_1=\set{b_1,b_2,\ldots,b_{\ell_1}}$,
$C_2=\set{b_{\ell_1+1},\allowbreak\ldots,\allowbreak b_{\ell_2}},\ldots\,$,
$C_r=\set{b_{\ell_{r-1}+1},\ldots,b_{\ell_r}}$. Define the 
\indexdef{target region}\indexdef{region!target}\textit{target
region} of a block $C_n$ to be the submanifold $T_\Delta(C_n)$ of $L$
defined as follows. Put $\ell_0=0$, $b_0=0$, and $b_{\ell_r+1}=1$.
\begin{enumerate}
\item If $C_n$ is $V$-cored, then 
$T_\Delta(C_n)=R(b_{\ell_{n-1}+1},b_{\ell_n+1})$.
\item If $C_n$ is $W$-cored, then 
$T_\Delta(C_n)=R(b_{\ell_{n-1}},b_{\ell_n})$.
\item If $C_n$ is bilongitudinal, then
$T_\Delta(C_n)=R(b_{\ell_{n-1}},b_{\ell_n+1})$.
\end{enumerate}
We remark that $T_\Delta(C_n)$ is all of $L$ when $B_\Delta$ consists of a
single bilongitudinal block, otherwise is of the form $V_t$ when $n=1$ and
$C_1$ is $W$-cored or bilongitudinal and of the form $W_t$ when $n=r$ and
$C_n$ is $V$-cored or bilongitudinal, and in all other cases it is a region
$R(s,t)$ diffeomorphic to $S^1\times S^1\times \I$.

\newpage
As noted in the next lemma, the interior of the target region of a block
contains the $Q_{a_i}$ for the $b_i$ in the block, at this point of our
argument.
\begin{lemma} Target regions satisfy the following.
\begin{enumerate}
\item If $b_i\in C_n$ and $u\in \Delta$, then $Q_{a_i}(u)$ is in the
interior of $T_\Delta(C_n)$.
\item If $\delta$ is a face of $\Delta$, and $C'_1,\ldots\,$, $C'_{r'}$ are
the blocks of $B_\delta$, then for each $i$, there exists a $j$ such that
$T_\delta(C'_i)\subseteq T_\Delta(C_j)$.
\end{enumerate}
\label{lem:target}
\end{lemma}

\begin{proof}
Property (1) is a consequence of Proposition~\ref{prop:hitting levels} and
the fact that Steps~1 and~2 do not create new intersections of the
$Q_{s_i}(u)$ with the $P_{t_j}$. For part~(2), the proof is direct
from the definitions, dividing into various subcases.
\end{proof}

Target regions can overlap in the following ways: the target region for a
$V$-cored block $C_n$ will overlap the target region of a succeeding
$W$-cored block $C_{n+1}$, and the target region of a bilongitudinal block
will overlap the target region of a preceding $V$-cored block or of a
succeeding $W$-cored block (note that by Lemma~\ref{lem:bilongitude},
successive blocks cannot both be bilongitudinal). The latter cause no
difficulties, but the conjunctions of a $V$-cored block and a succeeding
$W$-cored block will necessitate some care during the ensuing argument.

We can now begin the process that will complete Step~3. We will start at
the parameters that are vertices of the triangulation and move the
$Q_{a_i}$ for each $V$-cored or $W$-cored block to be level, that is, so
that each $Q_{a_i}(u)$ equals some $P_t$. The isotopies will be fixed on
each $P_{b_i}$ for which $Q_{a_i}$ is bilongitudinal, and these unchanged
$Q_{a_i}\cap P_{b_i}$ will be used to work with the bilongitudinal levels
in a later step.  For each cored block, the isotopy that levels the
$Q_{a_i}$ will move points only in the interior of the target region of the
block. As we move to higher-dimensional simplices, the $Q_{a_i}$ will
already be level at parameters on the boundary, and the deformation will be
fixed at those parameters. Each deformation for the parameters in a simplex
$\delta_0$ of dimension less than $d$ must be extended to a deformation of
$f$. The extension will change an $f_u$ only when $u$ is in the open star
of $\delta_0$, by a deformation that performs some initial portion of the
deformation of $f_{u_0}$ at a parameter $u_0$ of $\delta_0$--- the
parameter that is the $\delta_0$-coordinate of $u$ when the simplex that
contains it is written as a join $\delta_0*\delta_1$ (details will be given
below). We will see that because the target regions can overlap, the
deformation of an $f_u$ might not preserve all target regions, but enough
positioning of the image levels~$Q_{a_i}$ will be retained to continue the
inductive process.

Fix a vertex $\delta_0$ of the triangulation, and consider the first block
$C_1$ of $B_{\delta_0}$. If it is bilongitudinal, we do nothing. Suppose
that it is $V$-cored.  All of the $Q_{a_1},\ldots\,$, $Q_{a_{\ell_1}}$ lie
in the interior of the target region
$T_{\delta_0}(C_1)$. Lemma~\ref{lem:nesting}(2) shows that for either $Z=X$
or $Z=Y$, $V_{b_i}$ is concentric in $Z_{a_i}$ for $b_i\in C_1$. We claim
that there is an isotopy, supported on $T_{\delta_0}(C_1)$, that moves each
$Q_{a_i}$ to be level. If $C_1$ is the only block, then
$T_{\delta_0}(C_1)=L$ and the isotopy exists by the definition of
concentric. If there is a second block, then Lemma~\ref{lem:nesting}(3)
shows that the $Q_{a_i}$ for $b_i\in C_1$ are parallel to $P_{b_1}$ in
$T_{\delta_0}(C_1)=R(b_1,b_{\ell_1+1})$, and again the isotopy exists.
After performing the isotopy, we may assume that the $Q_{a_i}(\delta_0)$
are level.

To extend this deformation of $f_{\delta_0}$ to a deformation of the
parameterized family $f$, we regard each simplex $\Delta$ of the closed
star of $\delta_0$ in the triangulation as the join $\delta_0*\delta_1$,
where $\delta_1$ is the face of $\Delta$ spanned by the vertices of
$\Delta$ other than $\delta_0$. Each point of $\Delta$ is uniquely of the
form $u=s\delta_0+(1-s)u_1$ with $u_1\in \delta_1$. Write the isotopy of
$f_{\delta_0}$ as $j_t\circ f_{\delta_0}$, with $j_0$ the identity map of
$L$. Then, at $u$ the isotopy at time $t$ is $j_t\circ f_u$ for $0\leq
t\leq s$ and $j_s\circ f_u$ for $s\leq t\leq 1$.  For any two simplices
containing $\delta_0$, this deformation agrees on their intersection, so it
defines a deformation of~$f$.

The target region $T_{\delta_0}(C_1)$ will overlap $T_{\delta_0}(C_2)$ if
$C_2$ is bilongitudinal or $W$-cored. When $C_2$ is bilongitudinal, this
does not affect any of our necessary positioning. If it is $W$-cored, then
$Q_{a_i}$ with $b_i\in C_2$ may be moved into $T_{\delta_0}(C_1)$. At
$\delta_0$, such $Q_{a_i}$ can end up somewhere between the now-level
$Q_{a_{\ell_1}}$ and $P_{b_{\ell_2}}$, and at other parameters in the star
of $\delta_0$ they will lie somewhere in $R(b_1,b_{\ell_2})$. This will
require only a bit of attention in the later argument.

In case $C_1$ was $W$-cored, we use Lemmas~\ref{lem:nesting}(2)
and~\ref{lem:level-preserving on F}, producing a deformation of
$f_{\delta_0}$ supported on the interior of the solid torus
$T_\Delta(C_1)=V_{b_{\ell_1}}$, which does not meet any other target
region. This is extended to a deformation of $f$ just as before.

We move on to consider $C_2$ in analogous fashion, doing nothing if $C_2$
is bilongitudinal, and moving the $Q_{a_i}$ to be level at the parameter
$\delta_0$. If $C_1$ was $V$-cored and $C_2$ is $W$-cored, then instead of
the initial target region $T_{\delta_0}(C_2)$ we must use the region
between the now-level $Q_{a_{\ell_1}}(u)$ and $P_{b_{\ell_2}}$, but
otherwise the argument is the same. Proceed in the same way through the
remaining blocks $C_n$ of~$B_{\delta_0}$, ending with all the cored
$Q_{a_i}(u_0)$ moved to be level. This process for $u_0$ is repeated for
each $0$-simplex of the triangulation.

Now, consider a simplex $\delta$ of positive dimension. Inductively, we may
assume that at each $u$ in $\partial \delta$, each cored $Q_{a_i}$ has been
moved to a level torus, and $Q_{a_i}\cap P{b_i}$ is unchanged for each
bilongitudinal $Q_{a_i}$. Moreover, if $a_i$ is contained in a cored block
$C_j$, then $Q_{a_i}$ lies in the corresponding target region
$T_\delta(C_j)$, or else lies in the union of the target regions for a
$V$-cored block and a succeeding $W$-cored block.

We apply Lemma~\ref{lem:level-preserving on F} to each cored block of
$B_\delta$, sequentially up the cored blocks. We obtain a sequence of
deformations of $f$ on $\delta$, constant at parameters in $\partial
\delta$. There is no interference between different blocks, except when a
$W$-cored block $C_{n+1}$ succeeds a $V$-cored block $C_n$. First, the
$Q_{a_i}$ for the $V$-cored block are moved to be level. Then, at each
parameter in $\delta$, the $Q_{a_i}(u)$ for the $W$-cored block lie between
the now-level $Q_{a_{\ell_n}}(u)$ and $P_{b_{\ell_{n+1}}}$. We regard the
union of these regions over the parameters of $\delta$ as a product
$\delta\times S^1\times S^1\times \I$, and apply
Lemma~\ref{lem:level-preserving on F}. Thus the isotopy that levels the
$Q_{a_i}$ from the $W$-cored block need not move any of the $Q_{a_i}$ from
the $V$-cored block. In other cases, the successive isotopies take place in
disjoint regions. To extend this to a deformation of $f$, we adapt the join
method from above (of course when $\delta$ is $d$-dimensional, no extension
is necessary). Regard each simplex $\Delta$ of the closed star of $\delta$
in the triangulation as the join $\delta*\delta_1$, where $\delta_1$ is the
face of $\Delta$ spanned by the vertices of $\Delta$ not in $\delta$. Each
point of $\Delta$ is uniquely of the form $u=s u_0+(1-s)u_1$ with $u_0\in
\delta$ and $u_1\in \delta_1$. Write the isotopy of $f_{u_0}$ as $j_t\circ
f_{u_0}$, with $j_0$ the identity map of $L$. Then, at $u$ the isotopy at
time $t$ is $j_t\circ f_u$ for $0\leq t\leq s$ and $j_s\circ f_u$ for
$s\leq t\leq 1$. For any two simplices containing $\delta$, this
deformation agrees on their intersection, so it defines a deformation
of~$f$.

At the completion of this process, each cored $Q_{s_i}$ is level at all
parameters in $\Delta$, whenever $\Delta\subset U_i$. The bilongitudinal
$Q_{s_i}$ may have been moved around some, but their intersections
$Q_{s_i}\cap P_{t_i}$ will not be altered at parameters for which $t_i\in
B_\Delta$ since these intersections will not lie in the interior of any
target region for a cored level.
\smallskip

\noindent \textsl{Step 4: Push all cored $Q_{s_i}$ to be
vertical, that is, make each image of a fiber of $P_{s_i}$ a fiber in $L$.
}\vspace*{0.5 ex}

Again we work our way up the simplices of the triangulation. Start at a
$0$-simplex $\delta_0$. Each cored $Q_{a_i}(\delta_0)$ for $b_i\in
B_{\delta_0}$ is now level.  By Lemma~\ref{lem:coincident levels}, the
image fibers in $Q_{a_i}(\delta_0)$ are isotopic in that level torus to
fibers of $L$. Using Lemma~\ref{lem:fiberpreserving0}, there is an isotopy
of $f_{\delta_0}$ that preserves the level tori and makes
$Q_{a_i}(\delta_0)$ vertical. This isotopy can be chosen to fix all points
in other $Q_{a_j}(\delta_0)$, and is extended to a deformation of $f$ by
using the method of Step~3. We work our way up the skeleta; if
$\delta\subset U(b_i)$, then for every $u$ in $\delta$, each $Q_{a_i}(u)$
is level torus, and at parameters $u\in\partial \delta$, $Q_{a_i}(u)$ is
vertical.  Using Lemma~\ref{lem:fiberpreserving0}, we make the $Q_{a_i}(u)$
vertical at all $u\in \delta$, and extend to a deformation of $f$ as
before. We repeat this for all levels of cored blocks.

\noindent \textsl{Step 5: Push all bilongitudinal $Q_{s_i}$
to be vertical.}\vspace*{0.5 ex}

Now, we examine the bilongitudinal levels. For a bilongitudinal level
$Q_{a_i}$ at a vertex $\delta_0$, Corollary~\ref{coro:comeridian} shows
that the intersection circles are longitudes for $X_{a_i}$ and
$Y_{a_i}$. Lemma~\ref{lem:bilongitude} then shows that the circles of
$Q_{a_i}\cap P_{b_j}$ are isotopic in $Q_{a_i}$ and in $P_{b_j}$ to fibers.
First, use Lemma~\ref{lem:fiberpreserving1}
to find an isotopy
preserving levels, such that postcomposing $f_{\delta_0}$ by the isotopy
makes the intersection circles fibers of the $P_{b_j}$. Then, use
Lemma~\ref{lem:fiberpreserving1}
applied to $f_{\delta_0}^{-1}$ to
find an isotopy preserving levels of the domain, such that precomposing
$f_{\delta_0}$ by the isotopy makes the intersection circles the images of
fibers of $P_{s_i}$. After this process has been completed for the
bilongitudinal $Q_{a_i}$, the inverse image (in their union $\cup Q_{a_i}$) of
each region $R(b_j,b_{j+1})$ with $b_j$ or $b_{j+1}$ in a bilongitudinal
block is a collection of fibered annuli which map into $R(b_j,b_{j+1})$ by
embeddings that are fiber-preserving on their boundaries. We use
Lemma~\ref{lem:fiberpreserving2} to find an isotopy that makes the
$Q_{a_i}$ vertical.  Again, we extend to a deformation of $f$ and work our
way up the skeleta, to assume that $Q_{s_i}(u)$ is vertical whenever $u\in
\Delta$ and $\Delta\subset U_i$.

\vspace*{0.5 ex}\noindent \textsl{Step 6: Make $f$ fiber-preserving on
the complementary $S^1\times S^1\times \I$ or solid tori of the
$P_{s_i}$-levels}\vspace*{0.5 ex}

We work our way up the skeleta one last time, using
Lemma~\ref{lem:fiber-preserving on X} to make $f$ fiber-preserving on the
complementary $S^1\times S^1\times \I$ or solid tori of the~$P_{a_i}$.
\longpage

There is an annoying technical problem that arises in this step. At each
parameter, the deformations that make $f_u$ fiber-preserving on the
$S^1\times S^1\times \I$ are fixed on the boundaries of these submanifolds,
but the extended diffeomorphisms may have to move points on the other side
of the frontier. Thus, a region where $f_u$ was already fiber-preserving
may be changed to make $f_u$ no longer fiber-preserving there. One fix for
this is as follows. We can arrange that the final $f$ has all $f_u$
fiber-preserving except on small product neighborhoods of a finite set of
levels at each parameter. Then by removing a neighborhood of the singular
circles and their images, we can regard $f$ as a parameterized family of
diffeomorphisms of $S^1\times S^1\times \I$ that is fiber-preserving on a
neighborhood of the boundary at each parameter. Then we apply the following
version of Theorem~\ref{space of fp homeos}:
\begin{theorem} Suppose that $\Sigma$ is a Seifert-fibered $3$-manifold
with boundary and $g\colon \Sigma \times W \to \Sigma$ is a parameterized
family of diffeomorphisms, with $W$ compact, such that each $g_u$ is
fiber-preserving on a neighborhood of $\partial \Sigma$. Then there is a
deformation of $g$, relative to $U\times W$ for some open neighborhood $U$
of $\partial \Sigma$ in $\Sigma$, to a family of fiber-preserving
diffeomorphisms.
\end{theorem}

\newpage\noindent To prove this, we know from Theorem~\ref{space of fp
homeos} that there is some deformation from $g$ to a family $h$ of
fiber-preserving diffeomorphisms. Since the inclusion $\diff_f(\partial
\Sigma)\to \diff(\partial\Sigma)$ is a homotopy equivalence, the
restriction of this deformation to $\partial \Sigma$ can be assumed to be
fiber-preserving at all times. Choosing a collar $\partial \Sigma\times \I$
in which each $\Sigma\times \{t\}$ is a union of fibers, we may use
uniqueness of collars to change the deformation to be fiber-preserving on
the collar. Now, by performing less and less of the deformation as one
moves toward $\partial \Sigma$, obtain a new deformation from $g$ to a
fiber-preserving family $h'$ such that $h'=h$ outside $\partial
\Sigma\times \I$, but $h=g$ on $\partial \Sigma\times [0,1/2]$.
\end{proof}

\section[Parameters in $D^d$]
{Parameters in $D^d$}
\label{sec:Dk}

Regard $D^d$ as the unit ball in $d$-dimensional Euclidean space, with
boundary the unit sphere $S^{d-1}$. As mentioned in
Section~\ref{sec:reduction}, to prove that $\diff_f(L)\to \diff(L)$ is a
homotopy equivalence, we actually need to work with a family of
diffeomorphisms $f$ of $L$ parameterized by $D^d$, $d\geq 1$, for which
$f(u)$ is fiber-preserving whenever $u$ lies in the boundary
$S^{d-1}$. We must deform $f$ so that each $f(u)$ is
fiber-preserving, by a deformation that keeps $f(u)$ fiber-preserving at
all times when $u\in S^{d-1}$.

We now present a trick that allows us to gain good control of what happens
on $S^{d-1}$. The Hopf fibering we are using on $L$ can be described as a
Seifert fibering of $L$ over the round $2$-sphere $S$, in such a way that
each isometry of $L$ projects to an isometry of $S$. For the cases when
$q=1$, the round sphere is the actual quotient orbifold, and when $1<q$,
the quotient orbifold has two cone points but the only induced isometries
are rotations fixing those cone points. (Section~\ref{fiberings} above
details this description for the manifolds considered in
Chapter~\ref{ch:one-sided}, full details of all cases are in~\cite{M}.) By
conjugating $\pi_1(L)$ in $\SO(4)$, we may assume that the singular fibers,
when $q>1$, are the inverse images of the poles. We choose our sweepout so that
the level tori are the inverse images of latitude circles. Denote by $p_t$ the
latitude circle that is the image of the level torus~$P_t$.

There is an isotopy $J_t$ with $J_0$ the identity map of $L$ and each $J_t$
fiber-preserving, so that the images of the level tori $P_s$ under $J_1$
project to circles in the $2$-sphere as indicated in Figure~\ref{fig:Dk
trick}. Denote the image of $J_1(P_s)$ in $S$ by~$q_s$. Their key property
is that when moved by any orthogonal rotation of $S$, each $p_t$ meets the
image of some $q_s$ transversely in two or four points.
\index{figures!figure992@projection of a perturbed level}%
\begin{figure}
\includegraphics[width=4.5cm]{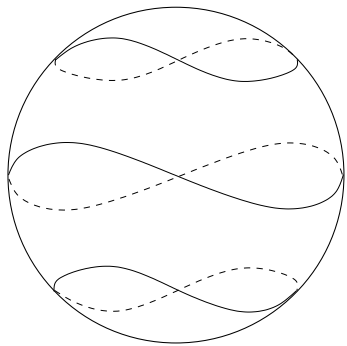}
\caption{Projections of the $J_1(P_t)$ into the $2$-sphere.}
\label{fig:Dk trick}
\end{figure}

Using Theorem~\ref{thm:reduce to fiber-preserving}, we may assume that
$f_u$ is actually an isometry of $L$ for each $u\in S^{d-1}$.  Denote the
isometry that $f_u$ induces on $S$ by $\overline{f_u}$.  Now, deform the
entire family $f$ by precomposing each $f_u$ with $J_t$. At points in
$S^{d-1}$, each $f_u\circ J_t$ is fiber-preserving, so this is an allowable
deformation of $f$.  At the end of the deformation, for each $u\in
S^{d-1}$, $f_u\circ J_1(P_s)$ is a fibered torus $Q_s$ that projects to
$\overline{f_u}(q_s)$. Since $\overline{f_u}$ is an isometry of $S$, it
follows that for any latitude circle $p_t$, some $\overline{f_u}(q_s)$
meets $p_t$ transversely, in either two or four points. So $P_t$ and this
$Q_s$ meet transversely in either two or four circles which are fibers of
$L$. In particular, they are in very good position. We call such a pair
$P_t$ and $Q_s$ at $u$ an \indexdef{instant pair}\textit{instant pair.}

Cover $S^{d-1}$ by finitely many open sets $Z_i'$ such that for each $i$,
there is an $(x_i,y_i)$ such that $Q_{x_i}$ and $P_{y_i}$ are an instant
pair at every point of $\overline{Z_i'}$. We may assume that there are open
sets $Z_i$ in $D^d$ such that $\overline{Z_i}\cap S^{d-1}=\overline{Z_i'}$
and $Q_{x_i}$ and $P_{y_i}$ meet in very good position at each point
of~$\overline{Z_i}$. For any sufficiently small deformation of $f$,
$Q_{x_i}$ and $P_{y_i}$ will still meet in very good position at all points
of~$\overline{Z_i}$. Let $V$ be a neighborhood of $S^{d-1}$ in $D^d$ such
that $\overline{V}$ is contained in the union of the~$Z_i$.

Now, we apply to $D^d$ the entire process used for the case when the
parameters lie in $S^d$, using appropriate fiber-preserving deformations at
parameters in $S^{d-1}$. Here are the steps:
\begin{enumerate}
\item By Theorem~\ref{thm:generalposition}, there are arbitrarily small
deformations of $f$ that put it in general position with respect to the
sweepout. Select the deformation sufficiently small so that the $Q_{x_i}$
and $P_{y_i}$ still meet in very good position at every point of
$\overline{Z_i}$. Within $V$, we taper the deformation off to the identity,
so that no change has taken place at parameters in $S^{d-1}$. At every
parameter, either there is already a pair in very good position, or $f_u$
satisfies the conditions 
\index{GP1@(GP1), (GP2), (GP3)}(GP1), (GP2), and~(GP3) of a general position
family.
\item
Theorem~\ref{thm:finding good levels} guarantees that at each of the
parameters in $D^d-V$, there is a pair $Q_s$ and $P_t$ meeting in good
position.
\item
Applying Theorem~\ref{thm:from good to very good} to $D^d$, with $S^{d-1}$
in the role of $W_0$, we find a deformation of $f$, fixed on $S^{d-1}$, and
a covering $U_i$ of $D^d$ and associated values $s_i$ so that for every
$u\in U_i$, $Q_{s_i}$ and $P_{t_i}$ meet in very good position, and
$Q_{s_i}$ has no discal intersection with any~$P_{t_j}$.
\item
In the pushout step of the proof of Theorem~\ref{thm:make
fiber-preserving}, we may assume that all the $U_i$ that meet $S^{d-1}$ are
the open sets $Z_i$. At parameters $u$ in $S^{d-1}$, the annuli to be
pushed out of each $V_{t_i}$ will be vertical annuli. The pushouts may be
performed using fiber-preserving isotopies at these parameters, because the
necessary deformations can be taken as lifts of deformations of circles in
the quotient sphere $S$, the lifting being possible by
Theorem~\ref{sfsquare}.
\item
After the triangulation of $D^d$ is chosen, the deformation that move the
$Q_{s_i}$ onto level tori can be performed using fiber-preserving isotopies
at parameters in $S^{d-1}$, again because the necessary deformations cover
deformations of circles in the quotient surface $S$. No further deformation
will be needed on simplices in $S^{d-1}$, since the $f_u$ are already
fiber-preserving there.
\end{enumerate}

This completes the discussion of the case of parameters in $D^d$, and the
proof of the Smale Conjecture for lens spaces.

\backmatter

\bibliography{ref}{}

\bibliographystyle{plain}

\printindex

\end{document}